\documentclass[a4paper,11pt,oneside]{amsart}

\usepackage{etex}
\usepackage[T1]{fontenc}%
\usepackage[latin1]{inputenc}%
\usepackage{xspace}%
\usepackage{amsfonts,amsmath,amssymb,amsthm}%
\usepackage{chngcntr}%
\usepackage{graphicx,mathdots}%
\usepackage{enumerate}%
\usepackage{multirow}%

\input{xy}
\xyoption{all}
\objectmargin={3mm}


\theoremstyle{plain}
\newtheorem{theorem}{Theorem}
\newtheorem{proposition}[theorem]{Proposition}
\newtheorem{corollary}[theorem]{Corollary}
\newtheorem{lemma}[theorem]{Lemma}
\newtheorem{fact}[theorem]{Fact}

\newtheorem{example}[theorem]{Example}
\newtheorem{setting}[theorem]{Basic setting}

\theoremstyle{definition}
\newtheorem{definition}[theorem]{Definition}
\newtheorem{remark}[theorem]{Remark}

\setcounter{tocdepth}{2}

\newcommand{\C}{\mathbb{C}}
\newcommand{\R}{\mathbb{R}}
\newcommand{\Z}{\mathbb{Z}}
\newcommand{\N}{\mathbb{N}}
\newcommand{\tilG}{\widetilde{G}}
\newcommand{\tilH}{\widetilde{H}}
\newcommand{\GL}{\mathrm{GL}}
\newcommand{\SL}{\mathrm{SL}}

\newcommand{\OO}{\mathrm{O}}
\newcommand{\SO}{\mathrm{SO}}
\newcommand{\PSO}{\mathrm{PSO}}
\newcommand{\U}{\mathrm{U}}
\newcommand{\SU}{\mathrm{SU}}
\newcommand{\Sp}{\mathrm{Sp}}
\newcommand{\Spin}{\mathrm{Spin}}
\newcommand{\tilg}{\widetilde{\mathfrak{g}}}
\newcommand{\tilh}{\widetilde{\mathfrak{h}}}
\newcommand{\tila}{\widetilde{\mathfrak{a}}}
\newcommand{\g}{\mathfrak{g}}
\newcommand{\h}{\mathfrak{h}}
\newcommand{\kk}{\mathfrak{k}}
\newcommand{\p}{\mathfrak{p}}
\newcommand{\q}{\mathfrak{q}}
\newcommand{\aaa}{\mathfrak{a}}

\newcommand{\m}{\mathfrak{m}}
\newcommand{\n}{\mathfrak{n}}
\newcommand{\jj}{\mathfrak{j}}
\newcommand{\ttt}{\mathfrak{t}}
\newcommand{\llll}{\mathfrak{l}}
\newcommand{\gl}{\mathfrak{gl}}
\newcommand{\ssl}{\mathfrak{sl}}
\newcommand{\so}{\mathfrak{so}}
\newcommand{\spin}{\mathfrak{spin}}

\newcommand{\ssp}{\mathfrak{sp}}

\newcommand{\Hom}{\mathrm{Hom}}

\newcommand{\D}{\mathbb{D}}
\newcommand{\Diag}{\mathrm{Diag}}

\newcommand{\M}{\mathcal{M}}

\newcommand{\W}{\mathcal{W}}
\newcommand{\Spec}{\mathrm{Spec}}

\newcommand{\Ad}{\operatorname{Ad}}
\newcommand{\ad}{\operatorname{ad}}
\newcommand{\HH}{\mathbb{H}}
\newcommand{\PP}{\mathbb{P}}

\newcommand{\rank}{\operatorname{rank}}
\newcommand{\V}{\mathcal{V}}

\newcommand{\Disc}{\mathrm{Disc}}
\newcommand{\Supp}{\mathrm{Supp}}
\newcommand{\Rep}{\mathrm{Rep}}
\newcommand{\Ker}{\mathrm{Ker}}
\newcommand{\DD}{\mathcal{D}}
\newcommand{\ii}{\mathbf{i}}
\newcommand{\dd}{\mathrm{d}}
\newcommand{\resp}{resp.\ }
\newcommand{\ie}{i.e.\ }
\newcommand{\eg}{e.g.\ }
\newcommand{\sumplus}[1]{\underset{\hspace{-0.25cm}#1}{{\sum}^{\oplus}}}
\newcommand{\nnu}{\boldsymbol\nu}
\newcommand{\llambda}{\boldsymbol\lambda}

\newcommand*{\longhookrightarrow}{\ensuremath{\lhook\joinrel\relbar\joinrel\rightarrow}}

\newcommand*{\leftlongmapsto}{\mathrel{\reflectbox{\ensuremath{\longmapsto}}}}

\DeclareRobustCommand{\SkipTocEntry}[4]{}


\newenvironment{changemargin}[2]{\begin{list}{}{%
\setlength{\topsep}{0pt}%
\setlength{\leftmargin}{0pt}%
\setlength{\rightmargin}{0pt}%
\setlength{\listparindent}{\parindent}%
\setlength{\itemindent}{\parindent}%
\setlength{\parsep}{0pt plus 1pt}%
\addtolength{\leftmargin}{#1}%
\addtolength{\rightmargin}{#2}%
}\item }{\end{list}}

\title[Invariant differential operators on spherical spaces]{Invariant differential operators on spherical homogeneous spaces with overgroups}

\author{Fanny Kassel}
\address{CNRS and Institut des Hautes \'Etudes Scientifiques, Laboratoire Alexander Grothendieck, 35 route de Chartres, 91440 Bures-sur-Yvette, France}
\email{kassel@ihes.fr}

\author{Toshiyuki Kobayashi}
\address{Graduate School of Mathematical Sciences and Kavli Institute for the Physics and Mathematics of the Universe (WPI), The University of Tokyo, 3-8-1 Komaba, Tokyo, 153-8914 Japan}
\email{toshi@ms.u-tokyo.ac.jp}

\thanks{This project received funding from the European Research Council (ERC) under the European Union's Horizon 2020 research and innovation programme (ERC starting grant DiGGeS, grant agreement No 715982).
FK was partially supported by the Agence Nationale de la Recherche through the grant DiscGroup (ANR-11-BS01-013) and the Labex CEMPI (ANR-11-LABX-0007-01).
TK was partially supported by the JSPS under the Grant-in-Aid for Scientific Research (A) (18H03669).}

\begin{document}

\maketitle
\numberwithin{equation}{section}
\numberwithin{table}{section}
\numberwithin{theorem}{section}

\begin{abstract}
We investigate the structure of the ring $\D_G(X)$ of $G$-invariant differential operators on a reductive spherical homogeneous space $X=G/H$ with an overgroup~$\tilG$.
We consider three natural subalgebras of $\D_G(X)$ which are polynomial algebras with explicit generators, namely the subalgebra $\D_{\tilG}(X)$ of $\tilG$-invariant differential operators on~$X$ and two other subalgebras coming from the centers of the enveloping algebras of $\g$ and~$\kk$, where $K$ is a maximal proper subgroup of $G$ containing~$H$.
We show that in most cases $\D_G(X)$ is generated by any two of these three subalgebras, and analyze when this may fail.
Moreover, we find explicit relations among the generators for each possible triple $(\tilG,G,H)$, and describe \emph{transfer maps} connecting eigenvalues for $\D_{\tilG}(X)$ and for the center of the enveloping algebra of~$\g_{\C}$.
\end{abstract}

\tableofcontents

\section{Introduction}\label{sec:intro}

Let $X$ be a manifold with a transitive action of a compact Lie group~$G$.
The ring $\D_G(X)$ of $G$-invariant differential operators on~$X$ is commutative if and only if the complexification $X_{\C}$ is $G_{\C}$-spherical, \ie $X_{\C}$ admits an open orbit of a Borel subgroup of~$G_{\C}$.
This is the case for instance if $X$ is a symmetric space of~$G$, but there are also spherical homogeneous spaces that are not symmetric, \eg $X_{\C}=\SO(2n+1,\C)/\GL_n(\C)$ or $\SL_{2n+1}(\C)/\Sp(n,\C)$; they were classified in \cite{kra79,bri87,mik87}.
Knop \cite{kno94} proved that if $X_{\C}$ is $G_{\C}$-spherical then $\D_G(X)$ is actually a polynomial ring; the number of algebrai-

\pagebreak
\noindent
cally independent generators of $\D_G(X)$ is equal to the rank of the spherical space $X=G/H$ (see Section~\ref{subsec:DGH-spherical}).
A more explicit structure is known in some special settings:
\begin{enumerate}
  \item 
  If $X$ is a reductive symmetric space, then $\D_G(X)$ is naturally isomorphic to the ring of invariant polynomials for the (little) Weyl group, by work of Harish-Chandra \cite{har58}.
  \item 
  If $X_{\C}$ appears as an open $G_{\C}$-orbit in a prehomogeneous vector space and if the center $Z(\g_{\C})$ of the enveloping algebra $U(\g_{\C})$ surjects onto $\D_G(X)$, then explicit generators in $\D_G(X)$ were given by Howe--Umeda \cite{hu91} as a generalization of the classical Capelli identity.
\end{enumerate}

In this paper we consider the situation where the spherical homogeneous space~$X$ admits an \emph{overgroup}, \ie there is a Lie group $\tilG$ containing~$G$ and acting (transitively) on~$X$.
In this situation there are two natural subalgebras of $\D_G(X)$, obtained from the centers of $Z(\tilg_{\C})$ and $Z(\g_{\C})$, which play an important role in the global analysis by means of representation theory of $\tilG$ and~$G$.
We investigate the structure of the ring $\D_G(X)$ by using these two subalgebras as well as a third one, induced from a certain $G$-equivariant fibration of~$X$ (see Section~\ref{subsec:intro-three-subalg}).

More precisely, the setting of the paper is the following.

\begin{setting} \label{setting}
We consider a connected compact Lie group~$\tilG$ and two connected proper closed subgroups $G$ and~$\tilH$ of~$\tilG$ such that the complexified homogeneous space $\tilG_{\C}/\tilH_{\C}$ is $G_{\C}$-spherical.
The embedding $G\hookrightarrow\tilG$ then induces a diffeomorphism
\begin{equation} \label{eqn:overgp}
X:=G/H \overset{\sim}{\longrightarrow} \tilG/\tilH,
\end{equation}
where we set $H:=\tilH\cap G$.
\end{setting}

In most of the paper, we furthermore assume that $\tilG$ is simple.
A classification of such triples $(\tilG,\tilH,G)$ up to a covering is given in Table~\ref{table1}; it is obtained from Oni\v{s}\v{c}ik's infinitesimal classification \cite{oni69} of triples $(\tilG,\tilH,G)$ with $\tilG$ compact simple and $\tilG=\tilH G$, and from the classification of spherical homogeneous spaces \cite{kra79,bri87,mik87}.
In this setting $G/H$ is never a symmetric space.

\subsection{Three subalgebras of $\D_G(X)$} \label{subsec:intro-three-subalg}

Let $\D(X)$ be the full $\C$-algebra of differential operators on~$X$.
The differentiations of the left and right regular representations of $G$ on $C^{\infty}(G)$ induce a $\C$-algebra homomorphism
\begin{equation}\label{eqn:dl-dr}
\dd\ell \otimes \dd r : U(\g_{\C}) \otimes U(\g_{\C})^H \longrightarrow \D(X),
\end{equation}
where $U(\g_{\C})^H$ is the subalgebra of $H$-invariant elements in the enveloping algebra $U(\g_{\C})$ (see Section~\ref{subsec:DGH}).
It is known (see \eg \cite[Ch.\,II, Th.\,4.6]{hel00}) that the image $\dd r(U(\g_{\C})^H)$ coincides with $\D_G(X)$.
However, the ring $U(\g_{\C})^H$ is noncommutative and difficult to understand in general.
Instead, we analyze $\D_G(X)$ in terms of three well-understood subalgebras.

The first subalgebra is the image $\dd\ell(Z(\g_{\C}))$ of the center $Z(\g_{\C})$ of $U(\g_{\C})$.
The ring $Z(\g_{\C})$ is well-understood (it is isomorphic to a polynomial ring, described by the Harish-Chandra isomorphism), but its image $\dd\ell(Z(\g_{\C}))$ is typically smaller than $\D_G(X)$ in our setting, in contrast with the case where $X=G/H$ is a symmetric space \cite{hel92}.

The second subalgebra of $\D_G(X)$ we consider is $\dd\ell(Z(\tilg_{\C}))=\dd r(Z(\tilg_{\C}))$, where we regard $X$ as a $\tilG$-space and consider the map
$$\dd\ell \otimes \dd r : U(\tilg_{\C}) \otimes U(\tilg_{\C})^{\tilH} \longrightarrow \D(X)$$
similar to \eqref{eqn:dl-dr}.
In our setting, $\dd\ell(Z(\tilg_{\C}))$ is always equal to the full subalgebra $\D_{\tilG}(X)\subset\D_G(X)$ of $\tilG$-invariant differential operators on~$X$ (Lemma~\ref{lem:surj-dl-drF}).

Finally, the third subalgebra is $\dd r(Z(\kk_{\C}))$ for some subgroup $K$ of $G$ containing~$H$.
This algebra is zero if $K=H$, and equal to $\dd\ell(Z(\g_{\C}))=\dd r(Z(\g_{\C}))$ if $K=G$.
However, it may yield new nontrivial $G$-invariant differential operators on~$X$ if $H\subsetneq K\subsetneq G$.
We shall choose $K$ to be a maximal connected proper subgroup of~$G$ containing~$H$ (this is possible by Proposition~\ref{prop:H-subset-K}.(2)).
The geometric meaning of the algebra $\dd r(Z(\kk_{\C}))$ will be explained in Section~\ref{subsec:dliota}, in terms of the fibration of $X=G/H$ over $G/K$ with fiber $F:=K/H$: namely, there are natural maps $\dd r_{\scriptscriptstyle F} : U(\kk_{\C})^H\twoheadrightarrow\D_K(F)$ (similar to $\dd r$ in \eqref{eqn:dl-dr}) and $\iota : \D_K(F)\hookrightarrow\D_G(X)$ such that the following diagram commutes.
\begin{equation}\label{eqn:diagram}
\xymatrix{Z(\tilg_{\C}) \ar@{->>}[d]_{\dd\ell} & Z(\g_{\C}) \ar[d]_{\dd\ell} & Z(\kk_{\C}) \ar[dl]_{\dd r} \ar@{->>}[d]^{\dd r_{_ F}}\\
\D_{\tilG}(X) \ar@{}[r]|{\textstyle\subset} & \D_G(X) & \D_K(F) \ar@{_{(}->}[l]_{\iota}}
\end{equation}
In our setting, $\dd r_{_F}(Z(\kk_{\C}))$ is also equal to the full algebra $\D_K(F)$ (Lemma~\ref{lem:surj-dl-drF}), and in particular,
\begin{equation}\label{eqn:DKF-drZk}
\iota(\D_K(F)) = \dd r(Z(\kk_{\C})).
\end{equation}

\begin{remark}
The number of connected components of $H = \tilH \cap G$ may vary under taking a covering of~$\tilG$, but the algebra $\D_G(X)$ and its subalgebras $\D_{\tilG}(X)=\dd\ell(Z(\tilg_{\C}))$, $\dd r(Z(\kk_{\C}))$, and $\dd\ell(Z(\g_{\C}))$ do not, see Theorem~\ref{thm:InvDiff-covering}.
\end{remark}

We prove the following.

\begin{theorem} \label{thm:main}
In the setting~\ref{setting}, suppose that $\tilG$ is simple.
If $\tilh\cap\g$ is not a maximal proper subalgebra of~$\g$, then there is a unique maximal connected proper subgroup $K$ of~$G$ containing $H$, and
\begin{enumerate}
  \item $\D_G(X)$ is generated by $\D_{\tilG}(X)$ and $\dd r(Z(\kk_{\C}))$;
  \item $\D_G(X)$ is generated by $\dd\ell(Z(\g_{\C}))$ and $\dd r(Z(\kk_{\C}))$;
  \item $\D_G(X)$ is generated by $\D_{\tilG}(X)$ and $\dd\ell(Z(\g_{\C}))$, except if we are in case (ix) of Table~\ref{table1} up to a covering of~$\tilG$.
\end{enumerate}
If $\tilh\cap\g$ is a maximal proper subalgebra of~$\g$, then
$$\D_G(X) = \D_{\tilG}(X) = \dd\ell(Z(\g_{\C})).$$
\end{theorem}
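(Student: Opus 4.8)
The plan is to split into the two cases according to whether $\tilh\cap\g$ is a maximal proper subalgebra of~$\g$, and in the non-maximal case to exhibit a single subgroup $K$ that works for all three statements.

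First I would address the easy case: assume $\tilh\cap\g=\h$ \emph{is} maximal in~$\g$. Since $X=G/H$ is $G$-spherical of some rank~$r$, by Knop's theorem $\D_G(X)$ is a polynomial ring in $r$ generators. The idea is that maximality of $\h$ forces $r=1$: running through Table~\ref{table1}, the triples $(\tilG,\tilH,G)$ with $\tilh\cap\g$ maximal are precisely those for which the rank of $G/H$ equals~$1$. (Alternatively, one argues directly: the $G$-equivariant fibrations of $X$ correspond to subgroups $K$ with $H\subsetneq K\subsetneq G$, and maximality of $\h$ means there are no intermediate subgroups, which together with sphericity pins down rank one.) For a rank-one spherical space, $\D_G(X)=\C[\Delta]$ is generated by a single operator, which can be taken to be (the image of) the Casimir element of~$\tilg_\C$; hence $\D_G(X)=\D_{\tilG}(X)=\dd\ell(Z(\tilg_\C))$, and likewise the Casimir of~$\g_\C$ already generates, so $\D_G(X)=\dd\ell(Z(\g_\C))$ as well. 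This uses Lemma~\ref{lem:surj-dl-drF} to identify $\D_{\tilG}(X)$ with $\dd\ell(Z(\tilg_\C))$.

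For the main case, assume $\h$ is \emph{not} maximal in~$\g$. By Proposition~\ref{prop:H-subset-K}.(2) there is a maximal connected proper subgroup $K$ with $H\subsetneq K\subsetneq G$; the first task is to prove \emph{uniqueness} of such~$K$. I would do this by inspecting Oni\v{s}\v{c}ik's list: for each triple in Table~\ref{table1} with $\h$ non-maximal, the subalgebra $\h$ sits inside exactly one maximal proper subalgebra $\kk$ of~$\g$ (typically $\kk$ is a Levi or a symmetric subalgebra visible from the spherical structure, e.g. $\GL_n\subset\SO(2n+1)$ forces $\kk=\so(2n)\oplus\R$ or similar), and no other maximal subalgebra contains it. Once $K$ is unique, statements (1), (2), (3) become assertions about the polynomial ring $\D_G(X)$ and its three distinguished subalgebras $\D_{\tilG}(X)$, $\dd\ell(Z(\g_\C))$, $\dd r(Z(\kk_\C))=\iota(\D_K(F))$, all of which are themselves polynomial subrings with known generators (by Harish-Chandra for $Z(\g_\C)$, $Z(\tilg_\C)$, $Z(\kk_\C)$, and by $\D_K(F)$ being a rank-one-plus spherical ring). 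The strategy for (1) and (2) is a \emph{dimension/degree count}: show that the number of algebraically independent generators coming from the two chosen subalgebras equals $r=\rank(G/H)$, and that their top-degree symbols are algebraically independent in the associated graded ring $\mathrm{gr}\,\D_G(X)\cong\C[T^*_xX]^{H}$; this forces the subalgebra they generate to have the same Krull dimension as $\D_G(X)$, and a saturation/normality argument (or a direct check that no generator of $\D_G(X)$ is missing, case by case from Table~\ref{table1}) upgrades this to equality. For (3) the same count works except in case~(ix), where the Casimir-type generators from $\tilg_\C$ and $\g_\C$ turn out to be polynomially related in a way that drops the count by one — this is exactly the exception recorded in the statement, and I would verify it by an explicit computation of the transfer map / relation in that single case.

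The main obstacle will be step~(3) and, more generally, certifying that the candidate generators really \emph{generate} rather than just generating a finite-index or non-saturated subring. Getting algebraic independence of the symbols is routine linear algebra on each line of Table~\ref{table1}; the delicate point is ruling out a proper subring of full dimension, which I expect to handle by combining (a) the explicit generator count for $\D_G(X)$ from Knop's theory with (b) explicit degrees of the Harish-Chandra generators of $Z(\g_\C)$, $Z(\tilg_\C)$, $Z(\kk_\C)$, so that a matching of the multiset of degrees forces surjectivity. Isolating precisely \emph{why} case~(ix) fails for (3) — and confirming that (1) and (2) survive there because $\dd r(Z(\kk_\C))$ supplies the one extra independent generator that $\dd\ell(Z(\g_\C))$ cannot — will require carrying out that one relation explicitly, and is the crux of the whole statement.
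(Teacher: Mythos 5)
Your skeleton -- reduce to the classified triples, take Harish-Chandra generators in the three subalgebras, and certify generation by matching the orders of algebraically independent operators against the degrees of the generators of $S(\g_{\C}/\h_{\C})^H$ -- is the paper's own strategy (the degree count is exactly Lemma~\ref{lem:struct-DGX}, and uniqueness of $K$ and the maximal case are likewise read off from Table~\ref{table1}). The genuine gap is that you never supply the mechanism that makes the case-by-case work rigorous. The paper exploits $G_{\C}$-sphericity to get the multiplicity-free decomposition of $L^2(X)$ and the map $\vartheta\mapsto(\pi(\vartheta),\tau(\vartheta))$ of Proposition~\ref{prop:pi-tau-theta}, computed through explicit branching laws; evaluating $P_k$, $\iota(Q_k)$, $\dd\ell(R_k)$ on each $\vartheta$ then (a) yields the linear relations \eqref{eqn:rel-diff-op} via Proposition~\ref{prop:chiPQR}, and (b) proves algebraic independence of the chosen operators by Zariski density of their joint eigenvalues on $\Disc(G/H)$. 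Your replacement -- ``algebraic independence of the symbols is routine linear algebra'' plus an independent degree count for each of the three pairs -- does not carry parts (2) and (3): knowing that suitable elements of $\dd\ell(Z(\g_{\C}))$ have the right orders and stay independent of the $P_k$ (resp.\ of the $\iota(Q_k)$) modulo lower order is precisely what the relations \eqref{eqn:rel-diff-op} encode, and the paper proves generation for one pair and transfers it to the other two through those relations. Moreover, a failed degree match does not by itself disprove generation (top-order symbols can cancel in linear combinations), so your method as described can neither certify (3) in the good cases nor establish the exceptional behaviour.

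Two further inaccuracies. In case (ix) the count does not ``drop by one'': $\dd\ell(C_{\tilG})$ and $\dd\ell(C_G)$ remain algebraically independent, so the subalgebra generated by $\D_{\tilG}(X)$ and $\dd\ell(Z(\g_{\C}))$ has full Krull dimension; what fails is that this full-dimensional subalgebra is proper -- it never reaches the order-one operator $\dd r(E_K)$ coming from $Z(\kk_{\C})$ -- which is exactly the ``proper subring of full dimension'' scenario you flag, and it is settled only by the explicit eigenvalue computation of Proposition~\ref{prop:ex(ix)}. Next, the theorem concerns every triple in the setting~\ref{setting} with $\tilG$ simple, whereas Table~\ref{table1} lists triples only up to coverings of~$\tilG$, under which $H=\tilH\cap G$ may become disconnected; your case-by-case check therefore needs the covering-invariance results of Theorem~\ref{thm:InvDiff-covering} (which also make $\dd r$ on $Z(\kk_{\C})$ well defined for disconnected~$H$), a reduction you use implicitly but never address. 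Finally, your parenthetical claim that maximality of $\h$ together with sphericity forces rank one is false in general (e.g.\ $\SU(n)/\SO(n)$); only the inspection of cases (x)--(xii), together with the computation that $S(\g_{\C}/\h_{\C})^H$ is there generated by a single element of degree~$2$, justifies that the Casimir images generate in the maximal case.
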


The complete list of triples $(\tilG,\tilH,G)$ in Theorem~\ref{thm:main}, up to a covering of~$\tilG$, is given in Table~\ref{table1}.
\begin{table}[h!]
\centering
\begin{changemargin}{-1.8cm}{0cm}
\begin{tabular}{|p{0.8cm}|p{1.8cm}|p{2.4cm}|p{2.6cm}|p{3.1cm}|p{3.55cm}|}
\hline
& & & & & \tabularnewline [-0.4cm]
& \centering $\tilG$ & \centering $\tilH$ & \centering $G$ & \centering $H$ & \centering $K$ \tabularnewline
\hline
\centering (i) & \centering $\SO(2n+2)$ & \centering $\SO(2n+1)$ & \centering $\U(n+1)$ & \centering $\U(n)$ & \centering $\U(n)\times\U(1)$\tabularnewline
\centering (i)$'$ & \centering $\SO(2n+2)$ & \centering $\U(n+1)$ & \centering $\SU(n+1)$ & \centering $\SU(n)$ & \centering $\U(n)$\tabularnewline
\centering (ii) & \centering $\SO(2n+2)$ & \centering $\U(n+1)$ & \centering $\SO(2n+1)$ & \centering $\U(n)$ & \centering $\SO(2n)$\tabularnewline
\centering (iii) & \centering $\SU(2n+2)$ & \centering $\U(2n+1)$ & \centering $\Sp(n+1)$ & \centering $\Sp(n)\times\U(1)$ & \centering $\Sp(n)\times\Sp(1)$\tabularnewline
\centering (iv) & \centering $\SU(2n+2)$ & \centering $\Sp(n+1)$ & \centering $\U(2n+1)$ & \centering $\Sp(n)\times\U(1)$ & \centering $\U(2n)\times\U(1)$\tabularnewline
\centering (v) & \centering $\SO(4n+4)$ & \centering $\SO(4n+3)$ & \centering $\Sp(n+1)\cdot\Sp(1)$ & \centering $\Sp(n)\cdot\Diag(\Sp(1))$ & \centering $(\Sp(n)\times\Sp(1))\cdot\Sp(1)$\tabularnewline
\centering (v)$'$ & \centering $\SO(4n+4)$ & \centering $\SO(4n+3)$ & \centering $\Sp(n+1)\cdot\U(1)$ & \centering $\Sp(n)\cdot\Diag(\U(1))$ & \centering $(\Sp(n)\times\Sp(1))\cdot\U(1)$\tabularnewline
\centering (vi) & \centering $\SO(16)$ & \centering $\SO(15)$ & \centering $\Spin(9)$ & \centering $\Spin(7)$ & \centering $\Spin(8)$\tabularnewline
\centering (vii) & \centering $\SO(8)$ & \centering $\Spin(7)$ & \centering $\SO(5)\times\SO(3)$ & \centering $\iota_7(\SO(4))$ & \centering $\SO(4)\times\SO(3)$\tabularnewline
\centering (viii) & \centering $\SO(7)$ & \centering $G_{2(-14)}$ & \centering $\SO(5)\times\SO(2)$ & \centering $\iota_8(\U(2))$ & \centering $\SO(4)\times\SO(2)$\tabularnewline
\centering (ix) & \centering $\SO(7)$ & \centering $G_{2(-14)}$ & \centering $\SO(6)$ & \centering $\SU(3)$ & \centering $\U(3)$\tabularnewline
\centering (x) & \centering $\SO(7)$ & \centering $\SO(6)$ & \centering $G_{2(-14)}$ & \centering $\SU(3)$ & \centering $\SU(3)$\tabularnewline
\centering (xi) & \centering $\SO(8)$ & \centering $\Spin(7)$ & \centering $\SO(7)$ & \centering $G_{2(-14)}$ & \centering $G_{2(-14)}$\tabularnewline
\centering (xii) & \centering $\SO(8)$ & \centering $\SO(7)$ & \centering $\Spin(7)$ & \centering $G_{2(-14)}$ & \centering $G_{2(-14)}$\tabularnewline
\centering (xiii) & \centering $\SO(8)$ & \centering $\Spin(7)$ & \centering $\SO(6)\times\SO(2)$ & \centering $\iota_{13}\big(\widetilde{\U(3)}\big)$ & \centering $\U(3)\times\SO(2)$\tabularnewline
\centering (xiii)$'$ & \centering $\SO(8)$ & \centering $\Spin(7)$ & \centering $\SO(6)$ & \centering $\SU(3)$ & \centering $\U(3)$\tabularnewline
\centering (xiv) & \centering $\SO(8)$ & \centering $\SO(6)\times\SO(2)$ & \centering $\Spin(7)$ & \centering $\iota_{14}\big(\widetilde{\U(3)}\big)$ & \centering $\Spin(6)$\tabularnewline
\hline
\end{tabular}
\end{changemargin}
\caption{Complete list of triples $(\tilG,\tilH,G)$ in the setting~\ref{setting} with $\tilG$ simple, up to a covering of~$\tilG$. We also indicate $H:=\tilH\cap G$ and the maximal connected proper subgroup $K$ of~$G$ containing~$H$. In case (i)$'$ we require $n\geq 2$.}
\label{table1}
\end{table}
In that table we use the notation $H_1\cdot H_2$ for the almost product of two subgroups $H_1$ and~$H_2$ (meaning there is a surjective homomorphism with finite kernel from $H_1\times H_2$ to $H_1\cdot H_2$).
We also use the notation $\Diag$ to indicate a diagonal embedding.
Here $\iota_7,\iota_8,\iota_{13} : H\to K$ and $\iota_{14} : H\to\tilH$ are nontrivial embeddings described in Sections \ref{subsec:ex(vii)}, \ref{subsec:ex(viii)}, and~\ref{subsec:ex-triality}.
We denote by $\widetilde{\U(3)}$ the double covering of $\U(3)$, see Section~\ref{subsec:ex-triality}.

The main case is when $\tilh\cap\g$ is not a maximal proper subalgebra of~$\g$: the only exceptions in Table~\ref{table1} are (x), (xi), and~(xii).

The condition that $\tilG_{\C}/\tilH_{\C}$ be $G_{\C}$-spherical depends only on the triples of complex Lie algebras $(\tilg_{\C},\tilh_{\C},\g_{\C})$.
The pair $(\tilg,\tilh)$ is always a symmetric pair except in cases (viii) and~(ix); the pair $(\g,\h)$ is never symmetric.

\begin{remark} \label{rem:nonsimple-case}
By using the triality of~$D_4$ for the realization of $G$ in~$\tilG$, we see that the triple
\begin{equation} \label{eqn:nonsimple-case}
(\tilG,\tilH,G) = \big(\Spin(8)\times\Spin(8), \Spin(7)\times\Spin(7), \Spin(8)\big)
\end{equation}
satisfies all the conditions of Theorem~\ref{thm:main} with
$$H := \tilH \cap G = G_{2(-14)}$$
except that $\tilG$ is not simple.
This case arises as a compact real form of the complexification of the isomorphism
$$\Spin(1,7)/G_{2(-14)} \simeq \Spin(8,\C)/\Spin(7,\C).$$
In this case, there is a unique maximal connected proper subgroup $K$ of~$G$ containing~$H$, with $K\simeq\Spin(7)$, and most (but not all) of our main results hold.
We discuss the case \eqref{eqn:nonsimple-case} separately in Section~\ref{sec:ex(*)} (see also Remark~\ref{rem:main-can-fail}).
\end{remark}

\subsection{Generators and relations for $\D_G(X)$}

We prove Theorem~\ref{thm:main} by finding explicit relations among the three subalgebras $\D_{\tilG}(X)=\dd\ell(Z(\tilg_{\C}))$, $\iota(\D_K(F))=\dd r(Z(\kk_{\C}))$ (see \eqref{eqn:DKF-drZk}), and $\dd\ell(Z(\g_{\C}))$ of $\D_G(X)$.
In particular, we find explicit algebraically independent generators of $\D_G(X)$ chosen from any two of the three subalgebras, as follows.

\begin{theorem} \label{thm:main-explicit}
In the setting~\ref{setting}, suppose that $\tilG$ is simple.
Let $K$ be a maximal connected proper subgroup of~$G$ containing~$H$ if $\h$ is not a maximal proper subalgebra of~$\g$, and $K=H$ otherwise.
Let $F=K/H$.

(1) There exist elements $P_k$ of $\D_{\tilG}(X)$, elements $Q_k$ of $\D_K(F)$, elements $R_k$ of $Z(\g_{\C})$, and integers $m,n,s,t\in\N$ with $m+n=s+t$ such that
\begin{itemize}
  \item $\D_{\tilG}(X) = \C[P_1,\dots,P_m]$ is a polynomial ring in the~$P_k$;
  \item $\D_K(F) = \C[Q_1,\dots,Q_n]$ is a polynomial ring in the~$Q_k$;
  \item $\D_G(X) = \C[P_1,\dots,P_m,\iota(Q_1),\dots,\iota(Q_n)]$
    
  \hspace{0.73cm} $= \C[\iota(Q_1),\dots,\iota(Q_s),\dd\ell(R_1),\dots,\dd\ell(R_t)]$\\
is a polynomial ring in the $P_k$ and~$\iota(Q_k)$, as well as in the $\iota(Q_k)$ and~$\dd\ell(R_k)$.
\end{itemize}

(2) The $P_k,Q_k,R_k$ can be chosen in such a way that for any~$k$ there exist constants $a_k,b_k,c_k\in\C$ with
\begin{equation}\label{eqn:rel-diff-op}
a_k\,P_k + b_k\,\iota(Q_k) = c_k\,\dd\ell(R_k).
\end{equation}

(3) The $P_k,Q_k,R_k$ can always be chosen in such a way that $\D_G(X) = \C[P_1,\dots,P_m,\dd\ell(R_1),\dots,\dd\ell(R_n)]$ is also a polynomial ring in the $P_k$ and $\dd\ell(R_k)$, unless we are in case (ix) of Table~\ref{table1} up to a covering of~$\tilG$.
\end{theorem}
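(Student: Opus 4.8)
The plan is to establish all three parts simultaneously, case by case over Table~\ref{table1}, reducing everything to a single structural picture: a commutative diagram of polynomial rings whose generators are Harish-Chandra-type images of explicit Casimir elements. First I would dispose of the easy situation where $\h$ is a maximal proper subalgebra of~$\g$, i.e.\ cases (x), (xi), (xii). There $K=H$, $F$ is a point, $\D_K(F)=\C$, and by the last line of Theorem~\ref{thm:main} we have $\D_G(X)=\D_{\tilG}(X)=\dd\ell(Z(\g_{\C}))$; this is a polynomial ring of rank one (the rank of $X$) so $m=s=t=n=1$ with $Q_1$ absent, $a_1P_1=c_1\dd\ell(R_1)$, and there is nothing to prove beyond naming generators. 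The substance is the generic case, $\h$ not maximal, where Theorem~\ref{thm:main}.(1)--(2) already gives that $\D_G(X)$ is generated by $\D_{\tilG}(X)$ together with $\dd r(Z(\kk_{\C}))=\iota(\D_K(F))$, and also by $\dd\ell(Z(\g_{\C}))$ together with $\iota(\D_K(F))$; here $\D_{\tilG}(X)=\dd\ell(Z(\tilg_{\C}))$, $\D_K(F)=\dd r_{\scriptscriptstyle F}(Z(\kk_{\C}))$ by Lemma~\ref{lem:surj-dl-drF}, and all three source rings $Z(\tilg_{\C})$, $Z(\g_{\C})$, $Z(\kk_{\C})$ are polynomial by Harish-Chandra. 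So the ranks $m$, $n$ are determined ($m=\dim$ of a Cartan for the $\tilG$-spherical structure, $n=\rank F$), and $m+n=\rank X = s+t$ follows from counting algebraically independent generators.

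The core step is to pin down, for each line of Table~\ref{table1}, explicit second-order generators. Concretely: take the Casimir element of $\tilg_{\C}$ (and, where $\D_{\tilG}(X)$ has rank $>1$, the higher generators of $Z(\tilg_{\C})$, e.g.\ a Pfaffian-type element for $\so$-overgroups), push it through $\dd\ell$ to get the $P_k$; take the Casimir (and higher invariants) of $\kk_{\C}$, push through $\dd r_{\scriptscriptstyle F}$ and then $\iota$ to get the $\iota(Q_k)$; take the Casimir of $\g_{\C}$ and its higher central generators to get the $\dd\ell(R_k)$. Because $\kk_{\C}\subset\g_{\C}\subset\tilg_{\C}$ and the Casimirs differ by the Casimir of an orthogonal complement acting on~$X$, the diagram~\eqref{eqn:diagram} forces a linear relation at the level of second-order operators: $\dd\ell(\mathrm{Cas}_{\g})$ minus $\dd\ell(\mathrm{Cas}_{\tilg})$ is, up to scalars coming from the normalization of the invariant forms, the operator $\iota(\mathrm{Cas}_{\kk/\h}$-type$)$ acting along the fiber~$F$, which lies in $\iota(\D_K(F))$. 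Matching up the higher generators requires checking, in each case, that the symbols (leading terms) of $P_k$, $\iota(Q_k)$, $\dd\ell(R_k)$ span the same space of invariant symbols of the given degree; this is a finite invariant-theory computation using the branching $\tilg\supset\g\supset\kk\supset\h$ recorded implicitly in Table~\ref{table1}. This yields \eqref{eqn:rel-diff-op} and, by triangularity of the change of generators, the two polynomial-ring presentations in~(1).

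For part~(3): given the relations \eqref{eqn:rel-diff-op}, whenever $b_k\neq 0$ for all relevant~$k$ one may eliminate each $\iota(Q_k)$ in favor of the pair $(P_k,\dd\ell(R_k))$, and then $\{P_1,\dots,P_m\}\cup\{\dd\ell(R_1),\dots,\dd\ell(R_n)\}$ is a generating set of the right cardinality $m+n=\rank X$, hence (again by polynomiality, Knop) a free polynomial generating set; this is exactly the statement that $\D_{\tilG}(X)$ and $\dd\ell(Z(\g_{\C}))$ generate $\D_G(X)$, which Theorem~\ref{thm:main}.(3) asserts holds outside case~(ix). So part~(3) is essentially a restatement of Theorem~\ref{thm:main}.(3) once the relations~\eqref{eqn:rel-diff-op} are in hand; the case-(ix) exception is precisely where some $b_k$ must vanish (equivalently $a_k$ and $c_k$ are proportional, so $P_k$ and $\dd\ell(R_k)$ carry the same information and cannot replace $\iota(Q_k)$). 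The main obstacle, as usual in this circle of ideas, is the bookkeeping in the exceptional and triality-twisted lines (vii)--(ix), (xiii)--(xiv): there the embeddings $\iota_7,\iota_8,\iota_{13},\iota_{14}$ are nonstandard, the relevant groups ($G_{2(-14)}$, $\Spin(7)$, $\widetilde{\U(3)}$) have Casimir eigenvalues that must be computed from explicit branching of specific small representations (the $7$-, $8$-, $15$-dimensional ones), and one must verify the exact scalars $a_k,b_k,c_k$ to detect the degeneration in~(ix). I expect this explicit eigenvalue/branching computation — rather than any conceptual point — to be the hard part, and I would organize it as one subsection per case mirroring the structure already announced in Sections~\ref{subsec:ex(vii)}, \ref{subsec:ex(viii)}, \ref{subsec:ex-triality}.
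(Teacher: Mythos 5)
There is a genuine gap, and it comes in two parts. First, your argument is circular: you take Theorem~\ref{thm:main}.(1)--(3) as an input (``Theorem~\ref{thm:main}.(1)--(2) already gives that $\D_G(X)$ is generated by\dots'', and part~(3) ``is essentially a restatement of Theorem~\ref{thm:main}.(3)''), but in the paper Theorem~\ref{thm:main} has no independent proof -- it is deduced from the explicit generators and relations of Theorem~\ref{thm:main-explicit}, which are established case by case in Sections~\ref{sec:computations}--\ref{sec:ex(*)}. A blind proof must therefore prove the generation statements directly. The paper does this via Lemma~\ref{lem:struct-DGX}: one computes the degrees of generators of the graded algebra $S(\g_{\C}/\h_{\C})^H$ by invariant theory in each case, and shows the proposed operators are algebraically independent by evaluating them on the $G$-isotypic components and using Zariski density of the resulting eigenvalue tuples; your ``right cardinality, hence free'' step presupposes generation, which is exactly what is at stake.

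Second, the mechanism you give for the relations \eqref{eqn:rel-diff-op} does not work. The claim that diagram \eqref{eqn:diagram} ``forces'' $\dd\ell(C_{\tilG})-a\,\dd\ell(C_G)\in\dd r(Z(\kk_{\C}))$ because the Casimirs differ by the Casimir of an orthogonal complement is not a formal consequence of the inclusions $\kk_{\C}\subset\g_{\C}\subset\tilg_{\C}$: $C_{\tilG}$ and $C_G$ lie in different enveloping algebras, and such relations genuinely fail without the sphericity and simplicity hypotheses (Remark~\ref{rem:Spnonsph}, and Proposition~\ref{prop:ex(*)-RDGX} for the product case), so no diagram chase can produce them. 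Likewise, matching \emph{symbols} of the higher generators only yields relations modulo lower-order operators, not the exact identities needed in~(2) and for the transfer maps. The paper's actual engine is representation-theoretic: $G_{\C}$-sphericity gives the multiplicity-free decomposition of $L^2(X)$ and the map $\vartheta\mapsto(\pi(\vartheta),\tau(\vartheta))$ of Proposition~\ref{prop:pi-tau-theta}, explicit branching laws (classical ones, plus the Borel--Weil/conormal-bundle estimates of Propositions~\ref{prop:branchnormal}--\ref{prop:tensornormal} for the nonstandard embeddings) give the eigenvalues of $P_k,\iota(Q_k),\dd\ell(R_k)$ on each $\vartheta$, and Propositions~\ref{prop:PQR}/\ref{prop:chiPQR} convert the resulting polynomial identities in the highest-weight parameters into operator identities. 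Finally, your diagnosis of the case (ix) exception is off: there the relation $2\,\dd\ell(C_{\tilG})=3\,\dd\ell(C_G)-3\,\dd r(C_K)$ has all coefficients nonzero; the failure of~(3) comes from the degree-one generator $E_K$ of $\D_K(F)$, of which only the square $C_K$ is reachable from $\dd\ell(Z(\g_{\C}))$ and $\D_{\tilG}(X)$, so that $\langle\D_{\tilG}(X),\dd\ell(Z(\g_{\C}))\rangle$ has index two in $\D_G(X)$ (Proposition~\ref{prop:ex(ix)}), not from a vanishing coefficient $b_k$.
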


Theorem~\ref{thm:main-explicit}.(1) gives an algebra isomorphism
\begin{equation} \label{eqn:InvOpTensor}
\D_G(X) \simeq \D_{\tilG}(X) \otimes \D_K(F).
\end{equation}
In this setting there are two expressions of $X$ as a homogeneous space, namely $\tilG/\tilH$ and $G/H$.
Both are spherical, but their ranks are different in general; as an immediate consequence of \eqref{eqn:InvOpTensor}, we obtain the following relation with the rank of the spherical homogeneous space $K/H$.

\begin{corollary} \label{cor:rank}
In the setting of Theorem~\ref{thm:main-explicit}, we have
$$\rank \tilG/\tilH + \rank K/H = \rank G/H.$$
\end{corollary}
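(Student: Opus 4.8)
The plan is to deduce Corollary~\ref{cor:rank} directly from the algebra isomorphism \eqref{eqn:InvOpTensor}, which in turn is Theorem~\ref{thm:main-explicit}.(1), together with Knop's theorem that for a $G_{\C}$-spherical homogeneous space the ring of invariant differential operators is a polynomial ring whose number of algebraically independent generators equals the rank of the space. So the proof is short, and the only real content is bookkeeping about which spherical spaces and which rings are in play.

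First I would recall, from Section~\ref{subsec:DGH-spherical} (as cited in the introduction after Knop \cite{kno94}), that $\rank G/H$ equals the Krull dimension of the polynomial ring $\D_G(X)$, i.e.\ the number of algebraically independent generators of $\D_G(X)$. Applying this to the three spherical homogeneous spaces appearing here: $\rank G/H$ is the number of generators of $\D_G(X)$; viewing $X\simeq\tilG/\tilH$, which is $G_{\C}$-spherical and hence a fortiori $\tilG_{\C}$-spherical (or one invokes directly that $\D_{\tilG}(X)$ is a polynomial ring, as established in the course of proving Theorem~\ref{thm:main-explicit}), $\rank\tilG/\tilH$ is the number of generators of $\D_{\tilG}(X)$; and $F=K/H$ is $K_{\C}$-spherical (this is part of the setup leading to Theorem~\ref{thm:main-explicit}, and is why $\D_K(F)$ is a polynomial ring), so $\rank K/H$ is the number of generators of $\D_K(F)$.

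Next I would combine these. By Theorem~\ref{thm:main-explicit}.(1) we have a $\C$-algebra isomorphism $\D_G(X)\simeq\D_{\tilG}(X)\otimes_{\C}\D_K(F)$, with $\D_{\tilG}(X)=\C[P_1,\dots,P_m]$ and $\D_K(F)=\C[Q_1,\dots,Q_n]$ polynomial rings, and $\D_G(X)=\C[P_1,\dots,P_m,\iota(Q_1),\dots,\iota(Q_n)]$ a polynomial ring in $m+n$ variables. Hence the number of algebraically independent generators of $\D_G(X)$ is $m+n$, that of $\D_{\tilG}(X)$ is $m$, and that of $\D_K(F)$ is $n$. Translating via Knop's theorem gives $\rank G/H=m+n=\rank\tilG/\tilH+\rank K/H$, which is the claimed identity. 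In the degenerate case where $\h$ is a maximal proper subalgebra of~$\g$, one takes $K=H$, so $F=K/H$ is a point, $\D_K(F)=\C$, $n=0$, and $\D_G(X)=\D_{\tilG}(X)$ by the last assertion of Theorem~\ref{thm:main}; the identity then reads $\rank G/H=\rank\tilG/\tilH+0$, consistent with everything.

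There is essentially no obstacle once Theorem~\ref{thm:main-explicit} is in hand — the corollary is a formal consequence of counting generators of polynomial rings. The one point requiring a line of care is the invariance of rank under the identification $X=G/H\simeq\tilG/\tilH$ and under passing to a covering of~$\tilG$: the algebra $\D_G(X)$ and hence $\rank G/H$ is intrinsic to~$X$, and by the Remark following \eqref{eqn:DKF-drZk} (cf.\ Theorem~\ref{thm:InvDiff-covering}) the subalgebras $\D_{\tilG}(X)$ and $\dd r(Z(\kk_{\C}))\cong\D_K(F)$ are likewise unchanged under such coverings, so the count $m+n=s+t$ is well-defined and the stated equality of ranks is unambiguous.
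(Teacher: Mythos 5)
Your proof is correct and is essentially the paper's own argument: the paper derives Corollary~\ref{cor:rank} as an immediate consequence of the isomorphism \eqref{eqn:InvOpTensor} from Theorem~\ref{thm:main-explicit}.(1), counting algebraically independent generators of the polynomial rings $\D_{\tilG}(X)$, $\D_K(F)$, and $\D_G(X)$ and identifying these counts with the ranks via Knop's theorem, exactly as you do (including the degenerate case $K=H$).
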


Corollary~\ref{cor:rank} also holds in the case \eqref{eqn:nonsimple-case} by Proposition~\ref{prop:ex(*)}.
Table~\ref{table2} gives the ranks of $\tilG/\tilH$, $K/H$, and $G/H$ in each case.

\begin{table}[h!]
\centering
\begin{tabular}{|p{2cm}|p{2cm}|p{2cm}|p{2cm}|}
\hline
& & & \tabularnewline [-0.4cm]
& \centering $\rank\tilG/\tilH$ & \centering $\rank K/H$ & \centering $\rank G/H$ \tabularnewline
\hline
\centering (i), (i)$'$ & \centering $1$ & \centering $1$ & \centering $2$\tabularnewline
\centering (ii) & \centering $\lfloor\frac{n+1}{2}\rfloor$ & \centering $\lfloor\frac{n}{2}\rfloor$ & \centering $n$\tabularnewline
\centering (iii) & \centering $1$ & \centering $1$ & \centering $2$\tabularnewline
\centering (iv) & \centering $n$ & \centering $n$ & \centering $2n$\tabularnewline
\centering (v), (v)$'$ &  \centering $1$ & \centering $1$ & \centering $2$\tabularnewline
\centering (vi) &  \centering $1$ & \centering $1$ & \centering $2$\tabularnewline
\centering (vii) &  \centering $1$ & \centering $1$ & \centering $2$\tabularnewline
\centering (viii) &  \centering $1$ & \centering $2$ & \centering $3$\tabularnewline
\centering (ix) &  \centering $1$ & \centering $1$ & \centering $2$\tabularnewline
\centering (x) & \centering $1$ & \centering $0$ & \centering $1$\tabularnewline
\centering (xi) & \centering $1$ & \centering $0$ & \centering $1$\tabularnewline
\centering (xii) & \centering $1$ & \centering $0$ & \centering $1$\tabularnewline
\centering (xiii), (xiii)$'$ & \centering $1$ & \centering $1$ & \centering $2$\tabularnewline
\centering (xiv) & \centering $2$ & \centering $1$ & \centering $3$\tabularnewline
\centering \eqref{eqn:nonsimple-case} & \centering $2$ & \centering $1$ & \centering $3$\tabularnewline
\hline
\end{tabular}
\caption{Ranks of $\tilG/\tilH$, $K/H$, and $G/H$ in each case of Table~\ref{table1} and in case \eqref{eqn:nonsimple-case}}
\label{table2}
\end{table}

The closed formulas \eqref{eqn:rel-diff-op} in Theorem~\ref{thm:main-explicit} for explicit generators $P_k,Q_k,R_k$ are given in Section~\ref{sec:computations} for each triple $(\tilG,\tilH,G)$ according to the classification of Table~\ref{table1}.
These formulas imply the following.

\begin{corollary} \label{cor:rel-Lapl}
In the setting of Theorem~\ref{thm:main-explicit}, let $C_{\tilG}\in Z(\tilg_{\C})$ and $C_G\in Z(\g_{\C})$ be the respective Casimir elements of the complex reductive Lie algebras $\tilg_{\C}$ and~$\g_{\C}$.
Then there exists a nonzero $a\in\R$ such that
\begin{equation} \label{eqn:rel-Lapl}
\dd\ell(C_{\tilG}) \in a\,\dd\ell(C_G) + \dd r(Z(\kk_{\C}))
\end{equation}
as a holomorphic differential operator on $\tilG_{\C}/\tilH_{\C}$.
\end{corollary}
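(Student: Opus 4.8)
\emph{Proof plan.} The plan is to extract Corollary~\ref{cor:rel-Lapl} from the last relation of Theorem~\ref{thm:main-explicit} by pinning down which of the generators $P_k,Q_k,R_k$ are the second-order ones. First I would check that $\dd\ell(C_{\tilG})$ is a second-order element of $\D_{\tilG}(X)$ which is not a constant: its order is at most~$2$ because $\dd\ell$ maps $Z(\tilg_\C)$ onto $\D_{\tilG}(X)$ compatibly with the order filtration (Lemma~\ref{lem:surj-dl-drF}) and $C_{\tilG}$ has degree~$2$, and it is nonconstant since $C_{\tilG}$ acts by pairwise distinct scalars on the infinitely many $\tilG$-types in $C^\infty(\tilG/\tilH)$, using $\tilH\subsetneq\tilG$. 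As $\tilg$ is simple, $C_{\tilG}$ is the \emph{unique} primitive generator of $Z(\tilg_\C)$ of degree~$2$, so $\dd\ell(C_{\tilG})$ may be taken as one of the polynomial generators of $\D_{\tilG}(X)=\C[P_1,\dots,P_m]$, say $P_1=\alpha\,\dd\ell(C_{\tilG})+\beta$ with $\alpha\in\C^\times$. Similarly, the order-$\le 2$ part of $\dd\ell(Z(\g_\C))$ is spanned by the constants together with the images of the Casimirs of the simple factors of~$\g$; when $\g$ is not simple (cases (vii), (viii), (xiii) of Table~\ref{table1}) exactly one simple factor of $\g$ is also a factor of~$K$, so the image of its Casimir lies in $\dd r(Z(\kk_\C))$, and one deduces that the second-order element $\dd\ell(R_1)$ is, modulo $\dd r(Z(\kk_\C))$ and up to an affine change, equal to $\dd\ell(C_G)$. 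Feeding $P_1$ and $\dd\ell(R_1)$ into the relation \eqref{eqn:rel-diff-op} for $k=1$, namely $a_1 P_1+b_1\,\iota(Q_1)=c_1\,\dd\ell(R_1)$, and collecting all constants together with $\iota(Q_1)\in\iota(\D_K(F))=\dd r(Z(\kk_\C))$ (see \eqref{eqn:DKF-drZk}) into a single element $D\in\dd r(Z(\kk_\C))$, I obtain $\dd\ell(C_{\tilG})=a\,\dd\ell(C_G)+D$ for some $a\in\C$; via the standard identification of $\D_G(X)$ with the algebra of $G_\C$-invariant holomorphic differential operators on $X_\C=\tilG_\C/\tilH_\C$, this is the asserted identity \eqref{eqn:rel-Lapl}. (If $\h$ is a maximal proper subalgebra of~$\g$, so that $K=H$, $F$ is a point and $\D_K(F)=\C$, there is nothing to feed in: instead one uses that $\D_G(X)=\D_{\tilG}(X)=\dd\ell(Z(\g_\C))$ has rank one by Theorem~\ref{thm:main} and Table~\ref{table2}, so $\dd\ell(C_{\tilG})$ and $\dd\ell(C_G)$ are automatically affinely related.)

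It then remains to see that $a$ is a nonzero real number, and for this I would compare principal symbols on the compact real form~$X$. The operators $\dd\ell(C_{\tilG})$ and $\dd\ell(C_G)$ are, up to sign, the Laplace--Beltrami operators of the $\tilG$- (\resp $G$-) invariant Riemannian metrics induced by $-B_{\tilg}$ (\resp $-B_\g$) on~$X$; in particular their principal symbols $\sigma_2(\dd\ell(C_{\tilG}))$ and $\sigma_2(\dd\ell(C_G))$ are positive-definite quadratic forms on every cotangent space $T^*_pX$. On the other hand, every element of $\dd r(Z(\kk_\C))=\iota(\D_K(F))$ differentiates only along the fibres of the fibration $X=G/H\to G/K$, so its principal symbol vanishes on the covectors pulled back from~$G/K$. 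Evaluating the identity $\sigma_2(\dd\ell(C_{\tilG}))=a\,\sigma_2(\dd\ell(C_G))+\sigma_2(D)$ on such a covector $\xi\neq0$ (which exists since $K\subsetneq G$) gives $\sigma_2(\dd\ell(C_{\tilG}))(\xi)=a\,\sigma_2(\dd\ell(C_G))(\xi)$ with both quantities positive, hence $a>0$; in particular $a\in\R\setminus\{0\}$.

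The main obstacle is the case-by-case bookkeeping that I have deferred above, namely verifying, for each triple of Table~\ref{table1} and for \eqref{eqn:nonsimple-case}, that the relation \eqref{eqn:rel-diff-op} can indeed be arranged so that its $k=1$ instance involves precisely the three Casimir elements. For the rank-one triples this is automatic, the Casimir then being the only second-order operator available in each of $\D_{\tilG}(X)$, $\D_K(F)$, $\dd\ell(Z(\g_\C))$. The delicate cases are those with $\rank\tilG/\tilH\ge2$ (cases (ii) with $n\ge3$, (iv) with $n\ge2$, (xiv), and \eqref{eqn:nonsimple-case}) and those with $\g$ not simple (cases (vii), (viii), (xiii)); in all of these one checks the required normalization directly from the explicit formulas of Section~\ref{sec:computations}, the structural point being that $C_{\tilG}$ (\resp the total Casimir $C_G$ modulo $\dd r(Z(\kk_\C))$) is the unique second-order generator, every other generator of $Z(\tilg_\C)$, $Z(\g_\C)$, $Z(\kk_\C)$ having strictly larger degree. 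Those same formulas also make the constant $a>0$ explicit (and rational) in each case.
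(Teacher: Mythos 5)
Your proposal is correct, but in the end it rests on exactly the same input as the paper's own proof: the explicit case-by-case relations of Section~\ref{sec:computations}, where each Proposition (part~(1)) states the Casimir identity outright (\eg $\dd\ell(C_{\tilG})=2\,\dd\ell(C_G)-\dd r(C_K)$ in cases (i)--(ii), $\dd\ell(C_{\tilG})=4\,\dd\ell(C_G)-3\,\dd r(C_K)$ in case (vi), $3\,\dd\ell(C_{\tilG})=4\,\dd\ell(C_G)$ in case (xi), etc.), so the detour through the abstract relations \eqref{eqn:rel-diff-op} of Theorem~\ref{thm:main-explicit}.(2) is dispensable. Be aware that the structural shortcut you use to avoid the bookkeeping is imprecise as stated: the order-two part of $\dd\ell(Z(\g_{\C}))$ (\resp of $\D_{\tilG}(X)$) is not always spanned by the Casimir and constants, because the centers of $\g$ or $\tilg$ contribute degree-one central elements and their squares (cases (i), (iii), (iv), (viii), (xiii)), and the pairing of generators in \eqref{eqn:rel-diff-op} by index need not match orders --- in case (iv), for instance, $P_1$ is a first-order (Euler-type) operator and $\dd\ell(C_{\tilG})$ is an affine combination of $P_1$ and $P_2$, so one also needs that the extra low-order central directions (the center of $\g$, or the simple/abelian factor of $\g$ shared with $K$) map into $\dd r(Z(\kk_{\C}))$ via Lemma~\ref{lem:lreta}; this is precisely the kind of verification the explicit formulas carry out, and you correctly defer to them. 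What your approach genuinely adds is the principal-symbol argument for the constant: since elements of $\iota(\D_K(F))$ differentiate only along the fibers of $X\to G/K$, evaluating second-order symbols on a nonzero covector pulled back from $G/K$ gives $a=\sigma_2(\dd\ell(C_{\tilG}))(\xi)/\sigma_2(\dd\ell(C_G))(\xi)>0$ a priori, without reading off the explicit values; this is a clean alternative to the paper, which simply exhibits $a$ explicitly in each case. (Note this positivity argument uses that the invariant form defining $C_G$ is definite on the compact form --- fine for the normalization fixed in Section~\ref{sec:computations}, but for an arbitrary nondegenerate $\Ad(G)$-invariant form on a non-simple $\g$, as in the Remark following the Corollary, one should instead argue factor by factor from the explicit relations.)
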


\begin{remark}
In cases (v), (vii), (viii), and~(xiii) of Table~\ref{table1}, the group $\tilG$ is simple but $G$ is not.
The formulas that we compute show for \emph{any} choice of an $\Ad(G)$-invariant nondegenerate symmetric bilinear form on~$\g_{\C}$, the corresponding Casimir element $C_G\in Z(\g_{\C})$ satisfies \eqref{eqn:rel-Lapl} for some nonzero $a\in\R$.
\end{remark}

The formulas \eqref{eqn:rel-diff-op} play a fundamental role in constructing a ``transfer map'' relating the eigenvalues of $Z(\g_{\C})$ and $\D_{\tilG}(X)$ (see Theorem~\ref{thm:transfer}), providing some interaction between the representation of~$\tilg$ and of its subalgebra~$\g$.

\begin{remark} \label{rem:main-can-fail}
An analogue of Theorems \ref{thm:main} and~\ref{thm:main-explicit} may fail in the setting \ref{setting} in the following situations:
\begin{itemize}
  \item Theorem~\ref{thm:main}.(2) may fail if $\tilG$ is only assumed to be semisimple, not simple: this happens in the case \eqref{eqn:nonsimple-case} (see Proposition~\ref{prop:ex(*)-RDGX}).
  \item Theorem~\ref{thm:main}.(1)--(2) may fail if $K$ is not maximal: this happens for
  $$(\tilG,\tilH,G,H) = \big(\SU(2n+2), \Sp(n+1), \U(2n+1), \Sp(n)\times\U(1)\big)$$
  and $K=\Sp(n)\times\U(1)\times\U(1)$ (see Remark~\ref{rem:K-not-max}.(2)).
  \item Theorem~\ref{thm:main}.(1)--(2)--(3) may fail if $X_{\C}$ is not $G_{\C}$-spherical: this happens for
  $$(\tilG,\tilH,G,H) = \big(\SO(4n+4), \SO(4n+3), \Sp(n+1), \Sp(n)\big)$$
  (see Remark~\ref{rem:Spnonsph}).
\end{itemize}
\end{remark}

In the case \eqref{eqn:nonsimple-case}, we shall prove that the ``transfer map'' still exists even though Theorem~\ref{thm:main}.(2) fails, and we shall find a closed formula for it in Proposition~\ref{prop:nu-ex(*)}.
This will be used in the forthcoming paper \cite{kkII} for analysis on the locally symmetric space $\Gamma\backslash\SO(8,\C)/\SO(7,\C)$, see Section~\ref{subsec:intro-applic} below.

\subsection{Transfer maps} \label{subsec:intro-transfer}

We now explain how eigenvalues of the two algebras $Z(\g_{\C})$ and $\D_{\tilG}(X)$ are related through a ``transfer map''.
In the whole section, we work in the setting of Theorem~\ref{thm:main-explicit}.

\subsubsection{Localization}

We start with some general formalism.
Let $\mathcal{I}$ be a maximal ideal of $Z(\kk_{\C})$ and $\langle\mathcal{I}\rangle$ the ideal generated by $\dd r(\mathcal{I})$ in the commutative algebra $\D_G(X)$.
Let
\begin{equation} \label{eqn:qI}
q_{\mathcal{I}} : \D_G(X) \longrightarrow \D_G(X)_{\mathcal{I}} := \D_G(X)/\langle\mathcal{I}\rangle
\end{equation}
be the quotient homomorphism.
Theorem~\ref{thm:main-explicit}.(1)--(2) implies the following.

\begin{proposition} \label{prop:qI}
In the setting of Theorem~\ref{thm:main-explicit}, for any maximal ideal $\mathcal{I}$ of $Z(\kk_{\C})$, the map $q_{\mathcal{I}}$ induces algebra isomorphisms
$$\D_{\tilG}(X) \overset{\sim}{\longrightarrow} \D_G(X)_{\mathcal{I}}$$
and
$$Z(\g_{\C})/\Ker(q_{\mathcal{I}}\circ\dd\ell) \overset{\sim}{\longrightarrow} \D_G(X)_{\mathcal{I}}.$$
\end{proposition}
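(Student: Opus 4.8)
\textbf{Plan of proof for Proposition~\ref{prop:qI}.}
The plan is to deduce everything directly from the polynomial-ring descriptions in Theorem~\ref{thm:main-explicit}.(1)--(2), together with the obvious compatibility \eqref{eqn:DKF-drZk} between $\dd r(Z(\kk_{\C}))$ and $\iota(\D_K(F))$. First I would recall that since $\mathcal{I}$ is a maximal ideal of the polynomial ring $\D_K(F)=\C[Q_1,\dots,Q_n]$, by Hilbert's Nullstellensatz it has the form $(Q_1-\beta_1,\dots,Q_n-\beta_n)$ for some $\beta=(\beta_1,\dots,\beta_n)\in\C^n$; hence $\langle\mathcal{I}\rangle$ is the ideal of $\D_G(X)$ generated by the $\iota(Q_k)-\beta_k$.

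For the first isomorphism, I would use the presentation $\D_G(X)=\C[P_1,\dots,P_m,\iota(Q_1),\dots,\iota(Q_n)]$ as a polynomial ring in the $P_k$ and $\iota(Q_k)$. Quotienting by the ideal generated by the $\iota(Q_k)-\beta_k$ simply specializes each $\iota(Q_k)$ to the scalar $\beta_k$, so the composite $\C[P_1,\dots,P_m]=\D_{\tilG}(X)\hookrightarrow\D_G(X)\xrightarrow{q_{\mathcal{I}}}\D_G(X)_{\mathcal{I}}$ is an isomorphism: it is surjective because the images of the $\iota(Q_k)$ already lie in the image of $\D_{\tilG}(X)$ (they are scalars), and injective because no nonzero polynomial in the algebraically independent $P_k$ lies in the ideal $(\iota(Q_1)-\beta_1,\dots,\iota(Q_n)-\beta_n)$ of the polynomial ring $\C[P_1,\dots,P_m,\iota(Q_1),\dots,\iota(Q_n)]$.

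For the second isomorphism, I would instead use the presentation $\D_G(X)=\C[\iota(Q_1),\dots,\iota(Q_s),\dd\ell(R_1),\dots,\dd\ell(R_t)]$ from Theorem~\ref{thm:main-explicit}.(1), together with the relations \eqref{eqn:rel-diff-op}, $a_k P_k+b_k\iota(Q_k)=c_k\dd\ell(R_k)$. The map $q_{\mathcal{I}}\circ\dd\ell:Z(\g_{\C})\to\D_G(X)_{\mathcal{I}}$ factors through $Z(\g_{\C})/\Ker(q_{\mathcal{I}}\circ\dd\ell)$ by definition, giving an injection of the quotient; the content is surjectivity. After killing the $\iota(Q_k)-\beta_k$, each generator $\iota(Q_k)$ of $\D_G(X)$ with $k\le s$ becomes the scalar $\beta_k$, so $\D_G(X)_{\mathcal{I}}$ is generated by the images of $\dd\ell(R_1),\dots,\dd\ell(R_t)$; since these images all lie in the image of $q_{\mathcal{I}}\circ\dd\ell$, that image is all of $\D_G(X)_{\mathcal{I}}$, proving surjectivity and hence the claimed isomorphism. (If one prefers to argue via the $P_k$: by \eqref{eqn:rel-diff-op} and $m+n=s+t$, for the indices where $\iota(Q_k)$ is specialized to a scalar, $\dd\ell(R_k)$ and $P_k$ agree up to an affine change, so the subalgebra generated by the $\dd\ell(R_k)$ surjects onto the subalgebra generated by the $P_k$, which is all of $\D_G(X)_{\mathcal{I}}$ by the first isomorphism.)

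The only mildly delicate point—the main obstacle, such as it is—is bookkeeping the indexing in Theorem~\ref{thm:main-explicit}: one must be careful that the relations \eqref{eqn:rel-diff-op} are indexed so that for the $\iota(Q_k)$ appearing as polynomial generators in the second presentation, the corresponding $\dd\ell(R_k)$ really do generate the complement, and that the constants $a_k,c_k$ in \eqref{eqn:rel-diff-op} are such that $\dd\ell(R_k)$ is not forced to a scalar when $\iota(Q_k)$ is. Since Theorem~\ref{thm:main-explicit} asserts the existence of a \emph{single} compatible choice of the $P_k,Q_k,R_k$ realizing both presentations and the relations \eqref{eqn:rel-diff-op} simultaneously, this compatibility is already built in, and no further case analysis using Table~\ref{table1} is needed; everything is formal manipulation of polynomial rings and their maximal-ideal quotients.
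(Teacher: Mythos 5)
Your proof is correct and follows essentially the same route as the paper, which presents Proposition~\ref{prop:qI} precisely as a formal consequence of the two polynomial-ring presentations (and relations) in Theorem~\ref{thm:main-explicit}.(1)--(2): kill the $\iota(Q_k)-\beta_k$, observe that $\D_{\tilG}(X)=\C[P_1,\dots,P_m]$ maps onto the quotient isomorphically, and that the images of the $\dd\ell(R_k)$ generate the quotient, giving the second isomorphism. The only step worth making explicit is the passage from a maximal ideal of $Z(\kk_{\C})$ to one of $\D_K(F)=\C[Q_1,\dots,Q_n]$: since $\dd r=\iota\circ\dd r_{\scriptscriptstyle F}$ on $Z(\kk_{\C})$ and $\dd r_{\scriptscriptstyle F}$ is surjective (Lemma~\ref{lem:surj-dl-drF}), the ideal $\langle\mathcal{I}\rangle$ is indeed generated by the $\iota(Q_k)-\beta_k$ for the ideals actually used (in particular all $\mathcal{I}_{\tau}$ with $\tau\in\Disc(K/H)$, whose characters factor through $\dd r_{\scriptscriptstyle F}$), which is what your Nullstellensatz step implicitly assumes.
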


These isomorphisms combine into an algebra isomorphism
\begin{equation} \label{eqn:phi-ideal-I}
\varphi_{\mathcal{I}} : Z(\g_{\C})/\Ker(q_{\mathcal{I}}\circ\dd\ell) \overset{\sim}{\longrightarrow} \D_{\tilG}(X),
\end{equation}
which induces a natural map
\begin{eqnarray*}
\varphi_{\mathcal{I}}^* : \Hom_{\C\text{-}\mathrm{alg}}(\D_{\tilG}(X),\C) & \overset{\sim}{\longrightarrow} & \Hom_{\C\text{-}\mathrm{alg}}\big(Z(\g_{\C})/\Ker(q_{\mathcal{I}}\circ\dd\ell),\C\big)\\
& & \hspace{0.5cm} \subset\ \Hom_{\C\text{-}\mathrm{alg}}(Z(\g_{\C}),\C).
\end{eqnarray*}
We note that there is no a priori homomorphism between the two algebras $Z(\g_{\C})$ and $\D_{\tilG}(X)$.

\subsubsection{The case of the annihilator of an irreducible representation of~$K$} \label{subsubsec:intro-I-tau}

When $\mathcal{I}$ is the annihilator of an irreducible representation of~$K$, the map $\varphi_{\mathcal{I}}^*$ has a geometric meaning, which we formulate below as a ``transfer map''.

For each irreducible $K$-module $(\tau,V_{\tau})$ with nonzero $H$-fixed vectors, we consider the isotypic $K$-module $W_{\tau} := (V_{\tau}^{\vee})^H\otimes V_{\tau}$ and form the $G$-equivariant vector bundle $\W_{\tau} := G\times_K W_{\tau}$ over $Y:=G/K$.
The group $G$ acts by translations on the space $C^{\infty}(Y,\W_{\tau})$ of smooth sections of this bundle, and we may view $C^{\infty}(Y,\W_{\tau})$ as a subrepresentation via the natural injective $G$-homomorphism
$$\ii_{\tau} : C^{\infty}(Y,\W_{\tau}) \longhookrightarrow C^{\infty}(X)$$
(see Section~\ref{subsec:L2decomp}).
In our setting, $W_{\tau}$ is isomorphic to~$V_{\tau}$ because the subspace of $H$-fixed vectors in~$\tau$ is one-dimensional (see Lemma~\ref{lem:spherical2}.(4) and Fact~\ref{fact:spherical-cpt}.(iv)).
The center $Z(\g_{\C})$ of the enveloping algebra $U(\g_{\C})$ acts on the space $C^{\infty}(Y,\W_{\tau})$ of smooth sections as differential operators which are $G$-invariant, and thus we have an algebra homomorphism into the ring $\D_G(Y,\W_{\tau})$ of $G$-invariant differential operators acting on $C^{\infty}(Y,\W_{\tau})$:
$$\dd\ell^{\tau} : Z(\g_{\C}) \longrightarrow \D_G(Y,\W_{\tau}).$$
We relate joint eigenfunctions for $\D_{\tilG}(X)$ on $C^{\infty}(X)$ to joint eigenfunctions for $Z(\g_{\C})$ on $C^{\infty}(Y,\W_{\tau})$ as follows.

Let $\mathcal{I}_{\tau}$ be the annihilator in $Z(\kk_{\C})$ of the contragredient representation $\tau^{\vee}$ of~$\tau$.
We have $\Ker(q_{\mathcal{I}_{\tau}}\circ\dd\ell) \subset \Ker(\dd\ell^{\tau})$, and the action of $Z(\g_{\C})$ on $C^{\infty}(Y,\W_{\tau})$ factors through $Z(\g_{\C})/\Ker(q_{\mathcal{I}_{\tau}}\circ\dd\ell)$.
The algebra isomorphism $\varphi_{\mathcal{I}_{\tau}}$ of \eqref{eqn:phi-ideal-I} implies that $\ii_{\tau}$ transfers joint eigenfunctions for $Z(\g_{\C})$ on the subrepresentation $C^{\infty}(Y,\W_{\tau})$ to joint eigenfunctions for $\D_G(X)$ on $C^{\infty}(X)$ via $\varphi_{\mathcal{I}_{\tau}}^*$.
Such an algebra isomorphism $\varphi_{\mathcal{I}_{\tau}}$ also exists when $(\tilG,\tilH,G)$ is the triple \eqref{eqn:nonsimple-case}, see Proposition~\ref{prop:nu-ex(*)}.
To describe the relation between joint eigenvalues for $\D_{\tilG}(X)$ and $Z(\g_{\C})$, we introduce ``transfer maps''
\begin{equation} \label{eqn:nu-lambda-tau}
\left\{ \begin{array}{l}
\nnu(\cdot,\tau) : \Hom_{\C\text{-}\mathrm{alg}}(\D_{\tilG}(X),\C) \longrightarrow \Hom_{\C\text{-}\mathrm{alg}}(Z(\g_{\C}),\C),\\
\llambda(\cdot,\tau) : \Hom_{\C\text{-}\mathrm{alg}}(Z(\g_{\C})/\Ker(\dd\ell^{\tau}),\C) \longrightarrow \Hom_{\C\text{-}\mathrm{alg}}(\D_{\tilG}(X),\C)
\end{array} \right.
\end{equation}
for every $\tau\in\Disc(K/H)$ by using the bijection $\varphi_{\mathcal{I}_{\tau}}^*$ as follows:
$$\xymatrixcolsep{4pc}
\xymatrix{
& \Hom_{\C\text{-}\mathrm{alg}}(Z(\g_{\C}),\C)\\
\Hom_{\C\text{-}\mathrm{alg}}(\D_{\tilG}(X),\C) \ar[ur]^{\nnu(\cdot,\tau)} \ar[r]^-{\sim}_-{\varphi_{\mathcal{I}_{\tau}}^*} & \ar@{}[u]|{\textstyle\cup} \Hom_{\C\text{-}\mathrm{alg}}(Z(\g_{\C})/\Ker(q_{\mathcal{I}_{\tau}}\circ\dd\ell),\C)\\
\mathrm{Spec}(X)_{\tau} \ar@{}[u]|{\textstyle\cup} \ar[r] & \ar[ul]^{\llambda(\cdot,\tau)} \ar@{}[u]|{\textstyle\cup} \Hom_{\C\text{-}\mathrm{alg}}(Z(\g_{\C})/\Ker(\dd\ell^{\tau}),\C)}$$
Here we set
$$C^{\infty}(X;\M_{\lambda})_{\tau} := \big\{ F\in\ii_{\tau}(C^{\infty}(Y,\W_{\tau})) ~:~ PF=\lambda(P)F \quad\forall P\in\D_{\tilG}(X)\big\}$$
and
\begin{equation} \label{eqn:Spec-tau-X}
\mathrm{Spec}(X)_{\tau} := \big\{\lambda\in\Hom_{\C\text{-}\mathrm{alg}}(\D_{\tilG}(X),\C) \,:\, C^{\infty}(X;\M_{\lambda})_{\tau}\neq\{ 0\}\big\}.
\end{equation}
We shall see (Proposition~\ref{prop:phiItau-vanish}) that $\varphi_{\mathcal{I}_{\tau}}^*(\lambda)$ vanishes on $\Ker(\dd\ell^{\tau})$ if $\lambda\in\Spec(X)_{\tau}$, hence
$\nnu(\llambda(\nu,\tau)) = \nu$ for all $\nu\in\Hom_{\C\text{-}\mathrm{alg}}(Z(\g_{\C})/\Ker(\dd\ell^{\tau}),\C)$ and $\llambda(\nnu(\lambda,\tau)) = \lambda$ for all $\lambda\in\mathrm{Spec}(X)_{\tau}$.

By using the closed formulas in Theorem~\ref{thm:main-explicit}.(1)--(2), we find an explicit formula for the transfer map
$$\nnu(\cdot,\tau) := \varphi_{\mathcal{I}_{\tau}}^* : \Hom_{\C\text{-}\mathrm{alg}}(\D_{\tilG}(X),\C)\longrightarrow\Hom_{\C\text{-}\mathrm{alg}}(Z(\g_{\C}),\C)$$
in terms of the highest weight of~$\tau$ and the Harish-Chandra isomorphisms
\begin{eqnarray*}
\Hom_{\C\text{-}\mathrm{alg}}(Z(\g_{\C}),\C) & \overset{\sim}{\longrightarrow} & \jj_{\C}^*/W(\g_{\C}),\\
\Hom_{\C\text{-}\mathrm{alg}}(\D_{\tilG}(X),\C) & \overset{\sim}{\longrightarrow} & \tila_{\C}^*/\widetilde{W},
\end{eqnarray*}
where $\jj_{\C}$ and~$\tila_{\C}$ are certain abelian subspaces of $\g_{\C}$ and~$\tilg_{\C}$, respectively, and $W(\g_{\C})$ and~$\widetilde{W}$ are finite reflection groups (see Section~\ref{subsec:nu-tau} for details).
We note that there is no a priori homomorphism between $\tila_{\C}$ and~$\jj_{\C}$.

\begin{theorem} \label{thm:transfer}
In the setting of Theorem~\ref{thm:main-explicit}, for any irreducible $K$-module $\tau$ with nonzero $H$-fixed vectors, there is an affine map $S_{\tau} : \tila_{\C}^* \to \jj_{\C}^*$ such that the transfer map
$$\nnu(\cdot,\tau) : \Hom_{\C\text{-}\mathrm{alg}}(\D_{\tilG}(X),\C)\longrightarrow\Hom_{\C\text{-}\mathrm{alg}}(Z(\g_{\C}),\C)$$
is given by $S_{\tau} : \tila_{\C}^*/\widetilde{W} \to \jj_{\C}^*/W(\g_{\C})$ via the Harish-Chandra isomorphisms.
This means that for any $\lambda\in\tila_{\C}^*$ and $\nu\in\jj_{\C}^*$ with $\nu=S_{\tau}(\lambda)$ mod $W(\g_{\C})$, the following two conditions on $f\in C^{\infty}(Y,\W_{\tau})$ are equivalent:
\begin{eqnarray*}
\dd\ell^{\tau}(R) f & = & \nu(R)\,f \quad\quad\quad\quad\forall R\in Z(\g_{\C}),\\
D(\ii_{\tau}f) & = & \lambda(D)\,\ii_{\tau}f \quad\quad\quad\forall D\in\D_{\tilG}(X).
\end{eqnarray*}
\end{theorem}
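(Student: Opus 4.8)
The plan is to derive the theorem from Theorem~\ref{thm:main}, the explicit relations~\eqref{eqn:rel-diff-op} of Theorem~\ref{thm:main-explicit}, and the localization formalism of Section~\ref{subsubsec:intro-I-tau}; throughout, set $g:=\ii_\tau f\in C^\infty(X)$. First I would record the relevant properties of $\ii_\tau$: it is injective and $G$-equivariant, hence intertwines the $Z(\g_\C)$-action $\dd\ell^\tau$ on $C^\infty(Y,\W_\tau)$ with the action of $\dd\ell(Z(\g_\C))\subset\D_G(X)$ on $C^\infty(X)$; and, since $\W_\tau=G\times_K W_\tau$ with $W_\tau\cong V_\tau$ irreducible as a $K$-module, the subalgebra $\dd r(Z(\kk_\C))=\iota(\D_K(F))$ acts on $\ii_\tau(C^\infty(Y,\W_\tau))$ by scalars, i.e.\ via a single character $\chi_\tau$, namely the one annihilated by $\langle\mathcal{I}_\tau\rangle$ (cf.\ Sections~\ref{subsec:L2decomp} and~\ref{subsec:dliota}).

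Next I would establish the asserted equivalence. If $\dd\ell^\tau(R)f=\nu(R)f$ for all $R\in Z(\g_\C)$, then $g$ is a joint eigenfunction of $\dd\ell(Z(\g_\C))$ (eigenvalue $\nu$) and of $\dd r(Z(\kk_\C))$ (eigenvalue $\chi_\tau$); as $\D_G(X)$ is generated by these two subalgebras (Theorem~\ref{thm:main}.(2)), $g$ is a joint $\D_G(X)$-eigenfunction whose eigenvalue on each $D$ is a fixed polynomial in the eigenvalues of $\nu$ and~$\chi_\tau$, so in particular $Dg=\lambda(D)g$ for all $D\in\D_{\tilG}(X)$ with $\lambda$ depending only on $\nu$ and~$\tau$. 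Conversely, if $Dg=\lambda(D)g$ for all $D\in\D_{\tilG}(X)$, then, $\dd r(Z(\kk_\C))$ being scalar on $g$ and $\D_G(X)$ being generated by $\D_{\tilG}(X)$ and $\dd r(Z(\kk_\C))$ (Theorem~\ref{thm:main}.(1)), $g$ is again a joint $\D_G(X)$-eigenfunction, so $\dd\ell(R)g=\nu(R)g$ for a character $\nu$ depending only on $\lambda$ and~$\tau$, and injectivity and equivariance of $\ii_\tau$ give back $\dd\ell^\tau(R)f=\nu(R)f$. The two resulting maps $\lambda\mapsto\nu$ and $\nu\mapsto\lambda$ are mutually inverse on the relevant sets, and matching with~\eqref{eqn:phi-ideal-I} identifies $\lambda\mapsto\nu$ with $\nnu(\cdot,\tau)=\varphi^*_{\mathcal{I}_\tau}$.

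It then remains to show that $\nnu(\cdot,\tau)$ is induced by an affine $S_\tau:\tila_\C^*\to\jj_\C^*$. Applying the relation $a_kP_k+b_k\,\iota(Q_k)=c_k\,\dd\ell(R_k)$ of~\eqref{eqn:rel-diff-op} to the joint $\D_G(X)$-eigenfunction $g$ yields $c_k\,\nu(R_k)=a_k\,\lambda(P_k)+b_k\,\chi_\tau(\iota(Q_k))$ for each $k$, where $\chi_\tau(\iota(Q_k))$ is obtained by evaluating the Harish-Chandra isomorphism of $Z(\kk_\C)$ on a lift of $Q_k$ and depends only on the highest weight of~$\tau$. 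Passing to the Harish-Chandra isomorphisms $\Hom_{\C\text{-}\mathrm{alg}}(\D_{\tilG}(X),\C)\cong\tila_\C^*/\widetilde{W}$ and $\Hom_{\C\text{-}\mathrm{alg}}(Z(\g_\C),\C)\cong\jj_\C^*/W(\g_\C)$ of Section~\ref{subsec:nu-tau} — with the $P_k$ chosen as a system of basic $\widetilde{W}$-invariants $p_k$ on $\tila_\C^*$ and the $R_k$ as $W(\g_\C)$-invariants $r_k$ on $\jj_\C^*$ — these become $r_k(\widehat\nu)=\tfrac{a_k}{c_k}\,p_k(\widehat\lambda)+\tfrac{b_k}{c_k}\,\chi_\tau(\iota(Q_k))$ (with a similar argument when $c_k=0$). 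Since $\nu$ factors through $Z(\g_\C)/\Ker(q_{\mathcal{I}_\tau}\circ\dd\ell)$, which is generated by the images of the $R_k$ by Theorem~\ref{thm:main-explicit}.(1), these identities determine $\widehat\nu\bmod W(\g_\C)$ from~$\widehat\lambda$; comparing the principal symbols of the operators in~\eqref{eqn:rel-diff-op} (using $\D_G(X)\simeq\D_{\tilG}(X)\otimes\D_K(F)$) shows that the linear part of $S_\tau$ is $\tau$-independent, and its translation part is $\tau$-dependent and absorbs the two $\rho$-shifts together with the scalars $\chi_\tau(\iota(Q_k))$. Exhibiting this $S_\tau$ and verifying $r_k\circ S_\tau=\tfrac{a_k}{c_k}p_k+\tfrac{b_k}{c_k}\chi_\tau(\iota(Q_k))$ as polynomials on $\tila_\C^*$, compatibly with $\widetilde{W}$ and $W(\g_\C)$, is then carried out case by case along Table~\ref{table1} using the formulas of Section~\ref{sec:computations}; in the degenerate case $K=H$ (cases (x), (xi), (xii)) one has $F$ a point, $\tau$ trivial, $\D_G(X)=\D_{\tilG}(X)=\dd\ell(Z(\g_\C))$, and $S_\tau$ is simply the affine identification of the two parameter spaces coming from $Z(\g_\C)/\Ker\overset{\sim}{\to}\D_{\tilG}(X)$.

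The main obstacle is precisely this last verification: the $R_k$ generate only the quotient $Z(\g_\C)/\Ker(q_{\mathcal{I}_\tau}\circ\dd\ell)$ and not $Z(\g_\C)$ itself, so the image of $\nnu(\cdot,\tau)$ is a proper affine subvariety of $\jj_\C^*/W(\g_\C)$, and producing an honest affine lift $S_\tau$ to all of $\jj_\C^*$ — with the $\rho$-shifts of both Harish-Chandra isomorphisms and the explicit form of $\chi_\tau$ built correctly into the translation term — requires the explicit generators and relations of each individual case; by contrast, the reduction in the first two paragraphs is a formal consequence of the structural results already at hand.
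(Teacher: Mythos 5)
Your first two paragraphs follow the paper's own route: the equivalence of the two eigenfunction conditions is obtained there (Theorem~\ref{thm:nu-tau}.(1)--(3), proved in Section~\ref{subsec:strategy-transfer}) precisely by combining the fact that $\dd r(Z(\kk_{\C}))$ acts on $\ii_{\tau}(C^{\infty}(Y,\W_{\tau}))$ through the single character killed by $\mathcal{I}_{\tau}$ with the generation statements of Theorem~\ref{thm:main}, packaged as the localization isomorphisms of Proposition~\ref{prop:qI} (and Proposition~\ref{prop:qItau-*} in the product case); so that part of your proposal is sound and is not a new argument.

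The gap is in your mechanism for producing and identifying the affine map $S_{\tau}$. The relations \eqref{eqn:rel-diff-op} give you the values $\nu(R_k)$ only for the finitely many $R_k$ appearing there, and these generate $Z(\g_{\C})/\Ker(q_{\mathcal{I}_{\tau}}\circ\dd\ell)$ but in general a \emph{proper} subalgebra of $Z(\g_{\C})$: in case (i) the only relation involves the Casimir $C_G$, while $Z(\g_{\C})$ for $\gl(n+1,\C)$ has $n+1$ generators; in case (vi) a single Casimir identity faces a rank-$4$ center. You correctly note that $\nu$ is nonetheless determined, but only because it \emph{also} vanishes on $\Ker(q_{\mathcal{I}_{\tau}}\circ\dd\ell)$; consequently your proposed verification, namely checking the polynomial identities $r_k\circ S_{\tau}=\tfrac{a_k}{c_k}p_k+\tfrac{b_k}{c_k}\chi_{\tau}(\iota(Q_k))$, does not suffice to conclude $\nu=\chi_{S_{\tau}(\lambda)}^G$: a character of $Z(\g_{\C})$ satisfying those finitely many identities need not kill the kernel, and the coordinates of $S_{\tau}(\lambda)$ invisible to the listed $R_k$ (for instance the constant entries $n-2,n-4,\dots,-n+2$ in Proposition~\ref{prop:nu-ex(i)}, or $k+5,k+3,k+1$ in Proposition~\ref{prop:nu-ex(vi)}) simply cannot be extracted from \eqref{eqn:rel-diff-op}. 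What is actually needed, and what the paper uses, is the explicit joint spectrum of $\ii_{\tau}(C^{\infty}(Y,\W_{\tau}))$: Proposition~\ref{prop:Stau-transfer} reduces the theorem to exhibiting an affine map with $S_{\tau}(\lambda(\vartheta)+\rho_{\tila})=\nu(\vartheta)+\rho$ mod $W(\g_{\C})$ for all $\vartheta\in\Disc(G/H)$ with $\tau(\vartheta)=\tau$, using the density of the discrete-series isotypic components, and this identity is then read off case by case from the explicit description of $\vartheta\mapsto(\pi(\vartheta),\tau(\vartheta))$ (branching laws), not from the generators-and-relations data. So your plan must be supplemented by this branching-law input (equivalently, an explicit description of $\Ker(q_{\mathcal{I}_{\tau}}\circ\dd\ell)$ or $\Ker(\dd\ell^{\tau})$); without it the affine map, and in particular its $\tau$-dependent translation part, is underdetermined except in the cases where the relations happen to hold for a full generating family of $Z(\g_{\C})$ (as in cases (ii) and (iv), where they hold for all $k\in\N_+$).
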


Theorem~\ref{thm:transfer} also holds in the case \eqref{eqn:nonsimple-case}.
We refer to Theorem~\ref{thm:nu-tau} for a more precise statement.
An explicit formula for the affine map $S_{\tau}$ for all~$\tau$ is obtained in Section~\ref{sec:computations} for each triple $(\tilG,\tilH,G)$ of Table~\ref{table1}, and in Section~\ref{sec:ex(*)} for the triple \eqref{eqn:nonsimple-case}.

\subsection{Application to noncompact real forms} \label{subsec:intro-applic}

We may reformulate Theorem~\ref{thm:main} in terms of complex Lie algebras, as follows.
Suppose $\tilg_{\C}\supset\g_{\C},\tilh_{\C},\kk_{\C}$ are reductive Lie algebras over~$\C$ such that
$$\left\{\begin{array}{l}
\tilg_{\C} = \g_{\C} + \tilh_{\C},\\
\h_{\C} := \g_{\C}\cap\tilh_{\C} \subset \kk_{\C} \subset \g_{\C}.
\end{array}
\right.$$
The ring $\D_{\tilG_{\C}}(\tilG_{\C}/\tilH_{\C})$ of $\tilG_{\C}$-invariant holomorphic differential operators on the complex homogeneous space $\tilG_{\C}/\tilH_{\C}$ is isomorphic to the ring $\D_G(G/H)$ and does not depend on the choice of connected complex Lie groups $\tilG_{\C}\supset\tilH_{\C}$ with Lie algebras $\tilg_{\C}\supset\tilh_{\C}$ in our setting, see Theorem~\ref{thm:HoloInvDiff} below.
Consider the following two conditions on the quadruple $(\tilg_{\C},\g_{\C},\tilh_{\C},\kk_{\C})$:
\begin{enumerate}
  \item[($\widetilde{\mathrm{A}}$)] $\D_{\tilG_{\C}}(\tilG_{\C}/\tilH_{\C})$ is contained in the $\C$-algebra generated by $\dd\ell(Z(\g_{\C}))$ and $\dd r(Z(\kk_{\C}))$;
  \item[($\widetilde{\mathrm{B}}$)] $\dd\ell(Z(\g_{\C}))$ is contained in the $\C$-algebra generated by $\D_{\tilG_{\C}}(\tilG_{\C}/\tilH_{\C})$ and $\dd r(Z(\kk_{\C}))$.
\end{enumerate}
Here is an immediate consequence of Theorem~\ref{thm:main}.(1)--(2).

\begin{corollary}
In the setting \ref{setting}, suppose $\tilG$ is simple.
Let $\kk$ be a maximal proper Lie subalgebra of~$\g$ containing $\tilh\cap\g$.
Then conditions ($\widetilde{\mathrm{A}}$) and~($\widetilde{\mathrm{B}}$) both hold for the quadruple $(\tilg_{\C},\g_{\C},\tilh_{\C},\kk_{\C})$.
\end{corollary}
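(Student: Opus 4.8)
The plan is to transcribe conditions~($\widetilde{\mathrm{A}}$) and~($\widetilde{\mathrm{B}}$) into statements about the compact model $X=G/H\simeq\tilG/\tilH$ of the setting~\ref{setting} and then read them off from parts~(1) and~(2) of Theorem~\ref{thm:main}. First I would invoke Theorem~\ref{thm:HoloInvDiff} to pass from the complex homogeneous space $\tilG_{\C}/\tilH_{\C}\simeq G_{\C}/H_{\C}$ to its compact real form: under the resulting identifications, the three complex-side subalgebras $\D_{\tilG_{\C}}(\tilG_{\C}/\tilH_{\C})$, $\dd\ell(Z(\g_{\C}))$ and $\dd r(Z(\kk_{\C}))$ are carried isomorphically onto the subalgebras $\D_{\tilG}(X)$, $\dd\ell(Z(\g_{\C}))$ and $\dd r(Z(\kk_{\C}))$ of $\D_G(X)$. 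Here $\kk$ is, by hypothesis (whose consistency is guaranteed by Proposition~\ref{prop:H-subset-K}), a maximal proper subalgebra of the compact Lie algebra~$\g$ containing $\tilh\cap\g=\h$; since $\g$ is compact, $\kk_{\C}$ is then maximal proper in~$\g_{\C}$ and $\kk$ is the Lie algebra of a maximal connected proper subgroup~$K$ of~$G$ containing~$H$. With these translations, ($\widetilde{\mathrm{A}}$) asserts that $\D_{\tilG}(X)$ lies in the subalgebra of $\D_G(X)$ generated by $\dd\ell(Z(\g_{\C}))$ and $\dd r(Z(\kk_{\C}))$, and ($\widetilde{\mathrm{B}}$) asserts that $\dd\ell(Z(\g_{\C}))$ lies in the subalgebra generated by $\D_{\tilG}(X)$ and $\dd r(Z(\kk_{\C}))$.

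Next I would split along the dichotomy of Theorem~\ref{thm:main}. If $\tilh\cap\g$ is not a maximal proper subalgebra of~$\g$, then by the first sentence of Theorem~\ref{thm:main} the subgroup~$K$ above is the \emph{unique} maximal connected proper subgroup of~$G$ containing~$H$, hence exactly the one appearing in parts~(1) and~(2). Part~(2) says $\D_G(X)$ is generated by $\dd\ell(Z(\g_{\C}))$ and $\dd r(Z(\kk_{\C}))$; since $\D_{\tilG}(X)\subset\D_G(X)$, this proves~($\widetilde{\mathrm{A}}$). Part~(1) says $\D_G(X)$ is generated by $\D_{\tilG}(X)$ and $\dd r(Z(\kk_{\C}))$; since $\dd\ell(Z(\g_{\C}))\subset\D_G(X)$, this proves~($\widetilde{\mathrm{B}}$). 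If instead $\tilh\cap\g$ is a maximal proper subalgebra of~$\g$, then necessarily $\kk=\tilh\cap\g=\h$, so $K=H$ and $\dd r(Z(\kk_{\C}))=\C$, and the last assertion of Theorem~\ref{thm:main} reads $\D_G(X)=\D_{\tilG}(X)=\dd\ell(Z(\g_{\C}))$; both ($\widetilde{\mathrm{A}}$) and~($\widetilde{\mathrm{B}}$) are then immediate.

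Since the argument is a direct translation of Theorem~\ref{thm:main}.(1)--(2), there is no substantial difficulty. The only point requiring care is the bookkeeping in the reduction step: one must check that the isomorphism of Theorem~\ref{thm:HoloInvDiff} really identifies the three complex-side subalgebras with the three compact-side ones entering Theorem~\ref{thm:main}, and that the abstractly given maximal proper subalgebra $\kk\subset\g$ corresponds to the subgroup~$K$ there — which uses compactness of~$\g$ to pass between maximality of~$\kk$ in~$\g$ and maximality of~$\kk_{\C}$ in~$\g_{\C}$ — while not forgetting the degenerate case $\kk=\tilh\cap\g$, handled above through the final line of Theorem~\ref{thm:main}.
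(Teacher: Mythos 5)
Your proposal is correct and follows essentially the paper's own route: the paper states this corollary as an immediate consequence of Theorem~\ref{thm:main}.(1)--(2), using exactly the identification $\D_{\tilG_{\C}}(\tilG_{\C}/\tilH_{\C})\simeq\D_G(G/H)$ (Theorem~\ref{thm:HoloInvDiff}) to transfer the three subalgebras to the compact picture, with the degenerate case handled by the last assertion of Theorem~\ref{thm:main}. Your extra bookkeeping (uniqueness of $K$, maximality of $\kk$ versus $\kk_{\C}$) only spells out what the paper leaves implicit.
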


\begin{remark}
In the case \eqref{eqn:nonsimple-case} where $\tilG$ is semisimple but not simple, condition ($\widetilde{\mathrm{B}}$) still holds (Proposition~\ref{prop:ex(*)}), but condition ($\widetilde{\mathrm{A}}$) fails (Proposition~\ref{prop:ex(*)-RDGX}).
\end{remark}

Since the rings of invariant differential operators depend only on the complexification (see Theorem~\ref{thm:HoloInvDiff}), our results hold for any real forms having the same complexification.
In particular, the generators $P_k$, $\iota(Q_k)$, and $\dd\ell(R_k)$ are defined on real forms of~$X_{\C}$ by the restriction of their holomorphic continuation, satisfying the same relations \eqref{eqn:rel-diff-op}.
Similarly, the relation \eqref{eqn:rel-Lapl} for the Casimir operators in Corollary~\ref{cor:rel-Lapl} holds on any real forms of $X_{\C}=\tilG_{\C}/\tilH_{\C}$.

Let $\tau\in\Disc(K/H)$.
The transfer map $\nnu(\cdot,\tau)$ of \eqref{eqn:nu-lambda-tau} gives certain constraints on $Z(\g_{\C})$-infinitesimal characters of irreducible $G$-modules realized in $C^{\infty}(Y,\W_{\tau})$.
We now formulate this more explicitly by using the argument of holomorphic continuation and the affine map $S_{\tau}$.
In the setting of Theorem~\ref{thm:main-explicit}, let $\tilG_{\C}\supset\tilH_{\C},G_{\C},K_{\C}$ be the complexifications of the compact Lie groups $\tilG\supset\tilH,G,K$, and let $\tilG_{\R}\supset\tilH_{\R},G_{\R},K_{\R}$ be other real forms.
We set $H_{\R}:=\tilH_{\R}\cap G_{\R}$ and $X_{\R}:=G_{\R}/H_{\R}$.
For simplicity, we assume that $K_{\R}=K$ and $H_{\R}=H$, hence $G_{\R}$ acts properly on~$X_{\R}$.
We use the same letter $\W_{\tau}$ to denote the $G_{\R}$-equivariant vector bundle over $Y_{\R}:=G_{\R}/K_{\R}$, which is given by the restriction of the holomorphic $G_{\C}$-equivariant vector bundle $\W_{\tau}^{\C}$ over $Y_{\C}:=G_{\C}/K_{\C}$ to the totally real submanifold~$Y_{\R}$.
For $\lambda\in\jj_{\C}^*/W(\g_{\C})$, we define the space of joint eigensections for $Z(\g_{\C})$ by
$$C^{\infty}(Y_{\R},\W_{\tau};\M_{\lambda}) := \{ f\in C^{\infty}(Y_{\R},\W_{\tau}) ~:~ \dd\ell^{\tau}(z)f = \chi_{\lambda}^G(z)f \quad\forall z\in Z(\g_{\C})\},$$
see \eqref{eqn:HCchi} for the notation of Harish-Chandra homomorphisms.
The set of possible infinitesimal characters for subrepresentations of the regular representation $C^{\infty}(Y_{\R},\W_{\tau})$ is defined by
$$\Supp_{Z(\g_{\C})}(C^{\infty}(Y_{\R},\W_{\tau})) := \{ \lambda\in\jj_{\C}^*/W(\g_{\C}) ~:~ C^{\infty}(Y_{\R},\W_{\tau};\M_{\lambda})\neq\{0\}\}.$$
Theorem~\ref{thm:transfer} implies the following.

\begin{corollary}
In the setting of Theorem~\ref{thm:main-explicit}, for any $\tau\in\Disc(K/H)$,
$$\Supp_{Z(\g_{\C})}(C^{\infty}(Y_{\R},\W_{\tau})) \subset S_{\tau}(\tila_{\C}^*)\ \mod W(\g_{\C}),$$
where $S_{\tau} : \tila_{\C}^*\to\jj_{\C}^*$ is the affine map of Theorem~\ref{thm:transfer}.
\end{corollary}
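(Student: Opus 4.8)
The plan is to deduce the corollary from Theorem~\ref{thm:transfer} by holomorphic continuation. First I would note that every object entering Theorem~\ref{thm:transfer} is controlled by the complexified data $(\tilg_{\C},\g_{\C},\tilh_{\C},\kk_{\C})$ alone: the subalgebras $\D_{\tilG}(X)$, $\dd\ell(Z(\g_{\C}))$, $\dd r(Z(\kk_{\C}))$ of $\D_G(X)$, the relations~\eqref{eqn:rel-diff-op}, the quotient algebras $\D_G(X)_{\mathcal{I}_{\tau}}$ together with the isomorphisms of Proposition~\ref{prop:qI} and~\eqref{eqn:phi-ideal-I}, the operator $\dd\ell^{\tau}$ on sections of $\W_{\tau}$, and the embedding $\ii_{\tau}$. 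By Theorem~\ref{thm:HoloInvDiff} and its analogue for the $G_{\C}$-equivariant bundle $\W_{\tau}^{\C}$ over $Y_{\C}$, all of these transport to the real form: the holomorphic continuation of each generator restricts to $X_{\R}$ and $Y_{\R}$, the relations~\eqref{eqn:rel-diff-op} persist, $\dd\ell^{\tau}$ has the same kernel on $C^{\infty}(Y_{\R},\W_{\tau})$ as on the compact $Y$, and $\ii_{\tau}\colon C^{\infty}(Y_{\R},\W_{\tau})\hookrightarrow C^{\infty}(X_{\R})$ is again an injective $G_{\R}$-homomorphism whose image is annihilated by the ideal $\langle\mathcal{I}_{\tau}\rangle$.

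Now let $\lambda$ belong to $\Supp_{Z(\g_{\C})}(C^{\infty}(Y_{\R},\W_{\tau}))$ and pick a nonzero $f\in C^{\infty}(Y_{\R},\W_{\tau})$ with $\dd\ell^{\tau}(z)f=\chi_{\lambda}^G(z)f$ for all $z\in Z(\g_{\C})$. Set $F:=\ii_{\tau}f\in C^{\infty}(X_{\R})$, which is nonzero. Since $\ii_{\tau}$ intertwines the $G_{\R}$-actions, $\dd\ell(R)F=\chi_{\lambda}^G(R)F$ for all $R\in Z(\g_{\C})$; and $\langle\mathcal{I}_{\tau}\rangle F=0$, so $\D_G(X)$ acts on $F$ through the quotient $\D_G(X)_{\mathcal{I}_{\tau}}$. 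Because $q_{\mathcal{I}_{\tau}}\circ\dd\ell$ is surjective onto $\D_G(X)_{\mathcal{I}_{\tau}}$ (Proposition~\ref{prop:qI}), these two facts force $F$ to be a joint eigenfunction for all of $\D_G(X)$, hence for $\D_{\tilG}(X)$, with some eigenvalue $\mu\in\Hom_{\C\text{-}\mathrm{alg}}(\D_{\tilG}(X),\C)$; unwinding~\eqref{eqn:phi-ideal-I} shows that $\varphi_{\mathcal{I}_{\tau}}^*(\mu)$ is the character of $Z(\g_{\C})$ induced by $\chi_{\lambda}^G$, which is legitimate since $\chi_{\lambda}^G$ annihilates $\Ker(\dd\ell^{\tau})\supset\Ker(q_{\mathcal{I}_{\tau}}\circ\dd\ell)$ (as $f\neq0$). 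One can also read this off directly from~\eqref{eqn:rel-diff-op}: each generator $P_k$ of $\D_{\tilG}(X)$ obeys $a_kP_k+b_k\,\iota(Q_k)=c_k\,\dd\ell(R_k)$, where $\iota(Q_k)\in\dd r(Z(\kk_{\C}))$ acts on $F$ by a scalar depending only on~$\tau$ and $\dd\ell(R_k)$ acts by $\chi_{\lambda}^G(R_k)$.

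It remains to apply Theorem~\ref{thm:transfer}. By construction $\nnu(\mu,\tau)=\varphi_{\mathcal{I}_{\tau}}^*(\mu)$ equals $\chi_{\lambda}^G$, which corresponds to $\lambda\in\jj_{\C}^*/W(\g_{\C})$ under the Harish-Chandra isomorphism. Theorem~\ref{thm:transfer} identifies $\nnu(\cdot,\tau)$ with the affine map $S_{\tau}\colon\tila_{\C}^*/\widetilde{W}\to\jj_{\C}^*/W(\g_{\C})$, so if $\widehat{\mu}\in\tila_{\C}^*$ is any representative of $\mu$ then $\lambda$ is the class of $S_{\tau}(\widehat{\mu})$, i.e. $\lambda\in S_{\tau}(\tila_{\C}^*)$ modulo $W(\g_{\C})$, which is the assertion. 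For the triple~\eqref{eqn:nonsimple-case} the same argument goes through with Proposition~\ref{prop:nu-ex(*)} replacing Theorem~\ref{thm:transfer}.

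The step I expect to be the main obstacle is the holomorphic-continuation claim of the first paragraph for the \emph{bundle-valued} data. For the scalar rings it is exactly Theorem~\ref{thm:HoloInvDiff}; but one must also check that $\dd\ell^{\tau}$, the embedding $\ii_{\tau}$, and the annihilation property $\langle\mathcal{I}_{\tau}\rangle\,\ii_{\tau}(C^{\infty}(Y_{\R},\W_{\tau}))=0$ genuinely survive restriction from the compact homogeneous space to the totally real submanifold $Y_{\R}\subset Y_{\C}$. This uses, besides the analytic continuation of holomorphic differential operators, the fact that the $H$-fixed space in $\tau$ is one-dimensional (Lemma~\ref{lem:spherical2}.(4) and Fact~\ref{fact:spherical-cpt}.(iv)), which is a property of the complexified pair $(\kk_{\C},\h_{\C})$ and hence independent of the real form. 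Once this is settled, all the remaining steps are formal manipulations with the quotient algebras $\D_G(X)_{\mathcal{I}_{\tau}}$ and the bijections $\varphi_{\mathcal{I}_{\tau}}^*$, $\nnu(\cdot,\tau)$, $\llambda(\cdot,\tau)$ already available in the compact setting.
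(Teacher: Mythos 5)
Your proposal is correct and follows essentially the same route the paper intends: the corollary is presented there as an immediate consequence of Theorem~\ref{thm:transfer} (equivalently Theorem~\ref{thm:nu-tau}.(1)--(3)) together with the holomorphic-continuation principle of Theorem~\ref{thm:HoloInvDiff} stated just before it, which is exactly what you spell out. The only cosmetic difference is that you re-derive the step ``$\ii_{\tau}f$ is a joint $\D_{\tilG}(X)$-eigenfunction'' from Proposition~\ref{prop:qI} on the real form rather than quoting Theorem~\ref{thm:nu-tau}.(2)--(3) directly.
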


In \cite{kkII}, we shall prove that under condition~($\widetilde{\mathrm{B}}$), any irreducible unitary representation $\pi$ of~$\tilG_{\R}$ realized in the space $\DD'(X_{\R})$ of distributions on~$X_{\R}$ is discretely decomposable when restricted to~$G_{\R}$, even when $G_{\R}$ is noncompact.
Then the relations \eqref{eqn:rel-diff-op} give crucial information for the bran\-ching law of irreducible representations $\pi$ of $\tilG_{\R}$ restricted to~$G_{\R}$, using the analysis on the fiber
\begin{equation} \label{eqn:fiber}
F := K/H \longrightarrow X_{\R} \longrightarrow Y_{\R}=G_{\R}/K_{\R}.
\end{equation}
In subsequent papers, we use the present results to find:
\begin{enumerate}[(a)]
  \item relationships between spectrum for Riemannian locally symmetric spaces $\Gamma\backslash G_{\R}/K_{\R}$ and spectrum for pseudo-Riemannian manifolds $\Gamma\backslash\tilG_{\R}/\tilH_{\R}$, using Theorems \ref{thm:main}.(2) and~\ref{thm:transfer}, see \cite{kkII};
  \item explicit branching laws of irreducible unitary representations of~$\tilG_{\R}$ (\eg Zuckerman's derived functor modules $A_{\mathfrak{q}}(\lambda)$) when restricted to the subgroup~$G$, using Theorem~\ref{thm:main}.(1), see \cite{kkI}.
\end{enumerate}
Thus in both (a) and~(b) we obtain results on infinite-dimensional representations of noncompact groups by reducing to finite-dimensional representations of compact groups and using Theorem~\ref{thm:main}.

\subsection{Remarks}

The idea of studying the interaction between harmonic analysis on homogeneous spaces with overgroups and branching laws of infinite-dimensional representations goes back to the papers \cite{kob93,kob94,kob09}, where computations were carried out in some situations where $\tilG/\tilH$ is a symmetric space of rank one.
The work of the current paper was started in the spring of 2011, as an attempt to generalize the machinery of \cite{kob93,kob94,kob09} to cases where $\tilG/\tilH$ has higher rank, and to find the right general framework in which such results hold.
Our results were announced in \cite{kob17}.

One important motivation for this paper has been the application to the analysis on locally pseudo-Riemannian symmetric spaces, as described in Section~\ref{subsec:intro-applic} and in \cite{kkII}.
Relations between Casimir operators as in Corollary~\ref{cor:rel-Lapl} were also announced by Mehdi--Olbrich at a talk at the Max Planck Institute in Bonn in August 2011.

Recently Schlichtkrull--Trapa--Vogan put on the arXiv the preprint \cite{stv18}, investigating the rank-one cases (i), (iv), (vi), (x) of Table~\ref{table1}, and proving the irreducibility of the representations of the exceptional group $G_{2(2)}$ in \cite[Th.\,6.4]{kob94} for the last singular parameters.

\subsection{Organization of the paper}

Sections \ref{sec:reminders} to~\ref{sec:disconnected-H} are of a theoretical nature.
Our analysis is centered around the $G$-equivariant fiber bundle $X=\tilG/\tilH\overset{F}{\longrightarrow}G/K$.
In Section~\ref{sec:reminders} we collect some basic facts on invariant differential operators and explain the diagram \eqref{eqn:diagram}.
In Section~\ref{sec:anal-fiber-bund} we discuss geometric approaches to the restriction of representations of $\tilG$ to the subgroup~$G$ in the space of square integrable or holomorphic sections.
The assumption that $X_{\C}$ is $G_{\C}$-spherical implies several multiplicity-freeness results for representations, not only of~$G$, but also of $\tilG$ and~$K$.
Using this, in Section~\ref{sec:strategy} we explain a precise strategy for proving Theorems \ref{thm:main} and~\ref{thm:transfer}.
In Section~\ref{sec:disconnected-H} we examine the connected components of $H=\tilH\cap G$, and prove that the subalgebras $\D_{\tilG}(X)$, $\dd r(Z(\kk_{\C}))$, and $\dd\ell(Z(\tilg_{\C}))$ are completely determined by the triple of Lie algebras $(\tilg_{\C},\tilh_{\C},\g_{\C})$.

Sections \ref{sec:computations} and~\ref{sec:ex(*)} are the technical heart of the paper: we complete the proofs of the main theorems through a case-by-case analysis.
In particular, we find the closed formula for the ``transfer map'' for simple~$\tilG$ in Section~\ref{sec:computations} in each case of Table~\ref{table1}, by carrying out computations of finite-dimensional representations.
Section~\ref{sec:ex(*)} focuses on the case of the triple \eqref{eqn:nonsimple-case}.

\addtocontents{toc}{\SkipTocEntry}
\subsection*{Notation}\label{subsec:notation-computations}

In the whole paper, we use the notation $\N=\Z\cap [0,+\infty)$ and $\N_+=\Z\cap (0,+\infty)$.
For $n\in\N_+$ we set
$$(\Z^n)_{\geq} := \{ (a_1,\dots,a_n)\in\Z^n : a_1\geq\dots\geq a_n\} $$
and $(\N^n)_{\geq}:=(\Z^n)_{\geq}\cap\N^n$.

\addtocontents{toc}{\SkipTocEntry}
\subsection*{Acknowledgements}

We would like to thank the referee for a careful reading of the paper and for very helpful comments and suggestions.
We are grateful to the University of Tokyo for its support through the GCOE program, and to the University of Chicago, the Max Planck Institut f\"ur Mathematik (Bonn), the Mathematical Sciences Research Institute (Berkeley), and the Institut des Hautes \'Etudes Scientifiques (Bures-sur-Yvette) for giving us opportunities to work together in very good conditions.

\section{Reminders and basic facts}
\label{sec:reminders}

In this section we set up some notation and review some known facts on spherical homogeneous spaces, in particular about invariant differential operators and regular representations.

Let $X=G/H$ be a reductive homogeneous space, by which we mean that $G$ is a connected real reductive linear Lie group and $H$ a reductive subgroup of~$G$.
We shall always assume that $H$ is algebraic.
The group $G$ naturally acts on the ring of differential operators on~$X$ by
$$g\cdot D = \ell_g^{\ast}\circ D\circ (\ell_g^{\ast})^{-1},$$
where $\ell_g^{\ast}$ is the pull-back by the left translation $\ell_g : x\mapsto g\cdot x$.
We denote by $\D_G(X)$ the ring of $G$-invariant differential operators on~$X$.

\subsection{General structure of $\D_G(X)$} \label{subsec:DGH}

We first recall some classical results on the structure of the $\C$-algebra $\D_G(X)$; see \cite[Ch.\,II]{hel00} for proofs and more details.
Let $U(\g_{\C})$ be the enveloping algebra of the complexified Lie algebra $\g_{\C}:=\g\otimes_{\R}\C$.
It acts on $C^{\infty}(X)$ by differentiation on the left:
$$\big((Y_1\cdots Y_m)\cdot f\big)(g) = \frac{\partial}{\partial t_1}\Big|_{t_1=0}\,\cdots\ \frac{\partial}{\partial t_m}\Big|_{t_m=0}\ f\big(\exp(-t_mY_m)\cdots\exp(-t_1Y_1) x\big)$$
for all $Y_1,\dots,Y_m\in\g$, all $f\in C^{\infty}(X)$, and all $x\in X$.
This gives a $\C$-algebra homomorphism
\begin{equation}\label{eqn:dl}
\dd\ell : U(\g_{\C}) \longrightarrow \D(X),
\end{equation}
where $\D(X)$ is the full $\C$-algebra of differential operators on~$X$.
On the other hand, $U(\g_{\C})$ acts on $C^{\infty}(G)$ by differentiation on the right:
$$\big((Y_1\cdots Y_m)\cdot f\big)(g) = \frac{\partial}{\partial t_1}\Big|_{t_1=0}\,\cdots\ \frac{\partial}{\partial t_m}\Big|_{t_m=0}\ f\big(g\exp(t_1Y_1)\cdots\exp(t_mY_m)\big)$$
for all $Y_1,\dots,Y_m\in\g$, all $f\in C^{\infty}(G)$, and all $g\in G$.
By identifying $C^{\infty}(X)$ with the set of right-$H$-invariant elements in~$C^{\infty}(G)$, we obtain a $\C$-algebra homomorphism
\begin{equation}\label{eqn:dr}
\dd r :\ U(\g_{\C})^H \,\longrightarrow\, \D_G(X),
\end{equation}
where $U(\g_{\C})^H$ is the subalgebra of $\Ad_G(H)$-invariant elements in $U(\g_{\C})$.
It is surjective and induces an algebra isomorphism
\begin{equation}\label{eqn:DX}
U(\g_{\C})^H/U(\g_{\C})\h_{\C} \cap U(\g_{\C})^H \,\overset{\sim}\longrightarrow\, \D_G(X)
\end{equation}
(see \cite[Ch.\,II, Th.\,4.6]{hel00}).

Since the center $Z(\g_{\C})$ is contained in $U(\g_{\C})^H$, the homomorphisms $\dd\ell$ and $\dd r$ of \eqref{eqn:dl} and \eqref{eqn:dr} restrict to homomorphisms from $Z(\g_{\C})$ to $\D_G(X)$.
To see the relationship between them, consider the inversion $g\mapsto g^{-1}$ of~$G$.
Its differential gives rise to an antiautomorphism $\eta$ of the enveloping algebra $U(\g_{\C})$, given by $Y_1\cdots Y_m\mapsto (-Y_m)\cdots (-Y_1)$ for all $Y_1,\dots,Y_m\in\g_{\C}$.
This antiautomorphism induces an automorphism of the commutative subalgebra $Z(\g_{\C})$.
The following is an immediate consequence of the definitions.

\begin{lemma}\label{lem:lreta}
We have $\dd\ell\circ\eta=\dd r$ on $Z(\g_{\C})$.
\end{lemma}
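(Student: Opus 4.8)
\textbf{Proof proposal for Lemma~\ref{lem:lreta} ($\dd\ell\circ\eta=\dd r$ on $Z(\g_{\C})$).}

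The plan is to compare the two sides directly on $C^{\infty}(G)$ (rather than on $C^{\infty}(X)$), where both $\dd\ell$ and $\dd r$ make sense as differential operators, and then descend to $X=G/H$. First I would recall the two one-parameter formulas from Section~\ref{subsec:DGH}: for a single $Y\in\g$, acting on $f\in C^{\infty}(G)$,
$$(\dd\ell(Y)f)(g) = \frac{\dd}{\dd t}\Big|_{t=0} f(\exp(-tY)g), \qquad (\dd r(Y)f)(g) = \frac{\dd}{\dd t}\Big|_{t=0} f(g\exp(tY)).$$
Now consider the inversion map $j : G\to G$, $g\mapsto g^{-1}$, with pull-back $j^*f = f\circ j$. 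The key elementary identity is that $j^*$ intertwines left and right translations up to sign on the Lie algebra level: $j^*\circ\dd\ell(Y)\circ(j^*)^{-1} = \dd r(-Y)$ for all $Y\in\g$, because $j(\exp(-tY)g) = g^{-1}\exp(tY)$, so differentiating $f(j(\exp(-tY)\,j(h)))$ at $t=0$ reproduces the right-derivative formula with $-Y$ replaced by $Y$ after the sign from $j$. Since $\eta$ is by definition the antiautomorphism of $U(\g_{\C})$ generated by $Y\mapsto -Y$, and conjugation by $j^*$ is an \emph{anti}homomorphism of differential operators (it reverses composition), one gets $j^*\circ\dd\ell(u)\circ(j^*)^{-1} = \dd r(\eta(u))$ for \emph{all} $u\in U(\g_{\C})$ by extending multiplicatively.

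Next I would specialize to $u\in Z(\g_{\C})$. The point is that a central element $u$ is in particular $\Ad(G)$-invariant, and for such $u$ one has $\dd\ell(u) = \dd r(\eta(u))$ \emph{without} the conjugation by $j^*$: this is the standard fact that left- and right-invariant differential operators built from $Z(\g_{\C})$ agree, which follows from the $G$-bi-invariance of $\dd\ell(u)$ for central $u$ together with the identification at the identity coset. Concretely, $\dd\ell(u)$ commutes with all right translations (central $u$), hence is right-invariant, hence equals $\dd r$ of its symbol at $e$; chasing the antiautomorphism $\eta$ through the symbol identification (which again reverses order, matching $\eta$) yields exactly $\dd\ell(u) = \dd r(\eta(u))$ on $C^{\infty}(G)$. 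Both $\dd\ell(u)$ and $\dd r(\eta(u))$ manifestly preserve the subspace of right-$H$-invariant functions (the former because it is right-$G$-invariant, the latter because $\eta(u)\in Z(\g_{\C})^H\subset U(\g_{\C})^H$), so the identity passes to $C^{\infty}(X)$, giving $\dd\ell\circ\eta = \dd r$ on $Z(\g_{\C})$ as operators on $X$.

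I do not expect a genuine obstacle here — the statement is ``an immediate consequence of the definitions'' as the text says — but the one place to be careful is the \emph{sign and order bookkeeping}: $\eta$ is an antiautomorphism (it reverses $Y_1\cdots Y_m \mapsto (-Y_m)\cdots(-Y_1)$), pull-back by inversion is an antihomomorphism on the operator algebra, and the left-action formula already carries a minus sign ($\exp(-t_mY_m)\cdots$). The cleanest write-up checks the identity first for degree-one elements $Y\in\g$ where all three sign conventions are visible, then invokes multiplicativity/anti-multiplicativity to extend to all of $U(\g_{\C})$, and only at the end restricts to $Z(\g_{\C})$ and to right-$H$-invariant functions. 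No deeper input (sphericity, the classification, etc.) is needed.
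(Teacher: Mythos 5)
Your headline strategy in the second paragraph (for an $\Ad(G)$-invariant element, $\dd\ell(u)$ is bi-invariant and comparing it with the image of $\dd r$ produces the order-reversing, sign-changing map $\eta$) is indeed the standard argument behind what the paper simply calls an immediate consequence of the definitions (no proof is written in the paper). But as written your proof contains concrete errors. First, with the paper's conventions the inversion $j:g\mapsto g^{-1}$ satisfies $j^*\circ\dd\ell(Y)\circ(j^*)^{-1}=\dd r(Y)$, \emph{not} $\dd r(-Y)$: the minus sign built into the definition of $\dd\ell$ is exactly what makes the sign disappear. Moreover conjugation by $j^*$ preserves composition (conjugation by any invertible operator is an algebra homomorphism), so it is not an antihomomorphism; since both $u\mapsto j^*\dd\ell(u)(j^*)^{-1}$ and $\dd r$ are homomorphisms agreeing on $\g_{\C}$, the correct identity is $j^*\circ\dd\ell(u)\circ(j^*)^{-1}=\dd r(u)$ for all $u$, with no $\eta$ at all. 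Your asserted identity $j^*\circ\dd\ell(u)\circ(j^*)^{-1}=\dd r(\eta(u))$ is therefore false whenever $\eta(u)\neq u$ ($\dd r$ is injective on $U(\g_{\C})$ acting on $C^{\infty}(G)$). This paragraph is not used later, but it is wrong, not just a harmless normalization.

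Second, in the paragraph that carries the argument you have the invariances backwards: $\dd\ell(u)$ commutes with \emph{all} right translations for \emph{every} $u\in U(\g_{\C})$ (centrality is irrelevant there), and the image of $\dd r$ consists of the \emph{left}-invariant operators. What centrality actually buys is $\ell_g\,\dd\ell(u)\,\ell_g^{-1}=\dd\ell(\Ad(g)u)=\dd\ell(u)$, i.e.\ left-invariance, which is what places $\dd\ell(u)$ in the image of $\dd r$. Finally, the step where $\eta$ genuinely enters is exactly the one you wave at ("chasing $\eta$ through the symbol identification"); it should be carried out, and it is short: either note that for any $u=Y_1\cdots Y_m$ one has $\dd\ell(u)f(e)=\dd r(\eta(u))f(e)$ (reversing the order of the factors $\exp(-t_iY_i)$ is precisely $\eta$) and that two left-invariant operators agreeing at $e$ coincide; or, more directly, for $u\in Z(\g_{\C})$ write $\exp(-t_mY_m)\cdots\exp(-t_1Y_1)g=g\exp(-t_m\Ad(g^{-1})Y_m)\cdots\exp(-t_1\Ad(g^{-1})Y_1)$ and use $\Ad(g^{-1})u=u$, which gives $\dd\ell(u)=\dd r(\eta(u))$ on $C^{\infty}(G)$, hence $\dd\ell(\eta(u))=\dd r(u)$ since $\eta$ is an involution preserving $Z(\g_{\C})$. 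Your descent to $C^{\infty}(X)$ via right-$H$-invariant functions is fine. So the conclusion and the intended route are salvageable, but the write-up needs the sign, the homomorphism-vs-antihomomorphism claim, and the left/right bookkeeping corrected, and the $\eta$-step made explicit.
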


\subsection{Spherical homogeneous spaces} \label{subsec:DGH-spherical}

Recall the following two characterizations of spherical homogeneous spaces, in terms of the ring of invariant differential operators (condition~(ii)) and in terms of representation theory (condition~(iii)).
For a continuous representation $\pi$ of~$G$, we denote by $\Hom_G(\pi,C^{\infty}(X))$ the set of $G$-intertwining continuous operators from $\pi$ to $C^{\infty}(X)$.

\begin{fact}\label{fact:spherical}
Suppose $X=G/H$ is a reductive homogeneous space.
Then the following conditions are equivalent:
\begin{enumerate}[(i)]
  \item $X_{\C}=G_{\C}/H_{\C}$ is $G_{\C}$-spherical;
  \item the $\C$-algebra $\D_G(X)$ is commutative;
  \item $\dim\Hom_G(\pi,C^{\infty}(X))$ is uniformly bounded for any irreducible representation $\pi$ of~$G$.
\end{enumerate}
\end{fact}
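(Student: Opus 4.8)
\textbf{Proof proposal for Fact~\ref{fact:spherical}.}
The plan is to recall that all three conditions are classical and to indicate how the equivalences are established, since none of the implications is original to this paper. The key point is that $X_{\C}=G_{\C}/H_{\C}$ is a complex reductive homogeneous space, and each of the conditions (i), (ii), (iii) is invariant under passage to the complexification (the ring $\D_G(X)$ is, after complexifying coefficients, isomorphic to the ring $\D_{G_{\C}}(X_{\C})$ of $G_{\C}$-invariant algebraic differential operators, and bounded-multiplicity statements for $G$-representations translate into statements about $(\g_{\C},K)$-modules). So it suffices to prove the chain of equivalences in the complex algebraic setting.

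First I would treat the equivalence (i) $\Leftrightarrow$ (ii). For the implication (i) $\Rightarrow$ (ii), one uses the description \eqref{eqn:DX} of $\D_G(X)$ as $U(\g_{\C})^H/(U(\g_{\C})\h_{\C}\cap U(\g_{\C})^H)$ together with the fact that, when a Borel subgroup $B$ of $G_{\C}$ has an open orbit on $X_{\C}$, every $B$-semiinvariant regular function on $X_{\C}$ is determined up to scalar by its weight; passing to associated graded algebras via the symbol map, $\operatorname{gr}\D_G(X)$ embeds into the ring $\C[T^*X]^{G}$ of invariant functions on the cotangent bundle, which is commutative, and one checks the Poisson bracket on symbols of $G$-invariant operators vanishes because the moment map image is coisotropic exactly in the spherical case — this is the classical argument going back to Gelfand, with the coisotropy criterion due to Guillemin--Sternberg and Mikityuk. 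For the converse (ii) $\Rightarrow$ (i), if $X_{\C}$ is not spherical, then $\dim\C(X_{\C})^B\geq 1$ (there is at least a one-parameter family of $B$-eigenfunctions) and one can exhibit two $G$-invariant differential operators with nonvanishing Poisson bracket of symbols, hence non-commuting; equivalently, one invokes Knop's theorem that $\operatorname{gr}\D_G(X)$ is the Poisson-commutant structure whose Poisson bracket is nondegenerate transverse to the generic orbit precisely in the non-spherical case. I would simply cite \cite[Ch.\,II]{hel00} and \cite{kno94} here rather than reproduce these arguments.

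For (ii) $\Leftrightarrow$ (iii), I would argue as follows. If $\D_G(X)$ is commutative, then for any irreducible $\pi$ the space $\Hom_G(\pi,C^{\infty}(X))$ is a module over $\D_G(X)$ that (via the infinitesimal character of $\pi$ under $\dd\ell(Z(\g_{\C}))\subset\D_G(X)$, or more precisely via evaluation at the $H$-fixed distribution vectors) is supported at a finite set of characters of the commutative ring $\D_G(X)$, and a standard argument with the holonomic/finite-dimensionality of the solution space of the corresponding system shows the dimension is bounded by a constant depending only on $G/H$; conversely, non-commutativity of $\D_G(X)$ produces, via a suitable family of finite-dimensional representations in $C^{\infty}(X)$ (which exists since $X$ is quasi-affine), representations $\pi$ with $\dim\Hom_G(\pi,C^{\infty}(X))$ growing without bound, because the multiplicity is controlled from below by the dimension of an irreducible module over a localization of $\D_G(X)$ at a generic character, and a noncommutative ring of the relevant type has localizations with modules of arbitrarily large dimension. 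The cleanest route, which I would take, is to note that (iii) is equivalent to sphericity directly by Vinberg--Kimelfeld (uniform boundedness of multiplicities in $\C[G_{\C}/H_{\C}]$, decomposed by the Frobenius/Peter--Weyl type argument, holds iff a Borel orbit is open), and that (ii) is then sandwiched by (i) and (iii). The main obstacle in writing this up honestly is that a fully self-contained proof of (i) $\Leftrightarrow$ (ii) requires the coisotropy criterion and Knop's structural results; since the statement is recalled only as background ("Recall the following two characterizations"), I expect the right move is to present it as a Fact with references to \cite{hel00}, \cite{kno94}, and the Vinberg--Kimelfeld criterion, rather than to give a complete proof.
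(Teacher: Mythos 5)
Your meta-conclusion is the right one: the paper gives no proof of this Fact at all, but simply records it with citations --- \cite{vin01} for (i)\,$\Leftrightarrow$\,(ii) and \cite{ko13} for (i)\,$\Leftrightarrow$\,(iii) --- so presenting it as background with references is exactly what is done. Your sketch of (i)\,$\Leftrightarrow$\,(ii) via symbols, the moment map and the coisotropy criterion is a reasonable account of the classical argument and is harmless, since that equivalence is indeed standard.

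There is, however, a genuine gap in how you propose to justify (iii). You reduce everything to the complex algebraic setting and then invoke the Vinberg--Kimelfeld criterion, i.e.\ bounded multiplicities in $\C[G_{\C}/H_{\C}]$ decomposed under $G_{\C}$. But in Fact~\ref{fact:spherical} the group $G$ is a general \emph{real} reductive group (not assumed compact), $\pi$ ranges over irreducible, typically infinite-dimensional, representations of~$G$, and the multiplicity is $\dim\Hom_G(\pi,C^{\infty}(X))$ for the real form $X$. The assertion that this uniform boundedness is ``invariant under passage to the complexification'' is precisely the nontrivial content: finite multiplicity is governed by \emph{real} sphericity of $X$, and the equivalence of \emph{uniformly bounded} multiplicity with $G_{\C}$-sphericity of $X_{\C}$ is the theorem of Kobayashi--Oshima \cite{ko13}, proved by analytic methods (systems with regular singularities), not by a formal complexification or by decomposing the polynomial ring. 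The Vinberg--Kimelfeld criterion \cite{vk78} only handles the compact/finite-dimensional situation, and indeed the paper invokes it only later, in Fact~\ref{fact:spherical-cpt}, where $G$ is assumed compact. Likewise your ``converse'' sketch (non-commutativity of $\D_G(X)$ forcing unbounded multiplicities via localizations) is not a standard argument and would need substantial work; in the paper this direction is absorbed into the cited equivalences rather than argued directly. So if you write this up, cite \cite{vin01} for (i)\,$\Leftrightarrow$\,(ii) and \cite{ko13} for (i)\,$\Leftrightarrow$\,(iii), and drop the claim that (iii) follows from the compact-case criterion.
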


For (i)\,$\Leftrightarrow$\,(ii), see \eg \cite{vin01}; for (i)\,$\Leftrightarrow$\,(iii), see \cite{ko13}.

If $X_{\C}=G_{\C}/H_{\C}$ is $G_{\C}$-spherical, then by work of Knop \cite{kno94} the ring $\D_G(X)$ is finitely generated as a $Z(\g_{\C})$-module, and there is a $\C$-algebra isomorphism
\begin{equation}\label{eqn:Psi}
\Psi : \D_G(X) \overset{\scriptscriptstyle\sim\,}{\longrightarrow} S(\aaa_{\C})^W,
\end{equation}
where $S(\aaa_{\C})^W$ is the ring of $W$-invariant elements in the symmetric algebra $S(\aaa_{\C})$ for some subspace $\aaa_{\C}$ of a Cartan subalgebra of~$\g_{\C}$ and some finite reflection group $W$ acting on~$\aaa_{\C}$.
In particular, $\D_G(X)$ is a polynomial algebra in $r$ generators, by a theorem of Chevalley (see \eg \cite[Th.\,2.1.3.1]{war72}), where
$$r := \dim_{\C} \aaa_{\C}$$
is called the \emph{rank} of $G/H$, denoted by $\rank G/H$.
A typical example of a spherical homogeneous space is a complex reductive symmetric space; in this case the isomorphism $\D_G(X)\simeq S(\aaa_{\C})^W$ is explicit, as we shall recall in Section~\ref{subsec:symmsp}.

\subsection{A geometric interpretation of the subalgebra $\dd r(Z(\kk_{\C}))$}\label{subsec:dliota}

Let $K$ be a connected reductive subgroup of~$G$ containing~$H$.
The reductive homogeneous space $X:=G/H$ fibers over $G/K$ with fiber $F:=K/H$.
There is a natural injective homomorphism
\begin{equation}\label{eqn:iotaF}
\iota : \D_K(F) \longhookrightarrow \D_G(X)
\end{equation}
defined as follows: for any $D\in\D_K(F)$, any $f\in C^{\infty}(X)$, and any $g\in G$,
\begin{equation}\label{eqn:defiota}
\big(\iota(D)f\big)|_{gF} = \big((\ell_g^{\ast})^{-1} \circ D \circ \ell_g^{\ast}\big)(f|_{gF}),
\end{equation}
where $\ell_g : X\rightarrow X$ is the translation by~$g$ and $\ell_g^{\ast} : C^{\infty}(X)\rightarrow C^{\infty}(X)$ the pull-back by~$\ell_g$.
Note that in \eqref{eqn:defiota} the right-hand side does not depend on the representative $g$ in $gF$ since $D$ is $K$-invariant.
Thus $\iota(D)$ is defined ``along the fibers $gF$ of the bundle $X=G/H\rightarrow G/K$'', and makes the following diagram commute for any $g\in G$ (where the unlabeled horizontal arrows denote restriction).
$$\xymatrixcolsep{3pc}
\xymatrix{
C^{\infty}(X) \ar[d]^{\iota(D)} \ar[r] & C^{\infty}(gF) \ar[r]^{\ell_g^{\ast}} & C^{\infty}(F) \ar[d]^D\\
C^{\infty}(X) \ar[r] & C^{\infty}(gF) \ar[r]^{\ell_g^{\ast}} & C^{\infty}(F)
}$$
Similarly to \eqref{eqn:dr}, we can define a map
\begin{equation}\label{eqn:dr-F}
\dd r_{\scriptscriptstyle F} : U(\kk_{\C})^H \longrightarrow \D_K(F).
\end{equation}
In particular, $\dd r_{\scriptscriptstyle F}$ is defined on the center $Z(\kk_{\C})$ of the enveloping algebra $U(\kk_{\C})$.
The following diagram commutes.
$$\xymatrixcolsep{3pc}
\xymatrix{
Z(\kk_{\C}) \ar[d]^{\dd r_{\scriptscriptstyle F}} \ar@{^{(}->}[r] & U(\g_{\C})^H \ar@{->>}[d]^{\dd r}\\
\D_K(F) \ar@{^{(}->}[r]^{\iota} & \D_G(X)
}$$

\subsection{The case of reductive symmetric spaces} \label{subsec:symmsp}

Reductive symmetric spaces are a special case of spherical homogeneous spaces, and the results of Section~\ref{subsec:DGH-spherical} are known in a more explicit form in this case, as we now explain.
We also collect a few other useful facts on symmetric spaces.

Note that in most cases of Table~\ref{table1}, both $\tilG/\tilH$ and $F=K/H$ are symmetric spaces; in Section~\ref{sec:strategy}, we shall apply the present results to $\tilG/\tilH$ and $F$ instead of $X=G/H$, replacing
$(\aaa\subset\jj, W, W(\g_{\C}), \rho=\rho_{\aaa}+\rho_{\m})$
with
$(\tila\subset\widetilde{\jj}, \widetilde{W}, W(\tilg_{\C}), \widetilde{\rho}=\rho_{\tila}+\rho_{\widetilde{m}})$
and
$(\aaa_F\subset\jj_K, W_F, W(\kk_{\C}), \rho_{\kk}=\rho_{\aaa_F}+\rho_{\m_F})$.

Suppose that $X=G/H$ is a reductive symmetric space, \ie $H$ is an open subgroup of the group of fixed points of~$G$ under some involutive automorphism~$\sigma$.
Let $\g=\h+\q$ be the decomposition of~$\g$ into eigenspaces of~$\dd\sigma$, with respective eigenvalues $+1$ and~$-1$.
Fix a maximal semisimple abelian subspace~$\aaa$ of~$\q$; we shall call such a subspace a \emph{Cartan subspace for the symmetric space $G/H$}.
Let $W$ be the Weyl group of the restricted root system $\Sigma(\g_{\C},\aaa_{\C})$ of~$\aaa_{\C}$ in~$\g_{\C}$.
There is a natural $\C$-algebra isomorphism
\begin{equation}\label{eqn:HC-isom}
\Psi : \D_G(X) \overset{\scriptscriptstyle\sim\,}{\longrightarrow} S(\aaa_{\C})^W
\end{equation}
as in Section~\ref{subsec:DGH-spherical}, known as the Harish-Chandra isomorphism.
Any $\nu\in\aaa_{\C}^{\ast}/W$ gives rise to a $\C$-algebra homomorphism
\begin{eqnarray*}
\chi_{\nu}^X :\ \D_G(X) & \longrightarrow & \quad\;\; \C\\
D\quad\;\; & \longmapsto & \langle\Psi(D),\nu\rangle.
\end{eqnarray*}
We extend $\aaa_{\C}$ to a Cartan subalgebra $\jj_{\C}$ of~$\g_{\C}$ and write $W(\g_{\C})$ for the Weyl group of the root system $\Delta(\g_{\C},\jj_{\C})$.

Harish-Chandra's original isomorphism concerned a special case of reductive symmetric spaces, namely \emph{group manifolds} $(G\times G)/\Diag(G)\simeq G$.
In this case the isomorphism amounts to
\begin{equation}\label{eqn:HC-isom-group}
\Phi : Z(\g_{\C}) \,\simeq\, \D_{G\times G}(G) \,\overset{\sim}{\longrightarrow}\, S(\jj_{\C})^{W(\g_{\C})}.
\end{equation}
Any $\lambda\in\jj_{\C}^{\ast}/W(\g_{\C})$ induces a $\C$-algebra homomorphism $\chi_{\lambda}^G : Z(\g_{\C})\rightarrow\C$, and we have a natural description of the set of maximal ideals of $Z(\g_{\C})$ as follows:
\begin{eqnarray}
\jj_{\C}^*/W(\g_{\C}) & \overset{\sim}{\longrightarrow} & \Hom_{\C\text{-}\mathrm{alg}}(Z(\g_{\C}),\C)\label{eqn:HCchi}\\
\lambda & \longmapsto & \chi_{\lambda}^G.\nonumber
\end{eqnarray}
We now discuss the relationship between $\chi_{\nu}^X$ and~$\chi_{\lambda}^G$.

Fix a positive system $\Delta^+(\g_{\C},\jj_{\C})$ of roots of~$\jj_{\C}$ in~$\g_{\C}$ and let $\Sigma^+(\g_{\C},\aaa_{\C})$ be a positive system of restricted roots of $\aaa_{\C}$ in~$\g_{\C}$ such that the restriction map $\alpha\mapsto\alpha|_{\jj_{\C}}$ sends $\Delta^+(\g_{\C},\jj_{\C})$ to $\Sigma^+(\g_{\C},\aaa_{\C})\cup\nolinebreak\{0\}$.
We set $\ttt_{\C}:=\jj_{\C}\cap\h_{\C}$.
Then we have a direct sum decomposition $\jj_{\C}=\ttt_{\C}+\aaa_{\C}$.
Let $\rho_{\aaa}$ (\resp $\rho$) be half the sum of the elements of $\Sigma^+(\g_{\C},\aaa_{\C})$ (\resp $\Delta^+(\g_{\C},\jj_{\C})$), counted with multiplicities, and let $\rho_{\m}:=\rho-\rho_{\aaa}$.
Then $\rho=\rho_{\m}+\rho_{\aaa}\in\jj_{\C}^{\ast}=\ttt_{\C}^{\ast}+\aaa_{\C}^{\ast}$.
The $\rho_{\m}$-shift map $\nu\mapsto\nu+\rho_{\m}$ from $\aaa_{\C}^{\ast}$ to~$\jj_{\C}^{\ast}$ induces a map
\begin{equation}\label{eqn:aj}
T : \aaa_{\C}^{\ast}/W \longrightarrow \jj_{\C}^{\ast}/W(\g_{\C}),
\end{equation}
which is independent of the choice of the positive systems.
The relationship between $\chi_{\nu}^X$ and~$\chi_{\lambda}^G$ is then given as follows.

\begin{lemma}\label{lem:chiZgDGX}
For any $\nu\in\aaa_{\C}^{\ast}/W$, the following diagrams commute.
$$\xymatrixcolsep{1.5pc}
\xymatrix{
Z(\g_{\C}) \ar[dd]_{\dd\ell} \ar[r]^-{\sim}_-{\Psi} & S(\jj_{\C})^{W(\g_{\C})} \ar[dr]^{\chi_{-T(\nu)}^G} & & Z(\g_{\C}) \ar[dd]^{\dd r} \ar[r]^-{\sim}_-{\Psi} & S(\jj_{\C})^{W(\g_{\C})} \ar[dr]^{\chi_{T(\nu)}^G} & \\
& & \C & & & \C\\
\D_G(X) \ar[r]^-{\sim} & S(\aaa_{\C})^W \ar[ur]^{\chi_{\nu}^X} & & \D_G(X) \ar[r]^-{\sim} & S(\aaa_{\C})^W \ar[ur]^{\chi_{\nu}^X} &
}$$
\end{lemma}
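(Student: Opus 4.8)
The statement to be proved is Lemma~\ref{lem:chiZgDGX}: the compatibility of the Harish-Chandra isomorphisms $\Psi$ on $Z(\g_{\C})$ (equivalently $\D_{G\times G}(G)$) and on $\D_G(X)$ with the character maps $\chi^G_{\mp T(\nu)}$ and $\chi^X_\nu$, for the two natural maps $\dd\ell$ and $\dd r$ from $Z(\g_{\C})$ into $\D_G(X)$. The plan is to reduce everything to the computation of a spherical/zonal eigenfunction on $X=G/H$ and to track how the $\rho_{\m}$-shift enters. Concretely, I would test both diagrams on joint eigenfunctions: fix $\nu\in\aaa_{\C}^*$ in general position and consider the elementary spherical function (or, more robustly, any $\mathcal{N}$-horospherical exponential section) $\phi_\nu$ on $X$ characterized by $D\phi_\nu = \chi^X_\nu(D)\,\phi_\nu$ for all $D\in\D_G(X)$, together with the asymptotic normalization coming from the Iwasawa-type decomposition attached to $\Sigma^+(\g_{\C},\aaa_{\C})$. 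Since $\dd r(Z(\g_{\C}))\subset\D_G(X)$, such $\phi_\nu$ is automatically a $Z(\g_{\C})$-eigenfunction under $\dd r$, and the eigenvalue is computed by the classical Harish-Chandra radial-part computation: the leading exponential of $\phi_\nu$ along $A$ is $e^{(\nu-\rho_{\aaa})\log a}$ (or $e^{(\nu+\rho_{\aaa})\log a}$ depending on normalization), and pairing this against $\Psi(z)\in S(\jj_{\C})^{W(\g_{\C})}$ via the Harish-Chandra homomorphism on $Z(\g_{\C})$ produces exactly $\chi^G_{T(\nu)}(z)=\langle\Psi(z),\nu+\rho_{\m}\rangle$, because $\rho=\rho_{\aaa}+\rho_{\m}$ and the $\rho_{\aaa}$-part is already absorbed in the normalization of $\phi_\nu$ while the $\ttt_{\C}$-component $\rho_{\m}$ is the genuine shift. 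This gives the right-hand diagram (for $\dd r$); the left-hand one (for $\dd\ell$) then follows by applying Lemma~\ref{lem:lreta}: $\dd\ell = \dd r\circ\eta$ on $Z(\g_{\C})$, and $\eta$ induces the automorphism of $S(\jj_{\C})^{W(\g_{\C})}\cong Z(\g_{\C})$ corresponding to $\lambda\mapsto -\lambda$ on $\jj_{\C}^*$, which is exactly the sign change turning $\chi^G_{T(\nu)}$ into $\chi^G_{-T(\nu)}$.

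First I would recall precisely the normalization conventions: fix the positive systems $\Delta^+(\g_{\C},\jj_{\C})$ and $\Sigma^+(\g_{\C},\aaa_{\C})$ as in the excerpt so that restriction sends the former onto $\Sigma^+(\g_{\C},\aaa_{\C})\cup\{0\}$, and write $\jj_{\C}=\ttt_{\C}+\aaa_{\C}$ with $\ttt_{\C}=\jj_{\C}\cap\h_{\C}$; the map $T$ is the $\rho_{\m}$-shift $\nu\mapsto\nu+\rho_{\m}$ descended to Weyl-orbit spaces, which is well-defined and independent of choices as already noted. Second, I would invoke the known explicit form of the Harish-Chandra isomorphism $\Psi\colon\D_G(X)\xrightarrow{\sim}S(\aaa_{\C})^W$ for a reductive symmetric space, together with its defining property: $\Psi(D)$ is obtained from the radial part of $D$ on $A\exp(\aaa)$ by the shift $S(\aaa_{\C})\ni p(\cdot)\mapsto p(\cdot-\rho_{\aaa})$ (this is exactly the statement that $\chi^X_\nu(D)$ equals the $e^{\nu}$-eigenvalue of the $\rho_{\aaa}$-normalized radial part). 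Third — the computational core — I would compute the radial part of $\dd r(z)$ for $z\in Z(\g_{\C})$ on $X$: this is the content of the Harish-Chandra homomorphism $Z(\g_{\C})\to S(\jj_{\C})^{W(\g_{\C})}$ restricted-and-compared with $S(\aaa_{\C})^W$. The key algebraic identity is that the projection of $\Phi(z)\in S(\jj_{\C})^{W(\g_{\C})}$ to the radial part on $A$, after the two successive $\rho$-shifts ($\rho$ for $\g_{\C}\supset\jj_{\C}$, and $\rho_{\aaa}$ for the symmetric-space normalization), leaves precisely the $\rho_{\m}$-discrepancy; hence evaluating at $\nu$ gives $\langle\Phi(z),\nu+\rho_{\m}\rangle=\chi^G_{T(\nu)}(z)$. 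This is standard (it is essentially the compatibility of the two Harish-Chandra homomorphisms for $G$ and for $G/H$, cf.\ the $H$-fixed-vector/Casimir computation), and I would cite the relevant statement in \cite{hel00} or \cite{war72} rather than redo it; one only needs to be careful that the conventions there match the ones fixed here, in particular the sign of the shift and whether $\dd r$ or $\dd\ell$ is being used.

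\textbf{Main obstacle.} The real work is bookkeeping of the two sign/shift conventions and making sure they are consistent: there is one shift $\rho_{\aaa}$ built into the definition of $\Psi$ on $\D_G(X)$, another shift $\rho$ built into $\Phi$ on $Z(\g_{\C})$, and the difference $\rho-\rho_{\aaa}=\rho_{\m}$ is exactly what appears in $T$; simultaneously, the passage from $\dd r$ to $\dd\ell$ flips the sign of the $\jj_{\C}^*$-parameter. Getting the left diagram to read $\chi^G_{-T(\nu)}$ (and not $\chi^G_{T(-\nu)}$, which differs since $T$ is affine, not linear) requires applying Lemma~\ref{lem:lreta} at the level of the isomorphism $\Phi$: $\eta$ acts on $S(\jj_{\C})^{W(\g_{\C})}$ by $p\mapsto p(-\,\cdot\,)$, i.e.\ $\chi^G_\lambda\circ\eta=\chi^G_{-\lambda}$, so $\chi^G_{T(\nu)}\circ\eta=\chi^G_{-T(\nu)}$ as required — here it is essential that the sign change acts \emph{after} the $\rho$-shift, which is exactly how $\eta$ interacts with the Harish-Chandra homomorphism. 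I expect no conceptual difficulty beyond this; the lemma is a packaging of classical facts, and the proof is a diagram chase once the radial-part computation for $\dd r(Z(\g_{\C}))$ and the $\eta$-equivariance of $\Phi$ are in hand.
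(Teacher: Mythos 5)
Your proposal is correct and is essentially the paper's argument in mirror image: the paper quotes the classical compatibility of the two Harish-Chandra homomorphisms (Helgason, or Warner Ch.\,2 \S 1.5) for the left ($\dd\ell$) diagram and then deduces the right one from Lemma~\ref{lem:lreta}, whereas you establish the right ($\dd r$) diagram by the radial-part computation (whose core identity you likewise take from \cite{hel00}/\cite{war72}) and deduce the left one from Lemma~\ref{lem:lreta} via $\chi^G_{\lambda}\circ\eta=\chi^G_{-\lambda}$. Your handling of the only nontrivial bookkeeping point — that $\eta$ flips the parameter \emph{after} the $\rho$-shift, so one gets $\chi^G_{-T(\nu)}$ and not $\chi^G_{T(-\nu)}$ — is exactly right.
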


\begin{proof}
For the left diagram, see \cite{hel92} or \cite[Ch.\,2, \S\,1.5]{war72}.
The commutativity of the right diagram follows from that of the left and from Lemma~\ref{lem:lreta}.
\end{proof}

The following fact is due to Helgason \cite{hel92}.

\begin{fact}\label{fact:ZgDGX}
If $G$ is a classical group, then $T$ is injective and the $\C$-algebra homomorphisms $\dd\ell : Z(\g_{\C})\rightarrow\D_G(X)$ and $\dd r : Z(\g_{\C})\rightarrow\D_G(X)$ are surjective.
\end{fact}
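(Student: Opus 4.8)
\textbf{Proof plan for Fact~\ref{fact:ZgDGX}.}
The statement asserts two things when $G$ is classical: injectivity of the $\rho_\m$-shift map $T:\aaa_\C^*/W\to\jj_\C^*/W(\g_\C)$, and surjectivity of $\dd\ell$ and $\dd r$ from $Z(\g_\C)$ onto $\D_G(X)$. Since the two maps $\dd\ell$ and $\dd r$ on $Z(\g_\C)$ are intertwined by the antiautomorphism $\eta$ (Lemma~\ref{lem:lreta}), and $\eta$ induces an automorphism of $Z(\g_\C)$, it suffices to prove surjectivity for $\dd\ell$ alone. The plan is to use the commuting diagram of Lemma~\ref{lem:chiZgDGX}: under the Harish-Chandra isomorphisms $\Psi:\D_G(X)\xrightarrow{\sim}S(\aaa_\C)^W$ and $\Psi:Z(\g_\C)\xrightarrow{\sim}S(\jj_\C)^{W(\g_\C)}$, the map $\dd\ell$ is identified (up to the sign $\nu\mapsto-\nu$, which is harmless for surjectivity) with the \emph{restriction-of-polynomials} map $S(\jj_\C)^{W(\g_\C)}\to S(\aaa_\C)^W$ dual to the affine embedding $\nu\mapsto T(\nu)=\nu+\rho_\m$. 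Thus $\dd\ell$ is surjective precisely when this restriction map is surjective, and it is injective on infinitesimal characters precisely when $T$ is injective — so both claims reduce to one statement about the pair $(\jj_\C,\aaa_\C)$ and the groups $W\subset W(\g_\C)$.

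Concretely, I would argue as follows. First, injectivity of $T$: two elements $\nu,\nu'\in\aaa_\C^*$ have $\nu+\rho_\m$ and $\nu'+\rho_\m$ in the same $W(\g_\C)$-orbit iff (by a standard argument, splitting $\jj_\C=\ttt_\C+\aaa_\C$ and using that $\rho_\m\in\ttt_\C^*$) they are $W$-conjugate, provided the restricted Weyl group $W$ is exactly the subgroup of $W(\g_\C)$ stabilizing $\ttt_\C$ pointwise and acting on $\aaa_\C$. For classical $G$ and a reductive symmetric space $G/H$ this is the classical description of $W$ as $N_{W(\g_\C)}(\aaa_\C)/Z_{W(\g_\C)}(\aaa_\C)$ together with the fact, due to Helgason, that for classical groups no extra identifications occur — equivalently, the little Weyl group acts on $\aaa_\C$ with the same invariants as one would naively expect. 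Second, surjectivity: $S(\aaa_\C)^W$ is a polynomial ring in $r=\dim\aaa_\C$ generators (Chevalley), and I must show each generator is the restriction of a $W(\g_\C)$-invariant polynomial on $\jj_\C$. The key input here is again Helgason's observation for classical $\g_\C$: the restriction map $S(\jj_\C)^{W(\g_\C)}\to S(\aaa_\C)$ has image exactly $S(\aaa_\C)^W$. For $\g_\C$ of type $A,B,C,D$ one checks this by exhibiting explicit generators — power sums, elementary symmetric functions, the Pfaffian — and verifying that their restrictions to the relevant Cartan subspace $\aaa_\C$ generate $S(\aaa_\C)^W$; here one uses that for classical symmetric pairs $\aaa_\C$ sits inside $\jj_\C$ as a coordinate subspace and $W$ acts as a signed-permutation or permutation group compatibly with the ambient $W(\g_\C)$.

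The main obstacle is the surjectivity half, and more precisely the fact that it genuinely \emph{fails} for some exceptional $\g_\C$ (which is why the hypothesis ``$G$ classical'' is present). So the proof cannot be a soft general argument; it must either cite Helgason's case-by-case verification \cite{hel92} or reproduce the comparison of generators for each classical type. In the write-up I would simply invoke \cite{hel92} for this, noting that the statement there is exactly that for $\g_\C$ classical the Harish-Chandra image of $Z(\g_\C)$ under $\dd\ell$ fills all of $S(\aaa_\C)^W$, and that injectivity of $T$ is part of the same analysis; the diagram of Lemma~\ref{lem:chiZgDGX} and Lemma~\ref{lem:lreta} then upgrade this to the stated surjectivity of both $\dd\ell$ and $\dd r$ as algebra homomorphisms $Z(\g_\C)\to\D_G(X)$. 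The only thing to be careful about is that $\Psi\circ\dd\ell$ differs from the naive restriction by the $W$-equivariant sign $\nu\mapsto-\nu$ on $\aaa_\C^*$, which preserves both injectivity and surjectivity, so it does not affect either conclusion.
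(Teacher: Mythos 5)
Your proposal is correct and matches the paper's treatment: the paper gives no independent proof of Fact~\ref{fact:ZgDGX}, but simply cites Helgason \cite{hel92}, exactly the case-by-case input you identify as unavoidable. Your reductions — handling $\dd r$ via the antiautomorphism $\eta$ of Lemma~\ref{lem:lreta}, and translating surjectivity/injectivity through the diagram of Lemma~\ref{lem:chiZgDGX} with the harmless sign $\nu\mapsto-\nu$ — are the same bookkeeping the paper itself relies on.
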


The Cartan--Weyl highest weight theory establishes a bijection between irreducible finite-dimensional representations of~$\g_{\C}$ and dominant integral weights with respect to the positive system $\Delta^+(\g_{\C},\jj_{\C})$:
$$\Rep(\g_{\C},\lambda) \longleftrightarrow \lambda.$$
When it exists, we denote by $\Rep(G,\lambda)$ the lift of $\Rep(\g_{\C},\lambda)$ to the connected compact group~$G$.
Among such representation, the irreducible finite-dimensional representations with nonzero $H_{\C}$-fixed vectors are characterized by the following theorem of Cartan--Helgason (see \eg \cite[Th.\,3.3.1.1]{war72}):

\begin{fact}[Cartan--Helgason theorem]\label{fact:CartanHelgason}
Suppose $X=G/H$ is a compact reductive symmetric space, and let $\lambda\in\jj_{\C}^{\ast}$ be a dominant integral weight with respect to $\Delta^+(\g_{\C},\jj_{\C})$.
\begin{enumerate}
  \item The representation $\Rep(\g_{\C},\lambda)$ has a nonzero $\h_{\C}$-fixed vector if and only if
  \begin{equation}\label{eqn:CarHel}
  \lambda|_{\ttt_{\C}}=0 \quad\quad \mathrm{and} \quad\quad \frac{\langle\lambda,\alpha\rangle}{\langle\alpha,\alpha\rangle} \in \N \quad \forall\alpha\in\Sigma^+(\g_{\C},\aaa_{\C}).
  \end{equation}
  In this case, the space of $\h_{\C}$-fixed vectors in $\Rep(\g_{\C},\lambda)$ is one-dimen\-sional, and we shall regard $\lambda$ as an element of~$\aaa_{\C}^{\ast}$ since $\lambda|_{\ttt_{\C}}=0$.
  \item Suppose $\lambda$ satisfies \eqref{eqn:CarHel}.
  Then $\Rep(\g_{\C},\lambda)$ lifts to a representation $\Rep(G,\lambda)$ of~$G$ if and only if $\lambda\in\aaa_{\C}^{\ast}$ lifts to the compact torus\linebreak $\exp\aaa\,\,(\subset\!\!G)$.
  In this case, if $H$ is connected, then the $G$-module $\Rep(\g_{\C},\lambda)$ is realized uniquely in the regular representation $C^{\infty}(X)$.
  \item The algebra $\D_G(X)$ acts on $\Rep(\g_{\C},\lambda)$ by the scalars $\chi_{\lambda+\rho_{\aaa}}^X$.
\end{enumerate}
\end{fact}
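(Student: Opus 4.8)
The plan is to establish the three assertions in turn, the substantive input being an Iwasawa-type decomposition attached to the symmetric pair $(\g,\h)$. Fix a Cartan subalgebra $\jj_\C=\ttt_\C\oplus\aaa_\C$ with $\ttt_\C=\jj_\C\cap\h_\C$, and choose the positive systems $\Delta^+(\g_\C,\jj_\C)$ and $\Sigma^+(\g_\C,\aaa_\C)$ compatibly, so that every $\alpha\in\Delta^+$ restricts to an element of $\Sigma^+\cup\{0\}$; write $\n_\C=\bigoplus_{\alpha\in\Delta^+,\,\alpha|_{\aaa_\C}\neq 0}\g_\C^{\alpha}$ and let $\m_\C$ be the centralizer of $\aaa_\C$ in $\g_\C$ (so $\jj_\C\subset\m_\C$ and $\m_\C\cap\q_\C=\aaa_\C$). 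Since $X_\C=G_\C/H_\C$ is spherical --- being symmetric --- a Borel subalgebra satisfies $\g_\C=\h_\C+\bb_\C$; discarding the part of $\bb_\C$ already inside $\h_\C$ and checking that dimensions match by a restricted-root multiplicity count (which uses the maximality of $\aaa$ in $\q$), this upgrades to the direct sum $\g_\C=\h_\C\oplus\aaa_\C\oplus\n_\C$, and likewise $\g_\C=\h_\C\oplus\aaa_\C\oplus\bar\n_\C$ with the opposite nilpotent part.

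For the necessity direction of~(1), let $0\neq v\in V_\lambda^{\h_\C}$ with $V_\lambda=\Rep(\g_\C,\lambda)$. By PBW applied to $\g_\C=\bar\n_\C\oplus\aaa_\C\oplus\h_\C$ together with $\h_\C v=0$ one gets $V_\lambda=U(\g_\C)v=U(\bar\n_\C)\,U(\aaa_\C)\,v$; writing $v=\sum_\theta v_\theta$ for the $\aaa_\C$-weight decomposition and using that $\bar\n_\C$ strictly lowers $\aaa_\C$-weights while the highest $\aaa_\C$-weight space of $V_\lambda$ is the $\m_\C$-submodule generated by the highest weight vector $v_\lambda$ (the irreducible $\m_\C$-module of highest weight $\lambda$), one sees that this highest $\aaa_\C$-weight space equals $\C v_{\lambda|_{\aaa_\C}}$; in particular it is one-dimensional and $v':=v_{\lambda|_{\aaa_\C}}\neq 0$. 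Since $\m_\C\cap\h_\C$ preserves each $\aaa_\C$-weight space, $v'$ is fixed by $\m_\C\cap\h_\C$, hence the $\m_\C$-character on $\C v'=\C v_\lambda$ is trivial on $\m_\C\cap\h_\C\supset\ttt_\C$, which forces $\lambda|_{\ttt_\C}=0$; the same computation also yields $\dim V_\lambda^{\h_\C}\leq 1$ (two such vectors have proportional projections to $\C v_\lambda$, so a suitable combination has vanishing projection and hence vanishes), consistent with sphericity. The integrality $\langle\lambda,\alpha^\vee\rangle\in\N$ for $\alpha\in\Sigma^+(\g_\C,\aaa_\C)$ is then obtained by restricting to the $\sigma$-stable rank-one reductive subalgebra attached to each restricted root and invoking the elementary rank-one case.

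For the sufficiency direction of~(1), the set $\Lambda$ of dominant $\lambda$ with $V_\lambda^{\h_\C}\neq 0$ is a finitely generated monoid (the weight monoid of the affine spherical variety $G_\C/H_\C$; closure under addition follows from the Cartan component of tensor products together with multiplicity-freeness), contained by the necessity direction in the finitely generated candidate monoid $\Lambda_0:=\{\lambda:\lambda|_{\ttt_\C}=0,\ \langle\lambda,\alpha^\vee\rangle\in\N\ \forall\alpha\in\Sigma^+\}$, which is generated by (multiples of) the restricted fundamental weights. It remains to show $\Lambda_0\subset\Lambda$, i.e.\ that each generator of $\Lambda_0$ is realized: this I would do by the rank-one reduction again, or equivalently by exhibiting the spherical vector $\int_H\pi(h)v_\lambda\,\dd h$ and detecting its non-vanishing through a Gindikin--Karpelevich-type product whose factors are nonzero precisely under the integrality hypothesis. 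I expect this last step --- and the rank-one case underlying the integrality in the necessity direction --- to be the genuine obstacle, since it rests on the fine structure of restricted roots; everything else is comparatively formal.

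For~(2), a representation of $\g_\C$ integrates to the connected compact group $G$ exactly when its highest weight lies in the character lattice of a maximal torus $T$; since $\lambda|_{\ttt_\C}=0$ by~(1), $\lambda$ is already trivial on the $\ttt$-direction, so this reduces to $\lambda$ being a character of the subtorus $\exp\aaa$. For the realization in $C^\infty(X)$ when $H$ is connected, Peter--Weyl gives $C^\infty(X)=C^\infty(G)^H\cong\widehat{\bigoplus}_\mu V_\mu\otimes(V_\mu^{\vee})^H$, so $\Rep(G,\lambda)$ occurs with multiplicity $\dim(V_\lambda^{\vee})^H$; as $G$ is compact, $V_\lambda^{\vee}\cong\overline{V_\lambda}$ and complex conjugation carries $V_\lambda^{H}$ bijectively onto $(V_\lambda^{\vee})^H$, and (using $H$ connected, so $V_\lambda^H=V_\lambda^{\h_\C}$) this multiplicity equals $\dim V_\lambda^{\h_\C}=1$ by~(1). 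For~(3), fix this unique copy of $\Rep(\g_\C,\lambda)$ in $C^\infty(X)$; each $D\in\D_G(X)$ commutes with the $G$-action, so by Schur's lemma it acts on this irreducible submodule by a scalar, defining a homomorphism $\D_G(X)\to\C$, i.e.\ an element $\nu\in\aaa_\C^*/W$ via the Harish-Chandra isomorphism $\Psi$. To identify $\nu=\lambda+\rho_\aaa$, test against the subalgebra $\dd\ell(Z(\g_\C))$: on this copy $Z(\g_\C)$ acts through $\dd\ell$ by the infinitesimal character of $V_\lambda$ (up to the anti-automorphism twist of Lemma~\ref{lem:lreta}), while by Lemma~\ref{lem:chiZgDGX} the composite $\chi^X_\nu\circ\dd\ell$ equals $\chi^G_{-T(\nu)}$ with $T$ the $\rho_\m$-shift; matching these two homomorphisms on $Z(\g_\C)$ and unwinding the $\rho_\m$-shift built into the compatible positive systems gives $\nu=\lambda+\rho_\aaa$, which one may alternatively verify directly by computing the radial parts of a set of generators of $\D_G(X)$ in the manner of Harish-Chandra.
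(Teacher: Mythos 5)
The paper offers no proof of this statement to compare yours with: it is stated as a \emph{Fact} and attributed to Cartan--Helgason with a reference to \cite[Th.\,3.3.1.1]{war72}, so I can only judge your argument on its own terms. Its skeleton is the standard one, and the parts you actually carry out are correct: the decomposition $\g_{\C}=\h_{\C}\oplus\aaa_{\C}\oplus\n_{\C}$ (most cleanly obtained as the complexified Iwasawa decomposition of the dual real form $\h+\sqrt{-1}\,\q$, rather than from sphericity plus a dimension count, which as written is hand-waving), the projection-to-highest-restricted-weight argument via $U(\g_{\C})=U(\n_{\C}^-)U(\aaa_{\C})U(\h_{\C})$ giving both $\lambda|_{\ttt_{\C}}=0$ and $\dim\Rep(\g_{\C},\lambda)^{\h_{\C}}\leq 1$, and the Peter--Weyl/Frobenius multiplicity count in the second half of~(2).

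However, the actual content of the theorem is left unproved, and you say so yourself: the integrality $\langle\lambda,\alpha\rangle/\langle\alpha,\alpha\rangle\in\N$ in the necessity direction and the entire sufficiency direction (nonvanishing of $\int_H\pi(h)v_\lambda\,\dd h$, or the rank-one reduction) are named but not carried out, and these are precisely the delicate steps resting on the structure of restricted roots; as it stands the proposal is a correct outline, not a proof. Two further steps are glossed over. In~(2), the equivalence ``$\Rep(\g_{\C},\lambda)$ lifts to $G$ iff $\lambda$ lifts to $\exp\aaa$'' is not formal in the nontrivial direction: the kernel of $\exp$ on $\ttt\oplus\aaa$ need not split as the sum of its intersections with $\ttt$ and with $\aaa$; $\sigma$-stability only gives $2Y\in\ker(\exp|_{\aaa})$ for $X+Y$ in the kernel, so integrality of $\lambda$ on $\ker(\exp|_{\aaa})$ a priori yields only half-integrality on the full kernel, and closing this factor of two is exactly where the strengthened integrality in \eqref{eqn:CarHel} (triviality of $e^{\lambda}$ on $\exp\aaa\cap Z_H(\aaa)$) enters the classical proof. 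In~(3), pinning down the eigencharacter by testing only against $\dd\ell(Z(\g_{\C}))$ determines it only on that subalgebra, which is known to equal $\D_G(X)$ for classical $G$ (the paper's Fact~\ref{fact:ZgDGX}, after \cite{hel92}) but not for every symmetric space, so your primary route is incomplete in general; moreover the matching of infinitesimal characters must keep track of the $\eta$-twist of Lemma~\ref{lem:lreta} and of $\Rep(\g_{\C},\lambda)$ versus its contragredient, which you only gesture at. The standard argument avoids both issues by letting $U(\g_{\C})^{H}$ act on the spherical (or highest restricted weight) vector and reading off the scalar directly from the definition of $\Psi$ as projection to $U(\aaa_{\C})$ followed by the $\rho_{\aaa}$-shift; your ``radial parts'' fallback is essentially this, but it is not carried out.
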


\subsection{A surjectivity result}

In the general setting of Theorem~\ref{thm:main-explicit}, we observe the following.

\begin{lemma}\label{lem:surj-dl-drF}
In the setting \ref{setting}, suppose that $\tilG$ is simple or that the triple $(\tilG,\tilH,G)$ is \eqref{eqn:nonsimple-case}, and let $K$ and $F=K/H$ be as in Theorem~\ref{thm:main-explicit} or Remark~\ref{rem:nonsimple-case}.
Then the homomorphisms
\begin{eqnarray*}
\dd\ell & : & Z(\tilg_{\C}) \longrightarrow \D_{\tilG}(X),\\
\dd r_{\scriptscriptstyle F} & : & Z(\kk_{\C}) \longrightarrow \D_K(F)
\end{eqnarray*}
of \eqref{eqn:dl} and \eqref{eqn:dr-F} are surjective.
\end{lemma}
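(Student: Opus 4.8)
The statement to prove is Lemma~\ref{lem:surj-dl-drF}: the surjectivity of $\dd\ell : Z(\tilg_{\C})\to\D_{\tilG}(X)$ and of $\dd r_{\scriptscriptstyle F} : Z(\kk_{\C})\to\D_K(F)$. The two claims are formally identical: both say that for a certain homogeneous space $M=L/L'$ occurring in our setting (namely $\tilG/\tilH$, resp.\ $K/H$), the image of the center of the enveloping algebra of $\mathfrak{l}_{\C}$ under $\dd\ell$ (resp.\ $\dd r$) is all of $\D_L(M)$. So the plan is to prove a single statement: if $L/L'$ is one of the homogeneous spaces appearing as $\tilG/\tilH$ or $K/H$ in Table~\ref{table1} (or in case \eqref{eqn:nonsimple-case}), then $\dd\ell(Z(\mathfrak{l}_{\C}))=\D_L(L'\backslash\text{pt})=\D_L(L/L')$, and symmetrically for $\dd r$.

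First I would invoke the structural observation already recorded in the excerpt: in all cases of Table~\ref{table1}, and in the case \eqref{eqn:nonsimple-case}, the pair $(\tilg,\tilh)$ is a symmetric pair \emph{except} in cases (viii) and~(ix), and the fiber $F=K/H$ is a symmetric space in all but finitely many cases as well (the text says ``in most cases of Table~\ref{table1}, both $\tilG/\tilH$ and $F=K/H$ are symmetric spaces''). For the symmetric cases, surjectivity of $\dd\ell$ and $\dd r$ is exactly Helgason's result, recorded here as Fact~\ref{fact:ZgDGX}, provided the relevant group is classical; one checks from the $\tilH,K$ columns of Table~\ref{table1} that whenever $\tilG/\tilH$ or $K/H$ is symmetric, the acting group ($\tilG$, resp.\ $K$) is classical in each such case (e.g.\ $\SO$, $\SU$, $\U$, $\Sp$ factors), so Fact~\ref{fact:ZgDGX} applies directly. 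For the few exceptional-group cases where the relevant group is $\Spin(9)$, $\Spin(7)$, $G_{2(-14)}$ acting on a symmetric space (e.g.\ the octonionic cases (vi), (xi), (xii) and the $F=K/H$ in (x)), surjectivity still holds: either because the fiber is trivial ($F$ is a point in (x),(xi),(xii), where $\D_K(F)=\C$ and the statement is vacuous), or because $\Spin(9)/\Spin(7)$ and similar rank-one symmetric spaces have $\D_L(M)$ generated by a single Casimir operator, which obviously lies in $\dd\ell(Z(\mathfrak{l}_{\C}))$.

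The genuine work is therefore confined to the non-symmetric cases: $(\tilg,\tilh)$ in cases (viii) and~(ix), where $\tilh=\mathfrak{g}_{2(-14)}$ sits in $\tilg=\so(7)$, and a handful of fibers $F=K/H$ that are spherical but not symmetric (for instance the $F$ of rank~$2$ in case (viii), namely $K/H=(\SO(4)\times\SO(2))/\iota_8(\U(2))$, and possibly cases~(iii),(v) etc.). Here I would argue as follows. By Knop's theorem (Section~\ref{subsec:DGH-spherical}), $\D_L(M)$ is a polynomial ring in $r=\rank(L/L')$ generators, and it is finitely generated as a module over the image $\dd\ell(Z(\mathfrak{l}_{\C}))$. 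To upgrade ``module-finite'' to ``equal'', it suffices to show the two rings have the same fraction field, equivalently the same Krull dimension, equivalently that $\dd\ell(Z(\mathfrak{l}_{\C}))$ already contains $r$ algebraically independent elements. For $\so(7)\supset\mathfrak{g}_{2(-14)}$ one has $\rank(L/L')=1$ (row (viii)--(ix) of Table~\ref{table2} gives $\rank\tilG/\tilH=1$), so one only needs a single nonconstant operator in the image of $\dd\ell$, which is supplied by the Casimir $C_{\tilG}$ — and $\D_L(M)$, being a polynomial ring in one generator containing the nonconstant element $\dd\ell(C_{\tilG})$ and module-finite over $\C[\dd\ell(C_{\tilG})]$, must equal $\C[\dd\ell(C_{\tilG})]$ by a degree/normalization argument. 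For the rank-$2$ nonsymmetric fiber in case~(viii) one needs a \emph{second} independent invariant operator in $\dd r(Z(\kk_{\C}))$; I expect this to come from a second Casimir-type element or from the fact that $\kk$ has a nonsimple summand (an $\SO(2)=\U(1)$ factor), whose enveloping algebra contributes the ``angular'' generator. Concretely one checks that $\dim \kk_{\C} - \dim\mathfrak h_{\C}$ and the explicit generators listed in Section~\ref{sec:computations} realize both generators of $\D_K(F)$ inside $\dd r(Z(\kk_{\C}))$.

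\textbf{Main obstacle.} The only nonroutine point is the nonsymmetric fiber case: for symmetric $\tilG/\tilH$ and $F$ one quotes Fact~\ref{fact:ZgDGX} and is done, but when $F=K/H$ is spherical of rank $\ge 2$ but not symmetric (and $K$ may be non-semisimple, e.g.\ involving a $\U(1)$-factor), there is no off-the-shelf ``$\dd r$ is surjective'' statement. I would handle it by the Krull-dimension/module-finiteness argument above, reducing to exhibiting the right number of algebraically independent operators in the image — and at worst by the explicit generator computations of Section~\ref{sec:computations}, which in any event produce explicit elements of $\dd r(Z(\kk_{\C}))$ that one verifies generate $\D_K(F)=\C[Q_1,\dots,Q_n]$. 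Since there are only finitely many such fibers, a short case check closes the lemma.
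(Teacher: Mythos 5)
Your overall reduction is the paper's: quote Helgason (Fact~\ref{fact:ZgDGX}) for the classical symmetric cases, then treat the finitely many remaining spaces by hand (the paper's list is $\SO(7)/G_{2(-14)}$, $\SO(8)/\Spin(7)$, the case (v)$'$ fiber $(\Sp(1)\times\U(1))/\Diag(\U(1))$, the case (viii) fiber $(\SO(4)\times\SO(2))/\iota_8(\U(2))$, and $\Spin(7)/G_{2(-14)}$ in the product case). The genuine gap is the mechanism you propose for those exceptional cases: the claim that module-finiteness over the image plus equal Krull dimension (i.e.\ exhibiting $r$ algebraically independent elements in the image) upgrades to equality is false. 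Same Krull dimension is not equivalent to same fraction field, and even with equal fraction fields one needs normality of the subring; the basic counterexample is $\C[u^2]\subset\C[u]$, which is module-finite of the same dimension but proper. The paper itself contains a counterexample to your criterion: in case (ix) the subalgebra $\C[\dd\ell(C_{\tilG}),\dd\ell(C_G)]$ of the rank-two polynomial ring $\D_G(X)$ contains two algebraically independent elements and $\D_G(X)$ is module-finite over it (since $\dd r(E_K)^2\in\C[\dd\ell(C_{\tilG}),\dd\ell(C_G)]$), yet it is a proper subalgebra of index two (Proposition~\ref{prop:ex(ix)}). So for $\SO(7)/G_{2(-14)}$ your argument only shows that $\C[\dd\ell(C_{\tilG})]\subset\D_{\tilG}(X)$ is a finite extension. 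What closes it — and what the paper does — is the symbol/graded argument of Lemma~\ref{lem:struct-DGX}: one computes that $S(\tilg_{\C}/\tilh_{\C})^{\tilH}=S(\C^7)^{G_2}$ is a polynomial ring on one element of degree $2$, and since $\dd\ell(C_{\tilG})$ has order $2$ with nonzero principal symbol it is itself a polynomial generator; similarly for the nonsymmetric fibers via Lemma~\ref{lem:ex(viii)}.(5)--(6). Your "degree/normalization argument" gestures at this but never supplies it, and your fallback (verify against the explicit generators of Section~\ref{sec:computations}) is in fact the paper's proof; so the lemma is reachable from your plan, but not by the primary argument you state.

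Two further points of precision in the case check. First, the exceptional list for $\tilG/\tilH$ is not only (viii)--(ix): in cases (xi) and (xiii) one has $\tilG/\tilH=\SO(8)/\Spin(7)$, where $\Spin(7)$ is \emph{not} a symmetric subgroup of $\SO(8)$ (only the abstract Lie algebra pair is symmetric, via triality), so Fact~\ref{fact:ZgDGX} does not apply verbatim; the paper reduces this case to $\SO(8)/\SO(7)$ by passing to a double covering and applying the triality of $D_4$. Second, the nonsymmetric fibers are exactly those of (v)$'$ and (viii) (plus $\Spin(7)/G_{2(-14)}$ in the product case \eqref{eqn:nonsimple-case}); the fibers in (iii) and (v) that you flag as "possibly" problematic are the symmetric spaces $\mathbb{S}^2$ and $(\Sp(1)\times\Sp(1))/\Diag(\Sp(1))$. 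These are checkable details, but they must be pinned down for the case-by-case reduction to be complete.
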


\begin{proof}
Suppose $\tilG$ is simple.
It follows from the classification of Table~\ref{table1} that $\tilG/\tilH$ is always a classical symmetric space, except in cases (viii) and~(ix), where $\tilG/\tilH=\SO(7)/G_{2(-14)}$, and in cases (xi) and~(xiii), where $\tilG/\tilH=\SO(8)/\Spin(7)$.
Similarly, $F=K/H$ is always a classical symmetric space or a singleton, except in case (v)$'$, where
$$F \!=\! ((\Sp(n)\times\Sp(1))\cdot\U(1))/(\Sp(n)\cdot\Diag(\U(1))) \simeq (\Sp(1)\times\U(1))/\Diag(\U(1)),$$
in case (viii), where $F=(\SO(4)\times\SO(2))/\iota_8(\U(2))$ (see Section~\ref{subsec:ex(viii)} for the definition of~$\iota_8$), and in the example of Section~\ref{sec:ex(*)}, where $F=\Spin(7)/G_{2(-14)}$.
Thus, by Fact~\ref{fact:ZgDGX}, we only need to prove that $\dd\ell : Z(\g_{\C})\rightarrow\D_G(X)$ is surjective in the following four cases:
\begin{enumerate}
  \item $X=G/H=\SO(7)/G_{2(-14)}$;
  \item $X=G/H=\SO(8)/\Spin(7)$;
  \item $X=G/H=(\Sp(1)\times\U(1))/\Diag(\U(1))$;
  \item $X=G/H=(\SO(4)\times\SO(2))/\iota_8(\U(2))$.
\end{enumerate}
For~(1) we see from Lemma~\ref{lem:struct-DGX} and Lemma~\ref{lem:ex(xi)}.(3) below that $\D_G(X)$ is generated by the Casimir operator.
For~(2) we reduce to the classical symmetric space $\SO(8)/\SO(7)$ by taking a double covering and using the triality of~$D_4$ (see Section~\ref{subsec:ex(vii)}).
For~(3) we note that $\D_G(X)$ is generated by the Casimir operators of $\Sp(1)$ and the Euler operator of $\U(1)$.
For~(4) we see from Lemmas \ref{lem:struct-DGX} and~\ref{lem:ex(viii)}.(5) below that $\D_G(X)$ is generated by the Casimir operator of $\SO(4)$ and the Euler operator of $\SO(2)$.

Suppose $(\tilG,\tilH,G)$ is the triple \eqref{eqn:nonsimple-case}.
Then $X=\tilG/\tilH$ is a direct product of two copies of $\Spin(8)/\Spin(7)$, and $F=K/H=\Spin(7)/G_{2(-14)}$, hence both $\dd\ell$ and $\dd r_{\scriptscriptstyle F}$ are surjective.
\end{proof}

\section{Analysis on fiber bundles and branching laws} \label{sec:anal-fiber-bund}

In this section, we collect some useful results on finite-dimensional representations of compact groups.
A similar approach will be used in \cite{kkII} to deal with infinite-dimensional representations of noncompact groups; this is why we use the terminology of discrete series representations here.

\subsection{Discrete series representations}\label{subsec:discreteseries}

Let $G$ be a unimodular Lie group and $H$ a closed unimodular subgroup.
The homogeneous space $G/H$ carries a $G$-invariant Radon measure.
Recall that an irreducible unitary representation~$\pi$ of~$G$ is called a \emph{discrete series representation} for $X=G/H$ if there exists a nonzero continuous $G$-intertwining operator from $\pi$ to the regular representation of~$G$ on~$L^2(X)$ or, equivalently, if $\pi$ can be realized as a closed $G$-invariant subspace of $L^2(X)$.
Let $\widehat{G}$ be the unitary dual of~$G$, \ie the set of equivalence classes of irreducible unitary representations of~$G$.
We shall denote by $\Disc(G/H)$ the subset of~$\widehat{G}$ consisting of unitary equivalence classes of discrete series representations for $G/H$.

We now assume that $G$ is compact.
Then any $\pi\in\widehat{G}$ is finite-dimensional.
By the Frobenius reciprocity theorem, $\Disc(G/H)$ is the set of equivalence classes of irreducible finite-dimensional representations $\pi$ of~$G$ with nonzero $H$-fixed vectors.
Furthermore,
$$\dim\Hom_G(\pi,L^2(X)) = [\pi|_H:\mathbf{1}] := \dim V_{\pi}^{H},$$
where $V_{\pi}^{H}$ is the subspace of $H$-invariant vectors in the representation space $V_{\pi}$ of~$\pi$.
Here is a version of Fact~\ref{fact:spherical} for compact~$G$.

\begin{fact}\label{fact:spherical-cpt}
Let $G$ be a connected compact Lie group.
Then the following conditions on $(G,H)$ are equivalent:
\begin{enumerate}[(i)]
  \item $X_{\C}=G_{\C}/H_{\C}$ is $G_{\C}$-spherical;
  \item the $\C$-algebra $\D_G(X)$ is commutative;
  \item the discrete series for $G/H$ have uniformly bounded multiplicities;
  \item $G/H$ is multiplicity-free (\ie all discrete series for $G/H$ have multiplicity~$1$).
\end{enumerate}
\end{fact}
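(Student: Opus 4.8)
The plan is to deduce the equivalence (i)\,$\Leftrightarrow$\,(ii) at once from Fact~\ref{fact:spherical}, since a compact homogeneous space $G/H$ is in particular a reductive homogeneous space, and then to organize the rest around the Peter--Weyl decomposition. Concretely, for $G$ connected compact the Hilbert-space and algebraic versions of Peter--Weyl give
\begin{equation*}
L^2(X) \ \simeq\ \bigoplus_{\pi\in\widehat{G}} V_{\pi}\otimes(V_{\pi}^{\vee})^{H}
\qquad\text{and}\qquad
\C[X_{\C}] \ \simeq\ \bigoplus_{\pi\in\widehat{G}} V_{\pi}\otimes(V_{\pi}^{\vee})^{H_{\C}},
\end{equation*}
where in the second line $X_{\C}=G_{\C}/H_{\C}$ is an affine variety (the group $H_{\C}$ being reductive), and $(V_{\pi}^{\vee})^{H_{\C}}=(V_{\pi}^{\vee})^{H}$ because $H$ is Zariski-dense in~$H_{\C}$. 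In $C(X)$, $C^{\infty}(X)$ and $L^2(X)$ alike the multiplicity of $\pi$ equals $\dim V_{\pi}^{H}$ by Frobenius reciprocity; hence our condition~(iii) coincides with condition~(iii) of Fact~\ref{fact:spherical} applied to the reductive homogeneous space $G/H$, which gives (i)\,$\Leftrightarrow$\,(iii). Since (iv)\,$\Rightarrow$\,(iii) is immediate (uniform bound~$1$), the only thing left to prove is (i)\,$\Rightarrow$\,(iv), i.e.\ that $G_{\C}$-sphericity of $X_{\C}$ forces $\dim V_{\pi}^{H}\le 1$ for all $\pi$.

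For that step I would run the classical Vinberg--Kimelfeld argument. Fix a Borel subgroup $B=TN$ of~$G_{\C}$ with a dense orbit on~$X_{\C}$, which exists by sphericity. In the isotypic component $V_{\pi}\otimes(V_{\pi}^{\vee})^{H_{\C}}\subset\C[X_{\C}]$, the subspace of $N$-invariant vectors on which $T$ acts by the highest weight $\lambda_{\pi}$ of~$\pi$ is exactly $\C\,v_{\lambda_{\pi}}\otimes(V_{\pi}^{\vee})^{H_{\C}}$, of dimension $\dim V_{\pi}^{H}$, and every nonzero element of it is a $B$-semiinvariant regular function on $X_{\C}$ with one and the same eigencharacter~$\lambda_{\pi}$. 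If $f_1,f_2$ are two such nonzero functions, their ratio $f_1/f_2$ is a $B$-invariant rational function on~$X_{\C}$, hence constant on the dense $B$-orbit and therefore constant; so $f_1$ and $f_2$ are proportional. Thus $\dim V_{\pi}^{H}\le 1$, the ring $\C[X_{\C}]$ is multiplicity-free as a $G_{\C}$-module, and (iv) holds. This closes the cycle: (i)\,$\Leftrightarrow$\,(ii), (i)\,$\Leftrightarrow$\,(iii), and (i)\,$\Rightarrow$\,(iv)\,$\Rightarrow$\,(iii).

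The one genuinely non-formal ingredient is this last implication --- the Vinberg--Kimelfeld passage from ``dense Borel orbit'' to ``multiplicity one'' via the constancy of $B$-invariant rational functions; everything else is bookkeeping with Peter--Weyl, Frobenius reciprocity, and the already-recorded Fact~\ref{fact:spherical}. The main points to handle carefully are the compatibility between the compact analytic picture ($L^2$, $C^{\infty}$) and the algebraic picture $\C[G_{\C}/H_{\C}]$ --- which is precisely where one uses that $H$ is algebraic, so that $X_{\C}$ is a genuine affine variety with $H_{\C}$ reductive and $(V_{\pi}^{\vee})^{H}=(V_{\pi}^{\vee})^{H_{\C}}$ --- and the identification of the highest-weight line of each isotypic component, ensuring that all the relevant functions really are $B$-semiinvariant with the same eigencharacter.
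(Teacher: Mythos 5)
Your proof is correct, and its core coincides with what the paper relies on: the paper does not prove this Fact but cites Vinberg--Kimelfeld for (i)\,$\Leftrightarrow$\,(iv) and Kr\"amer for (iii)\,$\Leftrightarrow$\,(iv), and your argument for (i)\,$\Rightarrow$\,(iv) is precisely the Vinberg--Kimelfeld mechanism (dense $B$-orbit, two $B$-semiinvariant regular functions with the same character have a constant ratio), with the algebraic Peter--Weyl decomposition of $\C[G_{\C}/H_{\C}]$ and the identification $(V_{\pi}^{\vee})^{H}=(V_{\pi}^{\vee})^{H_{\C}}$ handled correctly via affineness of $X_{\C}$ and Zariski density of $H$ in~$H_{\C}$. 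The one place you diverge is the treatment of (iii): instead of invoking Kr\"amer's result that uniformly bounded multiplicities already force multiplicity one, you tie (iii) to (i) through Frobenius reciprocity and condition (iii) of Fact~\ref{fact:spherical}, whose proof in the paper rests on the (much more general) finite-multiplicity theorem of \cite{ko13}; this closes the logical cycle (i)\,$\Leftrightarrow$\,(ii), (i)\,$\Leftrightarrow$\,(iii), (i)\,$\Rightarrow$\,(iv)\,$\Rightarrow$\,(iii) just as well, at the cost of leaning on heavier input for the compact case than Kr\"amer's elementary argument, but with the benefit of making the whole Fact self-contained modulo results already recorded in the paper.
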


For (i)\,$\Leftrightarrow$\,(iv), see \cite{vk78}; for (iii)\,$\Leftrightarrow$\,(iv), see \cite{kra76}.

When $X=G/H$ is a reductive symmetric space, the set $\Disc(G/H)$ is described by the Cartan--Helgason theorem (Fact~\ref{fact:CartanHelgason}.(2)).
For nonsymmetric spherical $X=G/H$ with $G$ simple, the set $\Disc(G/H)$ was determined by Kr\"amer \cite{kra79}.
We shall consider nonsymmetric spherical $X=G/H$ with an overgroup~$\tilG$ as in Table~\ref{table1} (where $G$ is not necessarily simple); in this case, the description of $\Disc(G/H)$ is enriched in Section~\ref{sec:computations} by a description of the branching laws of representations for the restriction $\tilG\downarrow G$.

\subsection{A decomposition of $L^2(X)$ using discrete series for a fiber}\label{subsec:L2decomp}

Let $\tilG$ be a compact connected Lie group and $\tilH,G$ two connected subgroups of~$\tilG$ such that $\tilG=\tilH G$.
Let $H:=\tilH\cap G$ and let $K$ be a connected subgroup of~$G$ containing~$H$ (see Proposition~\ref{prop:H-subset-K} for later applications).
The space $X:=G/H$ fibers over $Y:=G/K$ with fiber $F:=K/H$.
For any finite-dimensional (complex)  irreducible representation $(\tau,V_{\tau})$ of~$K$, we set
$$W_{\tau} := V_{\tau} \otimes (V_{\tau}^{\vee})^H \simeq V_{\tau} \otimes_{\C} \C^{\ell_{\tau}},$$
where $(\tau^{\vee},V_{\tau}^{\vee})$ is the contragredient representation and $\ell_{\tau}:=[\tau|_H:\mathbf{1}]\in\N$; by definition, $\ell_{\tau}\neq 0$ if and only if $\tau\in\Disc(K/H)$.
The matrix coefficient
\begin{equation} \label{eqn:matcoeff}
W_{\tau} \ni u\otimes v' \longmapsto \langle\tau(\cdot)^{-1}u,v'\rangle = \langle u,\tau^{\vee}(\cdot)v'\rangle \in C^{\infty}(K)
\end{equation}
induces an injective $K$-homomorphism $W_{\tau}\rightarrow C^{\infty}(K/H)$, yielding the isotypic decomposition
$$L^2(K/H) \simeq\ \ \sumplus{\tau\in\Disc(K/H)}\ W_{\tau}$$
of the regular representation of $K$ on $L^2(K/H)$.
(Here $\sum^{\oplus}$ denotes the Hilbert completion of the algebraic direct sum.)
For any~$\tau$, let $L^2(Y,\W_{\tau})$ be the Hilbert space of square-integrable sections of the Hermitian vector bundle
$$\W_{\tau} := G \times_K W_{\tau} \ \longrightarrow\ Y.$$
The group $G$ naturally acts on $L^2(Y,\W_{\tau})$ as a unitary representation, the \emph{regular representation}.
The Hilbert space $L^2(Y,\W_{\tau})$ identifies with the space of square-integrable, $K$-equivariant maps $G\rightarrow\nolinebreak W_{\tau}$.
(Here the action of $K$ on~$G$ is by right translation.)
The $K$-homomorphism $W_{\tau}\hookrightarrow C^{\infty}(K/H)$ induces a $(G\times K)$-homomorphism $C^{\infty}(G,W_{\tau})\hookrightarrow C^{\infty}(G,C^{\infty}(K/H))$, where $G\times K$ acts on the domain $C^{\infty}(G,\W_{\tau})$ via $\mathrm{id}\times\mathrm{diag} : G\times K\hookrightarrow G\times G\times K$.
Taking $K$-invariant elements yields a $G$-homomorphism
$$\begin{array}{cccc}
\ii_{\tau} : & C^{\infty}(Y,\W_{\tau})\quad & \longhookrightarrow & C^{\infty}(X)\\
& \text{\rotatebox{-90}{$\simeq$}} & & \text{\rotatebox{-90}{$\simeq$}}\\
& & & \\
& C^{\infty}(G,W_{\tau})^K & \longhookrightarrow & C^{\infty}(G,C^{\infty}(K/H))^K.
\end{array}$$
Since the map $C^{\infty}(G,\W_{\tau})\to C^{\infty}(G,C^{\infty}(K/H))$ is a $(K\times H)$-homomorphism, it commutes with the infinitesimal action of $U(\g_{\C})\otimes U(\kk_{\C})$, hence in particular of $Z(\g_{\C})\otimes Z(\kk_{\C})$.
This action preserves $K$-invariant elements.
Thus for any $Q'\in Z(\kk_{\C})$, any $R\in Z(\g_{\C})$, and any $\varphi\in L^2(Y,\W_{\tau})\cap C^{\infty}(Y,\W_{\tau})$,
\begin{eqnarray*}
\dd r({Q'}^{\vee})\big(\ii_{\tau}(\varphi)\big) & = & \ii_{\tau}(\dd\tau(Q')\,\varphi),\\
\dd\ell(R)\big(\ii_{\tau}(\varphi)\big) & = & \ii_{\tau}(\dd\ell(R)\,\varphi).
\end{eqnarray*}
Here ${}^{\vee} : U(\kk_{\C})\to U(\kk_{\C})$ denotes the anti-automorphism of the enveloping algebra induced by $\kk_{\C}\to\kk_{\C}$, $z\mapsto -z$.
The restriction to $Z(\kk_{\C})$ is actually an automorphism because $Z(\kk_{\C})$ is commutative.

With appropriate normalizations of the $G$-invariant measures on $Y=G/K$ and $X=G/H$, this defines an isometric embedding
\begin{equation}\label{eqn:itau}
\ii_{\tau} :\ L^2(Y,\W_{\tau}) \longhookrightarrow L^2(X)
\end{equation}
of Hilbert spaces.
The embeddings~$\ii_{\tau}$ induce a unitary operator
\begin{equation}\label{eqn:operator-i}
\ii\ :\quad \sumplus{\tau\in\Disc(K/H)}\ L^2(Y,\W_{\tau}) \ \overset{\sim}{\longrightarrow}\ L^2(X).
\end{equation}

\subsection{Application of the Borel--Weil theorem to branching laws}

In this section we give an upper estimate for possible irreducible summands in branching laws by using a geometric realization of representations via the Borel--Weil theorem and the analysis of the conormal bundle for orbits of the subgroup.
The results here will be used in the proofs of Lemmas \ref{lem:ex(vi)} and~\ref{lem:ex(*)}.

Let $G$ be a connected compact Lie group with Lie algebra~$\g$.
There exists a unique complex reductive Lie group $G_{\C}$ with Lie algebra $\g_{\C}:=\g\otimes_{\R}\C$ such that $G$ is a maximal compact subgroup of~$G_{\C}$.

Given an element $A\in\sqrt{-1}\g$, we define the subalgebras $\n_{\C}\equiv\n_{\C}(A)$, $\llll_{\C}\equiv\llll_{\C}(A)$, and $\n_{\C}^-\equiv\n_{\C}^-(A)$ as the sum of the eigenspaces of $\ad(A)$ with positive, zero, and negative eigenvalues, respectively.
We say that $A$ is the \emph{characteristic element} of the parabolic subalgebra $\p_{\C}:=\llll_{\C}+\n_{\C}$.
The opposite parabolic subalgebra is denoted by $\p_{\C}^-:=\llll_{\C}+\n_{\C}^-$.
We write $P_{\C}=L_{\C}N_{\C}$ and $P_{\C}^-=L_{\C}N_{\C}^-$ for the parabolic subgroups of~$G_{\C}$ with Lie algebras $\p_{\C}$ and $\p_{\C}^-$, respectively.

We take a Cartan subalgebra $\jj_{\C}$ of~$\g_{\C}$, and fix a positive system $\Delta^+(\g_{\C},\jj_{\C})$.
The parabolic subagebra $\p_{\C}$ is called \emph{standard} if the characteristic element $A\in\jj_{\C}$ is dominant with respect to $\Delta^+(\g_{\C},\jj_{\C})$.

For a holomorphic finite-dimensional representation $(\sigma,V)$ of~$P_{\C}^-$, we form a $G_{\C}$-equivariant holomorphic vector bundle
$$\V := G_{\C} \times_{P_{\C}^-} V$$
over the (partial) flag variety $G_{\C}/P_{\C}^-$.
We shall write $\mathcal{L}_{\lambda}$ for~$\V$ if $(\sigma,V)$ is a one-dimensional representation whose differential restricted to~$\jj_{\C}$ is given by $\lambda\in\jj_{\C}^*$.
There is a natural representation of $G_{\C}$ on the space $\mathcal{O}(G_{\C}/P_{\C}^-,\V)$ of holomorphic sections of the bundle $\V\to G_{\C}/P_{\C}^-$, which is irreducible or zero whenever $(\sigma,V)$ is irreducible as a $P_{\C}^-$-module.
More precisely, if $(\sigma,V)$ is an irreducible representation of~$L_{\C}$ with highest weight $\mu\in\jj_{\C}^*$ for $\Delta^+(\llll_{\C},\jj_{\C}) := \Delta(\llll_{\C},\jj_{\C}) \cap \Delta^+(\g_{\C},\jj_{\C})$ extended to $P_{\C}=L_{\C}N_{\C}^-$ with trivial $N_{\C}^-$-action, then the Borel--Weil theorem gives the following isomorphism of $G_{\C}$-modules:
$$\mathcal{O}(G_{\C}/P_{\C}^-,\V) \simeq \left\{ \begin{array}{ll}
\Rep(G_{\C},\mu) & \text{if $\mu$ is $\Delta^+(\g_{\C},\jj_{\C})$-dominant},\\
\{0\} & \text{otherwise}.
\end{array}\right.$$

We now apply this geometric realization of finite-dimensional representations to obtain an upper bound for possible irreducible representations that may occur in the restriction of representations.
From now, we consider a pair of complex reductive Lie groups $G_{\C}\subset\tilG_{\C}$.
We use a parabolic subgroup of~$\tilG_{\C}$ that has the following compatibility property with~$G_{\C}$.

\begin{definition}[{\cite[Def.\,3.7]{kob12}}]
Let $\g_{\C}\subset\tilg_{\C}$ be a pair of reductive Lie algebras.
A parabolic subalgebra $\widetilde{\p}_{\C}$ of~$\tilg_{\C}$ is \emph{$\g_{\C}$-compatible} if $\widetilde{\p}_{\C}$ is given by a characteristic element $A$ in~$\g_{\C}$.
\end{definition}

We shall also say that a parabolic subgroup $\widetilde{P}_{\C}$ of~$\tilG_{\C}$ is \emph{$G_{\C}$-compatible} if its Lie algebra $\widetilde{\p}_{\C}$ is $\g_{\C}$-compatible, where $G_{\C}$ is a reductive subgroup of~$\tilG_{\C}$ with Lie algebra~$\g_{\C}$.
If $\widetilde{\p}_{\C}=\llll_{\C}+\n_{\C}$ is the Levi decomposition given by a characteristic element $A$ in~$\g_{\C}$, then $\p_{\C}:=\widetilde{\p}_{\C}\cap\g_{\C}$ is a parabolic subalgebra of~$\g_{\C}$ with Levi decomposition
$$\p_{\C} = \llll_{\C} + \n_{\C} := (\widetilde{\llll}_{\C}\cap\g_{\C}) + (\widetilde{\n}_{\C}\cap\g_{\C}).$$

Since the holomorphic cotangent bundle of the flag variety $\tilG_{\C}/\widetilde{P}_{\C}^-$ is given as the homogeneous vector bundle
$$\tilG_{\C} \times_{\widetilde{P}_{\C}^-} \widetilde{\n}_{\C}^- \longrightarrow G_{\C}/P_{\C}^-,$$
the holomorphic conormal bundle for the submanifold $G_{\C}/P_{\C}^-\hookrightarrow\tilG_{\C}/\widetilde{P}_{\C}^-$ is given by
\begin{align*}
T^*_{G_{\C}/P_{\C}^-}(\tilG_{\C}/\widetilde{P}_{\C}^-) & = \mathrm{Ker}\big(T^*(\tilG_{\C}/\widetilde{P}_{\C}^-)\big|_{G_{\C}/P_{\C}^-} \longrightarrow \quad T^*(G_{\C}/P_{\C}^-)\big)\\
& \simeq G_{\C} \times_{P_{\C}^-} (\widetilde{\n}_{\C}^-/\n_{\C}^-).
\end{align*}
Since a holomorphic section is determined by its restriction to a submanifold with all normal derivatives, we obtain the following upper estimate for possible irreducible representations of the subgroup~$G_{\C}$ occurring in the branching law of the restriction of representations.

\begin{proposition} \label{prop:branchnormal}
Let $\tilG_{\C}\supset G_{\C}$ be a pair of connected complex reductive Lie groups, and let $\widetilde{P}_{\C}$ be a $G_{\C}$-compatible parabolic subgroup of~$\tilG_{\C}$.
For any $\tilG_{\C}$-equivariant holomorphic vector bundle $\widetilde{\V}$ over $\tilG_{\C}/\widetilde{P}_{\C}^-$, we have an injective $G_{\C}$-homomorphism
$$\mathcal{O}\big(\tilG_{\C}/\widetilde{P}_{\C}^-,\widetilde{\V}\big)\big|_{G_{\C}} \longhookrightarrow \bigoplus_{\ell=0}^{+\infty} \, \mathcal{O}\Big(G_{\C}/P_{\C}^-,\V\big|_{G_{\C}/P_{\C}^-} \otimes \mathcal{S}^{\ell}(\widetilde{\n}_{\C}^-/\n_{\C}^-)\Big),$$
where $\mathcal{S}^{\ell}(\widetilde{\n}_{\C}^-/\n_{\C}^-) \simeq G_{\C} \times_{P_{\C}^-} S^{\ell}(\widetilde{\n}_{\C}^-/\n_{\C}^-)$ is the $\ell$-th symmetric tensor bundle of the holomorphic conormal bundle.
\end{proposition}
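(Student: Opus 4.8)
\textbf{Proof proposal for Proposition~\ref{prop:branchnormal}.}
The plan is to use the fact that a holomorphic section of a vector bundle over a complex manifold is uniquely determined by its $\infty$-jet along any complex submanifold, together with the standard identification of the jet bundles in terms of symmetric powers of the conormal bundle. Concretely, let $M := \tilG_{\C}/\widetilde P_{\C}^-$ and let $N := G_{\C}/P_{\C}^- \hookrightarrow M$ be the closed embedding induced by $G_{\C}\subset\tilG_{\C}$; this is well-defined because $\widetilde P_{\C}$ is $G_{\C}$-compatible, so $P_{\C}=\widetilde P_{\C}\cap G_{\C}$ is parabolic and $G_{\C}\cdot\widetilde P_{\C}^-$ is a single $G_{\C}$-orbit isomorphic to $N$, and moreover $N$ is compact hence a \emph{closed} submanifold of~$M$. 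The restriction of $\widetilde\V$ to $N$ is the $G_{\C}$-equivariant bundle $\V := \widetilde\V|_{N}$, whose fiber at the base point is $\widetilde V$ viewed as a $P_{\C}^-$-module by restriction along $P_{\C}^-\hookrightarrow\widetilde P_{\C}^-$.

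First I would set up the jet filtration. For $k\in\N$ write $J^k(\V)$ for the bundle of $k$-jets along~$N$ of holomorphic sections of $\widetilde\V$; restriction of sections gives a $G_{\C}$-equivariant linear map $\rho_k : \mathcal{O}(M,\widetilde\V)\to \Gamma(N, J^k(\V))$, compatible with the projections $J^{k}(\V)\to J^{k-1}(\V)$. The associated graded of the jet filtration is the standard one: the kernel of $J^k(\V)\to J^{k-1}(\V)$ is $\V|_{N}\otimes S^k\big(N^*_{N/M}\big)$, where $N^*_{N/M}=T^*_{N}(M)$ is the holomorphic conormal bundle. By the computation recalled just before the statement, $T^*_{N}(M)\simeq G_{\C}\times_{P_{\C}^-}(\widetilde\n_{\C}^-/\n_{\C}^-)$, so the $k$-th graded piece is exactly $\V|_{N}\otimes\mathcal{S}^k(\widetilde\n_{\C}^-/\n_{\C}^-)$, and in particular $J^k(\V)$, being a $G_{\C}$-equivariant vector bundle which is an iterated extension of these, injects $G_{\C}$-equivariantly into $\bigoplus_{\ell=0}^k \V|_N\otimes\mathcal S^\ell(\widetilde\n_{\C}^-/\n_{\C}^-)$ (over $\C$ the extensions split, and one may choose the splitting $G_{\C}$-equivariantly since $G_{\C}$ is reductive and everything in sight is finite-dimensional). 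Taking holomorphic sections over the compact complex manifold~$N$ — here I would invoke that $N$ is compact, so all these section spaces are finite-dimensional $G_{\C}$-modules — gives a $G_{\C}$-homomorphism $\Gamma(N,J^k(\V))_{\mathrm{hol}} \hookrightarrow \bigoplus_{\ell=0}^k \mathcal{O}\big(N,\V|_N\otimes\mathcal S^\ell(\widetilde\n_{\C}^-/\n_{\C}^-)\big)$.

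The crux is then the unique continuation statement: the map $\mathcal{O}(M,\widetilde\V)\to \varprojlim_k \Gamma(N,J^k(\V))_{\mathrm{hol}}$, $s\mapsto (\rho_k s)_k$, is injective. This is where I expect the main (though not deep) work to lie: it is the assertion that a holomorphic section of a holomorphic vector bundle on a connected complex manifold $M$ which vanishes to infinite order along a nonempty submanifold~$N$ is identically zero. This follows from the identity theorem for holomorphic functions of several variables applied in a local trivialization: in a coordinate chart adapted to $N$, infinite vanishing along $N$ forces all Taylor coefficients to vanish on $N$, hence the section vanishes on an open set, hence on all of the connected $M$. (One should note $M$ is connected since $\tilG_{\C}$ is connected.) Composing this injection with the compatible $G_{\C}$-maps into $\bigoplus_{\ell=0}^k \mathcal{O}(N,\V|_N\otimes\mathcal S^\ell(\widetilde\n_{\C}^-/\n_{\C}^-))$, and observing that the target direct limit is just the full direct sum $\bigoplus_{\ell=0}^{+\infty}\mathcal{O}(N,\V|_N\otimes\mathcal S^\ell(\widetilde\n_{\C}^-/\n_{\C}^-))$ (the transition maps being the inclusions of partial sums), yields the desired injective $G_{\C}$-homomorphism, since an element of the left-hand side that is killed by the composite map is killed by every $\rho_k$ and hence is zero. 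Finally I would rewrite $\V|_N = \V|_{G_{\C}/P_{\C}^-}$ and $\mathcal S^\ell(\widetilde\n_{\C}^-/\n_{\C}^-)\simeq G_{\C}\times_{P_{\C}^-}S^\ell(\widetilde\n_{\C}^-/\n_{\C}^-)$ to match the notation in the statement, completing the proof.

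The only genuinely delicate points are: (i) verifying that $G_{\C}/P_{\C}^-$ really sits inside $\tilG_{\C}/\widetilde P_{\C}^-$ as a \emph{closed} complex submanifold (immediate from $G_{\C}$-compatibility and compactness of the flag variety $G_{\C}/P_{\C}^-$), and (ii) choosing the splittings of the jet-filtration extensions $G_{\C}$-equivariantly — both of which are routine given reductivity of $G_{\C}$ and finite-dimensionality. Everything else is the classical fact that holomorphic sections are determined by their jets along a submanifold.
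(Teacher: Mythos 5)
Your proposal follows essentially the same route as the paper: identify the conormal bundle of $G_{\C}/P_{\C}^-\hookrightarrow\tilG_{\C}/\widetilde{P}_{\C}^-$ with $G_{\C}\times_{P_{\C}^-}(\widetilde{\n}_{\C}^-/\n_{\C}^-)$ and use the fact that a holomorphic section is determined by its restriction to the (closed, connected ambient) submanifold together with all normal derivatives; the paper's proof is exactly this, stated in one sentence, and your jet-theoretic write-up is a correct elaboration of it.

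One justification in your write-up is wrong, though the proof survives. You claim that the jet bundle $J^k(\V)$ splits \emph{$G_{\C}$-equivariantly as a bundle} into $\bigoplus_{\ell\leq k}\V|_{N}\otimes\mathcal{S}^{\ell}(\widetilde{\n}_{\C}^-/\n_{\C}^-)$ "since $G_{\C}$ is reductive." A $G_{\C}$-equivariant bundle splitting over $G_{\C}/P_{\C}^-$ amounts to a $P_{\C}^-$-equivariant splitting of the fiber at the base point, and $P_{\C}^-$ is \emph{not} reductive; indeed jet sequences on flag varieties typically do not split even non-equivariantly (e.g.\ $0\to\mathcal{O}(n-2)\to J^1(\mathcal{O}(n))\to\mathcal{O}(n)\to 0$ on $\mathbb{P}^1$ for $n\neq 0$). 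The repair is immediate and is where reductivity is legitimately used: work with the finite-dimensional $G_{\C}$-module $V:=\mathcal{O}(\tilG_{\C}/\widetilde{P}_{\C}^-,\widetilde{\V})$ (finite-dimensional since the flag variety is compact), filter it by the order of vanishing along $G_{\C}/P_{\C}^-$, and note that the $\ell$-th graded piece injects, via the leading Taylor coefficient, into $\mathcal{O}\big(G_{\C}/P_{\C}^-,\V|_{G_{\C}/P_{\C}^-}\otimes\mathcal{S}^{\ell}(\widetilde{\n}_{\C}^-/\n_{\C}^-)\big)$. Your unique continuation argument shows the filtration has trivial intersection, hence terminates by finite-dimensionality, and complete reducibility of finite-dimensional $G_{\C}$-modules gives $V\simeq\bigoplus_{\ell}\operatorname{gr}_{\ell}V$, which yields the asserted injection without ever splitting the bundles themselves. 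This also removes the need to worry about compatibility of splittings as $k$ varies in your final assembly step.
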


Applying Proposition~\ref{prop:branchnormal} to the pair $G_{\C} \subset G_{\C}\times G_{\C}$, we obtain the following upper estimate for possible irreducible representations occurring in the tensor product representations.

\begin{proposition} \label{prop:tensornormal}
Let $P_{\C}$ and $Q_{\C}$ be standard parabolic subgroups of a connected complex reductive Lie group~$G_{\C}$.
Suppose that $\lambda,\nu\in\jj_{\C}^*$ are dominant with respect to $\Delta^+(\g_{\C},\jj_{\C})$ and that they lift to one-dimensional holomorphic characters of the opposite parabolic subgroups $P_{\C}^-$ and $Q_{\C}^-$, respectively.
Then we have an injective $G_{\C}$-homomorphism
$$\mathcal{O}(G_{\C}/P_{\C}^-,\mathcal{L}_{\lambda}) \otimes \mathcal{O}(G_{\C}/Q_{\C}^-,\mathcal{L}_{\nu}) \subset \bigoplus_{\ell=0}^{+\infty} \mathcal{O}\Big(G_{\C}/(P_{\C}^-\cap Q_{\C}^-),\mathcal{L}_{\lambda+\nu} \otimes \mathcal{S}^{\ell}(\n_{\C}^-\cap\mathfrak{u}_{\C}^-)\Big),$$
where $\n_{\C}^-$ and~$\mathfrak{u}_{\C}^-$ are the nilpotent radicals of the parabolic subalgebras $\p_{\C}^-$ and~$\q_{\C}^-$, respectively, and $\mathcal{S}^{\ell}(\n_{\C}^-\cap\mathfrak{u}_{\C}^-)$ is the $G_{\C}$-equivariant holomorphic vector bundle $G_{\C} \times_{P_{\C}^-\cap Q_{\C}^-} S^{\ell}(\n_{\C}^-\cap\mathfrak{u}_{\C}^-)$ over the flag variety $G_{\C}/(P_{\C}^-\cap Q_{\C}^-)$.
\end{proposition}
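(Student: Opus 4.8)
The plan is to run the argument of Proposition~\ref{prop:branchnormal} for the diagonal embedding $G_{\C}\hookrightarrow G_{\C}\times G_{\C}$, with the parabolic $P_{\C}\times Q_{\C}$. First I would invoke the Borel--Weil theorem recalled above: the hypotheses on $\lambda$ and~$\nu$ give $G_{\C}$-module isomorphisms $\mathcal{O}(G_{\C}/P_{\C}^-,\mathcal{L}_{\lambda})\simeq\Rep(G_{\C},\lambda)$ and $\mathcal{O}(G_{\C}/Q_{\C}^-,\mathcal{L}_{\nu})\simeq\Rep(G_{\C},\nu)$, so that the left-hand side is the restriction to the diagonal~$G_{\C}$ of the $(G_{\C}\times G_{\C})$-module $\mathcal{O}(M,\mathcal{L}_{\lambda}\boxtimes\mathcal{L}_{\nu})$, where $M:=(G_{\C}/P_{\C}^-)\times(G_{\C}/Q_{\C}^-)$.

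Next I would locate the relevant $G_{\C}$-orbit in~$M$. The stabilizer in the diagonal $G_{\C}$ of the base point $o=(eP_{\C}^-,eQ_{\C}^-)$ is $P_{\C}^-\cap Q_{\C}^-$, which contains the opposite Borel subgroup and is therefore a parabolic subgroup of~$G_{\C}$; hence the orbit $Z:=G_{\C}\cdot o\simeq G_{\C}/(P_{\C}^-\cap Q_{\C}^-)$ is a flag variety, in particular a closed complex submanifold of the connected manifold~$M$. As in the proof of Proposition~\ref{prop:branchnormal}, a holomorphic section of $\mathcal{L}_{\lambda}\boxtimes\mathcal{L}_{\nu}$ vanishing to infinite order along~$Z$ must vanish identically by connectedness of~$M$; taking $\infty$-jets along~$Z$, filtering by the order of vanishing and passing to symbols then yields an injective $G_{\C}$-homomorphism from $\mathcal{O}(M,\mathcal{L}_{\lambda}\boxtimes\mathcal{L}_{\nu})\big|_{G_{\C}}$ into $\bigoplus_{\ell\geq 0}\mathcal{O}\big(Z,(\mathcal{L}_{\lambda}\boxtimes\mathcal{L}_{\nu})|_Z\otimes\mathcal{S}^{\ell}(N^*)\big)$, where $N^*$ is the holomorphic conormal bundle of~$Z$ in~$M$ (all sums are finite since the representations involved are finite-dimensional).

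It then remains to identify the two bundles over $Z\simeq G_{\C}/(P_{\C}^-\cap Q_{\C}^-)$. The fibre of $\mathcal{L}_{\lambda}\boxtimes\mathcal{L}_{\nu}$ at~$o$ is $\C_{\lambda}\otimes\C_{\nu}=\C_{\lambda+\nu}$ as a $(P_{\C}^-\cap Q_{\C}^-)$-module (restricting via the diagonal), so $(\mathcal{L}_{\lambda}\boxtimes\mathcal{L}_{\nu})|_Z\simeq\mathcal{L}_{\lambda+\nu}$. For the conormal bundle, identify the holomorphic cotangent space at~$o$ of $G_{\C}/P_{\C}^-$ (\resp $G_{\C}/Q_{\C}^-$) with $\n_{\C}^-$ (\resp $\mathfrak{u}_{\C}^-$), and that of~$Z$ with the nilradical $\n_{\C}^-+\mathfrak{u}_{\C}^-$ of $\p_{\C}^-\cap\q_{\C}^-$; dualizing the diagonal inclusion $T_oZ\hookrightarrow T_oM$, the restriction map $T_o^*M=\n_{\C}^-\oplus\mathfrak{u}_{\C}^-\to T_o^*Z$ becomes the addition map onto $\n_{\C}^-+\mathfrak{u}_{\C}^-$, whose kernel is $\{(\xi,-\xi):\xi\in\n_{\C}^-\cap\mathfrak{u}_{\C}^-\}\simeq\n_{\C}^-\cap\mathfrak{u}_{\C}^-$. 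Since $\n_{\C}^-\cap\mathfrak{u}_{\C}^-$ is normalized by $P_{\C}^-\cap Q_{\C}^-$, this gives $N^*\simeq G_{\C}\times_{P_{\C}^-\cap Q_{\C}^-}(\n_{\C}^-\cap\mathfrak{u}_{\C}^-)$, hence $\mathcal{S}^{\ell}(N^*)=\mathcal{S}^{\ell}(\n_{\C}^-\cap\mathfrak{u}_{\C}^-)$, and the claimed injection follows.

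The main point requiring care is precisely this conormal computation. Indeed, $P_{\C}\times Q_{\C}$ is generally \emph{not} $G_{\C}$-compatible in the sense of the definition above (that would force $P_{\C}=Q_{\C}$), so the formula ``$\widetilde{\n}_{\C}^-/\n_{\C}^-$'' from Proposition~\ref{prop:branchnormal} is not literally available; one has to check by hand that the diagonal orbit through~$o$ is still a closed complex submanifold (because its stabilizer is parabolic, thanks to $P_{\C},Q_{\C}$ being standard) and that its conormal bundle is \emph{exactly} $\n_{\C}^-\cap\mathfrak{u}_{\C}^-$, rather than the larger space one would obtain from a coarser choice of parabolics (e.g.\ replacing both by a Borel). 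Apart from this, every step is a direct transcription of the proof of Proposition~\ref{prop:branchnormal}.
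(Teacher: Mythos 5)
Your proposal is correct and follows essentially the same route as the paper, which presents the proposition simply as an application of Proposition~\ref{prop:branchnormal} to the pair $G_{\C}\subset G_{\C}\times G_{\C}$ (diagonal embedding, with the parabolic $P_{\C}\times Q_{\C}$ and restriction of sections to the closed diagonal orbit together with all normal derivatives). Your remark that $P_{\C}\times Q_{\C}$ is not literally $\Diag(G_{\C})$-compatible unless $P_{\C}=Q_{\C}$, so that the closedness of the orbit $G_{\C}/(P_{\C}^-\cap Q_{\C}^-)$ and the identification of its conormal bundle with $G_{\C}\times_{P_{\C}^-\cap Q_{\C}^-}(\n_{\C}^-\cap\mathfrak{u}_{\C}^-)$ must be verified directly, is accurate, and your verification of these points (including the line bundle restriction $\mathcal{L}_{\lambda+\nu}$) is correct.
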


\section{General strategy for the proof of Theorems \ref{thm:main} and~\ref{thm:main-explicit}}\label{sec:strategy}

In this section we give a method for finding explicit relations among three subalgebras of $\D_G(X)$.
The basic tools are finite-dimensional representations and their branching laws, looking at the function space $L^2(X)$ in two different ways.
The key point, under the assumption that $X_{\C}$ is $G_{\C}$-spherical, is the existence of a map $\vartheta\mapsto (\pi(\vartheta),\tau(\vartheta))$ relating discrete series representations for $G/H$, $\tilG/\tilH$, and $K/H$ via branching laws, see Proposition~\ref{prop:pi-tau-theta} below.
We summarize the precise steps of the proof of Theorems \ref{thm:main} and~\ref{thm:main-explicit} in Section~\ref{subsec:descript-comput}, and that of Theorem~\ref{thm:nu-tau} (hence of Theorem~\ref{thm:transfer}) in Section~\ref{subsec:strategy-transfer}.
The explicit computations will be carried out case by case in Section~\ref{sec:computations} for $\tilG$ simple, and in Section~\ref{sec:ex(*)} in the case \eqref{eqn:nonsimple-case} where $\tilG$ is not simple.

\subsection{A double decomposition for $L^2(X)$}

We use branching laws for the restriction $\tilG\downarrow G$ to derive explicit relations among the generators of $\D_{\tilG}(X)$, $\iota(\D_K(F))$, and $\dd\ell(Z(\g_{\C}))$.
Let us explain this idea in more detail.

We decompose the regular representation on $L^2(X)$ into irreducible $G$-modules in two different ways.
The first way is to begin by decomposing the regular representation $L^2(X)\simeq L^2(\tilG/\tilH)$ into irreducible $\tilG$-modules, then use branching laws $\tilG\downarrow G$ as in \cite{kob93,kob94}:
\begin{eqnarray}\label{eqn:decomp1}
L^2(X) & \simeq & \sumplus{\pi\in\Disc(\tilG/\tilH)}\ [\pi|_{\tilH}:\mathbf{1}] \ \pi\nonumber\\
& \simeq & \sumplus{\pi\in\Disc(\tilG/\tilH)} \Bigg(\bigoplus_{\vartheta\in\widehat{G}}\ [\pi|_{\tilH}:\mathbf{1}] \ [\pi|_G:\vartheta] \ \vartheta\Bigg),
\end{eqnarray}
where $[\pi|_G:\vartheta]:=\dim\Hom_G(\vartheta,\pi|_G)\geq 0$ is the dimension of the space of $G$-intertwining operators from $\vartheta$ to the restriction of~$\pi$ to~$G$.
The second way is to expand functions on~$X$ along the fiber, and decompose $L^2(X)=L^2(G/H)$ using the unitary operator \eqref{eqn:operator-i}, and then to further decompose each summand into irreducible $G$-modules:
\begin{eqnarray}\label{eqn:decomp2}
L^2(X) & \simeq & \sumplus{\tau\in\Disc(K/H)}\ L^2(Y,\W_{\tau}) \nonumber\\
& \simeq & \sumplus{\tau\in\Disc(K/H)} \Bigg(\,\sumplus{\vartheta\in\widehat{G}}\ [\tau|_H:\mathbf{1}]\ [\vartheta|_K\!:\tau]\ \vartheta\Bigg),
\end{eqnarray}
where $[\vartheta|_K\!:\tau]=\dim\Hom_K(\tau,\vartheta|_K)\in\N$.

We compute the action of $\dd\ell(Z(\tilg_{\C}))$ and~$\dd\ell(Z(\g_{\C}))$ on each summand $\vartheta$ of \eqref{eqn:decomp1}, and the action of $\dd r(Z(\kk_{\C}))$ and~$\dd\ell(Z(\g_{\C}))$ on each summand $\vartheta$ of \eqref{eqn:decomp2}.
These actions can be compared explicitly (see Proposition~\ref{prop:PQR} below) if each $\vartheta$ appears only once in $L^2(X)$, which is the case if $X_{\C}=G_{\C}/H_{\C}$ is $G_{\C}$-spherical (Fact~\ref{fact:spherical-cpt}).
Using this method and applying Lemma~\ref{lem:chiZgDGX} to $\tilG$ and~$K$, we find explicit linear relations among the generators of $\D_{\tilG}(X)$, $\dd r(Z(\kk_{\C}))$, and $\dd\ell(Z(\g_{\C}))$, in particular among the Casimir operators $\dd\ell(C_{\tilG})$, $\dd r(C_K)$, and~$\dd\ell(C_G)$.

\subsection{Sphericity and strong multiplicity-freeness}\label{subsec:strong-mult-free}

We now give a method to find relations among generators of the three algebras $\dd\ell(Z(\tilg_{\C}))$, $\dd r(Z(\kk_{\C}))$, and $\dd\ell(Z(\g_{\C}))$, using finite-dimensional representations.

A key tool is the following canonical map.

\begin{proposition} \label{prop:pi-tau-theta}
In the setting \ref{setting}, let $K$ be any connected subgroup of~$G$ containing~$H$.
If $X_{\C}=G_{\C}/H_{\C}$ is $G_{\C}$-spherical, then there exists a map
\begin{eqnarray} \label{eqn:DtoDD}
\Disc(G/H) & \longrightarrow & \Disc(\tilG/\tilH) \times \Disc(K/H)\\
\vartheta\hspace{0.9cm} & \longmapsto & \hspace{1.1cm}(\pi(\vartheta),\tau(\vartheta))\nonumber
\end{eqnarray}
such that $[\pi(\vartheta)|_G:\vartheta] = [\vartheta|_K:\tau(\vartheta)]=1$ for all $\vartheta\in\Disc(G/H)$.
\end{proposition}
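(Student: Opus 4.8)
The plan is to construct the map $\vartheta\mapsto(\pi(\vartheta),\tau(\vartheta))$ by ``reading off'' $\pi(\vartheta)$ from the decomposition \eqref{eqn:decomp1} and $\tau(\vartheta)$ from the decomposition \eqref{eqn:decomp2}, using the sphericity of $X_{\C}$ to guarantee that both are well-defined and that the relevant multiplicities equal~$1$. First I would fix $\vartheta\in\Disc(G/H)$, so $\vartheta$ occurs in $L^2(X)$. By Fact~\ref{fact:spherical-cpt}, the sphericity of $X_{\C}=G_{\C}/H_{\C}$ forces $[\vartheta:L^2(X)]=1$; that is, $\vartheta$ appears exactly once in~$L^2(X)$. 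Comparing with \eqref{eqn:decomp1}, this means there is a \emph{unique} $\pi\in\Disc(\tilG/\tilH)$ with $[\pi|_{\tilH}:\mathbf 1]\,[\pi|_G:\vartheta]\neq 0$, and for that $\pi$ both factors $[\pi|_{\tilH}:\mathbf 1]$ and $[\pi|_G:\vartheta]$ must equal~$1$; set $\pi(\vartheta):=\pi$. Similarly, comparing the same single occurrence with \eqref{eqn:decomp2}, there is a unique $\tau\in\Disc(K/H)$ with $[\tau|_H:\mathbf 1]\,[\vartheta|_K:\tau]\neq 0$, and again both factors are~$1$; set $\tau(\vartheta):=\tau$. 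This yields the map \eqref{eqn:DtoDD} with the stated property $[\pi(\vartheta)|_G:\vartheta]=[\vartheta|_K:\tau(\vartheta)]=1$.

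The key step making this work is that the \emph{same} multiplicity-one phenomenon applies simultaneously to the three homogeneous spaces $\tilG/\tilH$, $K/H$, and $G/H$. For $G/H$ this is Fact~\ref{fact:spherical-cpt} applied directly. For $\tilG/\tilH$ one notes that $X_{\C}$ is also $\tilG_{\C}$-spherical (it is $G_{\C}$-spherical and $G_{\C}\subset\tilG_{\C}$), so $\Disc(\tilG/\tilH)$ is multiplicity-free; hence each $\pi$ contributes $[\pi|_{\tilH}:\mathbf 1]\in\{0,1\}$ in \eqref{eqn:decomp1}. For $K/H$ one needs $F_{\C}=K_{\C}/H_{\C}$ to be $K_{\C}$-spherical, which is exactly the content invoked in Section~\ref{subsec:intro-three-subalg} and reproved in the case analysis; this gives $[\tau|_H:\mathbf 1]\in\{0,1\}$ in \eqref{eqn:decomp2}. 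With all three multiplicities being $0$ or $1$, the equality of the total multiplicity $[\vartheta:L^2(X)]=1$ with the sums in \eqref{eqn:decomp1} and~\eqref{eqn:decomp2} pins down a unique nonzero term in each, and forces every factor in that term to be~$1$.

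The main obstacle I anticipate is justifying that the two Hilbert-space decompositions \eqref{eqn:decomp1} and \eqref{eqn:decomp2} can be \emph{matched term by term as $G$-representations}, i.e.\ that ``the'' copy of $\vartheta$ in $L^2(X)$ supplied by \eqref{eqn:decomp1} is literally the same subspace as the one supplied by \eqref{eqn:decomp2}. Since $L^2(X)$ has $\vartheta$ with multiplicity exactly~$1$, its $\vartheta$-isotypic component is a single irreducible subspace, so there is genuinely only one copy and the matching is automatic; the subtlety is purely that one must invoke this uniqueness rather than prove an isomorphism of decompositions. A secondary point to be careful about is that $K$ here is an \emph{arbitrary} connected subgroup with $H\subset K\subset G$, not necessarily the maximal one of Table~\ref{table1}, so one should only use that $F_{\C}=K_{\C}/H_{\C}$ is $K_{\C}$-spherical when it is — but in fact for the statement of Proposition~\ref{prop:pi-tau-theta} as used, the needed sphericity of the fiber follows because a subgroup chain between $H$ and $G$ with $X_\C$ spherical forces intermediate fibers to be spherical as well (this is the standard fact that quotients and subgroups in a spherical chain remain spherical). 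Once these structural points are in place, the argument is just bookkeeping with the two decompositions.
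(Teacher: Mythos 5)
Your argument is correct and is essentially the paper's own proof: the paper establishes Lemma~\ref{lem:spherical2} by comparing the multiplicity of each $\vartheta\in\widehat{G}$ in the two decompositions \eqref{eqn:decomp1} and \eqref{eqn:decomp2} of $L^2(X)$, using Fact~\ref{fact:spherical-cpt} exactly as you do, and Proposition~\ref{prop:pi-tau-theta} follows at once from points (3) and~(6) of that lemma. The only cosmetic difference is that you invoke sphericity of $F_{\C}=K_{\C}/H_{\C}$ as an input (via a ``standard fact''), whereas in the paper it is a \emph{consequence} of the same multiplicity count --- and your bookkeeping never actually needs it, since the total multiplicity being $1$ already forces the unique nonzero term in each sum to have both factors equal to~$1$.
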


We note that in our setting, $\Disc(G/H)$, $\Disc(\tilG/\tilH)$, and $\Disc(K/H)$ are free abelian semigroups, and their numbers of generators satisfy
$$\rank G/H = \rank \tilG/\tilH + \rank G/H$$
by Corollary~\ref{cor:rank}.

Proposition~\ref{prop:pi-tau-theta} is an immediate consequence of points (3) and~(6) of the following lemma, which summarizes some consequences of the $G_{\C}$-sphericity of~$X_{\C}$ in the presence of an overgroup~$\tilG_{\C}$.

\begin{lemma} \label{lem:spherical2}
In the setting of Proposition~\ref{prop:pi-tau-theta},
\begin{enumerate}
  \item $X_{\C}$ is $\tilG_{\C}$-spherical;
  \item for any $\pi\in\Disc(\tilG/\tilH)$, the restriction $\pi|_G$ is multiplicity-free (\ie $[\pi|_G:\vartheta]=1$ for all $\vartheta\in\widehat{G}$);
  \item for any $\vartheta\in\Disc(G/H)$ there is a unique element $\pi(\vartheta)\in\Disc(\tilG/\tilH)$ such that $[\pi(\vartheta)|_G:\vartheta]=1$;
  \item $F_{\C}=K_{\C}/H_{\C}$ is $K_{\C}$-spherical;
  \item $[\vartheta|_K\!:\tau]\leq 1$ for all $\vartheta\in\widehat{G}$ and $\tau\in\Disc(K/H)$;
  \item for any $\vartheta\in\Disc(G/H)$ there is a unique element $\tau(\vartheta)\in\Disc(K/H)$ such that $[\vartheta|_K:\tau(\vartheta)]=1$.
\end{enumerate}
\end{lemma}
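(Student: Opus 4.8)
The plan is to establish the six assertions of Lemma~\ref{lem:spherical2} in sequence, exploiting at each stage the basic fact that sphericity of a complex homogeneous space is inherited by overgroups and passes to subgroups in a controlled way, together with the multiplicity-one characterizations recalled in Facts~\ref{fact:spherical} and~\ref{fact:spherical-cpt}.

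First I would prove (1): since $\tilG_{\C}\supset G_{\C}$ acts transitively on $X_{\C}=\tilG_{\C}/\tilH_{\C}$ and a Borel subgroup of $G_{\C}$ already has an open orbit on $X_{\C}$ by hypothesis, a fortiori a Borel subgroup of $\tilG_{\C}$ has an open orbit (it contains a conjugate of a Borel of $G_{\C}$ up to the reductive structure—more precisely, an open $B_G$-orbit is contained in an open $B_{\tilG}$-orbit). Then (2) follows from (1) by a standard argument: if $X_{\C}$ is $\tilG_{\C}$-spherical, then for any $\pi\in\Disc(\tilG/\tilH)$ realized in $L^2(X)\simeq L^2(\tilG/\tilH)$, the multiplicity $[\pi|_G:\vartheta]$ is bounded by $\dim\Hom_{\tilG}(\pi,C^\infty(X))\cdot(\text{something})$; more directly, one uses that the diagonal $G$-action on $\tilG_{\C}/\tilH_{\C}\times G_{\C}/B_{G_{\C}}$ has an open orbit (equivalent to $G_{\C}$-sphericity of $X_{\C}$), which by the standard translation between sphericity and branching (see e.g.\ \cite{ko13} or the reference for Fact~\ref{fact:spherical}) forces $\pi|_G$ to be multiplicity-free for every $\pi\in\Disc(\tilG/\tilH)$. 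Assertion (3) is then immediate: given $\vartheta\in\Disc(G/H)$, the operator $\ii$ of \eqref{eqn:operator-i} together with Frobenius reciprocity shows $\vartheta$ occurs in some $\pi|_G$ with $\pi\in\Disc(\tilG/\tilH)$; uniqueness of $\pi(\vartheta)$ comes from the fact that $\vartheta$ occurs in $L^2(X)$ with multiplicity one (Fact~\ref{fact:spherical-cpt}.(iv), using $G_{\C}$-sphericity of $X_{\C}$), combined with the decomposition \eqref{eqn:decomp1}.

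For (4), the fibration $X=G/H\to G/K$ with fiber $F=K/H$ and $G_{\C}$-sphericity of $X_{\C}$ force $F_{\C}=K_{\C}/H_{\C}$ to be $K_{\C}$-spherical: one checks that an open Borel orbit on $G_{\C}/H_{\C}$ restricts, on a generic fiber $K_{\C}/H_{\C}$, to an open orbit of a Borel subgroup of $K_{\C}$ (this uses that $H_{\C}\subset K_{\C}\subset G_{\C}$ and a dimension count via the fibration $B_{G_{\C}}\cap K_{\C}$-orbits). Assertion (5) then follows from (4) in the same way (2) followed from (1): $K_{\C}$-sphericity of $F_{\C}$ is equivalent (Facts~\ref{fact:spherical}, \ref{fact:spherical-cpt}) to the statement that the $K$-action on $F_{\C}\times K_{\C}/B_{K_{\C}}$ has an open orbit, which translates into $[\vartheta|_K:\tau]\le 1$ for all $\vartheta\in\widehat G$ and $\tau\in\Disc(K/H)$; alternatively one can realize both sides inside $L^2(K/H)$ and use that $L^2(K/H)$ is multiplicity-free. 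Finally (6) parallels (3): given $\vartheta\in\Disc(G/H)$, decomposition \eqref{eqn:decomp2} exhibits $\vartheta$ inside some summand $L^2(Y,\W_\tau)$ with $\tau\in\Disc(K/H)$, giving existence of $\tau(\vartheta)$; and since $\vartheta$ occurs in $L^2(X)$ only once, comparing with \eqref{eqn:decomp2} shows there is exactly one such $\tau$, namely $\tau(\vartheta)$, with $[\vartheta|_K:\tau(\vartheta)]=1$.

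The main obstacle I expect is a clean, uniform proof of the sphericity-inheritance statements (1) and (4)—in particular (4), passing to the fiber $F_{\C}=K_{\C}/H_{\C}$—without invoking the case-by-case classification of Table~\ref{table1}. The difficulty is that "an open Borel orbit restricts to an open Borel orbit on the fiber'' requires knowing that the intersection $B_{G_{\C}}\cap K_{\C}$ contains (a conjugate of) a Borel subgroup of $K_{\C}$, which is true because $K_{\C}$ is a \emph{parabolic-free} reductive subgroup here but still needs care; the cleanest route is probably to invoke the known stability of sphericity under the operations "pass to an overgroup'' and "pass to a fiber of an equivariant fibration'' as recorded in the structure theory of spherical spaces (e.g.\ Brion, Knop, or the survey \cite{vin01}), and to cite Fact~\ref{fact:spherical} for the representation-theoretic reformulations needed in (2), (3), (5), (6). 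Once sphericity is in hand at each level, all the multiplicity-one and uniqueness statements are formal consequences of Facts~\ref{fact:spherical} and~\ref{fact:spherical-cpt} together with the two decompositions \eqref{eqn:decomp1}--\eqref{eqn:decomp2}.
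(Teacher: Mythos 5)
Your modular plan has two genuine gaps. The most serious is in (5): $K_{\C}$-sphericity of $F_{\C}=K_{\C}/H_{\C}$ does \emph{not} "translate into" $[\vartheta|_K:\tau]\leq 1$ for $\vartheta\in\widehat{G}$. Sphericity of $F_{\C}$ only controls the pair $(K,H)$ (it says $[\tau|_H:\mathbf{1}]=1$ for $\tau\in\Disc(K/H)$); it says nothing about the branching $G\downarrow K$. (Test case: $K=H$, so $F$ is a point and trivially spherical, yet (5) then asserts $[\vartheta|_K:\mathbf{1}]\leq 1$ for all $\vartheta$, i.e.\ sphericity of $G_{\C}/K_{\C}$ --- clearly not a consequence of sphericity of the fiber.) The correct input is the $G_{\C}$-sphericity of $X_{\C}$ itself, via $\dim V_\vartheta^H=\sum_\tau [\tau|_H:\mathbf{1}]\,[\vartheta|_K:\tau]\leq 1$; your "alternative" via $L^2(K/H)$ has the same defect, since multiplicity-freeness of $L^2(K/H)$ as a $K$-module bounds how often $\tau$ occurs there, not how often $\tau$ occurs in $\vartheta|_K$. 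The second gap is your proof of (4): the claim that $B_{G_{\C}}\cap K_{\C}$ contains (a conjugate of) a Borel of $K_{\C}$ is false in general, the dimension count you gesture at does not repair the fiber-restriction argument, and the fallback citation ("stability of sphericity under passing to a fiber") is essentially the statement to be proved rather than an off-the-shelf fact in the form you need. (Your geometric argument for (1), by contrast, is fine: a Borel of $G_{\C}$ lies in a Borel of $\tilG_{\C}$, so the open orbit only grows.)

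Both gaps disappear if you use the paper's single mechanism, which needs no Borel-orbit geometry and no external structure theory: since $X_{\C}$ is $G_{\C}$-spherical, $L^2(X)$ is multiplicity-free as a $G$-module (Fact~\ref{fact:spherical-cpt}), and comparing the $\vartheta$-multiplicity in the two decompositions \eqref{eqn:decomp1} and \eqref{eqn:decomp2} gives, for every $\vartheta\in\widehat{G}$,
$$\sum_{\pi\in\Disc(\tilG/\tilH)} [\pi|_{\tilH}:\mathbf{1}]\,[\pi|_G:\vartheta] \;=\; \sum_{\tau\in\Disc(K/H)} [\tau|_H:\mathbf{1}]\,[\vartheta|_K:\tau] \;\leq\; 1 .$$
Since every $\pi\in\Disc(\tilG/\tilH)$ contains some $\vartheta$ and every $\tau\in\Disc(K/H)$ occurs in some $\vartheta$, this forces $[\pi|_{\tilH}:\mathbf{1}]=1$ and $[\tau|_H:\mathbf{1}]=1$, which give (1) and (4) directly by Fact~\ref{fact:spherical-cpt}, and the displayed inequality (an equality exactly when $\vartheta\in\Disc(G/H)$) gives (2), (3), (5), (6) at once --- this is also the honest source of your steps (2) and (3), where the reference to the operator $\ii$ of \eqref{eqn:operator-i} is misplaced (that operator belongs to the fiber decomposition \eqref{eqn:decomp2}, i.e.\ to (6), not to the overgroup side).
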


\begin{proof}[Proof of Lemma~\ref{lem:spherical2}]
Decompose $L^2(X)$ into irreducible $G$-modules as in \eqref{eqn:decomp1} and \eqref{eqn:decomp2}.
Since $X_{\C}$ is $G_{\C}$-spherical, Fact~\ref{fact:spherical-cpt} implies that these decompositions are multiplicity-free.
In particular, $[\pi|_{\tilH}:\mathbf{1}]=1$ for all $\pi\in\Disc(\tilG/\tilH)$ and $[\tau|_H:\mathbf{1}]=1$ for all $\tau\in\Disc(K/H)$, and so (1) and~(4) hold by Fact~\ref{fact:spherical-cpt}.
Moreover, for any $\vartheta\in\widehat{G}$, by considering the multiplicities of $\vartheta$ in the regular representation on $L^2(X)$ in \eqref{eqn:decomp1} and \eqref{eqn:decomp2}, we see that
$$\sum_{\pi\in\Disc(\tilG/\tilH)} [\pi|_G:\vartheta] = \sum_{\tau\in\Disc(K/H)} [\vartheta|_K:\tau] \leq 1,$$
and the inequality is an equality if and only if $\vartheta\in\Disc(G/H)$.
This implies (2), (3), (5), and~(6).
\end{proof}

\begin{remark}\label{rem:strongmf}
Lemma~\ref{lem:spherical2} implies that if $X_{\C}=G_{\C}/H_{\C}$ is $G_{\C}$-spherical, then the double summation \eqref{eqn:decomp1} may be thought of as a \emph{strong multiplicity-free branching law}, in the sense that the restriction $\pi|_G$ is multiplicity-free and that the irreducible summands make up a disjoint union as $\pi$ ranges over $\Disc(\tilG/\tilH)$.
A similar interpretation holds for \eqref{eqn:decomp2}.
\end{remark}

Via the multiplicity-free decomposition
\begin{equation}
L^2(X) \ \simeq\ \sumplus{\vartheta\in\Disc(G/H)} \vartheta
\end{equation}
given by Lemma~\ref{lem:spherical2}, we can diagonalize any $G$-endomorphism of $L^2(X)$ by Schur's lemma.
This idea may also be applied to $G$-invariant differential operators on~$X$, and the map $\vartheta\mapsto (\pi(\vartheta),\tau(\vartheta))$ of Proposition~\ref{prop:pi-tau-theta} may then be interpreted in terms of spectral data, which provide useful information in analyzing the three subalgebras $\D_{\tilG}(X)$, $\dd r(Z(\kk_{\C}))$, and $\dd\ell(Z(\g_{\C}))$ of $\D_G(X)$.
To be more precise, we recall that the center $Z(\g_{\C})$ acts on the representation space of any $\vartheta\in\widehat{G}$ as scalars by Schur's lemma, yielding a $\C$-algebra homomorphism
$$\Psi_{\vartheta} : Z(\g_{\C}) \longrightarrow \C$$
(\emph{$Z(\g_{\C})$-infinitesimal character}).
Similarly, to any $\pi\in\widehat{\tilG}$ corresponds a $Z(\tilg_{\C})$-infinitesimal character $\Psi_{\pi} : Z(\tilg_{\C})\rightarrow\C$, and to any $\tau\in\widehat{K}$ a $Z(\kk_{\C})$-infinitesimal character $\Psi_{\tau} : Z(\kk_{\C})\rightarrow\C$.
We denote by ${}^{\vee} : U(\kk_{\C})\to U(\kk_{\C})$\linebreak the antiautomorphism of the enveloping algebra induced by $\kk_{\C}\to\kk_{\C}$,\linebreak $Z\mapsto -Z$.
Its restriction to the center $Z(\kk_{\C})$ of $U(\kk_{\C})$ is an automorphism since $Z(\kk_{\C})$ is commutative.
We have
$$\Psi_{\tau^{\vee}}(Q') = \Psi_{\tau}({Q'}^{\vee})$$
for all $Q'\in Z(\kk_{\C})$, where $\tau^{\vee}$ is the contragredient representation of~$\tau$.

Using the canonical map $\vartheta\mapsto (\pi(\vartheta),\tau(\vartheta))$ of Proposition~\ref{prop:pi-tau-theta}, we can reduce the question of finding explicit relations among $G$-invariant differential operators on~$X$ to the simpler question of finding identities among polynomials via the evaluation at $\vartheta\in\Disc(G/H)$, by the following proposition.

\begin{proposition} \label{prop:D-spec-Hom}
Let $G$ be a connected compact Lie group and $X=G/H$ where is $H$ a closed subgroup of~$G$, such that $X_{\C}=G_{\C}/H_{\C}$ is $G_{\C}$-spherical.
\begin{enumerate}
  \item There is a map
  $$\psi : \Disc(G/H) \times \D_G(X) \longrightarrow \C$$
  such that any $D\in\D_G(X)$ acts on the $G$-isotypic subspace $U_{\vartheta}$ of $\vartheta$ in $C^{\infty}(X)$ by the scalar $\psi(\vartheta,D)$.
  Moreover, $\psi$ induces an injective algebra homomorphism
  \begin{equation} \label{eqn:psi-Map}
  \tilde{\psi} : \D_G(X) \longhookrightarrow \mathrm{Map}(\Disc(G/H),\C).
  \end{equation}
  \item Suppose that $X\simeq\tilG/\tilH$ for some connected compact overgroup $\tilG$ of~$G$.
  Let $K$ be a connected subgroup of~$G$ containing~$H$.
  Then
  \begin{align*}
  \psi(\vartheta,\dd\ell(P')) & = \Psi_{\pi(\vartheta)}(P') & \mathrm{for\ all\ }P'\in Z(\tilg_{\C}),\\
  \psi(\vartheta,\dd r(Q')) & = \Psi_{\tau(\vartheta)}({Q'}^{\vee}) & \mathrm{for\ all\ }Q'\in Z(\kk_{\C}),\\
  \psi(\vartheta,\dd\ell(R)) & = \Psi_{\vartheta}(R) & \mathrm{for\ all\ }R\in Z(\g_{\C}).
  \end{align*}
\end{enumerate}
\end{proposition}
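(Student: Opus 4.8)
The plan is to prove Proposition~\ref{prop:D-spec-Hom} in two parts, treating the general sphericity statement~(1) first and then the explicit computation of~$\psi$ in the overgroup situation~(2).

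For part~(1), the starting point is Fact~\ref{fact:spherical-cpt}: since $X_{\C}$ is $G_{\C}$-spherical and $G$ is compact, the regular representation on $C^{\infty}(X)$ (or equivalently $L^2(X)$) is multiplicity-free, so $C^{\infty}(X) = \bigoplus_{\vartheta\in\Disc(G/H)} U_{\vartheta}$ with each isotypic component $U_{\vartheta}$ irreducible as a $G$-module. Given $D\in\D_G(X)$, $G$-invariance means $D$ commutes with the $G$-action, hence preserves each $U_{\vartheta}$ (by multiplicity-freeness there is no other isotypic component $D$ could map into) and acts there by a scalar by Schur's lemma; call it $\psi(\vartheta,D)$. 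The map $D\mapsto\psi(\vartheta,D)$ is an algebra homomorphism for each fixed~$\vartheta$ because $D\mapsto D|_{U_\vartheta}$ is, so $\tilde\psi$ lands in algebra homomorphisms coordinatewise and is itself an algebra homomorphism into $\mathrm{Map}(\Disc(G/H),\C)$. Injectivity is the one genuinely substantive point: if $\psi(\vartheta,D)=0$ for all $\vartheta$, then $D$ annihilates every $U_\vartheta$, hence annihilates the dense subspace of $K$-finite (here $G$-finite) vectors in $C^\infty(X)$, hence $D=0$ as a differential operator since such operators are determined by their action on a dense subspace of smooth vectors (one can invoke that $\D_G(X)$ embeds in $\D(X)$ and that the $G$-finite vectors are dense, or appeal directly to the isomorphism~\eqref{eqn:DX} together with the Peter--Weyl-type density).

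For part~(2), the three formulas are computed separately by tracing through the realizations set up in Sections~\ref{subsec:DGH} and~\ref{subsec:L2decomp}. For $\dd\ell(R)$ with $R\in Z(\g_{\C})$: since $\dd\ell$ is the differential of the left regular representation and $R$ is central, $\dd\ell(R)$ acts on the irreducible $G$-module $U_\vartheta\simeq\vartheta$ by the $Z(\g_{\C})$-infinitesimal character $\Psi_\vartheta(R)$, which is exactly the definition of $\Psi_\vartheta$; hence $\psi(\vartheta,\dd\ell(R))=\Psi_\vartheta(R)$. For $\dd\ell(P')$ with $P'\in Z(\tilg_{\C})$: now view $X\simeq\tilG/\tilH$ and decompose $L^2(X)$ as in~\eqref{eqn:decomp1}; the element $\vartheta\in\Disc(G/H)$ sits inside a unique $\pi\in\Disc(\tilG/\tilH)$ with $[\pi|_G:\vartheta]=1$, namely $\pi=\pi(\vartheta)$ from Proposition~\ref{prop:pi-tau-theta}, and on that $\pi$ the central element $\dd\ell(P')$ acts by $\Psi_{\pi(\vartheta)}(P')$, hence so it does on the subspace realizing~$\vartheta$. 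For $\dd r(Q')$ with $Q'\in Z(\kk_{\C})$: use the fiber decomposition~\eqref{eqn:decomp2} and the embeddings $\ii_\tau$ of Section~\ref{subsec:L2decomp}, together with the displayed identity $\dd r({Q'}^\vee)(\ii_\tau(\varphi))=\ii_\tau(\dd\tau(Q')\varphi)$ there; since $\vartheta$ occurs in $\ii_{\tau(\vartheta)}(C^\infty(Y,\W_{\tau(\vartheta)}))$ with $\tau(\vartheta)$ the unique element of $\Disc(K/H)$ with $[\vartheta|_K:\tau(\vartheta)]=1$, and $\dd\tau$ acts on $V_\tau$ by the infinitesimal character $\Psi_\tau$, one reads off $\psi(\vartheta,\dd r(Q'))=\Psi_{\tau(\vartheta)}({Q'}^\vee)$, the $^\vee$ coming precisely from the left-versus-right convention recorded in Section~\ref{subsec:dliota} (cf.\ Lemma~\ref{lem:lreta}) and in the definition of $\dd r_{\scriptscriptstyle F}$.

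The main obstacle I expect is not any single computation but the careful bookkeeping of which module a given $\vartheta$ sits in under each of the two decompositions~\eqref{eqn:decomp1} and~\eqref{eqn:decomp2}, and the compatibility of the various identifications ($U_\vartheta\subset C^\infty(X)$, the copy of $\vartheta$ inside $\pi(\vartheta)|_G$, the copy of $\vartheta$ inside $\ii_{\tau(\vartheta)}(C^\infty(Y,\W_{\tau(\vartheta)}))$) — one must check that these are genuinely the same subspace of $C^\infty(X)$, which is exactly what the strong multiplicity-freeness in Remark~\ref{rem:strongmf} guarantees: because $\vartheta$ occurs only once in $L^2(X)$, there is no ambiguity, and the scalar by which a given operator acts is well defined independently of which decomposition one uses to compute it. Once that is in place, the three formulas follow from the respective intertwining properties of $\dd\ell$, $\dd r$, and the $\ii_\tau$, together with Schur's lemma applied to $Z(\tilg_{\C})$, $Z(\kk_{\C})$, and $Z(\g_{\C})$ acting on irreducibles of $\tilG$, $K$, and $G$ respectively.
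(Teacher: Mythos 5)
Your proposal is correct and follows essentially the same route as the paper: part (1) via Schur's lemma on the irreducible isotypic components (guaranteed by $G_{\C}$-sphericity) plus injectivity from density of $\bigoplus_\vartheta U_\vartheta$ and faithfulness of the $\D_G(X)$-action, and part (2) by reading off the scalars from the unique occurrences of $\vartheta$ in $\pi(\vartheta)$ and in $\ii_{\tau(\vartheta)}(C^\infty(Y,\W_{\tau(\vartheta)}))$, with the contragredient appearing exactly through the identity $\dd r({Q'}^{\vee})\circ\ii_\tau=\ii_\tau\circ\dd\tau(Q')$. Your extra remark that strong multiplicity-freeness makes the scalar independent of which decomposition is used is precisely the point the paper relies on implicitly.
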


\begin{proof}
(1) All differential operators $D\in\D_G(X)$ preserve each $G$-isotypic subspace $U_{\vartheta}$.
Since $X_{\C}$ is $G_{\C}$-spherical, $U_{\vartheta}$ is an irreducible $G$-module.
By Schur's lemma, $D$ acts on~$U_{\vartheta}$ by a scalar, which we denote by $\psi(D,\vartheta)\in\C$.
This gives the desired map~$\psi$.
Since the action of $\D_G(X)$ on $C^{\infty}(X)$ is faithful, and since $\bigoplus_{\vartheta\in\Disc(G/H)} U_{\vartheta}$ is dense in $C^{\infty}(X)$, the induced map $\tilde{\psi}$ is injective.

(2) For any $R\in Z(\g_{\C})$ the operator $\dd\ell(R)\in\D_G(X)$ acts on~$U_{\vartheta}$ by the scalar $\Psi_{\vartheta}(R)$.
By definition \eqref{eqn:DtoDD} of $\pi(\vartheta)$, the $G$-module $U_{\vartheta}$ occurs in the $\tilG$-irreducible module $\pi(\vartheta)$, and so for any $P'\in Z(\tilg_{\C})$ the operator $\dd\ell(P')\in\D_G(X)$ acts on~$U_{\vartheta}$ by the scalar $\Psi_{\pi(\vartheta)}(P')$.
By definition \eqref{eqn:DtoDD} of $\tau(\vartheta)$, we have $U_{\vartheta}\subset\ii_{\tau(\vartheta)}(C^{\infty}(Y,\W_{\tau(\vartheta)}))$, and so for any $Q'\in Z(\kk_{\C})$ the operator $\dd r(Q')\in\D_G(X)$ acts on~$\vartheta$ by the scalar $\Psi_{\tau(\vartheta)^{\vee}}(Q')=\Psi_{\tau(\vartheta)}({Q'}^{\vee})$.
\end{proof}

\begin{remark}
In the setting of Theorems \ref{thm:main} and~\ref{thm:main-explicit}, by a natural parametri\-zation of $\Disc(G/H)$ by a certain semilattice in~$\aaa^*$, we may regard \eqref{eqn:psi-Map} as an algebra homomorphism from $\D_G(X)$ into the algebra of polynomials on~$\aaa_{\C}^*$.
See Lemma~\ref{lem:ex(*)-DGX-polyn} below for an example.
\end{remark}

The next proposition follows immediately from Proposition~\ref{prop:D-spec-Hom}.

\begin{proposition}\label{prop:PQR}
Suppose $X_{\C}=G_{\C}/H_{\C}$ is $G_{\C}$-spherical.
If $P'\in Z(\tilg_{\C})$, $Q'\in\nolinebreak Z(\kk_{\C})$, and $R\in Z(\g_{\C})$ satisfy
\begin{equation}\label{eqn:PsiPQR}
\Psi_{\pi(\vartheta)}(P') + \Psi_{\tau(\vartheta)^{\vee}}(Q') + \Psi_{\vartheta}(R) = 0
\end{equation}
for all $\vartheta\in\Disc(G/H)$, then $\dd\ell(P')+\dd r(Q')+\dd\ell(R)=0$ in $\D_G(X)$.
\end{proposition}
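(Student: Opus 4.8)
The plan is to deduce this at once from the spectral description of $\D_G(X)$ established in Proposition~\ref{prop:D-spec-Hom}, so that no new analytic input is needed. First I would record that the operator
$D := \dd\ell(P') + \dd r(Q') + \dd\ell(R)$
genuinely lies in $\D_G(X)$: we have $P'\in Z(\tilg_{\C})\subset U(\tilg_{\C})^{\tilH}$ and $R\in Z(\g_{\C})\subset U(\g_{\C})^H$, while $Q'\in Z(\kk_{\C})\subset U(\kk_{\C})^H\subset U(\g_{\C})^H$, so each of the three summands is the image of a central element under one of the homomorphisms $\dd\ell$ (for $\tilg_{\C}$, resp.\ $\g_{\C}$) or $\dd r$ into $\D_G(X)$, consistently with the commutative diagram of Section~\ref{subsec:dliota} (equivalently~\eqref{eqn:diagram}); in particular $\dd r(Q')$ coincides with $\iota(\dd r_{\scriptscriptstyle F}(Q'))$.

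Next I would evaluate the spectral scalar $\psi(\vartheta,D)$ for each $\vartheta\in\Disc(G/H)$. Since $X_{\C}$ is $G_{\C}$-spherical, Proposition~\ref{prop:D-spec-Hom}.(1) gives an injective algebra homomorphism $\tilde\psi:\D_G(X)\hookrightarrow\mathrm{Map}(\Disc(G/H),\C)$; in particular $\psi(\vartheta,\cdot)$ is additive, so
$\psi(\vartheta,D)=\psi(\vartheta,\dd\ell(P'))+\psi(\vartheta,\dd r(Q'))+\psi(\vartheta,\dd\ell(R))$
for every $\vartheta$. Now Proposition~\ref{prop:D-spec-Hom}.(2), together with the canonical map $\vartheta\mapsto(\pi(\vartheta),\tau(\vartheta))$ of Proposition~\ref{prop:pi-tau-theta}, identifies the three summands with $\Psi_{\pi(\vartheta)}(P')$, $\Psi_{\tau(\vartheta)}({Q'}^{\vee})=\Psi_{\tau(\vartheta)^{\vee}}(Q')$, and $\Psi_{\vartheta}(R)$ respectively. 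Substituting the hypothesis~\eqref{eqn:PsiPQR} yields $\psi(\vartheta,D)=0$ for all $\vartheta\in\Disc(G/H)$, i.e.\ $\tilde\psi(D)=0$, whence $D=0$ in $\D_G(X)$ by injectivity of $\tilde\psi$.

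I do not expect any genuine obstacle here: the substantive content — that each $G$-isotypic component $U_{\vartheta}$ is $G$-irreducible so that Schur's lemma applies, that $\bigoplus_{\vartheta}U_{\vartheta}$ is dense in $C^{\infty}(X)$ and the $\D_G(X)$-action is faithful, and the three infinitesimal-character formulas — is all packaged into Proposition~\ref{prop:D-spec-Hom} and its proof. The only points demanding care are bookkeeping: keeping straight which homomorphism ($\dd\ell$ on $Z(\tilg_{\C})$, $\dd r$ on $Z(\kk_{\C})$, $\dd\ell$ on $Z(\g_{\C})$) produces each term of $D$, and the contragredient twist ${Q'}^{\vee}$, which enters because $\dd r$ is built from right differentiation (cf.\ Lemma~\ref{lem:lreta}).
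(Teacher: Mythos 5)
Your proposal is correct and is precisely the argument the paper intends: the paper states that Proposition~\ref{prop:PQR} "follows immediately from Proposition~\ref{prop:D-spec-Hom}", and your write-up simply makes explicit the evaluation $\psi(\vartheta,\cdot)$ on each summand via Proposition~\ref{prop:D-spec-Hom}.(2), the hypothesis~\eqref{eqn:PsiPQR}, and the injectivity of $\tilde{\psi}$ from Proposition~\ref{prop:D-spec-Hom}.(1). The bookkeeping points you flag (which map produces each term, and the twist ${Q'}^{\vee}$) are handled consistently with the paper's conventions, so there is nothing to add.
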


In most cases of Table~\ref{table1}, we will be in the following situation: $X_{\C}=G_{\C}/H_{\C}$ is $G_{\C}$-spherical and both $\tilG/\tilH$ and $K/H$ are symmetric spaces.
Then we can reformulate Proposition~\ref{prop:PQR} in terms of $\D_{\tilG}(X)$ and $\D_K(F)$ instead of $Z(\tilg_{\C})$ and $Z(\kk_{\C})$, as follows.
Let $\tila$ (\resp $\aaa_F$) be a Cartan subspace for the symmetric space $\tilG/\tilH$ (\resp $K/H$) (see Section~\ref{subsec:symmsp}).
By the Cartan--Helgason theorem (Fact~\ref{fact:CartanHelgason}), for any $\vartheta\in\Disc(G/H)$ there exist $\lambda(\vartheta)\in\tila_{\C}^{\ast}$ and $\mu(\vartheta)\in (\aaa_F^{\ast})_{\C}$ such that $\pi(\vartheta)=\Rep(\tilG,\lambda(\vartheta))$ and $\tau(\vartheta)=\Rep(K,\mu(\vartheta))$.
By Lemma~\ref{lem:chiZgDGX}, for any $P'\in Z(\tilg_{\C})$ and $Q'\in Z(\kk_{\C})$ we have
\begin{eqnarray}
\Psi_{\pi(\vartheta)}(P') & = & \chi_{\lambda(\vartheta)+\rho_{\tila}}^X \circ \dd\ell(P'),\label{eqn:HCX-Psi-chi}\\
\Psi_{\tau(\vartheta)^{\vee}}(Q') & = & \chi_{\mu(\vartheta)+\rho_{\aaa_F}}^F \circ \dd r(Q').\nonumber
\end{eqnarray}
In Section~\ref{sec:computations}, we extend the formula \eqref{eqn:HCX-Psi-chi} to the cases where $\tilG/\tilH$ is nonsymmetric, see \eqref{eqn:HCX-vii} and \eqref{eqn:rhoa-nonsymm}.
Thus Proposition~\ref{prop:PQR} yields the following.

\begin{proposition}\label{prop:chiPQR}
Suppose that $X_{\C}=G_{\C}/H_{\C}$ is $G_{\C}$-spherical and that $K/H$ is a symmetric space.
If $P\in\D_{\tilG}(X)$, $Q\in\D_K(F)$, and $R\in Z(\g_{\C})$ satisfy
$$\chi_{\lambda(\vartheta)+\rho_{\tila}}^X(P) + \chi_{\mu(\vartheta)+\rho_{\aaa_F}}^F(Q) + \Psi_{\vartheta}(R)=0$$
for all $\vartheta\in\Disc(G/H)$, then $P+\iota(Q)+\dd\ell(R)=0$ in $\D_G(X)$.
\end{proposition}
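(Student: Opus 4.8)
The plan is to deduce Proposition~\ref{prop:chiPQR} from Proposition~\ref{prop:PQR} by lifting $P$ and $Q$ to the centres of the relevant enveloping algebras and then translating the eigenvalue hypothesis through the Harish-Chandra isomorphisms of the two symmetric spaces $\tilG/\tilH$ and $F=K/H$. First I would invoke Lemma~\ref{lem:surj-dl-drF}: since $\dd\ell\colon Z(\tilg_\C)\to\D_{\tilG}(X)$ and $\dd r_{\scriptscriptstyle F}\colon Z(\kk_\C)\to\D_K(F)$ are surjective, I pick $P'\in Z(\tilg_\C)$ with $\dd\ell(P')=P$ and $Q'\in Z(\kk_\C)$ with $\dd r_{\scriptscriptstyle F}(Q')=Q$. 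By the commutativity of the diagram in Section~\ref{subsec:dliota} relating $\dd r_{\scriptscriptstyle F}$, $\iota$ and $\dd r$, we have $\dd r(Q')=\iota(\dd r_{\scriptscriptstyle F}(Q'))=\iota(Q)$ in $\D_G(X)$; hence it suffices to prove that $\dd\ell(P')+\dd r(Q')+\dd\ell(R)=0$ in $\D_G(X)$.

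Next I would rewrite the eigenvalue hypothesis by means of the relations \eqref{eqn:HCX-Psi-chi}, which for each $\vartheta\in\Disc(G/H)$ express the infinitesimal characters $\Psi_{\pi(\vartheta)}$ on $Z(\tilg_\C)$ and $\Psi_{\tau(\vartheta)^\vee}$ on $Z(\kk_\C)$ in terms of the Harish-Chandra homomorphisms $\chi^X_{\lambda(\vartheta)+\rho_{\tila}}$ of $\D_{\tilG}(X)$ and $\chi^F_{\mu(\vartheta)+\rho_{\aaa_F}}$ of $\D_K(F)$ (for the non-symmetric instances of $\tilG/\tilH$, one uses instead the extension of \eqref{eqn:HCX-Psi-chi} established in Section~\ref{sec:computations}). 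Applying these to the lifts $P'$ and $Q'$ gives $\Psi_{\pi(\vartheta)}(P')=\chi^X_{\lambda(\vartheta)+\rho_{\tila}}(\dd\ell(P'))=\chi^X_{\lambda(\vartheta)+\rho_{\tila}}(P)$ and $\Psi_{\tau(\vartheta)^\vee}(Q')=\chi^F_{\mu(\vartheta)+\rho_{\aaa_F}}(\dd r_{\scriptscriptstyle F}(Q'))=\chi^F_{\mu(\vartheta)+\rho_{\aaa_F}}(Q)$, while $R$ contributes the same term $\Psi_\vartheta(R)$ on both sides. Thus the assumed identity
$$\chi^X_{\lambda(\vartheta)+\rho_{\tila}}(P)+\chi^F_{\mu(\vartheta)+\rho_{\aaa_F}}(Q)+\Psi_\vartheta(R)=0\qquad(\forall\,\vartheta\in\Disc(G/H))$$
is exactly the hypothesis $\Psi_{\pi(\vartheta)}(P')+\Psi_{\tau(\vartheta)^\vee}(Q')+\Psi_\vartheta(R)=0$ of Proposition~\ref{prop:PQR}. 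Since $X_\C=G_\C/H_\C$ is $G_\C$-spherical, that proposition yields $\dd\ell(P')+\dd r(Q')+\dd\ell(R)=0$ in $\D_G(X)$, and substituting $\dd\ell(P')=P$ and $\dd r(Q')=\iota(Q)$ gives $P+\iota(Q)+\dd\ell(R)=0$, as required.

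I do not expect a genuine obstacle here: the argument merely assembles Lemma~\ref{lem:surj-dl-drF}, the Harish-Chandra relations \eqref{eqn:HCX-Psi-chi} and Proposition~\ref{prop:PQR}. The one point that must be handled with care — and the only place an error could slip in — is the bookkeeping of conventions in the translation step: the antiautomorphism ${}^\vee$ (equivalently, the appearance of the contragredient $\tau(\vartheta)^\vee$, which is forced by using the right differentiation $\dd r$ on $Z(\kk_\C)$ rather than the left one), the asymmetric roles of $\dd\ell$ on $Z(\tilg_\C)$ versus $\dd r$ on $Z(\kk_\C)$, and the two distinct $\rho$-shifts $\rho_{\tila}$ and $\rho_{\aaa_F}$ (the half-sums of the positive restricted roots of $\tilG/\tilH$ and of $K/H$, respectively). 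All of these are already encoded in \eqref{eqn:HCX-Psi-chi}, so once that formula is granted the deduction is purely formal. Alternatively, one could bypass the lifting and argue directly through the injective algebra homomorphism $\tilde\psi$ of Proposition~\ref{prop:D-spec-Hom}.(1), evaluating $P+\iota(Q)+\dd\ell(R)$ on each $\vartheta\in\Disc(G/H)$ using Proposition~\ref{prop:D-spec-Hom}.(2) together with Fact~\ref{fact:CartanHelgason}.(3) applied to $\tilG/\tilH$ and $K/H$; this is the same computation in a different guise.
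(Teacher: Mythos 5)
Your proof is correct and follows essentially the same route as the paper, which obtains Proposition~\ref{prop:chiPQR} directly from Proposition~\ref{prop:PQR} via the Harish--Chandra relations \eqref{eqn:HCX-Psi-chi}. The only difference is that you spell out the lifting of $P$ and $Q$ to $Z(\tilg_{\C})$ and $Z(\kk_{\C})$ using the surjectivity of $\dd\ell$ and $\dd r_{\scriptscriptstyle F}$ (Lemma~\ref{lem:surj-dl-drF}) and the compatibility $\dd r(Q')=\iota(\dd r_{\scriptscriptstyle F}(Q'))$, steps the paper leaves implicit.
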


\subsection{The transfer map $\nnu(\cdot,\tau)$} \label{subsec:nu-tau}

Let $\tau\in\Disc(K/H)$.
Recall from Section~\ref{subsec:intro-transfer} that the transfer maps
$$\left\{ \begin{array}{l}
\nnu(\cdot,\tau) : \Hom_{\C\text{-}\mathrm{alg}}(\D_{\tilG}(X),\C) \longrightarrow \Hom_{\C\text{-}\mathrm{alg}}(Z(\g_{\C}),\C),\\
\llambda(\cdot,\tau) : \Hom_{\C\text{-}\mathrm{alg}}(Z(\g_{\C})/\Ker(\dd\ell^{\tau}),\C) \longrightarrow \Hom_{\C\text{-}\mathrm{alg}}(\D_{\tilG}(X),\C)
\end{array} \right.$$
of \eqref{eqn:nu-lambda-tau} are induced from a bijection
\begin{equation} \label{eqn:abstract-transfer}
\varphi_{\mathcal{I}_{\tau}}^* : \Hom_{\C\text{-}\mathrm{alg}}(\D_{\tilG}(X),\C) \overset{\sim}{\longrightarrow} \Hom_{\C\text{-}\mathrm{alg}}\big(Z(\g_{\C})/\Ker(q_{\mathcal{I}_{\tau}}\circ\dd\ell),\C\big),
\end{equation}
where $\mathcal{I}_{\tau}$ is the annihilator of $\tau^{\vee}$ in $Z(\kk_{\C})$; see the commutative diagram in Section~\ref{subsubsec:intro-I-tau}.
Such a bijection $\varphi_{\mathcal{I}_{\tau}}^*$ exists in the setting \ref{setting} when $\tilG$ is simple by Proposition~\ref{prop:qI}, and also in the case \eqref{eqn:nonsimple-case} where $\tilG$ is a direct product of simple Lie groups by Proposition~\ref{prop:qItau-*}.

Theorem~\ref{thm:main-explicit}.(1)--(2) for $\tilG$ simple and Proposition~\ref{prop:ex(*)} for $\tilG$ a product imply that the transfer maps $\nnu(\cdot,\tau)$ and $\llambda(\cdot,\tau)$ are inverse to each other, in the following sense.

\begin{proposition} \label{prop:phiItau-vanish}
In the setting \ref{setting}, suppose that $\tilG$ is simple or $(\tilG,\tilH,G)$ is the triple \eqref{eqn:nonsimple-case}.
Let $K$ be a maximal connected proper subgroup of~$G$ containing~$H$ if $\h$ is not a maximal proper subalgebra of~$\g$, and $K=H$ otherwise.
Let $\tau\in\Disc(K/H)$.
\begin{enumerate}
  \item If $\lambda\in\Spec(X)_{\tau}$ (see \eqref{eqn:Spec-tau-X}), then $\varphi_{\mathcal{I}_{\tau}}^*(\lambda)$ vanishes on $\Ker(\dd\ell^{\tau})$.
  \item We have $\nnu(\llambda(\nu,\tau)) = \nu$ for all $\nu\in\Hom_{\C\text{-}\mathrm{alg}}(Z(\g_{\C})/\Ker(\dd\ell^{\tau}),\C)$ and $\llambda(\nnu(\lambda,\tau)) = \lambda$ for all $\lambda\in\mathrm{Spec}(X)_{\tau}$.
\end{enumerate}
\end{proposition}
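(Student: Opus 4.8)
The plan is to establish part~(1) directly in the fiber picture $X=\tilG/\tilH\to Y=G/K$, and to deduce part~(2) formally from part~(1) together with the bijectivity of $\varphi_{\mathcal{I}_\tau}^*$. Recall that this bijectivity is supplied by Proposition~\ref{prop:qI} when $\tilG$ is simple, and by Proposition~\ref{prop:qItau-*} in the case~\eqref{eqn:nonsimple-case}; these in turn rest on Theorem~\ref{thm:main-explicit}.(1)--(2) and on Proposition~\ref{prop:ex(*)} respectively.

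For part~(1), fix $\lambda\in\Spec(X)_\tau$ and choose, by \eqref{eqn:Spec-tau-X}, a nonzero $F\in\ii_\tau(C^\infty(Y,\W_\tau))$ with $DF=\lambda(D)F$ for all $D\in\D_{\tilG}(X)$; write $F=\ii_\tau(f)$, where $f\in C^\infty(Y,\W_\tau)$ is nonzero since $\ii_\tau$ is injective. The first point is that $\langle\mathcal{I}_\tau\rangle$ annihilates $F$. By the compatibility relations of Section~\ref{subsec:L2decomp} (which, being infinitesimal intertwining identities, hold on all smooth sections), each $\dd r(Q')$ with $Q'\in Z(\kk_\C)$ acts on $\ii_\tau(C^\infty(Y,\W_\tau))$ by the scalar $\Psi_{\tau^\vee}(Q')$, which vanishes for $Q'\in\mathcal{I}_\tau$ since $\mathcal{I}_\tau$ is the annihilator of $\tau^\vee$; as $\ii_\tau(C^\infty(Y,\W_\tau))$ is a sum of $G$-isotypic components of $C^\infty(X)$, hence $\D_G(X)$-stable, it follows that the whole ideal $\langle\mathcal{I}_\tau\rangle$ generated by $\dd r(\mathcal{I}_\tau)$ in $\D_G(X)$ annihilates $\ii_\tau(C^\infty(Y,\W_\tau))$, in particular $F$. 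Next, for $R\in Z(\g_\C)$ let $D_R$ be the unique element of $\D_{\tilG}(X)$ with $D_R-\dd\ell(R)\in\langle\mathcal{I}_\tau\rangle$; by the construction of $\varphi_{\mathcal{I}_\tau}$ in \eqref{eqn:phi-ideal-I} from the isomorphisms of Proposition~\ref{prop:qI}, $D_R$ is the image of $R$ under $\varphi_{\mathcal{I}_\tau}$, so that $\varphi_{\mathcal{I}_\tau}^*(\lambda)(R)=\lambda(D_R)$, regarding $\varphi_{\mathcal{I}_\tau}^*(\lambda)$ as a homomorphism on $Z(\g_\C)$. Applying $D_R-\dd\ell(R)$ to $F$ and using the previous point, $\dd\ell(R)F=D_RF=\lambda(D_R)\,F=\varphi_{\mathcal{I}_\tau}^*(\lambda)(R)\,F$; on the other hand $\dd\ell(R)F=\dd\ell(R)\ii_\tau(f)=\ii_\tau(\dd\ell^\tau(R)f)$ by the compatibility relations again, so injectivity of $\ii_\tau$ gives $\dd\ell^\tau(R)f=\varphi_{\mathcal{I}_\tau}^*(\lambda)(R)\,f$. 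For $R\in\Ker(\dd\ell^\tau)$ the left-hand side is $0$, and since $f\neq 0$ this forces $\varphi_{\mathcal{I}_\tau}^*(\lambda)(R)=0$; hence $\varphi_{\mathcal{I}_\tau}^*(\lambda)$ vanishes on $\Ker(\dd\ell^\tau)$, which is part~(1).

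For part~(2), since $\Ker(q_{\mathcal{I}_\tau}\circ\dd\ell)\subset\Ker(\dd\ell^\tau)$, the set $\Hom_{\C\text{-}\mathrm{alg}}(Z(\g_\C)/\Ker(\dd\ell^\tau),\C)$ lies inside $\Hom_{\C\text{-}\mathrm{alg}}(Z(\g_\C)/\Ker(q_{\mathcal{I}_\tau}\circ\dd\ell),\C)$, the image of the bijection $\varphi_{\mathcal{I}_\tau}^*=\nnu(\cdot,\tau)$, and $\llambda(\cdot,\tau)$ is by construction the restriction of $(\varphi_{\mathcal{I}_\tau}^*)^{-1}$ to the former set. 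Hence for $\nu$ in that set, $\nnu(\llambda(\nu,\tau))=\varphi_{\mathcal{I}_\tau}^*\big((\varphi_{\mathcal{I}_\tau}^*)^{-1}(\nu)\big)=\nu$; and for $\lambda\in\Spec(X)_\tau$, part~(1) shows that $\nnu(\lambda,\tau)=\varphi_{\mathcal{I}_\tau}^*(\lambda)$ lies in that set, so $\llambda(\nnu(\lambda,\tau))$ is defined and equals $(\varphi_{\mathcal{I}_\tau}^*)^{-1}\big(\varphi_{\mathcal{I}_\tau}^*(\lambda)\big)=\lambda$. In the case~\eqref{eqn:nonsimple-case} the argument is word for word the same, with Proposition~\ref{prop:qItau-*} in place of Proposition~\ref{prop:qI}.

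The substance of the proof is part~(1), and in particular the interplay of the three families of operators along the fiber: that $\dd r(\mathcal{I}_\tau)$ annihilates $\ii_\tau(C^\infty(Y,\W_\tau))$, that $\ii_\tau$ intertwines $\dd\ell(R)$ with $\dd\ell^\tau(R)$, and that $D_R$ and $\dd\ell(R)$ differ by an element of $\langle\mathcal{I}_\tau\rangle$. I do not expect a serious obstacle; the point requiring care is the appearance of $\tau^\vee$ rather than $\tau$ --- equivalently, right versus left differentiation --- which is precisely the reason $\mathcal{I}_\tau$ was defined as the annihilator of $\tau^\vee$.
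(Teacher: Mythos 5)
Your argument is correct and follows essentially the same route as the paper: both rest on the structural input (Theorem~\ref{thm:main}.(1)/Proposition~\ref{prop:qI} for simple $\tilG$, Propositions \ref{prop:ex(*)}/\ref{prop:qItau-*} in the case \eqref{eqn:nonsimple-case}), the intertwining $\dd\ell(R)\circ\ii_{\tau}=\ii_{\tau}\circ\dd\ell^{\tau}(R)$, the scalar action $\Psi_{\tau^{\vee}}$ of $Z(\kk_{\C})$ on $\ii_{\tau}(C^{\infty}(Y,\W_{\tau}))$, and the nonvanishing of $f$. The only difference is cosmetic: the paper writes $\dd\ell(R)=\sum_j\dd r(Q_j)P_j$ explicitly, whereas you work modulo $\langle\mathcal{I}_{\tau}\rangle$ with the unique representative $D_R\in\D_{\tilG}(X)$, which is an equivalent reformulation via $\varphi_{\mathcal{I}_{\tau}}$.
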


\begin{proof}
(1) Let $\lambda\in\Spec(X)_{\tau}$.
Consider a nonzero $F\in C^{\infty}(X;\M_{\lambda})_{\tau}$, and write $F=\ii_{\tau}(f)$ where $f\in C^{\infty}(Y,\W_{\tau})$.
By Theorem~\ref{thm:main}.(1) for $\tilG$ simple and Proposition~\ref{prop:ex(*)} for the triple \eqref{eqn:nonsimple-case}, for any $R\in Z(\g_{\C})$ there exist $P_j\in\D_{\tilG}(X)$ and $Q_j\in Z(\kk_{\C})$, $1\leq j\leq m$, such that
$$\dd\ell(R) = \sum_j \dd r(Q_j)\,P_j$$
in $\D_G(X)$.
By definition \eqref{eqn:phi-ideal-I} of~$\varphi_{\mathcal{I}_{\tau}}$, we have $\varphi_{\mathcal{I}_{\tau}}^*(\lambda)(R) \!=\! \sum_j \Psi_{\tau^{\vee}}^K(Q_j)\,\lambda(P_j)$, because
$$\ii_{\tau}\big(\dd\ell^{\tau}(R)f\big) = \dd\ell^{\tau}(R)F = \sum_j \Psi_{\tau^{\vee}}^K(Q_j)\,\lambda(P_j)\,F = \varphi_{\mathcal{I}_{\tau}}^*(\lambda)(R)\,F.$$
Therefore, if $\dd\ell^{\tau}(R)=0$, then $\varphi_{\mathcal{I}_{\tau}}^*(\lambda)(R)=0$ because $F$ is nonzero.

(2) This follows readily from (1) and from the definition of $\nnu(\cdot,\tau)$ and $\llambda(\cdot,\tau)$ in Section~\ref{subsubsec:intro-I-tau}.
\end{proof}

In the rest of this section, we give a description of the map \eqref{eqn:abstract-transfer} that relates joint eigenvalues for $\D_{\tilG}(X)$ and for $Z(\g_{\C})$, by introducing an affine map $S_{\tau} : \tila_{\C}^*\to\jj_{\C}^*$; in this way, we give a more precise version of Theorem~\ref{thm:transfer}.
This description is given via the Harish-Chandra isomorphism, which we recall now.

For a symmetric space $X=\tilG/\tilH$, the Harish-Chandra isomorphism $\Psi$ of \eqref{eqn:Psi} gives an identification
\begin{equation}\label{eqn:Psi-dual}
\Psi^* : \tila_{\C}^*/\widetilde{W} \overset{\sim}{\longrightarrow} \Hom_{\C\text{-}\mathrm{alg}}(\D_{\tilG}(X),\C),
\end{equation}
where $\widetilde{W}$ is the Weyl group of the restricted root system $\Sigma(\tilg_{\C},\tila_{\C})$.
There are a few cases where $X_{\C}=\tilG_{\C}/\tilH_{\C}$ is a nonsymmetric spherical homogeneous space such as $X_{\C}=\SO(7,\C)/G_2(\C)$, and in Section~\ref{sec:computations} we give an explicit normalization of the identification \eqref{eqn:Psi-dual} in each case of Table~\ref{table1}, see \eqref{eqn:HCX-vii} and \eqref{eqn:HCX-viii} below.

The Harish-Chandra isomorphism $\Phi$ of \eqref{eqn:HC-isom-group} for the group manifold~$G_{\C}$ gives an identification
$$\Phi^* : \jj_{\C}^*/W(\g_{\C}) \overset{\sim}{\longrightarrow} \Hom_{\C\text{-}\mathrm{alg}}(Z(\g_{\C}),\C),$$
where $\jj_{\C}$ is a Cartan subalgebra of~$\g_{\C}$ and $W(\g_{\C})$ the Weyl group of the root system $\Delta(\g_{\C},\jj_{\C})$.

Let $\dd\ell : Z(\g_{\C})\to\D_G(X)$ be the natural $\C$-algebra homomorphism (see \eqref{eqn:dl-dr}).
Recall from Section~\ref{subsec:intro-transfer} that $\dd\ell^{\tau} : Z(\g_{\C})\to\D_G(Y,\W_{\tau})$ is a $\C$-algebra homomorphism into the ring of matrix-valued $G$-invariant differential operators on $C^{\infty}(Y,\W_{\tau})$, for $\tau\in\Disc(K/H)$.

\begin{theorem} \label{thm:nu-tau}
In the setting \ref{setting}, suppose that either $\tilG$ is simple, or $\tilG=\!^{\backprime}G\times\!^{\backprime}G$ and $\tilH=H_1\times H_2$ and $G=\Diag(^{\backprime}G)=\{(g,g)\,:\,g\in\,^{\backprime}G\}$ for some simple Lie group $^{\backprime}G$ and some subgroups $H_1$ and~$H_2$.
Let $K$ be a maximal connected proper subgroup of $G$ containing~$H$ if $\h$ is not a maximal proper subalgebra of~$\g$, and $K=H$ otherwise.
We set $Y:=G/K$, and let $\tau\in\Disc(K/H)$.
Then
\begin{enumerate}
  \item the ring $\D_G(X)$ preserves the subspace $\ii_{\tau}(C^{\infty}(Y,\W_{\tau}))$ of $C^{\infty}(X)$;
  \item for $f\in C^{\infty}(Y,\W_{\tau})$, the function $\ii_{\tau}(f)\in C^{\infty}(X)$ is a joint eigenfunction for $\D_{\tilG}(X)$ if and only if $f$ is a joint eigenfunction for $Z(\g_{\C})$ via $\dd\ell^{\tau}$;
  \item the joint eigenvalues for $\D_{\tilG}(X)$ and $Z(\g_{\C})$ on $\ii_{\tau}(C^{\infty}(Y,\W_{\tau}))$ in~(2) are related via the transfer map
$$\nnu(\cdot,\tau) : \Hom_{\C\text{-}\mathrm{alg}}(\D_{\tilG}(X),\C) \longrightarrow \Hom_{\C\text{-}\mathrm{alg}}(Z(\g_{\C}),\C)$$
in the sense that for any $\lambda\in\Hom_{\C\text{-}\mathrm{alg}}(\D_{\tilG}(X),\C)$, the following two conditions on $f\in C^{\infty}(Y,\W_{\tau})$ are equivalent:
\begin{align*}
\dd\ell^{\tau}(R) f & = \nnu(\lambda,\tau)(R)\,f && \hspace{-1.8cm}\forall R\in Z(\g_{\C}),\\
D(\ii_{\tau}f) & = \lambda(D)\,\ii_{\tau}f && \hspace{-1.8cm}\forall D\in\D_{\tilG}(X);
\end{align*}
  \item there exists an affine map $S_{\tau} : \tila_{\C}^*\to\jj_{\C}^*$ such that the following diagram commutes.
\begin{equation} \label{eqn:comm-diag}
\xymatrix{\tila_{\C}^* \ar[d] \ar[r]^{S_{\tau}} & \jj_{\C}^* \ar[d]\\
\tila_{\C}^*/\widetilde{W} \ar[d]_{\Psi^*} & \jj_{\C}^*/W(\g_{\C}) \ar[d]^{\Phi^*}\\
\Hom_{\C\text{-}\mathrm{alg}}(\D_{\tilG}(X),\C) \ar[r]_{\nnu(\cdot,\tau)} & \Hom_{\C\text{-}\mathrm{alg}}(Z(\g_{\C}),\C)}
\end{equation}
\end{enumerate}
\end{theorem}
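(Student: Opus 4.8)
The plan is to reduce everything to the abstract localization machinery of Section~\ref{subsec:nu-tau} together with the explicit structure theorems, and then to track the Harish-Chandra normalizations carefully. First I would establish (1): the key point is that $\iota(\D_K(F))=\dd r(Z(\kk_{\C}))$ (see \eqref{eqn:DKF-drZk}), so by Theorem~\ref{thm:main-explicit}.(1) the algebra $\D_G(X)$ is generated by $\D_{\tilG}(X)$ and $\dd r(Z(\kk_{\C}))$. Since $\D_{\tilG}(X)$ manifestly preserves $\ii_{\tau}(C^{\infty}(Y,\W_{\tau}))$ (being $\tilG$-invariant, and $\ii_{\tau}(C^{\infty}(Y,\W_{\tau}))$ is the $\tau$-component in the fiber decomposition \eqref{eqn:decomp2}, hence $\tilG$-stable by uniqueness in the spherical decomposition), and since any element of $\dd r(Z(\kk_{\C}))$ acts on $\ii_{\tau}(C^{\infty}(Y,\W_{\tau}))$ by the scalar $\Psi_{\tau^{\vee}}(Q')$ by the commutation relations in Section~\ref{subsec:L2decomp}, the whole algebra $\D_G(X)$ preserves this subspace. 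Then (2) follows: on $\ii_{\tau}(C^{\infty}(Y,\W_{\tau}))$, the ideal $\langle\mathcal{I}_{\tau}\rangle$ acts by zero, so the action of $\D_G(X)$ there factors through $\D_G(X)_{\mathcal{I}_{\tau}}$, and by Proposition~\ref{prop:qI} both $\D_{\tilG}(X)$ and $Z(\g_{\C})$ surject onto $\D_G(X)_{\mathcal{I}_{\tau}}$; the isomorphism $\varphi_{\mathcal{I}_{\tau}}$ of \eqref{eqn:phi-ideal-I} then says that a vector is a joint $\D_{\tilG}(X)$-eigenvector on $\ii_{\tau}(C^{\infty}(Y,\W_{\tau}))$ iff it is a joint $Z(\g_{\C})$-eigenvector (via $\dd\ell^{\tau}$), with eigenvalues matched by $\varphi_{\mathcal{I}_{\tau}}^*$; this is exactly (3), with $\nnu(\cdot,\tau):=\varphi_{\mathcal{I}_{\tau}}^*$ composed with the inclusion $\Hom_{\C\text{-}\mathrm{alg}}(Z(\g_{\C})/\Ker(q_{\mathcal{I}_{\tau}}\circ\dd\ell),\C)\subset\Hom_{\C\text{-}\mathrm{alg}}(Z(\g_{\C}),\C)$. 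Proposition~\ref{prop:phiItau-vanish} guarantees this lands where it should and is compatible with $\llambda(\cdot,\tau)$.

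For (4), the idea is to exhibit $\nnu(\cdot,\tau)$ concretely on infinitesimal characters using Proposition~\ref{prop:D-spec-Hom}. For $\vartheta\in\Disc(G/H)$ lying in the image of the $\tau$-fiber — that is, with $\tau(\vartheta)=\tau$ — the three scalars $\psi(\vartheta,\dd\ell(P'))=\Psi_{\pi(\vartheta)}(P')$, $\psi(\vartheta,\dd r(Q'))=\Psi_{\tau^{\vee}}(Q')$, $\psi(\vartheta,\dd\ell(R))=\Psi_{\vartheta}(R)$ all coincide on the common quotient $\D_G(X)_{\mathcal{I}_{\tau}}$. Via the Cartan--Helgason theorem (Fact~\ref{fact:CartanHelgason}) applied to the symmetric space $\tilG/\tilH$ (or its nonsymmetric-spherical analogue, with the normalization \eqref{eqn:Psi-dual} fixed in Section~\ref{sec:computations}), $\pi(\vartheta)=\Rep(\tilG,\lambda(\vartheta))$ and the $\D_{\tilG}(X)$-eigenvalue is $\chi^X_{\lambda(\vartheta)+\rho_{\tila}}$; likewise the $Z(\g_{\C})$-infinitesimal character of $\vartheta$ is $\chi^G_{\mu}$ for the appropriate $\mu\in\jj_{\C}^*$ obtained by the $\rho$-shift and the highest-weight-to-infinitesimal-character passage. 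The affine map $S_{\tau}$ is then read off by composing these two Harish-Chandra parametrizations: explicitly, $S_{\tau}$ sends a weight $\lambda\in\tila_{\C}^*$ (thought of as $\lambda(\vartheta)+\rho_{\tila}$) to the $\jj_{\C}^*$-weight governing the $Z(\g_{\C})$-infinitesimal character of the corresponding $\vartheta$, which depends \emph{affinely} on $\lambda$ because the highest weight of $\vartheta$ is $\lambda(\vartheta)$ plus a fixed contribution from the highest weight of $\tau$ (here one uses that the highest weight of $\vartheta$ is determined by those of $\pi(\vartheta)$ and $\tau(\vartheta)$ in the strongly multiplicity-free branching laws \eqref{eqn:decomp1}--\eqref{eqn:decomp2}), and the Harish-Chandra $\rho$-shifts are constant. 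Since the linear relations \eqref{eqn:rel-diff-op} among the $P_k,\iota(Q_k),\dd\ell(R_k)$ are themselves affine, one checks the commutativity of \eqref{eqn:comm-diag} by evaluating on the Zariski-dense set $\{\lambda(\vartheta)+\rho_{\tila}:\vartheta\in\Disc(G/H),\ \tau(\vartheta)=\tau\}$ (a coset of a finite-index semilattice in a subspace of $\tila^*$), where both sides agree by construction, and then invoking that a polynomial identity valid on a Zariski-dense set holds identically.

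The genuinely delicate step is (4), specifically pinning down that $S_{\tau}$ is \emph{affine} and not merely a map of quotient spaces, together with matching the Harish-Chandra normalizations on both sides — in particular handling the nonsymmetric cases $\tilG/\tilH=\SO(7)/G_{2(-14)}$ and $\SO(8)/\Spin(7)$ where \eqref{eqn:Psi-dual} is not the standard symmetric-space isomorphism and must be normalized by hand (this is \eqref{eqn:HCX-vii}, \eqref{eqn:HCX-viii}, \eqref{eqn:rhoa-nonsymm}), and the semisimple-not-simple case \eqref{eqn:nonsimple-case} where Theorem~\ref{thm:main}.(2) fails but $\varphi_{\mathcal{I}_{\tau}}^*$ still exists by Proposition~\ref{prop:qItau-*}. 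In the product case $\tilG=\,^{\backprime}G\times\,^{\backprime}G$, $G=\Diag(^{\backprime}G)$, the branching law $\tilG\downarrow G$ is the tensor-product decomposition, and the same argument goes through once one knows the relevant multiplicity-freeness (Lemma~\ref{lem:spherical2}); the affine dependence of $S_{\tau}$ on $\lambda$ is then visible from the fact that tensoring by the fixed finite-dimensional $\tau$-contribution shifts highest weights additively. Everything else — points (1), (2), (3) — is essentially a bookkeeping consequence of the localization isomorphisms of Proposition~\ref{prop:qI} and the fiber-decomposition identities of Section~\ref{subsec:L2decomp}, and I expect no real difficulty there beyond care with the antiautomorphism ${}^{\vee}$ on $Z(\kk_{\C})$.
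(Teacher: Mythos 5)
Your treatment of (1)--(3) follows the same architecture as the paper (the localization isomorphisms of Propositions~\ref{prop:qI} and~\ref{prop:qItau-*}, the scalar action of $\dd r(Z(\kk_{\C}))$ on the $\tau$-component, and $\nnu(\cdot,\tau)=\varphi_{\mathcal{I}_{\tau}}^*$), but your justification of (1) contains a false step: you assert that $\ii_{\tau}(C^{\infty}(Y,\W_{\tau}))$ is $\tilG$-stable ``by uniqueness in the spherical decomposition''. It is not: already in case (i) of Table~\ref{table1} this subspace consists of functions on $\mathbb{S}^{2n+1}$ of a fixed weight under the Hopf $\U(1)$-action, which $\SO(2n+2)$ does not preserve, and a single $\tilG$-isotypic component $\mathcal{H}^j(\R^{2n+2})$ meets the $\tau$-components for all $\tau=\mathbf{1}\boxtimes\C_{2k-j}$, $0\le k\le j$. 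The conclusion you need (that $\D_{\tilG}(X)$, indeed all of $\D_G(X)$, preserves the subspace) is true, but for a different reason: either because $\D_G(X)$ preserves each $G$-isotypic component $U_{\vartheta}$ and the subspace is the closed span of those $U_{\vartheta}$ with $\tau(\vartheta)=\tau$, or, as in the paper, from the identity $\ii_{\tau}\circ\dd\ell^{\tau}(R)=\dd\ell(R)\circ\ii_{\tau}$ together with the surjectivity of $q_{\mathcal{I}_{\tau}}\circ\dd\ell:Z(\g_{\C})\to\D_G(X)_{\mathcal{I}_{\tau}}$ and the vanishing of $\dd r(\mathcal{I}_{\tau})$ on the subspace. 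This defect is repairable.

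The serious gap is in (4). Your reduction to checking the diagram on the dense family of eigenspaces $\ii_{\tau}(U_{\vartheta})$ is exactly the paper's Proposition~\ref{prop:Stau-transfer}, but the whole content of (4) is the \emph{existence of an affine} $S_{\tau}$ with $S_{\tau}(\lambda(\vartheta)+\rho_{\tila})=\nu(\vartheta)+\rho$ mod $W(\g_{\C})$ for all $\vartheta$ with $\tau(\vartheta)=\tau$, and your argument for this --- ``the highest weight of $\vartheta$ is $\lambda(\vartheta)$ plus a fixed contribution from the highest weight of $\tau$'' --- is not a proof; it is not even well formed, since $\lambda(\vartheta)$ lies in $\tila_{\C}^*$ (attached to $\tilg$) while the highest weight of $\vartheta$ lies in $\jj_{\C}^*$ (attached to $\g$), and, as the paper stresses, there is no a priori map between $\tila_{\C}$ and $\jj_{\C}$. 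In the paper this affineness is obtained only by determining the map $\vartheta\mapsto(\pi(\vartheta),\tau(\vartheta))$ explicitly, case by case, through the branching laws $\tilG\downarrow G$ and $G\downarrow K$ (including the nontrivial ones proved via Proposition~\ref{prop:branchnormal}, e.g.\ $\SO(16)\downarrow\Spin(9)$, the triality reductions, and the tensor-product case \eqref{eqn:nonsimple-case} via Proposition~\ref{prop:tensornormal}), and then writing down $S_{\tau}$ in each case of Table~\ref{table1} and for \eqref{eqn:nonsimple-case}; see Sections~\ref{sec:computations} and~\ref{sec:ex(*)}, where the interlacing pattern of $\lambda$- and $\tau$-parameters (e.g.\ $t_{m,m-1}(\lambda/2,b(k))$ in case (ii)) is precisely what must be computed, together with the by-hand normalizations \eqref{eqn:HCX-vii}, \eqref{eqn:HCX-viii}, \eqref{eqn:rhoa-nonsymm} in the nonsymmetric cases. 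Your proposal acknowledges these points as ``delicate'' but supplies no argument replacing the case-by-case computation, so part (4) remains unproved as written.
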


We give a proof of Theorem~\ref{thm:nu-tau}.(1)--(3) in Section~\ref{subsec:strategy-transfer}, postponing the proof of Proposition~\ref{prop:qI} and its counterpart for the product case $(\tilG,\tilH,G)=(^{\backprime}G\times\!^{\backprime}G,H_1\times H_2,\Diag(^{\backprime}G))$ (Proposition~\ref{prop:qItau-*}) until Sections \ref{sec:computations} and~\ref{sec:ex(*)}.
We note that by the classification of Proposition~\ref{prop:class-triple-prod} below, the product case essentially reduces to the triple \eqref{eqn:nonsimple-case}.

An explicit formula for the affine map~$S_{\tau}$ is given in Section~\ref{sec:computations} for simple~$\tilG$ in each case, and in Section~\ref{sec:ex(*)} for the case \eqref{eqn:nonsimple-case}.

Statement~(3) provides useful information on possible $Z(\g_{\C})$-infinitesimal characters for irreducible $G$-modules in $C^{\infty}(Y,\W_{\tau})$, by means of the affine map~$S_{\tau}$.

\begin{remark} \label{rem:extend-nu-tau}
Theorem~\ref{thm:nu-tau}.(1) is not true if we do not assume $X_{\C}=G_{\C}/H_{\C}$ to be $G_{\C}$-spherical.
For instance, it is not true for
$$X = G/H = (\SO(2n-\nolinebreak 1)\times\nolinebreak\U(n))/\Diag(\U(n-1))$$
and $\tilG = \SO(2n)\times\SO(2n)$, where $X_{\C}$ is $\tilG_{\C}$-spherical but not $G_{\C}$-spherical: see \cite[Ex.\,8.8]{kkII}.
\end{remark}

\begin{remark}
The standard homomorphism $T : \aaa_{\C}^*/W\to\jj_{\C}^*/W(\g_{\C})$ of \eqref{eqn:aj} is induced by the inclusion $\aaa_{\C}\subset\jj_{\C}$ and the ``$\rho$-shift''.
In contrast, the map $S_{\tau} : \tila_{\C}^*\to\jj_{\C}^*$ of Theorem~\ref{thm:nu-tau}.(4) is defined even though there is a priori no inclusion relation between $\tila_{\C}$ (which is contained in $\tilg_{\C}$) and $\jj_{\C}$ (which is contained in $\g_{\C}$).
\end{remark}

\subsection{Graded algebras $\operatorname{gr}(\D_G(X))$} \label{subsec:graded-alg}

In order to prove that two of the three algebras $\dd\ell(Z(\tilg_{\C}))$, $\dd r(Z(\kk_{\C}))$, $\dd\ell(Z(\g_{\C}))$ above generate the $\C$-algebra\linebreak $\D_G(X)$ as in Theorems \ref{thm:main} and~\ref{thm:main-explicit}, we use the filtered algebra structure of $\D_G(X)$.
In this section, we give preliminary results on the graded algebra $\operatorname{gr}(\D_G(X))$ which will be used in Sections \ref{sec:computations} and~\ref{sec:ex(*)}.

The $\C$-algebra $\D_G(X)$ has a natural filtration $\{ \D_G(X)_N\}_{N\in\N}$ by the order of differential operators, with $\D_G(X)_M\D_G(X)_N\subset\D_G(X)_{M+N}$ for all $M,N\in\N$.
Therefore, the graded module
$$\operatorname{gr}(\D_G(X)) := \bigoplus_{N\in\N} \operatorname{gr}_N(\D_G(X)),$$
where $\operatorname{gr}_N(\D_G(X)) := \D_G(X)_N/\D_G(X)_{N+1}$, becomes a $\C$-algebra, which is isomorphic, as graded $\C$-algebras, to the subalgebra $S(\g_{\C}/\h_{\C})^H=\linebreak\bigoplus_{N\in\N} S^N(\g_{\C}/\h_{\C})^H$ of the symmetric algebra $S(\g_{\C}/\h_{\C})$.
We relate the two algebras $\D_G(X)$ and $S(\g_{\C}/\h_{\C})^H$ using the following lemma.

\begin{lemma}\label{lem:struct-DGX}
For any $\underline{m}=(m_1,\dots,m_k)\in\N^k$ and $N\in\N$, let
$$v_{\underline{m}}(N) := \# \bigg\{ (a_1,\dots,a_k)\in\N^k \ :\ \sum_{i=1}^k a_i m_i = N\bigg\} .$$
\begin{enumerate}
  \item The sequence $(v_{\underline{m}}(N))_{N\in\N}$ determines $k$ and~$\underline{m}$ up to permutation.
  \item Suppose $S(\g_{\C}/\h_{\C})^H$ is a polynomial ring generated by algebraically independent homogeneous elements $P_1,\dots,P_k$ of respective degrees $m_1,\dots,m_k$.
  Then
  $v_{\underline{m}}(N)=\dim S^N(\g_{\C}/\h_{\C})^H$ for all $N\in\N$.
  \item Suppose $X_{\C}=G_{\C}/H_{\C}$ is $G_{\C}$-spherical, and let $P_1,\dots,P_k$ be as in~(2).
  For $1\leq j\leq k$, let $D_j\in\D_G(X)_{m_j}$ be the preimage of $P_j\in S^{m_j}(\g_{\C}/\h_{\C})^H$.
  Then $D_1,\dots,D_k$ are algebraically independent, and $\D_G(X)$ is the polynomial ring generated by them.
\end{enumerate}
\end{lemma}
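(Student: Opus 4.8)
The plan is to prove the three parts in order, with part~(3) being the real goal and parts (1)--(2) serving as combinatorial and graded-algebra preliminaries.

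For part~(1), the generating function $\sum_{N\in\N} v_{\underline m}(N)\,t^N = \prod_{i=1}^k (1-t^{m_i})^{-1}$ encodes exactly the data of $k$ and the multiset $\{m_1,\dots,m_k\}$. I would argue that the sequence $(v_{\underline m}(N))_N$ determines this rational function, hence the multiset: for instance, $k$ is recovered as the order of the pole at $t=1$, and then one peels off the factors $(1-t^{m_i})^{-1}$ one at a time by looking at the smallest $N>0$ with $v_{\underline m}(N)\neq 0$, which gives the smallest exponent $\min_i m_i$ together with its multiplicity, and induct. This is elementary; the only care needed is to phrase it so that finitely many initial terms suffice, or simply to work with the full sequence.

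For part~(2), if $S(\g_\C/\h_\C)^H = \C[P_1,\dots,P_k]$ with the $P_j$ algebraically independent and homogeneous of degree $m_j$, then monomials $P_1^{a_1}\cdots P_k^{a_k}$ with $\sum a_i m_i = N$ form a $\C$-basis of the homogeneous component $S^N(\g_\C/\h_\C)^H$, since algebraic independence means there are no relations among these monomials and they span because the $P_j$ generate. Counting them is precisely $v_{\underline m}(N)$, giving $\dim S^N(\g_\C/\h_\C)^H = v_{\underline m}(N)$.

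For part~(3), recall (stated just above the lemma) that $\operatorname{gr}(\D_G(X)) \cong S(\g_\C/\h_\C)^H$ as graded $\C$-algebras, via the symbol map sending $\D_G(X)_m/\D_G(X)_{m+1}$ to $S^m(\g_\C/\h_\C)^H$. Since $X_\C$ is $G_\C$-spherical, $\D_G(X)$ is commutative (Fact~\ref{fact:spherical}) and is a polynomial ring in $r=\rank G/H$ generators by Knop \cite{kno94}. I would choose $D_j\in\D_G(X)_{m_j}$ lifting $P_j$, i.e.\ with principal symbol $P_j$. First, the $D_j$ are algebraically independent over~$\C$: a nontrivial polynomial relation $F(D_1,\dots,D_k)=0$ would, upon extracting the top-degree part with respect to the weighting $\deg D_j = m_j$, produce a nontrivial relation among the principal symbols $P_j$ in $\operatorname{gr}(\D_G(X))\cong S(\g_\C/\h_\C)^H$, contradicting their algebraic independence there. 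Next, $\C[D_1,\dots,D_k]$ equals all of $\D_G(X)$: I argue by induction on the filtration degree $N$ that $\D_G(X)_N$ is spanned by $\D_G(X)_{N-1}$ together with the degree-$\le N$ part of $\C[D_1,\dots,D_k]$. Indeed, given $D\in\D_G(X)_N$, its symbol lies in $S^N(\g_\C/\h_\C)^H = \sum_{\sum a_i m_i = N} \C\,P_1^{a_1}\cdots P_k^{a_k}$ by part~(2) (the dimension count shows these monomials span, not merely that they have the right count — here one uses that the monomials are linearly independent by algebraic independence, so they form a basis of a space of dimension $v_{\underline m}(N) = \dim S^N$). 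Hence $D - \sum c_{\underline a} D_1^{a_1}\cdots D_k^{a_k}$ has strictly lower order and we apply the inductive hypothesis. (Alternatively, and perhaps more cleanly: the inclusion $\C[D_1,\dots,D_k]\hookrightarrow\D_G(X)$ is a map of filtered algebras inducing a surjection — in fact an isomorphism by part~(2) and algebraic independence — on associated graded algebras, hence is itself surjective; and injectivity is the algebraic independence just established.) Therefore $\D_G(X) = \C[D_1,\dots,D_k]$ is a polynomial ring in the $D_j$; in particular $k=r$.

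The main obstacle is bookkeeping the filtration argument in part~(3): one must be careful that lifting symbols is well-defined only modulo lower order, and that the induction genuinely closes — the clean way is the associated-graded argument (a filtered morphism that is surjective on $\operatorname{gr}$ is surjective), which sidesteps choosing compatible lifts of products. Parts (1) and~(2) are routine.
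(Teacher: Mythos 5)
Your proposal is correct and follows essentially the same route as the paper: parts (1)--(2) are the elementary counting facts the paper treats as obvious, and for part (3) you lift algebraic independence from $\operatorname{gr}(\D_G(X))\simeq S(\g_{\C}/\h_{\C})^H$ and deduce generation by comparing with the associated graded algebra, which is exactly the paper's argument (phrased there as the dimension count $\dim(\D_G(X)_N\cap R)=\sum_{j\le N}\dim S^j(\g_{\C}/\h_{\C})^H=\dim\D_G(X)_N$ rather than your induction on the filtration, a purely cosmetic difference).
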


\begin{proof}
Statements (1) and~(2) are obvious.
For~(3), let $R$ be the $\C$-subalgebra of $\D_G(X)$ generated by $D_1,\dots,D_k$.
Since $P_1,\dots,P_k$ are algebraically independent in $\operatorname{gr}(\D_G(X))\simeq S(\g_{\C}/\h_{\C})^H$, so are $D_1,\dots,D_k$ in $\D_G(X)$.
Furthermore,
\begin{eqnarray*}
\dim\big(\D_G(X)_N\cap R\big) & = & \sum_{j=0}^N \dim S^j(\g_{\C}/\h_{\C})^H\\
& = & \sum_{j=0}^N \dim \operatorname{gr}_j(\D_G(X)) \;=\; \dim \D_G(X)_N
\end{eqnarray*}
for any~$N$, hence $R=\D_G(X)$.
\end{proof}

\subsection{Strategy for the proof of Theorems \ref{thm:main} and~\ref{thm:main-explicit}} \label{subsec:descript-comput}

We now explain how this machinery is used to find generators and relations for $\D_G(X)$ in Section~\ref{sec:computations}.
There are four steps.

The first step is to describe the map
\begin{eqnarray*}
\Disc(G/H) & \longrightarrow & \Disc(\tilG/\tilH) \times \Disc(K/H)\\
\vartheta & \longmapsto & (\pi(\vartheta),\tau(\vartheta))
\end{eqnarray*}
of Proposition~\ref{prop:pi-tau-theta}, which exists by $G_{\C}$-sphericity of~$X_{\C}$.
We note that an explicit description of the sets $\Disc(G/H)$, $\Disc(\tilG/\tilH)$, and $\Disc(K/H)$ was previously known in most cases, and is easily obtained in the remaining cases.
In fact, both $\tilG/\tilH$ and $K/H$ are symmetric spaces in most cases, hence $\Disc(\tilG/\tilH)$ and $\Disc(K/H)$ are described by the Cartan--Helgason theorem (Fact~\ref{fact:CartanHelgason}).
On the other hand, $G/H$ is never symmetric, but $\Disc(G/H)$ for $G_{\C}$-spherical $G_{\C}/H_{\C}$ was classified in \cite{gg13,kra79} under the assumption that $G$ is simple.
There are a few remaining cases where $G/H$ or $K/H$ is nonsymmetric and $G$ or~$K$ is not simple.
All of them are homogeneous spaces of classical groups of low dimension, and the classification of $\Disc(G/H)$ or $\Disc(K/H)$ can then be carried out easily.
To find an explicit formula for the map $\vartheta\mapsto\pi(\vartheta)$ or $\tau(\vartheta)$, we use the branching laws for the restriction $\tilG\downarrow G$ or $G\downarrow K$, respectively.
Some of them are obtained as special cases of the classical branching laws, whereas Proposition~\ref{prop:branchnormal} and an a priori knowledge of $\Disc(\tilG/\tilH)$ or $\Disc(K/H)$ help us find the branching laws when the subgroups are embedded in a nontrivial way (\eg for $\SO(16)\downarrow\Spin(7)$).

The second step consists in taking generators $P_k,Q_k,R_k$ for the three algebras $\D_{\tilG}(X)$, $\D_K(F)$, and $Z(\g_{\C})$, respectively.
In most cases, $\tilG/\tilH$ and $K/H$ are symmetric spaces, hence we can use the Harish-Chandra isomorphism (see \eqref{eqn:HC-isom} and \eqref{eqn:HC-isom-group}).
The choices of $P_k,Q_k,R_k$ are not unique; we make them carefully so that $P_k,Q_k,R_k$ have linear relations in the next step.

The third step consists in finding explicit linear relations among the differential operators $P_k,\iota(Q_k),\dd\ell(R_k)\in\D_G(X)$.
For this we use the map $\vartheta\mapsto (\pi(\vartheta),\tau(\vartheta))$ of Proposition~\ref{prop:pi-tau-theta} and compute the scalars by which these operators act on $\pi(\vartheta)$, $\tau(\vartheta)$, and~$\vartheta$, respectively.
For appropriate choices of $P_k,Q_k,R_k$, we find linear relations among these scalars which hold for all~$\vartheta$.
We then conclude using Proposition~\ref{prop:chiPQR}.

The last step is to prove that any two of the three subalgebras $\D_{\tilG}(X)$, $\iota(\D_K(F))$, and $\dd\ell(Z(\g_{\C}))$ generate $\D_G(X)$ (with one exception in case (ix) of Table~\ref{table1}).
For this, we exhibit algebraically independent subsets of the $P_k$ and $\iota(Q_k)$, of the $\iota(Q_k)$ and $\dd\ell(R_k)$, and of the $P_k$ and $\dd\ell(R_k)$, that generate $\D_G(X)$.
The proof is reduced to some estimates in the graded algebra $S(\g_{\C}/\h_{\C})^H$ by Lemma~\ref{lem:struct-DGX}.

These four steps complete the proof of Theorems \ref{thm:main} and~\ref{thm:main-explicit}, with explicit linear relations \eqref{eqn:rel-diff-op}.

\subsection{Strategy for the proof of Theorem~\ref{thm:nu-tau} (hence of Theorem~\ref{thm:transfer})} \label{subsec:strategy-transfer}

Postponing the proof of Proposition~\ref{prop:qI} (consequence of Theorem~\ref{thm:main}) until Section~\ref{sec:computations}, and the proof of its counterpart for the product case (Proposition~\ref{prop:qItau-*}) until Section~\ref{sec:ex(*)}, we now give a proof of Theorem~\ref{thm:nu-tau}.(1)--(3).

\begin{proof}[Proof of Theorem~\ref{thm:nu-tau}.(1)--(3)]
For $\tau\in\Disc(K/H)$, let $\mathcal{I}_{\tau}$ be the annihilator of the irreducible contragredient representation $\tau^{\vee}$ in $Z(\kk_{\C})$, and\linebreak $q_{\mathcal{I}_{\tau}} : \D_G(X) \to \D_G(X)_{\mathcal{I}_{\tau}} := \D_G(X)/\langle\mathcal{I}_{\tau}\rangle$ the quotient map \eqref{eqn:qI} as in Sections \ref{subsec:intro-transfer} and~\ref{subsec:nu-tau}.
 By Proposition~\ref{prop:qI} for $\tilG$ simple and Proposition~\ref{prop:qItau-*} for the product case (see Proposition~\ref{prop:class-triple-prod}), the map $q_{\mathcal{I}_{\tau}}$ induces an algebra isomorphism
$$q_{\mathcal{I}_{\tau}}\circ\dd\ell : Z(\g_{\C}) \longrightarrow \D_G(X)_{\mathcal{I}_{\tau}} = \D_G(X)/\mathcal{I}_{\tau},$$
which itself induces a bijection
$$\varphi_{\mathcal{I}_{\tau}}^* : \Hom_{\C\text{-}\mathrm{alg}}(\D_{\tilG}(X),\C) \overset{\sim}{\longrightarrow} \Hom_{\C\text{-}\mathrm{alg}}\big(Z(\g_{\C})/\Ker(q_{\mathcal{I}_{\tau}}\circ\dd\ell),\C\big).$$

(1) By Schur's lemma, the algebra $Z(\kk_{\C})$ acts on $\ii_{\tau}(C^{\infty}(Y,\W_{\tau}))$ via $\dd r$ as scalars, given by the algebra homomorphism $Z(\kk_{\C})\to Z(\kk_{\C})/\mathcal{I}_{\tau}\simeq\C$.
On the other hand, $\dd\ell(Z(\g_{\C}))$ preserves the subspace $\ii_{\tau}(C^{\infty}(Y,\W_{\tau}))$ of $C^{\infty}(X)$ because $\ii_{\tau}\circ\dd\ell^{\tau}(R)=\dd\ell(R)\circ\ii_{\tau}$ for all $R\in Z(\g_{\C})$.
Since $q_{\mathcal{I}_{\tau}}\circ\dd\ell : Z(\g_{\C})\to\D_G(X)_{\mathcal{I}_{\tau}}$ is surjective, any element of $\D_G(X)$ preserves $\ii_{\tau}(C^{\infty}(Y,\W_{\tau}))$.

(2) We again use the fact that the algebra $\dd r(Z(\kk_{\C}))$ acts on $C^{\infty}(Y,\W_{\tau})$ via $\dd r$ as scalars, given by the algebra homomorphism $Z(\kk_{\C})\to Z(\kk_{\C})/\mathcal{I}_{\tau}\simeq\C$.
Since the map $\varphi_{\mathcal{I}_{\tau}}^*$ above is surjective, $f$ is a joint eigenfunction for $Z(\g_{\C})$ via $\dd\ell^{\tau}$ if and only if $\ii_{\tau}(f)$ is a joint eigenfunction for $\D_G(X)$, if and only if $\ii_{\tau}(f)$ is a joint eigenfunction for $\D_{\tilG}(X)$.

(3) This follows from the definition of the transfer map $\nnu(\cdot,\tau)$ in Section~\ref{subsubsec:intro-I-tau}.
\end{proof}

The following proposition reduces the proof of Theorem~\ref{thm:nu-tau}.(4) to the question of finding an explicit formula for the map $\vartheta\mapsto (\pi(\vartheta),\tau(\vartheta))$ of Proposition~\ref{prop:pi-tau-theta}.

\begin{proposition} \label{prop:Stau-transfer}
In the setting of Proposition~\ref{prop:pi-tau-theta}, write
\begin{align*}
\pi(\vartheta) & = \Rep(\tilG,\lambda(\vartheta)) && \hspace{-1.5cm}\text{for }\lambda(\vartheta)\in\tila_{\C}^*,\\
\tau(\vartheta) & = \Rep(K,\nu(\vartheta)) && \hspace{-1.5cm}\text{for }\nu(\vartheta)\in\jj_{\C}^*.
\end{align*}
Let $\tau\in\Disc(K/H)$.
Suppose there is an affine map $S_{\tau} : \tila_{\C}^*\to\jj_{\C}^*$ such that
$$S_{\tau}\big(\lambda(\vartheta) + \rho_{\tila}\big) = \nu(\vartheta) + \rho \quad\quad \mod W(\g_{\C})$$
for all $\vartheta\in\Disc(G/H)$ with $\tau(\vartheta)=\tau$.
Then the transfer map $\nnu(\cdot,\tau)$ is given by the commutative diagram \eqref{eqn:comm-diag} for this~$S_{\tau}$.
\end{proposition}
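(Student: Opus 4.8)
The plan is to determine the transfer map $\nnu(\cdot,\tau)=\varphi_{\mathcal{I}_\tau}^*$ through its values on the spectral parameters of the discrete series lying over~$\tau$, and then to upgrade this to the commutativity of \eqref{eqn:comm-diag} using that $S_\tau$ is affine.

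First I would record, for each $\vartheta\in\Disc(G/H)$, the three characters attached to it. Since $X_\C$ is $G_\C$-spherical, $\D_G(X)$ acts on the irreducible $G$-submodule $U_\vartheta\subset C^\infty(X)$ by a character $\psi(\vartheta,\cdot)\in\Hom_{\C\text{-}\mathrm{alg}}(\D_G(X),\C)$ (Proposition~\ref{prop:D-spec-Hom}.(1)). By Proposition~\ref{prop:D-spec-Hom}.(2), Lemma~\ref{lem:chiZgDGX} applied to the overgroup~$\tilG$, and its extension \eqref{eqn:HCX-Psi-chi} in the nonsymmetric cases, this character restricts on $\D_{\tilG}(X)$ to $\Psi^*(\lambda(\vartheta)+\rho_{\tila})$, restricts on $\dd\ell(Z(\g_\C))$ to the $Z(\g_\C)$-infinitesimal character $\Psi_\vartheta$ of~$\vartheta$ (which under $\Phi^*$ is the class $\nu(\vartheta)+\rho$), and sends $\dd r(Q')$ to $\Psi_{\tau(\vartheta)^\vee}(Q')$ for $Q'\in Z(\kk_\C)$. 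Now assume $\tau(\vartheta)=\tau$. Since $\mathcal{I}_\tau$ is the annihilator of $\tau^\vee$ in $Z(\kk_\C)$, the last of these formulas shows that $\psi(\vartheta,\cdot)$ vanishes on $\dd r(\mathcal{I}_\tau)$, hence on the ideal $\langle\mathcal{I}_\tau\rangle$ it generates in $\D_G(X)$.

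Next I would unwind the transfer map on these characters. Recall that $\varphi_{\mathcal{I}_\tau}:Z(\g_\C)/\Ker(q_{\mathcal{I}_\tau}\circ\dd\ell)\overset{\sim}{\longrightarrow}\D_{\tilG}(X)$ of \eqref{eqn:phi-ideal-I}, composed with the identification $\D_{\tilG}(X)\simeq\D_G(X)_{\mathcal{I}_\tau}$ afforded by $q_{\mathcal{I}_\tau}$, sends $[R]$ to the class of $\dd\ell(R)$; equivalently $\varphi_{\mathcal{I}_\tau}([R])\equiv\dd\ell(R)\pmod{\langle\mathcal{I}_\tau\rangle}$ inside $\D_G(X)$. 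Applying $\psi(\vartheta,\cdot)$, which kills $\langle\mathcal{I}_\tau\rangle$ by the first step, we get $\nnu(\psi(\vartheta,\cdot)|_{\D_{\tilG}(X)},\tau)(R)=\psi(\vartheta,\varphi_{\mathcal{I}_\tau}([R]))=\psi(\vartheta,\dd\ell(R))=\Psi_\vartheta(R)$ for all $R\in Z(\g_\C)$. Together with the first step and the hypothesis that $S_\tau(\lambda(\vartheta)+\rho_{\tila})=\nu(\vartheta)+\rho$ mod $W(\g_\C)$, this says exactly that $\nnu(\Psi^*(\lambda(\vartheta)+\rho_{\tila}),\tau)=\Phi^*\big(S_\tau(\lambda(\vartheta)+\rho_{\tila})\big)$ for every $\vartheta$ lying over~$\tau$; i.e. the diagram \eqref{eqn:comm-diag} commutes after evaluation at each point of the form $\lambda(\vartheta)+\rho_{\tila}\in\tila_\C^*$.

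It remains to pass from these points to all of $\tila_\C^*$. Viewed as maps $\tila_\C^*\to\Hom_{\C\text{-}\mathrm{alg}}(Z(\g_\C),\C)$, both composites in \eqref{eqn:comm-diag} are morphisms of affine varieties (polynomial maps after the Chevalley identifications): the one through $S_\tau$ because $\Phi^*$ is the Harish-Chandra isomorphism and $S_\tau$ is affine, and the one through $\nnu(\cdot,\tau)=\varphi_{\mathcal{I}_\tau}^*$ because $\Psi^*$ is the Harish-Chandra isomorphism and $\nnu(\cdot,\tau)$ is the morphism on maximal spectra induced by the algebra homomorphism $Z(\g_\C)\xrightarrow{\dd\ell}\D_G(X)\xrightarrow{q_{\mathcal{I}_\tau}}\D_G(X)_{\mathcal{I}_\tau}\simeq\D_{\tilG}(X)$. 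Hence it is enough that $\{\lambda(\vartheta)+\rho_{\tila}:\vartheta\in\Disc(G/H),\ \tau(\vartheta)=\tau\}$ be Zariski-dense in $\tila_\C^*$. I expect this last rigidity step, rather than the preceding bookkeeping, to be the main obstacle: the mere existence of an affine $S_\tau$ with the interpolation property on the fiber over~$\tau$ need not pin down $\nnu(\cdot,\tau)$ when that fiber fails to span $\tila_\C^*$, so one should instead exhibit $\varphi_{\mathcal{I}_\tau}$ explicitly --- modulo $\langle\mathcal{I}_\tau\rangle$ the relations \eqref{eqn:rel-diff-op} of Theorem~\ref{thm:main-explicit} turn each $\dd\ell(R_k)$ into an explicit affine combination of the generators $P_k$ of $\D_{\tilG}(X)$, which determines $\varphi_{\mathcal{I}_\tau}$ and hence $\nnu(\cdot,\tau)$ completely and identifies it with $S_\tau$. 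This is why the argument is finished only in the case-by-case Sections \ref{sec:computations} and~\ref{sec:ex(*)}, in tandem with the proof of Proposition~\ref{prop:qI} (and, when $\tilG$ is a product, Proposition~\ref{prop:qItau-*}).
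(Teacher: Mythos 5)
Your argument is correct and is essentially the paper's own proof: the paper likewise evaluates the relevant algebras on the isotypic components $\ii_{\tau}(U_{\vartheta})$ via the Harish-Chandra homomorphisms (\eqref{eqn:HCX-Psi-chi} and \eqref{eqn:HCchi}), obtains the commutativity of \eqref{eqn:comm-diag} at the spectral parameters $\lambda(\vartheta)+\rho_{\tila}$ with $\tau(\vartheta)=\tau$, and then concludes by a density argument (density of $\bigoplus_{\tau(\vartheta)=\tau}\ii_{\tau}(U_{\vartheta})$ in $\ii_{\tau}(C^{\infty}(Y,\W_{\tau}))$), which is the same interpolation step you make explicit as Zariski density of the spectral set together with polynomiality of both composites. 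Your closing worry is the only caveat, and it is settled not by re-deriving $\varphi_{\mathcal{I}_{\tau}}$ from the relations \eqref{eqn:rel-diff-op} but simply by the fact that in every case where the proposition is invoked (Sections \ref{sec:computations} and \ref{sec:ex(*)}) the fiber $\{\lambda(\vartheta)+\rho_{\tila} : \tau(\vartheta)=\tau\}$ is an explicit full-rank lattice cone, hence Zariski-dense in $\tila_{\C}^*$.
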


\begin{proof}
For every $\vartheta\in\Disc(G/H)$, let $U_{\vartheta}$ be the $\vartheta$-isotypic component of the regular representation of~$G$ on $C^{\infty}(Y,\W_{\tau(\vartheta)})$, and for the irreducible representation $\pi(\vartheta)$ of~$\tilG$, let $\widetilde{U}_{\pi(\vartheta)}$ be the $\pi$-isotypic component of the regular representation of~$\tilG$ on $C^{\infty}(X)$.
Then $\ii_{\tau(\vartheta)}(U_{\vartheta}) \subset \widetilde{U}_{\pi(\vartheta)}$, and the algebras $Z(\g_{\C})$ and $\D_{\tilG}(X)$ act on $\ii_{\tau(\vartheta)}(U_{\vartheta})$ and $\widetilde{U}_{\pi(\vartheta)}$ as scalars, given by $\chi_{\lambda(\vartheta)+\rho_{\tila}}^X$ and $\chi_{\nu(\vartheta)+\rho}^G$ via the Harish-Chandra homomorphisms (see \eqref{eqn:HCX-Psi-chi} and \eqref{eqn:HCchi}), respectively.
Since the algebraic direct sum
$$\bigoplus_{\substack{\vartheta\in\Disc(G/H)\\ \tau(\vartheta)=\tau}} \ii_{\tau}(U_{\vartheta}) \quad\quad (\subset C^{\infty}(X))$$
of the eigenspaces of the algebras $Z(\g_{\C})$ and $\D_{\tilG}(X)$ is dense in $\ii_{\tau}(C^{\infty}(Y,\W_{\tau}))$, the transfer map $\nnu(\cdot,\tau)$ is given by the commutative diagram \eqref{eqn:comm-diag} for this~$S_{\tau}$.
\end{proof}

We prove that we can define an affine map $S_{\tau} : \tila_{\C}^*\to\jj_{\C}$ as in Proposition~\ref{prop:Stau-transfer} by determining, in each case in Sections \ref{sec:computations} and~\ref{sec:ex(*)}, an explicit description of the map $\vartheta\mapsto (\pi(\vartheta),\tau(\vartheta))$.

\section{Disconnected isotropy subgroups~$H$} \label{sec:disconnected-H}

In this section we prove that, in the setting of Theorem~\ref{thm:main-explicit}, the algebra $\D_G(X)$ and its subalgebras $\D_{\tilG}(X)$, $\dd r(Z(\kk_{\C}))$, and $\dd\ell(Z(\g_{\C}))$ are completely determined by the triple of Lie algebras $(\tilg_{\C},\h_{\C},\g_{\C})$.

For reductive symmetric spaces $G/H$, it is easy to check that the ring $\D_G(G/H)$ is isomorphic to $\D_G(G/H_0)$ where $H_0$ is the identity component of~$H$ (see \cite[Rem.\,3.1]{kk16} for instance).
However, the homogeneous spaces $X=G/H$ in Table~\ref{table1} or their coverings are never symmetric spaces, and in general, when a subgroup $H$ is disconnected, it may happen that $\D_G(G/H)$ is a proper subalgebra of $\D_G(G/H_0)$.
In the setting of Theorem~\ref{thm:main-explicit}, the group $H:=\tilH\cap G$ is not always connected: its number of connected components may vary under taking a covering of~$\tilG$.
However we prove the following.

\begin{theorem} \label{thm:InvDiff-covering}
Let $\tilG$ be a connected compact simple Lie group, and $\tilH$ and~$G$ two connected subgroups of~$\tilG$ such that $\tilG_{\C}/\tilH_{\C}$ is $G_{\C}$-spherical.
Let $H:=\tilH\cap G$.
\begin{enumerate}
  \item The algebra $\D_G(G/H)$ is completely determined by the pair of Lie algebras $(\g_{\C},\h_{\C})$, and does not vary under coverings of~$\tilG$.
  \item Let $\kk$ be a maximal proper subalgebra of~$\g$ containing $\tilh\cap\g$.
  Then the adjoint action of $H$ on $Z(\kk_{\C})$ is trivial, and so the homomorphism $\dd r : Z(\kk_{\C})\to\D_G(G/H)$ of \eqref{eqn:diagram} is well defined.
  \item The subalgebras $\D_{\tilG}(\tilG/\tilH)$, $\dd r(Z(\kk_{\C}))$, and $\dd\ell(Z(\g_{\C}))$ are completely determined by the triple of Lie algebras $(\tilg_{\C},\h_{\C},\g_{\C})$.
\end{enumerate}
\end{theorem}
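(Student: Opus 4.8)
The plan is to reduce the whole statement to the identity component $H_0$ of $H=\tilH\cap G$, using $G_{\C}$-sphericity of $X_{\C}$ decisively for~(1) and then reading off the remaining rigidity from the classification underlying Table~\ref{table1}. For statement~(1), write $\Gamma:=H/H_0$ for the finite component group, so that $p\colon G/H_0\to G/H$ is a $\Gamma$-covering on which $\Gamma$ acts by right translations commuting with the left $G$-action; since differential operators are local, pull-back by~$p$ identifies $\D_G(G/H)$ with the subalgebra $\D_G(G/H_0)^{\Gamma}$ of $\Gamma$-invariants in $\D_G(G/H_0)$. The key step is that $\Gamma$ acts trivially on $\D_G(G/H_0)$. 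As $G_{\C}$-sphericity of $G_{\C}/H_{\C}$ depends only on $(\g_{\C},\h_{\C})$, the space $G_{\C}/(H_0)_{\C}$ is again $G_{\C}$-spherical, so by Fact~\ref{fact:spherical-cpt} the regular representation decomposes as $C^{\infty}(G/H_0)=\bigoplus_{\vartheta}U_{\vartheta}$ with each $U_{\vartheta}$ irreducible. As $\Gamma$ commutes with~$G$ it preserves each $U_{\vartheta}$, so conjugation by $\gamma\in\Gamma$ leaves unchanged the scalar by which a $G$-invariant $D$ acts on $U_{\vartheta}$ (Schur's lemma); since a differential operator is determined by these scalars (Proposition~\ref{prop:D-spec-Hom}.(1) applied to $G/H_0$), we get $\gamma\cdot D=D$. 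Hence $\D_G(G/H)=\D_G(G/H_0)$, which by~\eqref{eqn:DX} for the connected group~$H_0$ equals $U(\g_{\C})^{\ad(\h)}\big/\big(U(\g_{\C})\h_{\C}\cap U(\g_{\C})^{\ad(\h)}\big)$; this depends only on $(\g_{\C},\h_{\C})$, hence is unchanged under a covering of~$\tilG$, which alters neither $\g_{\C}$ nor the Lie algebra~$\h_{\C}$ of $\tilH\cap G$.

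For statement~(2), I would first record, from Table~\ref{table1} together with Proposition~\ref{prop:H-subset-K}, that a maximal proper subalgebra $\kk$ of~$\g$ containing $\tilh\cap\g$ is the Lie algebra of a connected subgroup~$K$ of~$G$ with $H\subseteq K$ (when $\h$ is itself maximal one has $\kk=\h$ and $H$ connected, and there is nothing to prove). Then for every $\gamma\in H\subseteq K$ the automorphism $\Ad_G(\gamma)|_{\kk_{\C}}$ coincides with $\Ad_K(\gamma)$, hence is inner in~$\kk_{\C}$ and so acts trivially on the center $Z(\kk_{\C})$. Therefore $Z(\kk_{\C})\subseteq U(\g_{\C})^{H}$, and the composite $Z(\kk_{\C})\hookrightarrow U(\g_{\C})^{H}\to\D_G(G/H)$ appearing in~\eqref{eqn:diagram} is well defined.

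For statement~(3), each of the three subalgebras of $\D_G(X)$ is the image of a canonical homomorphism, so it suffices to check that the relevant Lie-algebra data are pinned down by $(\tilg_{\C},\h_{\C},\g_{\C})$. The algebra $\dd\ell(Z(\g_{\C}))=\dd r(Z(\g_{\C}))$ (Lemma~\ref{lem:lreta}) is the image of $Z(\g_{\C})$ under the surjection $U(\g_{\C})^{H}\to\D_G(X)$ of~\eqref{eqn:dr}, hence depends only on $(\g_{\C},\h_{\C})$; the algebra $\dd r(Z(\kk_{\C}))$ is the image of $Z(\kk_{\C})$ under the same surjection (well defined by~(2)), hence depends only on $(\g_{\C},\h_{\C},\kk_{\C})$, and by the classification the maximal proper subalgebra $\kk_{\C}$ of~$\g_{\C}$ containing~$\h_{\C}$ is unique; finally $\D_{\tilG}(\tilG/\tilH)$, for the connected group~$\tilH$, is by~\eqref{eqn:DX} the algebra $U(\tilg_{\C})^{\ad(\tilh)}\big/\big(U(\tilg_{\C})\tilh_{\C}\cap U(\tilg_{\C})^{\ad(\tilh)}\big)$, depending only on $(\tilg_{\C},\tilh_{\C})$, and the classification of triples with $\tilg_{\C}=\g_{\C}+\tilh_{\C}$ and $\g_{\C}\cap\tilh_{\C}=\h_{\C}$ (Oni\v{s}\v{c}ik together with the classification of spherical spaces) shows that $\tilh_{\C}$ is determined, up to conjugacy, by $(\tilg_{\C},\g_{\C},\h_{\C})$. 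Assembling these observations gives that all three subalgebras, and indeed the whole diagram~\eqref{eqn:diagram}, depend only on $(\tilg_{\C},\h_{\C},\g_{\C})$, and in particular do not vary under coverings of~$\tilG$.

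The main obstacle is statement~(1): for a disconnected~$H$ one only gets an inclusion $\D_G(G/H)\subseteq\D_G(G/H_0)$, and it can be strict in general, so the substance is to show that $G_{\C}$-sphericity rules this out — the point being that the component group commutes with~$G$ and that on a multiplicity-free space it can only act on each isotypic component by a scalar, hence on $\D_G(G/H_0)$ trivially. The other genuine input, used in~(2)--(3), is the rigidity that $\kk_{\C}$ and $\tilh_{\C}$ are determined by the Lie-algebra triple $(\tilg_{\C},\h_{\C},\g_{\C})$; this is a finite verification against Table~\ref{table1}.
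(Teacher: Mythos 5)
Your part~(1) is correct, and it takes a genuinely different route from the paper's. The paper first proves the group-theoretic inclusion $H\subset H_0\,Z(\tilG)$ (Proposition~\ref{prop:H-subset-K}.(1)), which requires the case-by-case analysis of Sections~\ref{subsec:proof-H-connected}--\ref{subsec:proof-HG-ZG}, and then deduces $\D_G(G/H)=\D_G(G/H_0)$ by comparing the graded algebras $S(\g_{\C}/\h_{\C})^{H}$ and $S(\g_{\C}/\h_{\C})^{H_0}$ via Lemma~\ref{lem:struct-DGX}. You instead identify $\D_G(G/H)$ with the $\Gamma$-invariants in $\D_G(G/H_0)$ for the deck group $\Gamma=H/H_0$, and use sphericity (hence multiplicity-freeness of $C^{\infty}(G/H_0)$, Fact~\ref{fact:spherical-cpt}) plus Schur's lemma to see that $\Gamma$ acts by scalars on each irreducible isotypic component, hence trivially by conjugation on $\D_G(G/H_0)$ (injectivity of $\tilde\psi$ in Proposition~\ref{prop:D-spec-Hom}.(1)). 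This is classification-free, and in fact proves the stronger statement that $\D_G(G/H)=\D_G(G/H_0)$ for any compact pair with $G_{\C}/(H_0)_{\C}$ spherical, with no overgroup needed; what it does not give you, and what the paper's route additionally provides, is the inclusion $H\subset H_0Z(\tilG)$, which the paper reuses for part~(2).

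That is where your proposal has a genuine gap. In~(2) you write that when $\h=\tilh\cap\g$ is maximal one has $\kk=\h$, $H$ connected, ``and there is nothing to prove.'' This is false: by Remark~\ref{rem:H-subset-K}, in cases (xi) and (xii) of Table~\ref{table1} exactly one of the five locally isomorphic forms of $(\tilG,\tilH,G)$ has $H=\tilH\cap G$ with two connected components, and there $K=H_0$ does not contain $H$, so your ``$H\subseteq K$ with $K$ connected, hence $\Ad(H)$ inner on $\kk_{\C}$'' argument does not apply. Since the theorem must hold for all coverings of~$\tilG$, this case has to be treated: either via the paper's Proposition~\ref{prop:H-subset-K}.(1) ($H\subset H_0Z(\tilG)$, so $\Ad(H)$ and $\Ad(H_0)$ agree on $\kk_{\C}$ and act trivially on $Z(\kk_{\C})$), or by the ad hoc observation that in those cases $\h_{\C}\simeq\g_{2}(\C)$ admits only inner automorphisms, so any $\Ad(h)$ acts trivially on $Z(\h_{\C})$. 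Your non-maximal case of~(2) (quoting Proposition~\ref{prop:H-subset-K}.(2)) is fine. Finally, in~(3) your treatment of $\D_{\tilG}(\tilG/\tilH)$ via ``$\tilh_{\C}$ is determined up to conjugacy by the classification'' only pins the algebra down up to isomorphism; the paper's argument is sharper and simpler: since $\tilG$ is connected, $\D_{\tilG}(\tilG/\tilH)$ is exactly the subalgebra of $\tilg$-invariant elements of $\D_G(G/H)$, hence is determined as a subalgebra by the Lie-algebra triple.
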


We may reformulate Theorem~\ref{thm:InvDiff-covering} in terms of the ring of invariant \emph{holomorphic} differential operators (Section~\ref{subsec:intro-applic}) on the complex manifold $X_{\C}=\tilG_{\C}/H_{\C}$, as follows.

\begin{theorem} \label{thm:HoloInvDiff}
Let $\tilG_{\C}$ be a connected complex simple Lie group, and $\tilH_{\C}$ and~$G_{\C}$ two connected complex reductive subgroups of~$\tilG_{\C}$ such that $\tilG_{\C}/\tilH_{\C}$ is $G_{\C}$-spherical.
Let $H_{\C}:=\tilH_{\C}\cap G_{\C}$.
\begin{enumerate}
  \item The algebra $\D_{G_{\C}}(G_{\C}/H_{\C})$ is completely determined by the pair of Lie algebras $(\g_{\C},\h_{\C})$, and does not vary under coverings of~$\tilG_{\C}$.
  \item Let $\kk_{\C}$ be a maximal proper complex reductive subalgebra of~$\g_{\C}$ containing $\tilh_{\C}\cap\g_{\C}$.
  Then the homomorphism $\dd r : Z(\kk_{\C})\to\D_{G_{\C}}(G_{\C}/H_{\C})$ is well defined.
  \item The subalgebras $\D_{\tilG_{\C}}(\tilG_{\C}/\tilH_{\C})$, $\dd r(Z(\kk_{\C}))$, and $\dd\ell(Z(\g_{\C}))$ are completely determined by the triple of Lie algebras $(\tilg_{\C},\h_{\C},\g_{\C})$.
\end{enumerate}
\end{theorem}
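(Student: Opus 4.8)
The plan is to deduce Theorem~\ref{thm:HoloInvDiff} from its compact counterpart Theorem~\ref{thm:InvDiff-covering} by means of the standard dictionary identifying $G$-invariant differential operators on a compact homogeneous space $G/H$ with $G_{\C}$-invariant holomorphic differential operators on the complexification $G_{\C}/H_{\C}$. The first step is to realize the complex data by compact groups. Given the triple $(\tilg_{\C},\h_{\C},\g_{\C})$ as in the hypotheses, I would choose a compact real form $\tilg$ of $\tilg_{\C}$ whose associated conjugation $\theta$ stabilizes both $\g_{\C}$ and $\tilh_{\C}$: for a single reductive subalgebra this is a theorem of Mostow, and the simultaneous statement follows here from the explicit classification of triples with $\tilg_{\C}$ simple and $\tilg_{\C}/\tilh_{\C}$ being $\g_{\C}$-spherical (in the setting~\ref{setting} one is already given $\tilG=\tilH G$ with $\tilG$ compact, and its complexification provides exactly such a~$\theta$). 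Writing $\g:=\tilg\cap\g_{\C}$, $\tilh:=\tilg\cap\tilh_{\C}$ and $\h:=\tilg\cap\h_{\C}=\g\cap\tilh$, one obtains compact real forms of $\g_{\C},\tilh_{\C},\h_{\C}$; let $\tilG\supset G,\tilH$ be the corresponding connected compact subgroups, chosen inside a compact real form of the given $\tilG_{\C}$, and $H:=\tilH\cap G$. Since $\tilH_{\C}$ and $G_{\C}$ are $\theta$-stable, so is $H_{\C}=\tilH_{\C}\cap G_{\C}$; hence $H_{\C}$ is reductive and $H=H_{\C}\cap\tilG$ is a maximal compact subgroup of~$H_{\C}$ meeting every connected component of~$H_{\C}$. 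In particular $H$ is Zariski-dense in~$H_{\C}$, and likewise $G$ in~$G_{\C}$ and $\tilH$ in~$\tilH_{\C}$.

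The second step is the transfer. Since $H$ is Zariski-dense in~$H_{\C}$, the $\Ad(H)$- and $\Ad(H_{\C})$-invariants coincide on each finite-dimensional $\Ad$-stable subspace of $U(\g_{\C})$, so $U(\g_{\C})^{H}=U(\g_{\C})^{H_{\C}}$; combining~\eqref{eqn:DX} with its evident holomorphic analogue yields a canonical isomorphism of $\C$-algebras
\[
\D_G(G/H) \;\cong\; U(\g_{\C})^{H_{\C}}\big/\bigl(U(\g_{\C})\h_{\C}\cap U(\g_{\C})^{H_{\C}}\bigr) \;\cong\; \D_{G_{\C}}(G_{\C}/H_{\C}),
\]
and the same argument with $\tilg_{\C}$ and $\tilH\subset\tilH_{\C}$ gives $\D_{\tilG}(G/H)\cong\D_{\tilG_{\C}}(\tilG_{\C}/\tilH_{\C})$. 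As all these identifications arise at the level of $U(\tilg_{\C})$, $U(\g_{\C})$ and $U(\kk_{\C})$, they are compatible with the homomorphisms $\dd\ell$ and $\dd r$ of the diagram~\eqref{eqn:diagram}, so that entire diagram for the compact picture is carried isomorphically onto the holomorphic one; in particular the three subalgebras in Theorem~\ref{thm:HoloInvDiff}.(3), namely $\D_{\tilG_{\C}}(\tilG_{\C}/\tilH_{\C})=\dd\ell(Z(\tilg_{\C}))$ (by Lemma~\ref{lem:surj-dl-drF}), $\dd r(Z(\kk_{\C}))$, and $\dd\ell(Z(\g_{\C}))$, correspond to $\D_{\tilG}(X)$, $\dd r(Z(\kk_{\C}))$, $\dd\ell(Z(\g_{\C}))$ inside $\D_G(X)$. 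Part~(2) follows: by Theorem~\ref{thm:InvDiff-covering}.(2) the adjoint action of $H$ on $Z(\kk_{\C})$ is trivial (here $\kk$ is a maximal proper subalgebra of~$\g$ containing~$\h$, of complexification~$\kk_{\C}$, which is no loss of generality since such a~$\kk$ is unique up to conjugacy by Theorem~\ref{thm:main}), hence by density $\Ad(H_{\C})$ acts trivially on $Z(\kk_{\C})$ as well, so $Z(\kk_{\C})\subset U(\g_{\C})^{H_{\C}}$ and the composite $Z(\kk_{\C})\hookrightarrow U(\g_{\C})^{H_{\C}}\twoheadrightarrow\D_{G_{\C}}(G_{\C}/H_{\C})$ is the desired~$\dd r$. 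Parts~(1) and~(3) are then obtained by transporting Theorem~\ref{thm:InvDiff-covering}.(1) and~(3) through these isomorphisms, using that a covering $\tilG_{\C}'\to\tilG_{\C}$ of connected complex simple groups restricts to a covering $\tilG'\to\tilG$ of their maximal compact subgroups, under which $G_{\C}',\tilH_{\C}'$ and $H_{\C}'=\tilH_{\C}'\cap G_{\C}'$ correspond to $G',\tilH'$ and $H'=\tilH'\cap G'$.

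The main obstacle is the first step: producing a single compact real form of $\tilg_{\C}$ --- equivalently a conjugation~$\theta$ --- stabilizing both reductive subalgebras $\g_{\C}$ and $\tilh_{\C}$ at once, so that $\tilh_{\C}\cap\g_{\C}$ is the complexification of a compact subalgebra and $H_{\C}=\tilH_{\C}\cap G_{\C}$ is reductive (without this, $H$ need not be Zariski-dense in~$H_{\C}$ and the dictionary breaks down). Simultaneity for two subgroups is not a formal consequence of Mostow's theorem; in our situation it is supplied by the classification underlying Table~\ref{table1}, each triple being realized, up to a covering of~$\tilG_{\C}$, by compact groups $\tilG\supset\tilH,G$ with the prescribed complexifications --- which is exactly the compatible real form we need. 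Once this compatibility is granted, the remainder is routine bookkeeping with the algebraic description~\eqref{eqn:DX}, together with the facts already established in Theorem~\ref{thm:InvDiff-covering}.
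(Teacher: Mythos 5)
Your reduction to the compact case has one genuine soft spot, and it is exactly the point you yourself flag: the existence of a single Cartan involution (compact real form) of $\tilG_{\C}$ stabilizing \emph{both} of the given subgroups $\tilH_{\C}$ and $G_{\C}$. Your proposed fix --- appeal to the classification behind Table~\ref{table1} --- does not close it. The classification only says that the abstract triple of Lie algebras $(\tilg_{\C},\tilh_{\C},\g_{\C})$ occurs, up to isomorphism and coverings, as the complexification of a compact triple; it says nothing about the relative position of the particular embedded pair $(\tilH_{\C},G_{\C})$ inside the given $\tilG_{\C}$. Since $H_{\C}=\tilH_{\C}\cap G_{\C}$ (hence every object in the statement) depends on that relative position, and conjugating $\tilH_{\C}$ and $G_{\C}$ by different elements can change the intersection, knowing that \emph{some} pair with these Lie algebras admits a compatible compact form does not show that the \emph{given} pair does; your parenthetical remark that the complexification of the compact setting "provides exactly such a $\theta$" presupposes the simultaneous conjugacy that has to be proved. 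The paper closes this with a short argument you would need to supply: choose $\theta$ preserving $G_{\C}$, replace $\tilH_{\C}$ by a $\theta$-stable conjugate $g\tilH_{\C}g^{-1}$ (Mostow), and then observe that, because $G_{\C}$ acts transitively on $\tilG_{\C}/\tilH_{\C}$, one can write $g=g_0h$ with $g_0\in G_{\C}$ and $h\in\tilH_{\C}$, so the replacement amounts to conjugation by $g_0\in G_{\C}$; the new intersection is therefore $G_{\C}$-conjugate to the original $H_{\C}$, and none of the algebras $\D_{G_{\C}}(G_{\C}/H_{\C})$, $\D_{\tilG_{\C}}(\tilG_{\C}/\tilH_{\C})$, $\dd r(Z(\kk_{\C}))$, $\dd\ell(Z(\g_{\C}))$ is affected. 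With this substitute for the classification appeal, the reduction is sound.

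The rest of your argument is fine and differs only mildly from the paper. Where the paper transfers the rings via the geometric restriction map of Lemma~\ref{lem:inj-iota*} (injective because a holomorphic differential operator is determined by its restriction to the totally real submanifold $G/H$) and proves surjectivity on graded invariants $S(\g_{\C}/\h_{\C})^{H_{\C}}\to S(\g_{\C}/\h_{\C})^{H}$ as in Lemma~\ref{lem:bij-iota*}, you instead identify both rings with the quotient of $U(\g_{\C})^{H}=U(\g_{\C})^{H_{\C}}$ through \eqref{eqn:DX} and its holomorphic analogue, using that the maximal compact subgroup $H$ meets every component of the reductive group $H_{\C}$ and is Zariski-dense. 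Both mechanisms use the same density input; yours is slightly more algebraic and has the advantage of being manifestly compatible with $\dd\ell$ and $\dd r$, while the paper's $\iota^*$ is what it later reuses for noncompact real forms. Your treatment of part (2) (triviality of $\Ad(H_{\C})$ on $Z(\kk_{\C})$ by density from Theorem~\ref{thm:InvDiff-covering}.(2)) and of the covering statement in parts (1) and (3) is correct.
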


Theorem~\ref{thm:HoloInvDiff} is derived from Theorem~\ref{thm:InvDiff-covering} in Section~\ref{subsec:inv-diff-op-real-form}, by using the natural isomorphism (Lemma~\ref{lem:bij-iota*})
$$\D_{G_{\C}}(G_{\C}/H_{\C}) \overset{\scriptscriptstyle\sim}{\longrightarrow} \D_G(G/H).$$
In Section~\ref{subsec:disconnectedH} we reduce the proof of Theorem~\ref{thm:InvDiff-covering} to two inclusions of Lie groups described in Proposition~\ref{prop:H-subset-K}.
These inclusions are established in Sections \ref{subsec:proof-H-connected} and~\ref{subsec:proof-HG-ZG} for most cases, with a separate treatment for coverings of cases (v), (vi), (vii) of Table~\ref{table1} in Section~\ref{subsec:proof-H-subset-K-rest}.

By Theorem~\ref{thm:InvDiff-covering}, it is sufficient to prove Theorems \ref{thm:main}, \ref{thm:main-explicit}, \ref{thm:transfer}, and~\ref{thm:nu-tau} for the triples $(\tilG,\tilH,G)$ of Table~\ref{table1}, and they are then automatically true for all other triples obtained by a covering of~$\tilG$.

\subsection{Invariant differential operators and real forms} \label{subsec:inv-diff-op-real-form}

We begin with some basic observations on invariant differential operators in the setting where the groups $G$ and~$H$ are not necessarily compact.
A holomorphic continuation argument will be used to apply our main results on compact groups to the analysis of locally homogeneous spaces of other real forms, see Section~\ref{subsec:intro-applic} and \cite{kkII}.
Recall that a subgroup $G_2$ of a complex Lie group~$G_1$ is said to be a \emph{real form} of~$G_1$ if the Lie algebra $\g_2$ of~$G_2$ is a real form of the complex Lie algebra $\g_1$ of~$G_1$, namely $\g_1=\g_2+\sqrt{-1}\,\g_2$ (direct sum).

\begin{lemma} \label{lem:inj-iota*}
Let $G_{\C}\supset H_{\C}$ be a pair of complex Lie groups, and $G\supset H$ respective real forms.
Suppose $H\subset G\cap H_{\C}$.
Then the natural $G$-equivariant smooth map
$$\iota : X = G/H \longrightarrow X_{\C} = G_{\C}/H_{\C}$$
induces an injective $\C$-algebra homomorphism
$$\iota^* : \D_{G_{\C}}(X_{\C}) \longhookrightarrow \D_G(X).$$
\end{lemma}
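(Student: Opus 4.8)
The plan is to bypass any direct manipulation of differential operators and instead work with the algebraic model of invariant differential operators via enveloping algebras. By \cite[Ch.\,II, Th.\,4.6]{hel00} and its (entirely parallel) holomorphic counterpart for $X_{\C}=G_{\C}/H_{\C}$, right-differentiation gives surjective $\C$-algebra homomorphisms
$$\dd r : U(\g_{\C})^{H} \longtwoheadrightarrow \D_G(X) \qquad\text{and}\qquad \dd r_{\C} : U(\g_{\C})^{H_{\C}} \longtwoheadrightarrow \D_{G_{\C}}(X_{\C}),$$
with respective kernels $U(\g_{\C})\h_{\C}\cap U(\g_{\C})^{H}$ and $U(\g_{\C})\h_{\C}\cap U(\g_{\C})^{H_{\C}}$. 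Here one uses the hypotheses that $\g$ and $\h$ are real forms, i.e.\ $\g_{\C}=\g+\sqrt{-1}\,\g$ and $\h_{\C}=\h+\sqrt{-1}\,\h$: they guarantee that the ambient algebra $U(\g_{\C})$ and the left ideal $U(\g_{\C})\h_{\C}$ occurring in the two descriptions are literally the same, so that the two quotients sit inside one common algebra.

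The key step is then a set-theoretic observation. Since $H\subset G\cap H_{\C}\subseteq H_{\C}$, invariance under $\Ad(H_{\C})$ is stronger than invariance under $\Ad(H)$, hence $U(\g_{\C})^{H_{\C}}\subseteq U(\g_{\C})^{H}$; intersecting this inclusion with the common left ideal $U(\g_{\C})\h_{\C}$ yields
$$\Ker(\dd r_{\C}) = U(\g_{\C})\h_{\C}\cap U(\g_{\C})^{H_{\C}} = \big(U(\g_{\C})\h_{\C}\cap U(\g_{\C})^{H}\big)\cap U(\g_{\C})^{H_{\C}} = \Ker(\dd r)\cap U(\g_{\C})^{H_{\C}}.$$
Therefore the inclusion $U(\g_{\C})^{H_{\C}}\hookrightarrow U(\g_{\C})^{H}$ descends to an injective $\C$-algebra homomorphism $\D_{G_{\C}}(X_{\C})\hookrightarrow\D_G(X)$ sending $\dd r_{\C}(u)\mapsto\dd r(u)$: well-definedness is the inclusion $\Ker(\dd r_{\C})\subseteq\Ker(\dd r)$, and injectivity is the displayed equality $\Ker(\dd r_{\C})=\Ker(\dd r)\cap U(\g_{\C})^{H_{\C}}$. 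That the map is an algebra homomorphism is immediate since $\dd r$ and $\dd r_{\C}$ are.

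It then remains to identify this algebraic map with the pullback $\iota^{*}$ along $\iota : G/H\to G_{\C}/H_{\C}$ (which is well defined and $G$-equivariant because $H\subset H_{\C}$, and is an immersion with totally real image of maximal real dimension by the real-form hypotheses, so that ``restricting a holomorphic differential operator to $\iota(X)$'' makes sense locally). For this I would observe that the restriction map $F\mapsto F\circ\iota$, from germs of holomorphic functions on $G_{\C}$ to smooth functions on $G$, intertwines $\dd r_{\C}(u)$ with $\dd r(u)$ for every $u\in U(\g_{\C})$ — differentiating a holomorphic function along a real one-parameter subgroup $\exp(tY)$, $Y\in\g$, and then restricting to $G$ gives the same as restricting first and then differentiating — and this descends to the quotients $X=G/H$ and $X_{\C}=G_{\C}/H_{\C}$ upon passing to $H$- resp.\ $H_{\C}$-invariant elements, identifying $\iota^{*}$ with the map constructed above.

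I expect the only genuinely delicate point to be the bookkeeping in the first paragraph: checking that ``$U(\g_{\C})\h_{\C}$'' really denotes one and the same left ideal in the description of $\D_G(X)$ and in that of $\D_{G_{\C}}(X_{\C})$, and that the two right-differentiation maps are restrictions of one another. This is exactly where the assumption that $\g$ and $\h$ are \emph{real forms} (not arbitrary real subalgebras) is used; once it is in place, the rest of the argument is purely formal.
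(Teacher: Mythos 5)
Your route is genuinely different from the paper's: instead of the geometric factorization $X\twoheadrightarrow G/(G\cap H_{\C})\hookrightarrow X_{\C}$ (injectivity from the covering, and from the fact that a holomorphic operator $\sum_{\alpha}c_{\alpha}(z)\,\partial_z^{\alpha}$ is determined by restricting its holomorphic coefficients to a maximal totally real submanifold), you model both rings as quotients of invariants in $U(\g_{\C})$. The gap is that your argument stands or falls with the surjectivity of $\dd r_{\C} : U(\g_{\C})^{H_{\C}}\to\D_{G_{\C}}(X_{\C})$ together with the kernel identification, i.e.\ the asserted ``holomorphic counterpart'' of \cite[Ch.\,II, Th.\,4.6]{hel00}. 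That theorem is about \emph{reductive} coset spaces in the smooth real category, whereas the Lemma assumes nothing beyond $G_{\C}\supset H_{\C}$ complex Lie groups with real forms $G\supset H$, $H\subset G\cap H_{\C}$; and in that generality the surjectivity is false. For example, take $G_{\C}=\SL(2,\C)\supset H_{\C}=N_{\C}$ unipotent (real forms $\SL(2,\R)\supset N_{\R}$), so $X_{\C}\simeq\C^2\smallsetminus\{0\}$: the Euler operator $z_1\frac{\partial}{\partial z_1}+z_2\frac{\partial}{\partial z_2}$ is a $G_{\C}$-invariant holomorphic differential operator, but $U(\g_{\C})^{N_{\C}}$ is the polynomial algebra generated by a nilpotent element spanning $\n_{\C}$ and the Casimir element, $\dd r$ annihilates $\n_{\C}$ on right-$N_{\C}$-invariant functions, and the Casimir acts as a quadratic polynomial in the Euler operator; hence $\dd r_{\C}(U(\g_{\C})^{N_{\C}})$ does not contain the Euler operator. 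Without surjectivity your map is only defined on the subalgebra $\dd r_{\C}(U(\g_{\C})^{H_{\C}})$, so injectivity of $\iota^*$ on all of $\D_{G_{\C}}(X_{\C})$ --- which is what the Lemma claims and what Lemma~\ref{lem:bij-iota*} and the graded argument later rely on --- is not established.

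Even if one adds the hypothesis that $H_{\C}$ is reductive (which covers every application in the paper), the holomorphic analogue of \eqref{eqn:DX} is precisely the nontrivial input: it is not covered by the citation, and proving it (identify invariant holomorphic operators with $H_{\C}$-invariants in $U(\g_{\C})/U(\g_{\C})\h_{\C}$, then use reductivity to lift invariants to $U(\g_{\C})^{H_{\C}}$) is of comparable substance to the Lemma itself, which the paper's soft geometric proof avoids entirely. A secondary issue: the Lemma concerns the specific geometric pullback $\iota^*$, and your identification of the abstract algebraic injection with $\iota^*$ again only quantifies over operators of the form $\dd r_{\C}(u)$, so it presupposes the same surjectivity. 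In short, your scheme can be completed in the reductive case after supplying the holomorphic Helgason-type theorem, but it does not prove the Lemma as stated; for that, an argument like the paper's restriction-to-a-totally-real-form step (or some substitute handling all invariant holomorphic operators at once) is needed.
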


\begin{proof}
The map $\iota$ is not necessarily injective, but it factors as follows:
$$X\ \underset{\text{covering}}{\ensuremath{\relbar\joinrel\relbar\joinrel\relbar\joinrel\relbar\joinrel\twoheadrightarrow}}\ G/G\cap H_{\C}\ \underset{\text{real form}}{\ensuremath{\lhook\joinrel\relbar\joinrel\relbar\joinrel\relbar\joinrel\relbar\joinrel\rightarrow}}\ X_{\C}.$$
This induces two homomorphisms whose composition is the desired map~$\iota^*$:
$$\D_{G_{\C}}(X_{\C})\ \underset{\text{restriction}}{\ensuremath{\relbar\joinrel\relbar\joinrel\relbar\joinrel\relbar\joinrel\rightarrow}}\ \D_G(G/G\cap H_{\C})\ \ensuremath{\lhook\joinrel\relbar\joinrel\relbar\joinrel\relbar\joinrel\relbar\joinrel\rightarrow}\ \D_G(X).$$
The second homomorphism is injective because $X\to G/(G\cap H_{\C})$ is a covering.
The first homomorphism $\D_{G_{\C}}(X_{\C})\to\D_G(G/(G\cap H_{\C}))$ is also injective because, locally, we can find coordinates $(z_1,\dots,z_n)$ on~$X_{\C}$, with $z_j=x_j+\sqrt{-1}\,y_j$, such that the totally real submanifold $G/(G\cap H_{\C})$ is given by $y_1=\dots=y_n=0$ and any differential operator $P\in\D_{G_{\C}}(X_{\C})$ is represented as $P = \sum_{\alpha} c_{\alpha}(z)\,\frac{\partial^{|\alpha|}}{\partial z^{\alpha}}$ with holomorphic coefficients $c_{\alpha}(z)$, and therefore the restriction map $P\mapsto \sum_{\alpha} c_{\alpha}(x)\,\frac{\partial^{|\alpha|}}{\partial x^{\alpha}}$ is injective.
\end{proof}

Lemma~\ref{lem:inj-iota*} implies the following.

\begin{lemma} \label{lem:bij-iota*}
In the setting of Lemma~\ref{lem:inj-iota*}, if $H$ meets every connected component of~$H_{\C}$, then $\iota^*$ is a ring isomorphism
$$\D_{G_{\C}}(G_{\C}/H_{\C}) \overset{\scriptscriptstyle\sim}{\longrightarrow} \D_G(G/H).$$
In particular, if $H_{\C}$ is connected, then the ring $\D_G(G/H)$ is completely determined by the pair of complex Lie algebras $(\g_{\C},\h_{\C})$, and does not depend on the real form $H$ of~$H_{\C}$.
\end{lemma}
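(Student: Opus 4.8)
The plan is to upgrade the injectivity of $\iota^*$ proved in Lemma~\ref{lem:inj-iota*} to a ring isomorphism, by passing to the algebraic models \eqref{eqn:DX} of the two rings and observing that, under our hypotheses, the $\Ad$-invariance conditions cutting them out inside $U(\g_{\C})$ are unchanged when $H_{\C}$ is replaced by its real form~$H$. The first — and essentially only substantial — step is to prove
$$U(\g_{\C})^{H_{\C}} = U(\g_{\C})^{H} \quad\text{inside } U(\g_{\C}).$$
Here I would argue as follows. Since $H$ is a real form of $H_{\C}$ we have $\h_{\C}=\h+\sqrt{-1}\,\h$, so by $\C$-bilinearity of the bracket an element of $U(\g_{\C})$ is annihilated by $\ad(\h)$ if and only if it is annihilated by $\ad(\h_{\C})$; passing to the connected groups, $U(\g_{\C})^{H_{0}}=U(\g_{\C})^{\h}=U(\g_{\C})^{\h_{\C}}=U(\g_{\C})^{(H_{\C})_{0}}$. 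Next, an element of $U(\g_{\C})^{(H_{\C})_0}$ lies in $U(\g_{\C})^{H_{\C}}$ precisely when it is $\Ad(h)$-fixed for one representative $h$ of each connected component of $H_{\C}$; by hypothesis these representatives may be chosen inside $H$, and the analogous description holds for $U(\g_{\C})^{H}$, whence the two algebras coincide. Consequently $U(\g_{\C})\h_{\C}\cap U(\g_{\C})^{H_{\C}} = U(\g_{\C})\h_{\C}\cap U(\g_{\C})^{H}$ as well, so \eqref{eqn:DX} presents $\D_{G_{\C}}(G_{\C}/H_{\C})$ and $\D_G(G/H)$ as the same quotient of $U(\g_{\C})$.

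Second, I would record that $\iota^*$ is compatible with right-differentiation: for $u\in U(\g_{\C})^{H_{\C}}$, the complex analogue of \eqref{eqn:dr} produces a $G_{\C}$-invariant holomorphic differential operator $\dd r(u)$ on $G_{\C}/H_{\C}$, and since right-differentiation by $U(\g_{\C})$ commutes with restricting holomorphic functions on $G_{\C}$ to the real form $G$ and with the covering $G/H\to G/(G\cap H_{\C})$ used to factor $\iota$ in the proof of Lemma~\ref{lem:inj-iota*}, one obtains $\iota^*(\dd r(u)) = \dd r(u)$ in $\D_G(G/H)$. Surjectivity of $\iota^*$ is then immediate: given $D\in\D_G(G/H)$, the surjectivity of $\dd r$ in \eqref{eqn:dr} yields $u\in U(\g_{\C})^{H}$ with $D=\dd r(u)$; by Step~1, $u\in U(\g_{\C})^{H_{\C}}$, so it defines $P\in\D_{G_{\C}}(G_{\C}/H_{\C})$ with $\iota^*(P)=D$. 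Together with Lemma~\ref{lem:inj-iota*}, this makes $\iota^*$ a ring isomorphism. The last assertion follows by specialization: if $H_{\C}$ is connected the component hypothesis is automatic, so $\iota^*$ is an isomorphism, and $\D_{G_{\C}}(G_{\C}/H_{\C})\cong U(\g_{\C})^{\h_{\C}}/\big(U(\g_{\C})\h_{\C}\cap U(\g_{\C})^{\h_{\C}}\big)$ by \eqref{eqn:DX}, an expression depending only on the pair $(\g_{\C},\h_{\C})$ and not on the choice of real form $H$ of $H_{\C}$.

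I do not expect a genuine obstacle: the proof is formal once the invariance identity $U(\g_{\C})^{H_{\C}}=U(\g_{\C})^{H}$ is in place. The one point that calls for care is the compatibility of $\iota^*$ with $\dd r$ in the second step — being precise about what ``pullback of a differential operator'' means along the two-stage factorization ``covering, then totally real embedding'' of $\iota$, and verifying that right-differentiation behaves well under each stage. I would also flag in passing that \eqref{eqn:DX} is being invoked for possibly noncompact $G$, which is legitimate since $H$ is assumed closed and algebraic as in Section~\ref{sec:reminders}.
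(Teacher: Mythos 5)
Your proposal is correct, and its decisive step coincides with the paper's: an object invariant under $H$ is invariant under $\h$, hence under $\h_{\C}$ by $\C$-linearity, hence under $(H_{\C})_0$, and the hypothesis that $H$ meets every connected component of $H_{\C}$ upgrades this to $H_{\C}$-invariance. The difference is where this argument is run. The paper runs it in the graded (symbol) algebra: using $\operatorname{gr}\bigl(\D_G(G/H)\bigr)\simeq S(\g_{\C}/\h_{\C})^{H}$ and its holomorphic counterpart for $\D_{G_{\C}}(G_{\C}/H_{\C})$, it shows that $\operatorname{gr}(\iota^*):S(\g_{\C}/\h_{\C})^{H_{\C}}\to S(\g_{\C}/\h_{\C})^{H}$ is surjective and concludes that the already injective filtered map $\iota^*$ is bijective, so it never has to identify $\iota^*$ with a map of enveloping-algebra quotients. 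You instead run the argument in $U(\g_{\C})$, proving $U(\g_{\C})^{H}=U(\g_{\C})^{H_{\C}}$, which forces two extra inputs: the holomorphic analogue of \eqref{eqn:DX} for $G_{\C}/H_{\C}$ (surjectivity of right differentiation onto the ring of invariant holomorphic operators), and the compatibility $\iota^*(\dd r(u))=\dd r(u)$ for $u\in U(\g_{\C})^{H_{\C}}$ along the factorization ``covering, then totally real embedding'' --- the point you rightly flag; it does hold, since right differentiation by $Y\in\g$ commutes with restricting holomorphic functions to the real form, extends $\C$-linearly to $\g_{\C}$ by holomorphy, and the covering $G/H\to G/(G\cap H_{\C})$ is $G$-equivariant. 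Both routes rest on structural identifications (the symbol isomorphism, respectively \eqref{eqn:DX} and its holomorphic analogue) that are valid in the reductive, algebraic setting in which the lemma is applied. Your version yields the slightly stronger intermediate fact that the two rings have literally the same presentation $U(\g_{\C})^{H}/\bigl(U(\g_{\C})\h_{\C}\cap U(\g_{\C})^{H}\bigr)$, while the paper's graded argument is shorter because surjectivity is checked degree by degree on symbols, with no need to trace $\dd r$ through $\iota^*$.
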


\begin{proof}
To see that the injective algebra homomorphism $\iota^*$ from Lemma~\ref{lem:inj-iota*} is surjective, it suffices to show that the induced map on the graded modules
$$\operatorname{gr}(\iota^*) : S(\g_{\C}/\h_{\C})^{H_{\C}} \longrightarrow S(\g_{\C}/\h_{\C})^H$$
is surjective, see Section~\ref{subsec:graded-alg}.
If $v\in S(\g_{\C}/\h_{\C})$ is $H$-invariant, then $v$ is invariant under the infinitesimal action of the Lie algebra~$\h$, hence of its complexification~$\h_{\C}$.
If $H$ meets every connected component of~$H_{\C}$, then any $v\in S(\g_{\C}/\h_{\C})$ is $H_{\C}$-invariant, hence $\operatorname{gr}(\iota^*)$ is surjective.
\end{proof}

\begin{proof}[Proof of Theorem~\ref{thm:HoloInvDiff} assuming Theorem~\ref{thm:InvDiff-covering}]
Up to replacing $\tilH_{\C}$ by some conjugate, we may and do assume that there is a Cartan involution $\theta$ of~$\tilG_{\C}$ which leaves both $\tilH_{\C}$ and~$G_{\C}$ invariant.
Since $G_{\C}$ acts transitively on $\tilG_{\C}/\tilH_{\C}$, the intersection $H_{\C}=\tilH_{\C}\cap G_{\C}$ is conjugate to the original one if we take a conjugation of~$G_{\C}$.
Then the subgroups $\tilG$, $\tilH$, $G$, and~$H$ of fixed points by~$\theta$ in $\tilG_{\C}$, $\tilH_{\C}$, $G_{\C}$, and~$H_{\C}$, respectively, are maximal compact subgroups of these complex groups.
In particular, $H$ meets every connected component of~$H_{\C}$.
Now Lemma~\ref{lem:bij-iota*} implies that Theorem~\ref{thm:HoloInvDiff} follows from Theorem~\ref{thm:InvDiff-covering}.
\end{proof}

\subsection{Proof of Theorem~\ref{thm:InvDiff-covering}} \label{subsec:disconnectedH}

Theorem~\ref{thm:InvDiff-covering} reduces to the following.

\begin{proposition} \label{prop:H-subset-K}
Let $\tilG$ be a connected compact simple Lie group, and $\tilH$ and~$G$ two connected closed subgroups of~$\tilG$ such that $\tilG_{\C}/\tilH_{\C}$ is $G_{\C}$-spherical.
Let $H:=\tilH\cap G$.
\begin{enumerate}
  \item We have
  \begin{equation} \label{eqn:HG-ZG}
  H  \subset H_0 \, Z(\tilG),
  \end{equation}
  where $H_0$ is the identity component of~$H$ and $Z(\tilG)$ the center of~$\tilG$.
  \item Suppose $\tilh\cap\g$ is not a maximal proper subalgebra of~$\g$.
Let $\kk$ be a maximal proper subalgebra of~$\g$ containing $\tilh\cap\g$, and $K$ the corresponding analytic subgroup of~$G$.
Then
\begin{equation} \label{eqn:H-subset-K}
(H =)\,\tilH\cap G\,\subset K.
\end{equation}
\end{enumerate}
\end{proposition}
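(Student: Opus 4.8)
The plan is to reduce both statements to the classification (Table~\ref{table1}) together with a covering argument, since Oni\v{s}\v{c}ik's list describes all infinitesimal triples $(\tilg,\tilh,\g)$ with $\tilG$ compact simple and $\tilg=\tilh+\g$, and every triple in our setting arises (up to a covering of $\tilG$) from one on that list with the extra sphericity condition imposed. I would organize the argument in three stages.

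\emph{Stage 1: reduce statement (1) to a statement about $\tilg$.} Write $\tilG^{\mathrm{sc}}$ for the simply connected cover of $\tilG$, and let $\tilH^{\mathrm{sc}}$, $G^{\mathrm{sc}}$ be the connected subgroups of $\tilG^{\mathrm{sc}}$ with Lie algebras $\tilh$, $\g$. A general covering $\tilG=\tilG^{\mathrm{sc}}/\Gamma$ with $\Gamma\subset Z(\tilG^{\mathrm{sc}})$ finite central has $\tilH=\tilH^{\mathrm{sc}}\Gamma/\Gamma$ and $G=G^{\mathrm{sc}}\Gamma/\Gamma$ (as the images of the analytic subgroups), and then $H=\tilH\cap G$ has Lie algebra $\tilh\cap\g$ with $H_0$ its identity component. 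The inclusion $H\subset H_0Z(\tilG)$ will follow once one knows that $\tilH\cap G\subset (\tilH\cap G)_0\cdot Z(\tilG)$; since any element of $\tilH\cap G$ lifts, modulo $\Gamma$, to an element of $\tilH^{\mathrm{sc}}\cap G^{\mathrm{sc}}$ times a central element, it suffices to prove the statement for $\tilG^{\mathrm{sc}}$, i.e.\ that $\tilH^{\mathrm{sc}}\cap G^{\mathrm{sc}}$ is connected, or differs from its identity component only by elements of $Z(\tilG^{\mathrm{sc}})$. This is exactly the content announced for Section~\ref{subsec:proof-H-connected}, and it is verified case by case from Table~\ref{table1}: one checks in each row that $\tilH^{\mathrm{sc}}\cap G^{\mathrm{sc}}$ (which one can compute explicitly, e.g.\ $\SO(2n+1)\cap\U(n+1)=\U(n)$ inside $\SO(2n+2)$, etc.) is connected, using that $G^{\mathrm{sc}}$ acts transitively on $\tilG^{\mathrm{sc}}/\tilH^{\mathrm{sc}}$ so the intersection has the expected dimension $\dim\tilh+\dim\g-\dim\tilg$.

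\emph{Stage 2: statement (2).} Assume $\tilh\cap\g=\h$ is not maximal in $\g$. First I would note that a maximal proper subalgebra $\kk$ of $\g$ containing $\h$ exists (take any maximal chain). The content of \eqref{eqn:H-subset-K} is that $H$, not just $H_0$, lands in the analytic subgroup $K$. Since $H_0\subset K$ automatically (as $\h\subset\kk$), by statement (1) it is enough to show $Z(\tilG)\cap G\subset K$, i.e.\ that the relevant central elements of $\tilG$ lie in $K$; equivalently, it is enough to handle the finitely many central elements that occur. Again this is checked from the classification: in each case of Table~\ref{table1} the group $K$ in the last column contains the center of $G$ (or at least the part of $Z(\tilG)\cap G$ that can appear), which one reads off directly — e.g.\ $\U(n)\times\U(1)\supset Z(\U(n+1))$ in case (i), $\U(n)\supset Z(\SU(n+1))$ in case (i)$'$, and so on. The uniqueness of $K$ asserted in Theorem~\ref{thm:main} (when $\h$ is not maximal) is a separate bookkeeping point: one verifies from the list that there is exactly one maximal proper subalgebra of $\g$ containing $\h$, using Dynkin's classification of maximal subalgebras of the compact simple (or low-rank classical) Lie algebras $\g$ that occur.

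\emph{Stage 3: the exceptional coverings.} The one genuine subtlety — which I expect to be the main obstacle — is in cases (v), (vi), (vii) of Table~\ref{table1}, where $\tilG$ (namely $\SO(4n+4)$, $\SO(16)$, $\SO(8)$) has a nontrivial fundamental group and passing to the $\Spin$ cover changes the number of components of $H=\tilH\cap G$; here the naive dimension count does not immediately give connectedness, and one must instead track how $\pi_0(H)$ interacts with the center, exactly as flagged for Section~\ref{subsec:proof-H-subset-K-rest}. For these one works in $\Spin(m)$, computes the preimage of $H$ under $\Spin(m)\to\SO(m)$ explicitly (using the spin representation / triality in the $\SO(8)$ case), and checks that the extra components are absorbed by $Z(\tilG)$ and that the enlarged $H$ still sits inside the analytic group $K$. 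Once these finitely many cases are dispatched, both \eqref{eqn:HG-ZG} and \eqref{eqn:H-subset-K} hold in general, completing the proof of Proposition~\ref{prop:H-subset-K}, and hence (via the graded-algebra argument of Section~\ref{subsec:graded-alg} and Lemma~\ref{lem:bij-iota*}) of Theorem~\ref{thm:InvDiff-covering}.
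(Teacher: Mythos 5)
There is a genuine gap, and it sits at the foundation of your Stage~1: the covering reduction goes in the wrong direction. In the simply connected form $\tilG^{\mathrm{sc}}$ the intersection $\tilH^{\mathrm{sc}}\cap G^{\mathrm{sc}}$ is automatically connected (the exact sequence $\pi_1(\tilG^{\mathrm{sc}}/\tilH^{\mathrm{sc}})\to\pi_0(\tilH^{\mathrm{sc}}\cap G^{\mathrm{sc}})\to\pi_0(G^{\mathrm{sc}})$ applies because $\tilG^{\mathrm{sc}}/\tilH^{\mathrm{sc}}$ is simply connected), but this tells you nothing about the quotients $\tilG=\tilG^{\mathrm{sc}}/\Gamma$. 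Passing to a quotient can create \emph{new} elements of $H=\tilH\cap G$: if $\tilde h\in\tilH^{\mathrm{sc}}$ lies in $G^{\mathrm{sc}}\gamma$ for a nontrivial $\gamma\in\Gamma$, then its image lies in $\tilH\cap G$ but does not come from $\tilH^{\mathrm{sc}}\cap G^{\mathrm{sc}}$, and your lifting argument ("any element of $\tilH\cap G$ lifts, modulo $\Gamma$, to an element of $\tilH^{\mathrm{sc}}\cap G^{\mathrm{sc}}$ times a central element") is exactly the statement \eqref{eqn:HG-ZG} you are trying to prove, not a reduction of it. That such extra components genuinely occur is shown by cases (xi)--(xii), where among the five locally isomorphic triples one has $\tilH\cap G$ with two components (Remark~\ref{rem:H-subset-K}); your scheme, which would make statement~(1) an immediate consequence of connectedness upstairs, cannot see this. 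The paper's reduction is the opposite one: it proves connectedness of $p(\tilH)\cap p(G)$ in the \emph{adjoint} group (Lemma~\ref{lem:H-connected-adjoint}), which does imply $H\subset H_0\,Z(\tilG)$ for every form because $\ker(\mathrm{Ad})=Z(\tilG)$. Also, your dimension count ("the intersection has the expected dimension") bears only on the Lie algebra, not on $\pi_0$; the connectedness tool is the homotopy exact sequence of Lemma~\ref{lem:simply-connected-implies-H-subset-K}.

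Since your Stage~2 deduces \eqref{eqn:H-subset-K} from statement~(1) plus the check $Z(\tilG)\cap G\subset K$, the whole chain collapses once Stage~1 fails. The logic of that deduction is fine in itself (write $h=h_0z$, note $z=h_0^{-1}h\in Z(\tilG)\cap G\subset Z(G)$), and it is a genuinely different route from the paper, which proves (2) directly and independently of (1): for most rows of Table~\ref{table1} one of $\tilG/\tilH$, $\tilG/G$ is a flag manifold or sphere, so $H$ is connected for \emph{all} coverings (Lemma~\ref{lem:simply-connected-implies-H-subset-K}); the remaining cases (v), (vi), (vii) are handled by transferring the inclusion between the five forms of $\so(4N)$ via Lemma~\ref{lem:move-up-down-H-subset-K} together with the pattern analysis of how $\tilH$, $G$, $K$ meet the center. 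But note that verifying $Z(\tilG)\cap G\subset K$ for every covering form of $\tilG$ (not just the representatives in Table~\ref{table1}) requires precisely the same center-tracking through $\Spin(4N)$, $\SO(4N)$, $\PSO(4N)$ and the half-spin quotients that the paper carries out, so your Stage~3 "compute and check" sketch would have to be expanded into that analysis in any case. To repair the argument: replace Stage~1 by the adjoint-group reduction, prove connectedness of the intersection in $\Ad(\tilG)$ in the problematic cases (e.g.\ by showing $-I_{4N}\in\tilH\cap G$ so that the projection is surjective onto $p(\tilH)\cap p(G)$, then using simple connectedness of $\tilG/\tilH$ or $\tilG/G$), and only then run your Stage~2.
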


Proposition~\ref{prop:H-subset-K}.(2) implies that $F:=K/H$ and the algebra homomorphism $\dd r : Z(\kk_{\C})\to\D(F)$ in Section~\ref{subsec:intro-three-subalg} are well defined, and that $\dd r(Z(\kk_{\C}))$ is contained in the subalgebra $\D_K(F)$ of $K$-invariant differential operators on~$F$.

\begin{proof}[Proof of Theorem~\ref{thm:InvDiff-covering}]
(1) The injective algebra homomorphism $\D_G(G/H)\to\D_G(G/H_0)$ induces an injective homomorphism of graded algebras
$$S(\g_{\C}/\h_{\C})^H \longrightarrow S(\g_{\C}/\h_{\C})^{H_0},$$
which is surjective by \eqref{eqn:HG-ZG}.
Thus the homomorphism $\D_G(G/H)\!\to\!\D_G(G/H_0)$ is surjective by Lemma~\ref{lem:struct-DGX}, hence is an isomorphism.

(2) The fact that the adjoint action of $H$ on $Z(\kk_{\C})$ is trivial is clear from \eqref{eqn:HG-ZG}.
In particular, $Z(\kk_{\C})\subset U(\g_{\C})^H$, hence $\dd r : Z(\kk_{\C})\to\D_G(G/H)$ is well defined by the restriction of the $\C$-algebra homomorphism \eqref{eqn:dr} to $Z(\kk_{\C})$.

(3) By~(1), the algebra $\D_G(G/H)$ is completely determined by the triple of Lie algebras $(\tilg_{\C},\h_{\C},\g_{\C})$, hence so are the subalgebras $\dd r(Z(\kk_{\C}))$ and $\dd\ell(Z(\g_{\C}))$.
Since $\tilG$ is connected, $\D_{\tilG}(\tilG/\tilH)$ is the subalgebra of $\D_G(G/H)$ consisting of $\tilg$-invariant elements, and so it is also completely determined by the triple of Lie algebras $(\tilg_{\C},\h_{\C},\g_{\C})$.
\end{proof}

\subsection{Proof of Proposition~\ref{prop:H-subset-K}.(2) in most cases of Table~\ref{table1}} \label{subsec:proof-H-connected}

Here are two basic tools.

\begin{lemma} \label{lem:simply-connected-implies-H-subset-K}
In the setting of Proposition~\ref{prop:H-subset-K}, if $\tilG/\tilH$ or $\tilG/G$ is simply connected, then $\tilH\cap G$ is connected.
Moreover, $\tilH_1\cap G_1$ is connected for any triple $(\tilG_1,\tilH_1,G_1)$ of connected Lie groups such that $\tilG_1$ is connected and a covering of~$\tilG$ and $\tilH_1$ and~$G_1$ are analytic subgroups of~$\tilG_1$ with respective Lie algebras $\tilh$ and~$\g$.
\end{lemma}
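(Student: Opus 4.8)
\textbf{Proof proposal for Lemma~\ref{lem:simply-connected-implies-H-subset-K}.}

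The plan is to use the long exact homotopy sequence of the fibration $\tilH/(\tilH\cap G)\hookrightarrow \tilG/G$, which is available because $\tilG=\tilH G$ gives a diffeomorphism $\tilH/(\tilH\cap G)\overset{\sim}{\to}\tilG/G$, and the parallel statement with the roles of $\tilH$ and~$G$ interchanged. First I would observe that since $\tilG$ acts transitively on $\tilG/G$ with stabilizer~$G$, and $\tilH$ also acts transitively (as $\tilG=\tilH G$), the orbit map identifies $\tilH/(\tilH\cap G)$ with $\tilG/G$; symmetrically, $G/(\tilH\cap G)\simeq\tilG/\tilH$. Now suppose $\tilG/G$ is simply connected. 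The fibration $G\hookrightarrow\tilG\to\tilG/G$ gives the exact sequence $\pi_1(\tilG)\to\pi_1(\tilG/G)\to\pi_0(G)\to\pi_0(\tilG)$; but more to the point I would apply the homotopy sequence to the fibration $\tilH\cap G\hookrightarrow G\to G/(\tilH\cap G)\simeq\tilG/\tilH$, which reads
\[
\pi_1(G)\longrightarrow\pi_1(\tilG/\tilH)\longrightarrow\pi_0(\tilH\cap G)\longrightarrow\pi_0(G)=\{1\},
\]
since $G$ is connected. Hence $\pi_0(\tilH\cap G)$ is a quotient of $\pi_1(\tilG/\tilH)$. Symmetrically, from $\tilH\cap G\hookrightarrow\tilH\to\tilH/(\tilH\cap G)\simeq\tilG/G$ we get that $\pi_0(\tilH\cap G)$ is a quotient of $\pi_1(\tilG/G)$ (using that $\tilH$ is connected). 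Therefore, if \emph{either} $\tilG/\tilH$ \emph{or} $\tilG/G$ is simply connected, then $\pi_0(\tilH\cap G)$ is trivial, i.e. $\tilH\cap G$ is connected. This proves the first assertion.

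For the second assertion, let $(\tilG_1,\tilH_1,G_1)$ be a triple as in the statement, with $\tilG_1\to\tilG$ a connected covering and $\tilH_1,G_1$ the analytic subgroups with Lie algebras $\tilh,\g$. The covering $p:\tilG_1\to\tilG$ restricts to coverings $\tilH_1\to\tilH$ and $G_1\to G$ onto the identity components (which equal $\tilH$ and $G$ since those are connected), and $\tilG_1=\tilH_1 G_1$ still holds at the Lie algebra level, hence at the group level since the relevant analytic subgroups generate. The key point is that $p$ restricts to a covering map $\tilG_1/G_1\to\tilG/G$: indeed $\tilG_1/G_1$ is connected, the map is a surjective submersion with discrete fibers, and local triviality follows from that of $p$. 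In particular $\pi_1(\tilG_1/G_1)$ is a subgroup of $\pi_1(\tilG/G)$ (via $p_*$, which is injective on $\pi_1$ as $p$ is a covering), and likewise $\pi_1(\tilG_1/\tilH_1)\hookrightarrow\pi_1(\tilG/\tilH)$. Rerunning the homotopy-sequence argument of the previous paragraph for the triple $(\tilG_1,\tilH_1,G_1)$ shows $\pi_0(\tilH_1\cap G_1)$ is a quotient of $\pi_1(\tilG_1/\tilH_1)$ and also of $\pi_1(\tilG_1/G_1)$. However, whichever of $\tilG/\tilH$ or $\tilG/G$ was simply connected, its preimage $\tilG_1/\tilH_1$ resp. $\tilG_1/G_1$ is then also simply connected (a connected cover of a simply connected space is a homeomorphism, or simply: $\pi_1$ of the cover injects into the trivial $\pi_1$ of the base). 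Hence $\pi_0(\tilH_1\cap G_1)=\{1\}$, so $\tilH_1\cap G_1$ is connected.

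The main obstacle I anticipate is purely bookkeeping: making sure that the diffeomorphism $\tilH/(\tilH\cap G)\simeq\tilG/G$ is set up correctly (one must check $\tilG=\tilH G$ implies the $\tilH$-orbit through the base point of $\tilG/G$ is everything, and that the stabilizer is exactly $\tilH\cap G$), and that $p:\tilG_1/G_1\to\tilG/G$ is genuinely a covering rather than merely a submersion --- this requires knowing $G_1=p^{-1}(G)_0$ and that $p^{-1}(G)$ meets $\tilG_1$ in a way compatible with the deck group. Since $\tilG$ is compact and all groups here are Lie groups with the stated Lie algebras, these verifications are standard; no representation theory or sphericity is needed for this particular lemma, only the hypothesis $\tilG=\tilH G$ (equivalently that $\tilG_{\C}/\tilH_{\C}$ is $G_{\C}$-spherical, which in Setting~\ref{setting} forces $\tilG=\tilH G$). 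I would present the homotopy-sequence computation once, abstractly, and then invoke it twice.
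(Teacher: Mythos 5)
Your proof is correct and follows essentially the same route as the paper: the paper likewise applies the exact sequence $\pi_1(\tilG/L)\to\pi_0(H)\to\pi_0(L')$ for the fibration $H\to L'\to L'/H\simeq\tilG/L$ with $(L,L')=(\tilH,G)$ or $(G,\tilH)$, and then notes that the hypothesis is unchanged under coverings of~$\tilG$. Your treatment of the covering step (that $\tilG_1/\tilH_1\to\tilG/\tilH$ is a covering, so simple connectedness passes up, and that $\tilG_1=\tilH_1G_1$ persists) just spells out details the paper leaves implicit.
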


\begin{proof}
Let $H:=\tilH\cap G$.
For $(L,L')=(\tilH,G)$ or $(G,\tilH)$, we have an exact sequence of homotopy groups
$$\pi_1(\tilG/L) \longrightarrow \pi_0(H) \longrightarrow \pi_0(L')$$
for the fibration $H\to L'\to L'/H\simeq\tilG/L$.
Thus if $\tilG/L$ is simply connected and $L'$ connected, then $H$ is connected.
Since the assumption is not changed under taking a covering of~$\tilG$, the last statement also holds.
\end{proof}

\begin{lemma} \label{lem:move-up-down-H-subset-K}
In the setting of Proposition~\ref{prop:H-subset-K}, let $Z$ be a central subgroup of~$\tilG$.
\begin{enumerate}
  \item If $Z\subset G$ or $Z\subset\tilH$, then \eqref{eqn:H-subset-K} for~$\tilG$ implies \eqref{eqn:H-subset-K} for $\tilG/Z$ (\ie $\varpi(\tilH)\cap\varpi(G)\subset\varpi(K)$ where $\varpi : \tilG\to\tilG/Z$ is the quotient map).
  \item If $Z\subset K$, then \eqref{eqn:H-subset-K} for $\tilG/Z$ implies \eqref{eqn:H-subset-K} for~$\tilG$.
\end{enumerate}
\end{lemma}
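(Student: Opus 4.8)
The plan is to prove Lemma~\ref{lem:move-up-down-H-subset-K} by chasing elements through the quotient map $\varpi:\tilG\to\tilG/Z$, using only the facts that $Z$ is central, that $\varpi$ restricts to the quotient maps on each of the analytic subgroups $\tilH$, $G$, $K$, and that the corresponding Lie algebras (hence analytic subgroups downstairs) are the $\varpi$-images of those upstairs. Throughout I write $\overline{L}:=\varpi(L)$ for a subgroup $L$ of $\tilG$, so that $\overline{\tilG}=\tilG/Z$, and I note that since $Z$ is central in $\tilG$ it is in particular central in each of $\tilH,G,K$, so each $\overline{L}$ is again an analytic subgroup of $\overline{\tilG}$ with Lie algebra naturally identified with that of $L$.

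For part~(1), assume \eqref{eqn:H-subset-K} holds for $\tilG$, i.e.\ $\tilH\cap G\subset K$, and assume $Z\subset G$ (the case $Z\subset\tilH$ is symmetric). Take $\bar g\in\overline{\tilH}\cap\overline{G}$; I would lift it to an element $h\in\tilH$ with $\varpi(h)=\bar g$, and separately to an element $g\in G$ with $\varpi(g)=\bar g$. Then $\varpi(h g^{-1})=1$, so $h g^{-1}\in Z\subset G$, whence $h\in G$. Therefore $h\in\tilH\cap G\subset K$, so $\bar g=\varpi(h)\in\overline{K}$. This shows $\overline{\tilH}\cap\overline{G}\subset\overline{K}$, which is \eqref{eqn:H-subset-K} for $\tilG/Z$. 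If instead $Z\subset\tilH$, the same argument with the roles of $\tilH$ and $G$ interchanged gives $h g^{-1}\in Z\subset\tilH$, hence $g\in\tilH\cap G\subset K$ and $\bar g\in\overline{K}$.

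For part~(2), assume $Z\subset K$ and that \eqref{eqn:H-subset-K} holds for $\tilG/Z$, i.e.\ $\overline{\tilH}\cap\overline{G}\subset\overline{K}$. Let $x\in\tilH\cap G$. Then $\varpi(x)\in\overline{\tilH}\cap\overline{G}\subset\overline{K}$, so there is $k\in K$ with $\varpi(x)=\varpi(k)$, i.e.\ $x k^{-1}\in Z\subset K$; hence $x\in K$. Thus $\tilH\cap G\subset K$, which is \eqref{eqn:H-subset-K} for $\tilG$. I do not anticipate a genuine obstacle here: the only point requiring a little care is the identification of the analytic subgroup of $\tilG/Z$ with Lie algebra $\kk$ (resp.\ $\tilh$, $\g$) with the image $\varpi(K)$ (resp.\ $\varpi(\tilH)$, $\varpi(G)$), which holds because $\varpi$ is a surjective homomorphism of Lie groups with discrete kernel and therefore a local diffeomorphism carrying analytic subgroups to analytic subgroups with the corresponding Lie algebras. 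Everything else is the elementary observation that membership in a subgroup containing $\ker\varpi=Z$ can be tested after applying $\varpi$.
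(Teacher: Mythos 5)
Your proof is correct and follows essentially the same route as the paper: your element-chasing in part~(1) is exactly the identity $\tilH Z\cap GZ=(\tilH\cap G)Z$ used there, and in part~(2) it is the observation $\varpi^{-1}(\varpi(K))=K$ when $Z\subset K$. The side remark about identifying analytic subgroups with their $\varpi$-images is fine but not even needed, since \eqref{eqn:H-subset-K} for $\tilG/Z$ is stated directly in terms of the images $\varpi(\tilH)$, $\varpi(G)$, $\varpi(K)$.
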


\begin{proof}
(1) If $Z\subset G$ or $Z\subset\tilH$, then $\tilH Z\cap GZ = (\tilH\cap G)Z$, and so $\varpi(\tilH) \cap \varpi(G) = \varpi(\tilH\cap G)$.
In particular, $\varpi(\tilH)\cap\varpi(G)\subset\varpi(K)$ as soon as $\tilH\cap G\subset K$.

(2) If $Z\subset K$, then $\varpi^{-1}(\varpi(K))=K$.
In particular,
$$\tilH\cap G \subset \varpi^{-1}\big(\varpi(\tilH)\cap\varpi(G)\big) \subset \varpi^{-1}(\varpi(K)) = K$$
as soon as $\varpi(\tilH)\cap\varpi(G)\subset\varpi(K)$.
\end{proof}

\begin{proposition} \label{prop:H-subset-K-for-i-x}
If $(\tilG,\tilH,G)$ is any triple of connected groups locally isomorphic to the triples in case (i), (ii), (iii), (iv), (viii), (ix), (x), (xiii), or (xiv) of Table~\ref{table1}, then $\tilH\cap G$ is connected.
\end{proposition}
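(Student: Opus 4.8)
\textbf{Proof proposal for Proposition~\ref{prop:H-subset-K-for-i-x}.}

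The plan is to use Lemma~\ref{lem:simply-connected-implies-H-subset-K}: to show $\tilH\cap G$ is connected, it suffices to exhibit, \emph{for at least one triple of connected groups in the local isomorphism class}, either that $\tilG/\tilH$ is simply connected or that $\tilG/G$ is simply connected; the last sentence of that lemma then propagates connectedness of $\tilH_1\cap G_1$ to every other triple $(\tilG_1,\tilH_1,G_1)$ in the same local isomorphism class. So the proof becomes a case-by-case inspection of the relevant rows of Table~\ref{table1}, and in each case I would pick the most convenient representative of the local isomorphism class.

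First I would handle the cases where one of the fibrations is manifestly a sphere or a low-dimensional Stiefel-type space. In case (i), $\tilG/\tilH = \SO(2n+2)/\SO(2n+1) = S^{2n+1}$ is simply connected (for $n\geq 0$), so $\tilH\cap G$ is connected. In case (ii) the same sphere $\SO(2n+2)/\U(n+1)$... here instead $\tilG/\tilH=\SO(2n+2)/\U(n+1)$ is the Hermitian symmetric space $\mathrm{SO}(2n+2)/\mathrm{U}(n+1)$, which is simply connected, so again we are done; alternatively one can use that in case (ii), $G=\SO(2n+1)$ and $\tilG/G = \SO(2n+2)/\SO(2n+1)=S^{2n+1}$ is simply connected. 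In case (iii), $\tilG/\tilH = \SU(2n+2)/\U(2n+1) = \PP^{2n+1}(\C)$ is simply connected. In case (iv), $\tilG/\tilH = \SU(2n+2)/\Sp(n+1)$ is a simply connected symmetric space (it is $\SU(2m)/\Sp(m)$ with $m=n+1$, which is simply connected). Cases (viii), (ix), (x) all involve $\tilG=\SO(7)$: in (viii) and (ix), $\tilG/\tilH = \SO(7)/G_{2(-14)}=\R\PP^7$... no, one must be careful — $\SO(7)/G_{2(-14)}$ is $\R\PP^7$, which is \emph{not} simply connected, but its double cover $\Spin(7)/G_{2(-14)}=S^7$ is, and $\Spin(7)$ is a connected covering of $\SO(7)$, so Lemma~\ref{lem:simply-connected-implies-H-subset-K} applied to the representative $(\Spin(7),G_{2(-14)},\widetilde{G})$ with $\Spin(7)/G_{2(-14)}=S^7$ simply connected gives connectedness. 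In case (x) one instead uses $\tilG/G=\SO(7)/G_{2(-14)}$; again pass to the $\Spin(7)$ cover where the fiber bundle base $\Spin(7)/G_{2(-14)}=S^7$ is simply connected, so $\tilH\cap G$ is connected for that representative, hence (by the last sentence of the lemma) for all representatives. For the $D_4$ cases (xiii) and (xiv), $\tilG=\SO(8)$ with $\tilH$ or $G$ equal to $\Spin(7)$; using the $\Spin(8)$ cover, $\Spin(8)/\Spin(7)=S^7$ is simply connected, so the relevant quotient becomes simply connected and the lemma applies.

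The main obstacle I anticipate is the bookkeeping of \emph{which} covering to pass to and which of $\tilG/\tilH$, $\tilG/G$ to use so that one genuinely obtains a simply connected space: the spaces as written in Table~\ref{table1} (e.g.\ $\SO(7)/G_{2(-14)}=\R\PP^7$, or spaces with $\U(n)$ or $\SO(n)$ factors) are often \emph{not} simply connected, and the argument only works after lifting to a suitable connected covering group (typically the simply connected cover $\widetilde{\tilG}$, or $\Spin$ in place of $\SO$); this is exactly why Lemma~\ref{lem:simply-connected-implies-H-subset-K} is stated with the ``covering'' clause. Concretely, in each of the nine listed cases I would (i) choose a connected covering $\tilG_1$ of $\tilG$ and analytic subgroups $\tilH_1,G_1$ with Lie algebras $\tilh,\g$; (ii) verify that one of $\tilG_1/\tilH_1$, $\tilG_1/G_1$ is a sphere, a complex or quaternionic projective space, or a simply connected symmetric space, hence simply connected; (iii) invoke Lemma~\ref{lem:simply-connected-implies-H-subset-K} to conclude $\tilH_1\cap G_1$ is connected, and then its final clause to conclude $\tilH\cap G$ is connected for every triple in the local isomorphism class. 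The only mild subtlety is confirming, for the symmetric-space cases (ii), (iv), (xiii), (xiv), the simple-connectedness claim from the classification of compact symmetric spaces (all irreducible compact symmetric spaces $G/K$ with $G$ simply connected and $K$ connected are simply connected), which is standard.
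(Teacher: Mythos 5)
Your overall strategy (verify that $\tilG/\tilH$ or $\tilG/G$ is simply connected and invoke Lemma~\ref{lem:simply-connected-implies-H-subset-K}) is the same as the paper's, but the way you pass between locally isomorphic triples contains a genuine gap. The final clause of Lemma~\ref{lem:simply-connected-implies-H-subset-K} propagates connectedness of $\tilH_1\cap G_1$ only to triples $(\tilG_1,\tilH_1,G_1)$ in which $\tilG_1$ is a \emph{covering} of the group $\tilG$ for which you verified the hypothesis; it says nothing about quotients of~$\tilG$, and ``locally isomorphic'' includes those. Your plan is to check simple connectedness at one convenient representative (often the simply connected form) and then ``propagate to every other triple in the local isomorphism class,'' which is exactly the invalid direction. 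This is not a cosmetic issue: in cases (viii), (ix), (x) you verify the hypothesis only for $\Spin(7)/G_{2(-14)}=\mathbb{S}^7$, but $\Spin(7)$ has no proper connected coverings, so the lemma gives you nothing for the table's own triples with $\tilG=\SO(7)$ (where $\SO(7)/G_{2(-14)}\simeq\PP^7(\R)$ is not simply connected); similarly, in case (xiii) your appeal to $\Spin(8)/\Spin(7)=\mathbb{S}^7$ does not descend to $\tilG=\SO(8)$, where $\SO(8)/\Spin(7)\simeq\PP^7(\R)$, and in cases (i) and (iv) the quotients $\tilG/\tilH$ you use ($\mathbb{S}^{2n+1}$, $\SU(2n+2)/\Sp(n+1)$) cease to be simply connected for forms such as $\PSO(2n+2)$ or $\mathrm{PSU}(2n+2)$.

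The paper avoids this by choosing, in each case, a quotient whose simple connectedness is \emph{intrinsic to the Lie algebra data}, hence valid for every isogeny form simultaneously: in cases (ii), (iii), (xiv) the subgroup $\tilH$ is the centralizer of a toral subgroup of~$\tilG$, and in cases (i), (iv), (viii), (xiii) the subgroup $G$ is, so the relevant quotient is a generalized flag manifold and is simply connected no matter which connected form of $\tilG$ one takes; for (ix) and (x) it uses $\SO(7)/\SO(6)=\mathbb{S}^6$ together with the observation that $\SO(7)$ is centerless, so that every locally isomorphic triple \emph{is} obtained by a covering of the table's triple, and then the covering clause legitimately applies. To repair your proof you would need to replace your representative-plus-propagation scheme by one of these two mechanisms (an intrinsic flag-manifold argument covering all forms, or a verification at the minimal form from which all others are coverings); in particular your choice of $\tilG/\tilH$ in cases (i), (iv), (viii), (xiii) should be replaced by $\tilG/G$.
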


\begin{proof}
Let $(\tilG,\tilH,G)$ be any triple of connected groups locally isomorphic to the triples in case (ii), (iii), or (xiv) of Table~\ref{table1}.
Then $\tilH$ is the centralizer of a toral subgroup of~$\tilG$, and so $\tilG/\tilH$ is a (generalized) flag manifold, hence simply connected.
We conclude using Lemma~\ref{lem:simply-connected-implies-H-subset-K}.

Similarly, let $(\tilG,\tilH,G)$ be any triple of connected groups locally isomorphic to the triples in case (i), (iv), (viii), or (xiii) of Table~\ref{table1}.
Then $G$ is the centralizer of a toral subgroup of~$\tilG$, and so $\tilG/G$ is a (generalized) flag manifold, hence simply connected.
We conclude using Lemma~\ref{lem:simply-connected-implies-H-subset-K}.

For cases (ix) and~(x) of Table~\ref{table1}, we consider the triple $(\tilG,\tilH,G)$ given in the table.
For this triple the group $\tilG=\SO(7)$ is adjoint, hence any other triple of connected groups locally isomorphic to $(\tilG,\tilH,G)$ is obtained by a covering of~$\tilG$.
For the triple of the table we note that either $\tilG/\tilH$ (case~(ix)) or $\tilG/G$ (case~(x)) is diffeomorphic to~$\mathbb{S}^6$, which is simply connected.
We conclude using Lemma~\ref{lem:simply-connected-implies-H-subset-K}.
\end{proof}

\subsection{Proof of Proposition~\ref{prop:H-subset-K}.(2) in the remaining cases (v), (vi), (vii) of Table~\ref{table1}} \label{subsec:proof-H-subset-K-rest}

For the proof of Proposition~\ref{prop:H-subset-K}.(2), we do not need to consider cases (xi) and~(xii) of Table~\ref{table1}, because $\kk=\tilh\cap\g$ in this case.
Therefore, by Proposition~\ref{prop:H-subset-K-for-i-x}, it is sufficient to treat the remaining cases (v), (vi), and~(vii) of Table~\ref{table1}, as follows.

\begin{proposition} \label{prop:H-subset-K-for-v-vi-vii}
If $(\tilG,\tilH,G)$ is any triple of connected groups locally isomorphic to the triples in case (v), (vi), or~(vii) of Table~\ref{table1}, and if $K$ is a maximal connected proper subgroup of~$G$ such that $\h:=\tilh\cap\g\subset\kk$, then the inclusion \eqref{eqn:H-subset-K} holds.
\end{proposition}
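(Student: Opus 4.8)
The plan is to derive, in each of the three cases, the connectedness of $\tilH\cap G$ for as many global forms as possible directly from Lemma~\ref{lem:simply-connected-implies-H-subset-K}, and to obtain \eqref{eqn:H-subset-K} for the remaining forms by transferring it via Lemma~\ref{lem:move-up-down-H-subset-K}. Note that whenever $\tilH\cap G$ is connected the inclusion \eqref{eqn:H-subset-K} is automatic: $\tilH\cap G$ then equals the analytic subgroup of~$G$ with Lie algebra $\h=\tilh\cap\g$, and since $\h\subset\kk$ by hypothesis and $K$ is the connected subgroup of~$G$ with Lie algebra~$\kk$, we get $\tilH\cap G\subset K$.

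Cases (v) and (vi) are immediate: $\tilH=\SO(4n+3)$, \resp $\SO(15)$, is the standard subgroup of $\tilG=\SO(4n+4)$, \resp $\SO(16)$, so $\tilG/\tilH$ is the sphere $S^{4n+3}$, \resp $S^{15}$, which is simply connected. By Lemma~\ref{lem:simply-connected-implies-H-subset-K}, $\tilH_1\cap G_1$ is connected for every triple obtained from a covering of~$\tilG$, and \eqref{eqn:H-subset-K} follows. For case~(vii) the subgroup $\tilH=\Spin(7)$ (sitting in $\tilG=\SO(8)$ via its $8$-dimensional spin representation, so that $\tilG/\tilH\cong\SO(7)/G_{2(-14)}\cong\mathbb{R}P^7$ by the decomposition $\SO(8)=\SO(7)\cdot\Spin(7)$ of case~(xi)) does \emph{not} give a simply connected quotient, so one works instead with $\tilG/G$: from the explicit, triality-dependent realization of $G=\SO(5)\times\SO(3)\hookrightarrow\SO(8)$ of Section~\ref{subsec:ex(vii)} one identifies $\SO(8)/(\SO(5)\times\SO(3))$ with the oriented Grassmannian $\widetilde{\mathrm{Gr}}_3(\R^8)$, which is simply connected because the map $\pi_1(\SO(3)\times\SO(5))\to\pi_1(\SO(8))$ is onto. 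Lemma~\ref{lem:simply-connected-implies-H-subset-K} then gives the connectedness of $\tilH\cap G$ — hence \eqref{eqn:H-subset-K} — for $\SO(8)$ and for its double cover $\Spin(8)$. Every other connected group with Lie algebras $(\so(8),\so(7),\so(5)\oplus\so(3))$ in the relevant position is a quotient of $\Spin(8)$ by a central subgroup~$Z$; for each such form one uses triality to match it with an appropriate triple inside~$\Spin(8)$, checks (using e.g.\ that $-I_8\in\Spin(7)=\tilH$, since the nontrivial central element of $\Spin(7)$ acts by $-1$ on the spin representation) that $Z$ lies in $\tilH$ or in~$G$, and concludes \eqref{eqn:H-subset-K} for that form from Lemma~\ref{lem:move-up-down-H-subset-K}.(1).

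The main obstacle is case~(vii): one needs the explicit description of the embeddings $G\hookrightarrow\tilG$ and $\iota_7\colon H\hookrightarrow K$, which rests on the triality of~$D_4$ (this is exactly the content of Section~\ref{subsec:ex(vii)}), both in order to recognize $\tilG/G$ as the simply connected oriented Grassmannian and in order to keep track, while passing between the global forms, of which central elements sit inside $\tilH$ and~$G$. Once this input is in hand the argument is a routine computation of fundamental groups of homogeneous spaces, parallel to the proof of Proposition~\ref{prop:H-subset-K-for-i-x}.
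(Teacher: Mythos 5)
Your positive inputs are the same as the paper's: simple connectedness of $\tilG/\tilH\simeq\mathbb{S}^{4N-1}$ in cases (v), (vi), and of $\tilG/G$ in case (vii) (your Grassmannian argument is essentially the paper's observation that $\SO(8)/\SO(3)$ is simply connected, hence so is its quotient $\SO(8)/G$), and your remark that connectedness of $\tilH\cap G$ yields \eqref{eqn:H-subset-K} is correct. The gap is in the quantifier ``any triple of connected groups locally isomorphic to\dots''. For $\tilg=\so(4N)$ there are \emph{five} global forms of $\tilG$: $\Spin(4N)$, $\SO(4N)$, $\PSO(4N)$, and the two half-spin quotients $\Spin(4N)/\{1,E\}$, $\Spin(4N)/\{1,-E\}$. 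Lemma~\ref{lem:simply-connected-implies-H-subset-K} only treats $\tilG_1$ a \emph{covering} of the table's $\tilG=\SO(4N)$, i.e.\ $\SO(4N)$ and $\Spin(4N)$. In cases (v) and (vi) you say nothing at all about the three proper quotient forms, and for these \eqref{eqn:H-subset-K} is not automatic: the intersection $\tilH\cap G$ can genuinely become disconnected after a central quotient (this is exactly what happens in one global form of cases (xi), (xii), see Remark~\ref{rem:H-subset-K}). Handling these forms is the bulk of the paper's proof: one must determine which of the central elements $-1,\pm E$ lie in the lifts $\tilH^{\bullet},G^{\bullet},K^{\bullet}\subset\Spin(4N)$ (the pattern analysis of Lemmas~\ref{lem:patterns} and~\ref{lem:patterns-cases-v-vi-vii}), and then transfer \eqref{eqn:H-subset-K} downwards with Lemma~\ref{lem:move-up-down-H-subset-K}.(1) and, where that fails, \emph{upwards} with Lemma~\ref{lem:move-up-down-H-subset-K}.(2).

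In case (vii) you do address the quotients of $\Spin(8)$, but your recipe --- check that the central kernel $Z$ lies in $\tilH$ or in $G$ and apply Lemma~\ref{lem:move-up-down-H-subset-K}.(1) --- breaks down for one of the two half-spin forms. Since $G=\SO(5)\times\SO(3)$ and $K=\SO(4)\times\SO(3)$ have pattern (a), their lifts to $\Spin(8)$ meet the center only in $\{1,-1\}$, while $\tilH^{\bullet}\simeq\Spin(7)$ (pattern (b)) contains exactly one of $E,-E$ and not $-1$; hence for the half-spin quotient by the \emph{other} element of $\{E,-E\}$ the kernel lies in neither $\tilH^{\bullet}$ nor $G^{\bullet}$, and Lemma~\ref{lem:move-up-down-H-subset-K}.(1) gives nothing. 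The paper reaches this form by first descending to $\PSO(8)$ (kernels successively inside $G^{\bullet}$ and $\tilH$) and then ascending by Lemma~\ref{lem:move-up-down-H-subset-K}.(2), using $-1\in K^{\bullet}$; your argument never invokes Lemma~\ref{lem:move-up-down-H-subset-K}.(2), so this form --- and likewise the analogous half-spin forms in cases (v) and (vi) --- is left unproved. (The alternative reading of your ``use triality to match it with an appropriate triple'' would convert this form of case (vii) into a form of case (v), which you also have not established.) So the proposal as written does not prove the proposition; what is missing is precisely the central-element bookkeeping and the two-way transfer argument of Section~\ref{subsec:proof-H-subset-K-rest}.
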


For this we consider the coverings
$$\Spin(4N) \overset{\varpi}{\longrightarrow} \SO(4N) \overset{p}{\longrightarrow} \PSO(4N).$$
The center $\{\pm I_{4N}\}$ of $\SO(4N)$ is isomorphic to $\Z/2\Z$, while that of $\Spin(4N)$ is isomorphic to $\Z/2\Z\times\Z/2\Z$ (see \cite[Chap.\,X, Th.\,3.32]{hel01}).
We write $\{1,-1,E,-E\}$ for the center of $\Spin(4N)$, where $\varpi(\pm 1)=I_{4N}\in\SO(4N)$ and $\varpi(\pm E)=-I_{4N}\in\SO(4N)$.
Therefore there are five Lie groups with Lie algebras $\so(4N)$ and they are related by the following double covering maps.
$$\xymatrix{
& \Spin(4N) \ar[dl]_{\varpi_-} \ar[d]^{\varpi} \ar[dr]^{\varpi_+} & \\
\Spin(4N)/\{1,-E\} \ar[dr] & \SO(4N) \ar[d]^p & \Spin(4N)/\{1,E\} \ar[dl]\\
& \PSO(4N) &}$$
Let $L$ be a connected Lie subgroup of $\SO(4N)$.
(In the sequel, we shall take $L$ to be $\tilG$, $\tilH$, $G$, or~$K$.)
We consider connected subgroups with the same Lie algebra $\llll$ in the above five Lie groups.
Among them, we denote by $L^{\bullet}:=\varpi^{-1}(L)_0$ the identity component of $\varpi^{-1}(L)$ in $\Spin(4N)$.
The following diagram summarizes the situation.
\begin{equation} \label{eqn:diagram-L}
\xymatrix{
& L^{\bullet} \ar[dl]_{\varpi_-} \ar[d]^{\varpi} \ar[dr]^{\varpi_+} & \\
\varpi_-(L^{\bullet}) \ar[dr] & L \ar[d]^p & \varpi_+(L^{\bullet}) \ar[dl]\\
& p(L) &}
\end{equation}
Each arrow in this diagram is either a double covering or an isomorphism.
We now refine the diagram \eqref{eqn:diagram-L} in cases (v), (vi), (vii) of Table~\ref{table1} by writing a double arrow in the case of a double covering and a single arrow in the case of an isomorphism.
The following three patterns appear, up to switching $\varpi_+$ and~$\varpi_-$.

$$\xymatrixcolsep{-1.2pc}
\xymatrix{
\text{Pattern }(a) & L^{\bullet} \ar[dl] \ar@{=>}[d] \ar[dr] & \hspace{2cm} & & \text{Pattern }(b) & L^{\bullet} \ar[dl] \ar[d] \ar@{=>}[dr] & \hspace{2cm} & & \text{Pattern }(c) & L^{\bullet} \ar@{=>}[dl] \ar@{=>}[d] \ar@{=>}[dr] & \hspace{2cm}\\
\varpi_-(L^{\bullet}) \ar@{=>}[dr] & L \ar[d] & \varpi_+(L^{\bullet}) \ar@{=>}[dl] & & \varpi_-(L^{\bullet}) \ar@{=>}[dr] & L \ar@{=>}[d] & \varpi_+(L^{\bullet}) \ar[dl] & & \varpi_-(L^{\bullet}) \ar@{=>}[dr] & L \ar@{=>}[d] & \varpi_+(L^{\bullet}) \ar@{=>}[dl]\\
& p(L) & & & & p(L) & & & & p(L) &}$$

For instance, $\tilG=\SO(4N)$ has pattern~(c).

We claim the following.

\begin{lemma} \label{lem:patterns-cases-v-vi-vii}
\begin{enumerate}
  \item Inside $\tilG=\SO(4n+4)$ (case (v) of Table~\ref{table1}), either
  \begin{itemize}
    \item $\tilH:=\SO(4n+3)$ has pattern~(a);
    \item $G:=\Sp(n+1)\cdot\Sp(1)$ and $K:=\Sp(n+1)\cdot\Sp(1)$ have pattern~(b) up to switching $\varpi_+$ and~$\varpi_-$;
  \end{itemize}
  or $G:=\Sp(n+1)\cdot\Sp(1)$ has pattern~(c).
  \item Inside $\tilG=\SO(16)$ (case (vi) of Table~\ref{table1}),
  \begin{itemize}
    \item $\tilH:=\SO(15)$ has pattern~(a);
    \item $G:=\Spin(9)$ and $K:=\Spin(8)$ have pattern~(b) up to switching $\varpi_+$ and~$\varpi_-$.
  \end{itemize}
  \item Inside $\tilG=\SO(8)$ (case (vii) of Table~\ref{table1}),
  \begin{itemize}
    \item $\tilH:=\Spin(7)$ has pattern~(b) up to switching $\varpi_+$ and~$\varpi_-$;
    \item $G:=\SO(5)\times\SO(3)$ and $K:=\SO(4)\times\SO(3)$ have pattern~(a).
  \end{itemize}
\end{enumerate}
\end{lemma}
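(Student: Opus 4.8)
\textbf{Proof plan for Lemma~\ref{lem:patterns-cases-v-vi-vii}.}
The plan is to determine, for each of the connected subgroups $L\in\{\tilH,G,K\}$ occurring in cases (v), (vi), (vii), which of the three patterns (a), (b), (c) holds, by tracking how the central elements $\{1,-1,E,-E\}$ of $\Spin(4N)$ intersect the identity component $L^\bullet=\varpi^{-1}(L)_0$. Concretely, a single arrow $\varpi\colon L^\bullet\to L$ is an isomorphism exactly when $-1\notin L^\bullet$ (equivalently $L^\bullet\cong L$), and likewise $\varpi_\pm(L^\bullet)\to p(L)$ is an isomorphism exactly when $\mp E\notin L^\bullet$. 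So the whole lemma reduces to computing, for each $L$, the subgroup $L^\bullet\cap\{1,-1,E,-E\}$ of $\Z/2\Z\times\Z/2\Z$: if it is trivial we are in pattern (c) (three double coverings below $L^\bullet$); if it is one of the three order-two subgroups we are in pattern (a) or (b) depending on which generator survives; it cannot be the whole group since all the maps from $L^\bullet$ shown have kernel of order at most $2$.

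The key steps, case by case, are as follows. First I would record the easy input that $\tilG=\SO(4N)$ itself has $\tilG^\bullet=\Spin(4N)$, whose center is all of $\{1,-1,E,-E\}$, giving pattern (c)—this fixes the normalization of $\varpi_\pm$. Next, for $\tilH=\SO(4n+3)$ (resp.\ $\SO(15)$) sitting inside $\SO(4n+4)$ (resp.\ $\SO(16)$) as a standard $\SO(4N-1)\hookrightarrow\SO(4N)$: its preimage is $\Spin(4N-1)$, which has cyclic center of order $2$, so exactly one of $-1,E,-E$ lies in $\tilH^\bullet$; identifying which one (it is the element $-1$ that generates the center of $\Spin(4N-1)$, since $\Spin(4N-1)\to\SO(4N-1)$ is the nontrivial double cover and $\Spin(4N-1)$ does not map onto $\PSO$) gives pattern (a). For the subgroups $G$ and $K$ that are (almost) products $\Sp(n+1)\cdot\Sp(1)$, $\Sp(n+1)\cdot\Sp(1)$, $\Spin(9)$, $\Spin(8)$, $\SO(5)\times\SO(3)$, $\SO(4)\times\SO(3)$, I would compute $L^\bullet$ directly from the embedding into the spin group, using the explicit realization of the relevant spin representations: e.g.\ $\Spin(9)\hookrightarrow\SO(16)$ via the $16$-dimensional spin representation lifts to $\Spin(16)$ with $-E$ (not $-1$) in the image because the spin representation of $\Spin(9)$ is faithful, giving pattern (b); similarly $\Spin(8)\hookrightarrow\SO(8)$ (vector, or one of the half-spin, representations by triality) and $\Spin(7)\hookrightarrow\SO(8)$ (spin representation) each have preimage equal to $\Spin(4N)/\{1,\pm E\}$ with a compatible choice of $\varpi_+$ vs.\ $\varpi_-$, giving pattern (b); and for $\SO(5)\times\SO(3)\hookrightarrow\SO(8)$ and $\SO(4)\times\SO(3)\hookrightarrow\SO(8)$ the preimage is $(\Spin(5)\times\Spin(3))/(\text{diagonal }\Z/2\Z)$, whose image central subgroup is the single nontrivial element $-1\in\Spin(8)$, giving pattern (a).

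The main obstacle I expect is bookkeeping rather than conceptual: one must be careful about \emph{which} of the two nontrivial "even" central elements $E$ and $-E$ one records, since (a) and (b) differ only by whether the surviving element is $-1$ or one of $\pm E$, and (b) additionally requires a consistent convention for swapping $\varpi_+\leftrightarrow\varpi_-$. To pin this down cleanly I would fix once and for all the labelling so that $\varpi_+(L^\bullet)=\Spin(4N)/\{1,E\}$ is the cover admitting a faithful spin-type representation, and then for each $L$ check a single representation-theoretic fact: whether the defining embedding $L\hookrightarrow\SO(4N)$ lifts to $\Spin(4N)$ (so $-1\in L^\bullet$, and $L$ "looks like a vector representation") or lifts only after passing to $\Spin(4N)/\{1,\pm E\}$ (so a half-spin-type central element survives). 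Both checks are standard for the low-rank classical and spin groups involved, and the three patterns (a), (b), (c) exhaust the possibilities because $L^\bullet\cap Z(\Spin(4N))$ has order $1$, $2$, or $4$, and order $4$ is excluded by the existence of the covering $L^\bullet\to L$ with kernel of order $\le 2$.
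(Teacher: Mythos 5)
Your reduction of the lemma to computing $Z_L := L^{\bullet}\cap\{1,-1,E,-E\}$ for each subgroup $L$ is a sound idea (it encodes the same information as the paper's Lemma~\ref{lem:patterns}), but your dictionary between $Z_L$ and the three patterns is wrong, and the error is load-bearing. In pattern (a) the kernel of the composite $L^{\bullet}\to p(L)$ is $\{1,-1\}$, in pattern (b) it is $\{1,E\}$ or $\{1,-E\}$, and in pattern (c) it is the \emph{whole} center, since there the composite is a covering of degree $4$; its kernel is exactly $Z_L$. You instead assert that $Z_L=\{1\}$ gives pattern (c) and that $Z_L$ "cannot be the whole group since all the maps from $L^{\bullet}$ shown have kernel of order at most $2$". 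Both claims are false: if $Z_L$ were trivial, all six arrows in the diagram would be isomorphisms, which is none of (a), (b), (c); and the three arrows out of $L^{\bullet}$ have kernels of order at most $2$ but they kill \emph{different} central elements, so they place no bound of $2$ on $|Z_L|$ --- your own normalizing example $L=\tilG=\SO(4N)$ has $Z_L$ equal to the full center and pattern (c), contradicting both of your claims at once.

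This is not harmless bookkeeping, because the only genuinely delicate point of the lemma is the dichotomy in case (v): there $-I_{4n+4}\in G=\Sp(n+1)\cdot\Sp(1)$, and the conclusion is "pattern (b) or pattern (c)" according to whether $-1\notin G^{\bullet}$ or $-1\in G^{\bullet}$, i.e.\ whether $|Z_G|=2$ or $4$. With your dictionary this case would be misdiagnosed, and your plan in fact never computes $Z_L$ for $L=\Sp(n+1)\cdot\Sp(1)$ or $(\Sp(n)\times\Sp(1))\cdot\Sp(1)$ at all --- the one place where the correct answer is a dichotomy rather than a definite pattern. Once the dictionary is corrected, your individual determinations (the element $-1$ survives for $\SO(4n+3)$, $\SO(15)$, $\SO(5)\times\SO(3)$, $\SO(4)\times\SO(3)$, giving (a); a spin-type element $\pm E$ survives for $\Spin(9)$ and $\Spin(8)$ inside $\SO(16)$ and for $\Spin(7)$ inside $\SO(8)$, giving (b)) do agree with the lemma, and this more computational route is viable; the paper instead gets the same conclusions more softly, by checking whether $-I_{4N}\in L$ and whether $L$ (resp.\ $\SO(4N)/L$) is simply connected, and feeding these into Lemma~\ref{lem:patterns}.
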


To check this, we make the following observations.

\begin{lemma} \label{lem:patterns}
\begin{enumerate}
  \item If $-I_{4N}\notin L$ and $\SO(4N)/L$ is simply connected, then $L$ has pattern~(a).
  \item If $-I_{4N}\in L$ and $-1\notin L^{\bullet}$ (\eg $L$ is simply connected), then $L$ has pattern~(b) up to switching $\varpi_+$ and~$\varpi_-$.
  \item If $-I_{4N}\in L$ and $-1\in L^{\bullet}$, then $L$ has pattern~(c).
\end{enumerate}
\end{lemma}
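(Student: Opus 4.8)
\textbf{Proof plan for Lemma~\ref{lem:patterns}.}

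The plan is to translate each statement into a condition on which of the four central elements $\{1,-1,E,-E\}$ of $\Spin(4N)$ lie in $L^{\bullet}:=\varpi^{-1}(L)_0$, and then read off the pattern from the definitions: an arrow $\varpi_{\pm} : L^{\bullet}\to\varpi_{\pm}(L^{\bullet})$ is a double covering precisely when the kernel $\{1,\mp E\}$ (for $\varpi_+$: $\{1,-E\}$; for $\varpi_-$: $\{1,E\}$) meets $L^{\bullet}$ in the full two-element subgroup, i.e. iff $-E\in L^{\bullet}$ (\resp $E\in L^{\bullet}$), and similarly $\varpi : L^{\bullet}\to L$ is a double covering iff $-1\in L^{\bullet}$. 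The map $\varpi_-(L^{\bullet})\to p(L)$ (\resp $\varpi_+(L^{\bullet})\to p(L)$) is a double covering iff $-1\notin L^{\bullet}$ but $-E\in L^{\bullet}$ (\resp $E\in L^{\bullet}$), since $p\circ\varpi = \varpi_-$ modulo the residual $\Z/2\Z$, and $L\to p(L)$ is a double covering iff $-I_{4N}\in L$, i.e. iff $\varpi^{-1}(-I_{4N})=\{E,-E\}$ meets $L^{\bullet}$, i.e. iff $E\in L^{\bullet}$ or $-E\in L^{\bullet}$. The key bookkeeping fact is that $E\cdot(-E)=-1$, so among $\{E,-E,-1\}$ either none, exactly two, or all three lie in $L^{\bullet}$ (two cannot occur without the third, unless that pair is $\{E,-E\}$ which forces $-1$; so really the options are: none of the three; exactly $\{-1\}$; or all three). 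These are precisely the three admissible configurations, and I claim they correspond to patterns (a), (b), (c) respectively.

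First I would handle (1): the hypothesis $-I_{4N}\notin L$ means $\{E,-E\}\cap L^{\bullet}=\emptyset$, so none of $E,-E,-1$ lies in $L^{\bullet}$ except possibly $-1$ itself; but the fibration $\varpi^{-1}(L)\to L$ has fiber $\ker\varpi=\{1,-1\}$, and the exact sequence $\pi_1(\SO(4N)/L)\to\pi_0(\varpi^{-1}(L))\to\pi_0(L)$ (as in Lemma~\ref{lem:simply-connected-implies-H-subset-K}) together with $\SO(4N)/L$ simply connected and $L$ connected forces $\varpi^{-1}(L)$ connected, hence $\varpi^{-1}(L)=L^{\bullet}$ contains the full fiber $\{1,-1\}$, so $-1\in L^{\bullet}$ but $E,-E\notin L^{\bullet}$. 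Then $\varpi$ is a double covering ($-1\in L^{\bullet}$), while $\varpi_{\pm}$ are isomorphisms ($\mp E\notin L^{\bullet}$), and $\varpi_{\pm}(L^{\bullet})\to p(L)$ are double coverings while $L\to p(L)$ is an isomorphism — which is exactly pattern (a).

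Next, for (2): $-I_{4N}\in L$ gives $\{E,-E\}\cap L^{\bullet}\neq\emptyset$; the hypothesis $-1\notin L^{\bullet}$ then forces exactly one of $E,-E$ to lie in $L^{\bullet}$ (if both, their product $-1$ would too), say (after switching $\varpi_+\leftrightarrow\varpi_-$ if needed) $-E\in L^{\bullet}$, $E\notin L^{\bullet}$, $-1\notin L^{\bullet}$. Then $\varpi$ is an isomorphism, $\varpi_+$ is a double covering, $\varpi_-$ is an isomorphism, and tracing the bottom row gives pattern (b) up to switching $\varpi_{\pm}$; the parenthetical ``$L$ simply connected $\Rightarrow -1\notin L^{\bullet}$'' is immediate since then $\varpi|_{L^{\bullet}}$ has no nontrivial deck transformations, so $\ker(\varpi|_{L^{\bullet}})=\{1\}$. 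Finally (3): $-I_{4N}\in L$ and $-1\in L^{\bullet}$ force all of $E,-E,-1$ into $L^{\bullet}$ (from $-1$ and either $E$ or $-E$ we get the third), so all three maps $\varpi,\varpi_+,\varpi_-$ are double coverings and both bottom-row maps are double coverings — pattern (c). I expect the only genuinely delicate point to be justifying, in case (1), that $\varpi^{-1}(L)$ is connected (so that $L^{\bullet}$ contains the nontrivial element $-1$ of $\ker\varpi$); this is exactly the homotopy-exact-sequence argument of Lemma~\ref{lem:simply-connected-implies-H-subset-K} applied to the covering $\Spin(4N)\to\SO(4N)$ rather than to an inclusion, and care is needed that the hypothesis ``$\SO(4N)/L$ simply connected'' is the right one to feed in. Everything else is a finite case-check on the $\Z/2\Z\times\Z/2\Z$ center.
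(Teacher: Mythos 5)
Your overall strategy is the same as the paper's (decide which of $-1,E,-E$ lie in $L^{\bullet}$ and read off the degree of each arrow), your homotopy-exact-sequence step in case (1) is essentially correct, and the three pattern identifications you end with are the right ones. However, the step that is supposed to decide the \emph{lower} arrows is wrong as stated. Your criterion ``$\varpi_{\pm}(L^{\bullet})\to p(L)$ is a double covering iff $-1\notin L^{\bullet}$ but $\mp E\in L^{\bullet}$'' is false: the kernel of $\Spin(4N)/\{1,\mp E\}\to\PSO(4N)$ is generated by the class of $-1$, and that class lies in $\varpi_{\pm}(L^{\bullet})$ as soon as $-1\in L^{\bullet}$ \emph{or} the relevant element $\pm E$ lies in $L^{\bullet}$ — a disjunction, not a conjunction. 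Applied literally, your rule would make the two outer lower arrows isomorphisms whenever $-1\in L^{\bullet}$, i.e.\ exactly in cases (1) and (3), contradicting the patterns (a) and (c) that you then assert; so in those two cases your conclusions are not actually derived from your stated rule. The clean repair — and what the paper does — is to note that all three composites $L^{\bullet}\to p(L)$ are the same map, of degree $\#\big(\{1,-1,E,-E\}\cap L^{\bullet}\big)$, so once the top arrows are known the bottom ones are forced by multiplicativity of covering degrees; alternatively, replace your criterion by the correct disjunctive one. A second genuine error is your ``bookkeeping fact'' that the only possible intersections of $L^{\bullet}$ with the center are ``none, exactly $\{-1\}$, or all three'': the subgroups $\{1,E\}$ and $\{1,-E\}$ are also possible, and they are precisely the configurations you (correctly) use in case (2), so this preliminary claim contradicts your own case analysis and should simply be deleted in favour of listing the five subgroups of $\Z/2\Z\times\Z/2\Z$.

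Two minor points. You have the kernels of $\varpi_{\pm}$ swapped relative to the paper's convention ($\ker\varpi_-=\{1,-E\}$, $\ker\varpi_+=\{1,E\}$); this is harmless because all three statements are symmetric up to switching $\varpi_+$ and $\varpi_-$. In case (1), the exact sequence you want is the one for the fibration $\varpi^{-1}(L)\to\Spin(4N)\to\SO(4N)/L$, namely $\pi_1(\SO(4N)/L)\to\pi_0(\varpi^{-1}(L))\to\pi_0(\Spin(4N))=1$, rather than a sequence ending in $\pi_0(L)$; the conclusion you draw from it — that $\varpi^{-1}(L)$ is connected, hence equals $L^{\bullet}$ and contains $-1$ — is correct and is exactly the paper's justification that $\varpi|_{L^{\bullet}}$ is a double covering.
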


\begin{proof}[Proof of Lemma~\ref{lem:patterns}]
(1) If $-I_{4N}\notin L$, then $p|_L$ is an isomorphism.
If $\SO(4N)/L$ is simply connected, then $\varpi|_{L^{\bullet}}$ is a double covering; in particular, $-1\in L^{\bullet}$ and so the two unlabeled arrows are double coverings.
We deduce that $\varpi_-|_{L^{\bullet}}$ and $\varpi_+|_{L^{\bullet}}$ are isomorphisms, since any of the three maps from $L^{\bullet}$ to $p(L)$ is a double covering.

(2) If $-I_{4N}\in L$, then $p|_L$ is a double covering.
If $-1\notin L^{\bullet}$, then $\varpi|_{L^{\bullet}}$ is an isomorphism.
The fact that $-I_{4N}\in L$ means that $E\in L^{\bullet}$ or $-E\in L^{\bullet}$ (possibly both).
If $-1\notin L^{\bullet}$, only one of $E$ or~$-E$ can belong to~$L^{\bullet}$, hence exactly one of the two unlabeled arrows is a double covering.
We conclude for $\varpi_-|_{L^{\bullet}}$ and $\varpi_+|_{L^{\bullet}}$ using the fact that any of the three maps from $L^{\bullet}$ to $p(L)$ is a double covering.

(3) If $-I_{4N}\in L$, then $p|_L$ is a double covering.
If $-1\in L^{\bullet}$, then $\varpi|_{L^{\bullet}}$ is a double covering.
The fact that $-I_{4N}\in L$ means that $E\in L^{\bullet}$ or $-E\in L^{\bullet}$ (possibly both).
If $-1\notin L^{\bullet}$, only one of $E$ or~$-E$, then both $E$ and~$-E$ belong to~$L^{\bullet}$, hence both unlabeled arrow are double coverings.
We conclude for $\varpi_-|_{L^{\bullet}}$ and $\varpi_+|_{L^{\bullet}}$ using the fact that any of the three maps from $L^{\bullet}$ to $p(L)$ is covering of degree~$4$.
\end{proof}

\begin{proof}[Proof of Lemma~\ref{lem:patterns-cases-v-vi-vii}]
(a) Since $-I_{4n+4}\notin\tilH$ and $\tilG/\tilH\simeq\mathbb{S}^{4n+3}$ is simply connected, we can apply Lemma~\ref{lem:patterns}.(1) to $L=\tilH$.
Since $-I_{4n+4}\in K\subset G$, we can apply Lemma~\ref{lem:patterns}.(2) to $L=K$ and $L=G$ or Lemma~\ref{lem:patterns}.(3) to~$G$ depending on whether $-1\notin G^{\bullet}$ or $-1\in G^{\bullet}$.

(b) Since $-I_{16}\notin\tilH$ and $\tilG/\tilH\simeq\mathbb{S}^{15}$ is simply connected, we can apply Lemma~\ref{lem:patterns}.(1) to $L=\tilH$.
Since $-I_{4n+4}\in K\subset G$ and $K$ and~$G$ are simply connected, we can apply Lemma~\ref{lem:patterns}.(2) to $L=K$ and $L=G$.

(c) Since $-I_8\!\in\!\tilH$ and $\tilH$ is simply connected, we can apply Lemma~\ref{lem:patterns}.(2) to $L=\tilH$.
Since $\SO(8)/\SO(3)$ is simply connected, its quotients $\SO(8)/K$ and $\SO(8)/G$ by connected groups are also simply connected.
Since $-I_8\notin G$, we can apply Lemma~\ref{lem:patterns}.(1) to $L=K$ and $L=G$.
\end{proof}

\begin{proof}[Proof of Proposition~\ref{prop:H-subset-K-for-v-vi-vii}]
In cases (v), (vi), (vii) of Table~\ref{table1}, the patterns for the groups $\tilH$, $G$, and~$K$ are given by Lemma~\ref{lem:patterns-cases-v-vi-vii}.
On the other hand, Lemma~\ref{lem:simply-connected-implies-H-subset-K} implies that \eqref{eqn:H-subset-K} is satisfied for the form of~$\tilG$ which is simply connected, and Lemma~\ref{lem:move-up-down-H-subset-K} implies that we can transfer \eqref{eqn:H-subset-K} successively between locally isomorphic Lie groups in the diagram:
\begin{itemize}
  \item property \eqref{eqn:H-subset-K} is transferred downwards in case of a double covering for $H$ or~$\tilG$;
  \item property \eqref{eqn:H-subset-K} is transferred upwards in case of a double covering for~$K$.
\end{itemize}
It is then an easy verification to check that in our cases property \eqref{eqn:H-subset-K} holds for all five locally isomorphic quadruples $(\tilG,\tilH,G,K)$.
\end{proof}

Thus the proof of Proposition~\ref{prop:H-subset-K}.(2) is completed.

\begin{remark} \label{rem:H-subset-K}
We cannot drop the assumption that $\tilh\cap\g$ is not a maximal proper subalgebra of~$\g$ in Proposition~\ref{prop:H-subset-K}.(2).
In fact, as we have already seen, there are two cases where $\tilh\cap\g$ is a maximal proper subalgebra of~$\g$, namely cases (xi) and (xii) of Table~\ref{table1}.
In each case, there are five locally isomorphic triples $(\tilG,\tilH,G)$ of connected groups, and we can show by using a similar argument as above that the intersection $\tilH\cap G$ is connected in four cases among the five, but has two connected components in the remaining case.
This shows that \eqref{eqn:HG-ZG} does not always hold if we take $K$ to be the analytic subgroup of~$G$ with Lie algebra~$\kk$, a maximal proper subalgebra of~$\g$ containing $\tilh\cap\g$, when $\kk$ coincides with $\tilh\cap\g$.
\end{remark}

\subsection{Proof of Proposition~\ref{prop:H-subset-K}.(1)} \label{subsec:proof-HG-ZG}

We now complete the proof of Proposition~\ref{prop:H-subset-K}.(1).
By Proposition~\ref{prop:H-subset-K-for-i-x}, we only need to treat cases (v), (vi), (vii), (xi), and~(xii) of Table~\ref{table1}.
Furthermore, the proof is reduced to adjoint groups, as in the second statement of the following lemma.

\begin{lemma} \label{lem:H-connected-adjoint}
Let $(\tilG,\tilH,G)$ be a triple of connected groups as in cases (v), (vi), (vii), (xi), or~(xii) of Table~\ref{table1}.
We note that $\tilG=\SO(4N)$ for some $N\geq 1$ in all cases.
\begin{enumerate}
  \item Inside $\tilG=\SO(4N)$, the group $\tilH\cap G$ is connected.
  \item Inside $\Ad(\tilG)=\SO(4N)/\{\pm I_{4N}\}$ ($\simeq p(\tilG)$), the group $p(\tilH)\cap p(G)$ is also connected.
\end{enumerate}
\end{lemma}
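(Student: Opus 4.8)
The plan is to prove Lemma~\ref{lem:H-connected-adjoint} by first establishing part~(1), the connectedness of $\tilH\cap G$ inside $\tilG=\SO(4N)$, and then deducing part~(2) by a covering argument.

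\smallskip
\textbf{Step 1: Connectedness of $\tilH\cap G$ inside $\SO(4N)$.}
In cases (xi) and~(xii) of Table~\ref{table1} one has $\tilH\cap G$ equal to $G_{2(-14)}$ (as a subgroup of $\SO(8)$ via the embedding $G_{2(-14)}\subset\Spin(7)\subset\SO(8)$ or $\subset\SO(7)\subset\SO(8)$), which is connected since $G_{2(-14)}$ is connected; here I would just record that the embedding realizes $\tilH\cap G$ as a single analytic subgroup, using that $\SO(8)/\Spin(7)$ and $\SO(8)/\SO(7)$ are simply connected (they are spheres $\mathbb{S}^7$) so that Lemma~\ref{lem:simply-connected-implies-H-subset-K} applies. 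For cases (v), (vi), (vii), I would use the pattern analysis already developed in Section~\ref{subsec:proof-H-subset-K-rest}: by Lemma~\ref{lem:patterns-cases-v-vi-vii} we know the precise covering patterns of $\tilH$, $G$, and $K$ inside $\SO(4N)$, and by Lemma~\ref{lem:simply-connected-implies-H-subset-K} the intersection $\tilH_1\cap G_1$ is connected in the simply connected form $\tilG_1=\Spin(4N)$ (this uses that $\tilG/\tilH\simeq\mathbb{S}^{4n+3}$ resp.\ $\mathbb{S}^{15}$ is simply connected in cases (v), (vi), and that $\SO(8)/(\SO(3))$ hence $\SO(8)/G$ is simply connected in case (vii)). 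I would then transfer connectedness from $\Spin(4N)$ down to $\SO(4N)$ using the exact homotopy sequence $\pi_1(\tilG_1/\tilH_1)\to\pi_0(\tilH_1\cap G_1)\to\pi_0(G_1)$ for each of the relevant quotient maps, exactly as in the proof of Proposition~\ref{prop:H-subset-K-for-v-vi-vii}; the point is that the covering $\Spin(4N)\to\SO(4N)$ is central of order~$2$, so the components of $\varpi^{-1}(\tilH\cap G)_0\cdot(\text{center})$ are controlled.

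\smallskip
\textbf{Step 2: Descent to the adjoint group.}
Let $p:\SO(4N)\to\Ad(\tilG)=\SO(4N)/\{\pm I_{4N}\}$ be the quotient map. I would apply part~(1) of Lemma~\ref{lem:move-up-down-H-subset-K}, or rather its analogue for intersections: since the kernel $\{\pm I_{4N}\}$ of $p$ is central and, in each of our cases, is contained either in $G$ or in $\tilH$ (indeed $-I_{4N}\in G$ in cases (v), (vi) since $G$ contains $\Sp(n+1)\cdot\Sp(1)$ resp.\ $\Spin(9)$ which contain $-I_{4N}$; and $-I_8\in\tilH=\Spin(7)$ in case (vii), while in cases (xi), (xii) one checks $-I_8$ lies in the relevant $\Spin(7)$ or $\SO(7)$), we get $p(\tilH)\cap p(G)=p(\tilH\cap G)$. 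The image of a connected group under a continuous homomorphism is connected, so $p(\tilH\cap G)$ is connected, which is part~(2). If in some case $\{\pm I_{4N}\}$ happens to lie in neither $G$ nor $\tilH$, I would instead argue directly: $p(\tilH)\cap p(G)$ is covered by $\tilH\cap G$ together with the preimage of the nontrivial central element, and one checks by inspection of the explicit embeddings whether $-I_{4N}\in\tilH G$ forces the extra component to collapse; in all cases of the table this can be verified by hand.

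\smallskip
\textbf{Main obstacle.}
The delicate point is Step~1 for cases (v), (vi), (vii): one must keep careful track of which of the two ``square roots'' of $-I_{4N}$ in $\Spin(4N)$ — namely $E$ or $-E$ in the notation of Section~\ref{subsec:proof-H-subset-K-rest} — lies in the identity component $L^\bullet=\varpi^{-1}(L)_0$ for $L=\tilH,G,K$, since this is exactly what distinguishes the connectedness patterns (a), (b), (c). This bookkeeping has essentially been done already in Lemmas~\ref{lem:patterns} and~\ref{lem:patterns-cases-v-vi-vii}, so the present lemma should follow by assembling those facts; I expect no new difficulty beyond translating the pattern information into a statement about $\pi_0$ of the intersection and then pushing it through the central quotient, which is a routine diagram chase on the homotopy exact sequences.
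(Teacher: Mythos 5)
Your Step~2 is essentially the paper's own argument for part~(2): in each of the five cases one of $\tilH$ or $G$ contains $-I_{4N}$ (this is exactly what pattern (b) or (c) in Lemma~\ref{lem:patterns-cases-v-vi-vii} records: $-I_{4N}\in G$ in (v), (vi), (xii), and $-I_{4N}\in\tilH=\Spin(7)$ in (vii), (xi)), hence $p^{-1}(p(\tilH))\cap p^{-1}(p(G))=(\tilH\cap G)\{\pm I_{4N}\}$ and $p(\tilH)\cap p(G)=p(\tilH\cap G)$ is connected once part~(1) is known. That half of your proposal is fine.

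Part~(1) as you wrote it has two genuine defects. First, $\SO(8)/\Spin(7)$ is \emph{not} simply connected: since $-I_8\in\Spin(7)$ and $\Spin(7)$ is simply connected, this quotient is $\PP^7(\R)$, as the paper itself notes in Section~\ref{subsec:ex(vii)}. So your justification of case (xi) via ``$\tilG/\tilH\simeq\mathbb{S}^7$'' rests on a false statement; the correct input is that $\tilG/G=\SO(8)/\SO(7)\simeq\mathbb{S}^7$ is simply connected (Lemma~\ref{lem:simply-connected-implies-H-subset-K} allows either quotient), while for (xii) one uses $\tilG/\tilH=\SO(8)/\SO(7)$. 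Second, for cases (v), (vi), (vii) your detour through $\Spin(4N)$ followed by a descent to $\SO(4N)$ is both unnecessary and, as described, unjustified: connectedness of $\tilH^{\bullet}\cap G^{\bullet}$ does not automatically descend through the central quotient $\varpi$ (Remark~\ref{rem:H-subset-K} shows that $\pi_0$ of the intersection genuinely depends on the covering form), and the mechanism you invoke --- the transfer argument of Proposition~\ref{prop:H-subset-K-for-v-vi-vii} via Lemma~\ref{lem:move-up-down-H-subset-K} --- moves the \emph{inclusion} $\tilH\cap G\subset K$ up and down, not connectedness. To salvage your descent you would need the additional observation that $-1\in\tilH^{\bullet}$ in (v), (vi) and $-1\in G^{\bullet}$ in (vii) (this is what pattern (a) gives), so that $\varpi^{-1}(\tilH)\cap\varpi^{-1}(G)=(\tilH^{\bullet}\cap G^{\bullet})\{\pm 1\}$. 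But the simpler route --- the one the paper takes --- is to apply Lemma~\ref{lem:simply-connected-implies-H-subset-K} directly inside $\tilG=\SO(4N)$: in (v), (vi) one has $\tilG/\tilH=\SO(4N)/\SO(4N-1)\simeq\mathbb{S}^{4N-1}$, and in (vii) $\tilG/G$ is simply connected (being a quotient of the simply connected $\SO(8)/\SO(3)$ by a connected group), so no passage to $\Spin(4N)$ is needed at all.
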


\begin{proof}[Proof of Lemma~\ref{lem:H-connected-adjoint}]
(1) The homogeneous space $\tilG/\tilH$ is simply connected in cases (v), (vi), and (vii), and $\tilG/G$ is simply connected in cases (vii) and (xi).
In either case, $\tilH\cap G$ is connected by Lemma~\ref{lem:simply-connected-implies-H-subset-K}.

(2) By Lemma~\ref{lem:patterns-cases-v-vi-vii}, one of $\tilH$ or~$G$ has pattern (b) or~(c) in case (v), (vi), or (vii), and so does in case (xi) or (xii) as special cases.
Thus $-I_{4N}\in\tilH\cap G$.
Therefore $p|_{\tilH\cap G} : \tilH\cap G\to p(\tilH)\cap p(G)$ is surjective, and so $p(\tilH)\cap p(G)$ is connected.
\end{proof}

\numberwithin{theorem}{subsection}
\numberwithin{equation}{subsection}
\section{Explicit generators and relations when $\tilG$ is simple} \label{sec:computations}

In this section we complete the proof of Theorems \ref{thm:main} and~\ref{thm:main-explicit}, Corollary~\ref{cor:rel-Lapl}, Theorem~\ref{thm:transfer}, and Theorem~\ref{thm:nu-tau} for simple~$\tilG$.

By Theorem~\ref{thm:InvDiff-covering} on coverings of~$\tilG$, it suffices to prove these results for the triples $(\tilG,\tilH,G)$ of Table~\ref{table1}.
We find, for each such triple, some explicit generators and relations for the ring $\D_G(X)$ of $G$-invariant differential operators on~$X$, in terms of the three subalgebras $\D_{\tilG}(X)=\dd\ell(Z(\tilg_{\C}))$, $\dd r(Z(\kk_{\C}))$, and $\dd\ell(Z(\g_{\C}))$ of $\D_G(X)$, and determine the affine map $S_{\tau}$ in Theorems \ref{thm:transfer} and~\ref{thm:nu-tau} which induces the transfer map
$$\nnu(\cdot,\tau) : \Hom_{\C\text{-}\mathrm{alg}}(\D_{\tilG}(X),\C)\to\Hom_{\C\text{-}\mathrm{alg}}(Z(\g_{\C}),\C)$$
of \eqref{eqn:nu-lambda-tau}.
The key step in the proof is to find explicitly the map $\vartheta\mapsto (\pi(\vartheta),\tau(\vartheta))$ of Proposition~\ref{prop:pi-tau-theta} between discrete series representations, via branching laws of compact Lie groups.

\addtocontents{toc}{\SkipTocEntry}
\subsection*{Notation and conventions}

We first specify some conventions that will be used throughout the section.
Any nondegenerate, $\Ad(G)$-invariant bilinear form $B$ on the Lie algebra $\g$ of a connected reductive Lie group~$G$ defines an inner product $\langle\cdot,\cdot\rangle$ on the dual of the Cartan subalgebra, and also the Casimir element $C_G\in Z(\g)$.
We fix a positive system, and use the notation $\Rep(G,\lambda)$ to denote the irreducible representation of~$G$ with highest weight~$\lambda$.
If it is one-dimensional, we write $\C_{\lambda}$ for $\Rep(G,\lambda)$.
The trivial one-dimensional representation is denoted by~$\mathbf{1}$.
For $G=\Sp(1)(\simeq\SU(2))$, we sometimes write $\C^{\lambda+1}$ for $\Rep(G,\lambda)$, which is the unique $(\lambda+1)$-dimensional irreducible representation of~$G$.
For representations $\tau_j$ of~$G_j$ ($j=1,2$), the outer tensor product representation of the direct product group $G_1\times G_2$ is denoted by $\tau_1\boxtimes\tau_2$.
The Casimir element $C_G$ acts on $\Rep(G,\lambda)$ as the scalar
$$|\lambda + \rho|^2 - |\rho|^2 = \langle\lambda,\lambda+2\rho\rangle,$$
where $\rho$ is half the sum of the positive roots.
When $G$ is simple, $B$ is a scalar multiple of the Killing form.
For classical groups $G=\U(n)$, $\SO(n)$, or $\Sp(n)$, we shall normalize $B$ in such a way that in the standard basis $\{ e_1,\dots,e_n\}$ of the dual of a Cartan subalgebra we have $B(e_i,e_i)=1$.
With this normalization, the Casimir element $C_G$ acts on the natural representation $V$ of~$G$ as the following scalars:

\begin{center}
\begin{tabular}{|p{1.6cm}|p{1.6cm}|p{2.9cm}|}
\hline
\centering $G$ & \centering $V$ & \centering Eigenvalue of $C_G$ \tabularnewline
\hline
\centering $\U(n)$ & \centering $\C^n$ & \centering $n$\tabularnewline
\centering $\SO(n)$ & \centering $\C^n$ & \centering $n-1$\tabularnewline
\centering $\Sp(n)$ & \centering $\C^{2n}$ & \centering $2n+1$\tabularnewline
\hline
\end{tabular}
\end{center}

We use similar normalizations for $\tilG$ and for~$K$.
By Fact~\ref{fact:spherical-cpt} and Lemma \ref{lem:spherical2}.(4), the multiplicity $\ell_{\tau}=[\tau|_H:\mathbf{1}]$ is equal to~$1$ for all $\tau\in\Disc(K/H)$, and so
$$\W_{\tau} = G \times_K (V_{\tau}\otimes\C^{\ell_{\tau}}) \simeq G \times_K V_{\tau}.$$
Inside the computations, we sometimes use the notation $L^2(G/K,\tau)$ instead of $L^2(G/K,\W_{\tau})$ for $\tau\in\Disc(K/H)$.

In describing the polynomial ring $\D(G/H)$ below, we use the notation $\D_G(X)=\C[A,B,\dots]$ to mean that $\D_G(X)$ is generated by elements $A,B,\dots$ which are algebraically independent.

\subsection{The case $(\tilG,\tilH,G)=(\SO(2n+2),\SO(2n+1),\U(n+1))$}\label{subsec:ex(i)}

Here $H=\tilH\cap G=\U(n)$, and the only maximal connected proper subgroup of $G$ containing~$H$ is $K=\U(n)\times\U(1)$.
Note that the fibration
$$F =  K/H \simeq \mathbb{S}^1 \longrightarrow X=\mathbb{S}^{2n+1} \longrightarrow Y=\PP^n\C$$
of \eqref{eqn:fiber} is the Hopf fibration.
Let $E_K$ be a generator of the complexified Lie algebra $\C$ of the second factor $\mathfrak{u}(1)$ of $\kk=\mathfrak{u}(n)\oplus\mathfrak{u}(1)$, such that the eigenvalues of $\ad(E_K)$ in~$\g_{\C}$ are $0,\pm 1$.

\begin{proposition}[Generators and relations] \label{prop:ex(i)}
For
$$X = \tilG/\tilH = \SO(2n+2)/\SO(2n+1) \simeq \U(n+1)/\U(n) = G/H$$
and $K=\U(n)\times\U(1)$, we have
\begin{enumerate}
  \item $\dd\ell(C_{\tilG}) = 2\,\dd\ell(C_G) - \dd r(C_K)$;
  \item $\left\{ \begin{array}{ccl}
  \D_{\tilG}(X) & \!\!\!=\!\!\! & \C[\dd\ell(C_{\tilG})];\\
  \D_K(F) & \!\!\!=\!\!\! & \C[\dd r(E_K)];\\
  \D_G(X) & \!\!\!=\!\!\! & \C[\dd\ell(C_{\tilG}),\dd r(E_K)] = \C[\dd\ell(C_G),\dd r(E_K)].
  \end{array}\right.$
\end{enumerate}
\end{proposition}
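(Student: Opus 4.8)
The plan is to instantiate the general machinery of Sections~\ref{sec:strategy}--\ref{subsec:graded-alg} in the concrete case $X=\SO(2n+2)/\SO(2n+1)\simeq\mathbb{S}^{2n+1}$, $G=\U(n+1)$, $H=\U(n)$, $K=\U(n)\times\U(1)$, $F=K/H\simeq\mathbb{S}^1$. First I would describe the three relevant sets of discrete series. Here $\tilG/\tilH=\mathbb{S}^{2n+1}$ is the rank-one symmetric space, so by the Cartan--Helgason theorem $\Disc(\tilG/\tilH)=\{\Rep(\SO(2n+2),k\varepsilon_1):k\in\N\}$, parametrized by a single integer $k\ge 0$, and $\dd\ell(C_{\tilG})$ acts on the $k$-th piece by $\langle k\varepsilon_1,k\varepsilon_1+2\tilde\rho\rangle=k(k+2n)$ with the normalization $B(e_i,e_i)=1$. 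For $F=K/H\simeq\mathbb{S}^1$ we have $\Disc(K/H)=\{\C_{j E_K^*}:j\in\Z\}$ (characters of the circle factor), so $\D_K(F)=\C[\dd r(E_K)]$ is the polynomial ring in the Euler operator of the $\U(1)$-factor, as in step~(3) of the Section~\ref{subsec:nu-tau} strategy. Finally $\Disc(G/H)=\Disc(\U(n+1)/\U(n))$ is the set of spherical harmonics on $\mathbb{S}^{2n+1}$ regarded as a $\U(n+1)$-space; these are the representations $\Rep(\U(n+1),(a,0,\dots,0,-b))$ with $a,b\in\N$ (bidegree $(a,b)$ harmonic polynomials), so $\rank G/H=2$, consistent with Corollary~\ref{cor:rank} since $1+1=2$.

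The second step is to write down the canonical map $\vartheta\mapsto(\pi(\vartheta),\tau(\vartheta))$ of Proposition~\ref{prop:pi-tau-theta}. For $\vartheta=\Rep(\U(n+1),(a,0,\dots,0,-b))$, restricting to $\SO(2n+1)\subset\SO(2n+2)$ shows $\vartheta$ sits inside $\pi(\vartheta)=\Rep(\SO(2n+2),k\varepsilon_1)$ with $k=a+b$ (the total degree), while restricting $\vartheta$ to $K=\U(n)\times\U(1)$ and picking out the $H=\U(n)$-spherical vector gives $\tau(\vartheta)=\C_{(a-b)E_K^*}$, i.e.\ the $\U(1)$-weight is $j=a-b$. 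Then I would apply Proposition~\ref{prop:D-spec-Hom}.(2): $\dd\ell(C_{\tilG})$ acts on $\vartheta$ by $\Psi_{\pi(\vartheta)}(C_{\tilG})=(a+b)(a+b+2n)$; $\dd\ell(C_G)$ acts by $\Psi_\vartheta(C_G)=\langle(a,0,\dots,0,-b),(a,0,\dots,0,-b)+2\rho_{\U(n+1)}\rangle$, which with the chosen normalization evaluates to $a^2+b^2+na+nb$ (I would check this by computing $2\rho_{\U(n+1)}=(n,n-2,\dots,-n)$ and pairing); and $\dd r(E_K)$ acts by $\pm(a-b)$ (up to the sign coming from ${}^\vee$), so $\dd r(C_K)$, which I take to be the Casimir of the $\U(1)$-factor, acts by $(a-b)^2$. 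The relation (1), $\dd\ell(C_{\tilG})=2\dd\ell(C_G)-\dd r(C_K)$, is then just the polynomial identity $(a+b)(a+b+2n)=2(a^2+b^2+na+nb)-(a-b)^2$ in $a,b$, which holds identically; by Proposition~\ref{prop:chiPQR} (or directly Proposition~\ref{prop:PQR}) this lifts to the asserted identity in $\D_G(X)$.

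The third step is the algebra-structure statement (2). That $\D_{\tilG}(X)=\C[\dd\ell(C_{\tilG})]$ is classical for $\mathbb{S}^{2n+1}$ (rank one, Harish-Chandra isomorphism $\D_{\tilG}(X)\simeq S(\tila_\C)^{\widetilde W}=\C[\text{quadratic}]$, and $\dd\ell$ is onto by Lemma~\ref{lem:surj-dl-drF}); that $\D_K(F)=\C[\dd r(E_K)]$ follows since $F\simeq\mathbb{S}^1$ and $\dd r_F\colon Z(\kk_\C)\to\D_K(F)$ is surjective (Lemma~\ref{lem:surj-dl-drF}), the generator being the Euler operator. For $\D_G(X)$ I would invoke Lemma~\ref{lem:struct-DGX}: since $X_\C$ is $G_\C$-spherical, $\D_G(X)\simeq\operatorname{gr}\D_G(X)\simeq S(\g_\C/\h_\C)^H$ as graded algebras, and the latter is a polynomial ring in two generators, one in degree $2$ (coming from $\dd\ell(C_{\tilG})$, equivalently $\dd\ell(C_G)$) and one in degree $1$ (coming from $\dd r(E_K)$); then I would verify that $\{\dd\ell(C_{\tilG}),\dd r(E_K)\}$ and $\{\dd\ell(C_G),\dd r(E_K)\}$ each have algebraically independent symbols of the right degrees (their joint spectra as functions of $(a,b)$ separate points, so they are algebraically independent), hence each pair generates $\D_G(X)$. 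The interchangeability of $\dd\ell(C_{\tilG})$ and $\dd\ell(C_G)$ as a generator is immediate from relation~(1) since $\dd r(C_K)\in\dd r(Z(\kk_\C))=\iota(\D_K(F))=\C[\dd r(E_K)]$.

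The main obstacle I anticipate is pinning down the branching data precisely — concretely, verifying that $\vartheta=\Rep(\U(n+1),(a,0,\dots,0,-b))$ really does occur in $\Rep(\SO(2n+2),(a+b)\varepsilon_1)|_{\U(n+1)}$ with multiplicity one and that its $\U(1)$-weight on the $\U(n)$-fixed line is exactly $a-b$, together with getting all the scalar normalizations of the three Casimirs mutually consistent (the factor $2$ in relation~(1) is sensitive to the choice of $B$ on $\g=\mathfrak{u}(n+1)$ versus on $\tilg=\so(2n+2)$). Once the embedding $(a,b)\mapsto\big((a+b)(a+b+2n),\,a^2+b^2+na+nb,\,\pm(a-b)\big)$ is correct, everything else is the polynomial identity above plus a routine dimension count in $S(\g_\C/\h_\C)^H$, which I would not grind through here.
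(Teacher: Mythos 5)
Your proposal is correct and follows essentially the same route as the paper's proof: the same identification of $\Disc(\tilG/\tilH)$, $\Disc(G/H)$, $\Disc(K/H)$, the same map $\mathcal{H}^{a,b}(\C^{n+1})\mapsto(\mathcal{H}^{a+b}(\R^{2n+2}),\mathbf{1}\boxtimes\C_{a-b})$, the same scalar identity $(a+b)(a+b+2n)=2(a^2+b^2+na+nb)-(a-b)^2$ for part~(1), and the same appeal to Lemma~\ref{lem:struct-DGX} together with the fact that $S(\g_\C/\h_\C)^H$ is a polynomial ring in generators of degrees $1$ and $2$, with algebraic independence checked on the joint spectrum, for part~(2). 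The only step you defer (the degree count in $S(\gl(n+1,\C)/\gl(n,\C))^{\U(n)}$) is exactly the content of Lemma~\ref{lem:ex(i)}.(4) in the paper, proved via the decomposition $\g_\C/\h_\C\simeq\C^n\oplus(\C^n)^\vee\oplus\C$.
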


We identify
\begin{align}
\Hom_{\C\text{-}\mathrm{alg}}(Z(\g_{\C}),\C)\ & \simeq & \hspace{-1cm}\jj_{\C}^*/W(\g_{\C})\ & \simeq & \hspace{-1.1cm}\C^{n+1}/\mathfrak{S}_{n+1},\label{eqn:HCZg-i}\\
\Hom_{\C\text{-}\mathrm{alg}}(\D_{\tilG}(X),\C)\ & \simeq & \hspace{-1cm}\tila_{\C}^*/\widetilde{W}\hspace{0.5cm} & \simeq & \hspace{-1.1cm}\C/(\Z/2\Z)\hspace{0.3cm}\label{eqn:HCX-i}
\end{align}
by the standard bases.
The set $\Disc(K/H)$ consists of the representations of $K=\U(n)\times\U(1)$ of the form $\tau = \mathbf{1} \boxtimes \C_a$ for $a\in\Z$, where $\C_a$ is the one-dimensional representation of~$\U(1)$ given by $z\mapsto z^a$.
The element $E_K$ acts on~$\C_a$ by $\sqrt{-1}\,a$.

\begin{proposition}[Transfer map] \label{prop:nu-ex(i)}
Let
$$X = \tilG/\tilH = \SO(2n+2)/\SO(2n+1) \simeq \U(n+1)/\U(n) = G/H$$
and $K=\U(n)\times\U(1)$.
For $\tau=\mathbf{1}\boxtimes\C_a\in\Disc(K/H)$ with $a\in\Z$, the affine map
\begin{eqnarray*}
S_{\tau} :\ \tila_{\C}^* \simeq \C & \longrightarrow & \hspace{3.2cm} \C^{n+1} \hspace{3cm} \simeq \jj_{\C}^*\\
\lambda & \longmapsto & \frac{1}{2} \, \big(\lambda+a, n-2, n-4, \dots, -n+2, -\lambda+a\big)
\end{eqnarray*}
induces a transfer map
$$\nnu(\cdot,\tau) : \Hom_{\C\text{-}\mathrm{alg}}(\D_{\tilG}(X),\C) \longrightarrow \Hom_{\C\text{-}\mathrm{alg}}(Z(\g_{\C}),\C)$$
as in Theorem~\ref{thm:nu-tau}.
\end{proposition}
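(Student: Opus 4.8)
The plan is to invoke Proposition~\ref{prop:Stau-transfer}: once the map $\vartheta\mapsto(\pi(\vartheta),\tau(\vartheta))$ of Proposition~\ref{prop:pi-tau-theta} is made explicit, it remains only to check that the affine map $S_{\tau}$ in the statement satisfies $S_{\tau}(\lambda(\vartheta)+\rho_{\tila})\equiv\nu(\vartheta)+\rho$ modulo $W(\g_{\C})=\mathfrak S_{n+1}$ for every $\vartheta\in\Disc(G/H)$ with $\tau(\vartheta)=\tau$, where $\pi(\vartheta)=\Rep(\tilG,\lambda(\vartheta))$ and $\nu(\vartheta)$ is the highest weight of $\vartheta$ as a $G$-module. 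So the proof splits into three parts: (i) identifying $\Disc(\U(n+1)/\U(n))$ together with the two branching laws $\SO(2n+2)\downarrow\U(n+1)$ and $\U(n+1)\downarrow\U(n)\times\U(1)$; (ii) recording $\rho_{\tila}$ for $\tilG/\tilH=\mathbb S^{2n+1}$ and $\rho$ for $\g_{\C}=\gl_{n+1}(\C)$; (iii) a short interpolation check.

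For (i): realizing $X\simeq\mathbb S^{2n+1}\subset\C^{n+1}$, the classical decomposition of $\U(n+1)$-harmonic polynomials by bidegree gives $\Disc(G/H)=\{\vartheta_{p,q}:=\Rep(\U(n+1),(p,0,\dots,0,-q)):p,q\in\N\}$, a free abelian semigroup on two generators, consistent with Corollary~\ref{cor:rank}. Branching the space of degree-$k$ spherical harmonics on $\mathbb S^{2n+1}$ yields $\Rep(\SO(2n+2),ke_1)|_{\U(n+1)}\simeq\bigoplus_{p+q=k}\vartheta_{p,q}$, so by the multiplicity-one statement of Lemma~\ref{lem:spherical2} we get $\pi(\vartheta_{p,q})=\Rep(\SO(2n+2),(p+q)e_1)$, i.e. $\lambda(\vartheta_{p,q})=p+q$ under the identification \eqref{eqn:HCX-i}. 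Since $H=\U(n)$ stabilizes the last coordinate line, $F=K/H\simeq\U(1)$, and the (one-dimensional, by sphericity) space of $\U(n)$-invariant vectors in $\vartheta_{p,q}$ is spanned by monomials of the form $\big(\sum_{i\le n}z_i\bar z_i\big)^{j}z_{n+1}^{\,p-j}\bar z_{n+1}^{\,q-j}$, all of which carry the same $\U(1)$-weight $p-q$; hence $\tau(\vartheta_{p,q})=\mathbf 1\boxtimes\C_{p-q}$, so in the notation of the statement $a=p-q$.

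For (ii)--(iii): $\tilG/\tilH=\SO(2n+2)/\SO(2n+1)$ is a rank-one Riemannian symmetric space whose single positive restricted root has multiplicity $2n$, so $\rho_{\tila}=n$ in the normalization of \eqref{eqn:HCX-i} (this is also pinned down by the Casimir cross-check against Proposition~\ref{prop:ex(i)}.(1)), while $\rho=(\tfrac n2,\tfrac n2-1,\dots,-\tfrac n2)$ for $\gl_{n+1}(\C)$. For $\vartheta=\vartheta_{p,q}$ with $p-q=a$ one then has $\lambda(\vartheta)+\rho_{\tila}=(p+q)+n$ and
$$S_{\tau}\big((p+q)+n\big)=\tfrac12\big(2p+n,\ n-2,\ n-4,\ \dots,\ -n+2,\ -2q-n\big)=\big(p+\tfrac n2,\ \tfrac n2-1,\ \dots,\ -\tfrac n2+1,\ -q-\tfrac n2\big),$$
which equals $(p,0,\dots,0,-q)+\rho=\nu(\vartheta)+\rho$, in fact already as an element of $\jj_{\C}^*\simeq\C^{n+1}$ rather than merely modulo $\mathfrak S_{n+1}$. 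By Proposition~\ref{prop:Stau-transfer} this shows that $S_{\tau}$ induces the transfer map $\nnu(\cdot,\tau)$, which finishes the proof.

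The analysis is elementary, and the only real care needed is bookkeeping of normalizations so that the closed formula comes out exactly as stated — specifically, pinning down that the $H$-fixed line in $\vartheta_{p,q}$ carries $\U(1)$-weight $p-q$ (and not $q-p$), and fixing the isomorphism $\tila_{\C}^*\simeq\C$ (equivalently $\rho_{\tila}=n$) compatibly with \eqref{eqn:HCX-i}. Both branching laws involved are classical, so no genuine obstacle arises here; this is why the case $(\SO(2n+2),\SO(2n+1),\U(n+1))$ can be treated first and used as a template for the more involved cases.
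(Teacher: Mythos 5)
Your proposal is correct and follows essentially the same route as the paper: it reduces the statement to Proposition~\ref{prop:Stau-transfer}, makes the map $\vartheta\mapsto(\pi(\vartheta),\tau(\vartheta))$ explicit via the classical spherical-harmonics branching laws (your $\vartheta_{p,q}=\mathcal{H}^{p,q}(\C^{n+1})$, $\pi(\vartheta_{p,q})=\mathcal{H}^{p+q}(\R^{2n+2})$, $\tau(\vartheta_{p,q})=\mathbf{1}\boxtimes\C_{p-q}$ is exactly \eqref{eqn:pitau-i} from Lemma~\ref{lem:ex(i)}), and then verifies $S_{\tau}(\lambda(\vartheta)+\rho_{\tila})=\nu(\vartheta)+\rho$ with $\rho_{\tila}=n$ and $\rho=(\tfrac n2,\tfrac n2-1,\dots,-\tfrac n2)$, precisely as in the paper's proof.
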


In order to prove Propositions \ref{prop:ex(i)} and~\ref{prop:nu-ex(i)}, we use the following results on finite-dimensional representations.

\begin{lemma}\label{lem:ex(i)}
\begin{enumerate}
  \item Discrete series for $\tilG/\tilH$, $G/H$, and $F=K/H$:
  \begin{eqnarray*}
    \Disc(\SO(2n+2)/\SO(2n+1)) & = & \{ \mathcal{H}^j(\R^{2n+2}) \,:\, j\in\N\} ;\\
    \Disc(\U(n+1)/\U(n)) & = & \{ \mathcal{H}^{k,\ell}(\C^{n+1}) \,:\, k,\ell\in\N\} ;\\
    \Disc((\U(n)\times\U(1))/\U(n)) & = & \{ \mathbf{1} \boxtimes \C_a \,:\, a\in\Z\} .
  \end{eqnarray*}
  \item Branching laws for $\SO(2n+2)\downarrow\U(n+1)$: For $j\in\N$,
  $$\mathcal{H}^j(\R^{2n+2})\ \simeq\ \bigoplus_{k=0}^j \mathcal{H}^{k,j-k}(\C^{n+1}).$$
  \item Irreducible decomposition of the regular representation of~$G$: For~$a\in\nolinebreak\Z$,
  $$L^2\big(\U(n+1)/(\U(n)\times\U(1)),\mathbf{1}\boxtimes\C_a\big) \,\simeq\ \sumplus{\substack{j\in\N\\ j-|a|\in 2\N}}\ \mathcal{H}^{\frac{j+a}{2},\frac{j-a}{2}}(\C^{n+1}).$$
  \item The ring $S(\g_{\C}/\h_{\C})^H=S(\gl(n+1,\C)/\gl(n,\C))^{\U(n)}$ is generated by two algebraically independent homogeneous elements of respective degrees $1$ and~$2$.
\end{enumerate}
\end{lemma}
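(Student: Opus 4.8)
The plan is to assemble parts (1)--(3) from classical harmonic analysis on the sphere $\mathbb{S}^{2n+1}$, viewed simultaneously as $\SO(2n+2)/\SO(2n+1)$, as $\U(n+1)/\U(n)$, and as the total space of the Hopf fibration, and to obtain part~(4) by an elementary invariant-theoretic computation. For~(1): the symmetric space $\SO(2n+2)/\SO(2n+1)$ has rank one, so by the Cartan--Helgason theorem (Fact~\ref{fact:CartanHelgason}) its discrete series are the representations $\mathcal{H}^j(\R^{2n+2})=\Rep(\SO(2n+2),je_1)$, $j\in\N$, on the harmonic polynomials of degree~$j$. The space $\U(n+1)/\U(n)\simeq\mathbb{S}^{2n+1}$ is $G_{\C}$-spherical but not symmetric; here I would use the classical fact that a polynomial on $\C^{n+1}$ decomposes into bihomogeneous pieces of bidegree $(k,\ell)$ in $(z,\bar z)$, that the ones annihilated by the complex Laplacian $\sum_i\partial_{z_i}\partial_{\bar z_i}$ form $\U(n+1)$-irreducible subspaces $\mathcal{H}^{k,\ell}(\C^{n+1})$ (of highest weight $(k,0,\dots,0,-\ell)$), and that these span a dense subspace of $L^2(\mathbb{S}^{2n+1})$. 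Finally $F=K/H\simeq\mathbb{S}^1$, so $\Disc((\U(n)\times\U(1))/\U(n))=\{\mathbf{1}\boxtimes\C_a:a\in\Z\}$ is just the Fourier decomposition of $L^2(\mathbb{S}^1)$.

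For~(2): I would compare the two decompositions of $L^2(\mathbb{S}^{2n+1})$ from~(1). Restricting to the sphere, the space of restrictions of polynomials of total degree $\le N$ on $\R^{2n+2}$ equals $\bigoplus_{j\le N}\mathcal{H}^j(\R^{2n+2})$ (using $|z|^2\equiv1$ to lower the degree), and equals $\bigoplus_{k+\ell\le N}\mathcal{H}^{k,\ell}(\C^{n+1})$ for the same reason when polynomials on $\R^{2n+2}$ are written in the variables $z,\bar z$ on $\C^{n+1}$; since these two polynomial spaces coincide, so do the two sums, for every~$N$. Taking the difference of the $N$- and $(N-1)$-truncations gives $\mathcal{H}^N(\R^{2n+2})\simeq\bigoplus_{k+\ell=N}\mathcal{H}^{k,\ell}(\C^{n+1})$, which is~(2); alternatively one may invoke the multiplicity-freeness of $\pi|_G$ (Lemma~\ref{lem:spherical2}.(2)) and compare dimensions. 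For~(3): I would track the residual $\U(1)$-action $z\mapsto e^{\sqrt{-1}\theta}z$ giving the Hopf fibration; an element of $\mathcal{H}^{k,\ell}(\C^{n+1})$ transforms by the $\U(1)$-character of weight $k-\ell$, so $\mathcal{H}^{k,\ell}(\C^{n+1})$ sits in the summand $L^2(\PP^n\C,\mathbf{1}\boxtimes\C_a)$ of $L^2(\mathbb{S}^{2n+1})\simeq\bigoplus_{a\in\Z}L^2(\PP^n\C,\mathbf{1}\boxtimes\C_a)$ exactly when $k-\ell=a$ (with the normalization of $\C_a$ fixed above). Combined with~(1), the admissible pairs are $(k,\ell)=\big(\tfrac{j+a}{2},\tfrac{j-a}{2}\big)$ for $j\in\N$ with $j-|a|\in2\N$, which is the claimed formula.

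For~(4): I would identify the isotropy $\U(n)$-module $\g_{\C}/\h_{\C}=\gl(n+1,\C)/\gl(n,\C)$. Writing $(n+1)\times(n+1)$ matrices in $(n,1)$-block form, the complement of $\gl(n,\C)$ is, as a $\U(n)$-module, $V\oplus V^*\oplus\C$, with $V=\C^n$ the standard representation (the last column), $V^*$ its dual (the last row), and $\C$ the trivial representation (the corner entry). Hence $S(\g_{\C}/\h_{\C})^{\U(n)}=\C[t]\otimes S(V\oplus V^*)^{\U(n)}$, where $t$ spans the trivial summand; since $\U(n)$ is Zariski-dense in $\GL(n,\C)$, this is $\C[t]\otimes S(V\oplus V^*)^{\GL(n,\C)}$, and by the first fundamental theorem of invariant theory for $\GL(n,\C)$ on one copy of $V$ and one of $V^*$ the ring $S(V\oplus V^*)^{\GL(n,\C)}$ is the polynomial ring generated by the single (degree-$2$) contraction invariant. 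Therefore $S(\g_{\C}/\h_{\C})^{\U(n)}=\C[t,q]$ is a polynomial ring in two algebraically independent homogeneous generators of degrees $1$ and~$2$; via Lemma~\ref{lem:struct-DGX} this is exactly what will force $\D_G(X)$ to be a polynomial ring on a first-order and a second-order generator.

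The only step I expect to require genuine care is the ``no further constituents'' half of~(2). The filtration argument above settles it cleanly, but if one prefers a representation-theoretic route, the cheapest is to combine~(1) with~(4): since $\D_G(X)\simeq S(\g_{\C}/\h_{\C})^H$ has rank~$2$, the semigroup $\Disc(G/H)$ is free on two generators, and this together with the dense span in~(1) pins down the branching law. All remaining assertions are direct citations of classical spherical-harmonic analysis or the routine reduction carried out in~(4).
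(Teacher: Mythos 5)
Your proposal is correct and follows essentially the same route as the paper: parts (1)--(3) are precisely the classical spherical-harmonic/Hopf-fibration analysis that the paper simply cites from Howe--Tan (the see-saw pair $\OO(2n+2)\supset\U(n+1)$, $\U(1)\supset\OO(1)$), and your filtration argument for the branching law together with the $\U(1)$-weight bookkeeping is the standard proof of exactly those cited facts. For (4) you use the same decomposition $\gl(n+1,\C)/\gl(n,\C)\simeq\C^n\oplus(\C^n)^{\vee}\oplus\C$ as the paper, and concluding via the first fundamental theorem for $\GL(n,\C)$ on one vector and one covector is interchangeable with the paper's count of graded dimensions followed by Lemma~\ref{lem:struct-DGX}.
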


Here we denote by $\mathcal{H}^j(\R^m)$ the space of spherical harmonics in~$\R^m$, \ie of complex-valued homogeneous polynomials $f(x_1,\dots,x_m)$ of degree $j\in\N$ such that
$$\sum_{i=1}^m \frac{\partial^2 f}{\partial x_i^2} = 0.$$
For $m>2$, the special orthogonal group $\SO(m)$ acts irreducibly on $\mathcal{H}^j(\R^m)$; the highest weight is $(j,0,\dots,0)$ in the standard coordinates.

For $k,\ell\in\N$, we denote by $\mathcal{H}^{k,\ell}(\C^m)$ the space of homogeneous polynomials $f(z_1,\dots,z_m,\overline{z_1},\dots,\overline{z_m})$ of degree~$k$ in $z_1,\dots,z_m$ and degree~$\ell$ in $\overline{z_1},\dots,\overline{z_m}$, such that
$$\sum_{i=1}^m \frac{\partial^2 f}{\partial z_i\partial\overline{z_i}} = 0.$$
For $m>1$ or for $m=1$ and $k\ell=0$, the unitary group $\U(m)$ acts irreducibly on $\mathcal{H}^{k,\ell}(\C^m)$; the highest weight is $(k,0,\dots,0,-\ell)$ in the standard coordinates.

\begin{proof}[Proof of Lemma~\ref{lem:ex(i)}]
For statements (1)--(3), see \eg \cite[\S\,2.1 \& 4.2]{ht93} in the context of the see-saw dual pair
$$\xymatrix{\OO(2n+2) \ar@{-}[d] \ar@{-}[dr] & \U(1) \ar@{-}[d] \ar@{-}[dl]\\ \U(n+1) & \OO(1).}$$

Statement~(4) follows from \cite{sch78}; we give a proof for the sake of completeness.
Via the decomposition
$$\gl(n+1,\C)/\gl(n,\C) \simeq \C^n \oplus (\C^n)^{\vee} \oplus \C$$
of $\gl(n+1,\C)/\gl(n,\C)$ into irreducible $\GL(n,\C)$-modules, the symmetric tensor space decomposes as
$$S\big(\gl(n+1,\C)/\gl(n,\C)\big) \simeq \bigoplus_{a,b,c\in\N} S^a(\C^n) \otimes S^b\big((\C^n)^{\vee}\big) \otimes S^c(\C).$$
Since the $\GL(n,\C)$-modules $S^a(\C^n)$, for $a\in\N$, are irreducible and mutually inequivalent, we have
$$S\big(\gl(n+1,\C)/\gl(n,\C)\big)^{\U(n)} \simeq \bigoplus_{a,c\in\N} \Big( S^a(\C^n) \otimes S^a\big((\C^n)^{\vee}\big) \Big)^{\GL(n,\C)} \otimes S^c(\C).$$
Therefore,
$$\dim S^N\big(\gl(n+1,\C)/\gl(n,\C)\big)^{\U(n)} = \#\big\{ (a,c)\in\N^2 \ :\ 2a+c=N\big\} ,$$
which is the dimension of the space of homogeneous polynomials of degree~$N$ in $\C[x,y^2]$, and so we may apply Lemma~\ref{lem:struct-DGX}.
\end{proof}

\begin{proof}[Proof of Proposition~\ref{prop:ex(i)}]
(1) By Lemma~\ref{lem:ex(i)}, the map $\vartheta\mapsto (\pi(\vartheta),\tau(\vartheta))$ of Proposition~\ref{prop:pi-tau-theta} is given by
\begin{equation}\label{eqn:pitau-i}
\mathcal{H}^{k,\ell}(\C^{n+1}) \longmapsto (\mathcal{H}^{k+\ell}(\R^{2n+2}), \mathbf{1}\boxtimes\C_{k-\ell}).
\end{equation}
The Casimir operators for $\tilG$, $G$, and~$K$ act on these representations as the following scalars.
\begin{center}
\begin{tabular}{|c|c|c|}
\hline
Operator & Representation & Scalar\\
\hline\hline
$C_{\tilG}$ & $\mathcal{H}^{k+\ell}(\R^{2n+2})$ & $(k+\ell) (k+\ell+2n)$\\
\hline
$C_G$ & $\mathcal{H}^{k,\ell}(\C^{n+1})$ & $k^2+\ell^2+kn+\ell n$\\
\hline
$C_K$ & $\mathbf{1}\boxtimes\C_{k-\ell}$ & $(k-\ell)^2$\\
\hline
$E_K$ & $\mathbf{1}\boxtimes\C_{k-\ell}$ & $\sqrt{-1}\,(k-\ell)$\\
\hline
\end{tabular}
\end{center}
This, together with the identity
$$(k+\ell) (k+\ell+2n) = 2\,(k^2+\ell^2+kn+\ell n) - (k-\ell)^2,$$
implies $\dd\ell(C_{\tilG}) \!=\! 2\,\dd\ell(C_G) - \dd r(C_K)$ on the $G$-isotypic component $\mathcal{H}^{k,\ell}(\C^{n+1})$ in $C^{\infty}(X)$ for all $k,\ell\in\N$, hence on the whole of $C^{\infty}(X)$.

(2) Since $\tilG/\tilH$ and $F=K/H$ are symmetric spaces, we obtain $\D_{\tilG}(X)$ and $\D_K(F)$ using the Harish-Chandra isomorphism \eqref{eqn:HC-isom}.
We now focus on $\D_G(X)$.
We only need to prove the first equality, since the other one follows from the relations between the generators.
For this, using Lemmas \ref{lem:struct-DGX}.(3) and~\ref{lem:ex(i)}.(4), it suffices to show that the two differential operators $\dd\ell(C_{\tilG})$ and $\dd r(E_K)$ on~$X$ are algebraically independent.
Let $f$ be a polynomial in two variables such that $f(\dd\ell(C_{\tilG}),\dd r(E_K))=0$ in $\D_G(X)$.
By letting this differential operator act on the $G$-isotypic component $\vartheta = \mathcal{H}^{k,\ell}(\C^{n+1})$ in $C^{\infty}(X)$, we obtain $f((k+\ell)(k+\ell+2n),-\sqrt{-1}\,(k-\nolinebreak\ell))=0$ for all $k,\ell\in\N$, hence $f$ is the zero polynomial.
\end{proof}

\begin{proof}[Proof of Proposition~\ref{prop:nu-ex(i)}]
We use Proposition~\ref{prop:Stau-transfer} and the formula \eqref{eqn:pitau-i} for the map $\vartheta\mapsto (\pi(\vartheta),\tau(\vartheta))$ of Proposition~\ref{prop:pi-tau-theta}.
Let $\tau=\mathbf{1}\boxtimes\C_a\in\Disc(K/H)$ with $a\in\Z$.
If $\vartheta\in\Disc(G/H)$ satisfies $\tau(\vartheta)=\tau$, then $\vartheta$ is of the form $\vartheta=\mathcal{H}^{k,\ell}(\C^{n+1})$ for some $k,\ell\in\N$ with $k-\ell=a$, by \eqref{eqn:pitau-i}.
The algebra $\D_{\tilG}(X)$ acts on the irreducible $\tilG$-submodule $\pi(\vartheta)=\mathcal{H}^{k+\ell}(\R^{2n+2})$ by the scalars
$$\lambda(\vartheta) + \rho_{\tila} = (k+\ell) + n \in \C/(\Z/2\Z)$$
via the Harish-Chandra isomorphism \eqref{eqn:HCX-i}, whereas the algebra $Z(\g_{\C})$ acts on the irreducible $G$-module $\vartheta = \Rep(\SO(5),(j,k))\boxtimes\C_a$ by the scalars
$$\nu(\vartheta) + \rho = \Big(k+\frac{n}{2}, \frac{n}{2}-1, \frac{n}{2}-2, \dots, 1-\frac{n}{2},-\ell-\frac{n}{2}\Big) \in \C^{n+1}/\mathfrak{S}_{n+1}$$
via \eqref{eqn:HCZg-i}.
Thus the affine map $S_{\tau}$ in Proposition~\ref{prop:nu-ex(i)} sends $\lambda(\vartheta)+\rho_{\tila}$ to $\nu(\vartheta)+\rho$ for any $\vartheta\in\Disc(G/H)$ such that $\tau(\vartheta)=\tau$, and we conclude using Proposition~\ref{prop:Stau-transfer}.
\end{proof}

This completes the proof of Theorems \ref{thm:main}, \ref{thm:main-explicit}, \ref{thm:transfer}, and~\ref{thm:nu-tau}, as well as Corollary~\ref{cor:rel-Lapl}, in case (i) of Table~\ref{table1}.
For case (i)$'$ of Table~\ref{table1}, the homogeneous spaces $G/H$ and $K/H$ change as follows:\begin{align*}
G/H\,: && \hspace{-0.6cm}\U(n+1)/\U(n) & \ \leadsto\ \SU(n+1)/\SU(n),\\
K/H\,: && \hspace{-0.6cm}(\U(n)\times\U(1))/\U(n) & \ \leadsto\ \U(n)/\SU(n).
\end{align*}
The proof works similarly, and so we omit it.

\subsection{The case $(\tilG,\tilH,G)=(\SO(2n+2),\U(n+1),\SO(2n+1))$}\label{subsec:ex(ii)}

There are two inequivalent embeddings of $\tilH=\U(n+1)$ into $\tilG=\SO(2n+2)$, which are conjugate by an outer automorphism of~$\tilG$.
In either case, we have $H=\linebreak\tilH\cap G=\U(n)$ and the only maximal connected proper subgroup of $G$ containing~$H$ is $K=\SO(2n)$.
Note that $X_{\C}=G_{\C}/H_{\C}=\SO(2n+1,\C)/\GL(n,\C)$ is $G_{\C}$-spherical but is not a symmetric space.
We shall give explicit generators of the ring $\D_G(X)$ by using the fibration $X\overset{F}{\longrightarrow} G/K$.

Since $X=\tilG/\tilH$ and $F=K/H$ are symmetric spaces, the structure of the rings $\D_{\tilG}(X)$ and $\D_K(F)$ is well understood by the Harish-Chandra isomorphism \eqref{eqn:HC-isom}.
Further, $\D_{\tilG}(X)=\dd\ell(Z(\tilg_{\C}))$ and $\D_K(F)=\dd r_F(Z(\kk_{\C}))$ by Fact~\ref{fact:ZgDGX}.
We now recall generators of the $\C$-algebras $\D_{\tilG}(X)$ and $\D_K(F)$, to be used in the description of $\D_G(X)$ (Propositions \ref{prop:ex(ii)} and~\ref{prop:ex(ii)bis}).
We refer to Section~\ref{subsec:strong-mult-free} for the notation $\chi_{\lambda}^X,\chi_{\mu}^F,\chi_{\nu}^G$ for the Harish-Chandra isomorphisms.

The rank of the symmetric space $X=\tilG/\tilH$ is $m:=\lfloor\frac{n+1}{2}\rfloor$, and the restricted root system $\Sigma(\tilg_{\C},\tila_{\C})$ is of type $BC_m$ if $n=2m$ is even and of type $C_m$ if $n=2m-1$ is odd.
In either case, the Weyl group $\widetilde{W}$ of the restricted root system $\Sigma(\tilg_{\C},\tila_{\C})$ is isomorphic to $\mathfrak{S}_m\ltimes (\Z/2\Z)^m$.
The ring $S(\tila_{\C})^{\widetilde{W}}$ is a polynomial ring generated by algebraically independent homogeneous elements of respective degrees $2,4,\dots,2m$.
Consider the standard basis $\{ h_1,\dots,h_m\}$ of~$\tila_{\C}^{\ast}$ and choose a positive system such that
$$\Sigma^+(\tilg_{\C},\tila_{\C}) \!=\! \left\{ \begin{array}{ll}
\!\{ 2h_i, h_j\pm h_k : 1\leq i\leq m,\ 1\leq j<k\leq m\} & \!\!\!\text{if }n=2m-1,\\
\!\{ h_i, 2h_i, h_j\pm h_k : 1\leq i\leq m,\ 1\leq j<k\leq m\} & \!\!\!\text{if }n=2m.
\end{array}\right.$$
Using these coordinates, for $k\in\N$ we define $P_k\in\D_{\tilG}(X)$ by
$$\chi_{\lambda}^X(P_k) = \sum_{j=1}^m \lambda_j^{2k}$$
for $\lambda=(\lambda_1,\dots,\lambda_m)\in\tila_{\C}^{\ast}/\widetilde{W}$.
Then $\D_{\tilG}(X)$ is a polynomial algebra generated by $P_1,\dots,P_m$.
The rank of the symmetric space $F=K/H$ is $\lfloor\frac{n}{2}\rfloor$, and the restricted root system $\Sigma(\kk_{\C},(\aaa_F)_{\C})$ is of type $BC_{\lfloor n/2\rfloor}$.
We define similarly $Q_k\in\D_K(F)$ for $k\in\N_+$ by
$$\chi_{\mu}^F(Q_k) = \sum_{j=1}^{\lfloor n/2\rfloor} \mu_j^{2k}$$
for $\mu=(\mu_1,\dots,\mu_{\lfloor n/2\rfloor})\in(\jj_F)_{\C}^{\ast}/\widetilde{W}$.
Then $\D_K(F)$ is a polynomial algebra generated by $Q_1,\dots,Q_{\lfloor\frac{n}{2}\rfloor}$.
Finally, take a Cartan subalgebra $\jj_{\C}$ of~$\g_{\C}$ and the standard basis $\{ f_1,\dots,f_n\}$ of $\jj_{\C}^{\ast}$ such that the root system $\Delta(\g_{\C},\jj_{\C})$ is given as
$$\big\{\!\pm f_i, \pm f_j\pm f_k \,:\, 1\leq i\leq n,\ 1\leq j<k\leq n\big\} .$$
For $k\in\N_+$, we define $R_k\in Z(\g_{\C})$ by
$$\chi_{\nu}^G(R_k) = \sum_{j=1}^n \nu_j^{2k}$$
for $\nu=(\nu_1,\dots,\nu_n)\in\jj_{\C}^{\ast}/W(B_n)$.
Note that $P_k$, $\iota(Q_k)$, and $\dd r(R_k)$ are all differential operator of order $2k$ on~$X$.

With this notation, here is our description of $\D_G(X)$.

\subsubsection{The case that $n=2m-1$ is odd}

\begin{proposition}[Generators and relations] \label{prop:ex(ii)}
For
$$X = \tilG/\tilH = \SO(4m)/\U(2m) \simeq \SO(4m-1)/\U(2m-1) = G/H$$
and $K=\SO(4m-2)$, we have
\begin{enumerate}
  \item $\left\{ \begin{array}{l}
  P_k + \iota(Q_k) = 2^{2k}\,\dd\ell(R_k) \quad\mathrm{for\ all\ }k\in\N_+;\\
  \dd\ell(C_{\tilG}) = 2\,\dd\ell(C_G) - \dd r(C_K);
  \end{array}\right.$
  \item $\left\{ \begin{array}{ccl}
\D_{\tilG}(X) & \!\!\!=\!\!\! & \C[P_1,\dots,P_m];\\
\D_K(F) & \!\!\!=\!\!\! & \C[Q_1,\dots,Q_{m-1}];\\
\D_G(X) & \!\!\!=\!\!\! & \C[P_1,\dots,P_m,\iota(Q_1),\dots,\iota(Q_{m-1})]\\
& & \hspace{0.3cm} =\, \C[P_1,\dots,P_m,\dd r(R_1),\dots,\dd r(R_{m-1})]\\
& & \hspace{0.3cm} =\,\C[\iota(Q_1),\dots,\iota(Q_{m-1}),\dd r(R_1),\dots,\dd r(R_m)].
\end{array}\right.$
\end{enumerate}
\end{proposition}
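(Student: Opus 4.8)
\textbf{Plan of proof for Proposition~\ref{prop:ex(ii)} (case $n=2m-1$).}
The whole argument runs through the canonical map $\vartheta\mapsto(\pi(\vartheta),\tau(\vartheta))$ of Proposition~\ref{prop:pi-tau-theta} and Proposition~\ref{prop:chiPQR}, exactly as announced in the four-step strategy of Section~\ref{subsec:descript-comput}. First I would record the three lists $\Disc(\tilG/\tilH)$, $\Disc(G/H)$, $\Disc(K/H)$ together with the relevant branching laws. Since $\tilG/\tilH=\SO(4m)/\U(2m)$ and $F=K/H=\SO(4m-2)/\U(2m-1)$ are Hermitian symmetric spaces, the Cartan--Helgason theorem (Fact~\ref{fact:CartanHelgason}) describes $\Disc(\tilG/\tilH)$ and $\Disc(K/H)$ by dominant weights supported on $\tila_{\C}^*$ resp.\ $(\aaa_F)_{\C}^*$. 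For $\Disc(G/H)=\Disc(\SO(4m-1)/\U(2m-1))$, which is spherical but not symmetric, I would quote Kr\"amer's classification \cite{kra79}. A formula of the shape $\vartheta\mapsto(\pi(\vartheta),\tau(\vartheta))$ is then obtained from the classical branching rule $\SO(2\ell)\downarrow\U(\ell)$ and $\SO(2\ell-1)\downarrow\U(\ell-1)$, most conveniently through the see-saw pair as in Lemma~\ref{lem:ex(i)}; this yields an explicit affine relation between the highest-weight parameter of $\vartheta$ (an element of a semilattice in $\aaa^*\simeq\Z^{n}$) and the parameters $\lambda(\vartheta)\in\tila_{\C}^*$ and $\mu(\vartheta)\in(\aaa_F)_{\C}^*$.

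Second, with the generators $P_k\in\D_{\tilG}(X)$, $Q_k\in\D_K(F)$, $R_k\in Z(\g_{\C})$ already fixed in the text by their Harish-Chandra images (power sums $\sum\lambda_j^{2k}$, $\sum\mu_j^{2k}$, $\sum\nu_j^{2k}$), I would compute, via Lemma~\ref{lem:chiZgDGX} and the Cartan--Helgason shifts $\rho_{\tila}$, $\rho_{\aaa_F}$, $\rho$, the scalars
$\chi^X_{\lambda(\vartheta)+\rho_{\tila}}(P_k)$, $\chi^F_{\mu(\vartheta)+\rho_{\aaa_F}}(Q_k)$, $\Psi_\vartheta(R_k)$ as polynomials in the discrete parameter of~$\vartheta$. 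The combinatorial heart is the identity
\[
\sum_{j=1}^{m}(\lambda_j(\vartheta)+\rho_{\tila,j})^{2k}+\sum_{j=1}^{m-1}(\mu_j(\vartheta)+\rho_{\aaa_F,j})^{2k}=2^{2k}\sum_{j=1}^{n}\nu_j(\vartheta)^{2k}
\]
for all $\vartheta\in\Disc(G/H)$: once the three parameter sets are written down explicitly one checks that, as multisets, $\{\nu_j(\vartheta)\}$ is (up to the scaling by $2$ produced by the different normalizations of the Casimir/bilinear forms) the union $\{\pm(\lambda_j(\vartheta)+\rho_{\tila,j})/2\}\cup\{\pm(\mu_j(\vartheta)+\rho_{\aaa_F,j})/2\}$, whence all power sums agree simultaneously. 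Feeding this into Proposition~\ref{prop:chiPQR} gives $P_k+\iota(Q_k)=2^{2k}\,\dd\ell(R_k)$ in $\D_G(X)$ for every $k\in\N_+$; specializing $k=1$ (or running the same comparison on Casimirs directly, as in Proposition~\ref{prop:ex(i)}) gives $\dd\ell(C_{\tilG})=2\,\dd\ell(C_G)-\dd r(C_K)$, which is statement~(1).

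Third, for the ring-structure statement~(2), the descriptions of $\D_{\tilG}(X)$ and $\D_K(F)$ as polynomial rings are just the Harish-Chandra isomorphism recalled before the proposition. For $\D_G(X)$ I would invoke Lemma~\ref{lem:struct-DGX}: since $X_{\C}$ is $G_{\C}$-spherical, it suffices to exhibit $2m-1$ algebraically independent elements among the $P_k$, $\iota(Q_k)$, $\dd r(R_k)$ whose symbols generate $S(\g_{\C}/\h_{\C})^H$. One computes (as in Lemma~\ref{lem:ex(i)}.(4), via the decomposition $\g_{\C}/\h_{\C}=\so(2n+1,\C)/\gl(n,\C)$ into $\GL(n,\C)$-modules) the Poincar\'e series of $S(\g_{\C}/\h_{\C})^H$ and reads off that it is a polynomial ring on generators in degrees $2,4,\dots,2m$ and $2,4,\dots,2(m-1)$, i.e.\ $\rank G/H=2m-1=m+(m-1)$ generators, matching Corollary~\ref{cor:rank}. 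Since $P_1,\dots,P_m$ have the right degrees $2,\dots,2m$ and are algebraically independent in $\operatorname{gr}\D_G(X)$, and $\iota(Q_1),\dots,\iota(Q_{m-1})$ (resp.\ $\dd r(R_1),\dots,\dd r(R_{m-1})$) have degrees $2,\dots,2(m-1)$, a rank/degree count as in the proof of Lemma~\ref{lem:struct-DGX}.(3) shows each listed family is a full set of algebraically independent generators; the third equality $\D_G(X)=\C[\iota(Q_1),\dots,\iota(Q_{m-1}),\dd r(R_1),\dots,\dd r(R_m)]$ then follows from the relations in~(1), which let one solve for the $P_k$.

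\textbf{Main obstacle.} The genuinely non-formal point is pinning down the explicit map $\vartheta\mapsto(\pi(\vartheta),\tau(\vartheta))$ and verifying the multiset identity for the shifted Harish-Chandra parameters with the \emph{correct} $\rho$-shifts and the \emph{correct} relative normalization of the invariant bilinear forms on $\tilg_{\C}$, $\g_{\C}$, $\kk_{\C}$ (the factor $2^{2k}$, and the factor $2$ in the Casimir relation, are entirely an artifact of these normalizations and must be tracked carefully using the eigenvalue table in the Notation subsection). Everything else --- the algebraic independence, the polynomial-ring assertions, the passage from scalar identities to operator identities --- is routine given Lemma~\ref{lem:struct-DGX}, Proposition~\ref{prop:chiPQR}, and the already-established Corollary~\ref{cor:rank}.
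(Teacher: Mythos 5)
Your plan follows the paper's own proof essentially step for step: the same map $\vartheta\mapsto(\pi(\vartheta),\tau(\vartheta))$ coming from the branching law $\SO(4m)\downarrow\SO(4m-1)$, the same shifted-parameter identity $\sum_i(2a_i)^{2k}+\sum_i(2b_i)^{2k}=2^{2k}\,\Psi_{\vartheta}(R_k)$ fed into Proposition~\ref{prop:chiPQR}, and the same order/degree count via Lemma~\ref{lem:struct-DGX} and the structure of $S(\g_{\C}/\h_{\C})^H$ (which the paper simply quotes from Schwarz/Knop), with algebraic independence checked by evaluating on $\Disc(G/H)$ and Zariski density. Two small cautions: the Casimir relation does not follow from the $k=1$ identity alone until you verify that the additive constants produced by the various $\rho$-shifts cancel (the paper instead checks it by a separate eigenvalue computation, which is the alternative you mention), and Corollary~\ref{cor:rank} is a consequence of this proposition, so it can only serve as a consistency check, not as an input.
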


In all the equalities of Proposition~\ref{prop:ex(ii)}.(2), the right-hand side denotes the polynomial ring generated by algebraically independent elements.

In order to give an explicit description of the affine map $S_{\tau}$ inducing the transfer map, we write the Harish-Chandra isomorphism as
\begin{align}
\Hom_{\C\text{-}\mathrm{alg}}(Z(\g_{\C}),\C) & \ \simeq\ \jj_{\C}^*/W(\g_{\C}) && \hspace{-0.8cm} \simeq\ \C^{2m-1}/W(B_{2m-1}),\label{eqn:HCZg-iia}\\
\Hom_{\C\text{-}\mathrm{alg}}(\D_{\tilG}(X),\C) & \ \simeq\ \hspace{0.3cm} \tila_{\C}^*/\widetilde{W} && \hspace{-0.8cm} \simeq\ \C^m/W(C_m)\label{eqn:HCX-iia}
\end{align}
using the standard bases.
The set $\Disc(K/H)$ is the set of representations of $K=\SO(4m-2)$ of the form $\tau=\Rep(\SO(4m-2),t_{m,m-1}((k,0),k)$ for $k\in (\N^{m-1})_{\geq}$.
We set
$$b(k) := \Big(k_i + 2(m-i) - \frac{1}{2}\Big)_{1\leq i\leq m-1} \in \C^{m-1}.$$

\begin{proposition}[Transfer map] \label{prop:nu-ex(ii)}
Let
$$X = \tilG/\tilH = \SO(4m)/\U(2m) \simeq \SO(4m-1)/\U(2m-1) = G/H$$
and $K\!=\!\SO(4m-2)$.
For $\tau\!=\!\Rep(\SO(4m-2),t_{m,m-1}((k,0),k)\in\nolinebreak\Disc(K/H)$ with $k\in (\N^{m-1})_{\geq}$, the affine map
\begin{eqnarray*}
S_{\tau} :\ \tila_{\C}^* \simeq \C^m & \longrightarrow & \hspace{0.8cm} \C^{2m-1} \hspace{0.8cm} \simeq \jj_{\C}^*\\
\lambda \hspace{0.3cm} & \longmapsto & t_{m,m-1}\Big(\frac{\lambda}{2}, b(k)\Big)\nonumber
\end{eqnarray*}
induces a transfer map
$$\nnu(\cdot,\tau) : \Hom_{\C\text{-}\mathrm{alg}}(\D_{\tilG}(X),\C) \longrightarrow \Hom_{\C\text{-}\mathrm{alg}}(Z(\g_{\C}),\C)$$
as in Theorem~\ref{thm:nu-tau}.
\end{proposition}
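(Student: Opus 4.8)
The plan is to apply the general machinery of Section~\ref{sec:strategy}, specialized to the present triple, following the four-step outline of Section~\ref{subsec:descript-comput}. The computation hinges on making the canonical map $\vartheta\mapsto(\pi(\vartheta),\tau(\vartheta))$ of Proposition~\ref{prop:pi-tau-theta} completely explicit, via the branching law for $\SO(4m)\downarrow\SO(4m-1)$ composed with the description of the regular representation of $\SO(4m-1)$ on sections of the bundle $\W_\tau\to\SO(4m-1)/\SO(4m-2)$. First I would record the three relevant sets of discrete series: $\Disc(\tilG/\tilH)=\Disc(\SO(4m)/\U(2m))$ is given by the Cartan--Helgason theorem (Fact~\ref{fact:CartanHelgason}) in terms of highest weights $\lambda=(\lambda_1\geq\dots\geq\lambda_m\geq 0)$ on $\tila_\C^*$; $\Disc(K/H)=\Disc(\SO(4m-2)/\U(2m-1))$ is parametrized similarly by $k=(k_1\geq\dots\geq k_{m-1}\geq 0)$; and $\Disc(G/H)=\Disc(\SO(4m-1)/\U(2m-1))$, the nonsymmetric case, is parametrized by the pair $(\lambda,k)$ with an appropriate interlacing condition coming from the branching rule. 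The key combinatorial identity to establish is that $\vartheta$ with parameters $(\lambda,k)$ maps to $\pi(\vartheta)$ with parameter $\lambda$ and $\tau(\vartheta)$ with parameter $k$, using that the relevant branching multiplicities are all $\leq 1$ by Lemma~\ref{lem:spherical2}.

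\textbf{The relations.} Once this dictionary is in place, the second step is to fix the generators $P_k,Q_k,R_k$ as the power sums displayed just before the statement, normalized via the Harish-Chandra isomorphisms for $\tilG/\tilH$, for $K/H$, and for the group manifold $G_\C$. The third step is to compute, via Proposition~\ref{prop:D-spec-Hom}, the scalar by which each of $P_k$, $\iota(Q_k)$, $\dd\ell(R_k)$ acts on the $G$-isotypic component $\vartheta$: it equals the value of the corresponding power-sum polynomial at $\lambda(\vartheta)+\rho_{\tila}$, at $\mu(\vartheta)+\rho_{\aaa_F}$, and at $\Psi_\vartheta(R_k)$ read off from the $\rho$-shifted highest weight of $\vartheta$, respectively. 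The relation $P_k+\iota(Q_k)=2^{2k}\dd\ell(R_k)$ then reduces to the polynomial identity expressing the $2k$-th power sum of the $\jj_\C$-coordinates of $\nu(\vartheta)+\rho$ as the sum of two power sums scaled by $2^{-2k}$: concretely, the coordinates of $\nu(\vartheta)+\rho$ for $X=\SO(4m-1)/\U(2m-1)$ split, after the scaling and the explicit $\rho$-shifts $\rho_{\tila}$ and $\rho_{\aaa_F}$, into the union of the $\lambda$-block and the $k$-block, up to sign symmetry absorbed by $W(B_{2m-1})$. This is exactly the content of the map $t_{m,m-1}$ appearing in Proposition~\ref{prop:nu-ex(ii)}; I would verify it by a direct comparison of the two $\rho$-vectors, checking that $\rho$ for $B_{2m-1}$ decomposes as $\tfrac12\rho_{\tila}$ on the first $m$ coordinates and $b(0)$-type shifts on the last $m-1$. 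The Casimir relation $\dd\ell(C_{\tilG})=2\dd\ell(C_G)-\dd r(C_K)$ is then the $k=1$ instance after accounting for the constant terms $|\rho|^2$; this can also be checked independently on the natural representations using the eigenvalue table in the Notation section. Having the linear relations for all $k$, Proposition~\ref{prop:chiPQR} upgrades them from identities of eigenvalues to identities in $\D_G(X)$.

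\textbf{Generation.} The fourth step proves the three polynomial-ring assertions in part~(2). Here I would invoke Lemma~\ref{lem:struct-DGX}: it suffices to show that the $m$ operators $\{P_k\}$ together with the $m-1$ operators $\{\iota(Q_k)\}$ (respectively $\{\iota(Q_k)\}$ with $\{\dd r(R_k)\}$, etc.) are algebraically independent and that the generating function of $\dim S^N(\g_\C/\h_\C)^H$ matches $v_{\underline m}(N)$ for $\underline m=(2,4,\dots,2m,2,4,\dots,2(m-1))$. Algebraic independence follows from the evaluation homomorphism $\tilde\psi$ of Proposition~\ref{prop:D-spec-Hom}.(1): a polynomial relation among the operators would give a polynomial relation among the power-sum functions $(\lambda,k)\mapsto(\sum\lambda_j^{2i},\sum k_j^{2i})$ on the semilattice $\Disc(G/H)$, which forces the polynomial to vanish since power sums $p_1,\dots,p_m$ in $m$ variables are algebraically independent and the two blocks of variables are independent. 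For the equivalence of the three presentations one then uses the relations $P_k+\iota(Q_k)=2^{2k}\dd\ell(R_k)$ to trade generators; the slight asymmetry ($R_1,\dots,R_{m-1}$ in two presentations but $R_1,\dots,R_m$ in the third) comes from the fact that $\D_{\tilG}(X)$ contributes degrees $2,\dots,2m$ while $\D_K(F)$ contributes $2,\dots,2(m-1)$, so in the $Q$-$R$ presentation one needs the extra top-degree $R_m$. Finally, Proposition~\ref{prop:nu-ex(ii)} follows from Proposition~\ref{prop:Stau-transfer}: the affine map $S_\tau$ sending $\lambda$ to $t_{m,m-1}(\lambda/2,b(k))$ is precisely the one computed in step three, and one checks $S_\tau(\lambda(\vartheta)+\rho_{\tila})=\nu(\vartheta)+\rho$ mod $W(B_{2m-1})$ for every $\vartheta$ with $\tau(\vartheta)=\tau$.

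\textbf{Main obstacle.} I expect the delicate point to be the explicit determination of $\Disc(G/H)$ for the nonsymmetric space $\SO(4m-1)/\U(2m-1)$ and the precise form of the branching law $\SO(4m)\downarrow\SO(4m-1)$ restricted to the relevant representations — i.e.\ pinning down the interlacing conditions so that the map $\vartheta\mapsto(\pi(\vartheta),\tau(\vartheta))$ is literally a bijection onto its image with the stated parameters. Once the parametrizations and the two $\rho$-vectors are written out correctly, the identity $P_k+\iota(Q_k)=2^{2k}\dd\ell(R_k)$ and the transfer formula are a matter of bookkeeping with the scaling factor $2$ (which reflects that $\U(n)\subset\SO(2n)$ realizes each $\U$-weight as a pair $\pm$ of $\SO$-weights). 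The generation statement is then routine given Lemma~\ref{lem:struct-DGX}, modulo the computation of $\dim S^N(\so(4m-1,\C)/\gl(2m-1,\C))^{\U(2m-1)}$, which should follow from a prehomogeneous-vector-space decomposition analogous to Lemma~\ref{lem:ex(i)}.(4).
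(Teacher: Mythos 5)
Your proposal is correct and follows essentially the same route as the paper: make the map $\vartheta\mapsto(\pi(\vartheta),\tau(\vartheta))$ explicit via the branching law $\SO(4m)\downarrow\SO(4m-1)$ and the parametrizations of the three discrete-series sets (the paper's Lemma~\ref{lem:ex(ii)}), compute $\lambda(\vartheta)+\rho_{\tila}=(2a_1,\dots,2a_m)$ and $\nu(\vartheta)+\rho=t_{m,m-1}(a,b)$ with $a_i=j_i+2(m-i)+\tfrac12$, $b_i=k_i+2(m-i)-\tfrac12$, check $S_{\tau}(\lambda(\vartheta)+\rho_{\tila})=\nu(\vartheta)+\rho$ mod $W(B_{2m-1})$, and conclude by Proposition~\ref{prop:Stau-transfer}. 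The additional material on the relations $P_k+\iota(Q_k)=2^{2k}\dd\ell(R_k)$ and on generation of $\D_G(X)$ belongs to Proposition~\ref{prop:ex(ii)} rather than to the statement at hand, but it is consistent with the paper's treatment and does not affect the correctness of your argument.
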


For the proof of Propositions \ref{prop:ex(ii)} and~\ref{prop:nu-ex(ii)}, we use the following lemma on finite-dimensional representations.

\begin{lemma}\label{lem:ex(ii)}
\begin{enumerate}
  \item Discrete series for $\tilG/\tilH$, $G/H$, and $F=K/H$:
  \begin{align*}
    & \Disc(\SO(4m)/\U(2m)) = \{ \Rep(\SO(4m),t_{m,m}(j,j)) \,:\, j\in (\N^m)_{\geq}\} ;\\
    & \Disc(\SO(4m-1)/\U(2m-1)) = \{ \Rep(\SO(4m-1),\omega) \,:\, \omega\in (\N^{2m-1})_{\geq}\} ;\\
    & \Disc(\SO(4m-2)/\U(2m-1))\\
    & \hspace{1cm} = \big\{ \Rep\big(\SO(4m-2),t_{m,m-1}((k,0),k)\big) \,:\, k\in (\N^{m-1})_{\geq}\big\} .
  \end{align*}
  \item Branching laws for $\SO(4m)\downarrow\SO(4m-1)$: For $j\in (\N^m)_{\geq}$,
  \begin{align*}
  & \Rep\big(\SO(4m),t_{m,m}(j,j)\big)|_{\SO(4m-1)}\\
  & \simeq \bigoplus_{\substack{k\in(\N^{m-1})_{\geq}\\ t_{m,m-1}(j,k)\in(\N^{2m-1})_{\geq}}} \Rep\big(\SO(4m-1),t_{m,m-1}(j,k)\big).
  \end{align*}
  \item Irreducible decomposition of the regular representation of~$G$:\\ For $k\in\nolinebreak (\N^{m-1})_{\geq}$,
  \begin{align*}
  & L^2\Big(\SO(4m-1)/\SO(4m-2),\Rep\big(\SO(4m-2),t_{m,m-1}((k,0),k)\big)\Big)\\
  & \simeq\ \ \sumplus{\substack{j\in(\N^m)_{\geq}\\ t_{m,m-1}(j,k)\in(\N^{2m-1})_{\geq}}}\, \Rep\big(\SO(4m-1),t_{m,m-1}(j,k)\big).
  \end{align*}
  \item The ring $S(\g_{\C}/\h_{\C})^H=S(\so(4m-1,\C)/\gl(2m-2,\C))^{\U(2m-2)}$ is generated by algebraically independent homogeneous elements of respective degrees $2,2,4,4,\dots,m-1,m-1,m$.
\end{enumerate}
\end{lemma}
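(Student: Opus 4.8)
The plan is to reduce all four assertions to classical branching rules for orthogonal groups and to one $\GL_n$-invariant-theory computation, in the spirit of the argument already carried out for case~(i) (Lemma~\ref{lem:ex(i)}). For part~(1), the spaces $\SO(4m)/\U(2m)$ and $\SO(4m-2)/\U(2m-1)$ are compact symmetric spaces of type~$\mathrm{DIII}$, so I would invoke the Cartan--Helgason theorem (Fact~\ref{fact:CartanHelgason}): after fixing $\jj_{\C}$ compatibly I compute $\ttt_{\C}=\jj_{\C}\cap\h_{\C}$ and a Cartan subspace, identify the restricted root systems (of type $C_m$, resp.\ $BC_{m-1}$), and check that the dominant integral weights vanishing on~$\ttt_{\C}$ and integral on the positive restricted roots are exactly the ``doubled'' sequences $t_{m,m}(j,j)$ and $t_{m,m-1}((k,0),k)$; the lifting condition to the compact groups is automatic because these weights are non-spin. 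For the nonsymmetric middle space $\SO(4m-1)/\U(2m-1)=\SO(2n+1)/\GL(n)$ with $n=2m-1$, I would either quote Kr\"amer's classification~\cite{kra79}, or, more self-containedly, deduce it from part~(2): since $X=\tilG/\tilH=G/H$ one has $L^2(X)\simeq L^2(\tilG/\tilH)$ as $G$-modules, so $\Disc(G/H)$ consists precisely of the irreducible $G$-summands of $\pi|_G$ as $\pi$ runs over $\Disc(\tilG/\tilH)$; by part~(2) these are the $\Rep(\SO(4m-1),t_{m,m-1}(j,k))$, and writing $j=(\omega_1,\omega_3,\dots)$, $k=(\omega_2,\omega_4,\dots)$ shows that the sequences $t_{m,m-1}(j,k)$ exhaust $(\N^{2m-1})_{\geq}$.

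Parts~(2) and~(3) are combinatorics of interlacing patterns. Part~(2) is the specialization to $\lambda=t_{m,m}(j,j)$ of the multiplicity-free branching rule $\SO(2N)\downarrow\SO(2N-1)$ with $N=2m$: the interlacing inequalities force the odd-indexed coordinates of the smaller weight to equal the~$j_i$ and leave the $i$-th even-indexed one free in $[j_{i+1},j_i]$, i.e.\ the restriction is the sum over all~$k$ with $t_{m,m-1}(j,k)$ dominant. For part~(3) I would use Frobenius reciprocity for $\SO(4m-2)\subset\SO(4m-1)$, which presents $L^2(\SO(4m-1)/\SO(4m-2),\W_{\tau})$ as the Hilbert sum of the $\vartheta$ with $[\vartheta|_{\SO(4m-2)}:\tau]\neq 0$, and then the branching rule $\SO(4m-1)\downarrow\SO(4m-2)$ applied to $\vartheta=\Rep(\SO(4m-1),\omega)$: matching against $\tau=\Rep(\SO(4m-2),t_{m,m-1}((k,0),k))$, the interlacing constraints pin $\omega$ down to $t_{m,m-1}(j,k)$ for dominant~$j$, with multiplicity one, yielding exactly the stated decomposition.

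For part~(4), the key structural input is that $X_{\C}$ is $G_{\C}$-spherical, so $\D_G(X)$ is a polynomial ring by Knop~\cite{kno94} and $\operatorname{gr}\D_G(X)\simeq S(\g_{\C}/\h_{\C})^H$ (Section~\ref{subsec:graded-alg}); hence $S(\g_{\C}/\h_{\C})^H$ is itself a polynomial ring and only the degrees of a homogeneous generating set need to be pinned down. I would decompose $\g_{\C}/\h_{\C}=\so(2n+1,\C)/\gl(n,\C)\simeq\Lambda^2\C^n\oplus\Lambda^2(\C^n)^{\vee}\oplus\C^n\oplus(\C^n)^{\vee}$ as a $\GL(n,\C)$-module, write down the natural contraction invariants---the traces $\operatorname{tr}((\xi\omega)^k)$ of powers of the composite of the two skew blocks, and the scalars $\langle\phi,(\omega\xi)^k v\rangle$ obtained by pushing the vector through that composite---and then verify that they are algebraically independent and generate. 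This last verification is where I would either compute the Poincar\'e series $\sum_N\dim S^N(\g_{\C}/\h_{\C})^H\,t^N$ directly, as in the proof of Lemma~\ref{lem:ex(i)}.(4), and compare it with $\prod_i(1-t^{m_i})^{-1}$ via Lemma~\ref{lem:struct-DGX}.(1), or simply quote Schwarz's tables~\cite{sch78}; a count of the length of the skew block ($n=2m-1$ odd, hence $m-1$ nonzero eigenvalue-pairs) together with the Cayley--Hamilton truncation for the vector string then gives the generator degrees.

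I expect the main obstacle to be part~(4): parts~(2)--(3) are routine once the classical branching rules are in hand, and the middle case of~(1) needs only the already-established fact that $H=\tilH\cap G$ is the connected group $\U(n)$ in this model (Section~\ref{sec:disconnected-H}, Theorem~\ref{thm:InvDiff-covering}). By contrast, for~(4) one must produce the \emph{complete} list of fundamental $\gl(n,\C)$-invariants on $\Lambda^2\C^n\oplus\Lambda^2(\C^n)^{\vee}\oplus\C^n\oplus(\C^n)^{\vee}$ and show nothing is missing, for which the Poincar\'e-series bookkeeping (or Schwarz's classification) is the decisive ingredient; everything else then feeds into the general machinery of Section~\ref{sec:strategy}.
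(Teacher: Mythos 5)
Your proposal is correct and follows essentially the same route as the paper: Cartan--Helgason for the two symmetric spaces and Kr\"amer \cite{kra79} (or the branching-law argument) for the nonsymmetric middle space in~(1), the classical interlacing rules for $\SO(\ell)\downarrow\SO(\ell-1)$ \cite[Th.\,8.1.3 \& 8.1.4]{gw09} together with Frobenius reciprocity for (2)--(3), and Schwarz's tables \cite{sch78} (or the Poincar\'e-series count via Lemma~\ref{lem:struct-DGX}) for~(4). The only, inessential, difference is that you recover $\Disc(G/H)$ from the overgroup decomposition \eqref{eqn:decomp1} via part~(2), whereas the paper's alternative argument uses the fiber decomposition \eqref{eqn:decomp2} via part~(3); both are equally valid.
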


Here we use the following notation: for $m',m''\in\N_+$ with $m''\in\{ m',m'-\nolinebreak 1\}$, we define an ``alternating concatenation'' map $t_{m',m''} : \Z^{m'}\times\Z^{m''}\!\rightarrow\Z^{m'+m''}$~by
\begin{equation}\label{eqn:t-m'-m''}
t_{m',m''}(j,k) = (j_1,k_1,j_2,k_2,\dots)
\end{equation}
for $j=(j_1,\ldots,j_{m'})\in\Z^{m'}$ and $k=(k_1,\ldots,k_{m''})\in\Z^{m''}$.

\begin{proof}[Proof of Lemma~\ref{lem:ex(ii)}]
Since $\tilG/\tilH$ and $K/H$ are symmetric spaces, the first and third formulas of~(1) follow from the Cartan--Helgason theorem (Fact~\ref{fact:CartanHelgason}).
For the second formula of~(1) (description of the discrete series of the nonsymmetric spherical homogeneous space $G/H$), see \cite{kra79}; the argument below using branching laws and \eqref{eqn:decomp2} gives an alternative proof.
One immediately deduces (2) and~(3) from the classical branching laws for $\SO(\ell)\downarrow\SO(\ell-1)$, see \eg \cite[Th.\,8.1.3 \& 8.1.4]{gw09}, and the Frobenius reciprocity.
For~(4), see the tables in \cite{sch78}, or \cite[\S\,10]{kno94}.
\end{proof}

\begin{proof}[Proof of Proposition~\ref{prop:ex(ii)}]
(1) We first prove that $P_k+\iota(Q_k)=2^{2k}\,\dd\ell(R_k)$.
By Lemma~\ref{lem:ex(ii)}, the map $\vartheta\mapsto (\pi(\vartheta),\tau(\vartheta))$ of Proposition~\ref{prop:pi-tau-theta} is given by
\begin{equation} \label{eqn:pitau-iia}
\begin{array}{l}
\Rep(\SO(4m-1),t_{m,m-1}(j,k))\\
\longmapsto \big(\Rep(\SO(4m),t_{m,m}(j,j)),\Rep(\SO(4m-2),t_{m,m-1}((k,0),k))\big)\hspace{-0.5cm}
\end{array}
\end{equation}
for $j\in\N^m$ and $k\in\N^{m-1}$ with $j_1\geq k_1\geq\dots\geq j_{m-1}\geq k_{m-1}\geq j_m$.
For $1\leq i\leq m$ we set
$$a_i := j_i + 2(m-i) + \frac{1}{2}.$$
Since $\rho_{\aaa} = \sum_{i=1}^m (4m-4i+1)\,h_i$, we obtain
$$2 \sum_{i=1}^m j_i h_i + \rho_{\tila} \,=\, 2 \sum_{i=1}^m a_i h_i \,\in \tila_{\C}^*.$$
Since the embedding $\tila_{\C}^{\ast}\hookrightarrow\widetilde{\jj}_{\C}^{\ast}$ is given by $2j\mapsto t_{m,m}(j,j)$ via the standard bases of $\tila_{\C}^{\ast}$ and~$\widetilde{\jj}_{\C}^{\ast}$, the map $T : \tila_{\C}^{\ast}/\widetilde{W}\rightarrow\widetilde{\jj}_{\C}^{\ast}/W(\tilg_{\C})$ (see \eqref{eqn:aj}) satisfies
$$T\bigg(2 \sum_{i=1}^m a_i h_i\bigg) = \Big(a_1+\frac{1}{2}, a_1-\frac{1}{2}, \dots, a_m+\frac{1}{2}, a_m-\frac{1}{2}\Big) \in \widetilde{\jj}_{\C}^{\ast}/W(\tilg_{\C}),$$
which is the $Z(\tilg_{\C})$-infinitesimal character $\Psi_{\pi(\vartheta)}$ of $\pi(\vartheta)$, and by Lemma~\ref{lem:chiZgDGX}.(1) the operator $P_k$ acts on the representation space of $\pi(\vartheta)$ as the scalar
\begin{equation} \label{eqn:piP-iia}
\chi_{(2a_1,\dots,2a_m)}^X(P_k) = 2^{2k} \sum_{i=1}^m a_i^{2k}.
\end{equation}
For $1\leq i\leq m-1$ we set
$$b_i := k_i + 2(m-i) - \frac{1}{2}.$$
Then $\tau(\vartheta)^{\vee}$ has $Z(\kk_{\C})$-infinitesimal character
$$\Psi_{\tau(\vartheta)^{\vee}} = - \Big(b_1+\frac{1}{2}, b_1-\frac{1}{2}, \dots, b_{m-1}+\frac{1}{2}, b_{m-1}-\frac{1}{2}, 0\Big) \in (\jj_{\kk})_{\C}^{\ast}/W(\kk_{\C}),$$
and by Lemma~\ref{lem:chiZgDGX}.(2) the operator $\iota(Q_k)$ acts on the representation space of $\tau(\vartheta)$ as the scalar
$$\chi_{(2b_1,\dots,2b_{m-1})}^F(Q_k) = 2^{2k} \sum_{i=1}^{m-1} b_i^{2k}.$$
On the other hand, $\vartheta$ itself has $Z(\g_{\C})$-infinitesimal character
$$\Psi_{\vartheta} = (a_1, b_1, \dots, a_{m-1}, b_{m-1}, a_m) \in \jj_{\C}^{\ast}/W(B_{2m-1}),$$
and so $\dd\ell(R_k)$ acts on the representation space of~$\vartheta$ as the scalar
$$\Psi_{\vartheta}(R_k) = \sum_{i=1}^m a_i^{2k} + \sum_{i=1}^{m-1} b_i^{2k}.$$
Thus $P_k+\iota(Q_k)=2^{2k}\,\dd\ell(R_k)$ by Proposition~\ref{prop:PQR}.

We now check the relation among Casimir operators.
For any $\ell\in\N_+$ and $j\in\Z^{\ell}$, we set
\begin{align*}
h_{\ell}(j) & = \sum_{i=1}^{\ell} j_i^2 + \sum_{i=1}^{\ell} (4\ell-4i+1)\,j_i\\
\text{and}\quad\quad h'_{\ell}(j) & = \sum_{i=1}^{\ell} j_i^2 + \sum_{i=1}^{\ell} (4\ell-4i+3)\,j_i.
\end{align*}
The Casimir operators for $\tilG$, $G$, and~$K$ act on the following irreducible representations as the following scalars.
\begin{center}
\begin{tabular}{|c|c|c|}
\hline
Operator & Representation & Scalar\\
\hline\hline
$C_{\tilG}$ & $\Rep(\SO(4m),t_{m,m}(j,j))$ & $2\,h_m(j)$\\
\hline
$C_G$ & $\Rep(\SO(4m-1),t_{m,m-1}(j,k))$ & $h_m(j)+h'_{m-1}(k)$\\
\hline
$C_K$ & $\Rep(\SO(4m-2),t_{m,m-1}((k,0),k))$ & $2\,h'_{m-1}(k)$\\
\hline
\end{tabular}
\end{center}
\noindent
This implies $\dd\ell(C_{\tilG})=2\,\dd\ell(C_G)-\dd r(C_K)$.

(2) We have already given descriptions of $\D_{\tilG}(X)$ and $\D_K(F)$, so we now focus on $\D_G(X)$.
We only need to prove the first equality for $\D_G(X)$, since the other ones follow from the relations between the generators.
For this, using Lemmas \ref{lem:struct-DGX}.(3) and~\ref{lem:ex(ii)}.(4), it suffices to show that the differential operators $P_1,\dots,P_m,\iota(Q_1),\dots,\iota(Q_{m-1})$ are algebraically independent.
Let $f$ be a po\-lynomial in $(2m-1)$ variables such that $f(P_1,\dots,P_m,\iota(Q_1),\dots,\iota(Q_{m-1}))\linebreak =\nolinebreak 0$ in $\D_G(X)$.
By letting this differential operator act on the $G$-isotypic component $\vartheta = \Rep(\SO(4m-1),t_{m,m-1}(j,k))$ in $C^{\infty}(X)$, we obtain
$$f\bigg(\sum_{i=1}^m A_i^2, \sum_{i=1}^m A_i^4, \dots, \sum_{i=1}^m A_i^{2m}, \sum_{i=1}^{m-1} B_i^2, \dots, \sum_{i=1}^{m-1} B_i^{2m-2}\bigg) = 0,$$
where we set
$$\left\{\begin{array}{l}
A_i := 2a_i = 2j_i+4(m-i)+1,\\
B_i := 2b_i = 2k_i+4(m-i)+1.
\end{array}\right.$$
Since the set of elements $(A_1,\dots,A_m,B_1,\dots,B_{m-1})\in\C^{2m-1}$ for $\vartheta$ ranging over $\Disc(G/H)$ is Zariski-dense in~$\C^{2m-1}$, we conclude that $f$ is the zero polynomial.
\end{proof}

\begin{proof}[Proof of Proposition~\ref{prop:nu-ex(ii)}]
We retain the notation of the first half of the proof of Proposition~\ref{prop:ex(ii)}.(1), and use Proposition~\ref{prop:Stau-transfer} and the formula \eqref{eqn:pitau-iia} for the map $\vartheta\mapsto (\pi(\vartheta),\tau(\vartheta))$ of Proposition~\ref{prop:pi-tau-theta}.
Let
$$\tau = \Rep\big(\SO(4m-2),t_{m,m-1}((k,0),k)\big) \in \Disc(K/H)$$
with $k\in (\N^{m-1})_{\geq}$.
If $\vartheta\in\Disc(G/H)$ satisfies $\tau(\vartheta)=\tau$, then $\vartheta$ is of the form $\vartheta = \Rep(\SO(4m-1),t_{m,m-1}(j,k))$ for some $j\in\N$ with $j_1\geq k_1\geq\dots\geq j_{m-1}\geq k_{m-1}\geq j_m$, by \eqref{eqn:pitau-iia}.
The algebra $\D_{\tilG}(X)$ acts on the irreducible $\tilG$-submodule $\pi(\vartheta) = \Rep(\SO(4m),t_{m,m}(j,j))$ by the scalars
$$\lambda(\vartheta) + \rho_{\tila} = (2a_1,\dots,2a_m) \in \C^m/W(C_m)$$
via the Harish-Chandra isomorphism \eqref{eqn:HCX-iia}, where $a_i = j_i + 2(m-i) + 1/2$ ($1\leq i\leq m$), whereas the algebra $Z(\g_{\C})$ acts on the irreducible $G$-module $\vartheta = \Rep(\SO(4m-1),t_{m,m-1}(j,k))$ by the scalars
$$\nu(\vartheta) + \rho = (a_1,\dots,a_m,b_1,\dots,b_{m-1}) \in \C^{2m-1}/W(B_{2m-1})$$
via \eqref{eqn:HCZg-iia}.
Thus the affine map $S_{\tau}$ in Proposition~\ref{prop:nu-ex(ii)} sends $\lambda(\vartheta)+\rho_{\tila}$ to $\nu(\vartheta)+\rho$ for any $\vartheta\in\Disc(G/H)$ such that $\tau(\vartheta)=\tau$, and we conclude using Proposition~\ref{prop:Stau-transfer}.
\end{proof}

\subsubsection{The case that $n=2m$ is even}

\begin{proposition}[Generators and relations] \label{prop:ex(ii)bis}
For
$$X = \tilG/\tilH = \SO(4m+2)/\U(2m+1) \simeq \SO(4m+1)/\U(2m) = G/H$$
and $K=\SO(4m)$, we have
\begin{enumerate}
  \item $\left\{ \begin{array}{l}
  P_k + \iota(Q_k) = 2^{2k}\,\dd\ell(R_k) \quad\mathrm{for\ all\ }k\in\N_+;\\
  \dd\ell(C_{\tilG}) = 2\,\dd\ell(C_G) - \dd r(C_K);
  \end{array}\right.$
  \item $\left\{ \begin{array}{ccl}
\D_{\tilG}(X) & \!\!\!=\!\!\! & \C[P_1,\dots,P_m];\\
\D_K(F) & \!\!\!=\!\!\! & \C[Q_1,\dots,Q_m];\\
\D_G(X) & \!\!\!=\!\!\! & \C[P_1,\dots,P_m,\iota(Q_1),\dots,\iota(Q_m)]\\
& & \hspace{0.3cm} =\,\C[P_1,\dots,P_m,\dd r(R_1),\dots,\dd r(R_m)]\\
& & \hspace{0.3cm} =\,\C[\iota(Q_1),\dots,\iota(Q_m),\dd r(R_1),\dots,\dd r(R_m)].
\end{array}\right.$
\end{enumerate}
\end{proposition}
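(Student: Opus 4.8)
The plan is to follow verbatim the strategy laid out in Section~\ref{subsec:descript-comput} and already executed for the odd case $n=2m-1$ in Proposition~\ref{prop:ex(ii)}: the even case is structurally identical, with only the combinatorial bookkeeping of the $\SO$-restricted root systems changed. First I would record the representation-theoretic input analogous to Lemma~\ref{lem:ex(ii)}: the discrete series
\begin{align*}
\Disc(\SO(4m+2)/\U(2m+1)) & = \{\Rep(\SO(4m+2),t_{m,m}(j,j)) : j\in(\N^m)_{\geq}\},\\
\Disc(\SO(4m)/\U(2m)) & = \{\Rep(\SO(4m),t_{m,m}(k,k)) : k\in(\N^m)_{\geq}\},
\end{align*}
together with the description of $\Disc(\SO(4m+1)/\U(2m))$ from \cite{kra79} (or, alternatively, recovered from \eqref{eqn:decomp2}), the classical $\SO(\ell)\downarrow\SO(\ell-1)$ branching laws from \cite[Th.\,8.1.3--8.1.4]{gw09} applied twice to get both $\tilG\downarrow G$ and $G\downarrow K$, and the structure of $S(\so(4m+1,\C)/\gl(2m,\C))^{\U(2m)}$ from \cite{sch78} or \cite[\S\,10]{kno94}, which should be a polynomial ring in algebraically independent homogeneous generators of degrees $2,2,4,4,\dots,2m,2m$. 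From these, the canonical map $\vartheta\mapsto(\pi(\vartheta),\tau(\vartheta))$ of Proposition~\ref{prop:pi-tau-theta} takes the form
$$\Rep(\SO(4m+1),t_{m,m}(j,k)) \longmapsto \big(\Rep(\SO(4m+2),t_{m,m}(j,j)),\Rep(\SO(4m),t_{m,m}(k,k))\big)$$
for $j,k\in\N^m$ with $j_1\geq k_1\geq\dots\geq j_m\geq k_m\geq 0$, where now both $j$ and $k$ have $m$ components (in contrast to the odd case, where $k$ had only $m-1$ components and there was a trailing~$0$).

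Next I would set $a_i := j_i + 2(m-i) + \tfrac{1}{2}$ and $b_i := k_i + 2(m-i) + \tfrac12$ for $1\leq i\leq m$, compute the $\rho$-shifts for each of the three symmetric spaces $\tilG/\tilH$, $F=K/H$, and the group manifold side for $G$, and read off that $P_k$ acts on $\pi(\vartheta)$ by $2^{2k}\sum_{i=1}^m a_i^{2k}$, that $\iota(Q_k)$ acts on $\tau(\vartheta)$ by $2^{2k}\sum_{i=1}^m b_i^{2k}$ (using Lemma~\ref{lem:chiZgDGX} with the appropriate sign for the contragredient), and that $\dd\ell(R_k)$ acts on $\vartheta$ by $\sum_{i=1}^m a_i^{2k}+\sum_{i=1}^m b_i^{2k}$, since the $Z(\g_{\C})$-infinitesimal character of $\Rep(\SO(4m+1),t_{m,m}(j,k))$ is $(a_1,b_1,\dots,a_m,b_m)\in\jj_{\C}^*/W(B_{2m})$. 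The identity
$$2^{2k}\Big(\sum_{i=1}^m a_i^{2k}+\sum_{i=1}^m b_i^{2k}\Big) = 2^{2k}\sum_{i=1}^m a_i^{2k} + 2^{2k}\sum_{i=1}^m b_i^{2k}$$
is trivial, so Proposition~\ref{prop:PQR} (or its symmetric-space reformulation Proposition~\ref{prop:chiPQR}) yields $P_k+\iota(Q_k)=2^{2k}\,\dd\ell(R_k)$ for all $k\in\N_+$. For the Casimir relation I would tabulate the eigenvalues of $C_{\tilG},C_G,C_K$ on $t_{m,m}(j,j)$, $t_{m,m}(j,k)$, $t_{m,m}(k,k)$ respectively — they will be $2h_m(j)$, $h_m(j)+h_m(k)$ (with a slightly different linear term than in the odd case because the root system is $D$-type on both the $\tilG$ and $K$ sides now), $2h_m(k)$ — and conclude $\dd\ell(C_{\tilG})=2\,\dd\ell(C_G)-\dd r(C_K)$.

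Finally, for part~(2) I would invoke Lemma~\ref{lem:struct-DGX}.(3): since $S(\g_{\C}/\h_{\C})^H$ is a polynomial ring on $2m$ generators of degrees $2,2,4,4,\dots,2m,2m$, and since $X_{\C}$ is $G_{\C}$-spherical, it suffices to exhibit, among the $P_k,\iota(Q_k),\dd r(R_k)$, a set of $2m$ algebraically independent elements of the correct degrees generating $\D_G(X)$. The map $\tilde\psi$ of Proposition~\ref{prop:D-spec-Hom}.(1) identifies each of these operators with a symmetric polynomial in the variables $A_i:=2a_i=2j_i+4(m-i)+1$ and $B_i:=2b_i=2k_i+4(m-i)+1$, evaluated over the Zariski-dense set of $(A_1,\dots,A_m,B_1,\dots,B_m)\in\C^{2m}$ arising from $\vartheta\in\Disc(G/H)$; algebraic independence of $\{P_1,\dots,P_m,\iota(Q_1),\dots,\iota(Q_m)\}$ then follows from that of the power sums $\{\sum A_i^{2k},\sum B_i^{2k}\}_{1\le k\le m}$, and the other two presentations of $\D_G(X)$ follow from the linear relations. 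The main obstacle I anticipate is not conceptual but bookkeeping: correctly matching the $\rho$-shifts and root-system types ($D_{2m}$ versus $B_{2m}$ versus $C_m$) on the three sides so that the power-sum generators line up degree-by-degree and the constant $2^{2k}$ comes out right; in particular one must be careful that in the even case the fiber $F=K/H=\SO(4m)/\U(2m)$ has rank $m$ rather than $m-1$, so that Corollary~\ref{cor:rank}'s count $\rank\tilG/\tilH+\rank K/H = m+m = 2m = \rank G/H$ is respected and no generator is missing or doubled.
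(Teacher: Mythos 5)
Your proposal follows exactly the paper's route (the paper itself proves this even case by repeating the argument of Proposition~\ref{prop:ex(ii)} ``with minor changes''), but the errors in your write-up sit precisely in those minor changes, and they affect the key eigenvalue computations. First, the parametrization of $\Disc(\tilG/\tilH)$ and of $\pi(\vartheta)$ is wrong: $\SO(4m+2)$ has rank $2m+1$, so the highest weight cannot be $t_{m,m}(j,j)\in\Z^{2m}$; the correct weight is $t_{m+1,m}((j,0),j)=(j_1,j_1,\dots,j_m,j_m,0)$, with a trailing zero. Second, your $\rho$-shift on the $j$-side is the odd-case value carried over by mistake. Here $\Sigma^+(\tilg_{\C},\tila_{\C})$ is of type $BC_m$ (the short roots $h_i$ appear with multiplicity $4$), so $\rho_{\tila}=\sum_i(4m-4i+3)h_i$ and hence $a_i=j_i+2(m-i)+\tfrac32$, not $j_i+2(m-i)+\tfrac12$; correspondingly, the $Z(\g_{\C})$-infinitesimal character of $\vartheta=\Rep(\SO(4m+1),t_{m,m}(j,k))$ is $(j_1+2m-\tfrac12,\,k_1+2m-\tfrac32,\dots)$, i.e.\ its $a$-entries are $j_i+2(m-i)+\tfrac32$ interlaced with $b_i=k_i+2(m-i)+\tfrac12$ (your $b_i$ is correct). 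With your $a_i$, both the claimed scalar of $P_k$ on $\pi(\vartheta)$ and the claimed infinitesimal character of $\vartheta$ are false; the identity $P_k+\iota(Q_k)=2^{2k}\,\dd\ell(R_k)$ only ``comes out'' because the same wrong $a_i$ is inserted on both sides, so as written the verification via Proposition~\ref{prop:chiPQR} does not stand. The same slip propagates: the Casimir scalar on the $j$-part is $h'_m(j)$ (linear coefficients $4m-4i+3$), not $h_m(j)$ as you wrote (your parenthetical caveat does not fix this), and in part~(2) the variables should be $A_i=2j_i+4(m-i)+3$, $B_i=2k_i+4(m-i)+1$.

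None of this is a conceptual gap: the strategy (Cartan--Helgason plus the classical $\SO(\ell)\downarrow\SO(\ell-1)$ branching laws to get $\vartheta\mapsto(\pi(\vartheta),\tau(\vartheta))$, evaluation of scalars via Lemma~\ref{lem:chiZgDGX}, Proposition~\ref{prop:chiPQR} for the relations, and Lemma~\ref{lem:struct-DGX} with a Zariski-density argument for algebraic independence) is exactly the paper's, and the Zariski-density step in part~(2) is unaffected by the constant shift. But the eigenvalue bookkeeping is the entire content of the even case, so you need to redo it with the corrected weight $t_{m+1,m}((j,0),j)$ and the corrected shifts $a_i=j_i+2(m-i)+\tfrac32$, $b_i=k_i+2(m-i)+\tfrac12$ before the proof is correct.
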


In all the equalities of Proposition~\ref{prop:ex(ii)bis}.(2), the right-hand side denotes the polynomial ring generated by algebraically independent elements.

We identify
\begin{align*}
\Hom_{\C\text{-}\mathrm{alg}}(Z(\g_{\C}),\C) & \ \simeq\ \jj_{\C}^*/W(\g_{\C}) && \hspace{-1.7cm} \simeq\ \C^{2m}/W(B_{2m}),\\
\Hom_{\C\text{-}\mathrm{alg}}(\D_{\tilG}(X),\C) & \ \simeq\ \hspace{0.3cm} \tila_{\C}^*/\widetilde{W} && \hspace{-1.7cm} \simeq \hspace{0.2cm} \C^m/W(BC_m)
\end{align*}
by the standard bases.
The set $\Disc(K/H)$ consists of the representations of $K=\SO(4m)$ of the form $\tau=\Rep(\SO(4m),t_{m,m}(k,k))$ for $k=(k_1,\dots,k_m)\in (\N^m)_{\geq}$.
We set
$$b(k) := \Big(k_i + 2(m-i) + \frac{1}{2}\Big)_{1\leq i\leq m} \in \C^m.$$

\begin{proposition}[Transfer map] \label{prop:nu-ex(ii)bis}
Let
$$X = \tilG/\tilH = \SO(4m+2)/\U(2m+1) \simeq \SO(4m+1)/\U(2m) = G/H$$
and $K=\SO(4m)$.
For $\tau=\Rep(\SO(4m),t_{m,m}(k,k))\in\Disc(K/H)$ with $k=(k_1,\dots,k_m)\in (\N^m)_{\geq}$, the affine map
\begin{eqnarray*}
S_{\tau} :\ \tila_{\C}^* \simeq \C^m & \longrightarrow & \hspace{1cm} \C^{2m} \hspace{0.9cm} \simeq \jj_{\C}^*\\
\lambda \hspace{0.3cm} & \longmapsto & t_{m,m-1}\Big(\frac{\lambda}{2},b(k)\Big)
\end{eqnarray*}
induces a transfer map
$$\nnu(\cdot,\tau) : \Hom_{\C\text{-}\mathrm{alg}}(\D_{\tilG}(X),\C) \longrightarrow \Hom_{\C\text{-}\mathrm{alg}}(Z(\g_{\C}),\C)$$
as in Theorem~\ref{thm:nu-tau}.
\end{proposition}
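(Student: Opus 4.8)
The proof is completely parallel to that of Proposition~\ref{prop:nu-ex(ii)}, only the combinatorial bookkeeping being changed to account for $n=2m$ even. The plan is to apply Proposition~\ref{prop:Stau-transfer}: it suffices to describe explicitly the map $\vartheta\mapsto(\pi(\vartheta),\tau(\vartheta))$ of Proposition~\ref{prop:pi-tau-theta} and to check that the affine map $S_{\tau}$ of the statement carries $\lambda(\vartheta)+\rho_{\tila}$ to $\nu(\vartheta)+\rho$ modulo $W(\g_{\C})$ for every $\vartheta\in\Disc(G/H)$ with $\tau(\vartheta)=\tau$.

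First I would record the representation-theoretic input (the analogue for $n=2m$ of Lemma~\ref{lem:ex(ii)}, used in the proof of Proposition~\ref{prop:ex(ii)bis}): the set $\Disc(\SO(4m+1)/\U(2m))$ is described by Kr\"amer's classification (or, equivalently, read off from the decomposition \eqref{eqn:decomp2}), and the classical branching rule for $\SO(4m+2)\downarrow\SO(4m+1)$, obtained by iterating $\SO(\ell)\downarrow\SO(\ell-1)$, identifies $\Disc(G/H)$ with the set of pairs $(j,k)\in\N^m\times\N^m$ satisfying the interlacing $j_1\geq k_1\geq j_2\geq\dots\geq j_m\geq k_m\geq 0$, via
\[
\Rep\big(\SO(4m+1),t_{m,m}(j,k)\big)\ \longmapsto\ \Big(\Rep\big(\SO(4m+2),t_{m,m}(j,j)\big),\ \Rep\big(\SO(4m),t_{m,m}(k,k)\big)\Big).
\]
Thus $\pi(\vartheta)$ is the $\tilG$-module with highest weight $(j_1,j_1,\dots,j_m,j_m,0)$ and $\tau(\vartheta)$ the $K$-module with highest weight $(k_1,k_1,\dots,k_m,k_m)$.

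Next I would compute the two $\rho$-shifts. Since $\tilG/\tilH=\SO(4m+2)/\U(2m+1)$ is a symmetric space whose restricted root system $\Sigma(\tilg_{\C},\tila_{\C})$ is of type $BC_m$ (roots $h_i$, $2h_i$, $h_j\pm h_k$ with multiplicities $4,1,4$), the Cartan--Helgason theorem (Fact~\ref{fact:CartanHelgason}) together with $\rho_{\tila}=\sum_i(4m-4i+3)h_i$ gives $\lambda(\vartheta)+\rho_{\tila}=(2a_1,\dots,2a_m)$ in $\tila_{\C}^*\simeq\C^m$, where $a_i:=j_i+2(m-i)+\tfrac32$. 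On the other hand $\g_{\C}=\so(4m+1,\C)$ has root system of type $B_{2m}$ with $\rho=(2m-\tfrac12,2m-\tfrac32,\dots,\tfrac12)$, so adding $\rho$ to the highest weight $t_{m,m}(j,k)$ of $\vartheta$ yields $\nu(\vartheta)+\rho=t_{m,m}\big((a_i)_i,(b_i)_i\big)$ with $b_i:=k_i+2(m-i)+\tfrac12=b(k)_i$. Comparing the two expressions, $t_{m,m}\big(\tfrac12(\lambda(\vartheta)+\rho_{\tila}),\,b(k)\big)=\nu(\vartheta)+\rho$ modulo $W(B_{2m})$, which is precisely the value $S_{\tau}(\lambda(\vartheta)+\rho_{\tila})$ prescribed by the affine map $\lambda\mapsto t_{m,m}(\lambda/2,b(k))$; Proposition~\ref{prop:Stau-transfer} then completes the proof.

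The only genuinely delicate part is the $\rho$-shift bookkeeping: one must keep track of three separate normalizations — $\rho_{\tila}$ for the $BC_m$ restricted system of $\SO(4m+2)/\U(2m+1)$ (where the extra short roots $\pm h_i$, absent in the odd case, are responsible for the shift $+\tfrac32$ in place of $+\tfrac12$), $\rho$ for $B_{2m}$, and the embedding $\tila_{\C}^*\hookrightarrow\widetilde{\jj}_{\C}^*$, $\sum c_ih_i\mapsto(t_{m,m}(c/2,c/2),0)$ — and then verify that the alternating-concatenation map $t_{m,m}$ of \eqref{eqn:t-m'-m''} interleaves the $a_i$ coming from $\pi(\vartheta)$ with the $b_i$ coming from $\tau(\vartheta)$ exactly as they occur in the highest weight of $\vartheta$. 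No new phenomenon arises relative to the odd case; the only structural difference is that now $\rank\tilG/\tilH=\rank K/H=m$, so $S_{\tau}$ lands in $\C^{2m}=\jj_{\C}^*$ through a symmetric concatenation $t_{m,m}$.
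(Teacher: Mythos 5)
Your proof is correct and is exactly the argument the paper intends: the paper omits this proof, noting it is analogous to Proposition~\ref{prop:nu-ex(ii)} via the formula \eqref{eqn:pitau-iib}, and your $\rho$-shift values $a_i=j_i+2(m-i)+\tfrac32$, $b_i=k_i+2(m-i)+\tfrac12$ and the interleaving $\nu(\vartheta)+\rho=t_{m,m}(a,b(k))$ agree with the data used in the proof of Proposition~\ref{prop:ex(ii)bis}. Your reading $S_{\tau}(\lambda)=t_{m,m}(\lambda/2,b(k))$ (rather than the printed $t_{m,m-1}$) is the right one, since $b(k)\in\C^m$ and the target is $\C^{2m}$; the only slip is that your displayed formula writes $\pi(\vartheta)=\Rep(\SO(4m+2),t_{m,m}(j,j))$ instead of $t_{m+1,m}((j,0),j)$, which your prose immediately corrects by giving the highest weight $(j_1,j_1,\dots,j_m,j_m,0)$.
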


For the proof of Propositions \ref{prop:ex(ii)bis} and~\ref{prop:nu-ex(ii)bis}, we use the following lemma on finite-dimensional representations, again with the notation \eqref{eqn:t-m'-m''}.

\begin{lemma}\label{lem:ex(ii)bis}
\begin{enumerate}
  \item Discrete series for $\tilG/\tilH$, $G/H$, and $F=K/H$:
  \begin{align*}
    & \Disc(\SO(4m+2)/\U(2m+1))\\
    & \hspace{1cm} = \big\{ \Rep\big(\SO(4m+2),t_{m+1,m}((j,0),j)\big) \,:\, j\in (\N^m)_{\geq}\big\} ;\\
    & \Disc(\SO(4m+1)/\U(2m)) = \{ \Rep(\SO(4m+1),\omega) \,:\, \omega\in (\N^{2m})_{\geq}\} ;\\
    & \Disc(\SO(4m)/\U(2m)) = \{ \Rep(\SO(4m),t_{m,m}(k,k)) \,:\, k\in (\N^m)_{\geq}\} .
  \end{align*}
  \item Branching laws for $\SO(4m+2)\downarrow\SO(4m+1)$: For $j\in\nolinebreak (\N^m)_{\geq}$,
  \begin{align*}
  & \Rep\big(\SO(4m+2),t_{m+1,m}((j,0),j)\big)|_{\SO(4m+1)}\\
  & \simeq \bigoplus_{\substack{k\in(\N^m)_{\geq}\\ t_{m,m}(j,k)\in(\N^{2m})_{\geq}}} \Rep\big(\SO(4m+1),t_{m,m}(j,k)\big).
  \end{align*}
  \item Irreducible decomposition of the regular representation of~$G$:\\ For $k\in\nolinebreak (\N^m)_{\geq}$,
  \begin{align*}
  & L^2\Big(\SO(4m+1)/\SO(4m),\Rep\big(\SO(4m),t_{m,m}(k,k)\big)\Big)\\
  & \simeq\ \ \sumplus{\substack{j\in(\N^m)_{\geq}\\ t_{m,m}(j,k)\in(\N^{2m})_{\geq}}}\, \Rep\big(\SO(4m+1),t_{m,m}(j,k)\big).
  \end{align*}
  \item The ring $S(\g_{\C}/\h_{\C})^H=S(\so(4m+1,\C)/\gl(2m,\C))^{\U(2m)}$ is generated\linebreak by algebraically independent homogeneous elements of respective degrees $2,2,4,4,\dots,m,m$.
\end{enumerate}
\end{lemma}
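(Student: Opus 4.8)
The plan is to prove Lemma~\ref{lem:ex(ii)bis} by the same strategy as in the odd case (Lemma~\ref{lem:ex(ii)}), exploiting that the relevant homogeneous spaces are symmetric or low-dimensional. For part~(1), the spaces $\tilG/\tilH = \SO(4m+2)/\U(2m+1)$ and $F = K/H = \SO(4m)/\U(2m)$ are Hermitian symmetric spaces, so $\Disc(\tilG/\tilH)$ and $\Disc(K/H)$ are given directly by the Cartan--Helgason theorem (Fact~\ref{fact:CartanHelgason}): the $\widetilde{W}$-dominant highest weights satisfying \eqref{eqn:CarHel} for $\SO(2N)/\U(N)$ are exactly those of the form $t_{\lceil N/2\rceil,\lfloor N/2\rfloor}$ applied to a pair $(j,j)$ or $((j,0),j)$ with $j\in(\N^{\lfloor N/2\rfloor})_{\geq}$, depending on the parity of~$N$. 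For $N=2m+1$ odd this gives the first formula, and for $N=2m$ even the third. For the middle formula, $G/H = \SO(4m+1)/\U(2m)$ is the nonsymmetric spherical space; its discrete series were determined in \cite{kra79}, giving all $\Rep(\SO(4m+1),\omega)$ with $\omega\in(\N^{2m})_{\geq}$. As in the proof of Lemma~\ref{lem:ex(ii)}, one can alternatively recover this from the branching law \eqref{eqn:decomp2} together with parts~(2)--(3), which I would note.

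For parts~(2) and~(3), I would invoke the classical $\SO(\ell)\downarrow\SO(\ell-1)$ branching rule (interlacing of highest weights; see \cite[Th.\,8.1.3 \& 8.1.4]{gw09}) together with Frobenius reciprocity. Concretely, restricting $\Rep(\SO(4m+2),t_{m+1,m}((j,0),j))$ to $\SO(4m+1)$ yields the sum over highest weights $\omega\in(\N^{2m})_{\geq}$ interlacing $t_{m+1,m}((j,0),j)$ (a length-$(2m+1)$ vector), and the interlacing condition forces $\omega = t_{m,m}(j,k)$ with $j_1\geq k_1\geq\cdots\geq j_m\geq k_m$, i.e.\ $t_{m,m}(j,k)\in(\N^{2m})_{\geq}$ in the stated shorthand; this is part~(2). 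Part~(3) is then the Frobenius-reciprocity dual: the $\SO(4m)$-type $\Rep(\SO(4m),t_{m,m}(k,k))$ appears in $\Rep(\SO(4m+1),t_{m,m}(j,k))|_{\SO(4m)}$ with multiplicity one (again by interlacing, since $t_{m,m}(k,k)$ interlaces $t_{m,m}(j,k)$ precisely when $j_i\geq k_i\geq j_{i+1}$), so by Frobenius reciprocity applied to $\SO(4m+1)/\SO(4m)$ one gets $L^2(\SO(4m+1)/\SO(4m),\Rep(\SO(4m),t_{m,m}(k,k))) \simeq \sum^{\oplus} \Rep(\SO(4m+1),t_{m,m}(j,k))$ over the same index set. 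I would spell out the interlacing bookkeeping carefully but briefly, since the alternating-concatenation notation $t_{m',m''}$ makes it easy to slip a sign or an index.

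For part~(4), I would compute $S(\so(4m+1,\C)/\gl(2m,\C))^{\U(2m)}$ exactly as in the proof of Lemma~\ref{lem:ex(i)}.(4): decompose $\g_{\C}/\h_{\C} = \so(4m+1,\C)/\gl(2m,\C)$ into irreducible $\GL(2m,\C)$-modules --- here one gets $\C^{2m}\oplus(\C^{2m})^{\vee}\oplus\Lambda^2(\C^{2m})\oplus\Lambda^2((\C^{2m})^{\vee})$ or the analogous decomposition coming from $\so(4m+1)=\gl(2m)\oplus\Lambda^2\oplus\overline{\Lambda^2}\oplus\C^{2m}\oplus\overline{\C^{2m}}$ --- take $\U(2m)$-invariants of the symmetric algebra by pairing each module with its dual, and count the Hilbert series; the generators of the invariant ring then have degrees $2,2,4,4,\dots,2m,2m$ as claimed. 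Alternatively, and more safely, I would simply cite the tables in \cite{sch78} and \cite[\S\,10]{kno94}, which list exactly these generator degrees for the spherical pair in question, in parallel with the citation used in Lemma~\ref{lem:ex(ii)}.(4).

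The main obstacle I anticipate is purely combinatorial: keeping the parity conventions straight across the three spaces and ensuring the interlacing conditions in~(2) and~(3) translate correctly into the compact notation $t_{m,m}(j,k)\in(\N^{2m})_{\geq}$, so that the index sets in~(2) and~(3) genuinely coincide (this coincidence is what drives the self-consistency of decompositions \eqref{eqn:decomp1}--\eqref{eqn:decomp2} and hence Proposition~\ref{prop:ex(ii)bis}). There is no deep difficulty --- everything reduces to classical branching laws plus a Hilbert-series count --- but the verification that $G/H$ is genuinely $G_{\C}$-spherical (needed to even apply the machinery) should be recalled: it follows from the classification \cite{kra79,bri87,mik87} as already used in setting up Table~\ref{table1}.
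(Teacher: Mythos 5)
Your proposal is correct and follows essentially the same route as the paper, which omits this proof and simply refers to that of Lemma~\ref{lem:ex(ii)}: Cartan--Helgason for the two symmetric spaces and Kr\"amer for $\Disc(G/H)$ in (1), the classical $\SO(\ell)\downarrow\SO(\ell-1)$ interlacing branching law plus Frobenius reciprocity for (2)--(3), and the Schwarz/Knop tables (or your sketched direct invariant-theoretic count) for (4). One remark: the generator degrees you obtain, $2,2,4,4,\dots,2m,2m$, do not literally match the statement's ``$2,2,4,4,\dots,m,m$'', but yours are the values forced by $\rank G/H=2m$ and by the orders of $P_1,\dots,P_m,\iota(Q_1),\dots,\iota(Q_m)$ used in the proof of Proposition~\ref{prop:ex(ii)bis}, so the printed list is evidently a typo (the same halving occurs in Lemma~\ref{lem:ex(ii)}.(4)) and your version is the correct one.
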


The proof of Lemma~\ref{lem:ex(ii)bis} is similar to that of Lemma~\ref{lem:ex(ii)}, and the proof of Proposition~\ref{prop:nu-ex(ii)bis} to that of Proposition~\ref{prop:nu-ex(ii)}, using the formula \eqref{eqn:pitau-iib}.
We omit these proofs.
The proof of Proposition~\ref{prop:ex(ii)bis} is also similar to that of Proposition~\ref{prop:ex(ii)}; we now briefly indicate some minor changes.

\begin{proof}[Proof of Proposition~\ref{prop:ex(ii)bis}]
(1) We first prove that $P_k+\iota(Q_k)=2^{2k}\,\dd\ell(R_k)$.
Let $\vartheta=\Rep(\SO(4m+1),t_{m,m}(j,k))$, where $j,k\in\N^m$ satisfy $j_1\geq k_1\geq\dots\geq j_m\geq k_m$.
By Lemma~\ref{lem:ex(ii)bis}, the map $\vartheta\mapsto (\pi(\vartheta),\tau(\vartheta))$ of Proposition~\ref{prop:pi-tau-theta} is given by
\begin{equation} \label{eqn:pitau-iib}
\left\{ \begin{array}{ccl}
\pi(\vartheta) & = & \Rep(\SO(4m+2),t_{m+1,m}((j,0),j)),\\
\tau(\vartheta) & = & \Rep(\SO(4m),t_{m,m}(k,k)).
\end{array} \right.
\end{equation}
We conclude as in the proof of Proposition~\ref{prop:ex(ii)}.(1), with $a_i=\linebreak j_i+2(m-i)+3/2$ and $b_i=k_i+2(m-i)+1/2$ for $1\leq i\leq m$.

We now check the relation among Casimir operators.
For any $\ell\in\N_+$ and $j\in\Z^{\ell}$, we set
\begin{align*}
h_{\ell}(j) & = \sum_{i=1}^{\ell} j_i^2 + \sum_{i=1}^{\ell} (4\ell-4i+1)\,j_i\\
\text{and}\quad\quad h'_{\ell}(j) & = \sum_{i=1}^{\ell} j_i^2 + \sum_{i=1}^{\ell} (4\ell-4i+3)\,j_i.
\end{align*}
The Casimir operators for $\tilG$, $G$, and~$K$ act on the following irreducible representations as the following scalars.
\begin{center}
\begin{tabular}{|c|c|c|}
\hline
Operator & Representation & Scalar\\
\hline\hline
$C_{\tilG}$ & $\Rep(\SO(4m+2),t_{m+1,m}((j,0),j))$ & $2\,h'_m(j)$\\
\hline
$C_G$ & $\Rep(\SO(4m+1),t_{m,m}(j,k))$ & $h'_m(j)+h_m(k)$\\
\hline
$C_K$ & $\Rep(\SO(4m),t_{m,m}(k,k))$ & $2\,h_m(k)$\\
\hline
\end{tabular}
\end{center}
\noindent
This implies $\dd\ell(C_{\tilG})=2\,\dd\ell(C_G)-\dd r(C_K)$.

(2) This is similar to the proof of Proposition~\ref{prop:ex(ii)}.(2).
\end{proof}

\subsection{The case $(\tilG,\tilH,G)=(\SU(2n+2),\U(2n+1),\Sp(n+1))$}\label{subsec:ex(iii)}

Here $H=\Sp(n)\times\U(1)$, and the only maximal connected proper subgroup of $G$ containing~$H$ is $K=\Sp(n)\times\Sp(1)$; we have $F=K/H=\mathbb{S}^2$.

\begin{proposition}[Generators and relations] \label{prop:ex(iii)}
For
$$X = \tilG/\tilH = \SU(2n+2)/\U(2n+1) \simeq \Sp(n+1)/(\Sp(n)\times\U(1)) = G/H$$
and $K=\Sp(n)\times\Sp(1)$, we have
\begin{enumerate}
  \item $2\,\dd\ell(C_{\tilG}) = 2\,\dd\ell(C_G) - \dd r(C_K)$;
  \item $\left\{ \begin{array}{ccl}
  \D_{\tilG}(X) & \!\!\!\!=\!\!\!\! & \C[\dd\ell(C_{\tilG})];\\
  \D_K(F) & \!\!\!\!=\!\!\!\! & \C[\dd r(C_K)];\\
  \D_G(X) & \!\!\!\!=\!\!\!\! & \C[\dd\ell(C_{\tilG}),\dd r(C_K)] = \C[\dd\ell(C_{\tilG}),\dd\ell(C_G)] = \C[\dd\ell(C_G),\dd r(C_K)].
  \end{array}\right.$
\end{enumerate}
\end{proposition}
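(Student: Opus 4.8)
The plan is to run the four-step argument of Sections~\ref{subsec:ex(i)} and~\ref{subsec:ex(ii)} along the twistor fibration $F=K/H=\mathbb{S}^2\to X=\PP^{2n+1}\C\to Y$, where $Y=\Sp(n+1)/(\Sp(n)\times\Sp(1))$ (the quaternionic projective space $\HH P^n$). The first step is to record the needed facts on finite-dimensional representations: (a) the discrete series, namely $\Disc(\SU(2n+2)/\U(2n+1))=\{\mathcal{H}^{k,k}(\C^{2n+2}):k\in\N\}$ (bidegree $(k,k)$ harmonics, \ie $\Rep(\SU(2n+2),k(e_1-e_{2n+2}))$), $\Disc((\Sp(n)\times\Sp(1))/(\Sp(n)\times\U(1)))=\{\mathbf{1}\boxtimes\Rep(\Sp(1),2a):a\in\N\}$ (spherical harmonics on $\mathbb{S}^2$), and $\Disc(\Sp(n+1)/(\Sp(n)\times\U(1)))=\{\Rep(\Sp(n+1),(p,q,0,\dots,0)):p\geq q\geq 0,\ p+q\in 2\N\}$; (b) the branching law $\mathcal{H}^{k,k}(\C^{2n+2})|_{\Sp(n+1)}\simeq\bigoplus_{p+q=2k,\ p\geq q\geq 0}\Rep(\Sp(n+1),(p,q,0,\dots,0))$; (c) the resulting decomposition of the regular representation of $G=\Sp(n+1)$ on $L^2(Y,\W_{\mathbf{1}\boxtimes\Rep(\Sp(1),2a)})$, obtained from (b) and~\eqref{eqn:decomp2} or from Frobenius reciprocity; and (d) the fact that $S(\g_{\C}/\h_{\C})^H=S(\ssp(n+1,\C)/(\ssp(n,\C)\oplus\C))^{\Sp(n)\times\U(1)}$ is a polynomial ring on two algebraically independent homogeneous generators of degree~$2$. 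Statement~(d) follows from the decomposition $\ssp(n+1,\C)/\h_{\C}\simeq\C_2\oplus\C_{-2}\oplus(\C^{2n}\boxtimes\C^2)$ as an $\Sp(n)\times\U(1)$-module, together with the first and second fundamental theorems of invariant theory for $\Sp(n)$, which leave exactly the two quadratic invariants ``$u\bar u$'' (from $\C_2\oplus\C_{-2}$) and the symplectic pairing (from $\C^{2n}\boxtimes\C^2$).

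The second step is to identify the canonical map $\vartheta\mapsto(\pi(\vartheta),\tau(\vartheta))$ of Proposition~\ref{prop:pi-tau-theta}. Writing $\vartheta=\Rep(\Sp(n+1),(p,q,0,\dots,0))$ with $p\geq q\geq 0$ and $p+q$ even, (b) forces $\pi(\vartheta)=\mathcal{H}^{k,k}(\C^{2n+2})$ with $k=(p+q)/2$, and computing the $\Sp(n)$-fixed vectors of $\vartheta|_{\Sp(n)\times\Sp(1)}$ (as an $\Sp(1)$-module) gives $\tau(\vartheta)=\mathbf{1}\boxtimes\Rep(\Sp(1),p-q)$, so $a=(p-q)/2$. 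I expect (b) --- equivalently, the description of $\Disc(\Sp(n+1)/(\Sp(n)\times\U(1)))$ together with the branching laws for $\SU(2n+2)\downarrow\Sp(n+1)$ and $\Sp(n+1)\downarrow\Sp(n)\times\Sp(1)$ --- to be the main point; one can obtain it either by invoking Kr\"amer~\cite{kra79} for $\Disc(G/H)$ (here $G=\Sp(n+1)$ is simple) and reading off $\pi,\tau$ from classical branching rules, or directly from the Clebsch--Gordan decomposition of $S^k(\C^{2n+2})\otimes S^k(\C^{2n+2})$ into irreducible $\Sp(n+1)$-modules, combined with the harmonic decomposition $S^k(\C^{2n+2})\otimes S^k(\overline{\C^{2n+2}})\simeq\bigoplus_{j\geq 0}r^{2j}\,\mathcal{H}^{k-j,k-j}(\C^{2n+2})$ and the self-duality of $\C^{2n+2}$ over $\Sp(n+1)$.

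The third step computes Casimir eigenvalues and proves relation~(1). With the normalization $B(e_i,e_i)=1$ of Section~\ref{sec:computations}, $\dd\ell(C_{\tilG})$ acts on $\pi(\vartheta)$ by $2k(k+2n+1)$, $\dd\ell(C_G)$ acts on $\vartheta$ by $p^2+q^2+2(n+1)p+2nq$, and $\dd r(C_K)$ acts on $\tau(\vartheta)$ by $(p-q)(p-q+2)$ (using that $\Sp(1)$-representations are self-dual and that $\dd r(C_{\Sp(n)})$ vanishes on $F$). One then checks the polynomial identity
\[
4k(k+2n+1)=2(p^2+q^2+2(n+1)p+2nq)-(p-q)(p-q+2)\qquad (2k=p+q),
\]
both sides being equal to $(p+q)^2+(4n+2)(p+q)$, and concludes $2\,\dd\ell(C_{\tilG})=2\,\dd\ell(C_G)-\dd r(C_K)$ by Proposition~\ref{prop:PQR}.

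The last step establishes~(2). By Lemma~\ref{lem:surj-dl-drF} and the Harish-Chandra isomorphism for the rank-one symmetric spaces $X=\tilG/\tilH=\PP^{2n+1}\C$ and $F=K/H=\mathbb{S}^2$, we have $\D_{\tilG}(X)=\C[\dd\ell(C_{\tilG})]$ and $\D_K(F)=\C[\dd r(C_K)]$. For $\D_G(X)$, statement~(d) and Lemma~\ref{lem:struct-DGX}.(3) reduce us to checking that the order-$2$ operators $\dd\ell(C_{\tilG})$ and $\dd r(C_K)$ are algebraically independent in $\D_G(X)$. This follows from Step~3: if $f$ is a polynomial with $f(\dd\ell(C_{\tilG}),\dd r(C_K))=0$, then $f$ vanishes at all points $(2k(k+2n+1),(p-q)(p-q+2))$ with $2k=p+q$ and $p\geq q\geq 0$, and these points are Zariski-dense in $\C^2$, so $f=0$. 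Hence $\D_G(X)=\C[\dd\ell(C_{\tilG}),\dd r(C_K)]$, and the equalities $\C[\dd\ell(C_{\tilG}),\dd r(C_K)]=\C[\dd\ell(C_{\tilG}),\dd\ell(C_G)]=\C[\dd\ell(C_G),\dd r(C_K)]$ follow from the linear relation in~(1). The only genuinely substantive step is Step~2; Steps 1(d), 3, and 4 are routine given the machinery of Sections~\ref{sec:reminders}--\ref{sec:strategy}.
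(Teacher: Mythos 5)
Your proposal is correct and follows essentially the same route as the paper: the same description of the discrete series and of the branching law $\SU(2n+2)\downarrow\Sp(n+1)$, the same Casimir eigenvalue computation giving the linear relation via Proposition~\ref{prop:PQR}, and the same Zariski-density argument combined with Lemma~\ref{lem:struct-DGX} and the two quadratic generators of $S(\g_{\C}/\h_{\C})^H$. The only cosmetic difference is that you justify the invariant-theoretic step via the fundamental theorems for $\Sp(n)$, whereas the paper argues through the multiplicity-free, self-dual decomposition of $S(\C^{2n})$; both give the same two degree-$2$ generators.
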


We identify
\begin{align}
\Hom_{\C\text{-}\mathrm{alg}}(Z(\g_{\C}),\C) & \ \simeq\ \jj_{\C}^*/W(\g_{\C}) && \hspace{-1cm} \simeq\ \C^{n+1}/W(C_{n+1}),\label{eqn:HCZg-iii}\\
\Hom_{\C\text{-}\mathrm{alg}}(\D_{\tilG}(X),\C) & \ \simeq\ \hspace{0.3cm} \tila_{\C}^*/\widetilde{W} && \hspace{-1cm} \simeq\ \C/(\Z/2\Z)\label{eqn:HCX-iii}
\end{align}
by the standard bases.
The set $\Disc(K/H)$ consists of the representations of $K=\Sp(n)\times\Sp(1)$ of the form $\tau=\mathbf{1}\boxtimes\C^{2a+1}$ for $a\in\N$.
Here, for $b\in\N_+$ we denote by $\C^b$ the (unique) irreducible $b$-dimensional representation of $\Sp(1)\simeq\SU(2)$.

\begin{proposition}[Transfer map] \label{prop:nu-ex(iii)}
Let
$$X = \tilG/\tilH = \SU(2n+2)/\U(2n+1) \simeq \Sp(n+1)/(\Sp(n)\times\U(1)) = G/H$$
and $K=\Sp(n)\times\Sp(1)$.
For $\tau=\mathbf{1}\boxtimes\C^{2a+1}\in\Disc(K/H)$ with $a\in\N$, the affine map
\begin{eqnarray*}
S_{\tau} :\ \tila_{\C}^* \simeq \C & \longrightarrow & \hspace{3.3cm} \C^{n+1} \hspace{3.2cm} \simeq \jj_{\C}^*\\
\lambda & \longmapsto & \Big(\frac{\lambda}{2}+a+\frac{1}{2}, \frac{\lambda}{2}-\Big(a+\frac{1}{2}\Big), n-1, n-2, \dots, 1\Big)
\end{eqnarray*}
induces a transfer map
$$\nnu(\cdot,\tau) : \Hom_{\C\text{-}\mathrm{alg}}(\D_{\tilG}(X),\C) \longrightarrow \Hom_{\C\text{-}\mathrm{alg}}(Z(\g_{\C}),\C)$$
as in Theorem~\ref{thm:nu-tau}.
\end{proposition}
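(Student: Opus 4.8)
The plan is to deduce Proposition~\ref{prop:nu-ex(iii)} from Proposition~\ref{prop:Stau-transfer}. By that proposition it suffices to describe the canonical map $\vartheta\mapsto(\pi(\vartheta),\tau(\vartheta))$ of Proposition~\ref{prop:pi-tau-theta} explicitly, to compute the Harish--Chandra parameters $\lambda(\vartheta)+\rho_{\tila}$ of $\pi(\vartheta)$ and $\nu(\vartheta)+\rho$ of $\vartheta$, and to check that the displayed affine map $S_{\tau}$ carries the former to the latter modulo $W(\g_{\C})$ for every $\vartheta\in\Disc(G/H)$ with $\tau(\vartheta)=\tau$.

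First I would record the representation-theoretic input, which is exactly the one used in the proof of Proposition~\ref{prop:ex(iii)}. Here $\tilG/\tilH=\SU(2n+2)/\U(2n+1)\simeq\PP^{2n+1}\C$, and $G/H=\Sp(n+1)/(\Sp(n)\times\U(1))$ is its realization as the twistor space of $\HH\PP^n=G/K$, with fiber $F=K/H=\Sp(1)/\U(1)\simeq\mathbb{S}^2$. Thus
\[
\Disc(\tilG/\tilH)=\{\mathcal{H}^{k,k}(\C^{2n+2}) : k\in\N\},\qquad \Disc(K/H)=\{\mathbf{1}\boxtimes\C^{2a+1} : a\in\N\},
\]
while sphericity of $G_{\C}/H_{\C}$ (Lemma~\ref{lem:spherical2}) together with the classical branching law for $\SU(2n+2)\downarrow\Sp(n+1)$ on harmonic polynomials,
\[
\mathcal{H}^{k,k}(\C^{2n+2})\big|_{\Sp(n+1)}\ \simeq\ \bigoplus_{a=0}^{k}\Rep\big(\Sp(n+1),(k+a,k-a,0,\dots,0)\big),
\]
shows $\Disc(G/H)=\{\Rep(\Sp(n+1),(p,q,0,\dots,0)) : p\geq q\geq 0,\ p\equiv q\pmod 2\}$ and $\pi(\Rep(\Sp(n+1),(k+a,k-a,0,\dots,0)))=\mathcal{H}^{k,k}(\C^{2n+2})$. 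Decomposing $L^2(\HH\PP^n,\W_{\mathbf{1}\boxtimes\C^{2a+1}})$ as in \eqref{eqn:decomp2} (it consists of the representations $\Rep(\Sp(n+1),(k+a,k-a,0,\dots,0))$ for $k\geq a$) then gives $\tau(\Rep(\Sp(n+1),(p,q,0,\dots,0)))=\mathbf{1}\boxtimes\C^{p-q+1}$.

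The remaining step is Harish--Chandra bookkeeping. The symmetric space $\tilG/\tilH=\PP^{2n+1}\C$ has rank one with restricted root system $BC_1$ of multiplicities $(4n,1)$, so that, in the normalization of $\tila_{\C}^*\simeq\C$ used in the statement, $\rho_{\tila}=2n+1$, and by the Cartan--Helgason theorem (Fact~\ref{fact:CartanHelgason}) the spherical representation $\mathcal{H}^{k,k}(\C^{2n+2})$ has $\lambda(\vartheta)=2k$; hence $\lambda(\vartheta)+\rho_{\tila}=2k+2n+1$. On the other hand, for $\g_{\C}=\ssp(n+1,\C)$ with $\rho=(n+1,n,\dots,1)$ the $Z(\g_{\C})$-infinitesimal character of $\vartheta=\Rep(\Sp(n+1),(k+a,k-a,0,\dots,0))$ is $\nu(\vartheta)+\rho=(k+a+n+1,\,k-a+n,\,n-1,\dots,1)$ in $\jj_{\C}^*/W(\g_{\C})$. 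Substituting, for $\tau=\mathbf{1}\boxtimes\C^{2a+1}$,
\[
S_{\tau}\big(\lambda(\vartheta)+\rho_{\tila}\big)=S_{\tau}(2k+2n+1)=\big(k+a+n+1,\,k-a+n,\,n-1,\dots,1\big)=\nu(\vartheta)+\rho ,
\]
which is precisely the identity required in Proposition~\ref{prop:Stau-transfer}; therefore $S_{\tau}$ induces $\nnu(\cdot,\tau)$ as in Theorem~\ref{thm:nu-tau}, proving Proposition~\ref{prop:nu-ex(iii)}.

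The main obstacle lies in the second step: correctly matching the three parametrizations, i.e.\ establishing the branching law $\SU(2n+2)\downarrow\Sp(n+1)$ on harmonics and, via \eqref{eqn:decomp2}, identifying the fiber type $\tau(\vartheta)=\mathbf{1}\boxtimes\C^{p-q+1}$ of $\vartheta=\Rep(\Sp(n+1),(p,q,0,\dots,0))$. Once that combinatorial identification is in place, the infinitesimal-character computations and the verification of the affine relation are routine, so I expect no serious difficulty beyond the branching-law bookkeeping (the description of $\Disc(G/H)$ itself being essentially due to Kr\"amer).
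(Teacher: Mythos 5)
Your proposal is correct and follows essentially the same route as the paper: it invokes Proposition~\ref{prop:Stau-transfer}, identifies the map $\vartheta\mapsto(\pi(\vartheta),\tau(\vartheta))$ via the branching law $\SU(2n+2)\downarrow\Sp(n+1)$ and the fiberwise decomposition (the content of Lemma~\ref{lem:ex(iii)} and \eqref{eqn:pitau-iii}), and then matches Harish--Chandra parameters, with $\lambda(\vartheta)+\rho_{\tila}=2j+2n+1$ and $\nu(\vartheta)+\rho=(j+a+n+1,\,j-a+n,\,n-1,\dots,1)$ exactly as in the paper's proof. The only cosmetic difference is your reparametrization $(p,q)=(j+a,j-a)$ and the explicit root-multiplicity computation of $\rho_{\tila}$, both consistent with the paper's normalization \eqref{eqn:HCX-iii}.
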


In order to prove Proposition~\ref{prop:ex(iii)}, we use the following results on finite-dimensional representations.

\begin{lemma}\label{lem:ex(iii)}
\begin{enumerate}
  \item Discrete series for $\tilG/\tilH$, $G/H$, and $F=K/H$:
  \begin{eqnarray*}
    \Disc(\SU(2n+2)/\U(2n+1)) & \!\!=\!\! & \{ \mathcal{H}^{j,j}(\C^{2n+2}) \,:\, j\in\N\} ;\\
    \Disc(\Sp(n+1)/\Sp(n)\times\U(1)) & \!\!=\!\! & \{ \mathcal{H}^{k,\ell}(\HH^{n+1}) \,:\, k,\ell\in\N,\ k-\ell\in 2\N\} ;\\
    \Disc(\Sp(n)\times\Sp(1)/\Sp(n)\times\U(1)) & \!\!=\!\! & \{ \mathbf{1} \boxtimes \C^{2a+1} \,:\, a\in\N\} .
  \end{eqnarray*}
  \item Branching laws for $\SU(2n+2)\downarrow\Sp(n+1)$: For $j\in\N$,
  $$\mathcal{H}^{j,j}(\C^{2n+2})|_{\Sp(n+1)} \ \simeq\ \bigoplus_{k=j}^{2j} \mathcal{H}^{k,2j-k}(\HH^{n+1}).$$
  \item Irreducible decomposition of the regular representation of~$G$:~For~$a\!\in\nolinebreak\!\N$,
  $$L^2\big(\Sp(n+1)/\Sp(n)\times\Sp(1),\mathbf{1}\boxtimes\C^{2a+1}\big) \ \simeq\ \sumplus{\substack{j\in\N\\ j\geq a}}\ \mathcal{H}^{j+a,j-a}(\HH^{n+1}).$$
  \item The ring $S(\g_{\C}/\h_{\C})^H=S(\ssp(n+1,\C)/(\C\oplus\ssp(n,\C))^{\Sp(n)\times\U(1)}$ is generated by two algebraically independent homogeneous elements of degree~$2$.
\end{enumerate}
\end{lemma}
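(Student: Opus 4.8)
The plan is to verify the four assertions in turn, in the spirit of the proof of Lemma~\ref{lem:ex(i)}.

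\emph{Statement (1).} Here $X=\SU(2n+2)/\U(2n+1)\simeq\PP^{2n+1}\C$ is a compact Hermitian symmetric space of $\tilG$, and $F=K/H=(\Sp(n)\times\Sp(1))/(\Sp(n)\times\U(1))\simeq\Sp(1)/\U(1)\simeq\mathbb{S}^2$ is the symmetric space $\PP^1\C$; for these two the lists of discrete series are immediate from the Cartan--Helgason theorem (Fact~\ref{fact:CartanHelgason})---in particular $\Disc(\mathbb{S}^2)$ consists exactly of the odd-dimensional irreducible representations $\C^{2a+1}$ of $\Sp(1)\simeq\SU(2)$, those admitting a nonzero $\U(1)$-fixed vector. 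The space $G/H=\Sp(n+1)/(\Sp(n)\times\U(1))$ is spherical but not symmetric; it is the twistor fibration $X\simeq\PP^{2n+1}\C\to\PP^n\HH=G/K$ with fiber $\mathbb{S}^2$. Since $G$ is simple, $\Disc(G/H)$ is covered by Kr\"amer's classification \cite{kra79}, and the arguments below reprove it. Throughout, $\mathcal{H}^{k,\ell}(\HH^{n+1})$ denotes the irreducible $\Sp(n+1)$-representation of highest weight $k e_1+\ell e_2$, and $k-\ell\in 2\N$ is precisely the condition for a nonzero $(\Sp(n)\times\U(1))$-fixed vector (e.g.\ because the standard module $\C^{2n+2}$, whose $\Sp(n)$-fixed part carries the $\U(1)$-weights $\pm1$, admits no such vector).

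\emph{Statements (2) and (3).} For~(2) I would compute the restriction to $\Sp(n+1)$ of the ``adjoint-type'' representation $\mathcal{H}^{j,j}(\C^{2n+2})$ (highest weight $(j,0,\dots,0,-j)$) by a see-saw dual pair argument as in \cite{ht93}: writing $\C^{2n+2}=\HH^{n+1}=\C^{2n+2}\otimes_{\C}\C^2$ exhibits, inside a metaplectic representation, the see-saw of dual pairs $(\U(2n+2),\U(1,1))$ and $(\Sp(n+1),\OO^{*}(4))$ with $\Sp(n+1)\subset\U(2n+2)$ and $\U(1,1)\subset\OO^{*}(4)$; the induced correspondence yields $\mathcal{H}^{j,j}(\C^{2n+2})|_{\Sp(n+1)}\simeq\bigoplus_{k=j}^{2j}\mathcal{H}^{k,2j-k}(\HH^{n+1})$. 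Alternatively, the inclusion ``$\subseteq$'' comes from the Borel--Weil conormal estimate of Proposition~\ref{prop:branchnormal} for a suitable $\g_{\C}$-compatible parabolic, with equality then forced either by a dimension count or by the a priori knowledge of $\Disc(\tilG/\tilH)$ via \eqref{eqn:decomp1}. One checks each summand lies in $\Disc(G/H)$ (since $k-(2j-k)=2(k-j)\in 2\N$ as $k\ge j$), consistent with~(1), and the cases $j=0,1$ give a quick check. Statement~(3) is then obtained from the classical branching rule for $\Sp(n+1)\downarrow\Sp(n)\times\Sp(1)$ (see e.g.\ \cite{gw09}) and Frobenius reciprocity: for $\vartheta=\mathcal{H}^{k,\ell}(\HH^{n+1})$ the $\Sp(n)$-fixed subspace is the irreducible $\Sp(1)$-module $\C^{k-\ell+1}$, so $[\vartheta|_{\Sp(n)\times\Sp(1)}:\mathbf{1}\boxtimes\C^{2a+1}]=1$ iff $k-\ell=2a$, and re-indexing via $j:=(k+\ell)/2$ gives the displayed decomposition. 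This is automatically consistent with~(2) and Lemma~\ref{lem:spherical2}: $\pi(\mathcal{H}^{k,\ell}(\HH^{n+1}))=\mathcal{H}^{(k+\ell)/2,(k+\ell)/2}(\C^{2n+2})$ and $\tau(\mathcal{H}^{k,\ell}(\HH^{n+1}))=\mathbf{1}\boxtimes\C^{k-\ell+1}$.

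\emph{Statement (4).} This is a direct computation. Since $H=\Sp(n)\times\U(1)$ with $\U(1)$ a maximal torus of the $\Sp(1)$-factor of $K$, one has, as an $(\Sp(n)\times\U(1))$-module,
\[
\g_{\C}/\h_{\C}\ \simeq\ \bigl(\C^{2n}\otimes\C_1\bigr)\oplus\bigl(\C^{2n}\otimes\C_{-1}\bigr)\oplus\C_2\oplus\C_{-2},
\]
where $\C^{2n}$ is the standard $\Sp(n)$-module and $\C_r$ the weight-$r$ character of $\U(1)$: the first two summands form the holomorphic tangent space $\C^{2n}\otimes\C^2$ of $\PP^n\HH=G/K$, and the last two are the root spaces of $\ssp(1,\C)$. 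Using $S^a(V\otimes L)\simeq S^a(V)\otimes L^{\otimes a}$ for a line $L$, together with the irreducibility and self-duality of $S^a(\C^{2n})$ over $\Sp(n,\C)$, the space of $(\Sp(n)\times\U(1))$-invariants in $S^N(\g_{\C}/\h_{\C})$ is in bijection with the set of pairs $(a,c)\in\N^2$ satisfying $2a+2c=N$; hence $\dim S^N(\g_{\C}/\h_{\C})^H=v_{(2,2)}(N)$ in the notation of Lemma~\ref{lem:struct-DGX}. Since $X_{\C}$ is $G_{\C}$-spherical, $S(\g_{\C}/\h_{\C})^H\simeq\operatorname{gr}(\D_G(X))$ is a polynomial ring (Knop \cite{kno94}), so by Lemma~\ref{lem:struct-DGX}.(1) its generators must have degrees $(2,2)$. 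This also appears in Schwarz's tables \cite{sch78}.

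\emph{Expected difficulty.} The only non-routine step is the branching law~(2): choosing the right dual pair (or the right $\g_{\C}$-compatible parabolic for Proposition~\ref{prop:branchnormal}), checking sharpness of the upper estimate, and keeping all highest-weight normalizations for $\mathcal{H}^{k,\ell}(\HH^{n+1})$ coherent across~(1)--(3). Parts (1), (3), (4) are then bookkeeping.
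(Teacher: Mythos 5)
Your proposal is correct and follows essentially the same route as the paper: (1) via the Cartan--Helgason theorem for the two symmetric spaces and Kr\"amer for $G/H$, (2)--(3) via the classical Howe--Tan/see-saw circle of ideas together with Frobenius reciprocity (the paper simply cites \cite[Prop.\,5.1]{ht93} and \cite{kob94}), and (4) from the decomposition $\g_{\C}/\h_{\C}\simeq(\C^{2n}\boxtimes\C_1)\oplus(\C^{2n}\boxtimes\C_{-1})\oplus\C_2\oplus\C_{-2}$ and the multiplicity-freeness and self-duality of $S(\C^{2n})$ as an $\Sp(n)$-module. The one genuine divergence is the end of (4): the paper does not invoke polynomiality from sphericity at all, but observes directly that $S(\g_{\C}/\h_{\C})^{H}\simeq S(\C_2\oplus\C_{-2})^{\U(1)}\otimes S(\C^{2n}\oplus\C^{2n})^{\Sp(n)}$, a tensor product of two polynomial rings each on one quadratic generator; you instead compute only the Hilbert function and then appeal to Knop (or Schwarz) plus Lemma~\ref{lem:struct-DGX}.(1) to pin down the degrees. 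That is logically fine, but note that polynomiality of the \emph{graded} ring is not a formal consequence of Knop's theorem as quoted in the paper (which concerns the filtered ring $\D_G(X)$); you would be using his cotangent-bundle result $\C[T^*X]^G\simeq S(\aaa_{\C})^W$ or Schwarz's tables. Since your own weight computation already shows that the $\Sp(n)$-invariants and the $\U(1)$-invariants decouple, the paper's direct factorization is available to you at no extra cost and makes the argument self-contained.
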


Here, for $k\geq\ell\geq 0$ we denote by $\mathcal{H}^{k,\ell}(\HH^{n+1})$ the irreducible finite-dimensional representation of $\Sp(n+1)$ with highest weight $(k,\ell,0,\dots,0)$.

\begin{proof}[Proof of Lemma~\ref{lem:ex(iii)}]
Since $\tilG/\tilH$ and $K/H$ are symmetric spaces, the first and third formulas of~(1) follow from the Cartan--Helgason theorem (Fact~\ref{fact:CartanHelgason}), see also \cite{ht93} for the spherical harmonics on~$\C^N$.
For the second formula of~(1), see \cite{kra79}.
The branching law in~(2) and the decomposition in~(3) are classical.
They can be derived from \cite[Prop.\,5.1]{ht93} and the Frobenius reciprocity; they are also a special case of the general results of \cite{kob94} on the branching laws of unitary representations.

We now prove~(4).
Recall that $\C_m$, for $m\in\Z$, denotes the one-dimensional representation $\U(1)\rightarrow\GL(1,\C)$ given by $z\mapsto z^m$.
For $m\neq 0$, the graded ring $S(\C_m\oplus\C_{-m})^{\U(1)}$ is generated by one homogeneous element of degree~$2$.
As $H$-modules, we have
$$\g_{\C}/\h_{\C} \simeq (\mathbf{1}\boxtimes\C_2) \oplus (\mathbf{1}\boxtimes\C_{-2}) \oplus (\C^{2n}\boxtimes\C_1) \oplus (\C^{2n}\boxtimes\C_{-1}),$$
hence the isomorphism of $\U(1)$-modules
$$S(\g_{\C}/\h_{\C})^{\Sp(n)\times\{ 1\}} \simeq S(\C_2\oplus\C_{-2}) \otimes S(\C^{2n}\oplus\C^{2n})^{\Sp(n)\times\{ 1\}}.$$
Since $S(\C^{2n})$ decomposes into a \emph{multiplicity-free} sum
$$S(\C^{2n}) \simeq \bigoplus_{j\in\N} S^j(\C^{2n})$$
of \emph{self-dual} irreducible representations of $\Sp(n)$, we see that the dimension of $S^N(\C^{2n}\oplus\C^{2n})^{\Sp(n)}$ is $0$ if $N$ is odd and $1$ if $N$ is even.
In particular, $\Sp(1)$ acts trivially on $S(\C^2\boxtimes\C^{2n})^{\Sp(n)\times\{ 1\}}=S(\C^{2n}\oplus\C^{2n})^{\Sp(n)}$, and so does its subgroup $\U(1)$.
Therefore,
$$S(\g_{\C}/\h_{\C})^{\Sp(n)\times\U(1)} \simeq S(\C_2\oplus\C_{-2})^{\U(1)} \otimes S(\C^{2n}\oplus\C^{2n})^{\Sp(n)}.$$
We conclude using the fact that both factors in the tensor product are polynomial rings generated by a single homogeneous element of degree~$2$.
\end{proof}

\begin{proof}[Proof of Proposition~\ref{prop:ex(iii)}]
(1) By Lemma~\ref{lem:ex(iii)}, the map $\vartheta\mapsto (\pi(\vartheta),\tau(\vartheta))$ of Proposition~\ref{prop:pi-tau-theta} is given by
\begin{equation} \label{eqn:pitau-iii}
\mathcal{H}^{k,\ell}(\HH^{n+1}) \longmapsto \big(\mathcal{H}^{\frac{k+\ell}{2},\frac{k+\ell}{2}}(\C^{2n+2}), \mathbf{1} \boxtimes \C^{k-\ell+1}\big).
\end{equation}
The Casimir operators for $\tilG$, $G$, and~$K$ act on these representations as the following scalars.
\begin{center}
\begin{tabular}{|c|c|c|}
\hline
Operator & Representation & Scalar\\
\hline\hline
$C_{\tilG}$ & $\mathcal{H}^{\frac{k+\ell}{2},\frac{k+\ell}{2}}(\C^{2n+2})$ & $\frac{1}{2} (k+\ell) (k+\ell+4n+2)$\\
\hline
$C_G$ & $\mathcal{H}^{k,\ell}(\HH^{n+1})$ & $k^2+\ell^2+2(k+\ell)n+2k$\\
\hline
$C_K$ & $\mathbf{1}\boxtimes\C^{k-\ell+1}$ & $(k-\ell)(k-\ell+2)$\\
\hline
\end{tabular}
\end{center}
This, together with the identity
$$(k+\ell) (k+\ell+4n+2) = 2 \big(k^2+\ell^2+2(k+\ell)n+2k\big) - (k-\ell)(k-\ell+2),$$
implies $2\,\dd\ell(C_{\tilG}) = 2\,\dd\ell(C_G) - \dd r(C_K)$.

(2) Since $\tilG/\tilH$ and $F=K/H$ are symmetric spaces, we obtain $\D_{\tilG}(X)$ and $\D_K(F)$ using the Harish-Chandra isomorphism.
We now focus on $\D_G(X)$.
We only need to prove the first equality, since the other ones follow from the relations between the generators.
For this, using Lemmas \ref{lem:struct-DGX}.(3) and \ref{lem:ex(iii)}.(4), it suffices to show that the two differential operators $\dd\ell(C_{\tilG})$ and $\dd r(C_K)$ on~$X$ are algebraically independent.
Let $f$ be a polynomial in two variables such that $f(\dd\ell(C_{\tilG}),\dd r(C_K))=0$ in $\D_G(X)$.
By letting this differential operator act on the $G$-isotypic component $\vartheta = \mathcal{H}^{k,\ell}(\HH^{n+1})$ in $C^{\infty}(X)$, we obtain
$$f\Big( \frac{1}{2} (k+\ell) (k+\ell+4n+2), (k-\ell)(k-\ell+2) \Big) = 0$$
for all $k,\ell\in\N$ with $k-\ell\in 2\N$, hence $f$ is the zero polynomial.
\end{proof}

\begin{proof}[Proof of Proposition~\ref{prop:nu-ex(iii)}]
We use Proposition~\ref{prop:Stau-transfer} and the formula \eqref{eqn:pitau-iii} for the map $\vartheta\mapsto (\pi(\vartheta),\tau(\vartheta))$ of Proposition~\ref{prop:pi-tau-theta}.
Let $\tau=\mathbf{1}\boxtimes\C^{2a+1}\in\Disc(K/H)$ with $a\in\N$.
If $\vartheta\in\Disc(G/H)$ satisfies $\tau(\vartheta)=\tau$, then $\vartheta$ is of the form
$$\vartheta = \mathcal{H}^{j+a,j-a}(\HH^{n+1}) \simeq \Rep\big(\Sp(n+1),(j+a,j-a,0,\dots,0)\big)$$
for some $j\in\N$ with $j\geq a$, by \eqref{eqn:pitau-iii}.
The algebra $\D_{\tilG}(X)$ acts on the irreducible $\tilG$-submodule
$$\pi(\vartheta) = \mathcal{H}^{j,j}(\C^{2n+2}) \simeq \Rep\big(\U(2n+2),(j,0,\dots,0,-j)\big)$$
by the scalars
$$\lambda(\vartheta) + \rho_{\tila} = 2(j + n + 1/2) \in \C/(\Z/2\Z)$$
via the Harish-Chandra isomorphism \eqref{eqn:HCX-iii}, whereas the algebra $Z(\g_{\C})$ acts on the irreducible $G$-module $\vartheta = \mathcal{H}^{j+a,j-a}(\HH^{n+1})$ by the scalars
$$\nu(\vartheta) + \rho = (n+1+j+a,n+j-a,n-1,\dots,1) \in \C^{n+1}/W(C_{n+1})$$
via \eqref{eqn:HCZg-iii}.
Thus the affine map $S_{\tau}$ in Proposition~\ref{prop:nu-ex(iii)} sends $\lambda(\vartheta)+\rho_{\tila}$ to $\nu(\vartheta)+\rho$ for any $\vartheta\in\Disc(G/H)$ such that $\tau(\vartheta)=\tau$, and we conclude using Proposition~\ref{prop:Stau-transfer}.
\end{proof}

\subsection{The case $(\tilG,\tilH,G)=(\SU(2n+2),\Sp(n+1),\U(2n+1))$}\label{subsec:ex(iv)}

To simplify the computations, we use a central extension of $\tilG$ by $\U(1)$ and work with
$$(\tilG,\tilH,G) = \big(\U(2n+2), \Sp(n+1), \U(2n+1)\big).$$
This increases the dimension of $X=\tilG/\tilH$ by one.
By using block matrices of sizes $2n+1$ and~$1$, the group $G=\U(2n+1)$ embeds into $\SU(2n+2)$ as $g\mapsto (g,\det g^{-1})$ and into $\U(2n+2)$ as $g\mapsto (g,1)$.
Consequently, $H=\tilH\cap G$ is isomorphic to $\Sp(n)\times\U(1)$ in the original setting, and to $\Sp(n)$ in the present setting where $\tilG=\U(2n+2)$.
The only maximal connected proper subgroup of $G$ containing~$H$ is $K=\U(2n)\times\U(1)$.
The space $X_{\C}=G_{\C}/H_{\C}=\GL(2n+1,\C)/\Sp(n,\C)$ is $G_{\C}$-spherical but is not a symmetric space.

Since $X=\tilG/\tilH$ and $F=K/H$ are classical symmetric spaces, the structure of the rings $\D_{\tilG}(X)$ and $\D_K(F)$ is well understood by the Harish-Chandra isomorphism \eqref{eqn:HC-isom}.
Further, $\D_{\tilG}(X)=\dd\ell(Z(\tilg_{\C}))$ and $\D_K(F)=\dd r_F(Z(\kk_{\C}))$ by Fact~\ref{fact:ZgDGX}.
On the other hand, $G/H\simeq X$ is not a symmetric space.
We now give explicit generators of the ring $\D_G(X)$ by using the fibration $X\overset{F}{\longrightarrow} G/K$.
We refer to Section~\ref{subsec:symmsp} for the notation $\chi_{\lambda}^X$, $\chi_{\mu}^F$, and~$\chi_{\nu}^G$ for the Harish-Chandra isomorphisms.

The rank of the symmetric space $X=\tilG/\tilH$ is $n+1$, and the restricted root system $\Sigma(\tilg_{\C},\tila_{\C})$ is of type~$A_n$.
We take the standard basis $\{ h_1,\dots,h_{n+1}\}$ of~$\tila_{\C}^{\ast}$ and choose a positive system such that
$$\Sigma^+(\tilg_{\C},\tila_{\C}) = \{ h_j - h_k : 1\leq j<k\leq n+1\} .$$
Then the Harish-Chandra isomorphism \eqref{eqn:Psi-dual} amounts to
\begin{equation} \label{eqn:HCZg-iv}
\Hom_{\C\text{-}\mathrm{alg}}(\D_{\tilG}(X),\C)\ \simeq\ \tila_{\C}^*/\widetilde{W}\ \simeq\ \C^{n+1}/\mathfrak{S}_{n+1}.
\end{equation}
Using these coordinates, for $k\in\N_+$ we define $P_k\in\D_{\tilG}(X)$ by
$$\chi_{\lambda}^X(P_k) = \sum_{j=1}^{n+1} \lambda_j^k$$
for $\lambda=(\lambda_1,\dots,\lambda_{n+1})\in\tila_{\C}^{\ast}/W(A_n)\simeq\C^{n+1}/\mathfrak{S}_{n+1}$.
Then $\D_{\tilG}(X)$ is a polynomial algebra generated by $P_1,\dots,P_{n+1}$.

We observe that the fiber $F=K/H$ is isomorphic to the direct product $\U(2n)/\Sp(n) \times \U(1)$; the first component is the same as $\tilG/\tilH$ with $n+1$ replaced by~$n$.
Thus the restricted root system $\Sigma(\kk_{\C},(\aaa_F)_{\C})$ is of type $A_{n-1}$, and we define similarly $Q\in\D_K(F)$ and $Q_k\in\D_K(F)$ for $k\in\N_+$ by
$$\chi_{\mu}^F(Q) = \mu_0 \quad\quad\mathrm{and}\quad\quad \chi_{\mu}^F(Q_k) = \sum_{j=1}^n \mu_j^k$$
for $\mu=(\mu_1,\dots,\mu_n,\mu_0)\in(\aaa_F)_{\C}^{\ast}/(W(A_{n-1})\times\{ 1\})$.
Then
\begin{eqnarray*}
\D_K(F) & \simeq & \D_{\U(2n)}(\U(2n)/\Sp(1)) \otimes \D_{\U(1)}(\U(1))\\
& \simeq & \C[Q_1,\dots,Q_n] \otimes \C[Q].
\end{eqnarray*}

Finally, take a Cartan subalgebra $\jj_{\C}$ of $\g_{\C}=\gl(2n+1,\C)$ and the standard basis $\{ f_1,\dots,f_{2n+1}\}$ of~$\jj_{\C}^{\ast}$ such that the root system $\Delta(\g_{\C},\jj_{\C})$ is given as
$$\{ \pm (f_j - f_k) : 1\leq j<k\leq 2n+1\} .$$
The Harish-Chandra isomorphism \eqref{eqn:HCchi} amounts to
\begin{equation} \label{eqn:HCX-iv}
\Hom_{\C\text{-}\mathrm{alg}}(Z(\g_{\C}),\C)\ \simeq\ \jj_{\C}^*/W(\g_{\C})\ \simeq\ \C^{2n+1}/\mathfrak{S}_{2n+1}.
\end{equation}
For $k\in\N_+$, we define $R_k\in Z(\g_{\C})$ by
$$\chi_{\nu}^G(R_k) = \sum_{j=1}^{2n+1} \nu_j^k$$
for $\nu=(\nu_1,\dots,\nu_{2n+1})\in\jj_{\C}^{\ast}/W(A_{2n})\simeq\C^{2n+1}/\mathfrak{S}_{2n+1}$.
Note that $P_k$, $\iota(Q_k)$, and $\dd r(R_k)$ are all differential operator of order $2k$ on~$X$.

The group $K=\U(2n)\times\nolinebreak\U(1)$ is not simple; for $i\in\{ 1,2\}$, we denote by $C_K^{(i)}\in Z(\kk_{\C})$ the Casimir element of the $i$-th factor of~$K$.
With this notation, here is our description of $\D_G(X)$.

\begin{proposition}[Generators and relations] \label{prop:ex(iv)}
For
$$X = \tilG/\tilH = \U(2n+2)/\Sp(n+1) \simeq \U(2n+1)/\Sp(n) = G/H$$
and $K=\U(2n)\times\U(1)$, we have
\begin{enumerate}
  \item $\left\{ \begin{array}{l}
  P_k + \iota(Q_k) = 2^k\,\dd\ell(R_k) \quad\mathrm{for\ all\ }k\in\N_+;\\
  \dd\ell(C_{\tilG}) = 2\,\dd\ell(C_G) - \dd r(C_K^{(1)});
  \end{array}\right.$
  \item $\left\{ \begin{array}{ccl}
\D_{\tilG}(X) & \!\!\!=\!\!\! & \C[P_1,\dots,P_{n+1}];\\
\D_K(F) & \!\!\!=\!\!\! & \C[Q,Q_1,\dots,Q_n];\\
\D_G(X) & \!\!\!=\!\!\! & \C[P_1,\dots,P_{n+1},\iota(Q_1),\dots,\iota(Q_n)]\\
& & \hspace{0.3cm} =\, \C[\iota(Q_1),\dots,\iota(Q_n),\dd\ell(R_1),\dots,\dd\ell(R_{n+1})]\\
& & \hspace{0.3cm} =\, \C[P_1,\dots,P_{n+1},\dd\ell(R_1),\dots,\dd\ell(R_n)]\\
& & \hspace{0.3cm} =\, \C[P_1,\dots,P_n,\dd\ell(R_1),\dots,\dd\ell(R_{n+1})].
\end{array}\right.$
\end{enumerate}
\end{proposition}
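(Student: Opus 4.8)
The plan is to follow the four-step strategy laid out in Section~\ref{subsec:descript-comput}, specialized to the triple $(\tilG,\tilH,G)=(\U(2n+2),\Sp(n+1),\U(2n+1))$ with $H=\Sp(n)$ and $K=\U(2n)\times\U(1)$. The backbone is Proposition~\ref{prop:pi-tau-theta} (the map $\vartheta\mapsto(\pi(\vartheta),\tau(\vartheta))$), Proposition~\ref{prop:PQR}/Proposition~\ref{prop:chiPQR} (converting identities of infinitesimal characters into identities of differential operators), and Lemma~\ref{lem:struct-DGX} (reducing algebraic independence and generation to a count in $S(\g_\C/\h_\C)^H$). So I first need the representation-theoretic input, which I will package as a lemma analogous to Lemmas~\ref{lem:ex(i)}--\ref{lem:ex(iii)}: (a) the description of $\Disc(\tilG/\tilH)=\Disc(\U(2n+2)/\Sp(n+1))$, of $\Disc(G/H)=\Disc(\U(2n+1)/\Sp(n))$ (the latter is nonsymmetric; cite \cite{kra79} or \cite{gg13}), and of $\Disc(K/H)\simeq\Disc(\U(2n)/\Sp(n))\times\widehat{\U(1)}$; (b) the branching law $\U(2n+2)\downarrow\U(2n+1)$ restricted to the relevant $\tilH$-spherical representations; (c) the decomposition of $L^2(G/K,\W_\tau)$ into $G$-irreducibles; and (d) the fact that $S(\gl(2n+1,\C)/\ssp(n,\C))^{\Sp(n)}$ is a polynomial ring on algebraically independent generators of degrees $1,1,2,2,\dots,n,n,n+1$ — this last point from \cite{sch78} or \cite{kno94}, and it is the key numerical fact that forces $m=n{+}1$, $n_{\text{fib}}=n{+}1$ in the notation of Theorem~\ref{thm:main-explicit}, matching $\rank\tilG/\tilH+\rank K/H=\rank G/H$, i.e.\ $(n{+}1)+(n{+}1)=2n{+}2$, consistent with Table~\ref{table2}.

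Next, in the second step I fix the generators already named in the excerpt: $P_k$ ($1\le k\le n{+}1$) via power sums $\chi^X_\lambda(P_k)=\sum_j\lambda_j^k$; $Q$, $Q_k$ ($1\le k\le n$) via $\chi^F_\mu(Q)=\mu_0$ and $\chi^F_\mu(Q_k)=\sum_j\mu_j^k$; and $R_k$ ($1\le k\le 2n{+}1$) via $\chi^G_\nu(R_k)=\sum_j\nu_j^k$. The third step is the computation proper: I write the map $\vartheta\mapsto(\pi(\vartheta),\tau(\vartheta))$ explicitly — by analogy with case (iii), a representation $\vartheta=\mathcal H^{k,\ell}(\cdots)$-type parameter of $\U(2n+1)$ with $\Sp(n)$-fixed vectors should map to the ``doubled'' parameter on the $\U(2n+2)$ side and to $(\text{truncation})\boxtimes\C_{?}$ on the $K$ side — and then compute the scalars by which $P_k$, $\iota(Q_k)$, $\dd\ell(R_k)$ act, using Lemma~\ref{lem:chiZgDGX} applied to $\tilG$ and to $K$. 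With the right choice of $\rho$-shifts (writing $a_i=j_i+(\text{shift})$ and so on) the power-sum relation $\sum_i a_i^k+\sum_i b_i^k=2^k\cdot(\text{power sum of }\{a_i/2,b_i/2\}\text{-type data})$ should come out as the clean identity $P_k+\iota(Q_k)=2^k\,\dd\ell(R_k)$, and summing/combining the $k=1,2$ pieces gives the Casimir relation $\dd\ell(C_{\tilG})=2\,\dd\ell(C_G)-\dd r(C_K^{(1)})$. One then invokes Proposition~\ref{prop:PQR} (all $\vartheta\in\Disc(G/H)$ appear with multiplicity one by $G_\C$-sphericity, Fact~\ref{fact:spherical-cpt} and Lemma~\ref{lem:spherical2}) to upgrade the scalar identities to operator identities in $\D_G(X)$.

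The final step is the generation/algebraic-independence claim in part~(2). Using Lemma~\ref{lem:struct-DGX}.(3), it suffices to show each of the four listed families is a set of $2n{+}1$ algebraically independent elements and generates $\D_G(X)$; by the degree count in (d) the graded ring $S(\g_\C/\h_\C)^H$ has Hilbert series matching a polynomial ring on generators of those degrees, so I just need that the chosen families have principal symbols that are algebraically independent in $\operatorname{gr}\D_G(X)\simeq S(\g_\C/\h_\C)^H$, equivalently (by Lemma~\ref{lem:struct-DGX}.(1)) that the multiset of degrees of each family is $\{1,1,2,2,\dots,n,n,n{+}1\}$ and that the evaluation map $\vartheta\mapsto(\text{scalars})$ has Zariski-dense image in $\C^{2n+1}$ — exactly as in the proof of Proposition~\ref{prop:ex(ii)}. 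For instance the family $\{P_1,\dots,P_n,\dd\ell(R_1),\dots,\dd\ell(R_{n+1})\}$ has degrees $1,\dots,n$ and $1,\dots,n{+}1$, i.e.\ $\{1,1,2,2,\dots,n,n,n{+}1\}$, and the linear relation $P_{n+1}=2^{n+1}\dd\ell(R_{n+1})-\iota(Q_{n+1})$ together with the other relations lets me trade between families; and the mixed families combine one ``$\U(1)$-type'' operator coming from $\iota(Q)$ or from $R_{n+1}$ with the right number of ``$A$-type'' operators. The main obstacle I expect is \textbf{pinning down the branching law $\U(2n+2)\downarrow\U(2n+1)$ on $\Sp$-spherical vectors and the exact $\rho$-shifts}, since $G/H$ is nonsymmetric so I cannot simply quote Cartan--Helgason on that side — I will instead determine the parametrization of $\Disc(\U(2n+1)/\Sp(n))$ via \cite{kra79} and pull the infinitesimal character out of the $L^2(G/K,\W_\tau)$-decomposition in (c), cross-checking that the resulting power-sum identity is consistent across all $k$ (a strong internal consistency check). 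Everything else is bookkeeping in symmetric functions.
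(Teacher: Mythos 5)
Your plan follows the paper's own proof essentially step for step: the same map $\vartheta\mapsto(\pi(\vartheta),\tau(\vartheta))$ extracted from the classical branching law $\U(2n+2)\downarrow\U(2n+1)$ and the $L^2(G/K,\W_\tau)$-decomposition, the same power-sum generators with the $\rho$-shifts $a_i=j_i+n-2i+2$, $b_i=k_i+n-2i+1$ yielding $P_k+\iota(Q_k)=2^k\,\dd\ell(R_k)$ via Proposition~\ref{prop:PQR} (and the Casimir relation as the $k=2$ instance up to a constant check), and the same Lemma~\ref{lem:struct-DGX} degree-count plus Zariski-density argument for part~(2). The only divergence is that you quote \cite{sch78} or \cite{kno94} for the degrees $1,1,2,2,\dots,n,n,n+1$ of $S(\g_{\C}/\h_{\C})^{H}$, where the paper gives a self-contained proof via the Cartan--Helgason theorem and Pieri's rule; this is harmless.
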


In all the equalities of Proposition~\ref{prop:ex(iv)}.(2), the right-hand side denotes the polynomial ring generated by algebraically independent elements.

The set $\Disc(K/H)$ consists of the representations of $K=\U(2n)\times\U(1)$ of the form $\tau=\Rep(\U(2n),t_n(k,k))\boxtimes\Rep(\U(1),a)$ for $k\in (\Z^n)_{\geq}$ and $a\in\Z$ (see Lemma~\ref{lem:ex(iv)}.(1) below).
For $k=(k_1,\dots,k_n)\in\Z^n$, we set
$$b(k) := \big(k_i + n - 2i + 1\big)_{1\leq i\leq n} \in \C^n.$$

\begin{proposition}[Transfer map] \label{prop:nu-ex(iv)}
Let
$$X = \tilG/\tilH = \U(2n+2)/\Sp(n+1) \simeq \U(2n+1)/\Sp(n) = G/H$$
and $K=\U(2n)\times\U(1)$.
For $\tau=\Rep(\U(2n),t_n(k,k))\boxtimes\Rep(\U(1),a)\in\Disc(K/H)$ with $k\in (\Z^n)_{\geq}$ and $a\in\Z$, the affine map
\begin{eqnarray*}
S_{\tau} :\ \tila_{\C}^* \simeq \C^n & \longrightarrow & \hspace{0.7cm} \C^{2n+1} \hspace{0.6cm} \simeq \jj_{\C}^*\\
\lambda \hspace{0.15cm} & \longmapsto & t_{n+1,n}\Big(\frac{\lambda}{2},b(k)\Big)
\end{eqnarray*}
induces a transfer map
$$\nnu(\cdot,\tau) : \Hom_{\C\text{-}\mathrm{alg}}(\D_{\tilG}(X),\C) \longrightarrow \Hom_{\C\text{-}\mathrm{alg}}(Z(\g_{\C}),\C)$$
as in Theorem~\ref{thm:nu-tau}.
\end{proposition}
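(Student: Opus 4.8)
The plan is to follow the same four-step strategy laid out in Section~\ref{subsec:descript-comput} and already executed in the previous cases (i)--(iii), but now for the triple $(\tilG,\tilH,G)=(\U(2n+2),\Sp(n+1),\U(2n+1))$ with $K=\U(2n)\times\U(1)$ and $F=K/H\simeq\U(2n)/\Sp(n)\times\U(1)$. The engine is Proposition~\ref{prop:Stau-transfer}: once we have an explicit description of the map $\vartheta\mapsto(\pi(\vartheta),\tau(\vartheta))$ of Proposition~\ref{prop:pi-tau-theta}, verifying that the stated affine $S_\tau$ satisfies $S_\tau(\lambda(\vartheta)+\rho_{\tila})\equiv\nu(\vartheta)+\rho\pmod{W(\g_\C)}$ for all $\vartheta\in\Disc(G/H)$ with $\tau(\vartheta)=\tau$ is then a routine bookkeeping exercise in $\rho$-shifts.

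First I would record the relevant representation-theoretic data in a lemma analogous to Lemmas~\ref{lem:ex(i)}--\ref{lem:ex(ii)}: (1) $\Disc(\U(2n+2)/\Sp(n+1))$, $\Disc(\U(2n+1)/\Sp(n))$, and $\Disc(F)$, the first and third coming from the Cartan--Helgason theorem (Fact~\ref{fact:CartanHelgason}) since $\tilG/\tilH$ and $F$ are symmetric, the second from Kr\"amer's classification \cite{kra79} (or reconstructed via \eqref{eqn:decomp2}); (2) the branching law $\U(2n+2)\downarrow\U(2n+1)$, obtained from the classical $\GL_N\downarrow\GL_{N-1}$ interlacing rule together with Frobenius reciprocity; (3) the $L^2$-decomposition of the regular representation of $G$ on $L^2(G/K,\tau)$; and (4) the structure of $S(\g_\C/\h_\C)^H=S(\gl(2n+1,\C)/\ssp(n,\C))^{\Sp(n)}$, which by \cite{sch78} (or \cite[\S10]{kno94}) is a polynomial ring in generators of degrees $1,2,2,4,4,\dots$ --- here the degree-$1$ generator reflects the $\U(1)$-factor of $K$. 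From (2)--(3) one reads off that $\vartheta=\Rep(\U(2n+1),t_{n+1,n}(j,k))$ maps to $\pi(\vartheta)=\Rep(\U(2n+2),t_{n+1,n+1}(j,j))$ and $\tau(\vartheta)=\Rep(\U(2n),t_n(k,k))\boxtimes\Rep(\U(1),a)$ with an appropriate $a=a(j,k)$ depending on the total degree (tracking the determinant twist introduced by the embedding $\U(2n+1)\hookrightarrow\SU(2n+2)$ versus $\U(2n+1)\hookrightarrow\U(2n+2)$).

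Next, for the generators $P_k,Q_k,R_k$ defined above via power sums of Harish-Chandra parameters, I would compute the scalars by which $P_k$, $\iota(Q_k)$, $\dd\ell(R_k)$ act on $\pi(\vartheta)$, $\tau(\vartheta)$, $\vartheta$ respectively, using Lemma~\ref{lem:chiZgDGX} applied to $\tilG$ and to $K$. Setting $a_i:=j_i+(n+1)-i+\tfrac12$ and $b_i:=k_i+n-i+\tfrac12$ (the entries of $\lambda(\vartheta)+\rho_{\tila}$ halved and of $\Psi_{\vartheta}$), one gets $P_k\mapsto 2^k\sum_i a_i^k$, $\iota(Q_k)\mapsto 2^k\sum_i b_i^k$, $\dd\ell(R_k)\mapsto\sum_i a_i^k+\sum_i b_i^k$, so the identity $\sum_i a_i^k+\sum_i b_i^k = \big(2^k\sum a_i^k + 2^k\sum b_i^k\big)/2^k$ gives $P_k+\iota(Q_k)=2^k\dd\ell(R_k)$ by Proposition~\ref{prop:PQR}; the Casimir relation $\dd\ell(C_{\tilG})=2\,\dd\ell(C_G)-\dd r(C_K^{(1)})$ follows from the analogous $k=2$ computation (the $\U(1)$-factor contributes nothing to $C_K^{(1)}$). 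For the generation statements in part~(2) I would, exactly as in the proof of Proposition~\ref{prop:ex(ii)}, invoke Lemma~\ref{lem:struct-DGX}.(3) together with the degree count from (4): the $2n+1$ operators $P_1,\dots,P_{n+1},\iota(Q_1),\dots,\iota(Q_n)$ have the correct degrees $1,2,\dots,n+1,2,4,\dots,2n$ matching the generators of $S(\g_\C/\h_\C)^H$, and algebraic independence follows because the tuple $(a_1,\dots,a_{n+1},b_1,\dots,b_n)$ ranges over a Zariski-dense subset of $\C^{2n+1}$ as $\vartheta$ varies; the alternative generating sets are then obtained from the linear relations $P_k+\iota(Q_k)=2^k\dd\ell(R_k)$, noting that $R_1,\dots,R_{n+1}$ together with the $P_k$ or the $\iota(Q_k)$ suffice since one can solve for $\iota(Q_k)$ or $P_k$.

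The main obstacle I anticipate is pinning down the exact $\U(1)$-weight $a(j,k)$ in $\tau(\vartheta)$ and the precise normalization of $\rho_{\tila}$, because of the two competing embeddings of $\U(2n+1)$ (the determinant twist is what makes this case delicate and is the reason the authors pass to the central extension $\U(2n+2)$ in the first place). Getting the det-twist bookkeeping consistent between the branching law, the $L^2$-decomposition, and the Harish-Chandra parametrization of $\Disc(K/H)$ is where sign and index errors can creep in; everything else is a direct transcription of the template from Section~\ref{subsec:ex(ii)}. Once $\vartheta\mapsto(\pi(\vartheta),\tau(\vartheta))$ is correctly recorded, Proposition~\ref{prop:Stau-transfer} delivers the transfer map $S_\tau$ with $S_\tau(\lambda)=t_{n+1,n}(\lambda/2,b(k))$ immediately, finishing Proposition~\ref{prop:nu-ex(iv)} and, with the relations and generation statements, Proposition~\ref{prop:ex(iv)} as well; this completes Theorems~\ref{thm:main}, \ref{thm:main-explicit}, \ref{thm:transfer}, and~\ref{thm:nu-tau} and Corollary~\ref{cor:rel-Lapl} in case~(iv) of Table~\ref{table1}.
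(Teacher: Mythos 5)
Your strategy is the paper's own (describe $\vartheta\mapsto(\pi(\vartheta),\tau(\vartheta))$ via the branching law $\U(2n+2)\downarrow\U(2n+1)$ and the fiberwise $L^2$-decomposition, then feed the Harish-Chandra parameters into Proposition~\ref{prop:Stau-transfer}), but the step you dismiss as ``routine bookkeeping in $\rho$-shifts'' is exactly where your proposal fails, and since the whole content of Proposition~\ref{prop:nu-ex(iv)} is the explicit affine map, this is a genuine gap rather than a cosmetic slip. For $\vartheta=\Rep(\U(2n+1),t_{n+1,n}(j,k))$ the entries $j_i$ and $k_i$ sit at the \emph{interlaced} positions $2i-1$ and $2i$ of the highest weight, so the $Z(\g_{\C})$-infinitesimal character is $(a_1,b_1,\dots,a_n,b_n,a_{n+1})$ with $a_i=j_i+n-2i+2$ and $b_i=k_i+n-2i+1$ (all integers), while on the $\tilG$-side $\lambda(\vartheta)+\rho_{\tila}=(2a_1,\dots,2a_{n+1})$, because the embedding $\tila_{\C}^*\hookrightarrow\widetilde{\jj}_{\C}^*$ sends $2j\mapsto t_{n+1,n+1}(j,j)$ and $\rho_{\tila}=\sum_i 2(n+2-2i)h_i$ for the multiplicity-four restricted root system of type $A_n$. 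Your values $a_i=j_i+(n+1)-i+\tfrac12$ and $b_i=k_i+n-i+\tfrac12$ come from reading off $\rho_{\U(2n+2)}$ and $\rho_{\U(2n)}$ at \emph{consecutive} positions instead, and they are not even a uniform shift of the correct ones: already for $n=1$ the infinitesimal character of $\vartheta$ is $(j_1+1,\,k_1,\,j_2-1)$, not $(j_1+\tfrac32,\,k_1+\tfrac12,\,j_2+\tfrac12)$. With your parameters the multiset $\{a_i\}\cup\{b_i\}$ is not the infinitesimal character of $\vartheta$, so your claimed scalar for $\dd\ell(R_k)$ is wrong, the relation $P_k+\iota(Q_k)=2^k\,\dd\ell(R_k)$ is not verified, and the check $S_{\tau}(\lambda(\vartheta)+\rho_{\tila})\equiv\nu(\vartheta)+\rho \bmod W(\g_{\C})$ does not reproduce the $b(k)_i=k_i+n-2i+1$ appearing in the statement you are supposed to prove.

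A second point you leave open is the $\U(1)$-weight: you write $a=a(j,k)$ ``depending on the total degree'' but never determine it. In the normalization $\tilG=\U(2n+2)$, $G=\U(2n+1)$ embedded as $g\mapsto(g,1)$, one has $\tau(\vartheta)=\Rep(\U(2n),t_{n,n}(k,k))\boxtimes\Rep\big(\U(1),\sum_{i=1}^{n+1}j_i-\sum_{i=1}^{n}k_i\big)$, so the fiber $\{\vartheta:\tau(\vartheta)=\tau\}$ consists of the $j$ interlacing $k$ with $\sum_i j_i=a+\sum_i k_i$; this does not enter the formula for $S_{\tau}$ (which is independent of $a$), but it is needed to make the appeal to Proposition~\ref{prop:Stau-transfer} and the identification of $\Spec(X)_{\tau}$ complete, and it is precisely the determinant-twist bookkeeping you flagged but did not carry out. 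Once these parameters are corrected, the remainder of your outline (branching via the classical interlacing rule, Cartan--Helgason and Kr\"amer for the discrete series, and the degree count through Lemma~\ref{lem:struct-DGX} for the generation statements) coincides with the paper's argument.
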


Here we use the notation $t_{m',m''}(j,k)$ from \eqref{eqn:t-m'-m''}.
To avoid confusion, we write $\Rep(\U(1),a)$ for the one-dimensional representation of~$\U(1)$ given by $z\mapsto z^a$, and not $\C_a$ as in Sections \ref{subsec:ex(i)} and~\ref{subsec:ex(iii)}.

Propositions \ref{prop:ex(iv)} and~\ref{prop:nu-ex(iv)} are consequences of the following results on finite-dimensional representations.

\begin{lemma}\label{lem:ex(iv)}
\begin{enumerate}
  \item Discrete series for $\tilG/\tilH$, $G/H$, and $F=K/H$:
  \begin{align*}
    & \Disc(\U(2n+2)/\Sp(n+1))\\
    & \hspace{1cm} = \big\{ \Rep\big(\U(2n+2),t_{n+1,n+1}(j,j)\big) \,:\, j\in (\Z^{n+1})_{\geq}\big\} ;\\
    & \Disc\big(\U(2n+1)/\Sp(n)\big) = \{ \Rep(\U(2n+1),\omega) \,:\, \omega\in (\Z^{2n+1})_{\geq}\} ;\\
    & \Disc\big(\U(2n)\times\U(1)/\Sp(n)\big)\\
    & \hspace{1cm} = \big\{ \Rep\big(\U(2n),t_{n,n}(k,k)\big) \boxtimes \Rep(\U(1),a) \,:\, k\in (\Z^n)_{\geq},\,a\in\Z\big\} .
  \end{align*}
  \item Branching laws for $\U(2n+2)\downarrow\U(2n+1)$: For $j\in\nolinebreak (\Z^{n+1})_{\geq}$,
  \begin{align*}
  & \Rep\big(\U(2n+2),t_{n+1,n+1}(j,j)\big)|_{\U(2n+1)}\\
  & \simeq \bigoplus_{\substack{k\in\Z^n\\ t_{n+1,n}(j,k)\in (\Z^{2n+1})_{\geq}}} \Rep\big(\U(2n+1),t_{n+1,n}(j,k)\big).
  \end{align*}
  \item Irreducible decomposition of the regular representation of~$G$: For $a\in\nolinebreak\Z$ and $k\in (\Z^n)_{\geq}$,
  \begin{align*}
  \hspace{1.1cm} & L^2\big(\U(2n+1)/(\U(2n)\times\U(1)),\Rep(\U(2n),t_{n,n}(k,k)) \boxtimes\,\Rep(\U(1),a)\big)\\
  \hspace{1.1cm} & \hspace{1.1cm} \simeq\ \ \sumplus{\substack{j\in\Z^{n+1}\\ t_{n+1,n}(j,k)\in (\Z^{2n+1})_{\geq}\\ \sum_{i=1}^{n+1} j_i - \sum_{i=1}^n k_i = a}} \Rep\big(\U(2n+1),t_{n+1,n}(j,k)\big).
  \end{align*}
  \item The ring $S(\g_{\C}/\h_{\C})^H=S(\gl(2n+1,\C)/\ssp(n,\C))^{\Sp(n,\C)}$ is generated by algebraically independent homogeneous elements of respective degrees $1,1,2,2,\dots,n,n,n+\nolinebreak 1$.
\end{enumerate}
\end{lemma}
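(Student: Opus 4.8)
The plan is to establish the four statements in turn, feeding the branching computations of (2)--(3) back into the one nonsymmetric case of (1).

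\emph{Statement (1).} The spaces $\tilG/\tilH=\U(2n+2)/\Sp(n+1)$ and $\U(2n)/\Sp(n)$ are compact symmetric spaces (of type AII modulo the central $\U(1)$) and $\U(1)$ is a group manifold; since moreover $F=K/H$ is the direct product $\U(2n)/\Sp(n)\times\U(1)$, the first and third displayed formulas follow from the Cartan--Helgason theorem (Fact~\ref{fact:CartanHelgason}) together with the classical fact that $\Rep(\U(2m),\mu)$ carries a nonzero $\Sp(m)$-fixed vector exactly when every part of $\mu$ occurs with even multiplicity, i.e.\ $\mu=t_{m,m}(k,k)$ with $k\in(\Z^m)_{\geq}$, in which case the fixed space is one-dimensional. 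For the middle formula, $G/H=\U(2n+1)/\Sp(n)$ is \emph{not} symmetric and $\U(2n+1)$ is not simple, so I would not appeal to Cartan--Helgason or to Kr\"amer \cite{kra79}; instead I would restrict $\Rep(\U(2n+1),\omega)$ first to $\U(2n)$ by the Gelfand--Tsetlin rule, then to $\Sp(n)$. Only the $\U(2n)$-constituents $\Rep(\U(2n),t_{n,n}(k,k))$ contribute $\Sp(n)$-fixed vectors, and the interlacing inequalities force $k_i=\omega_{2i}$; for every $\omega\in(\Z^{2n+1})_{\geq}$ this choice is admissible and unique, so $\dim V_{\Rep(\U(2n+1),\omega)}^{\Sp(n)}=1$ for all $\omega$, giving the claimed description of $\Disc(G/H)$ (the multiplicity one being also guaranteed by $G_{\C}$-sphericity via Lemma~\ref{lem:spherical2}).

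\emph{Statements (2) and (3).} Both reduce to the Gelfand--Tsetlin branching rule for $\U(N+1)\downarrow\U(N)$ combined with Frobenius reciprocity. For (2), applying $\U(2n+2)\downarrow\U(2n+1)$ to the highest weight $t_{n+1,n+1}(j,j)$ forces the odd-index coordinates of the resulting $\U(2n+1)$-highest weight to equal $j_1,\dots,j_{n+1}$, while the even-index coordinates $k_1,\dots,k_n$ remain free subject only to $t_{n+1,n}(j,k)\in(\Z^{2n+1})_{\geq}$. For (3), Frobenius reciprocity gives $[L^2(G/K,\W_{\tau})|_G:\vartheta]=[\vartheta|_K:\tau]$, and the branching $\U(2n+1)\downarrow\U(2n)\times\U(1)$ (Gelfand--Tsetlin for the $\U(2n)$-factor, the $\U(1)$-factor recording the difference of traces) shows this multiplicity equals $1$ precisely when $\vartheta=\Rep(\U(2n+1),t_{n+1,n}(j,k))$ with $\sum_i j_i-\sum_i k_i=a$, and $0$ otherwise. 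All multiplicity-one statements throughout come from Fact~\ref{fact:spherical-cpt} and Lemma~\ref{lem:spherical2}; the only delicate point is keeping the ``alternating concatenation'' maps $t_{m',m''}$ of \eqref{eqn:t-m'-m''} straight.

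\emph{Statement (4).} I would either quote the tables of Schwarz \cite{sch78} (equivalently Knop \cite[\S\,10]{kno94}), or argue directly: as an $\Sp(n,\C)$-module, $\gl(2n+1,\C)/\ssp(n,\C)$ decomposes as $\Lambda^2(\C^{2n})\oplus\C^{2n}\oplus(\C^{2n})^{\vee}\oplus\C$, and the first fundamental theorem of invariant theory for $\Sp(n,\C)$ describes the generators of $S(\g_{\C}/\h_{\C})^H$; comparing the resulting Poincar\'e series with $v_{\underline{m}}(N)$ as in Lemma~\ref{lem:struct-DGX} pins down the degrees $1,1,2,2,\dots,n,n,n+1$. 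The main obstacle is really statement~(1) for the nonsymmetric space $\U(2n+1)/\Sp(n)$, where the usual black-box inputs (Cartan--Helgason, Kr\"amer) do not directly apply; everything else is a matter of assembling classical branching rules and invariant-theoretic facts, modulo careful bookkeeping of the $t_{m',m''}$ combinatorics.
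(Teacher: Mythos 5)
Your treatment of (1)--(3) is essentially the paper's. The paper deduces (2) and (3) from the classical branching law for $\U(\ell+1)\downarrow\U(\ell)\times\U(1)$ together with Frobenius reciprocity, and for $\Disc(\U(2n+1)/\Sp(n))$ it cites Kr\"amer \cite{kra79} while explicitly noting that the branching argument you give (only the $\U(2n)$-constituents of the form $t_{n,n}(k,k)$ carry $\Sp(n)$-fixed vectors, and the interlacing inequalities force $k_i=\omega_{2i}$, with multiplicity one) yields an alternative proof; so there is no real divergence there.

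For (4) the situation is less clean. Your fallback of quoting Schwarz \cite{sch78} or Knop \cite[\S\,10]{kno94} is consistent with what the paper does in other cases, but here the paper gives a self-contained computation, and your sketched direct argument has a gap: the first fundamental theorem for $\Sp(n,\C)$ describes the invariants of sums of copies of the standard representation $\C^{2n}$, and it does not directly handle the summand $\Lambda^2(\C^{2n})$; in particular it does not by itself produce the Poincar\'e series you propose to compare with $v_{\underline{m}}(N)$ -- that series is precisely the nontrivial content. What the paper does instead is to observe that, as an $\ssp(n,\C)$-module, $\ssl(2n+1,\C)/\ssp(n,\C)$ is the restriction of the $\gl(2n,\C)$-module $\C^{2n}\oplus(\C^{2n})^{\vee}\oplus\Lambda^2(\C^{2n})$, to apply the Cartan--Helgason theorem (Fact~\ref{fact:CartanHelgason}) to the pair $(\gl(2n,\C),\ssp(n,\C))$ (so that $\Sp(n,\C)$-fixed vectors occur exactly in the constituents with highest weight of the form $t_{n,n}(\lambda,\lambda)$, each with a one-dimensional fixed space), and then to count such constituents in each $S^N$ by two applications of Pieri's rule; the resulting count equals the number of monomials of degree $N$ in generators of degrees $1,2,\dots,n$ and $2,3,\dots,n+1$, and Lemma~\ref{lem:struct-DGX} converts this dimension count into the statement, the remaining degree-one generator coming from the central summand $\C$ of $\gl(2n+1,\C)$. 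So if you want a proof rather than a citation for (4), you should either reproduce this Cartan--Helgason/Pieri computation or invoke an FFT valid for mixed symplectic tensors; the vector FFT alone does not suffice.
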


\begin{proof}[Proof of Lemma~\ref{lem:ex(iv)}]
(1) Since $\tilG/\tilH$ and $K/H$ are symmetric spaces, the descriptions of $\Disc(\tilG/\tilH)$ and $\Disc(K/H)$ follow from the Cartan--Helgason theorem (Fact~\ref{fact:CartanHelgason}).
For the description of $\Disc(G/H)$ for the nonsymmetric spherical homogeneous space $G/H$, see \cite{kra79}; the argument below using branching laws and \eqref{eqn:decomp2} gives an alternative proof.

One immediately deduces (2) and~(3) from the classical branching laws for $\U(\ell+1)\downarrow\U(\ell)\times\U(1)$, see \eg \cite[Th.\,8.1.1]{gw09}.

We now prove~(4).
The quotient module $\gl(2n+1,\C)/\ssp(n,\C)$ is isomorphic to $\C\oplus\ssl(2n+1,\C)/\ssp(n,\C)$, and the second summand splits into a direct sum of four irreducible representations of $\ssp(n,\C)$:
$$2\,\Rep\big(\ssp(n),(1,0,\dots,0)\big) \oplus \Rep\big(\ssp(n),(1,1,0,\dots,0)\big) \oplus \C.$$
We use the observation that this $\ssp(n,\C)$-module is isomorphic to the restriction of the following irreducible $\gl(2n,\C)$-module:
$$\C^{2n} \oplus (\C^{2n})^{\vee} \oplus \Lambda^2(\C^{2n}).$$
By the Cartan--Helgason theorem (Fact~\ref{fact:CartanHelgason}), the highest weights of irreducible representations of $\gl(2n,\C)$ having nonzero $\ssp(n,\C)$-fixed vectors are of the form
\begin{equation}\label{eqn:CH-gl-sp}
(\lambda_1,\lambda_1,\lambda_2,\lambda_2,\dots,\lambda_n,\lambda_n) \quad\text{with}\ (\lambda_1,\dots,\lambda_n)\in (\Z^n)_{\geq}.
\end{equation}
Then, for any $N\in\N$, the dimension of $S^N(\ssl(2n+1,\C)/\ssp(n,\C))^{\Sp(n,\C)}$ coin\-cides with the multiplicity of such irreducible $\gl(2n,\C)$-modules occurring~in
$$S^N\big( \C^{2n} \oplus (\C^{2n})^{\vee} \oplus \Lambda^2(\C^{2n}) \big) \simeq \bigoplus_{\substack{i,j,k\in\N\\ i+j+k=N}} S^i(\C^{2n}) \otimes S^j((\C^{2n})^{\vee}) \otimes S^k(\Lambda^2(\C^{2n})),$$
because $\ssp(n,\C)$-fixed vectors in irreducible $\gl(2n,\C)$-modules are unique up to a multiplicative scalar (Fact~\ref{fact:CartanHelgason}).
The $\gl(2n,\C)$-module $S^k(\Lambda^2(\C^{2n}))$ has a multiplicity-free decomposition
$$\bigoplus_{\substack{b_1\geq\dots\geq b_n\geq 0\\ b_1+\dots+b_n=k}} \Rep\big(\gl(2n,\C),(b_1,b_1,b_2,b_2,\dots,b_n,b_n)\big).$$
By Pieri's law \cite[Cor.\,9.2.4]{gw09}, the tensor product representation $S^i(\C^{2n})\otimes S^k(\Lambda^2(\C^{2n}))$ is decomposed as
$$\bigoplus \Rep\big(\gl(2n,\C),(a_1,b_1,a_2,b_2,\dots,a_n,b_n)\big),$$
where the sum is taken over $(a_1,\dots,a_n),(b_1,\dots,b_n)\in\Z^n$ satisfying
$$\left \{
\begin{array}{l}
  a_1 \geq b_1 \geq a_2 \geq b_2 \geq \dots \geq a_n \geq b_n \geq 0,\\
  b_1 + \dots + b_n = k,\\
  (a_1 - b_1) + (a_2 - b_2) + \dots + (a_n - b_n) = i.
\end{array}
\right.$$
By using Pieri's law again, we see that irreducible representations of $\gl(2n,\C)$ with highest weights of the form \eqref{eqn:CH-gl-sp} occur in
$$S^j((\C^{2n})^{\vee}) \otimes \Rep\big(\gl(2n,\C),(a_1,b_1,a_2,b_2,\dots,a_n,b_n)\big)$$
if and only if
$$\left \{
\begin{array}{l}
  \lambda_{\ell} = b_{\ell} \quad\quad \forall 1\leq\ell\leq n,\\
  (a_1 - \lambda_1) + (a_2 - \lambda_2) + \dots + (a_n - \lambda_n) = j.
\end{array}
\right.$$
Therefore, $\dim S^N(\ssl(2n+1,\C)/\ssp(n,\C))^{\Sp(n,\C)}$ is equal to
\begin{equation}\label{eqn:combin}
\# \Big\{ (a_1,b_1,\dots,a_n,b_n)\in (\N^{2n})_{\geq} \ :\ 2\,\sum_{\ell=1}^n a_{\ell} - \sum_{\ell=1}^n b_{\ell} = N \Big\} .
\end{equation}
For any $1\leq j\leq n$, we set
$$\left \{
\begin{array}{ccl}
  c_{2j-1} & := & a_j - b_j,\\
  c_{2j} & := & b_j - a_{j+1},
\end{array}
\right.$$
with the convention that $a_{2n+1}=0$.
Then \eqref{eqn:combin} amounts to
$$\# \Big\{ (c_1,\dots,c_{2n})\in\N^{2n} \ :\ \sum_{j=1}^n (j+1) \, c_{2j-1} + \sum_{j=1}^n j \, c_{2j} = N \Big\} ,$$
which is the dimension of the space of homogeneous polynomials of degree~$N$ in $\C[x_1,x_2^2,\dots,x_n^n,y_1^2,y_2^3,\dots,y_n^{n+1}]$ for algebraically independent elements $x_1,\dots,x_n,y_1,\dots,y_n$.
We conclude using Lemma~\ref{lem:struct-DGX}.
\end{proof}

\begin{proof}[Proof of Proposition~\ref{prop:ex(iv)}]
(1) We first prove that $P_k + \iota(Q_k) = 2^k\,\dd\ell(R_k)$.
By Lemma~\ref{lem:ex(iv)}, the map $\vartheta\mapsto (\pi(\vartheta),\tau(\vartheta))$ of Proposition~\ref{prop:pi-tau-theta} is given by
\begin{equation} \label{eqn:pitau-iv}
\begin{array}{l}
\Rep(\U(2n+1),t_{n+1,n}(j,k))\\
\longmapsto \Big(\Rep(\U(2n+2),t_{n+1,n+1}(j,j)),\\
\hspace{1.6cm}\Rep(\U(2n),t_{n,n}(k,k)) \boxtimes \Rep\big(\U(1), \sum_{i=1}^{n+1} j_i - \sum_{i=1}^n k_i\big)\Big)\hspace{-0.5cm}
\end{array}
\end{equation}
for $j\in\N^{n+1}$ and $k\in\N^n$ with $j_1\geq k_1\geq\dots\geq j_n\geq k_n\geq j_{n+1}$.
For $1\leq i\leq n+1$ we set
$$a_i := j_i + n - 2i + 2.$$
Then
$$2 \sum_{i=1}^{n+1} j_i h_i + \rho_{\tila} = 2 \sum_{i=1}^{n+1} a_i h_i.$$
Since the embedding $\tila_{\C}^{\ast}\hookrightarrow\widetilde{\jj}_{\C}^{\ast}$ is given by $2j\mapsto t_{n+1,n+1}(j,j)$ via the standard bases of $\tila_{\C}^{\ast}$ and~$\widetilde{\jj}_{\C}^{\ast}$, the map $T : \tila_{\C}^{\ast}/\widetilde{W}\rightarrow\widetilde{\jj}_{\C}^{\ast}/W(\tilg_{\C})$ (see \eqref{eqn:aj}) satisfies
$$T\bigg(2 \sum_{i=1}^{n+1} a_i h_i\bigg) = \Big(a_1+\frac{1}{2}, a_1-\frac{1}{2}, \dots, a_{n+1}+\frac{1}{2}, a_{n+1}-\frac{1}{2}\Big) \in \widetilde{\jj}_{\C}^{\ast}/W(\tilg_{\C}),$$
which is the $Z(\tilg_{\C})$-infinitesimal character $\Psi_{\pi(\vartheta)}$ of $\pi(\vartheta)$, and by Lemma~\ref{lem:chiZgDGX}.(1) the operator $P_k$ acts on the representation space of $\pi(\vartheta)$ as the scalar
\begin{equation} \label{eqn:piP-iv}
\chi_{(2a_1,\dots,2a_{n+1})}^X(P_k) = 2^k \sum_{i=1}^{n+1} a_i^k.
\end{equation}
For $1\leq i\leq n$ we set
\begin{eqnarray*}
b_i & := & k_i + n - 2i + 1,\\
\nu_0 & := & \sum_{i=1}^{n+1} j_i - \sum_{i=1}^n k_i.
\end{eqnarray*}
Then $\tau(\vartheta)^{\vee}$ has $Z(\kk_{\C})$-infinitesimal character
$$\Psi_{\tau(\vartheta)^{\vee}} = - \Big(\nu_0, b_1+\frac{1}{2}, b_1-\frac{1}{2}, \dots, b_n+\frac{1}{2}, b_n-\frac{1}{2}\Big) \in (\jj_{\kk})_{\C}^{\ast}/W(\kk_{\C}),$$
and by Lemma~\ref{lem:chiZgDGX}.(2) the operator $\iota(Q_k)$ acts on the representation space of $\tau(\vartheta)$ as the scalar
$$\chi_{(2b_1,\dots,2b_n,\nu_0)}^F(Q_k) = 2^k \sum_{i=1}^n b_i^k.$$
On the other hand, $\vartheta$ itself has $Z(\g_{\C})$-infinitesimal character
$$\Psi_{\vartheta} = (a_1, b_1, \dots, a_n, b_n, a_{n+1}) \in \jj_{\C}^{\ast}/W(A_{2n}) \simeq \C^{2n+1}/\mathfrak{S}_{2n+1},$$
and so $\dd\ell(R_k)$ acts on the representation space of~$\vartheta$ as the scalar
$$\Psi_{\vartheta}(R_k) = \sum_{i=1}^{n+1} a_i^k + \sum_{i=1}^n b_i^k.$$
Thus $P_k+\iota(Q_k)=2^k\,\dd\ell(R_k)$ by Proposition~\ref{prop:PQR}.

We now check the relation among Casimir operators for $\tilG$, $G$, and~$K$.
These act on the following irreducible representations as the following scalars.
\begin{center}
\begin{tabular}{|c|c|c|}
\hline
Operator & Representation & Scalar\\
\hline\hline
$C_{\tilG}$ & $\Rep\big(\U(2n+2),t_{n+1,n+1}(j,j)\big)$ & $2 \sum_{i=1}^{n+1} (j_i^2 + 2 (n+2-2i) j_i)$\\
\hline
$C_G$ & $\Rep\big(\U(2n+1),t_{n+1,n}(j,k)\big)$ & $\sum_{i=1}^{n+1} (j_i^2 + 2 (n+2-2i) j_i)$\\
& & $+ \sum_{i=1}^n (k_i^2 + 2 (n+1-2i) k_i)$\\
\hline
$C_K^{(1)}$ & $\Rep(\U(2n),t_{n,n}(k,k))$ & $2 \sum_{i=1}^n (k_i^2 + 2 (n+1-2i) k_i)$\\
\hline
$C_K^{(2)}$ & $\Rep\big(\U(1),\sum_{i=1}^{n+1} j_i - \sum_{i=1}^n k_i\big)$ & $\big(\sum_{i=1}^{n+1} j_i - \sum_{i=1}^n k_i\big)^2$\\
\hline
\end{tabular}
\end{center}
This implies $\dd\ell(C_{\tilG})=2\,\dd\ell(C_G)-\dd r(C_K^{(1)})$.

(2) We have already given descriptions of $\D_{\tilG}(X)$ and $\D_K(F)$.
The description of $\D_G(X)$ can be deduced from Proposition~\ref{prop:ex(iv)}.(1) and Lemma~\ref{lem:ex(iv)}.(4), similarly to the proof of Proposition~\ref{prop:ex(ii)}.(2).
\end{proof}

\begin{proof}[Proof of Proposition~\ref{prop:nu-ex(iv)}]
We use Proposition~\ref{prop:Stau-transfer} and the formula \eqref{eqn:pitau-iv} for the map $\vartheta\mapsto (\pi(\vartheta),\tau(\vartheta))$ of Proposition~\ref{prop:pi-tau-theta}.
Let
$$\tau=\Rep(\U(2n),t_n(k,k))\boxtimes\Rep(\U(1),a)\in\Disc(K/H)$$
with $k\in (\Z^n)_{\geq}$ and $a\in\Z$.
If $\vartheta\!\in\!\Disc(G/H)$ satisfies $\tau(\vartheta)\!=\!\tau$, then~$\vartheta$~is~of~the\linebreak form $\vartheta = \Rep(\U(2n+1),t_{n+1,n}(j,k))$ for some $j\in\N^{n+1}$ with $j_1\geq k_1\geq\dots\linebreak\geq j_n\geq k_n\geq j_{n+1}$ and $\sum_{i=1}^{n+1} j_i = a + \sum_{i=1}^n k_i$, by \eqref{eqn:pitau-iv}.
The algebra $\D_{\tilG}(X)$ acts on the irreducible $\tilG$-submodule $\pi(\vartheta) = \Rep(\U(2n+\nolinebreak 2),t_{n+1,n+1}(j,j))$ by the scalars
$$\lambda(\vartheta) + \rho_{\tila} = (2a_1,\dots,2a_{n+1}) \in \C^{n+1}/\mathfrak{S}_{n+1}$$
via the Harish-Chandra isomorphism \eqref{eqn:HCX-iv}, where $a_i = j_i + n - 2i + 2$ ($1\leq i\leq n+1$), whereas the algebra $Z(\g_{\C})$ acts on the irreducible $G$-module $\vartheta = \Rep(\U(2n+1),t_{n+1,n}(j,k))$ by the scalars
$$\nu(\vartheta) + \rho = (a_1,\dots,a_{n+1},b_1,\dots,b_n) \in \C^{2n+1}/\mathfrak{S}_{2n+1}$$
via \eqref{eqn:HCZg-iv}, where $b_i = k_i + n - 2i + 1$ as in the proof of Proposition~\ref{prop:ex(iv)}.(1).
Thus the affine map $S_{\tau}$ in Proposition~\ref{prop:nu-ex(iv)} sends $\lambda(\vartheta)+\rho_{\tila}$ to $\nu(\vartheta)+\rho$ for any $\vartheta\in\Disc(G/H)$ such that $\tau(\vartheta)=\tau$, and we conclude using Proposition~\ref{prop:Stau-transfer}.
\end{proof}

\begin{remark}\label{rem:K-not-max}
(1) One can deduce analogous results in the original setting where $\tilG=\SU(2n+2)$ from the corresponding ones for $\tilG=\U(2n+2)$ which we have just discussed, such as Propositions \ref{prop:ex(iv)} and~\ref{prop:nu-ex(iv)}.
If $\tilG=\SU(2n+2)$, then $\tila_{\C}^*$ is isomorphic to $\C^n$, rather than $\tila_{\C}^*$ for $\tilG=\U(2n+2)$.

(2) Let $K':=\Sp(n)\times\U(1)\times\U(1)$, so that $H\subsetneq K'\subsetneq K\subsetneq G=\SU(2n+2)$.
Then $K'/H\simeq\mathbb{S}^1$, hence the $\C$-algebra $\dd r(Z(\kk'_{\C}))$ is generated by a single vector field (the Euler homogeneity differential operator).
It follows from Proposition~\ref{prop:ex(iv)} that neither condition~($\widetilde{\mathrm{A}}$) nor condition~($\widetilde{\mathrm{B}}$) of Section~\ref{subsec:intro-applic} holds if we replace $K$ with~$K'$.
In particular, Theorem~\ref{thm:main}.(1) and~(2) fail if we replace $K$ with the nonmaximal subgroup~$K'$.
\end{remark}

\subsection{The case $(\tilG,\tilH,G)=(\SO(4n+4),\SO(4n+3),\Sp(n+1)\cdot\Sp(1))$}\label{subsec:ex(v)}

Here $H=\Sp(n)\cdot\Diag(\Sp(1))$, and the only maximal connected proper subgroup of $G$ containing~$H$ is $K = (\Sp(n) \times \Sp(1)) \cdot \Sp(1)$.
The groups $G$ and~$K$ are not simple.
For $i\in\{ 1,2\}$ we denote by $C_G^{(i)}\in\nolinebreak Z(\g_{\C})$ the Casimir element of the $i$-th factor of~$G$, and for $j\in\{ 1,2,3\}$ by $C_K^{(j)}\in\nolinebreak Z(\kk_{\C})$ the Casimir element of the $j$-th factor of~$K$.
Then $\dd r(C_K^{(1)})=0$, and $\dd r(C_K^{(2)})=\dd r(C_K^{(3)})\in\D_G(X)$; we denote this last element by $\dd r(C_K)$.

\begin{proposition}[Generators and relations] \label{prop:ex(v)}
For
\begin{eqnarray*}
X = \tilG/\tilH & = & \SO(4n+4)/\SO(4n+3)\\
& \simeq & (\Sp(n+1)\cdot\Sp(1))/(\Sp(n)\cdot\Diag(\Sp(1))) = G/H
\end{eqnarray*}
and $K = (\Sp(n) \times \Sp(1)) \cdot \Sp(1)$, we have
\begin{enumerate}
  \item $\left \{
\begin{array}{l}
  \dd\ell(C_{\tilG}) = 2\,\dd\ell(C_G^{(1)}) - \dd r(C_K);\\
  \dd\ell(C_G^{(2)}) = \dd r(C_K);
\end{array}
\right.$
  \item $\left \{ \begin{array}{cll}
  \D_{\tilG}(X) & \!\!\!=\!\!\! & \C[\dd\ell(C_{\tilG})];\\
  \D_K(F) & \!\!\!=\!\!\! & \C[\dd r(C_K)];\\
  \D_G(X) & \!\!\!=\!\!\! & \C[\dd\ell(C_{\tilG}),\dd r(C_K)] = \C[\dd\ell(C_{\tilG}),\dd\ell(C_G^{(1)})] = \C[\dd\ell(C_G^{(1)}),\dd r(C_K)].
  \end{array}\right.$
\end{enumerate}
\end{proposition}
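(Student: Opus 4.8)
The plan is to run the four-step strategy of Section~\ref{subsec:descript-comput}, specialized to the quaternionic Hopf fibration. Here $X=\tilG/\tilH=\mathbb{S}^{4n+3}$, the fiber $F=K/H$ is $\Sp(1)\simeq\mathbb{S}^3$ (a group manifold), and the base $Y=G/K$ is the quaternionic projective space $\Sp(n+1)/(\Sp(n)\times\Sp(1))$. Since both $\tilG/\tilH$ and $F$ are symmetric spaces, the Harish-Chandra isomorphism \eqref{eqn:HC-isom} (together with the surjectivity in Lemma~\ref{lem:surj-dl-drF}) gives $\D_{\tilG}(X)=\C[\dd\ell(C_{\tilG})]$ and $\D_K(F)=\C[\dd r(C_K)]$, a polynomial ring in the single generator $\dd r(C_K)$ coming from the Casimir of the group manifold~$F$; here $\dd r(C_K^{(1)})=0$ because $C_K^{(1)}$, the Casimir of the factor $\Sp(n)\subset H$, lies in $U(\kk_{\C})\h_{\C}$, and $\dd r(C_K^{(2)})=\dd r(C_K^{(3)})=:\dd r(C_K)$ because the left and right Casimirs agree on~$F$. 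This settles the first two lines of~(2).

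First I would describe the canonical map $\vartheta\mapsto(\pi(\vartheta),\tau(\vartheta))$ of Proposition~\ref{prop:pi-tau-theta}. One has $\Disc(\tilG/\tilH)=\{\mathcal{H}^j(\R^{4n+4}):j\in\N\}$ and, from the Peter--Weyl decomposition of $L^2(\mathbb{S}^3)$ read off the almost product $K$, $\Disc(K/H)=\{\mathbf{1}\boxtimes\C^{a+1}\boxtimes\C^{a+1}:a\in\N\}$; for the nonsymmetric space $G/H$ one may quote Kr\"amer's classification \cite{kra79}. The key input is the branching law for $\SO(4n+4)\downarrow\Sp(n+1)\cdot\Sp(1)$ restricted to the spaces of spherical harmonics --- the decomposition of quaternionic spherical harmonics --- which comes from Howe duality for the compact dual pair $(\Sp(n+1)\cdot\Sp(1),\OO(1))$ in $\OO(4n+4)$ (equivalently by peeling off the invariant quadratic form from the Cauchy decomposition of $S^j(\C^{2n+2}\otimes\C^2)$):
\begin{equation*}
\mathcal{H}^j(\R^{4n+4})\big|_{\Sp(n+1)\cdot\Sp(1)}\ \simeq\ \bigoplus_{\substack{a,k\in\N\\ a+2k=j}}\Rep\big(\Sp(n+1),(a+k,k,0,\dots,0)\big)\boxtimes\C^{a+1}.
\end{equation*}
Comparing the two decompositions \eqref{eqn:decomp1} and \eqref{eqn:decomp2} of $L^2(X)$ --- using also the Frobenius description of $L^2(Y,\W_{\tau})$ --- then shows $\Disc(G/H)=\{\Rep(\Sp(n+1),(a+k,k,0,\dots,0))\boxtimes\C^{a+1}:a,k\in\N\}$ and identifies
\begin{equation*}
\vartheta=\Rep\big(\Sp(n+1),(a+k,k,0,\dots,0)\big)\boxtimes\C^{a+1}\ \longmapsto\ \big(\mathcal{H}^{a+2k}(\R^{4n+4}),\ \mathbf{1}\boxtimes\C^{a+1}\boxtimes\C^{a+1}\big).
\end{equation*}

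Next I would compute Casimir eigenvalues with the normalization $B(e_i,e_i)=1$, so that $C_G$ acts on $\Rep(G,\mu)$ by $\langle\mu,\mu+2\rho\rangle$. One finds that $C_{\tilG}$ acts on $\pi(\vartheta)=\mathcal{H}^{a+2k}$ by $(a+2k)(a+2k+4n+2)$, the $\Sp(n+1)$-Casimir $C_G^{(1)}$ acts on $\vartheta$ by $(a+k)^2+k^2+2(n+1)(a+k)+2nk$, and both $C_G^{(2)}$ (the Casimir of the $\Sp(1)$-factor of $G$, which equals $C_K^{(3)}$) and $C_K$ act by $a(a+2)$. The identity
\begin{equation*}
(a+2k)(a+2k+4n+2)=2\big((a+k)^2+k^2+2(n+1)(a+k)+2nk\big)-a(a+2),
\end{equation*}
valid for all $a,k\in\N$, together with $\Psi_{\tau^{\vee}}(C_K)=\Psi_{\tau}(C_K)$, yields via Proposition~\ref{prop:PQR} the relation $\dd\ell(C_{\tilG})=2\,\dd\ell(C_G^{(1)})-\dd r(C_K)$; the relation $\dd\ell(C_G^{(2)})=\dd r(C_K)$ is then immediate, either from Proposition~\ref{prop:PQR} or, abstractly, because $C_G^{(2)}=C_K^{(3)}$ as elements of $U(\ssp(1)_{\C})$ and $\dd\ell=\dd r$ on central elements by Lemma~\ref{lem:lreta}. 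This is~(1).

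Finally, to obtain the description of $\D_G(X)$ in~(2) I would analyze $\operatorname{gr}(\D_G(X))\simeq S(\g_{\C}/\h_{\C})^H$. As a module over $H_0\simeq\Sp(n)\times\Sp(1)$ (with $\Sp(1)$ embedded diagonally), $\g_{\C}/\h_{\C}\simeq(\C^{2n}\boxtimes\C^2)\oplus(\mathbf{1}\boxtimes\C^3)$, where $\C^2$ is the standard $\Sp(1)$-module and $\C^3$ its adjoint. By the Cauchy formula and the first fundamental theorem for $\Sp(n)$, the $\Sp(n)$-invariants of $S^i(\C^{2n}\boxtimes\C^2)$ vanish for $i$ odd and form a one-dimensional \emph{trivial} $\Sp(1)$-module for $i$ even, while $S(\C^3)^{\Sp(1)}$ is a polynomial ring in the Casimir; hence $\dim S^N(\g_{\C}/\h_{\C})^H=N/2+1$ for $N$ even and $0$ for $N$ odd, so $S(\g_{\C}/\h_{\C})^H$ is a polynomial ring on two homogeneous generators of degree~$2$. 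By Lemma~\ref{lem:struct-DGX}.(3), $\D_G(X)$ is then a polynomial ring on two order-$2$ operators, and one checks that any two of $\dd\ell(C_{\tilG}),\dd\ell(C_G^{(1)}),\dd r(C_K)$ are algebraically independent as in the proofs of Propositions~\ref{prop:ex(i)} and~\ref{prop:ex(iii)}: a polynomial relation between two of them would, when evaluated on the $G$-isotypic components $\vartheta\in\Disc(G/H)$, vanish on the Zariski-dense set of pairs $\big((a+2k)(a+2k+4n+2),a(a+2)\big)$ in $\C^2$, hence be trivial. The main obstacle is the first step: establishing the quaternionic spherical harmonic branching law and --- above all --- correctly tracking the two $\Sp(1)$-factors of $K$ (and the attendant covering issues, which reduce to the Table-\ref{table1} realization by Theorem~\ref{thm:InvDiff-covering}) so that $\tau(\vartheta)$ is pinned down unambiguously; once $\vartheta\mapsto(\pi(\vartheta),\tau(\vartheta))$ is known, the remaining steps are routine, the last being entirely parallel to Lemma~\ref{lem:ex(iii)}.(4).
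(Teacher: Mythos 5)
Your proposal is correct and follows essentially the same route as the paper: the same identification of $\vartheta\mapsto(\pi(\vartheta),\tau(\vartheta))$ via the branching law for $\SO(4n+4)\downarrow\Sp(n+1)\cdot\Sp(1)$ (your parameters $(a,k)$ correspond to the paper's $(k,\ell)$ via $a=k-\ell$), the same Casimir eigenvalue identity giving relation (1), and the same computation of $S(\g_{\C}/\h_{\C})^H\simeq S(\C^{2n}\oplus\C^{2n})^{\Sp(n)}\otimes S(\C^3)^{\Sp(1)}$ combined with Lemma~\ref{lem:struct-DGX}.(3) and the Zariski-density argument for (2). The only differences are cosmetic: you re-derive the spherical-harmonic branching law via Howe duality/the Cauchy decomposition where the paper cites \cite[Prop.\,5.1]{ht93}, and you spell out the justifications of $\dd r(C_K^{(1)})=0$ and $\dd r(C_K^{(2)})=\dd r(C_K^{(3)})$ that the paper states without proof.
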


\begin{remark}\label{rem:Spnonsph}
Let $G':=\Sp(n+1)\subset G$ and $H':=\Sp(n)\subset H$.
Then $X$ is isomorphic to $G'/H'$ as a $G'$-space, and $K':=\Sp(n)\times\Sp(1)\subset K$ is a maximal connected proper subgroup of $G'$ containing~$H'$.
However, $X_{\C}$ is \emph{not} $G'_{\C}$-spherical, and none of the assertions in Theorem~\ref{thm:main} holds if we replace $(G,H,K)$ with $(G',H',K')$.
In fact, the subalgebra of $\D_{G'}(X)$ generated by $\D_{\tilG}(X)$, $\dd r(Z(\kk'_{\C}))$, and $\dd\ell(Z(\g'_{\C}))$ is contained in the commutative algebra $\D_G(X)$, whereas $\D_{G'}(X)$ is noncommutative by Fact~\ref{fact:spherical}.
\end{remark}

We identify
\begin{align}
\Hom_{\C\text{-}\mathrm{alg}}(Z(\g_{\C}),\C) & \ \simeq\ \jj_{\C}^*/W(\g_{\C}) & & \hspace{-0.25cm} \simeq\, (\C^{n+1}\oplus\C)/W(C_{n+1}\times C_1),\hspace{-0.2cm}\label{eqn:HCZg-v}\\
\Hom_{\C\text{-}\mathrm{alg}}(\D_{\tilG}(X),\C) & \ \simeq \hspace{0.4cm} \tila_{\C}^*/\widetilde{W} & & \hspace{-0.25cm} \simeq\ \,\C/(\Z/2\Z)\hspace{-0.2cm}\label{eqn:HCX-v}
\end{align}
by the standard bases.
The set $\Disc(K/H)$ consists of the representations of $K = (\Sp(n) \times \Sp(1)) \cdot \Sp(1)$ of the form $\tau=\mathbf{1}\boxtimes\C^a\boxtimes\C^a$ for $a\in\N_+$.

\begin{proposition}[Transfer map] \label{prop:nu-ex(v)}
Let
\begin{eqnarray*}
X = \tilG/\tilH & = & \SO(4n+4)/\SO(4n+3)\\
& \simeq & (\Sp(n+1)\cdot\Sp(1))/(\Sp(n)\cdot\Diag(\Sp(1))) = G/H
\end{eqnarray*}
and $K = (\Sp(n) \times \Sp(1)) \cdot \Sp(1)$.
For $\tau=\mathbf{1}\boxtimes\C^a\boxtimes\C^a\in\Disc(K/H)$ with $a\in\N_+$, the affine map
\begin{eqnarray*}
S_{\tau} :\ \tila_{\C}^* \simeq \C & \longrightarrow & \hspace{2.4cm} \C^{n+1}\oplus\C \hspace{2.4cm} \simeq \jj_{\C}^*\\
\lambda & \longmapsto & \Big(\Big(\frac{\lambda+a}{2}, \frac{\lambda-a}{2}, n-1, n-2, \dots, 1\Big), a\Big)
\end{eqnarray*}
induces a transfer map
$$\nnu(\cdot,\tau) : \Hom_{\C\text{-}\mathrm{alg}}(\D_{\tilG}(X),\C) \longrightarrow \Hom_{\C\text{-}\mathrm{alg}}(Z(\g_{\C}),\C)$$
as in Theorem~\ref{thm:nu-tau}.
\end{proposition}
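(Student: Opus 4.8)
The plan is to derive Proposition~\ref{prop:nu-ex(v)} from Proposition~\ref{prop:Stau-transfer}: the only thing to feed in is an explicit formula for the map $\vartheta\mapsto(\pi(\vartheta),\tau(\vartheta))$ of Proposition~\ref{prop:pi-tau-theta}. First I would collect the required representation theory in a lemma analogous to Lemma~\ref{lem:ex(iii)}, using the notation $\mathcal{H}^{k,\ell}(\HH^{n+1})$ for the irreducible representation of $\Sp(n+1)$ with highest weight $(k,\ell,0,\dots,0)$ (for $k\geq\ell\geq 0$), $\C^b$ for the $b$-dimensional irreducible representation of $\Sp(1)$, and $\mathcal{H}^j(\R^{4n+4})$ for the degree-$j$ spherical harmonics on $\mathbb{S}^{4n+3}$. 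The ingredients are: the descriptions $\Disc(\SO(4n+4)/\SO(4n+3))=\{\mathcal{H}^j(\R^{4n+4}):j\in\N\}$ and $\Disc(K/H)=\{\mathbf{1}\boxtimes\C^a\boxtimes\C^a:a\in\N_+\}$; the description $\Disc(G/H)=\{\mathcal{H}^{k,\ell}(\HH^{n+1})\boxtimes\C^{\,k-\ell+1}:k\geq\ell\geq 0\}$, which follows from \cite{kra79} (or, alternatively, from the branching law below together with \eqref{eqn:decomp2}); the branching law $\mathcal{H}^j(\R^{4n+4})|_{\Sp(n+1)\cdot\Sp(1)}\simeq\bigoplus_{k+\ell=j,\,k\geq\ell\geq 0}\mathcal{H}^{k,\ell}(\HH^{n+1})\boxtimes\C^{\,k-\ell+1}$ for the restriction $\SO(4n+4)\downarrow\Sp(n+1)\cdot\Sp(1)$; and the resulting decomposition of $L^2(G/K,\W_\tau)$. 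Together these yield that the map $\vartheta\mapsto(\pi(\vartheta),\tau(\vartheta))$ is
$$\mathcal{H}^{k,\ell}(\HH^{n+1})\boxtimes\C^{\,k-\ell+1}\ \longmapsto\ \big(\mathcal{H}^{k+\ell}(\R^{4n+4}),\ \mathbf{1}\boxtimes\C^{\,k-\ell+1}\boxtimes\C^{\,k-\ell+1}\big).$$

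Granting this, fix $\tau=\mathbf{1}\boxtimes\C^a\boxtimes\C^a\in\Disc(K/H)$ with $a\in\N_+$; the elements $\vartheta\in\Disc(G/H)$ with $\tau(\vartheta)=\tau$ are precisely $\vartheta_\ell:=\mathcal{H}^{\ell+a-1,\ell}(\HH^{n+1})\boxtimes\C^a$ for $\ell\in\N$, with $\pi(\vartheta_\ell)=\mathcal{H}^{2\ell+a-1}(\R^{4n+4})$. Since $\tilG/\tilH=\mathbb{S}^{4n+3}$ is a rank-one symmetric space with $\rho_{\tila}=2n+1$, the algebra $\D_{\tilG}(X)=\C[\dd\ell(C_{\tilG})]$ acts on $\pi(\vartheta_\ell)$ by the scalar that, under the Harish-Chandra isomorphism \eqref{eqn:HCX-v}, corresponds to $\lambda(\vartheta_\ell)+\rho_{\tila}=(2\ell+a-1)+(2n+1)=2\ell+a+2n\in\C/(\Z/2\Z)$. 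On the other hand $\g_{\C}=\ssp(n+1,\C)\oplus\ssp(1,\C)$ with $\rho=\big((n+1,n,\dots,1),1\big)$, so $Z(\g_{\C})$ acts on $\vartheta_\ell$ by the infinitesimal character $\nu(\vartheta_\ell)+\rho=\big((\ell+a+n,\ \ell+n,\ n-1,\dots,1),\ a\big)\in(\C^{n+1}\oplus\C)/W(C_{n+1}\times C_1)$ via \eqref{eqn:HCZg-v}. A direct substitution then shows that the affine map $S_\tau$ of the statement sends $\lambda(\vartheta_\ell)+\rho_{\tila}$ to $\nu(\vartheta_\ell)+\rho$ for every $\ell\in\N$, and Proposition~\ref{prop:Stau-transfer} finishes the proof. (The same Casimir scalars reproduce the relations $\dd\ell(C_{\tilG})=2\,\dd\ell(C_G^{(1)})-\dd r(C_K)$ and $\dd\ell(C_G^{(2)})=\dd r(C_K)$ of Proposition~\ref{prop:ex(v)}, a useful cross-check.)

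The genuinely delicate step is pinning down the map $\vartheta\mapsto(\pi(\vartheta),\tau(\vartheta))$: besides the branching law $\SO(4n+4)\downarrow\Sp(n+1)\cdot\Sp(1)$ on spherical harmonics, one has to keep track of the almost-product structures of $G=\Sp(n+1)\cdot\Sp(1)$ and $K=(\Sp(n)\times\Sp(1))\cdot\Sp(1)$, verifying that the representations of $\Sp(n+1)\times\Sp(1)$ and of $\Sp(n)\times\Sp(1)\times\Sp(1)$ in play descend to $G$ and $K$, and that $\dim V_\tau^H=1$ for each $\tau\in\Disc(K/H)$ (which is automatic from Lemma~\ref{lem:spherical2} once the $G_{\C}$-sphericity of $X_{\C}$ is known). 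Once that is settled, the eigenvalue bookkeeping and the verification of $S_\tau$ are purely mechanical, exactly as in the proofs of Propositions~\ref{prop:nu-ex(i)} and~\ref{prop:nu-ex(iii)}.
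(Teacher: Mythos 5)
Your proposal is correct and follows essentially the same route as the paper: it invokes Proposition~\ref{prop:Stau-transfer} together with the explicit description of $\vartheta\mapsto(\pi(\vartheta),\tau(\vartheta))$ from Lemma~\ref{lem:ex(v)} (the formula \eqref{eqn:pitau-v}), and your eigenvalue bookkeeping ($\lambda(\vartheta_\ell)+\rho_{\tila}=2\ell+a+2n$, $\nu(\vartheta_\ell)+\rho=((\ell+a+n,\ell+n,n-1,\dots,1),a)$) matches the paper's computation with $k=\ell+a-1$. No gaps.
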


In order to prove Propositions \ref{prop:ex(v)} and~\ref{prop:nu-ex(v)}, we use the following results on finite-dimensional representations.

\begin{lemma}\label{lem:ex(v)}
\begin{enumerate}
  \item Discrete series for $\tilG/\tilH$, $G/H$, and $F=K/H$:
  \begin{align*}
    & \Disc(\SO(4n+4)/\SO(4n+3)) = \{ \mathcal{H}^j(\R^{4n+4}) \,:\, j\in\N\} ;\\
    & \Disc\big((\Sp(n+1)\cdot\Sp(1))/(\Sp(n)\cdot\Diag(\Sp(1)))\big)\\
    & \hspace{1cm} = \{ \mathcal{H}^{k,\ell}(\HH^{n+1}) \boxtimes \C^{k-\ell+1} \,:\, k\geq\ell\geq 0\} ;\\
    & \Disc\big(((\Sp(n)\times\Sp(1))\cdot\Sp(1))/(\Sp(n)\cdot\Diag(\Sp(1)))\big)\\
    & \hspace{1cm} = \{ \mathbf{1} \boxtimes \C^a \boxtimes \C^a \,:\, a\in\N_+\} .
  \end{align*}
  \item Branching laws for $\SO(4n+4)\downarrow\Sp(n+1)\cdot\Sp(1)$: For $j\in\N$,
  $$\mathcal{H}^j(\R^{4n+4})|_{\Sp(n+1)\cdot\Sp(1)}\ \simeq \bigoplus_{\substack{k\in\N\\ \frac{j}{2}\leq k\leq j}} \mathcal{H}^{k,j-k}(\HH^{n+1}) \boxtimes \C^{2k-j+1}.$$
  \item Irreducible decomposition of the regular representation of~$G$: For~$a\in\nolinebreak\N_+$,
  \begin{align*}
  & L^2\big((\Sp(n+1)\cdot\Sp(1)) / ((\Sp(n)\times\Sp(1))\cdot\Sp(1)), \mathbf{1} \boxtimes \C^a \boxtimes \C^a\big)\\
  & \simeq\ \sumplus{\substack{k\in\N\\ k\geq a-1}}\ \mathcal{H}^{k,k+1-a}(\HH^{n+1}) \boxtimes \C^a.
  \end{align*}
  \item The ring $S(\g_{\C}/\h_{\C})^H$ is generated by two algebraically independent homogeneous elements of degree~$2$.
\end{enumerate}
\end{lemma}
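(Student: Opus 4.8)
The plan is to prove the four statements in the order (2), (1), (3), (4). Throughout we use that $\tilG/\tilH=\SO(4n+4)/\SO(4n+3)$ is the sphere $\mathbb{S}^{4n+3}$, that the diffeomorphism \eqref{eqn:overgp} realizes the same sphere as $G/H$, and that $F=K/H$ is the group manifold $\Sp(1)\simeq\mathbb{S}^3$: indeed the common factor $\Sp(n)$ in $K=(\Sp(n)\times\Sp(1))\cdot\Sp(1)$ and $H=\Sp(n)\cdot\Diag(\Sp(1))$ cancels and $\Diag(\Sp(1))$ sits diagonally in the inner $\Sp(1)$-factor of $K$ and in the outer one, so $F\simeq(\Sp(1)\times\Sp(1))/\Diag(\Sp(1))$. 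The first formula of~(1) is then the classical description of spherical harmonics (the Cartan--Helgason theorem, Fact~\ref{fact:CartanHelgason}, for the rank-one symmetric space $\mathbb{S}^{4n+3}$), and the third formula of~(1) is the Cartan--Helgason theorem for the group manifold together with the self-duality of the irreducible $\Sp(1)$-modules $\C^a$ (so that $\C^a\boxtimes(\C^a)^\vee=\C^a\boxtimes\C^a$), with the $\Sp(n)$-factor acting trivially.

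For~(2), the embedding $\Sp(n+1)\cdot\Sp(1)\hookrightarrow\SO(4n+4)$ is induced by the action $(A,q)\cdot v=Avq^{-1}$ of $\Sp(n+1)\times\Sp(1)$ on $\HH^{n+1}=\R^{4n+4}$, for which $\Sp(n+1)$ and $\Sp(1)$ form a reductive dual pair in $\OO(4n+4)$. The decomposition of the harmonics $\mathcal{H}^j(\R^{4n+4})$ asserted in~(2) is the classical decomposition of quaternionic spherical harmonics; it can be obtained from Howe duality for this compact dual pair (equivalently, by first decomposing the spaces of degree-$j$ polynomials on $\R^{4n+4}$ under $\Sp(n+1)\times\Sp(1)$ and then separating harmonics from multiples of the radial polynomial), or as a special case of the branching laws for unitary representations in \cite{kob94}; here $\mathcal{H}^{k,\ell}(\HH^{n+1})=\Rep(\Sp(n+1),(k,\ell,0,\dots,0))$ as in Lemma~\ref{lem:ex(iii)}, the representations descend to $\Sp(n+1)\cdot\Sp(1)$, and the constraint $j/2\le k\le j$ (equivalently $0\le\ell\le k$, $j=k+\ell$) records which of them occur. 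Granting~(2), the second formula of~(1) follows: by \eqref{eqn:decomp1} the elements of $\Disc(G/H)$ are exactly the irreducible $G$-modules occurring in $L^2(\mathbb{S}^{4n+3})=\bigoplus_{j\in\N}\mathcal{H}^j(\R^{4n+4})$, which by~(2) is $\{\,\mathcal{H}^{k,\ell}(\HH^{n+1})\boxtimes\C^{k-\ell+1}:k\ge\ell\ge0\,\}$; since $X_{\C}$ is $G_{\C}$-spherical, each occurs with multiplicity one (Fact~\ref{fact:spherical-cpt}), hence has a one-dimensional space of $H$-fixed vectors.

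For~(3), fix $\tau=\mathbf{1}\boxtimes\C^a\boxtimes\C^a\in\Disc(K/H)$ with $a\in\N_+$. By Frobenius reciprocity and the multiplicity-freeness of $L^2(X)$ (Fact~\ref{fact:spherical-cpt}, with $\ell_\tau=1$), we have $L^2(G/K,\W_\tau)\simeq\bigoplus\vartheta$, the sum running over those $\vartheta\in\Disc(G/H)$ with $\tau(\vartheta)=\tau$, where $\tau(\vartheta)$ is the map of Lemma~\ref{lem:spherical2}.(6) (note $[\vartheta|_K:\tau]\neq0$ forces $\vartheta\in\Disc(G/H)$, again by Lemma~\ref{lem:spherical2}). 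Now for $\vartheta=\mathcal{H}^{k,\ell}(\HH^{n+1})\boxtimes\C^{k-\ell+1}$ the outer $\Sp(1)$-factor of $K$ coincides with the $\Sp(1)$-factor of $G$, hence acts on $\vartheta$ by $\C^{k-\ell+1}$; since $\tau(\vartheta)\in\Disc(K/H)$ is of the form $\mathbf{1}\boxtimes\C^a\boxtimes\C^a$ by the third formula of~(1), this forces $a=k-\ell+1$, i.e. $\tau(\vartheta)=\mathbf{1}\boxtimes\C^{k-\ell+1}\boxtimes\C^{k-\ell+1}$. Writing $\ell=k+1-a$, the condition $\ell\ge0$ reads $k\ge a-1$ (and $k\ge\ell$ is automatic since $a\ge1$), which yields exactly the decomposition in~(3).

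For~(4) I would first identify $\g_{\C}/\h_{\C}$ as an $H$-module: with $\g_{\C}=\ssp(n+1,\C)\oplus\ssp(1,\C)$ and $\h_{\C}=\ssp(n,\C)\oplus\ssp(1,\C)_{\Delta}$, a direct computation gives an isomorphism of $\Sp(n)\times\Sp(1)$-modules $\g_{\C}/\h_{\C}\simeq(\C^{2n}\otimes\C^{2})\oplus\C^{3}$, where $\C^{2n}$ is the standard $\Sp(n)$-module, $\C^{2}$ the standard $\Sp(1)$-module, and $\C^{3}=\ad\,\ssp(1,\C)$. Hence $S(\g_{\C}/\h_{\C})^{H}=\bigl[S(\C^{2n}\otimes\C^{2})\otimes S(\C^{3})\bigr]^{\Sp(n)\times\Sp(1)}$. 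Viewing $\C^{2n}\otimes\C^{2}$ as two copies of the standard $\Sp(n)$-module, the first fundamental theorem for the symplectic group (see \cite{gw09}) gives that $S(\C^{2n}\otimes\C^{2})^{\Sp(n)}$ is the polynomial ring $\C[q]$ generated by the single degree-$2$ element $q$ obtained by contracting the $\Sp(n)$-invariant symplectic form on $\C^{2n}$ with the $\Sp(1)$-invariant symplectic form on $\C^{2}$; and $q$ is automatically $\Sp(1)$-invariant, since an element of $\SL(2,\C)$ acting on the two copies of $\C^{2n}$ rescales $q$ by its determinant $1$. On the other factor, $S(\C^{3})^{\Sp(1)}=\C[c]$ with $c$ the Casimir element, of degree $2$. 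Therefore $S(\g_{\C}/\h_{\C})^{H}=\C[q,c]$ is a polynomial ring on two algebraically independent homogeneous generators of degree $2$, which is~(4).

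The step I expect to be the main obstacle is~(2): making the quaternionic spherical harmonic decomposition, and in particular the precise occurrence range $j/2\le k\le j$, completely rigorous --- equivalently, pinning down the theta correspondence for the compact dual pair $(\Sp(n+1),\Sp(1))$ in $\OO(4n+4)$ --- since statements~(1) and~(3) are then essentially formal consequences of sphericity and Frobenius reciprocity. A secondary point needing care is the module identification $\g_{\C}/\h_{\C}\simeq(\C^{2n}\otimes\C^{2})\oplus\C^{3}$ and the invariant-theoretic bookkeeping underlying~(4).
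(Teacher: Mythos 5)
Your proposal is correct and follows essentially the same route as the paper: part (2) is the Howe--Tan decomposition of spherical harmonics under the compact dual pair $(\Sp(n+1),\Sp(1))$ in $\OO(4n+4)$ (the paper cites \cite[Prop.\,5.1]{ht93}), parts (1) and (3) are obtained from it by Frobenius reciprocity and sphericity, and part (4) uses the same identification $\g_{\C}/\h_{\C}\simeq(\C^{2n}\boxtimes\C^2)\oplus(\mathbf{1}\boxtimes\C^3)$ together with the observation that $\Sp(1)$ acts trivially on $S(\C^{2n}\oplus\C^{2n})^{\Sp(n)}$. The only cosmetic differences are that you identify the degree-$2$ generator of $S(\C^{2n}\oplus\C^{2n})^{\Sp(n)}$ via the first fundamental theorem rather than the paper's multiplicity-free dimension count, and you re-derive the second formula of (1) and part (3) from (2) instead of quoting them directly, which is consistent with the alternative argument the paper itself indicates in analogous cases.
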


Here, for $m>1$ we denote by $\mathcal{H}^{k,k'}(\HH^m)$ the irreducible representation of~$\Sp(m)$ whose highest weight is $(k,k',0,\dots,0)$ in the standard coordinates.
For $a\in\N_+$, we denote by $\C^a$ the unique $a$-dimensional $\Sp(1)$-module.

\begin{proof}[Proof of Lemma~\ref{lem:ex(v)}]
Statements (1), (2), (3) follow from \cite[Prop.\,5.1]{ht93} and the Frobenius reciprocity.
To see~(4), we note that there is an isomorphism of $H$-modules
$$\g_{\C}/\h_{\C} \,\simeq\, \g_{\C}/\kk_{\C} \oplus \kk_{\C}/\h_{\C} \,\simeq\, (\C^{2n}\boxtimes\C^2) \oplus (\C\boxtimes\C^3).$$
The action of  $H = \Sp(n)\cdot\Diag(\Sp(1))$ on $\g_{\C}/\h_{\C}$ thus comes from an action of $\Sp(n)\times\Sp(1)$, and we have $S(\g_{\C}/\h_{\C})^H = S(\g_{\C}/\h_{\C})^{\Sp(n)\times\Sp(1)}$.
Note that
$$S(\g_{\C}/\h_{\C})^{\Sp(n)\times\{ 1\}} \,\simeq\, S(\C^{2n}\oplus\C^{2n})^{\Sp(n)\times\{ 1\}} \otimes S(\C^3)$$
as $\Sp(1)$-modules.
As in the proof of Lemma~\ref{lem:ex(iii)}.(4), the group $\Sp(1)$ acts trivially on $S(\C^{2n}\oplus\C^{2n})^{\Sp(n)\times\{ 1\}}$.
Therefore,
$$S(\g_{\C}/\h_{\C})^{\Sp(n)\times\Sp(1)} \,\simeq\, S(\C^{2n}\oplus\C^{2n})^{\Sp(n)} \otimes S(\C^3)^{\Sp(1)}$$
is a polynomial ring generated by two homogeneous elements of degree~$2$ coming from $S(\C^{2n}\oplus\C^{2n})^{\Sp(n)}$ and $S(\C^3)^{\Sp(1)}$.
\end{proof}

\begin{proof}[Proof of Proposition~\ref{prop:ex(v)}]
(1) By Lemma~\ref{lem:ex(v)}, the map $\vartheta\mapsto (\pi(\vartheta),\tau(\vartheta))$ of Proposition~\ref{prop:pi-tau-theta} is given by
\begin{equation} \label{eqn:pitau-v}
\mathcal{H}^{k,\ell}(\HH^{n+1}) \boxtimes \C^{k-\ell+1} \longmapsto \big(\mathcal{H}^{k+\ell}(\R^{4n+4}), \mathbf{1} \boxtimes \C^{k-\ell+1} \boxtimes \C^{k-\ell+1}\big).
\end{equation}
The Casimir operators for $\tilG$ and for the factors of $G$ and~$K$ act on these representations as the following scalars.
\begin{center}
\begin{tabular}{|c|c|c|}
\hline
Operator & Representation & Scalar\\
\hline\hline
$C_{\tilG}$ & $\mathcal{H}^{k+\ell}(\R^{4n+4})$ & $(k+\ell)(k+\ell+4n+2)$\\
\hline
$C_G^{(1)}$ & $\mathcal{H}^{k,\ell}(\HH^{n+1})\boxtimes\C^{k-\ell+1}$ & $k^2+\ell^2+2(k+\ell)n+2k$\\
\hline
$C_G^{(2)}$ & $\mathcal{H}^{k,\ell}(\HH^{n+1})\boxtimes\C^{k-\ell+1}$ & $(k-\ell)(k-\ell+2)$\\
\hline
$C_K$ & $\mathbf{1}\boxtimes\C^{k-\ell+1}\boxtimes\C^{k-\ell+1}$ & $(k-\ell)(k-\ell+2)$\\
\hline
\end{tabular}
\end{center}
This, together with the identity
$$(k+\ell)(k+\ell+4n+2) = 2 \big(k^2+\ell^2+2(k+\ell)n+2k\big) - (k-\ell)(k-\ell+2),$$
implies $\dd\ell(C_{\tilG}) = 2\,\dd\ell(C_G^{(1)}) - \dd r(C_K)$ and $\dd\ell(C_G^{(2)}) = \dd r(C_K)$.

(2) Since $\tilG/\tilH$ and $F=K/H$ are symmetric spaces, we obtain $\D_{\tilG}(X)$ and $\D_K(F)$ using the Harish-Chandra isomorphism.
We now focus on $\D_G(X)$.
We only need to prove the first equality, since the other ones follow from the relations between the generators.
For this, using Lemmas \ref{lem:struct-DGX}.(3) and \ref{lem:ex(v)}.(4), it suffices to show that the two differential operators $\dd\ell(C_{\tilG})$ and $\dd r(C_K)$ on~$X$ are algebraically independent.
Let $f$ be a polynomial in two variables such that $f(\dd\ell(C_{\tilG}),\dd r(C_K))=0$ in $\D_G(X)$.
By letting this differential operator act on the $G$-isotypic component $\vartheta = \mathcal{H}^{k,\ell}(\HH^{n+1}) \boxtimes \C^{k-\ell+1}$ in $C^{\infty}(X)$, we obtain
$$f\big( (k+\ell)(k+\ell+4n+2), (k-\ell)(k-\ell+2) \big) = 0$$
for all $k,\ell\in\N$ with $k\geq\ell$, hence $f$ is the zero polynomial.
\end{proof}

\begin{proof}[Proof of Proposition~\ref{prop:nu-ex(v)}]
We use Proposition~\ref{prop:Stau-transfer} and the formula \eqref{eqn:pitau-v} for the map $\vartheta\mapsto (\pi(\vartheta),\tau(\vartheta))$ of Proposition~\ref{prop:pi-tau-theta}.
Let $\tau=\mathbf{1}\boxtimes\C^a\boxtimes\C^a\in\Disc(K/H)$ with $a\in\N_+$.
If $\vartheta\in\Disc(G/H)$ satisfies $\tau(\vartheta)=\tau$, then $\vartheta$ is of the form
\begin{eqnarray*}
\vartheta & = & \mathcal{H}^{k,\ell}(\HH^{n+1})\boxtimes\C^a\\
& \simeq & \Rep(\Sp(n+1),(k,\ell,0,\dots,0))\boxtimes\Rep(\U(1),a)
\end{eqnarray*}
for some $k,\ell\in\N$ with $k-\ell+1=a$, by \eqref{eqn:pitau-v}.
The algebra $\D_{\tilG}(X)$ acts on the irreducible $\tilG$-submodule $\pi(\vartheta) = \mathcal{H}^{k+\ell}(\R^{4n+4})$ by the scalars
$$\lambda(\vartheta) + \rho_{\tila} = k+\ell+2n+1 \in \C/(\Z/2\Z)$$
via the Harish-Chandra isomorphism \eqref{eqn:HCX-v}, whereas the algebra $Z(\g_{\C})$ acts on the irreducible $G$-module $\vartheta = \mathcal{H}^{k,\ell}(\HH^{n+1})\boxtimes\C^a\mathcal{H}^{j+a,j-a}(\HH^{n+1})$ by the scalars
$$\nu(\vartheta) + \rho = (k+n+1, \ell+n; n-1,\dots,-1;k-\ell+1) \in (\C^{n+1}\oplus\C)/W(C_{n+1}\times C_1)$$
via \eqref{eqn:HCZg-v}.
Thus the affine map $S_{\tau}$ in Proposition~\ref{prop:nu-ex(v)} sends $\lambda(\vartheta)+\rho_{\tila}$ to $\nu(\vartheta)+\rho$ for any $\vartheta\in\Disc(G/H)$ such that $\tau(\vartheta)=\tau$, and we conclude using Proposition~\ref{prop:Stau-transfer}.
\end{proof}

This completes the proof of Theorems \ref{thm:main}, \ref{thm:main-explicit}, \ref{thm:transfer}, and~\ref{thm:nu-tau}, as well as Corollary~\ref{cor:rel-Lapl}, in the case (v) of Table~\ref{table1}.
For the case (v)$'$ of Table~\ref{table1}, the homogeneous spaces $G/H$ and $K/H$ change as follows:
\begin{eqnarray*}
G/H\,: & \big(\Sp(n+1)\cdot\Sp(1))/(\Sp(n)\cdot\Diag(\Sp(1))\big)\\
& \hspace{0.5cm} \leadsto\ \big(\Sp(n+1)\cdot\U(1))/(\Sp(n)\cdot\Diag(\U(1))\big),\\
K/H\,: & \big((\Sp(n)\times\Sp(1))\cdot\Sp(1))/(\Sp(n)\cdot\Diag(\Sp(1))\big)\\
& \hspace{0.5cm} \leadsto\ \big((\Sp(n)\times\Sp(1))\cdot\U(1))/(\Sp(n)\cdot\Diag(\U(1))\big).
\end{eqnarray*}
The proof works similarly, and so we omit it.

\subsection{The case $(\tilG,\tilH,G)=(\SO(16),\SO(15),\Spin(9))$}\label{subsec:ex(vi)}

Here $H=\Spin(7)$, and the only maximal connected proper subgroup of~$G$ containing~$H$ is~$K=\Spin(8)$.
The fibration $F\to X\to Y$ is the fibration of spheres $\mathbb{S}^7\to\nolinebreak\mathbb{S}^{15}\to\nolinebreak\mathbb{S}^8$.

\begin{proposition}[Generators and relations] \label{prop:ex(vi)}
For
$$X = \tilG/\tilH = \SO(16)/\SO(15) \simeq \Spin(9)/\Spin(7) = G/H$$
and $K=\Spin(8)$, we have
\begin{enumerate}
  \item $\dd\ell(C_{\tilG}) = 4\,\dd\ell(C_G) - 3\,\dd r(C_K)$;
  \item $\left \{ \begin{array}{cll}
  \D_{\tilG}(X) & \!\!\!=\!\!\! & \C[\dd\ell(C_{\tilG})];\\
  \D_K(F) & \!\!\!=\!\!\! & \C[\dd r(C_K)];\\
  \D_G(X) & \!\!\!=\!\!\! & \C[\dd\ell(C_{\tilG}),\dd r(C_K)] = \C[\dd\ell(C_{\tilG}),\dd\ell(C_G)] = \C[\dd\ell(C_G),\dd r(C_K)].
  \end{array}\right.$
\end{enumerate}
\end{proposition}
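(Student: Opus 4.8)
The plan is to follow the general strategy of Section~\ref{sec:strategy}, specializing to the triple $(\SO(16),\SO(15),\Spin(9))$. The key point, as in the other cases of Section~\ref{sec:computations}, is to determine explicitly the map $\vartheta\mapsto(\pi(\vartheta),\tau(\vartheta))$ of Proposition~\ref{prop:pi-tau-theta} between discrete series representations, which here are parametrized by single integers since all three spaces $\tilG/\tilH=\mathbb{S}^{15}$, $K/H=\mathbb{S}^7$ and $G/H=\mathbb{S}^{15}$ (as $\Spin(9)/\Spin(7)$) have rank one. Concretely, I would first record (in an auxiliary lemma, analogous to Lemmas~\ref{lem:ex(i)}, \ref{lem:ex(iii)}, \ref{lem:ex(v)}) the following four facts: (a) $\Disc(\SO(16)/\SO(15))=\{\mathcal{H}^j(\R^{16}):j\in\N\}$, $\Disc(\Spin(9)/\Spin(7))$ is parametrized by $\N$ (the spherical harmonics on $\mathbb{S}^{15}$ viewed as a $\Spin(9)$-space via the action of $\Spin(9)$ on $\R^{16}$ coming from the spin representation), and $\Disc(\Spin(8)/\Spin(7))=\{\mathcal{H}^\ell(\R^8):\ell\in\N\}$; (b) the branching law $\SO(16)\downarrow\Spin(9)$ of $\mathcal{H}^j(\R^{16})$, obtained via the $\Spin(9)\subset\SO(16)$ coming from the $16$-dimensional real spin representation (equivalently the inclusion $\Spin(7)\subset\Spin(8)\subset\Spin(9)$ and the octonionic Hopf fibration $\mathbb{S}^7\to\mathbb{S}^{15}\to\mathbb{S}^8$); (c) the decomposition of $L^2(\Spin(9)/\Spin(8),\W_\tau)$ into $\Spin(9)$-irreducibles; and (d) the structure of $S(\g_\C/\h_\C)^H=S(\spin(9,\C)/\spin(7,\C))^{\Spin(7)}$, which should be a polynomial ring in two homogeneous generators (degrees to be determined, presumably $2$ and $2$ or $2$ and $4$). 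For (b) the right tool is Proposition~\ref{prop:branchnormal} together with a priori knowledge of $\Disc(\Spin(9)/\Spin(7))$, exactly as suggested in the text for $\SO(16)\downarrow\Spin(7)$.

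Once the map $\vartheta\mapsto(\pi(\vartheta),\tau(\vartheta))$ is in hand — I expect it to take the shape $\vartheta_{\,k,\ell}\mapsto(\mathcal{H}^{k+?\ell}(\R^{16}),\mathcal{H}^{\ell}(\R^8))$ for suitable labels, reflecting that the fiber degree $\ell$ contributes to the ambient $\mathbb{S}^{15}$-degree with the multiplicity dictated by the Hopf fibration — I would compute the scalars by which the three Casimir operators $C_{\tilG}$, $C_G$, $C_K$ act on $\pi(\vartheta)$, $\vartheta$, $\tau(\vartheta)$ respectively, using the eigenvalue $\langle\mu,\mu+2\rho\rangle$ and the normalizations fixed in the "Notation and conventions" subsection. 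The relation $\dd\ell(C_{\tilG})=4\,\dd\ell(C_G)-3\,\dd r(C_K)$ then amounts to a single polynomial identity in the two integer parameters, which I would verify directly; Proposition~\ref{prop:PQR} (or its reformulation Proposition~\ref{prop:chiPQR}, since $\tilG/\tilH$ and $K/H$ are symmetric spaces here) upgrades this identity of eigenvalues to an identity of differential operators. The coefficients $4$ and $3$ are forced by the ratio of the Casimir normalizations for $\Spin(9)$ versus its $16$-dimensional spin representation and for $\Spin(8)$ versus its $8$-dimensional representation, so getting the bookkeeping of these normalizations right is the one place where care is needed.

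For part~(2), the descriptions $\D_{\tilG}(X)=\C[\dd\ell(C_{\tilG})]$ and $\D_K(F)=\C[\dd r(C_K)]$ are immediate from the Harish-Chandra isomorphism since $\mathbb{S}^{15}$ and $\mathbb{S}^7$ are rank-one symmetric spaces (and $\dd\ell(Z(\tilg_\C))=\D_{\tilG}(X)$, $\dd r_F(Z(\kk_\C))=\D_K(F)$ by Lemma~\ref{lem:surj-dl-drF}). For $\D_G(X)$ it suffices, by Lemma~\ref{lem:struct-DGX}.(3) and fact~(d) above, to show that the two operators $\dd\ell(C_{\tilG})$ and $\dd r(C_K)$ are algebraically independent and have the right orders to match the generator degrees of $S(\g_\C/\h_\C)^H$; algebraic independence follows, as in the proof of Proposition~\ref{prop:ex(i)} or Proposition~\ref{prop:ex(iii)}, by letting a hypothetical polynomial relation act on the isotypic component $\vartheta_{\,k,\ell}$ and observing that the pairs of eigenvalues $(\psi(\vartheta,\dd\ell(C_{\tilG})),\psi(\vartheta,\dd r(C_K)))$ are Zariski-dense in $\C^2$ as $(k,\ell)$ ranges over $\N^2$. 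The remaining two equalities $\D_G(X)=\C[\dd\ell(C_{\tilG}),\dd\ell(C_G)]=\C[\dd\ell(C_G),\dd r(C_K)]$ then follow formally from part~(1), since each of $\dd\ell(C_{\tilG})$, $\dd r(C_K)$ can be solved for in terms of $\dd\ell(C_G)$ and the other. The main obstacle I anticipate is establishing fact~(d) — computing $S(\spin(9,\C)/\spin(7,\C))^{\Spin(7)}$ and its two generator degrees — and, relatedly, pinning down the branching law in~(b) precisely enough (including which $\Spin(9)$-representations actually occur and with what fiber-degree shift); these are exactly the inputs the text flags as needing Proposition~\ref{prop:branchnormal} and an a priori knowledge of $\Disc(\Spin(9)/\Spin(7))$, and they will be handled in the supporting lemma (cf.\ the reference to Lemma~\ref{lem:ex(vi)} in Lemma~\ref{lem:surj-dl-drF} and in Section~\ref{subsec:Application of the Borel--Weil theorem to branching laws}).
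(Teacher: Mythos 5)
Your plan is essentially the paper's own proof: an auxiliary lemma (the paper's Lemma~\ref{lem:ex(vi)}) recording the three sets of discrete series, the branching law $\SO(16)\downarrow\Spin(9)$ obtained from Proposition~\ref{prop:branchnormal} combined with the a priori description of $\Disc(\Spin(9)/\Spin(7))$, the decomposition of $L^2(Y,\W_{\tau})$, and the invariant ring $S(\g_{\C}/\h_{\C})^H$; then the Casimir eigenvalue identity promoted to an operator identity by Proposition~\ref{prop:PQR}, and part~(2) via the Harish--Chandra isomorphism, Lemma~\ref{lem:struct-DGX}.(3), and Zariski density of the eigenvalue pairs.

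One correction is needed, though it does not change the route. The space $G/H=\Spin(9)/\Spin(7)$ has rank \emph{two} (see Table~\ref{table2} and Corollary~\ref{cor:rank}), not one, and $\Disc(\Spin(9)/\Spin(7))$ is parametrized by \emph{pairs}: it consists of $\Rep\big(\Spin(9),\frac{1}{2}(j,k,k,k)\big)$ with $j\geq k\geq 0$ and $j-k\in 2\N$; in particular $\mathcal{H}^j(\R^{16})$ is reducible under $\Spin(9)$, decomposing into the summands with $k\leq j$, $k\equiv j \bmod 2$. Your opening assertion that all three spaces have rank one and that $\Disc(G/H)$ is indexed by a single integer is therefore false --- and if it were true the proposition itself could not hold, since $\dd r(C_K)$ would then add nothing to $\D_{\tilG}(X)$. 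Fortunately the operative parts of your plan already use the correct two-parameter picture (the map $\vartheta_{k,\ell}\mapsto(\pi(\vartheta),\tau(\vartheta))$, the identity in two integer parameters, Zariski density in $\C^2$), so the slip does not derail the argument, but it must be repaired when you state fact~(a). Two further bookkeeping points: the ring $S(\g_{\C}/\h_{\C})^H\simeq S(\C^8\oplus\C^7)^{\Spin(7)}$ is generated by two elements of degree $(2,2)$ (so two second-order operators do suffice in Lemma~\ref{lem:struct-DGX}.(3)); and the $K$-types in $L^2(K/H)$ are $\Rep\big(\Spin(8),\frac{1}{2}(k,k,k,k)\big)$, i.e.\ triality twists of $\mathcal{H}^k(\R^8)$ rather than $\mathcal{H}^k(\R^8)$ itself --- the Casimir eigenvalue $k^2+6k$ is the same, so the relation $\dd\ell(C_{\tilG})=4\,\dd\ell(C_G)-3\,\dd r(C_K)$ is unaffected, but the correct labels are needed to carry out the Borel--Weil/conormal-bundle branching argument.
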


We identify
\begin{eqnarray}
\Hom_{\C\text{-}\mathrm{alg}}(Z(\g_{\C}),\C) & \simeq & \jj_{\C}^*/W(\g_{\C}) \ \simeq\ \C^4/W(B_4),\label{eqn:HCZg-vi}\\
\Hom_{\C\text{-}\mathrm{alg}}(\D_{\tilG}(X),\C) & \simeq & \hspace{0.3cm} \tila_{\C}^*/\widetilde{W} \hspace{0.45cm} \simeq \hspace{0.25cm} \C/(\Z/2\Z)\label{eqn:HCX-vi}
\end{eqnarray}
by the standard bases.
The set $\Disc(K/H)$ consists of the representations of $K=\Spin(8)$ of the form $\tau=\mathcal{H}^k(\R^8)$ for $k\in\N$.

\begin{proposition}[Transfer map] \label{prop:nu-ex(vi)}
Let
$$X = \tilG/\tilH = \SO(16)/\SO(15) \simeq \Spin(9)/\Spin(7) = G/H$$
and $K=\Spin(8)$.
For $\tau=\mathcal{H}^k(\R^8)\in\Disc(K/H)$ with $k\in\N$, the affine map
\begin{eqnarray*}
S_{\tau} :\ \tila_{\C}^* \simeq \C & \longrightarrow & \hspace{1.8cm} \C^4 \hspace{1.8cm} \simeq \jj_{\C}^*\\
\lambda & \longmapsto & \frac{1}{2}\,(\lambda, k+5, k+3, k+1)
\end{eqnarray*}
induces a transfer map
$$\nnu(\cdot,\tau) : \Hom_{\C\text{-}\mathrm{alg}}(\D_{\tilG}(X),\C) \longrightarrow \Hom_{\C\text{-}\mathrm{alg}}(Z(\g_{\C}),\C)$$
as in Theorem~\ref{thm:nu-tau}.
\end{proposition}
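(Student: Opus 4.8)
The plan is to follow the same scheme as in the previous cases: by Proposition~\ref{prop:Stau-transfer}, it suffices to exhibit an explicit formula for the map $\vartheta\mapsto(\pi(\vartheta),\tau(\vartheta))$ of Proposition~\ref{prop:pi-tau-theta} and then to check that the affine map $S_\tau$ of the statement carries $\lambda(\vartheta)+\rho_{\tila}$ to $\nu(\vartheta)+\rho$ for every $\vartheta\in\Disc(G/H)$ with $\tau(\vartheta)=\tau$. So the first step is to establish, in Lemma~\ref{lem:ex(vi)} below, the three relevant sets of discrete series: $\Disc(\SO(16)/\SO(15))=\{\mathcal H^{j}(\R^{16})\}_{j\in\N}$ and $\Disc(\Spin(8)/\Spin(7))=\{\mathcal H^{k}(\R^{8})\}_{k\in\N}$ (both symmetric spaces, so this is Cartan--Helgason), together with $\Disc(\Spin(9)/\Spin(7))$, a free abelian semigroup on two generators whose description is on Kr\"amer's list. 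From the branching laws for $\SO(16)\downarrow\Spin(9)$ and $\Spin(9)\downarrow\Spin(8)$ one then reads off that $\vartheta$ is indexed by a pair $(j,k)$ with $j\geq k\geq 0$, that $\vartheta_{j,k}\mapsto(\mathcal H^{j}(\R^{16}),\mathcal H^{k}(\R^{8}))$, and that the $\Spin(9)$-module $\vartheta_{j,k}$ has highest weight $(\tfrac{j}{2},\tfrac{k}{2},\tfrac{k}{2},\tfrac{k}{2})$.

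Granting this, the computation of scalars is routine. Since $X=\tilG/\tilH=\SO(16)/\SO(15)$ is a rank-one symmetric space, $\D_{\tilG}(X)=\C[\dd\ell(C_{\tilG})]$ and, via the identification \eqref{eqn:HCX-vi}, the algebra $\D_{\tilG}(X)$ acts on $\pi(\vartheta_{j,k})=\mathcal H^{j}(\R^{16})$ by the scalar $\lambda(\vartheta)+\rho_{\tila}=j+7$. On the other side, with $\rho=(\tfrac72,\tfrac52,\tfrac32,\tfrac12)$ the half sum of positive roots of $B_4$, the $Z(\g_\C)$-infinitesimal character of $\vartheta_{j,k}$ is $\nu(\vartheta)+\rho=\tfrac12(j+7,\,k+5,\,k+3,\,k+1)$ via \eqref{eqn:HCZg-vi}. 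Hence $S_\tau$ sends $\lambda(\vartheta)+\rho_{\tila}$ to $\nu(\vartheta)+\rho$ whenever $\tau(\vartheta)=\tau=\mathcal H^{k}(\R^{8})$, and Proposition~\ref{prop:Stau-transfer} gives the claim. Along the way I would also record the Casimir relation of Proposition~\ref{prop:ex(vi)} by evaluating $C_{\tilG},C_G,C_K$ on $\pi(\vartheta),\vartheta,\tau(\vartheta)$ — one checks $j(j+14)=4\bigl(\tfrac14 j(j+14)+\tfrac34 k(k+6)\bigr)-3\,k(k+6)$ — and invoking Proposition~\ref{prop:PQR}.

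The main obstacle is the branching law for $\SO(16)\downarrow\Spin(9)$, which is not a classical orthogonal-tower branching: $\Spin(9)$ sits inside $\SO(16)$ through its $16$-dimensional spin representation, so the $\Spin(9)$-constituents of $\mathcal H^{j}(\R^{16})$ are themselves spin-type modules. Here I would realize $\mathcal H^{j}(\R^{16})$ by the Borel--Weil theorem and apply the conormal-bundle estimate of Proposition~\ref{prop:branchnormal} to a $\g_\C$-compatible parabolic of $\so(16,\C)$ to obtain an upper bound for these constituents; matching this upper bound against the a priori knowledge of $\Disc(\Spin(9)/\Spin(7))$ and the fiberwise decomposition \eqref{eqn:decomp2} forces the branching law and pins down $\vartheta\mapsto(\pi(\vartheta),\tau(\vartheta))$ exactly. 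Once this combinatorial step is settled, everything else reduces to evaluations of Harish-Chandra homomorphisms on explicitly known weights.
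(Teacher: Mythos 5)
Your proposal is correct and follows essentially the same route as the paper: it reduces the statement to Proposition~\ref{prop:Stau-transfer} via the explicit map $\vartheta\mapsto(\pi(\vartheta),\tau(\vartheta))$ of \eqref{eqn:pitau-vi}, computes $\lambda(\vartheta)+\rho_{\tila}=j+7$ and $\nu(\vartheta)+\rho=\tfrac12(j+7,k+5,k+3,k+1)$ exactly as in the paper, and handles the nonstandard branching $\SO(16)\downarrow\Spin(9)$ by the same Borel--Weil plus conormal-bundle estimate (Proposition~\ref{prop:branchnormal}) matched against $\Disc(\Spin(9)/\Spin(7))$. The only slip is cosmetic: the parameters $(j,k)$ of $\Disc(G/H)$ satisfy the parity condition $j-k\in2\N$, not merely $j\geq k\geq 0$, but this does not affect the verification of the formula for $S_{\tau}$.
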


In order to prove Propositions \ref{prop:ex(vi)} and~\ref{prop:nu-ex(vi)}, we use the following results on finite-dimensional representations.

\begin{lemma}\label{lem:ex(vi)}
\begin{enumerate}
  \item Discrete series for $\tilG/\tilH$, $G/H$, and $F=K/H$:
  \begin{eqnarray*}
    \Disc(\SO(16)/\SO(15)) & = & \{ \mathcal{H}^j(\R^{16}) \,:\, j\in\N\} ;\\
    \Disc(\Spin(9)/\Spin(7)) & = & \Big\{ \Rep\Big(\Spin(9),\frac{1}{2}(j,k,k,k)\Big) \,:\, j,k\in\N,\ j-k\in 2\N\Big\} ;\\
    \Disc(\Spin(8)/\Spin(7)) & = & \Big\{ \Rep\Big(\Spin(8),\frac{1}{2}(k,k,k,k)\Big) \,:\, k\in\N\Big\} .
  \end{eqnarray*}
  \item Branching laws for $\SO(16)\downarrow\Spin(9)$: For $j\in\N$,
  $$\mathcal{H}^j(\R^{16})|_{\Spin(9)}\ \simeq \bigoplus_{\substack{k\in\N\\ j-k\in 2\N}} \Rep\Big(\Spin(9),\frac{1}{2}(j,k,k,k)\Big).$$
  \item Irreducible decomposition of the regular representation of~$G$: For~$k\in\nolinebreak\N$,
  \begin{align*}
  & L^2\Big(\Spin(9)/\Spin(8),\Rep\Big(\Spin(8),\frac{1}{2}(k,k,k,k)\Big)\Big)\\
  & \simeq\ \sumplus{j-k\in 2\N}\ \Rep\Big(\Spin(9),\frac{1}{2}(j,k,k,k)\Big).
  \end{align*}
  \item The ring $S\big(\g_{\C}/\h_{\C}\big)^H=S(\C^8 \oplus \C^7)^{\Spin(7)}$ is generated by two algebraically independent homogeneous elements of degree~$2$.
\end{enumerate}
\end{lemma}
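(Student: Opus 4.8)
The four assertions are largely independent, and the plan is to reduce all the representation-theoretic content to classical branching rules together with the multiplicity-freeness furnished by sphericity (Fact~\ref{fact:spherical-cpt}), isolating the one genuinely delicate point in~(2). First I would dispose of the easy parts of~(1) and of~(3). The first formula of~(1) is the classical decomposition of $L^2(\mathbb{S}^{15})$ into spaces $\mathcal{H}^j(\R^{16})$ of spherical harmonics. For the third formula, note that $F=K/H=\Spin(8)/\Spin(7)$ with $\Spin(7)$ the stabilizer of a spinor; a triality automorphism of $\Spin(8)$ carries this subgroup to the vector stabilizer $\Spin(7)\subset\SO(8)$, so that $F$ is, in suitable coordinates, the rank-one symmetric space $\mathbb{S}^7$, and the Cartan--Helgason theorem (Fact~\ref{fact:CartanHelgason}) transported through triality gives $\Disc(F)=\{\Rep(\Spin(8),\tfrac12(k,k,k,k)):k\in\N\}$. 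For~(3) I would use Frobenius reciprocity, $L^2(Y,\W_\tau)\simeq\bigoplus_{\vartheta\in\widehat{G}}\Hom_{\Spin(8)}(\tau,\vartheta|_{\Spin(8)})\otimes\vartheta$ with $Y=\Spin(9)/\Spin(8)=\mathbb{S}^8$, combined with the classical interlacing rule for $\Spin(9)\downarrow\Spin(8)$ (e.g.\ \cite[Ch.\,8]{gw09}): for $\tau=\Rep(\Spin(8),\tfrac12(k,k,k,k))$ the interlacing inequalities collapse and force $\vartheta=\Rep(\Spin(9),\tfrac12(j,k,k,k))$ with $j\geq k$ and $j\equiv k\!\pmod 2$, each with multiplicity one, which is~(3).

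Next, the second formula of~(1) and assertion~(2). Expanding $L^2(X)$ along the fibre via~\eqref{eqn:decomp2} and substituting~(3) yields $L^2(X)\simeq\bigoplus\Rep(\Spin(9),\tfrac12(j,k,k,k))$ over $j\geq k\geq 0$, $j\equiv k\!\pmod 2$; since $X_\C$ is $G_\C$-spherical this sum is multiplicity-free, giving the second formula of~(1) (alternatively one may quote Kr\"amer~\cite{kra79}). Comparing with the decomposition $L^2(X)\simeq\bigoplus_j\mathcal{H}^j(\R^{16})|_{\Spin(9)}$ coming from~\eqref{eqn:decomp1}, it remains to check that the constituent $\Rep(\Spin(9),\tfrac12(j,k,k,k))$ sits in the summand indexed by~$j$. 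Its highest weight occurs among the weights of the homogeneous polynomials of degree $d$ on $\R^{16}=\Delta_9$ only for $d\geq j$ (the first coordinate of a sum of $d$ weights $\tfrac12(\pm1,\pm1,\pm1,\pm1)$ is at most $d/2$), so it cannot occur in $\mathcal{H}^{j'}$ with $j'<j$; and $\mathcal{H}^j$ certainly contains $\Rep(\Spin(9),\tfrac j2(1,1,1,1))$, namely the submodule generated by the $\SO(16)$-highest weight vector. To exclude $j'>j$ I would apply the Borel--Weil/conormal upper bound of Proposition~\ref{prop:branchnormal} to a $\Spin(9,\C)$-compatible parabolic of $\SO(16,\C)$ (one may take the characteristic element $A=\tfrac12(e_1+e_2+e_3+e_4)$ in $\so(9,\C)$), which after identifying the conormal bundle pins the index down exactly, proving~(2).

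Finally~(4). As $\Spin(7)$-modules $\g_\C/\h_\C\simeq\C^8\oplus\C^7$ with $\C^8=\Delta_7$ the spin and $\C^7=V_7$ the vector representation; each carries an essentially unique invariant quadratic form, $q_1$ and~$q_2$, which are algebraically independent as they involve disjoint variables. To see that $q_1,q_2$ generate the whole invariant ring I would invoke Lemma~\ref{lem:struct-DGX}: it suffices to show $\dim S^N(\C^8\oplus\C^7)^{\Spin(7)}=\#\{(a,b)\in\N^2:2a+2b=N\}$. Writing $S^N=\bigoplus_{a+b=N}S^a(\Delta_7)\otimes S^b(V_7)$, the classical harmonic expansion gives $S^b(V_7)|_{\Spin(7)}=\bigoplus_{m\leq b,\,m\equiv b\,(2)}\Rep(\Spin(7),m e_1)$, while $\Spin(7)$ acts transitively on $\mathbb{S}^7\subset\Delta_7$ with stabilizer $G_2$ and a triality argument (as in~(1)) shows $S^a(\Delta_7)|_{\Spin(7)}=\bigoplus_{p\leq a,\,p\equiv a\,(2)}\Rep(\Spin(7),p\omega_3)$. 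Since the highest weights $\{m e_1\}$ and $\{p\omega_3\}$ agree only for the trivial representation, $(S^a\Delta_7\otimes S^bV_7)^{\Spin(7)}$ is one-dimensional when $a,b$ are both even and $0$ otherwise, which is the required count. (Alternatively, (4) appears on the invariant-theory tables of Schwarz~\cite{sch78} or Knop~\cite{kno94}.)

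The step I expect to be the main obstacle is the index-matching in~(2): cleanly ruling out $j'>j$ means either running the Borel--Weil/conormal estimate of Proposition~\ref{prop:branchnormal}, which requires exhibiting the $\Spin(9,\C)$-compatible parabolic of $\SO(16,\C)$ and explicitly describing the $\GL(4,\C)$-module structure of the conormal bundle of the corresponding flag subvariety and its symmetric powers, or carrying out a somewhat involved dimension comparison between $\dim\mathcal{H}^j(\R^{16})$ and $\sum_k\dim\Rep(\Spin(9),\tfrac12(j,k,k,k))$. Everything else is routine once the right classical branching rules are in hand.
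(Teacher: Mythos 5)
Parts (1), (3) and (4) of your plan, and your overall strategy for (2), coincide in substance with the paper's proof: (1) via spherical harmonics/the Cartan--Helgason theorem (Fact~\ref{fact:CartanHelgason}) and Kr\"amer (your alternative derivation of the second formula from (3) plus multiplicity-freeness is also fine), (3) via Frobenius reciprocity and the interlacing rule for $\Spin(9)\downarrow\Spin(8)$, and (4) via the fact that $S(\C^8)^{\Spin(7)}$ and $S(\C^7)^{\Spin(7)}$ share no nontrivial constituent; your weight bound (a) and the occurrence (b) of the $k=j$ constituent in $\mathcal{H}^j$ are also correct.

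The gap is in the decisive exclusion step of (2). The parabolic of $\SO(16,\C)$ determined by the characteristic element $\tfrac12(e_1+e_2+e_3+e_4)\in\so(9,\C)$ is the paper's $\widetilde{Q}_{\C}$, with Levi of type $\GL(1,\C)\times\GL(4,\C)\times\SO(6,\C)$; its nilradical has dimension $44$, so the conormal space $\widetilde{\n}_{\C}^-/\n_{\C}^-$ for $G_{\C}/P_{\C}^-\hookrightarrow \tilG_{\C}/\widetilde{Q}_{\C}^-$ is $34$-dimensional, and its weights point in both directions relative to $\omega_+$ (for instance $\pm$ of $(1,1,0,0)$, $(0,0,1,1)$, $(-1,0,1,1)$ all occur). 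Hence $S^{\ell}(\widetilde{\n}_{\C}^-/\n_{\C}^-)$ breaks into many $\widetilde{\GL}(4,\C)$-constituents and the resulting upper bound from Proposition~\ref{prop:branchnormal} does \emph{not} confine the highest weights of the constituents of $\mathcal{H}^{j'}(\R^{16})|_{\Spin(9)}$ to the family $\tfrac12(j',k,k,k)$; it is not even clear that it rules out the trivial representation inside $\mathcal{H}^2(\R^{16})$. So it does not exclude $\Rep(\Spin(9),\tfrac12(j,k,k,k))\subset\mathcal{H}^{j'}$ with $j'>j$, which is exactly the point at issue, and your steps (a), (b) together with multiplicity-freeness cannot repair this: a ``shifted'' assignment sending the constituents with $k<j$ into higher harmonics is consistent with all of them. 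What the paper does is realize $\mathcal{H}^j(\R^{16})$ by Borel--Weil on $\tilG_{\C}/\widetilde{P}_{\C}^-$ for the \emph{larger} parabolic $\widetilde{P}_{\C}$ with Levi $\GL(1,\C)\times\SO(14,\C)$ (the stabilizer of the highest-weight isotropic line). This $\widetilde{P}_{\C}$ is not itself given by a characteristic element of $\so(9,\C)$ --- which is presumably why you reached for $\widetilde{Q}_{\C}$ --- but all the conormal argument needs is that $\widetilde{P}_{\C}\cap\Spin(9,\C)$ is parabolic, and the paper checks this precisely by means of your $\widetilde{Q}_{\C}\subset\widetilde{P}_{\C}$, identifying the intersection as the maximal parabolic with Levi $\widetilde{\GL}(4,\C)$. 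Then $\widetilde{\n}_{\C}^-/\n_{\C}^-$ is only $4$-dimensional with weights $-(1,1,1,0),-(1,1,0,1),-(1,0,1,1),-(0,1,1,1)$, each $S^{\ell}$ is irreducible, and the bound is exactly $\bigoplus_{k\equiv j\,(2)}\Rep(\Spin(9),\tfrac12(j,k,k,k))$, which together with (1) forces equality (and makes (a), (b) unnecessary). Either switch to this parabolic or carry out the dimension comparison you mention as a fallback; as stated, the step does not go through.
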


\begin{proof}[Proof of Lemma~\ref{lem:ex(vi)}]
(1) The description of $\Disc(\SO(16)/\SO(15))$ and of $\Disc(\Spin(8)/\Spin(7))$ follows from the classical theory of spherical harmonics (see \eg \cite[Intro. Th.\,3.1]{hel00}) or from the Cartan--Helgason theorem (Fact~\ref{fact:CartanHelgason}), and the description of $\Disc(\Spin(9)/\Spin(7))$ from \cite{kra79}.

(2) Let $\widetilde{\jj}_{\C}$ be a Cartan subalgebra of $\tilg_{\C}=\so(16,\C)$ and let $\{f_1,\dots,f_8\}$ be the standard basis of~$\widetilde{\jj}_{\C}^*$.
Fix a positive system
$$\Delta^+(\tilg_{\C},\widetilde{\jj}_{\C}) = \{ f_i\pm f_j \,:\, 1\leq i<j\leq 8\}.$$
Let $\widetilde{\p}_{\C} = \widetilde{\ell}_{\C} + \widetilde{\n}_{\C}$ be the standard parabolic subalgebra of~$\tilg_{\C}$ with
\begin{equation} \label{eqn:SO16-nweight}
\Delta(\widetilde{\n}_{\C},\widetilde{\jj}_{\C}) = \{ f_i\pm f_j \,:\, 2\leq i<j\leq 8\},
\end{equation}
and $\widetilde{P}_{\C}$ the corresponding parabolic subgroup.
By the Borel--Weil theorem, we can realize the irreducible representation $\mathcal{H}^j(\R^{16})=\Rep(\SO(16),jf_1)$ of~$\tilG$ on the space $\mathcal{O}(\tilG_{\C}/\widetilde{P}_{\C}^-,\mathcal{L}_{jf_1})$ of holomorphic sections of the $\tilG_{\C}$-equivariant holomorphic line bundle $\mathcal{L}_{jf_1} = \tilG_{\C} \times_{\widetilde{P}_{\C}^-} \C_{jf_1}$, where $\widetilde{P}_{\C}^-$ is the opposite parabolic subgroup to~$\widetilde{P}_{\C}$.

Let $\widetilde{\GL}(4,\C)$ be the double covering group of $\GL(4,\C)$, given by
$$\{ (c,A)\in\C^*\times\GL(4,\C) \,:\, \det(A)=c^2\}.$$
The natural embeddings $\GL(4,\C)\hookrightarrow\SO(8,\C)\hookrightarrow\SO(9,\C)$ lift to $\widetilde{\GL}(4,\C)\hookrightarrow\Spin(8,\C)\hookrightarrow\Spin(9,\C)=G_{\C}$.

We take the standard basis $\{ e_1,e_2,e_3,e_4\}$ of~$\jj_{\C}^*$, and a positive system $\Delta^+(\g_{\C},\jj_{\C}) = \{ e_i\pm e_j \,:\, 1\leq i<j\leq 4\} \cup \{ e_i \,:\, 1\leq i\leq 4\}$.
We set
$$\omega_+ := \frac{1}{2} (e_1+e_2+e_3+e_4)$$
and define a maximal parabolic subgroup $P_{\C}=L_{\C}N_{\C}$ by the characteristic element $(1,1,1,1)\in\C^4\simeq\jj_{\C}$.
Then the Levi subgroup $L_{\C}$ is isomorphic to $\widetilde{\GL}(4,\C)$ and
\begin{equation} \label{eqn:Spin9-nweight}
\Delta(\n_{\C},\jj_{\C}) = \{ e_i+e_j \,:\, 1\leq i<j\leq 4\} \cup \{ e_i \,:\, 1\leq i\leq 4\}.
\end{equation}
Given that the spin representation $\Rep(\Spin(9),\omega_+)\simeq\C^{16}$ has $\jj_{\C}$-weights
$$\Big\{ \frac{1}{2} (x_1,x_2,x_3,x_4) ~:~ x_j\in\{ \pm 1\}\Big\} \subset \C^4 \simeq \jj_{\C}^*,$$
we may and do assume that $\jj_{\C}$ is contained in~$\widetilde{\jj}_{\C}$ and that
\begin{equation} \label{eqn:Spin9-1111}
\iota(1,1,1,1) = (2,1,1,1,1,0,0,0) \in \widetilde{\jj}_{\C} \quad\mathrm{and}\quad \iota^* f_1 = \omega_+ \in \jj_{\C}^*,
\end{equation}
where $\iota$ denotes the inclusion map $\jj_{\C}\hookrightarrow\widetilde{\jj}_{\C}$.

Let $\widetilde{Q}_{\C}$ be the standard parabolic subgroup of~$\tilG_{\C}$ given by the characteristic element $(2,1,1,1,1,0,0,0)\in\widetilde{\jj}_{\C}$.
Then \eqref{eqn:Spin9-1111} shows that $\widetilde{Q}_{\C}$ is compatible with the reductive subgroup~$G_{\C}$ and $P_{\C}=\widetilde{Q}_{\C}\cap G_{\C}$.
We now verify that $P_{\C}=\widetilde{P}_{\C}\cap G_{\C}$.
Clearly, $\widetilde{Q}_{\C}$ is a subgroup of~$\widetilde{P}_{\C}$, hence $P_{\C}\subset\widetilde{P}_{\C}\cap G_{\C}$.
We note that $\widetilde{P}_{\C}\cap G_{\C}$ is a proper subgroup of~$G_{\C}$, because $G_{\C}=\Spin(9,\C)$ cannot be a subgroup of the Levi subgroup $\SO(2,\C)\times\SO(14,\C)$ of $\widetilde{P}_{\C}$.
Since $P_{\C}$ is a maximal parabolic subgroup of~$G_{\C}$, we conclude that $P_{\C}=\widetilde{P}_{\C}\cap G_{\C}$ with $L_{\C}\subset\widetilde{L}_{\C}$ and $N_{\C}\subset\widetilde{N}_{\C}$, and so $\widetilde{P}_{\C}$ is also compatible with the reductive subgroup~$G_{\C}$.

We claim that the Levi subgroup $L_{\C}\simeq\widetilde{\GL}(4,\C)$ acts on $\widetilde{\n}_{\C}/\n_{\C}\simeq\C^4$ by $\Rep(\widetilde{\GL}(4,\C),(1,1,1,0))$.
To see this, we observe from \eqref{eqn:SO16-nweight} that
\begin{eqnarray*}
\Delta(\widetilde{\n}_{\C},\jj_{\C}) & \!\!\!=\!\!\! & \iota^*(\Delta(\widetilde{\n}_{\C},\widetilde{\jj}_{\C}))\\
& \!\!\!=\!\!\! & \{ (x_1,x_2,x_3,x_4) \,:\, x_j\in\{ 0,1\}\big\} \smallsetminus \{ (0,0,0,0),(-1,-1,-1,-1)\}.
\end{eqnarray*}
Comparing this with \eqref{eqn:Spin9-nweight}, we find
$$\Delta(\widetilde{\n}_{\C}/\n_{\C},\jj_{\C}) = \{ (1,1,1,0), (1,1,0,1), (1,0,1,1), (0,1,1,1)\}.$$

Applying Proposition~\ref{prop:branchnormal} to the embedding $G_{\C}/P_{\C}^-\hookrightarrow\tilG_{\C}/\widetilde{P}_{\C}^-$, we obtain the following upper estimate for possible irreducible $\Spin(9)$-modules occurring in the restriction of the $\SO(16)$-module $\mathcal{H}^j(\R^{16}) \simeq \mathcal{O}(\tilG_{\C}/\widetilde{P}_{\C}^-,\mathcal{L}_{jf_1})$:
$$\mathcal{O}\big(\tilG_{\C}/\widetilde{P}_{\C}^-,\mathcal{L}_{jf_1}\big)\big|_{\Spin(9)} \,\subset\, \bigoplus_{\ell=0}^{+\infty} \mathcal{O}\big(G_{\C}/P_{\C}^-,\mathcal{L}_{\omega_+}\otimes\mathcal{S}^{\ell}(\widetilde{\n}_{\C}^-/\n_{\C}^-)\big),$$
because $\iota^*f_1=\omega_+$.
The right-hand side is actually a finite sum because the Borel--Weil theorem states that
\begin{align*}
& \mathcal{O}\big(G_{\C}/P_{\C}^-,\mathcal{L}_{\omega_+}\otimes\mathcal{S}^{\ell}(\widetilde{\n}_{\C}^-/\n_{\C}^-)\big)\\
& \simeq \left\{ \begin{array}{ll}
\Rep\big(\Spin(9),j\omega_+ + (0,-\ell,-\ell,-\ell)\big) & \!\text{if $j\geq 2\ell$},\\
\{0\} & \!\text{otherwise}.
\end{array}\right.
\end{align*}
Thus we have shown
\begin{equation} \label{eqn:HjR16-Spin9}
\mathcal{H}^j(\R^{16})\big|_{\Spin(9)} \subset \bigoplus_{\substack{k\in\N\\ j-k\in 2\N}} \Rep\Big(\Spin(9),\frac{1}{2}(j,k,k,k)\Big).
\end{equation}
Since the union of all irreducible $\Spin(9)$-modules occurring in $\mathcal{H}^j(\R^{16})$ for some~$j$ coincides with $\Disc(\Spin(9)/\Spin(7))$ by the comparison of \eqref{eqn:decomp1} with \eqref{eqn:decomp2}, the description of $\Disc(\Spin(9)/\Spin(7))$ in~(1) forces \eqref{eqn:HjR16-Spin9} to be an equality.
This completes the proof of~(2).

(3) By the Frobenius reciprocity, this follows from the classical branching law for $\mathfrak{o}(N)\downarrow\mathfrak{o}(N-1)$, see \eg \cite[Th.\,8.1.3]{gw09}, with $N=9$.

(4) This follows from the fact that there is no common nontrivial $\Spin(7)$-module in $S(\C^8)$ and $S(\C^7)$, as is seen from the following irreducible decompositions:
\begin{eqnarray}
S(\C^8) & \simeq & \bigoplus_{j\in\N} \Rep\Big(\Spin(7),\frac{j}{2}(1,1,1)\Big) \otimes \C[r^2],\nonumber\\
S(\C^7) & \simeq & \bigoplus_{j\in\N} \Rep\big(\SO(7),(k,0,0)\big) \otimes \C[s^2],\label{eqn:decomp-S(C7)}
\end{eqnarray}
where $r^2$ denotes a $\Spin(7)$-invariant quadratic form on~$\C^8$ and $s^2$ an $\SO(7)$-invariant quadratic form on~$\C^7$.
\end{proof}

\begin{proof}[Proof of Proposition~\ref{prop:ex(vi)}]
(1) By Lemma~\ref{lem:ex(vi)}, the map $\vartheta\mapsto (\pi(\vartheta),\tau(\vartheta))$ of Proposition~\ref{prop:pi-tau-theta} is given by
\begin{equation} \label{eqn:pitau-vi}
\Rep\Big(\Spin(9),\frac{1}{2}(j,k,k,k)\Big) \longmapsto \Big(\mathcal{H}^j(\R^{16}), \Rep\Big(\Spin(8),\frac{1}{2}(k,k,k,k)\Big)\Big).
\end{equation}
The Casimir operators for $\tilG$, $G$, and~$K$ act on these representations as the following scalars.
\begin{center}
\begin{tabular}{|c|c|c|}
\hline
Operator & Representation & Scalar\\
\hline\hline
$C_{\tilG}$ & $\mathcal{H}^j(\R^{16})$ & $j^2+14j$\\
\hline
$C_G$ & $\Rep(\Spin(9),\frac{1}{2}(j,k,k,k))$ & $\frac{1}{4}\,(j^2+14j+3k^2+18k)$\\
\hline
$C_K$ & $\Rep(\Spin(8),\frac{1}{2}(k,k,k,k))$ & $k^2+6k$\\
\hline
\end{tabular}
\end{center}
This, together with the identity
$$j^2 + 14j = (j^2 + 14j +3k^2 + 18k) - 3 \, (k^2 + 6k),$$
implies $\dd\ell(C_{\tilG}) = 4\,\dd\ell(C_G) - 3\,\dd r(C_K)$.

(2) Since $\tilG/\tilH$ and $F=K/H$ are symmetric spaces, we obtain $\D_{\tilG}(X)$ and $\D_K(F)$ using the Harish-Chandra isomorphism.
We now focus on $\D_G(X)$.
We only need to prove the first equality, since the others follow from the relations between the generators.
For this, using Lemmas \ref{lem:struct-DGX}.(3) and~\ref{lem:ex(vi)}.(4), it suffices to show that the two differential operators $\dd\ell(C_{\tilG})$ and $\dd r(C_K)$ on~$X$ are algebraically independent.
Let $f$ be a polynomial in two variables such that $f(\dd\ell(C_{\tilG}),\dd r(C_K))=0$ in $\D_G(X)$.
By letting this differential operator act on the $G$-isotypic component $\vartheta = \Rep\big(\Spin(9),\frac{1}{2}(j,k,k,k)\big)$ in $C^{\infty}(X)$, we obtain
$$f\big( j^2+14j, k^2+6k \big) = 0$$
for all $j,k\in\N$ with $j-k\in 2\N$, hence $f$ is the zero polynomial.
\end{proof}

\begin{proof}[Proof of Proposition~\ref{prop:nu-ex(vi)}]
We use Proposition~\ref{prop:Stau-transfer} and the formula \eqref{eqn:pitau-vi} for the map $\vartheta\mapsto (\pi(\vartheta),\tau(\vartheta))$ of Proposition~\ref{prop:pi-tau-theta}.
Let $\tau=\mathcal{H}^k(\R^8)\in\Disc(K/H)$ with $k\in\N$.
If $\vartheta\in\Disc(G/H)$ satisfies $\tau(\vartheta)=\tau$, then $\vartheta$ is of the form $\vartheta = \Rep(\Spin(9),\frac{1}{2}(j,k,k,k))$ for some $j\in\N$ with $j-k\in 2\N$, by \eqref{eqn:pitau-vi}.
The algebra $\D_{\tilG}(X)$ acts on the irreducible $\tilG$-submodule $\pi(\vartheta) = \mathcal{H}^j(\R^{16})$ by the scalars
$$\lambda(\vartheta) + \rho_{\tila} = j+7 \in \C/(\Z/2\Z)$$
via the Harish-Chandra isomorphism \eqref{eqn:HCX-vi}, whereas the algebra $Z(\g_{\C})$ acts on the irreducible $G$-module $\vartheta = \Rep(\Spin(9),\frac{1}{2}(j,k,k,k))$ by the scalars
$$\nu(\vartheta) + \rho = \frac{1}{2}(j+7, k+5, k+3, k+1) \in \C^4/W(B_4)$$
via \eqref{eqn:HCZg-vi}.
Thus the affine map $S_{\tau}$ in Proposition~\ref{prop:nu-ex(vi)} sends $\lambda(\vartheta)+\rho_{\tila}$ to $\nu(\vartheta)+\rho$ for any $\vartheta\in\Disc(G/H)$ such that $\tau(\vartheta)=\tau$, and we conclude using Proposition~\ref{prop:Stau-transfer}.
\end{proof}

\begin{remark}
The homogeneous space $\C^{\ast}\times\Spin(9,\C)/\Spin(7,\C)$ arises\linebreak as the unique open orbit of the prehomogeneous vector space\linebreak $(\C^{\ast}\times\Spin(9,\C),\C^{16})$ (see \eg \cite{igu70,hu91}).
Howe--Umeda \cite[\S\,11.11]{hu91} proved that the $\C$-algebra homomorphism $\dd\ell : Z(\g_{\C})\to\D_G(X)$ is not surjective and that the ``abstract Capelli problem'' has a negative answer for this prehomogeneous space.
Proposition~\ref{prop:ex(vi)}.(2) gives a refinement of their assertion in this case.
The novelty here is to introduce the operator $\dd r(C_K)\in\D_G(X)\smallsetminus\dd\ell(Z(\g_{\C}))$ to describe the ring $\D_G(X)$.
\end{remark}

\subsection{The case $(\tilG,\tilH,G)=(\SO(8),\Spin(7),\SO(5)\times\SO(3))$}\label{subsec:ex(vii)}

Here $H=\tilH\cap\nolinebreak G$ is isomorphic to $\SO(4) \simeq \big(\SU(2)\times\SU(2)\big)/\{\pm(I_2,I_2)\}$.
The only maximal connected proper subgroup of~$G$ containing~$H$ is $K=\SO(4)\times\nolinebreak\SO(3)$, which is realized in~$G$ in the standard manner.
The group $H$ is the image of the embedding $\iota_7 :\nolinebreak\SO(4)\simeq\big(\SU(2)\times\SU(2)\big)/\{\pm(I_2,I_2)\} \to K$ induced from the following diagram:
$$\begin{array}{ccccccccl}
1 & \!\!\!\longrightarrow\!\!\! & \{ \pm\Diag(I_2)\} \times \{ \pm I_2\} & \!\!\!\longrightarrow\!\!\! & \SU(2) \times \SU(2) \times \SU(2) & \!\!\!\longrightarrow\!\!\! & K & \!\!\!\longrightarrow\!\!\! & 1\\
& & \cup & & \cup & & \cup & & \\
1 & \!\!\!\longrightarrow\!\!\! & \{ \pm (I_2,I_2,I_2)\} & \!\!\!\longrightarrow\!\!\! & \SU(2) \times \Diag(\SU(2)) & \!\!\!\longrightarrow\!\!\! & H & \!\!\!\longrightarrow\!\!\! & 1.
\end{array}$$
Thus $F=K/H$ is diffeomorphic to the $3$-dimensional projective space $\PP^3(\R)$, and the fibration $F\to X\to Y$ identifies with a variant of the quaternionic Hopf fibration $\PP^3(\R)\to\PP^7(\R)\to\mathbb{S}^4$.
The groups $G$ and~$K$ are not simple.
We denote by $C_G^{(i)}\in Z(\g_{\C})$ the Casimir element of the $i$-th factor of~$G$, for $i\in\{ 1,2\}$, and by $C'_K\in Z(\kk_{\C})$ the Casimir element of the $\SO(4)$ factor of~$K$.

\begin{proposition}[Generators and relations] \label{prop:ex(vii)}
For
$$X = \tilG/\tilH = \SO(8)/\Spin(7) \simeq (\SO(5)\times\SO(3))/\iota_7(\SO(4)) = G/H$$
and $K=\SO(4)\times\SO(3)$, we have
\begin{enumerate}
  \item $\left \{
\begin{array}{l}
  \dd\ell(C_{\tilG}) = 4\,\dd\ell(C_G^{(1)}) - 4\,\dd\ell(C_G^{(2)});\\
  2\,\dd\ell(C_G^{(2)}) = \dd r(C'_K);
\end{array}
\right.$
  \item $\left \{
\begin{array}{l}
  \D_{\tilG}(X) = \C[\dd\ell(C_{\tilG})];\\
  \D_K(F) = \C[\dd r(C'_K)];\\
  \D_G(X) = \C[\dd\ell(C_{\tilG}),\dd r(C'_K)] = \C[\dd\ell(C_G^{(1)}),\dd\ell(C_G^{(2)})].
\end{array}
\right.$
\end{enumerate}
\end{proposition}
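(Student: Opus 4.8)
\textbf{Proof proposal for Proposition~\ref{prop:ex(vii)}.}
The plan is to follow verbatim the four-step strategy of Section~\ref{subsec:descript-comput}, specialized to the triple $(\SO(8),\Spin(7),\SO(5)\times\SO(3))$, exactly as was done in cases (i)--(vi) above. I would state and prove a lemma (call it Lemma~\ref{lem:ex(vii)}, in the style of Lemmas~\ref{lem:ex(i)}--\ref{lem:ex(vi)}) collecting four pieces of data: (a) the discrete series of $\tilG/\tilH=\SO(8)/\Spin(7)$, of $G/H=(\SO(5)\times\SO(3))/\iota_7(\SO(4))$, and of $F=K/H\simeq\PP^3(\R)$; (b) the branching law for $\SO(8)\downarrow\SO(5)\times\SO(3)$ restricted to the relevant $\tilH$-spherical representations; (c) the irreducible decomposition of the regular representation $L^2(G/K,\tau)$ for $\tau\in\Disc(K/H)$; and (d) the generators and degrees of $S(\g_{\C}/\h_{\C})^H$, which here should be a polynomial ring on two generators of degree~$2$ (so that, by Lemma~\ref{lem:struct-DGX}, $\D_G(X)$ is a polynomial ring in two second-order operators). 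For (a), $\Disc(\SO(8)/\Spin(7))=\{\Rep(\SO(8),\tfrac12(j,j,j,j)):j\in\N\}$ by Cartan--Helgason for the symmetric space, $\Disc(\PP^3(\R))$ is $\{\mathcal{H}^{2a}(\R^4)\}$ again by Cartan--Helgason, and $\Disc(G/H)$ can be read off by Kr\"amer \cite{kra79} or, as the proofs above repeatedly note, recovered a posteriori by comparing \eqref{eqn:decomp1} and \eqref{eqn:decomp2}. Since $\tilG/\tilH$ is the symmetric space $\SO(8)/\Spin(7)$ ($=\mathbb{S}^7$ realized via triality) and $F$ is symmetric, the rings $\D_{\tilG}(X)=\C[\dd\ell(C_{\tilG})]$ and $\D_K(F)=\C[\dd r(C'_K)]$ follow from the Harish-Chandra isomorphism \eqref{eqn:HC-isom}, and Lemma~\ref{lem:surj-dl-drF} already guarantees $\dd\ell(Z(\tilg_{\C}))=\D_{\tilG}(X)$ and $\dd r_F(Z(\kk_{\C}))=\D_K(F)$ (the case $X=\SO(8)/\SO(7)$ invoked there after passing to a double cover and using triality). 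This handles Step~2.

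For Step~3 (the relations in part~(1)), I would write down the explicit map $\vartheta\mapsto(\pi(\vartheta),\tau(\vartheta))$ of Proposition~\ref{prop:pi-tau-theta} from the branching law, parametrizing $\vartheta\in\Disc(G/H)$ by a pair of integers, then tabulate the eigenvalues of $C_{\tilG}$, $C_G^{(1)}$, $C_G^{(2)}$, and $C'_K$ on $\pi(\vartheta)$, $\vartheta$, and $\tau(\vartheta)$ respectively, using the normalization $B(e_i,e_i)=1$ and the formula $\langle\lambda,\lambda+2\rho\rangle$ from the ``Notation and conventions'' paragraph. The two claimed identities $\dd\ell(C_{\tilG})=4\,\dd\ell(C_G^{(1)})-4\,\dd\ell(C_G^{(2)})$ and $2\,\dd\ell(C_G^{(2)})=\dd r(C'_K)$ then reduce to two polynomial identities in the discrete-series parameters, valid for all $\vartheta$, and one concludes by Proposition~\ref{prop:PQR} (or Proposition~\ref{prop:chiPQR}, since $\tilG/\tilH$ and $K/H$ are both symmetric). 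Note that here $\dd r(C_K^{(1)})$ for the $\SO(3)$ factor is \emph{not} zero, unlike case (v); rather, because $H=\iota_7(\SO(4))$ contains a diagonal $\SU(2)$, the $\SO(3)$-Casimir of $K$ gets identified with the $\SO(3)$-Casimir of $G$, i.e.\ $C_G^{(2)}$, which is precisely the content of the second relation. For Step~4 (part~(2)), given part~(d) of the lemma, Lemma~\ref{lem:struct-DGX}.(3) says $\D_G(X)$ is a polynomial ring on two second-order generators, so it suffices to check that $\{\dd\ell(C_{\tilG}),\dd r(C'_K)\}$ and $\{\dd\ell(C_G^{(1)}),\dd\ell(C_G^{(2)})\}$ are each algebraically independent pairs — done by the usual Zariski-density argument, letting a putative polynomial relation act on $C^\infty(X)$ and evaluating on the dense set of $G$-isotypic components, exactly as in the proof of Proposition~\ref{prop:ex(vi)}.(2).

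The main obstacle is part~(b) of the lemma: computing the branching law $\SO(8)\downarrow\SO(5)\times\SO(3)$ for the representations $\Rep(\SO(8),\tfrac12(j,j,j,j))$, because the embedding $\SO(5)\times\SO(3)\hookrightarrow\SO(8)$ coming from $\R^8\simeq\R^5\otimes\R^3$ is not a ``classical'' chain and the $\tfrac12(j,j,j,j)$ are spin-type weights requiring the triality realization. I would handle this the way case~(vi) handled $\SO(16)\downarrow\Spin(7)$: realize $\Rep(\SO(8),\tfrac12(j,j,j,j))$ via Borel--Weil on a generalized flag variety of $\SO(8,\C)$, choose a $G_{\C}$-compatible parabolic (Proposition~\ref{prop:branchnormal}), compute the weights of the conormal bundle $\widetilde{\n}_{\C}^-/\n_{\C}^-$, and get an \emph{upper} bound for the possible $G_{\C}$-irreducible constituents; then match it against the a priori list $\Disc(G/H)$ from \cite{kra79} (obtained via \eqref{eqn:decomp1}=\eqref{eqn:decomp2}) to force equality. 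Alternatively — and perhaps more cheaply here, since all groups are small — one can exploit the triality automorphism of $D_4$ that carries $\Spin(7)\subset\SO(8)$ to $\SO(7)\subset\SO(8)$, so that $\SO(8)/\Spin(7)$ becomes the round $\mathbb{S}^7$ with its standard $\SO(8)$-action twisted by triality; under this twist $G=\SO(5)\times\SO(3)$ maps to a subgroup acting on $\mathbb{S}^7$ for which the branching is the classical spherical-harmonic decomposition restricted along $\R^8=\R^5\otimes\R^3$. Either route reduces (b) to bookkeeping with $BC$-type root data; once (b) is in hand, all the remaining computations are the routine eigenvalue tabulations already illustrated six times in this section, and the transfer-map statement (the analogue of Propositions~\ref{prop:nu-ex(vi)} etc.) follows from Proposition~\ref{prop:Stau-transfer} with the affine map $S_\tau$ read directly off the $\rho$-shifted parameters.
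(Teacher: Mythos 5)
Your overall skeleton is exactly the paper's (a lemma in the style of Lemmas~\ref{lem:ex(i)}--\ref{lem:ex(vi)}, Casimir eigenvalue tables fed into Proposition~\ref{prop:chiPQR}, and algebraic independence via Lemma~\ref{lem:struct-DGX} with two degree-$2$ generators of $S(\g_{\C}/\h_{\C})^H$), and your fallback for the branching law --- Borel--Weil plus Proposition~\ref{prop:branchnormal}, matched against Kr\"amer's list --- would indeed work. But two of the inputs you assert as facts are wrong, and one of them would make the computation fail as written. The discrete series of the fiber: $F=K/H$ is not the $\SO(4)$-space $\PP^3(\R)$ with the $\SO(3)$-factor of $K$ acting trivially; via the covering $\SU(2)\times\SU(2)\times\SU(2)\to K$ it is a group-manifold-type symmetric space, and $\Disc(K/H)=\{\Rep(\SO(4),(k,k))\boxtimes\Rep(\SO(3),k):k\in\N\}$ (Lemma~\ref{lem:ex(vii)}.(1)). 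Your list $\{\mathcal{H}^{2a}(\R^4)\boxtimes\mathbf{1}\}$ consists, for $a\geq 1$, of representations with \emph{no} nonzero $\iota_7(\SO(4))$-fixed vector: pulled back to the cover $\SU(2)\times\Diag(\SU(2))$ of~$H$, the first $\SU(2)$ acts irreducibly by $\C^{2a+1}$. With that list the map $\vartheta\mapsto\tau(\vartheta)$, the fiberwise decomposition \eqref{eqn:decomp2}, the identity $\D_K(F)=\C[\dd r(C'_K)]$, and above all the second relation $2\,\dd\ell(C_G^{(2)})=\dd r(C'_K)$ cannot be verified --- note also that this relation involves the Casimir $C'_K$ of the $\SO(4)$ factor of $K$ (whence the factor $2$ on $\Rep(\SO(4),(k,k))$), not the $\SO(3)$-Casimir of $K$ as your heuristic suggests; the nontrivial action of the $\SO(3)$ factor of $K$ on every $\tau\in\Disc(K/H)$ is exactly what the relation records.

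The second faulty input is geometric: the embedding $\SO(5)\times\SO(3)\hookrightarrow\SO(8)$ is the block embedding $\R^8=\R^5\oplus\R^3$ (the tensor product $\R^5\otimes\R^3$ is $15$-dimensional), and under the triality twist the correct picture is quaternionic, not a tensor decomposition: the double cover of $X$ becomes $\mathbb{S}^7\subset\HH^2$ on which the preimage of $G$ acts through $\Sp(2)\cdot\Sp(1)$, so the branching you need is $\mathcal{H}^j(\R^8)|_{\Sp(2)\cdot\Sp(1)}\simeq\bigoplus_k \mathcal{H}^{k,j-k}(\HH^2)\boxtimes\C^{2k-j+1}$, i.e.\ Lemma~\ref{lem:ex(v)} with $n=1$; this is precisely how the paper proves Lemma~\ref{lem:ex(vii)}.(2)--(3), descending to $\SO(5)\times\SO(3)$ via $\mathcal{H}^{a,b}(\HH^2)\simeq\Rep(\SO(5),(\frac{a+b}{2},\frac{a-b}{2}))$ when $a\equiv b\bmod 2$. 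Relatedly, only the even part survives on $\SO(8)/\Spin(7)\simeq\PP^7(\R)$, so $\Disc(\tilG/\tilH)=\{\Rep(\SO(8),(j,j,j,j)):j\in\N\}$ with integral entries, not $\tfrac12(j,j,j,j)$; and since $\tilG/\tilH$ is not itself a symmetric space, the statements $\D_{\tilG}(X)=\C[\dd\ell(C_{\tilG})]$ and the Cartan--Helgason-type description require this same triality reduction to $\SO(8)/\SO(7)$ rather than a direct appeal to \eqref{eqn:HC-isom}. Once these inputs are corrected, the rest of your plan (eigenvalue tables, Proposition~\ref{prop:chiPQR}, the Zariski-density independence argument, and Proposition~\ref{prop:Stau-transfer} for the transfer map) goes through exactly as in the paper.
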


We identify
\begin{align}
\Hom_{\C\text{-}\mathrm{alg}}(Z(\g_{\C}),\C) & & \hspace{-0.2cm} \simeq\ \jj_{\C}^*/W(\g_{\C}) & & \hspace{-0.3cm} \simeq\ (\C^2\oplus\C)/W(B_2\times B_1),\label{eqn:HCZg-vii}\\
\Hom_{\C\text{-}\mathrm{alg}}(\D_{\tilG}(X),\C) & & \hspace{-1.2cm} \simeq \hspace{0.35cm} \tila_{\C}^*/\widetilde{W} \hspace{0.4cm}  & & \hspace{-1.2cm} \simeq\ \C/(\Z/2\Z)\hspace{2.22cm}\label{eqn:HCX-vii}
\end{align}
by the standard bases.
We note that $\tilG/\tilH$ is not a symmetric space, but the Lie algebras $(\tilg,\tilh)=(\mathfrak{so}(8),\mathfrak{spin}(7))$ form a symmetric pair and the description of the algebra $\D_{\tilG}(X)$ is the same as that for symmetric spaces.
In order to be more precise, let us recall the triality of the Dynkin diagram~$D_4$.

\subsubsection*{Triality}

For $\tilg_{\C}=\so(8,\C)$, the outer automorphism group $\mathrm{Out}(\tilg_{\C})=\mathrm{Aut}(\tilg_{\C})/\mathrm{Int}(\tilg_{\C})$ is isomorphic to the symmetric group~$\mathfrak{S}_3$, corresponding to the automorphism group of the Dynkin diagram~$D_4$.
More precisely, let $\widetilde{\jj}_{\C}$ be a Cartan subalgebra of~$\tilg_{\C}$ and $\{ e_1,e_2,e_3,e_4\}$ the standard basis of~$\widetilde{\jj}_{\C}^{\ast}$.
Fix a positive system $\Delta^+(\tilg_{\C},\widetilde{\jj}_{\C})=\{ e_i\pm e_j\,:\,1\leq i<j\leq 4\}$ and set
$$\omega_{\pm} := \frac{1}{2} (e_1 + e_2 + e_3 \pm e_4).$$
Then the automorphism group of the Dynkin diagram~$D_4$ is the permutation group of the set $\{ e_1,\omega_+,\omega_-\}$.
It gives rise to triality in $\mathfrak{so}(8)$.
We denote by $\varsigma$ the outer automorphism of $\mathfrak{so}(8)$ of order three corresponding to the outer automorphism of~$\mathrm{D}_4$ as described in the figure below.
With this choice,
\begin{equation} \label{eqn:triality-basis}
\varsigma^*(e_1)=\omega_+, \quad \varsigma^*(\omega_+)=\omega_-, \quad \varsigma^*(\omega_-)=e_1.
\end{equation}

\begin{figure}
\centering
\includegraphics{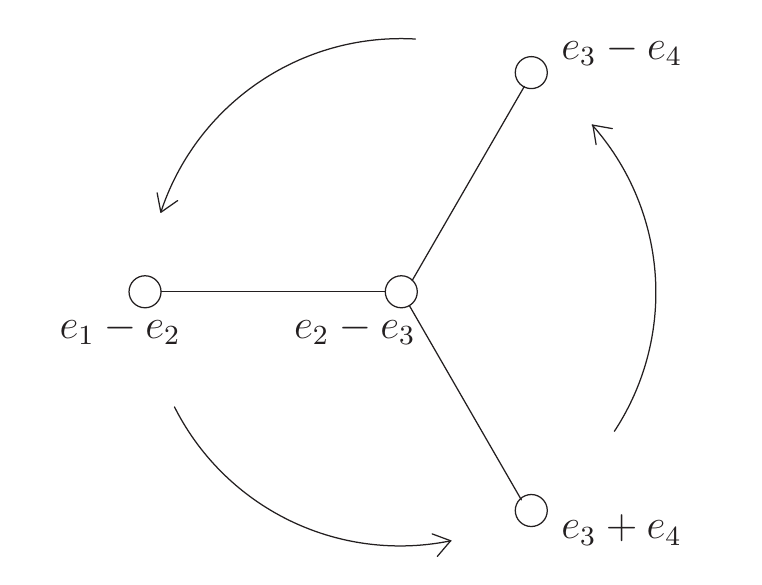}
\end{figure}

The set $\Disc(K/H)$ consists of the representations of $K=\SO(4)\times\SO(3)$ of the form $\tau=\Rep(\SO(4),(k,k))\boxtimes\Rep(\SO(3),k)$ for $k\in\N$.

\begin{proposition}[Transfer map] \label{prop:nu-ex(vii)}
Let
$$X = \tilG/\tilH = \SO(8)/\Spin(7) \simeq (\SO(5)\times\SO(3))/\iota_7(\SO(4)) = G/H$$
and $K=\SO(4)\times\SO(3)$.
For $\tau=\Rep(\SO(4),(k,k))\boxtimes\Rep(\SO(3),k)\in\Disc(K/H)$ with $k\in\N$, the affine map
\begin{eqnarray*}
S_{\tau} :\ \tila_{\C}^* \simeq \C & \longrightarrow & \hspace{1cm} \C^2\oplus\C \hspace{1cm} \simeq \jj_{\C}^*\\
\lambda & \longmapsto & \frac{1}{2}\,(\lambda, 2k+3, 2k+1)
\end{eqnarray*}
induces a transfer map
$$\nnu(\cdot,\tau) : \Hom_{\C\text{-}\mathrm{alg}}(\D_{\tilG}(X),\C) \longrightarrow \Hom_{\C\text{-}\mathrm{alg}}(Z(\g_{\C}),\C)$$
as in Theorem~\ref{thm:nu-tau}.
\end{proposition}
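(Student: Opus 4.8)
The plan is to follow the general strategy of Section~\ref{subsec:strategy-transfer}, namely to invoke Proposition~\ref{prop:Stau-transfer} and reduce the claim to an explicit identification of the map $\vartheta\mapsto(\pi(\vartheta),\tau(\vartheta))$ of Proposition~\ref{prop:pi-tau-theta} in this case, together with a bookkeeping of $\rho$-shifts. Concretely, first I would record the branching data for case (vii): the sets $\Disc(\tilG/\tilH)$, $\Disc(G/H)$, $\Disc(K/H)$ (a lemma analogous to Lemma~\ref{lem:ex(vii)}, whose existence is presupposed by Proposition~\ref{prop:ex(vii)}), together with the branching law $\SO(8)\!\downarrow\!\SO(5)\times\SO(3)$ restricted through $\Spin(7)$ and the decomposition of $L^2$ of the base. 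From these one reads off the explicit formula for $\vartheta\mapsto(\pi(\vartheta),\tau(\vartheta))$; call it \eqref{eqn:pitau-vii} in parallel with \eqref{eqn:pitau-vi}. The key subtlety here is that $\tilG/\tilH=\SO(8)/\Spin(7)$ is nonsymmetric, so the identification \eqref{eqn:HCX-vii} of $\Hom_{\C\text{-}\mathrm{alg}}(\D_{\tilG}(X),\C)$ with $\tila_{\C}^*/\widetilde W\simeq\C/(\Z/2\Z)$ must be the specific normalization fixed in Section~\ref{subsec:ex(vii)} via triality: one uses the outer automorphism $\varsigma$ of $\mathfrak{so}(8)$ with \eqref{eqn:triality-basis} to transport the nonsymmetric pair $(\mathfrak{so}(8),\mathfrak{spin}(7))$ to the symmetric pair $(\mathfrak{so}(8),\mathfrak{so}(7))$, for which the Harish-Chandra/Cartan--Helgason description is standard, and then $\rho_{\tila}$ is the image of the usual $\rho$-shift under $\varsigma^*$.

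The heart of the argument is then a direct computation. For each $\vartheta\in\Disc(G/H)$ with $\tau(\vartheta)=\tau=\Rep(\SO(4),(k,k))\boxtimes\Rep(\SO(3),k)$, I would write $\vartheta$ in the form dictated by \eqref{eqn:pitau-vii} — parametrized by a single integer, say $j$, subject to the appropriate parity/inequality constraint relative to $k$ — and compute two things: (a) the scalar $\lambda(\vartheta)+\rho_{\tila}\in\C/(\Z/2\Z)$ by which $\D_{\tilG}(X)$ acts on $\pi(\vartheta)$, using the triality normalization of \eqref{eqn:HCX-vii}; and (b) the scalar $\nu(\vartheta)+\rho\in(\C^2\oplus\C)/W(B_2\times B_1)$ by which $Z(\g_{\C})=Z(\mathfrak{so}(5,\C))\otimes Z(\mathfrak{so}(3,\C))$ acts on $\vartheta$, via \eqref{eqn:HCZg-vii}. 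One checks that the affine map
$$
S_{\tau}:\tila_{\C}^*\simeq\C\longrightarrow\C^2\oplus\C\simeq\jj_{\C}^*,\qquad
\lambda\longmapsto\tfrac12\,(\lambda,\ 2k+3,\ 2k+1)
$$
sends $\lambda(\vartheta)+\rho_{\tila}$ to $\nu(\vartheta)+\rho$ modulo $W(\g_{\C})$ for every such $\vartheta$. Finally, Proposition~\ref{prop:Stau-transfer} upgrades this equality of values on the discrete set $\{\vartheta:\tau(\vartheta)=\tau\}$ to the statement that $S_{\tau}$ induces the transfer map $\nnu(\cdot,\tau)$ as in Theorem~\ref{thm:nu-tau}, since $\Disc(G/H)$ parametrizes a Zariski-dense (indeed, in rank one, cofinite-in-a-lattice) family and the relevant algebra homomorphisms are determined by their values there. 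That $S_{\tau}$ is genuinely affine — the constant part $\tfrac12(0,2k+3,2k+1)$ depends on $\tau$ but not on $\vartheta$, and the linear part is $\lambda\mapsto\tfrac12(\lambda,0,0)$ — is exactly what the computation exhibits.

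The main obstacle, and the only genuinely delicate point, is the consistent handling of the triality normalization: the $B_2$-slot of $\nu(\vartheta)+\rho$ corresponds to the $\SO(5)$-factor $C_G^{(1)}$, whose contribution is controlled by Proposition~\ref{prop:ex(vii)}.(1) (the relations $\dd\ell(C_{\tilG})=4\,\dd\ell(C_G^{(1)})-4\,\dd\ell(C_G^{(2)})$ and $2\,\dd\ell(C_G^{(2)})=\dd r(C'_K)$), and one must make sure that the highest-weight coordinates of $\pi(\vartheta)=\Rep(\SO(8),\cdot)$ after applying $\varsigma^*$, and the half-integral $\rho$-shifts for $\SO(5)$ and $\SO(3)$, line up to give precisely the stated $S_{\tau}$. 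This is where a sign or a factor of $2$ can easily slip; the cleanest check is to evaluate on the spherical vector case $\tau=\mathbf 1$ (i.e.\ $k=0$) and on the Casimir, where the formula must reproduce Proposition~\ref{prop:ex(vii)}.(1) and Corollary~\ref{cor:rel-Lapl}. Once that consistency is verified, the remaining steps are routine Cartan--Helgason and Harish-Chandra bookkeeping, and the proof is complete; the passage from case (vii) to case (vii)$'$ of Table~\ref{table1} (where $K/H$ is replaced by its $\U(1)$-variant) goes through mutatis mutandis, as in the end of Section~\ref{subsec:ex(i)}.

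\begin{proof}
This is a special case of Theorem~\ref{thm:nu-tau}, proved by the method of Section~\ref{subsec:strategy-transfer}. By Lemma~\ref{lem:ex(vii)} (or the explicit branching law $\SO(8)\downarrow\SO(5)\times\SO(3)$ through $\Spin(7)$), the map $\vartheta\mapsto(\pi(\vartheta),\tau(\vartheta))$ of Proposition~\ref{prop:pi-tau-theta} is explicitly known; in particular, given $\tau=\Rep(\SO(4),(k,k))\boxtimes\Rep(\SO(3),k)\in\Disc(K/H)$ with $k\in\N$, every $\vartheta\in\Disc(G/H)$ with $\tau(\vartheta)=\tau$ has $\pi(\vartheta)$ of the form $\Rep(\SO(8),\cdot)$ whose triality image (using \eqref{eqn:triality-basis}) carries the normalization \eqref{eqn:HCX-vii}, and the infinitesimal character of $\vartheta$ for $Z(\g_{\C})=Z(\mathfrak{so}(5,\C))\otimes Z(\mathfrak{so}(3,\C))$ is computed via \eqref{eqn:HCZg-vii}. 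A direct computation of these two scalars, together with the relations of Proposition~\ref{prop:ex(vii)}.(1), shows that the affine map
$$
S_{\tau}:\tila_{\C}^*\simeq\C\longrightarrow\C^2\oplus\C\simeq\jj_{\C}^*,\qquad
\lambda\longmapsto\tfrac12\,(\lambda,\ 2k+3,\ 2k+1)
$$
satisfies $S_{\tau}(\lambda(\vartheta)+\rho_{\tila})=\nu(\vartheta)+\rho$ mod $W(\g_{\C})$ for all such $\vartheta$. Proposition~\ref{prop:Stau-transfer} then shows that $S_{\tau}$ induces the transfer map $\nnu(\cdot,\tau)$ as in Theorem~\ref{thm:nu-tau}.
\end{proof}
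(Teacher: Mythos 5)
Your proposal follows essentially the same route as the paper's proof: identify the map $\vartheta\mapsto(\pi(\vartheta),\tau(\vartheta))$ from Lemma~\ref{lem:ex(vii)} (so that $\vartheta=\Rep(\SO(5),(j,k))\boxtimes\Rep(\SO(3),k)$ with $j\geq k$ and $\pi(\vartheta)=\Rep(\SO(8),(j,j,j,j))$), compute $\lambda(\vartheta)+\rho_{\tila}=2j+3$ via the triality-normalized isomorphism \eqref{eqn:HCX-vii} and $\nu(\vartheta)+\rho$ via \eqref{eqn:HCZg-vii}, and conclude with Proposition~\ref{prop:Stau-transfer}. The only caveat is in the computation you deferred: carrying it out gives $\nu(\vartheta)+\rho=\tfrac12(2j+3,\,2k+1;\,2k+1)$ (exactly as in the paper's own proof, and consistent with the Casimir scalars of Proposition~\ref{prop:ex(vii)}), so the matching affine map has second coordinate $(2k+1)/2$ rather than the $(2k+3)/2$ appearing in the statement — a discrepancy between the printed statement and the proof, not a flaw in your method.
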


In order to prove Propositions \ref{prop:ex(vii)} and~\ref{prop:nu-ex(vii)}, we use the following results on finite-dimensional representations.

\begin{lemma}\label{lem:ex(vii)}
\begin{enumerate}
  \item Discrete series for $\tilG/\tilH$, $G/H$, and $F=K/H$:
  \begin{align*}
  & \Disc(\SO(8)/\Spin(7)) = \{ \Rep(\SO(8),(j,j,j,j)) \ :\ j\in\N\} ;\\
  & \Disc\big((\SO(5)\times\SO(3))/\iota_7(\SO(4))\big)\\
  & \hspace{1cm} = \{ \Rep(\SO(5),(j,k))\boxtimes\Rep(\SO(3),k) \ :\ k,j\in\N,\ k\leq j\} ;\\
  & \Disc\big((\SO(4)\times\SO(3))/\iota_7(\SO(4))\big)\\
  & \hspace{1cm} = \{ \Rep(\SO(4),(k,k)) \boxtimes \Rep(\SO(3),k) \ :\ k\in\N\} .
  \end{align*}
  \item Branching laws for $\SO(8)\downarrow\SO(5)\times\SO(3)$: For $j\in\N$,
  $$\Rep\big(\SO(8),(j,j,j,j)\big)|_{\SO(5)\times\SO(3)} \,\simeq\, \bigoplus_{k=0}^j \Rep\big(\SO(5),(j,k)\big) \boxtimes \Rep(\SO(3),k).$$
  \item Irreducible decomposition of the regular representation of~$G$: For~$k\in\nolinebreak\N$,
  $$L^2\big((\SO(5)\times\SO(3))/(\SO(4)\times\SO(3)),\Rep\big(\SO(4),(k,k)\big) \boxtimes \Rep(\SO(3),k)\big)$$
  $$\simeq\, \sumplus{\substack{j\in\N\\ j\geq k}}\, \Rep\big(\SO(5),(j,k)\big) \boxtimes \Rep(\SO(3),k).$$
  \item The ring $S(\g_{\C}/\h_{\C})^H=S(\C^7)^{\iota_7(\SO(4))}$ is generated by two algebraically independent homogeneous elements of degree~$2$.
\end{enumerate}
\end{lemma}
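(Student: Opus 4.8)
The plan is to prove Lemma~\ref{lem:ex(vii)} by combining the triality of~$D_4$ with classical branching laws, exactly as in the analogous cases treated earlier in the section. The four statements are of different nature: (1) is a statement about discrete series (\ie spherical representations), (2) and~(3) are branching laws that follow from each other by Frobenius reciprocity via \eqref{eqn:decomp1}--\eqref{eqn:decomp2}, and (4) is a ring-theoretic computation of $S(\C^7)^{\iota_7(\SO(4))}$.

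First I would handle~(1). For $\tilG/\tilH=\SO(8)/\Spin(7)$, the pair $(\tilg,\tilh)$ is a symmetric pair, so $\Disc(\SO(8)/\Spin(7))$ is given by the Cartan--Helgason theorem (Fact~\ref{fact:CartanHelgason}); alternatively, this is the symmetric space dual to $\SO(8)/\SO(7)$ under triality, and under the outer automorphism~$\varsigma$ of~\eqref{eqn:triality-basis} the spherical highest weights $(j,0,0,0)$ of $\SO(8)/\SO(7)$ are sent to $\frac{1}{2}(j,j,j,j)$, which in the realization coming from $\Spin(7)$ become $(j,j,j,j)$ after rescaling; this gives the first formula. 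For $F=K/H=(\SO(4)\times\SO(3))/\iota_7(\SO(4))$, which is diffeomorphic to~$\PP^3(\R)$, the description is a direct computation: a representation $\Rep(\SO(4),(a,b))\boxtimes\Rep(\SO(3),c)$ has an $\iota_7(\SO(4))$-fixed vector iff, after writing $\SO(4)\simeq(\SU(2)\times\SU(2))/\{\pm\}$ and using $\iota_7$, the tensor product $\Rep(\SU(2),a+b)\boxtimes\Rep(\SU(2),a-b)\boxtimes\Rep(\SU(2),c)$ contains the trivial representation of $\SU(2)\times\Diag(\SU(2))$, which by Clebsch--Gordan forces $a-b=0$ and $c=a+b$, \ie $a=b=c=k$. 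For $\Disc(G/H)$, since $G/H$ is a nonsymmetric spherical space with $G=\SO(5)\times\SO(3)$ not simple, one cannot directly quote \cite{kra79}; instead I would derive it from~(3) together with the comparison of \eqref{eqn:decomp1} and \eqref{eqn:decomp2} applied to $G_{\C}$-sphericity of~$X_{\C}$, as was done for case~(ii) in Lemma~\ref{lem:ex(ii)}.

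Next, for~(2): the branching $\SO(8)\downarrow\SO(5)\times\SO(3)$ is not one of the classical ``$\SO(n)\downarrow\SO(n-1)$'' laws, so I would use triality to reduce it. Under~$\varsigma$, the $\SO(8)$-module $\Rep(\SO(8),(j,j,j,j))$ corresponds to $\Rep(\SO(8),(2j,0,0,0))=\mathcal{H}^{2j}(\R^8)$, and $\SO(5)\times\SO(3)\subset\SO(8)$ is the standard block subgroup acting on $\R^8=\R^5\oplus\R^3$; the restriction of spherical harmonics $\mathcal{H}^{2j}(\R^5\oplus\R^3)$ to $\SO(5)\times\SO(3)$ is classical and decomposes as $\bigoplus \mathcal{H}^{p}(\R^5)\boxtimes\mathcal{H}^{q}(\R^3)$ over suitable $(p,q)$; tracing through the triality correspondence on highest weights via~\eqref{eqn:triality-basis} turns this into the stated sum $\bigoplus_{k=0}^j \Rep(\SO(5),(j,k))\boxtimes\Rep(\SO(3),k)$. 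An a priori knowledge of $\Disc(\SO(8)/\Spin(7))$ and $\Disc(K/H)$ then pins down the multiplicities, exactly as in the proof of Lemma~\ref{lem:ex(vi)}.(2). Statement~(3) follows from~(2) by Frobenius reciprocity: comparing the two decompositions \eqref{eqn:decomp1} and \eqref{eqn:decomp2} of $L^2(X)$ and using that $[\vartheta|_K:\tau(\vartheta)]=1$ (Lemma~\ref{lem:spherical2}.(6)), the $\tau$-isotypic summand $L^2(Y,\W_\tau)$ is the sum of exactly those $\vartheta$ whose image under $\vartheta\mapsto\tau(\vartheta)$ equals~$\tau$.

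Finally, for~(4): as $H$-modules, $\g_\C/\h_\C\simeq\C^7$ is the restriction to $\iota_7(\SO(4))$ of the natural $\SO(7)$-module (since $G_2(-14)\supset\iota_7(\SO(4))$ via the inclusion pattern in the displayed diagram), but more directly one decomposes $\C^7$ under $\iota_7(\SO(4))\simeq(\SU(2)\times\SU(2))/\{\pm\}$: one checks $\C^7\simeq\Rep(\SU(2),2)\boxtimes\Rep(\SU(2),2)\oplus\mathbf{1}$, \ie a $\C^3\boxtimes\C^3$ plus a trivial summand. Then $S(\C^7)^{\iota_7(\SO(4))}\simeq S(\C^3\otimes\C^3)^{\SO(3)\times\SO(3)}\otimes S(\C^1)$; the first factor is generated by the two basic invariants of a pair of symmetric $3\times3$-type forms (the trace of the Gram matrix of the $\SO(3)\times\SO(3)$ action), giving one generator in degree~$2$, and the second factor contributes the remaining degree-$2$ generator. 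Counting $\dim S^N(\C^7)^{\iota_7(\SO(4))}$ and applying Lemma~\ref{lem:struct-DGX} then shows $\D_G(X)$ is the polynomial ring on two degree-$2$ generators. I expect the main obstacle to be statement~(2): making the triality bookkeeping precise — in particular verifying that the standard block subgroup $\SO(5)\times\SO(3)$ is the $\varsigma$-image of the subgroup realizing $\iota_7(\SO(4))\subset K\subset G$, and that the weight correspondence \eqref{eqn:triality-basis} carries the classical harmonic decomposition to the claimed one — requires careful tracking of coordinates, whereas (1), (3), (4) are comparatively routine.
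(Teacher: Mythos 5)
There are two genuine errors in your plan. The more serious one is in part (2): after applying the triality automorphism $\varsigma$ to replace $\Rep(\SO(8),(j,j,j,j))$ by $\mathcal{H}^{2j}(\R^8)$, you assume that the subgroup $G=\SO(5)\times\SO(3)$ becomes the standard block subgroup acting on $\R^8=\R^5\oplus\R^3$. It does not: the standard block subgroup is not even transitive on $X\simeq\PP^7(\R)$ (its generic orbits on the sphere are $\mathbb{S}^4\times\mathbb{S}^2$, of dimension~$6$), so it cannot be the group $G$ of the setting~\ref{setting}. In fact $\varsigma^{-1}(G)$ is the quaternionic subgroup $\Sp(2)\cdot\Sp(1)$ acting on $\HH^2$, and the branching law you need is $\mathcal{H}^{2j}(\R^8)|_{\Sp(2)\cdot\Sp(1)}$, i.e.\ Lemma~\ref{lem:ex(v)} with $n=1$, transported through the covering $\Sp(2)\times\Sp(1)\to\SO(5)\times\SO(3)$ via $\mathcal{H}^{a,b}(\HH^2)\simeq\Rep(\SO(5),(\frac{a+b}{2},\frac{a-b}{2}))$ for $a\equiv b\bmod 2$; this is exactly how the paper proceeds. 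The classical two-block decomposition of $\mathcal{H}^{2j}(\R^5\oplus\R^3)$ would produce summands $\mathcal{H}^p(\R^5)\boxtimes\mathcal{H}^q(\R^3)$ with $\SO(5)$-highest weights $(p,0)$, which is not the asserted answer, so no amount of coordinate bookkeeping rescues the block-subgroup route. Since your derivations of $\Disc(G/H)$ in~(1) and of~(3) are built on~(2), they inherit this gap (the Frobenius-reciprocity step itself is fine).

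Part~(4) is also wrong as written. The $H$-module $\g_{\C}/\h_{\C}$ is $7$-dimensional, whereas your proposed decomposition $\C^3\boxtimes\C^3\oplus\mathbf{1}$ has dimension~$10$; moreover a genuinely trivial summand would yield a degree-one invariant, contradicting the very statement you are proving, and $S(\C^3\otimes\C^3)^{\SO(3)\times\SO(3)}$ is not generated by a single quadratic element (it contains the determinant in degree~$3$). The correct computation uses $\g_{\C}/\h_{\C}\simeq\g_{\C}/\kk_{\C}\oplus\kk_{\C}/\h_{\C}\simeq(\C^2\boxtimes\C^2)\oplus(\mathbf{1}\boxtimes\C^3)$ as modules of the covering $\SU(2)\times\SU(2)$ of~$H$: the first summand contributes the $\SO(4)$-invariant quadratic form (on which the second factor acts trivially), the second summand contributes the $\SO(3)$-invariant quadratic form, giving $S(\g_{\C}/\h_{\C})^H\simeq\C[Q]\otimes\C[Q']$ with two algebraically independent generators of degree~$2$. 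Your Clebsch--Gordan computation of $\Disc(K/H)$ in~(1) reaches the right answer but is sloppy about the $\SU(2)$ versus $\SO(3)$ weight normalization ($\Rep(\SO(3),c)$ pulls back to the $\SU(2)$-module of highest weight $2c$); that is a minor point compared with the two issues above.
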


\begin{proof}[Proof of Lemma~\ref{lem:ex(vii)}]
(1) We use the triality of $\so(8)$.
The automorphism $\varsigma$ of $\so(8)$ sends $\mathfrak{spin}(7)$ to $\so(7)$, and induces a double covering $\mathbb{S}^7\to\PP^7(\R)$~by
$$\SO(8)^{\sim}/\SO(7)^{\sim} \underset{\varsigma}{\simeq} \SO(8)^{\sim}/\Spin(7) \longrightarrow \SO(8)/\Spin(7) = \tilG/\tilH,$$
where $\SO(N)^{\sim}$ denotes the double covering of $\SO(N)$ for $N=7$ or~$8$.
Thus $\Disc(\tilG/\tilH)$ is obtained by taking the even part of $\Disc(\SO(8)/\SO(7))$ (see Lemma~\ref{lem:ex(i)}.(1) with $n=3$) via~$\varsigma$, and the computation boils down to the isomorphism
\begin{equation} \label{eqn:Hk-tri}
\Rep\big(\SO(8),\varsigma\cdot (2j,0,0,0)\big) = \Rep\big(\SO(8),(j,j,j,j)\big).
\end{equation}

(2) We use another expression of the double covering $\mathbb{S}^7\to\PP^7(\R)$, namely
$$(\Sp(2)\times\Sp(1)) / (\Sp(1)\times\Diag(\Sp(1))) \longrightarrow (\SO(5)\times\SO(3)) / \iota_7(\SO(4)).$$
With the notation of Lemma~\ref{lem:ex(iii)}, we have
\begin{align*}
& \Disc\big((\Sp(2)\times\Sp(1))/\Diag(\Sp(1))\big)\\
& = \{ \mathcal{H}^{a,b}(\HH^2) \boxtimes \C^{a-b+1} \,:\, a\geq b\geq 0,\ a,b\in\Z\}.
\end{align*}
We conclude using the fact that the $\Sp(2)$-module $\mathcal{H}^{a,b}(\HH^2)$ descends to $\SO(5)$ if $a\equiv b\mod 2$ and is isomorphic to $\Rep\big(\SO(5),(\frac{a+b}{2},\frac{a-b}{2})\big)$ as an $\SO(5)$-module.

(3) The branching law is a special case of Lemma~\ref{lem:ex(v)} with $n=1$ via the triality automorphism $\varsigma$ and the covering $\Sp(2)\times\Sp(1)\to\SO(5)\times\SO(3)$.

(4) We have an irreducible decomposition as $\SO(4)$-modules via~$\iota_7$:
$$\g_{\C}/\h_{\C} \,\simeq\, \g_{\C}/\kk_{\C} \oplus \kk_{\C}/\h_{\C} \,\simeq\, (\C^2\boxtimes\C^2) \oplus (\mathbf{1}\boxtimes\C^3).$$
Then the ring $S(\C^2\boxtimes\C^2)^{\SU(2)\times\{1\}}$ is a polynomial ring generated by a single homogeneous element of degree~$2$, on which $\{1\}\times\SU(2)$ acts trivially.
Therefore, $S(\g_{\C}/\h_{\C})^H$ is isomorphic to
$$S(\C^2\boxtimes\C^2)^{\SU(2)\times\{1\}} \otimes S(\mathbf{1}\boxtimes\C^3)^{\{1\}\times\SO(3)},$$
and the statement follows.
\end{proof}

\begin{proof}[Proof of Proposition~\ref{prop:ex(vii)}]
(1) By Lemma~\ref{lem:ex(vii)}, the map $\vartheta\mapsto (\pi(\vartheta),\tau(\vartheta))$ of Proposition~\ref{prop:pi-tau-theta} is given by
\begin{equation} \label{eqn:pitau-vii}
\begin{array}{l}
\Rep(\SO(5),(j,k)) \boxtimes \Rep(\SO(3),k)\\
\longmapsto \big(\Rep(\SO(8),(j,j,j,j)),\Rep(\SO(4),(k,k)) \boxtimes \Rep(\SO(3),k)\big).\hspace{-0.2cm}
\end{array}
\end{equation}
The Casimir operators for $\tilG$ and for the factors of $G$ and~$K$ act on these representations as the following scalars.
\begin{center}
\begin{tabular}{|c|c|c|}
\hline
Operator & Representation & Scalar\\
\hline\hline
$C_{\tilG}$ & $\Rep(\SO(8),(j,j,j,j))$ & $4 (j^2 + 3j)$\\
\hline
$C_G^{(1)}$ & $\Rep(\SO(5),(j,k))$ & $j^2 + 3j + k^2 + k$\\
\hline
$C_G^{(2)}$ & $\Rep(\SO(3),k)$ & $k^2 + k$\\
\hline
$C'_K$ & $\Rep(\SO(4),(k,k))$ & $2 (k^2 + k)$\\
\hline
\end{tabular}
\end{center}
This implies $\dd\ell(C_{\tilG}) = 4\,\dd\ell(C_G^{(1)}) - 4\,\dd\ell(C_G^{(2)})$ and $2\,\dd\ell(C_G^{(2)}) = \dd r(C'_K)$.

(2) The description of $\D_{\tilG}(X)$ from the classical result for the symmetric space $\SO(8)/\SO(7) \simeq \Spin(8)/\Spin(7)$ (see Proposition~\ref{prop:ex(i)}.(2) with $n=3$) by the triality of~$D_4$.
The description of $\D_K(F)$ is reduced to the group manifold case $(^{\backprime}G\times\!^{\backprime}G)/\Diag(^{\backprime}G)$ with $^{\backprime}G=\SU(2)$ using the diagram just before Proposition~\ref{prop:ex(vii)}.
We now focus on $\D_G(X)$.
We only need to prove the first equality, since the other one follows from the relations between the generators.
For this, using Lemmas \ref{lem:struct-DGX}.(3) and~\ref{lem:ex(vii)}.(4), it suffices to show that the two differential operators $\dd\ell(C_{\tilG})$ and $\dd r(C'_K)$ on~$X$ are algebraically independent.
Let $f$ be a polynomial in two variables such that $f(\dd\ell(C_{\tilG}),\dd r(C'_K))=0$ in $\D_G(X)$.
By letting this differential operator act on the $G$-isotypic component $\vartheta = \Rep\big(\SO(5),(j,k)\big) \boxtimes \Rep(\SO(3),k)$ in $C^{\infty}(X)$, we obtain
$$f\big( 4(j^2+3j), 2(k^2+k) \big) = 0$$
for all $j,k\in\N$ with $j\geq k$, hence $f$ is the zero polynomial.
\end{proof}

\begin{proof}[Proof of Proposition~\ref{prop:nu-ex(vii)}]
We use Proposition~\ref{prop:Stau-transfer} and the formula \eqref{eqn:pitau-vii} for the map $\vartheta\mapsto (\pi(\vartheta),\tau(\vartheta))$ of Proposition~\ref{prop:pi-tau-theta}.
Let $\tau=\Rep(\SO(4),(k,k))\boxtimes\Rep(\SO(3),k)\in\Disc(K/H)$ with $k\in\N$.
If $\vartheta\in\Disc(G/H)$ satisfies $\tau(\vartheta)=\tau$, then $\vartheta$ is of the form $\vartheta = \Rep(\SO(5),(j,k))\boxtimes\Rep(\SO(3),k)$ for some $j\in\N$ with $j\geq k$, by \eqref{eqn:pitau-vii}.
The algebra $\D_{\tilG}(X)$ acts on the irreducible $\tilG$-submodule $\pi(\vartheta) = \Rep(\SO(8),(j,j,j,j))$ by the scalars
$$\lambda(\vartheta) + \rho_{\tila} = 2j + 3 \in \C/(\Z/2\Z)$$
via the Harish-Chandra isomorphism \eqref{eqn:HCX-vii}, whereas the algebra $Z(\g_{\C})$ acts on the irreducible $G$-module $\vartheta = \Rep(\SO(5),(j,k))\boxtimes\Rep(\SO(3),k)$ by the scalars
$$\nu(\vartheta) + \rho = \frac{1}{2}(2j+3, 2k+1; 2k+1) \in (\C^2\oplus\C)/W(B_2\times B_1)$$
via \eqref{eqn:HCZg-vii}.
Thus the affine map $S_{\tau}$ in Proposition~\ref{prop:nu-ex(vii)} sends $\lambda(\vartheta)+\rho_{\tila}$ to $\nu(\vartheta)+\rho$ for any $\vartheta\in\Disc(G/H)$ such that $\tau(\vartheta)=\tau$, and we conclude using Proposition~\ref{prop:Stau-transfer}.
\end{proof}

\subsection{The case $(\tilG,\tilH,G)=(\SO(7),G_{2(-14)},\SO(5)\times\SO(2))$}\label{subsec:ex(viii)}

Here $H=\tilH\cap\nolinebreak G$ is isomorphic to $\U(2) \simeq (\SU(2)\times\SO(2))/\{\pm(I_2,I_2)\}$.
The only maxi\-mal connected proper subgroup of~$G$ containing~$H$ is $K=\SO(4)\times\SO(2)$, which is realized in~$G$ in the standard manner.
The group $H$ is the image of the embedding $\iota_8 :\nolinebreak\U(2)\simeq\big(\SU(2)\times\SO(2)\big)/\{\pm(I_2,I_2)\} \to K$ induced from the following diagram:
$$\begin{array}{ccccccccl}
1 & \!\!\!\longrightarrow\!\!\! & \{ \pm\Diag(I_2)\} \times \{ \pm I_2\} & \!\!\!\longrightarrow\!\!\! & \SU(2) \times \SU(2) \times \SO(2) & \!\!\!\longrightarrow\!\!\! & K & \!\!\!\longrightarrow\!\!\! & 1\\
& & \cup & & \cup & & \cup & & \\
1 & \!\!\!\longrightarrow\!\!\! & \{ \pm (I_2,I_2,I_2)\} & \!\!\!\longrightarrow\!\!\! & \SU(2) \times \Diag(\SO(2)) & \!\!\!\longrightarrow\!\!\! & H & \!\!\!\longrightarrow\!\!\! & 1.
\end{array}$$
This case and case~(ix) of Table~\ref{table1} (see Section~\ref{subsec:ex(ix)}) are different from the other cases in the sense that neither $\tilG/\tilH$ nor $K/H$ is a symmetric space.

The groups $G$ and~$K$ are not simple.
We denote by $C_G^{(1)}$ (\resp $C_K^{(1)}$) the Casimir element of the first factor of $G=\SO(5)\times\SO(2)$ (\resp $K=\SO(4)\times\SO(2)$), and by $E_G$ (\resp $E_K$) a generator of the abelian ideal $\so(2)$ of $\g$ (\resp $\kk$) such that the eigenvalues of $\ad(E_G)$ (\resp $\ad(E_K)$) in~$\tilg_{\C}$ are $0,\pm 1$.

\begin{proposition}[Generators and relations] \label{prop:ex(viii)}
For
$$X = \tilG/\tilH = \SO(7)/G_{2(-14)} \simeq (\SO(5)\times\SO(2))/\iota_8(\U(2)) = G/H$$
and $K=\SO(4)\times\SO(2)$, we have
\begin{enumerate}
  \item $\left \{
\begin{array}{l}
  \dd\ell(E_G) = \dd r(E_K);\\
  2\,\dd\ell(C_{\tilG}) = 6\,\dd\ell(C_G^{(1)}) - 3\,\dd r(C_K^{(1)});
\end{array}
\right.$
  \item $\left \{
\begin{array}{ccl}
  \D_{\tilG}(X) & \!\!\!=\!\!\! & \C[\dd\ell(C_{\tilG})];\\
  \D_K(F) & \!\!\!=\!\!\! & \C[\dd r(C_K^{(1)}),\dd r(E_K)];\\
  \D_G(X) & \!\!\!=\!\!\! & \C[\dd\ell(C_{\tilG}),\dd r(C_K^{(1)}),\dd r(E_K)]\\
  & & \hspace{0.3cm} =\, \C[\dd\ell(C_G^{(1)}),\dd r(C_K^{(1)}),\dd r(E_K)]\\
  & & \hspace{0.3cm} =\, \C[\dd\ell(C_{\tilG}),\dd\ell(C_G^{(1)}),\dd\ell(E_G)].
\end{array}
\right.$
\end{enumerate}
\end{proposition}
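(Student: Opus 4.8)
\textbf{Proof strategy for Proposition~\ref{prop:ex(viii)}.}
The plan is to follow the four-step program of Section~\ref{subsec:descript-comput}, exactly as in the earlier cases, the main novelty being that here \emph{neither} $\tilG/\tilH$ \emph{nor} $K/H$ is a symmetric space, so the Harish-Chandra isomorphism is not directly available on either side and must be replaced by an explicit description of the relevant $\Disc$ sets and their infinitesimal characters. First I would record the three sets of discrete series: $\Disc(\SO(7)/G_{2(-14)})$ (a rank-one family, parametrized by a single integer), $\Disc((\SO(5)\times\SO(2))/\iota_8(\U(2)))$ (a rank-three family), and $\Disc((\SO(4)\times\SO(2))/\iota_8(\U(2)))$ (the fiber $F=K/H$, rank two: a $C_K^{(1)}$-parameter $k$ coming from the diagonal $\SO(3)\subset\SO(4)$ and an $E_K$-parameter $a$ coming from $\SO(2)$). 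These, together with the branching laws $\SO(7)\downarrow\SO(5)\times\SO(2)$ and the decomposition of $L^2(\SO(5)\times\SO(2)/K,\W_\tau)$ into irreducibles of $G$, give the map $\vartheta\mapsto(\pi(\vartheta),\tau(\vartheta))$ of Proposition~\ref{prop:pi-tau-theta}; I would package these facts as a Lemma (call it Lemma~\ref{lem:ex(viii)}) parallel to Lemmas~\ref{lem:ex(vii)} and~\ref{lem:ex(vi)}, with an additional part computing $\dim S^N(\g_\C/\h_\C)^H$. For the branching law $\SO(7)\downarrow G_{2(-14)}$-invariants and $\SO(7)\downarrow\SO(5)\times\SO(2)$, I expect to use the realization of $G_{2(-14)}$ inside $\SO(7)$ together with the classification of $\Disc(G_{2(-14)}\backslash\SO(7))$ via \cite{kra79}, and to pin down $\tau(\vartheta)$ by comparing \eqref{eqn:decomp1} and \eqref{eqn:decomp2} as in the proof of Lemma~\ref{lem:ex(vi)}.

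The second and third steps run together: take $\dd\ell(C_{\tilG})$, $\dd\ell(C_G^{(1)})$, $\dd\ell(E_G)\in\D_{\tilG}(X)$... (no, $\dd\ell(C_{\tilG})\in\D_{\tilG}(X)$; $\dd\ell(C_G^{(1)}),\dd\ell(E_G)\in\dd\ell(Z(\g_\C))$) and $\dd r(C_K^{(1)}),\dd r(E_K)\in\dd r(Z(\kk_\C))$ as candidate generators, compute the scalars by which each acts on $\pi(\vartheta)$, $\tau(\vartheta)$, $\vartheta$ using the eigenvalue table of the Notation subsection (eigenvalue $n-1$ of the $\SO(n)$-Casimir on $\C^n$, etc.) together with the formula $C_G$ acts on $\Rep(G,\lambda)$ by $\langle\lambda,\lambda+2\rho\rangle$, and then read off the linear relations. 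The relation $\dd\ell(E_G)=\dd r(E_K)$ should be immediate from the diagram before Proposition~\ref{prop:ex(viii)}: the $\SO(2)$ of $G$ and the $\SO(2)$ of $K$ are identified through $\iota_8$ with the diagonal $\SO(2)$, so $E_G$ and $E_K$ act by the same scalar on every $\vartheta\in\Disc(G/H)$, whence equality by Proposition~\ref{prop:PQR} (in the form suited to $\dd r$, i.e.\ the general Proposition~\ref{prop:D-spec-Hom}.(2)). The Casimir relation $2\,\dd\ell(C_{\tilG})=6\,\dd\ell(C_G^{(1)})-3\,\dd r(C_K^{(1)})$ is then a polynomial identity in the discrete-series parameters (the parameter $j$ from the rank-one family of $\tilG/\tilH$ and the parameters $k$ from $C_K^{(1)}$ and the hidden parameter coming from the $\SO(5)$ side): I would verify it on the explicit eigenvalue table and conclude with Proposition~\ref{prop:PQR}, or equivalently with Proposition~\ref{prop:chiPQR} after extending the Harish-Chandra normalization \eqref{eqn:HCX-vii}-type formula to this nonsymmetric case as flagged in \eqref{eqn:rhoa-nonsymm}.

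The fourth step is to show that any two of the three subalgebras generate $\D_G(X)$. Here $\D_G(X)$ has rank $3$ (see Table~\ref{table2}: $\rank\tilG/\tilH=1$, $\rank K/H=2$, sum $=3$), and by Lemma~\ref{lem:struct-DGX} it suffices to prove that $S(\g_\C/\h_\C)^H$ is a polynomial ring on three homogeneous generators of the expected degrees, and that each claimed triple of operators is algebraically independent in $\D_G(X)$. For the first part I would decompose $\g_\C/\h_\C$ as an $H=\iota_8(\U(2))$-module --- splitting $\g_\C/\h_\C\simeq\g_\C/\kk_\C\oplus\kk_\C/\h_\C$ as in the proof of Lemma~\ref{lem:ex(vii)}.(4), with $\kk_\C/\h_\C$ the one-dimensional $\so(2)$-complement sitting inside $\so(4)$ --- and count invariants in $S(\g_\C/\h_\C)$, using that $S(\C^2\boxtimes\C_m\oplus\C^2\boxtimes\C_{-m})^{\SU(2)}$ and $S(\C_p\oplus\C_{-p})^{\SO(2)}$ are polynomial rings on single degree-$2$ generators; I expect to land on generators of degrees $1,2,2$ (the degree-$1$ one being the Euler operator $\dd r(E_K)$). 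The algebraic independence of $\{\dd\ell(C_{\tilG}),\dd r(C_K^{(1)}),\dd r(E_K)\}$, of $\{\dd\ell(C_G^{(1)}),\dd r(C_K^{(1)}),\dd r(E_K)\}$, and of $\{\dd\ell(C_{\tilG}),\dd\ell(C_G^{(1)}),\dd\ell(E_G)\}$ then follows by letting a putative polynomial relation act on the $\vartheta$-isotypic components and observing Zariski-density of the parameter tuples, exactly as in the proof of Proposition~\ref{prop:ex(vii)}. The main obstacle I anticipate is the first step for the nonsymmetric fiber $F=K/H$ and overspace $\tilG/\tilH$: getting the branching law $\SO(5)\times\SO(2)\downarrow K$ together with the $G_{2(-14)}$-spherical structure explicit enough to nail the exact form of $\tau(\vartheta)$ --- in particular matching the $\SO(2)$-weight bookkeeping through the two-to-one maps in the defining diagram of $\iota_8$ --- and then the parallel combinatorial count of $\dim S^N(\g_\C/\h_\C)^H$; once those are in hand, the linear algebra and the algebraic-independence arguments are routine and mirror the earlier cases verbatim.
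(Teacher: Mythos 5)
Your plan coincides with the paper's proof of Proposition~\ref{prop:ex(viii)}: the same preparatory lemma (the three $\Disc$ sets, the branching law $\SO(7)\downarrow\SO(5)\times\SO(2)$, the $L^2(G/K,\W_\tau)$-decomposition, and the invariant count giving $S(\g_{\C}/\h_{\C})^H$ generators of degrees $1,2,2$), then the eigenvalue tables together with Proposition~\ref{prop:PQR} for the two linear relations, and finally Lemma~\ref{lem:struct-DGX} plus evaluation on the $\vartheta$-isotypic components and Zariski density of the parameters $(j,k,a)$ for algebraic independence; the only difference is that the paper simply quotes Tsukamoto for the branching law rather than rederiving it via Proposition~\ref{prop:branchnormal}. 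Two small corrections: $\kk_{\C}/\h_{\C}$ is three-dimensional, $\simeq\mathbf{1}\boxtimes(\C_2\oplus\C_0\oplus\C_{-2})$ as an $H$-module (your own use of $S(\C_p\oplus\C_{-p})^{\SO(2)}$ and the predicted degrees $1,2,2$ already presuppose this), not the one-dimensional space you describe, and the identities $\D_{\tilG}(X)=\C[\dd\ell(C_{\tilG})]$ and $\D_K(F)=\C[\dd r(C_K^{(1)}),\dd r(E_K)]$ require their own invariant counts ($S(\C^7)^{G_2}$ and $S(\kk_{\C}/\h_{\C})^H\simeq S(\C^3)^{\SO(2)}$), which your lemma should include alongside the count for $S(\g_{\C}/\h_{\C})^H$.
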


Identifying $\jj_{\C}^*$ with $\C^2\oplus\C$ via the standard basis, the Harish-Chandra homomorphism amounts to
\begin{equation} \label{eqn:HCZg-viii}
\Hom_{\C\text{-}\mathrm{alg}}(Z(\g_{\C}),\C) \ \simeq\ \jj_{\C}^*/W(\g_{\C}) \ \simeq\ (\C^2\oplus\C)/(W(B_2)\!\times\!\{1\}).
\end{equation}
On the other hand, $X_{\C}=\tilG_{\C}/\tilH_{\C}=\SO(7,\C)/G_2(\C)$ is a nonsymmetric spherical homogeneous space of rank one.
We take $\aaa_{\C}^*:=\C(1,1,1)$ viewed as a subspace of~$\jj_{\C}^*$, and normalize the generalized Harish-Chandra isomorphism of \eqref{eqn:Psi} as
\begin{eqnarray} \label{eqn:HCX-viii}
\Hom_{\C\text{-}\mathrm{alg}}(\D_{\tilG}(X),\C)\ \simeq\ \tila_{\C}^*/\widetilde{W} & \simeq & \C/(\Z/2\Z)\\
\chi_{\lambda}^X \hspace{3cm} & \leftlongmapsto & \hspace{0.5cm}\lambda\nonumber
\end{eqnarray}
so that $\chi_{\lambda}^X(\dd\ell(C_{\tilG}))=3(\lambda^2-9/4)$.
Then $\vartheta=\Rep(\SO(7),\lambda)$ belongs to $\Disc(\tilG/\tilH)$ if and only if $\lambda$ is of the form $\lambda=j(1,1,1)$ for some $j\in\N$ (see Lemma~\ref{lem:ex(viii)}.(1) below), and $P\in\D_{\tilG}(X)$ acts on~$\vartheta$ by the scalar $\chi_{\lambda+\rho_{\tila}}^X(P)$ where we set
\begin{equation} \label{eqn:rhoa-nonsymm}
\rho_{\tila} := \frac{3}{2} \, (1,1,1).
\end{equation}
With this normalization, the set $\Disc(K/H)$ consists of the representations of $K=\SO(4)\times\SO(2)$ of the form $\tau=\Rep(\SO(4),(k,k))\boxtimes\C_a$ for $a,k\in\Z$ and $|a|\leq k$, and the following holds.

\begin{proposition}[Transfer map] \label{prop:nu-ex(viii)}
Let
$$X = \tilG/\tilH = \SO(7)/G_{2(-14)} \simeq (\SO(5)\times\SO(2))/\iota_8(\U(2)) = G/H$$
and $K=\SO(4)\times\SO(2)$.
For $\tau=\Rep(\SO(4),(k,k))\boxtimes\C_a\in\Disc(K/H)$ with $a,k\in\Z$ and $|a|\leq k$, the affine map
\begin{eqnarray*}
S_{\tau} :\ \tila_{\C}^* \simeq \C & \longrightarrow & \hspace{0.7cm} \C^2\oplus\C \hspace{0.6cm} \simeq \jj_{\C}^*\\
\lambda & \longmapsto & \Big(\Big(\lambda,k+\frac{1}{2}\Big),a\Big)
\end{eqnarray*}
induces a transfer map
$$\nnu(\cdot,\tau) : \Hom_{\C\text{-}\mathrm{alg}}(\D_{\tilG}(X),\C) \longrightarrow \Hom_{\C\text{-}\mathrm{alg}}(Z(\g_{\C}),\C)$$
as in Theorem~\ref{thm:nu-tau}.
\end{proposition}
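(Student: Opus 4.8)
The plan is to follow the general strategy laid out in Sections~\ref{sec:strategy}, in particular Proposition~\ref{prop:Stau-transfer}: it suffices to determine the map $\vartheta\mapsto(\pi(\vartheta),\tau(\vartheta))$ of Proposition~\ref{prop:pi-tau-theta} explicitly, compute the Harish-Chandra parameters of $\pi(\vartheta)$ and $\vartheta$ for those $\vartheta\in\Disc(G/H)$ with $\tau(\vartheta)=\tau$, and check that the asserted affine map $S_{\tau}$ carries $\lambda(\vartheta)+\rho_{\tila}$ to $\nu(\vartheta)+\rho$ modulo $W(\g_{\C})$. First I would establish the representation-theoretic input analogous to Lemmas~\ref{lem:ex(vii)} and~\ref{lem:ex(i)}: a lemma (call it Lemma~\ref{lem:ex(viii)}) describing $\Disc(\SO(7)/G_{2(-14)})$, $\Disc((\SO(5)\times\SO(2))/\iota_8(\U(2)))$, and $\Disc((\SO(4)\times\SO(2))/\iota_8(\U(2)))$, the branching law $\SO(7)\downarrow\SO(5)\times\SO(2)$, the irreducible decomposition of the regular representation of $G$ on $L^2(G/K,\tau)$, and the structure of $S(\g_{\C}/\h_{\C})^H=S(\C^5)^{\iota_8(\U(2))}$ as a polynomial ring on two generators (of degrees $2$ and $1$, reflecting $\rank G/H=3$ split as $1+2$). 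For the branching law $\SO(7)\downarrow\SO(5)\times\SO(2)$ the natural tool is either the known classical branching combined with an a priori description of $\Disc(G/H)$ via Fact~\ref{fact:spherical-cpt} and the comparison of \eqref{eqn:decomp1} with \eqref{eqn:decomp2} (as done in case~(vi)), or Proposition~\ref{prop:branchnormal} applied to a $G_{\C}$-compatible parabolic of $\SO(7,\C)$; I expect $\Disc(G/H)$ to consist of $\Rep(\SO(5),(j,k))\boxtimes\C_a$ with suitable inequalities relating $j,k,|a|$.

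Next, with Lemma~\ref{lem:ex(viii)} in hand, I would write down the map $\vartheta\mapsto(\pi(\vartheta),\tau(\vartheta))$ in a display (the analogue of \eqref{eqn:pitau-vii}), namely $\Rep(\SO(5),(j,k))\boxtimes\C_a\mapsto(\Rep(\SO(7),j(1,1,1))\text{ or similar},\Rep(\SO(4),(k,k))\boxtimes\C_a)$, tabulate the scalars by which $C_{\tilG}$, $C_G^{(1)}$, $E_G$, $C_K^{(1)}$, $E_K$ act on these representations, and read off the linear identities of Proposition~\ref{prop:ex(viii)}.(1) by Proposition~\ref{prop:PQR} (together with the normalization \eqref{eqn:HCX-viii}--\eqref{eqn:rhoa-nonsymm} and the extension \eqref{eqn:HCX-vii} of the Harish-Chandra formalism to the nonsymmetric case; for $K/H$ nonsymmetric one argues via the covering $\Sp(2)\times\mathbb{S}^1\to\SO(5)\times\SO(2)$ and the group-manifold case as in Lemma~\ref{lem:ex(vii)}). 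Proposition~\ref{prop:ex(viii)}.(2) then follows from Lemma~\ref{lem:struct-DGX} exactly as in the earlier cases: one checks that $\dd\ell(C_{\tilG}),\dd r(C_K^{(1)}),\dd r(E_K)$ are algebraically independent by evaluating on the $G$-isotypic components, using that the corresponding triple of scalars ranges over a Zariski-dense subset, and that the number and degrees of generators match those of $S(\g_{\C}/\h_{\C})^H$.

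Finally, for Proposition~\ref{prop:nu-ex(viii)} itself: fix $\tau=\Rep(\SO(4),(k,k))\boxtimes\C_a$ with $|a|\le k$, take $\vartheta\in\Disc(G/H)$ with $\tau(\vartheta)=\tau$, so $\vartheta=\Rep(\SO(5),(j,k))\boxtimes\C_a$ for appropriate $j$; compute $\lambda(\vartheta)+\rho_{\tila}=j+\tfrac32$ in $\C/(\Z/2\Z)$ via \eqref{eqn:HCX-viii}--\eqref{eqn:rhoa-nonsymm}, and $\nu(\vartheta)+\rho$ in $(\C^2\oplus\C)/(W(B_2)\times\{1\})$ via \eqref{eqn:HCZg-viii} using $\rho_{B_2}=(3/2,1/2)$; one then verifies that $S_{\tau}(j+\tfrac32)=\big((j+\tfrac32,k+\tfrac12),a\big)$ matches $\nu(\vartheta)+\rho$, and invokes Proposition~\ref{prop:Stau-transfer}. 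The main obstacle I anticipate is the branching law $\SO(7)\downarrow\SO(5)\times\SO(2)$ through the nonstandard embedding dictated by $G_{2(-14)}\subset\SO(7)$ and by $\iota_8$: pinning down the precise range of $(j,k,a)$ and the exact form of $\pi(\vartheta)$ (the $\SO(7)$-highest weight, which must be forced to lie on the line $\C(1,1,1)$ so that $\pi(\vartheta)\in\Disc(\SO(7)/G_{2(-14)})$) requires care, and I would handle it by combining the a priori knowledge of $\Disc(\SO(7)/G_{2(-14)})$ (rank one, spherical harmonics on the $6$-sphere-like space) with the conormal-bundle estimate of Proposition~\ref{prop:branchnormal}, mirroring the treatment of case~(vi). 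A secondary subtlety is the correct normalization of the nonsymmetric Harish-Chandra isomorphisms for both $\tilG/\tilH$ and $K/H$, but this is already templated by \eqref{eqn:HCX-vii}, \eqref{eqn:HCX-viii}, \eqref{eqn:rhoa-nonsymm} and Lemma~\ref{lem:ex(vii)}.
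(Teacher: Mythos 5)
Your plan is essentially the paper's proof: establish the case-(viii) analogue of the earlier lemmas (Lemma~\ref{lem:ex(viii)}), read off the map $\vartheta\mapsto(\pi(\vartheta),\tau(\vartheta))$ as in \eqref{eqn:pitau-viii}, and then compute $\lambda(\vartheta)+\rho_{\tila}=j+\tfrac32$ via \eqref{eqn:HCX-viii}--\eqref{eqn:rhoa-nonsymm} and $\nu(\vartheta)+\rho=(j+\tfrac32,k+\tfrac12;a)$ via \eqref{eqn:HCZg-viii}, so that $S_{\tau}$ matches and Proposition~\ref{prop:Stau-transfer} applies; your final paragraph reproduces exactly the computation in the paper. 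The one genuine methodological difference is the branching law $\SO(7)\downarrow\SO(5)\times\SO(2)$: the paper simply quotes Tsukamoto \cite{tsu81} (the embedding of $G$ in $\tilG$ is the standard block one -- only $H=\iota_8(\U(2))$ sits nonstandardly in $K$), whereas you propose the conormal-bundle route of Proposition~\ref{prop:branchnormal}. That route can work, but note that the upper estimate is forced to be an equality by a priori (or independently derived) knowledge of $\Disc(G/H)$ -- e.g.\ obtained from the classical $\SO(5)\downarrow\SO(4)$ branching, Frobenius reciprocity and \eqref{eqn:decomp2} -- not by knowledge of $\Disc(\tilG/\tilH)$ alone, since the latter does not tell you which $\vartheta$ occur inside a given $\Rep(\SO(7),(j,j,j))$; your first paragraph has this right, but the last paragraph states the exhaustion argument with $\Disc(\tilG/\tilH)$ in place of $\Disc(G/H)$.

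A second slip, in the supporting material rather than in the transfer-map computation itself: $\g_{\C}/\h_{\C}$ has dimension $11-4=7$, so $S(\g_{\C}/\h_{\C})^H=S(\C^4\oplus\C^3)^{\SU(2)\times\SO(2)}$, and it is a polynomial ring on \emph{three} generators of degrees $1,2,2$ (consistent with $\rank G/H=3$), not $S(\C^5)^{\iota_8(\U(2))}$ on two generators as you wrote. With only two generators the Lemma~\ref{lem:struct-DGX} count could not accommodate the three algebraically independent operators $\dd\ell(C_{\tilG}),\dd r(C_K^{(1)}),\dd r(E_K)$, so Proposition~\ref{prop:ex(viii)}.(2) -- which underlies the very existence of the transfer map via Proposition~\ref{prop:qI} -- would not come out of your argument as stated; correcting the count of generators (as in Lemma~\ref{lem:ex(viii)}.(5)) repairs this and the rest of your outline goes through unchanged.
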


In order to prove Propositions \ref{prop:ex(viii)} and~\ref{prop:nu-ex(viii)}, we use the following results on finite-dimensional representations.

\begin{lemma}\label{lem:ex(viii)}
\begin{enumerate}
  \item Discrete series for $\tilG/\tilH$, $G/H$, and $F=K/H$:
  \begin{align*}
  & \Disc(\SO(7)/G_{2(-14)}) = \{ \Rep(\SO(7),(j,j,j)) \,:\, j\in\N\} ;\\
  & \Disc\big((\SO(5)\times\SO(2))/\iota_8(\U(2))\big)\\
  & \hspace{1cm} = \{ \Rep(\SO(5),(j,k)) \boxtimes \C_a \,:\, |a|\leq k\leq j,\ a,j,k\in\Z\} ;\\
  & \Disc\big((\SO(4)\times\SO(2))/\iota_8(\U(2))\big)\\
  & \hspace{1cm} = \{ \Rep(\SO(4),(k,k)) \boxtimes \C_a \,:\, |a|\leq k,\ a,k\in\Z\} .
  \end{align*}
  \item Branching laws for $\SO(7)\downarrow\SO(5)\times\SO(2)$: For $j\in\N$,
  $$\Rep\big(\SO(7),(j,j,j)\big)|_{\SO(5)\times\SO(2)}\ \simeq \bigoplus_{\substack{a,k\in\Z\\ |a|\leq k\leq j}} \Rep\big(\SO(5),(j,k)\big) \boxtimes \C_a.$$
  \item Irreducible decomposition of the regular representation of~$G$:\\ For $a\in\Z$ and $k\in\N$,
  \begin{align*}
  & L^2\big((\SO(5)\times\SO(2))/(\SO(4)\times\SO(2)),\Rep\big(\SO(4),(k,k)\big) \boxtimes \C_a\big)\\
  & \hspace{1.3cm} \simeq\ \sumplus{\substack{j\in\N\\ j\geq k}}\ \Rep\big(\SO(5),(j,k)\big) \boxtimes \C_a.
  \end{align*}
  \item The ring $S(\tilg_{\C}/\tilh_{\C})^{\tilH}=S(\C^7)^{G_2}$ is generated by a single homogeneous element of degree~$2$.
  \item The ring $S(\g_{\C}/\h_{\C})^H=S(\C^4\oplus\C^3)^{\SU(2)\times\SO(2)}$ is generated by three algebraically independent homogeneous elements of respective degrees $1,2,2$.
  \item The ring $S(\kk_{\C}/\h_{\C})^H=S(\C^3)^{\SO(2)}$ is generated by two algebraically independent homogeneous elements of respective degrees $1,2$.
\end{enumerate}
\end{lemma}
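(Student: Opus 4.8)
\textbf{Proof proposal for Lemma~\ref{lem:ex(viii)}.} The plan is to handle the six statements by the same general methods used in the previous cases (Lemmas~\ref{lem:ex(i)}--\ref{lem:ex(vii)}), with the main work being the three nonsymmetric branching laws in parts (1)--(3), since here neither $\tilG/\tilH$ nor $K/H$ is symmetric. For part~(1), the description of $\Disc(\SO(7)/G_{2(-14)})$ should be quoted from Kr\"amer's classification \cite{kra79} (this is the rank-one nonsymmetric spherical space $\SO(7,\C)/G_2(\C)$); alternatively, as in case~(vi), one recovers it a posteriori by comparing the two decompositions \eqref{eqn:decomp1} and~\eqref{eqn:decomp2} once the branching law in~(2) is known. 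The descriptions of $\Disc\big((\SO(5)\times\SO(2))/\iota_8(\U(2))\big)$ and $\Disc\big((\SO(4)\times\SO(2))/\iota_8(\U(2))\big)$ I would obtain directly from the Frobenius reciprocity and the diagram defining $\iota_8$: an irreducible $\SO(4)\times\SO(2)$-module has an $\iota_8(\U(2))$-fixed vector iff, pulled back to $\SU(2)\times\SU(2)\times\SO(2)$, it has a vector fixed by $\SU(2)\times\Diag(\SO(2))$; writing $\SO(4)\simeq(\SU(2)\times\SU(2))/\{\pm(I_2,I_2)\}$, an irreducible of $\SO(4)$ is $\C^{p+1}\boxtimes\C^{q+1}$ with $p\equiv q\pmod 2$, and the $\SU(2)$-in-the-first-factor invariance forces $p=0$, i.e.\ the highest weight is $(k,k)$; then $\Diag(\SO(2))$-invariance against $\C_a$ gives the condition $|a|\le k$ by the $\SU(2)\downarrow\SO(2)$ branching. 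For $\SO(5)\times\SO(2)$ one argues the same way using $\SO(5)\supset\SO(4)$ and the classical $\SO(5)\downarrow\SO(4)$ branching, which yields $|a|\le k\le j$.

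For part~(2), the branching law $\SO(7)\downarrow\SO(5)\times\SO(2)$, I would follow the strategy of Lemma~\ref{lem:ex(vi)}.(2): realize $\Rep(\SO(7),(j,j,j))$ via Borel--Weil on a flag variety of $\SO(7,\C)$ with a $\g_{\C}$-compatible parabolic (here $\g_{\C}=\so(5,\C)\oplus\so(2,\C)$), identify the conormal bundle $\widetilde{\n}_{\C}^-/\n_{\C}^-$ as an $L_{\C}$-module, and apply Proposition~\ref{prop:branchnormal} together with Borel--Weil on $G_{\C}/P_{\C}^-$ to get an \emph{upper} bound for the irreducible summands; the a priori knowledge of $\Disc(\SO(7)/G_{2(-14)})$ from~(1) and the comparison of \eqref{eqn:decomp1} with \eqref{eqn:decomp2} then forces equality, exactly as in \eqref{eqn:HjR16-Spin9}. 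Alternatively, since these are all small classical groups, one could compute the branching law for $\SO(7)\downarrow\SO(6)\downarrow\SO(5)$ by iterated classical $\SO(N)\downarrow\SO(N-1)$ branching \cite[Th.\,8.1.3]{gw09} and then descend the $\SO(6)$-level to $\SO(5)\times\SO(2)$; I expect the Borel--Weil route to be cleaner. Part~(3) is then immediate from~(2) by Frobenius reciprocity and the isotypic decomposition \eqref{eqn:decomp2}: $L^2\big(G/K,\Rep(\SO(4),(k,k))\boxtimes\C_a\big)$ collects exactly those $\Rep(\SO(5),(j,k))\boxtimes\C_a$ for which this $K$-type occurs in the restriction, i.e.\ $j\ge k$.

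Parts~(4)--(6) are routine invariant-theory computations of the graded rings $S(\mathfrak{m}_{\C})^{H'}$ that feed into Lemma~\ref{lem:struct-DGX}. For~(4), $S(\C^7)^{G_2}=\C[r^2]$ is classical since $G_2$ acts on $\C^7$ preserving a quadratic form and acting transitively on the sphere. For~(5), decompose as $H$-modules
\[
\g_{\C}/\h_{\C} \,\simeq\, \g_{\C}/\kk_{\C} \oplus \kk_{\C}/\h_{\C} \,\simeq\, (\C^2\boxtimes\C^2) \oplus (\C^2\boxtimes\mathbf{1}) \oplus (\mathbf{1}\boxtimes\C^3)
\]
via $\iota_8$ (the $\SO(5)/\SO(4)$ part contributes $\C^2\boxtimes\mathbf{1}$ under $\SU(2)\times\SO(2)$, the $\SO(2)$ part $\mathbf{1}\boxtimes\C^3$); as in Lemma~\ref{lem:ex(vii)}.(4), $\SU(2)$ acts through $\SU(2)\times\{1\}$ only on the $\C^2$-factors and $\SO(2)$ through $\{1\}\times\SO(2)$, so $S(\g_{\C}/\h_{\C})^H$ factors as a tensor product of $S(\C^2\oplus\C^2)^{\SU(2)}$ (one generator in degree~$2$, the symplectic pairing) against the $\SO(2)$-invariants of $S(\C^2_{\mathrm{weight}}\oplus\C^3_{\mathrm{weight}})$, and a weight count gives one generator in degree~$1$ (the diagonal-weight-zero monomial) and one more in degree~$2$; hence degrees $1,2,2$. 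For~(6), $S(\C^3)^{\SO(2)}$ with $\C^3$ of $\SO(2)$-weights $\{0,\pm1\}$ is $\C[u, v^2]$ for algebraically independent $u$ (weight~$0$) and $v^2$ (the weight-$0$ quadratic), i.e.\ degrees $1,2$. In all three the Poincar\'e series matches that predicted by Lemma~\ref{lem:struct-DGX}.(2), so the stated generators are algebraically independent.

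\textbf{Main obstacle.} The essential difficulty is part~(2), the branching law $\SO(7)\downarrow\SO(5)\times\SO(2)$ with $\SO(2)$ embedded diagonally, which is not a standard special case; the Borel--Weil/conormal-bundle argument of Proposition~\ref{prop:branchnormal} gives only an inequality, so one genuinely needs the a priori determination of $\Disc(\SO(7)/G_{2(-14)})$ in~(1) (from \cite{kra79}) to pin down the multiplicities by the double-decomposition comparison. Everything else is either classical branching theory for $\SO(N)\downarrow\SO(N-1)$ plus Frobenius reciprocity, or elementary weight-counting in symmetric algebras.
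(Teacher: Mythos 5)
Your overall route matches the paper for (1), (3), (4), (6): the paper quotes Kr\"amer for $\Disc(\SO(7)/G_{2(-14)})$, gets the third formula of (1) by the same $\SU(2)$-computation from the diagram defining $\iota_8$, proves (3) by the classical $\SO(5)\downarrow\SO(4)$ branching plus Frobenius reciprocity, and cites Schwarz for (4); it deduces the second formula of (1) from (2) or (3), which is essentially your direct $\SO(5)\downarrow\SO(4)$ argument. Where you genuinely differ is (2): the paper simply cites Tsukamoto \cite{tsu81}, whereas you propose a self-contained proof via Proposition~\ref{prop:branchnormal} modeled on Lemma~\ref{lem:ex(vi)}.(2). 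That route does work, but note two points. First, the conormal-bundle bound here is an \emph{infinite} sum: the $\SO(2)$-factor imposes no dominance condition, so the upper estimate is $\bigoplus\Rep(\SO(5),(j,k))\boxtimes\C_a$ over $a\le k\le j$ with $a$ unbounded below, i.e.\ it contains spurious terms with $a<-k$. The comparison of \eqref{eqn:decomp1} with \eqref{eqn:decomp2} therefore needs the already-established description of $\Disc\big((\SO(5)\times\SO(2))/\iota_8(\U(2))\big)$ -- not only $\Disc(\SO(7)/G_{2(-14)})$, as your closing paragraph suggests -- to discard those terms; since you prove that second formula independently in (1), the argument closes, but the attribution should be corrected. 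Second, (3) is not ``immediate from (2)'': it is exactly Frobenius reciprocity combined with the $\SO(5)\downarrow\SO(4)$ branching law (your criterion $j\ge k$ is that branching law), independent of the $\SO(7)$-branching.

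There is one concrete error, in (5): your decomposition $\g_{\C}/\h_{\C}\simeq(\C^2\boxtimes\C^2)\oplus(\C^2\boxtimes\mathbf{1})\oplus(\mathbf{1}\boxtimes\C^3)$ has dimension $9$, while $\dim\g_{\C}/\h_{\C}=11-4=7$; the summand $\C^2\boxtimes\mathbf{1}$ does not exist. The correct $(\SU(2)\times\SO(2))$-module structure is
\[
\g_{\C}/\h_{\C}\;\simeq\;\big(\C^2\boxtimes(\C_1\oplus\C_{-1})\big)\oplus\big(\mathbf{1}\boxtimes(\C_2\oplus\C_0\oplus\C_{-2})\big),
\]
where the first summand is all of the $4$-dimensional $\SO(5)/\SO(4)$-part $\g_{\C}/\kk_{\C}$ and the second is $\kk_{\C}/\h_{\C}\simeq(\su(2,\C)\oplus\so(2,\C))/\Diag(\so(2,\C))$. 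As written, your step fails: with three copies of $\C^2$ under $\SU(2)$ the invariant ring would already contain three degree-$2$ symplectic pairings, contradicting the asserted generators of degrees $1,2,2$. With the corrected decomposition your tensor-factorization argument (one symplectic pairing of degree~$2$, on which $\SO(2)$ acts trivially, times $S(\C_0)\otimes S(\C_2\oplus\C_{-2})^{\SO(2)}$ with generators of degrees $1$ and~$2$) is exactly the paper's proof and gives the stated degrees. The small slips in (6) (weights $\pm1$ versus $\pm2$, and writing the degree-$2$ invariant as $v^2$ rather than the product of the two opposite-weight variables) do not affect the conclusion.
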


\begin{proof}[Proof of Lemma~\ref{lem:ex(viii)}]
(1) The description of $\Disc(\SO(7)/G_{2(-14)})$ is given by Kr\"amer \cite{kra79}.
The description of $\Disc\big((\SO(4)\times\SO(2))/\iota_8(\U(2))\big)$ readily follows from a computation for $\SU(2)$ using the diagram just before Proposition~\ref{prop:ex(viii)}.
The description of $\Disc\big((\SO(5)\times\SO(2))/\iota_8(\U(2))\big)$ follows from (2) via \eqref{eqn:decomp1} or from (3) via \eqref{eqn:decomp2}.

(2) See \cite{tsu81}.

(3) By the classical branching law for $\SO(N)\downarrow\SO(N-1)$, see \eg \cite[Th.\,8.1.3]{gw09}, with $N=5$, the assertion follows by the Frobenius reciprocity.

(4) See \cite{sch78} or the proof of Lemma~\ref{lem:ex(*)}.(4) below.

(5) We have an isomorphism of $(\SU(2)\times\SO(2))$-modules
$$\g_{\C}/\h_{\C} \simeq \C^4 \oplus \C^3 \simeq \big(\C^2 \boxtimes (\C_1 \oplus \C_{-1})\big) \oplus \big(\mathbf{1} \boxtimes (\C_2 \oplus \C_0 \oplus \C_{-2})\big).$$
The ring $S(\C^2\otimes\C^2)^{\SU(2)\times\{ 1\}}$ is a polynomial ring generated by one homogeneous element of degree~$2$, on which $\{ 1\}\times\SO(2)$ acts trivially.
Therefore, $S(\C^4\oplus\C^3)^{\SU(2)\times\SO(2)}$ is isomorphic to
$$S\big(\C^2 \boxtimes (\C_1 \oplus \C_{-1})\big)^{\SU(2)\times\{ 1\}} \otimes S(\C_2 \oplus \C_{-2})^{\SO(2)} \otimes S(\C),$$
and statement~(5) follows.

(6) Via the double covering $\SU(2)\times\SO(2)\overset{\sim}{\longrightarrow} H\simeq\U(2)$, the group $\SU(2)\times\SO(2)$ acts on $\g_{\C}/\h_{\C}\simeq\C^3$ as $\mathbf{1} \boxtimes (\C_2\oplus\C_0\oplus\C_{-2})$, and then the ring $S(\kk_{\C}/\h_{\C})^H$ is isomorphic to
$$S(\C_0) \otimes S(\C_2\oplus\C_{-2})^{\SO(2)},$$
and statement~(6) follows.
\end{proof}

\begin{proof}[Proof of Proposition~\ref{prop:ex(viii)}]
(1) By Lemma~\ref{lem:ex(viii)}, the map $\vartheta\mapsto (\pi(\vartheta),\tau(\vartheta))$ of Proposition~\ref{prop:pi-tau-theta} is given by
\begin{equation} \label{eqn:pitau-viii}
\begin{array}{l}
\Rep(\SO(5),(j,k)) \boxtimes \C_a\\
\longmapsto \big(\Rep(\SO(7),(j,j,j)),\Rep(\SO(4),(k,k)) \boxtimes \C_a
\end{array}
\end{equation}
for $a\in\Z$ and $j,k\in\N$ with $|a|\leq k\leq j$.
The Casimir operators for $\tilG$ and for the factors of $G$ and~$K$ act on these irreducible representations as the following scalars.
\begin{center}
\begin{tabular}{|c|c|c|}
\hline
Operator & Representation & Scalar\\
\hline\hline
$C_{\tilG}$ & $\Rep(\SO(7),(j,j,j))$ & $3 (j^2 + 3j)$\\
\hline
$C_G^{(1)}$ & \multirow{2}{*}{$\Rep(\SO(5),(j,k))\boxtimes\C_a$} & $j^2 + 3j + k^2 + k$\\
\cline{1-1} \cline{3-3}
$E_G$ & & $\sqrt{-1}\,a$\\
\hline
$C_K^{(1)}$ & \multirow{2}{*}{$\Rep(\SO(4),(k,k))\boxtimes\C_a$} & $2 (k^2 + k)$\\
\cline{1-1} \cline{3-3}
$E_K$ & & $\sqrt{-1}\,a$\\
\hline
\end{tabular}
\end{center}
This implies $\dd\ell(E_G) = \dd r(E_K)$ and $2\,\dd\ell(C_{\tilG}) = 6\,\dd\ell(C_G^{(1)}) - 3\,\dd r(C_K^{(1)})$.

(2) The description of $\D_{\tilG}(X)$ follows from the fact that it is generated by a single differential operator of degree~$2$, by Lemma~\ref{lem:ex(viii)}.(4).
Using the diagram just before Proposition~\ref{prop:ex(viii)}, we see that
$$F = K/H \simeq (\SU(2) \times \SO(2)/\{\pm I_2\})/\Diag(\SO(2)),$$
hence $\D_K(F)$ is isomorphic to $\D_{\SU(2)\times\SO(2)}(\SU(2))$, which contains $\dd\ell (C_K^{(1)})\!= \dd r(C_K^{(1)})$ and $\dd r(E_K)$; we conclude using Lemma~\ref{lem:ex(viii)}.(6).
We now focus on $\D_G(X)$.
We only need to prove the first equality, since the other ones follow from the relations between the generators.
For this, using Lemmas \ref{lem:struct-DGX}.(3) and~\ref{lem:ex(viii)}.(5), it suffices to show that the three differential operators $\dd\ell(C_{\tilG})$, $\dd r(C_K^{(1)})$, and $\dd r(E_K)$ on~$X$ are algebraically independent.
Let $f$ be a polynomial in three variables such that $f(\dd\ell(C_{\tilG}),\dd r(C_K^{(1)}),\dd r(E_K))=0$ in $\D_G(X)$.
By letting this differential operator act on the $G$-isotypic component $\vartheta = \Rep(\SO(5),(j,k)) \boxtimes \C_a $ in $C^{\infty}(X)$, we obtain
$$f\big( 3(j^2 + 3j), 2(k^2 + k), - \sqrt{-1}\,a \big) = 0$$
for all $a,j,k\in\Z$ with $|a|\leq k\leq j$, hence $f$ is the zero polynomial.
\end{proof}

\begin{proof}[Proof of Proposition~\ref{prop:nu-ex(viii)}]
We use Proposition~\ref{prop:Stau-transfer} and the formula \eqref{eqn:pitau-viii} for the map $\vartheta\mapsto (\pi(\vartheta),\tau(\vartheta))$ of Proposition~\ref{prop:pi-tau-theta}.
Let $\tau = \Rep(\SO(4),(k,k))\boxtimes\C_a \in \Disc(K/H)$ with $k\geq |a|$.
If $\vartheta\in\Disc(G/H)$ satisfies $\tau(\vartheta)=\tau$, then $\vartheta$ is of the form $\vartheta = \Rep(\SO(5),(j,k))\boxtimes\C_a$ for some $j\geq k$, by \eqref{eqn:pitau-viii}.
The algebra $\D_{\tilG}(X)$ acts on the irreducible $\tilG$-submodule $\pi(\vartheta) = \Rep(\SO(7),(j,j,j))$ of $C^{\infty}(X)$ by the scalars
$$\lambda(\vartheta) + \rho_{\tila} = \frac{1}{2} (2j+3) \in \C/(\Z/2\Z)$$
via the Harish-Chandra isomorphism \eqref{eqn:HCX-viii}, whereas the algebra $Z(\g_{\C})$ acts on the irreducible $G$-module $\vartheta = \Rep(\SO(5),(j,k))\boxtimes\C_a$ by the scalars
$$\nu(\vartheta) + \rho = \Big(j+\frac{3}{2},k+\frac{1}{2};a\Big) \in (\C^2\oplus\C)/W(B_2)\times\{1\}$$
via \eqref{eqn:HCZg-viii}.
Thus the affine map $S_{\tau}$ in Proposition~\ref{prop:nu-ex(viii)} sends $\lambda(\vartheta)+\rho_{\tila}$ to $\nu(\vartheta)+\rho$ for any $\vartheta\in\Disc(G/H)$ such that $\tau(\vartheta)=\tau$, and we conclude using Proposition~\ref{prop:Stau-transfer}.
\end{proof}

\begin{remark}
The group $\SO(5)$ already acts transitively on $X=\tilG/\tilH$.
If, instead of $(\SO(5)\times\SO(2),\iota_8(\U(2)))$, we take $(G,H)=(\SO(5),\SU(2))$, then $X=G/H$ is the same as in Proposition~\ref{prop:ex(viii)} and Lemma~\ref{lem:ex(viii)}.
However, $X_{\C}$ is not $G_{\C}$-spherical anymore and Theorem~\ref{thm:main}.(1)--(2) fail, as one can see from Proposition~\ref{prop:ex(viii)}.
\end{remark}

\subsection{The case $(\tilG,\tilH,G)=(\SO(7),G_{2(-14)},\SO(6))$}\label{subsec:ex(ix)}

Here $H=\SU(3)$, and the only maximal connected proper subgroup of~$G$ containing~$H$ is $K=\U(3)$.
Neither $\tilG/\tilH$ nor $G/H$ is a symmetric space.
Let $E_K$ be a generator of the center of $\kk=\mathfrak{u}(3)$ such that the eigenvalues of $\ad(E_K)$ in~$\g_{\C}$ are $0,\pm 1,\pm 2$.

\begin{proposition}[Generators and relations] \label{prop:ex(ix)}
For
$$X = \tilG/\tilH = \SO(7)/G_{2(-14)} \simeq \SO(6)/\SU(3) = G/H$$
and $K=\U(3)$, we have
\begin{enumerate}
  \item $2\,\dd\ell(C_{\tilG}) = 3\,\dd\ell(C_G) - 3\,\dd r(C_K)$;
  \item $\left \{ \begin{array}{cll}
  \D_{\tilG}(X) & \!\!\!=\!\!\! & \C[\dd\ell(C_{\tilG})];\\
  \D_K(F) & \!\!\!=\!\!\! & \C[\dd r(E_K)];\\
  \D_G(X) & \!\!\!=\!\!\! & \C[\dd\ell(C_{\tilG}),\dd r(E_K)] \,=\, \C[\dd\ell(C_G),\dd r(E_K)].
  \end{array}\right.$
\end{enumerate}
\end{proposition}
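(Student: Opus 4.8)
The plan is to carry out the four-step strategy of Section~\ref{subsec:descript-comput}, working with finite-dimensional representations of the compact groups $\tilG=\SO(7)$, $G=\SO(6)$, $K=\U(3)$ and their branching laws; the main work is Step~1, the determination of the canonical map $\vartheta\mapsto(\pi(\vartheta),\tau(\vartheta))$ of Proposition~\ref{prop:pi-tau-theta}. Since $X=\tilG/\tilH=\SO(7)/G_{2(-14)}$ is diffeomorphic to $\mathbb{S}^7/\{\pm 1\}=\PP^7(\R)$, and $G=\SO(6)\simeq\SU(4)/\{\pm I_4\}$ acts on it through $\mathbb{S}^7\simeq\SU(4)/\SU(3)$, one gets $\Disc(\SO(6)/\SU(3))=\{\mathcal{H}^{k,\ell}(\C^4):k,\ell\in\N,\ k+\ell\in 2\Z\}$, where under $\so(6,\C)\simeq\ssl(4,\C)$ one has $\mathcal{H}^{k,\ell}(\C^4)\simeq\Rep(\SO(6),(\tfrac{k+\ell}{2},\tfrac{k+\ell}{2},\tfrac{k-\ell}{2}))$. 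Also $\Disc(\SO(7)/G_{2(-14)})=\{\Rep(\SO(7),j(1,1,1)):j\in\N\}$ (Kr\"amer \cite{kra79}), and $F=K/H=\U(3)/\SU(3)\simeq\mathbb{S}^1$ gives $\Disc(K/H)=\{\C_a:a\in\Z\}$. Combining the classical branching law $\SO(7)\downarrow\SO(6)$, namely $\Rep(\SO(7),j(1,1,1))|_{\SO(6)}\simeq\bigoplus_{|m|\le j}\Rep(\SO(6),(j,j,m))$ (see \eg \cite{gw09}), with the decomposition of $L^2(G/K,\W_{\C_a})$ obtained from the branching $\SO(6)\downarrow\U(3)$ along the fiber, one finds
$$\mathcal{H}^{k,\ell}(\C^4)\ \longmapsto\ \Big(\Rep\big(\SO(7),\tfrac{k+\ell}{2}(1,1,1)\big),\ \C_{(k-\ell)/2}\Big);$$
alternatively, $\Disc(G/H)$ and this map can be read off by comparing the decompositions \eqref{eqn:decomp1} and \eqref{eqn:decomp2} and using multiplicity-freeness (Lemma~\ref{lem:spherical2}).

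For the generators, $\D_{\tilG}(X)=\C[\dd\ell(C_{\tilG})]$ because $S(\tilg_{\C}/\tilh_{\C})^{\tilH}=S(\C^7)^{G_2}$ is a polynomial ring on one degree-$2$ generator (Lemma~\ref{lem:ex(viii)}.(4)), and $\D_K(F)=\C[\dd r(E_K)]$ because $F\simeq\mathbb{S}^1$ and $\dd r_F:Z(\kk_{\C})\to\D_K(F)$ is surjective (Lemma~\ref{lem:surj-dl-drF}) and kills the $\SU(3)$-Casimirs; for $Z(\g_{\C})$ we use $C_G=C_{\SO(6)}$. With the normalization $B(e_i,e_i)=1$, a short Casimir computation gives that $C_{\tilG}$ acts on $\Rep(\SO(7),p(1,1,1))$ by $3p^2+9p$, $C_G$ acts on $\Rep(\SO(6),(p,p,q))$ by $2p^2+q^2+6p$, and $C_K$ acts on $\C_a$ by $a^2$. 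Writing $p=\tfrac{k+\ell}{2}$, $q=\tfrac{k-\ell}{2}$ and using the identity $2(3p^2+9p)=3(2p^2+q^2+6p)-3q^2$, Proposition~\ref{prop:PQR} yields $2\,\dd\ell(C_{\tilG})=3\,\dd\ell(C_G)-3\,\dd r(C_K)$ in $\D_G(X)$, which is relation~(1).

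For~(2): $X=G/H$ has rank~$2$ (Table~\ref{table2}), and $\operatorname{gr}(\D_G(X))\simeq S(\g_{\C}/\h_{\C})^H\simeq S(\C\oplus\C^3\oplus(\C^3)^{\vee})^{\SU(3)}$ is a polynomial ring on two algebraically independent homogeneous generators of degrees~$1$ and~$2$. Since $\dd r(E_K)$ has order~$1$ and $\dd\ell(C_{\tilG})$ has order~$2$, letting a polynomial relation $f(\dd\ell(C_{\tilG}),\dd r(E_K))=0$ act on $\mathcal{H}^{k,\ell}(\C^4)$ produces a polynomial identity in $(p,q)$ holding on a Zariski-dense subset of $\C^2$, hence $f=0$; so these two operators are algebraically independent and, by Lemma~\ref{lem:struct-DGX}.(3), generate $\D_G(X)$. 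The same argument applies verbatim with $\dd\ell(C_G)$ in place of $\dd\ell(C_{\tilG})$, giving the second equality in~(2). Note that $\dd\ell(C_{\tilG})$ and $\dd\ell(C_G)$ are both of order~$2$ and, by~(1), generate only the proper polynomial subring $\C[\dd\ell(C_{\tilG}),\dd\ell(C_G)]=\C[\dd\ell(C_{\tilG}),\dd r(C_K)]$, which contains no operator of order~$1$; this is precisely why case (ix) is the exception in Theorem~\ref{thm:main}.(3), and why the displayed list in~(2) omits $\C[\dd\ell(C_{\tilG}),\dd\ell(C_G)]$.

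The hard part is the branching $\SO(6)\downarrow\U(3)$, i.e. the fiber character attached to each $\vartheta$, together with the realization of $\mathcal{H}^{k,\ell}(\C^4)$ as an $\SO(6)$-module via $D_3=A_3$: the relevant $\U(3)\subset\SO(6)$ is \emph{not} the one giving the Hermitian symmetric space $\SO(6)/\U(3)$ (here the grading element $E_K$ has $\ad$-eigenvalues $0,\pm1,\pm2$ on $\g_{\C}$, not $0,\pm2$), so the embedding and the normalization of $E_K$ must be handled with care. Once the map $\vartheta\mapsto(\pi(\vartheta),\tau(\vartheta))$ and the graded-ring computation are in hand, the remaining steps are routine.
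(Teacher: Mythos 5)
Your overall strategy is the paper's own (identify the discrete series, pin down the map $\vartheta\mapsto(\pi(\vartheta),\tau(\vartheta))$ via branching, compute Casimir scalars and apply Proposition~\ref{prop:PQR}, then get the generators from $S(\g_{\C}/\h_{\C})^H$ and Lemma~\ref{lem:struct-DGX}), and almost all of your intermediate assertions agree with the paper: the parametrizations of $\Disc(\tilG/\tilH)$, $\Disc(G/H)$, $\Disc(K/H)$, the branching $\SO(7)\downarrow\SO(6)$ for $\Rep(\SO(7),(j,j,j))$, the Casimir eigenvalues, the degrees $1,2$ for $S(\g_{\C}/\h_{\C})^H$, and the algebraic-independence argument are all as in the paper (which likewise passes through $\U(4)/\Diag(\U(1))\simeq\SO(6)$ for the $\SO(6)$-side computations).

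However, there is one genuine gap, and it sits exactly at the step you yourself call ``the hard part.'' Your parenthetical claim that the relevant $\U(3)\subset\SO(6)$ is \emph{not} the one giving the Hermitian symmetric space, because $\ad(E_K)$ would have eigenvalues $0,\pm1,\pm2$ on $\g_{\C}$, is wrong: since $H=\SU(3)$ acts on $\R^6\simeq\C^3$ preserving a complex structure and acts $\R$-irreducibly, the unique maximal connected subgroup of $\SO(6)$ containing it is precisely the symmetric subgroup $\U(3)=\SU(3)\cdot Z_{\SO(6)}(\SU(3))_0$, i.e. $K_{\C}$ corresponds to $S(\GL_3\times\GL_1)$ under $\so(6,\C)\simeq\ssl(4,\C)$; for a central $E_K\in\u(3)$ the $\ad$-eigenvalues on $\g_{\C}$ are $0,\pm2$ (the spectrum $0,\pm1,\pm2$ occurs on $\tilg_{\C}=\so(7,\C)$, not on $\g_{\C}$). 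Because of this misidentification you never actually establish the fiber datum $\tau(\vartheta)$ --- you assert $\mathcal{H}^{k,\ell}(\C^4)\mapsto\C_{(k-\ell)/2}$ and defer the branching $\SO(6)\downarrow\U(3)$ --- yet this identification is indispensable for relation~(1), since the scalar by which $\dd r(C_K)$ acts on the $\vartheta$-isotypic component is $\Psi_{\tau(\vartheta)^{\vee}}(C_K)$ (comparing \eqref{eqn:decomp1} and \eqref{eqn:decomp2} only gives existence and uniqueness of $\tau(\vartheta)$, not which character it is). The paper closes exactly this step by using that $K$ \emph{is} the symmetric subgroup: via \eqref{eqn:isom-dual-SO(6)} the decomposition of $L^2(\SO(6)/\U(3),\chi_k)$ reduces to the classical branching $\U(4)\downarrow\U(3)\times\U(1)$, which yields $\tau(\Rep(\SO(6),(j,j,k)))=\chi_k$ (your formula, in your coordinates). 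Once you correct the statement about $K$ and supply this branching argument (your asserted answer is the right one), the rest of your proof goes through and coincides with the paper's; note also that the paper's computation of $S(\g_{\C}/\h_{\C})^H$ again uses the symmetric pair $(\so(6,\C),\gl(3,\C))$, whereas your direct computation of $\SU(3)$-invariants on $\C\oplus\C^3\oplus(\C^3)^{\vee}$ is an acceptable equivalent.
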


Thus the algebra $\D_G(X)$ is generated by $\D_{\tilG}(X)$ and $\dd r(Z(\kk_{\C}))$, and also by $\dd\ell(Z(\g_{\C}))$ and $\dd r(Z(\kk_{\C}))$, but \emph{not} by $\D_{\tilG}(X)$ and $\dd\ell(Z(\g_{\C}))$.
The subalgebra generated by $\D_{\tilG}(X)$ and $\dd\ell(Z(\g_{\C}))$, which is isomorphic to the polynomial ring $\C[\dd\ell(C_{\tilG}),\dd\ell(C_G)]$, has index two in $\D_G(X)$.

We now identify
\begin{equation} \label{eqn:HCZg-ix}
\Hom_{\C\text{-}\mathrm{alg}}(Z(\g_{\C}),\C) \ \simeq\ \jj_{\C}^*/W(\g_{\C}) \ \simeq\ \C^3/W(D_3)
\end{equation}
via the standard basis and use again the Harish-Chandra isomorphism \eqref{eqn:HCX-viii} for $X=\SO(7)/G_{2(-14)}$.
The set $\Disc(K/H)$ consists of the representations of $K=\U(3)$ of the form $\tau=\chi_k$ for $k\in\Z$.

\begin{proposition}[Transfer map] \label{prop:nu-ex(ix)}
Let
$$X = \tilG/\tilH = \SO(7)/G_{2(-14)} \simeq \SO(6)/\SU(3) = G/H$$
and $K=\U(3)$.
For $\tau=\chi_k\in\Disc(K/H)$ with $k\in\Z$, the affine map
\begin{eqnarray*}
S_{\tau} :\ \tila_{\C}^* \simeq \C & \longrightarrow & \hspace{0.8cm} \C^2\oplus\C \hspace{0.8cm} \simeq \jj_{\C}^*\\
\lambda & \longmapsto & \Big(\lambda+\frac{1}{2}, \lambda-\frac{1}{2}, k\Big)
\end{eqnarray*}
induces a transfer map
$$\nnu(\cdot,\tau) : \Hom_{\C\text{-}\mathrm{alg}}(\D_{\tilG}(X),\C) \longrightarrow \Hom_{\C\text{-}\mathrm{alg}}(Z(\g_{\C}),\C)$$
as in Theorem~\ref{thm:nu-tau}.
\end{proposition}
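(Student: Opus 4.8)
The plan is to deduce Proposition~\ref{prop:nu-ex(ix)} from Proposition~\ref{prop:Stau-transfer}, just as in the previous subsections: once an explicit form of the map $\vartheta\mapsto(\pi(\vartheta),\tau(\vartheta))$ of Proposition~\ref{prop:pi-tau-theta} is at hand, it suffices to check that the displayed affine map $S_{\tau}$ carries $\lambda(\vartheta)+\rho_{\tila}$ to $\nu(\vartheta)+\rho$ modulo $W(D_3)$ for every $\vartheta\in\Disc(G/H)$ with $\tau(\vartheta)=\tau$. The bijection $\varphi^*_{\mathcal{I}_{\tau}}$ that underlies the transfer map is provided by Proposition~\ref{prop:qI}, which applies since $\tilG=\SO(7)$ is simple; note that in case~(ix) only part~(3) of Theorem~\ref{thm:main} fails, not parts~(1)--(2), so Proposition~\ref{prop:qI}, and hence Theorem~\ref{thm:nu-tau}, are available here.

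First I would assemble the finite-dimensional input, in the spirit of Lemma~\ref{lem:ex(viii)}, since neither $\tilG/\tilH=\SO(7)/G_{2(-14)}$ nor $G/H=\SO(6)/\SU(3)$ is symmetric. By Kr\"amer~\cite{kra79}, $\Disc(\SO(7)/G_{2(-14)})=\{\Rep(\SO(7),(j,j,j)):j\in\N\}$. Using the local isomorphism $\SO(6)\simeq\SU(4)/\{\pm I_4\}$, the harmonic decomposition $L^2(\SU(4)/\SU(3))\simeq\bigoplus_{p,q\in\N}\mathcal{H}^{p,q}(\C^4)$, and the translation of the $\mathfrak{sl}(4,\C)$-highest weight of $\mathcal{H}^{p,q}(\C^4)$ into $\mathfrak{so}(6,\C)$-coordinates via $\mathfrak{sl}(4,\C)\simeq\mathfrak{so}(6,\C)$, I expect to obtain $\Disc(\SO(6)/\SU(3))=\{\Rep(\SO(6),(a,a,b)):a\in\N,\ b\in\Z,\ |b|\leq a\}$ (keeping only the summands with $p+q$ even, so that $a=(p+q)/2$ and $b=(p-q)/2$), and $\Disc(\U(3)/\SU(3))=\{\chi_k:k\in\Z\}$ with $\chi_k=\det^{k}$. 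Next I would compute the branching law $\SO(7)\downarrow\SO(6)$ on $\Rep(\SO(7),(a,a,a))$ from the classical interlacing rule for $\mathfrak{o}(7)\downarrow\mathfrak{o}(6)$ (see e.g.\ \cite{gw09}); the interlacing constraints should leave only
$$\Rep(\SO(7),(a,a,a))\big|_{\SO(6)}\ \simeq\ \bigoplus_{|b|\leq a}\Rep(\SO(6),(a,a,b)),$$
each summand with multiplicity one. Together with the $G$-isotypic decomposition of $L^2(\SO(6)/\U(3),\chi_k)$ and the comparison of the two decompositions \eqref{eqn:decomp1} and \eqref{eqn:decomp2} (multiplicity-free by $G_{\C}$-sphericity), this should pin down $\vartheta=\Rep(\SO(6),(a,a,k))\mapsto(\pi(\vartheta),\tau(\vartheta))=(\Rep(\SO(7),(a,a,a)),\chi_k)$ for $a\in\N$ and $k\in\Z$ with $|k|\leq a$, where $k$ is the last coordinate of the highest weight of $\vartheta$, namely the $\det$-power selected by the $\SU(3)$-fixed vectors.

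With this map in hand I would then compute the two infinitesimal characters and read off $S_{\tau}$. On the irreducible $\tilG$-submodule $\pi(\vartheta)=\Rep(\SO(7),(a,a,a))$ of $C^{\infty}(X)$, the algebra $\D_{\tilG}(X)$ acts by the scalar $\lambda(\vartheta)+\rho_{\tila}=a+\frac{3}{2}\in\C/(\Z/2\Z)$, via the normalized generalized Harish--Chandra isomorphism \eqref{eqn:HCX-viii} for $\SO(7)/G_{2(-14)}$ and the $\rho$-shift $\rho_{\tila}=\frac{3}{2}(1,1,1)$ of \eqref{eqn:rhoa-nonsymm}. On $\vartheta=\Rep(\SO(6),(a,a,k))$, the center $Z(\g_{\C})$ acts by the infinitesimal character $\nu(\vartheta)+\rho=(a+2,\,a+1,\,k)\in\C^3/W(D_3)$ via \eqref{eqn:HCZg-ix} (here $\rho=(2,1,0)$ for $\SO(6)$). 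It then remains only to observe that the affine map $\lambda\mapsto(\lambda+\frac{1}{2},\lambda-\frac{1}{2},k)$ sends $a+\frac{3}{2}$ to $(a+2,a+1,k)$ for all admissible $(a,k)$, i.e.\ that $S_{\tau}$ carries $\lambda(\vartheta)+\rho_{\tila}$ to $\nu(\vartheta)+\rho$; Proposition~\ref{prop:Stau-transfer} then yields that $\nnu(\cdot,\tau)$ is induced by $S_{\tau}$ through the commutative diagram \eqref{eqn:comm-diag}, which is the claim.

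The main obstacle will be Step~1. The delicate points are pinning down the $\mathfrak{o}(7)\downarrow\mathfrak{o}(6)$ branching of $\Rep(\SO(7),(a,a,a))$ --- for which, as in the proof of Lemma~\ref{lem:ex(vi)}.(2), one can combine the classical interlacing rule with the a priori known list $\Disc(\SO(6)/\SU(3))$ to force the inclusion to be an equality --- and keeping consistent the coordinate change $\mathfrak{sl}(4,\C)\simeq\mathfrak{so}(6,\C)$ together with the embedding $\U(3)\hookrightarrow\SU(4)/\{\pm I_4\}$, since a single sign or indexing slip there would corrupt the final formula for $S_{\tau}$; in particular one must verify that the $\det$-power realizing $\tau(\vartheta)$ matches the last coordinate $k$ of the highest weight of $\vartheta$ with the correct sign, so that the third coordinate of $\nu(\vartheta)+\rho$ is genuinely $k$ and not $-k$ (these being inequivalent modulo $W(D_3)$). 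The remaining ingredients --- the explicit eigenvalues of the Casimir operators $C_{\tilG},C_G$ and of the Euler operator $\dd r(E_K)$, which are in any case needed for Proposition~\ref{prop:ex(ix)} --- are routine.
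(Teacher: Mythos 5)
Your proposal is correct and follows essentially the same route as the paper: it establishes the descriptions of $\Disc(\tilG/\tilH)$, $\Disc(G/H)$, $\Disc(K/H)$ and the branching/fibration decompositions (the paper likewise obtains $\Disc(\SO(6)/\SU(3))$ and the decomposition of $L^2(\SO(6)/\U(3),\chi_k)$ via the isomorphism $\U(4)/\Diag(\U(1))\simeq\SO(6)$ and classical $A$-type branching, and the $\SO(7)\downarrow\SO(6)$ law directly from the interlacing rule, with no need for the Lemma~\ref{lem:ex(vi)}-type argument), deduces the map $\vartheta=\Rep(\SO(6),(j,j,k))\mapsto(\Rep(\SO(7),(j,j,j)),\chi_k)$, computes $\lambda(\vartheta)+\rho_{\tila}=j+\tfrac32$ via \eqref{eqn:HCX-viii}--\eqref{eqn:rhoa-nonsymm} and $\nu(\vartheta)+\rho=(j+2,j+1,k)$ via \eqref{eqn:HCZg-ix}, and concludes with Proposition~\ref{prop:Stau-transfer}. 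The sign check on the last coordinate $k$ that you flag is exactly the point encoded in the paper's Lemma~\ref{lem:ex(ix)}.(3), and your remark that Proposition~\ref{prop:qI} is available in case~(ix) (since only Theorem~\ref{thm:main}.(3) fails there) is accurate.
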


In order to prove Propositions \ref{prop:ex(ix)} and~\ref{prop:nu-ex(ix)}, we use the following results on finite-dimensional representations.

\begin{lemma}\label{lem:ex(ix)}
\begin{enumerate}
  \item Discrete series for $\tilG/\tilH$, $G/H$, and $F=K/H$:
  \begin{eqnarray*}
  \Disc(\SO(7)/G_{2(-14)}) & = & \{ \Rep(\SO(7),(j,j,j)) \ :\ j\in\N\} ;\\
  \Disc(\SO(6)/\SU(3)) & = & \{ \Rep(\SO(6),(j,j,k)) \ :\ |k|\leq j,\ j,k\in\Z\} ;\\
  \Disc(\U(3)/\SU(3)) & = & \{ \chi_k \ :\ k\in\Z\} ,
  \end{eqnarray*}
  where $\chi_k(g)=(\det g)^k$.
  \item Branching laws for $\SO(7)\downarrow\SO(6)$: For $j\in\N$,
  $$\Rep\big(\SO(7),(j,j,j)\big)|_{\SO(6)}\ \simeq\ \bigoplus_{\substack{k\in\Z\\ |k|\leq j}} \Rep\big(\SO(6),(j,j,k)\big).$$
  \item Irreducible decomposition of the regular representation of~$G$: For~$k\in\nolinebreak\Z$,
  $$L^2\big(\SO(6)/\U(3),\chi_k\big)\ \simeq\ \sumplus{\substack{j\in\N\\ j\geq |k|}}\ \Rep\big(\SO(6),(j,j,k)\big) .$$
  \item The ring $S(\g_{\C}/\h_{\C})^H=S(\C^6\oplus\C)^{\SU(3)}$ is generated by two algebraically independent homogeneous elements of respective degrees $1$ and~$2$.
\end{enumerate}
\end{lemma}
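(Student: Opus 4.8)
The final statement to prove is Lemma~\ref{lem:ex(ix)}, which collects four facts about finite-dimensional representations in the case $(\tilG,\tilH,G)=(\SO(7),G_{2(-14)},\SO(6))$ with $K=\U(3)$: a description of the three sets of discrete series, the branching law $\SO(7)\downarrow\SO(6)$ on the relevant representations, the decomposition of the regular representation of $G$ on $L^2(\SO(6)/\U(3),\chi_k)$, and the structure of the invariant ring $S(\g_{\C}/\h_{\C})^H=S(\C^6\oplus\C)^{\SU(3)}$.

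The plan is as follows. For part~(1), the description of $\Disc(\SO(7)/G_{2(-14)})$ is exactly the one quoted from Kr\"amer~\cite{kra79} and already used in Lemma~\ref{lem:ex(viii)}.(1) and~\ref{lem:ex(vi)}.(1), so I would simply cite it. The set $\Disc(\U(3)/\SU(3))$ is immediate: a representation of $\U(3)$ has an $\SU(3)$-fixed vector iff it is one-dimensional, i.e.\ of the form $\chi_k:g\mapsto(\det g)^k$ for $k\in\Z$. For $\Disc(\SO(6)/\SU(3))$ I would use the local isomorphism $\SO(6)\simeq\SU(4)/\{\pm I\}$ under which the embedded $\SU(3)$ becomes the standard block $\SU(3)\subset\SU(4)$; then $\SO(6)/\SU(3)$ is a double cover of (a quotient related to) $\SU(4)/\SU(3)\simeq\mathbb{S}^7$, and by the Cartan--Helgason-type analysis — or directly by Frobenius reciprocity together with part~(2) and the double decomposition \eqref{eqn:decomp1}--\eqref{eqn:decomp2} as suggested in the general strategy of Section~\ref{subsec:descript-comput} — the discrete series are exactly the $\Rep(\SO(6),(j,j,k))$ with $|k|\le j$. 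Note that such a $\Rep(\SO(6),(j,j,k))$ has highest weight with first two entries equal, reflecting $\SU(3)$-sphericity of one of the two half-spin representations of $\SO(6)$.

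For parts~(2) and~(3), these are classical. The branching law $\SO(7)\downarrow\SO(6)$ is the standard interlacing rule \cite[Th.\,8.1.3]{gw09}: $\Rep(\SO(7),(a_1,a_2,a_3))|_{\SO(6)}\simeq\bigoplus\Rep(\SO(6),(b_1,b_2,\pm b_3))$ over $a_1\ge b_1\ge a_2\ge b_2\ge a_3\ge b_3\ge 0$, and specializing to $(a_1,a_2,a_3)=(j,j,j)$ collapses the inequalities to $b_1=b_2=j$, $0\le b_3\le j$, giving $\bigoplus_{|k|\le j}\Rep(\SO(6),(j,j,k))$ after accounting for the $\pm$ on the last coordinate. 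Part~(3) then follows from part~(2) by the Frobenius reciprocity in the form \eqref{eqn:decomp2}: the multiplicity of $\Rep(\SO(6),(j,j,k))$ in $L^2(\SO(6)/\U(3),\chi_k)$ equals $[\Rep(\SO(6),(j,j,k))|_{\U(3)}:\chi_k]$, which by restricting first to $\SO(6)\downarrow\SU(3)$ (via $\SO(7)\downarrow\SO(6)$, using that each $\Rep(\SO(7),(j,j,j))$ restricted to $\SU(3)$ has a one-dimensional fixed space by $\tilG_{\C}/\tilH_{\C}$-sphericity and Lemma~\ref{lem:spherical2}) is $1$ precisely when $j\ge|k|$; alternatively this is a direct computation with the $\U(3)$-weights of $\Rep(\SO(6),(j,j,k))$.

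For part~(4), I would compute $S(\C^6\oplus\C)^{\SU(3)}=S(\C^6)^{\SU(3)}\otimes S(\C)$, so it suffices to show $S(\C^6)^{\SU(3)}$ is a polynomial ring on one generator of degree~$2$. Here $\C^6$ is the $\SO(6,\C)$-module obtained by restriction to $\SU(3,\C)$, and under $\so(6,\C)\simeq\ssl(4,\C)$ the natural $6$-dimensional module is $\Lambda^2(\C^4)$; restricting $\ssl(4,\C)\downarrow\ssl(3,\C)$ (block embedding) gives $\Lambda^2(\C^4)|_{\SL(3,\C)}\simeq\Lambda^2(\C^3)\oplus\C^3\simeq(\C^3)^{\vee}\oplus\C^3$. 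Thus $S(\C^6)^{\SU(3)}\simeq S((\C^3)^{\vee}\oplus\C^3)^{\SL(3,\C)}$, and since $S(\C^3)\simeq\bigoplus_{a\in\N}S^a(\C^3)$ is multiplicity-free with pairwise inequivalent (and dual to $S^a((\C^3)^{\vee})$) summands, the invariants are $\bigoplus_{a}(S^a((\C^3)^{\vee})\otimes S^a(\C^3))^{\SL(3,\C)}$, one-dimensional in each even total degree $2a$ and zero in odd degree; hence $S(\C^6)^{\SU(3)}=\C[q]$ for a single invariant quadratic form $q$ (the contraction pairing), and then $S(\C^6\oplus\C)^{\SU(3)}=\C[q,x]$. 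One concludes using Lemma~\ref{lem:struct-DGX}. The same conclusion can be read off the tables of \cite{sch78} or from \cite[\S\,10]{kno94}.

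The main obstacle, modest as it is here, is organizing the identification $\SO(6)\simeq\SU(4)/\{\pm I\}$ and the attendant $\SU(3)$-embedding carefully enough that parts~(1), (3), and~(4) are genuinely consistent — in particular checking that the copy of $\SU(3)$ inside $\SO(6)$ arising as $G_{2(-14)}\cap\SO(6)$ is indeed the standard block $\SU(3)\subset\SU(4)$ (equivalently, that it acts on $\C^6\simeq\Lambda^2\C^4$ as $(\C^3)^\vee\oplus\C^3$), rather than via some twist; once this is pinned down, the branching and invariant-theory computations are routine interlacing-rule and multiplicity-free arguments exactly as in the parallel Lemmas~\ref{lem:ex(vi)}, \ref{lem:ex(vii)}, \ref{lem:ex(viii)}.
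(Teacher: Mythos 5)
Your proposal is correct and follows essentially the same route as the paper: Kr\"amer's list for $\Disc(\SO(7)/G_{2(-14)})$, the isomorphism $\so(6)\simeq\su(4)$ combined with the classical branching laws $\SO(7)\downarrow\SO(6)$ and $\U(4)\downarrow\U(3)$ plus Frobenius reciprocity for (1)--(3), and the decomposition $\C^6|_{\SU(3)}\simeq\C^3\oplus(\C^3)^{\vee}$ for (4). The only cosmetic differences are that the paper fixes the dictionary with the explicit weight correspondence $(x,y,z)\mapsto(x+y,x+z,y+z,0)$ (which settles the embedding issue you flag) and deduces (4) from the rank-one symmetric space $\SO(6)/\U(3)$, whereas you verify the invariant ring directly.
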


\begin{proof}[Proof of Lemma~\ref{lem:ex(ix)}]
(1) For $\Disc(\SO(7)/G_{2(-14)})$, see Lemma~\ref{lem:ex(viii)}.(1).
The equality for $\Disc(\U(3)/\SU(3))$ is clear.
The equality for $\Disc(\SO(6)/\SU(3))$ is given in \cite{kra79}, but we now provide an alternative approach for later purposes.
The isomorphism of Lie groups $\U(4)/\Diag(\U(1))\simeq\SO(6)$ induces a bijection between the two sets
\begin{eqnarray*}
\widehat{\SO(6)} & \simeq & \{ (x,y,z)\in\Z^3 \ :\ x\geq y\geq |z|\},\\
(\U(4)/\Diag(\U(1)))^{\widehat{\,\,}} & \simeq & \{ (a,b,c,d)\in\Z^4 \ :\ a\geq b\geq c\geq d\}/\Z(1,1,1,1),
\end{eqnarray*}
via the map
\begin{equation} \label{eqn:isom-dual-SO(6)}
(x,y,z) \longmapsto (x+y,x+z,y+z,0).
\end{equation}
Now the description of $\Disc(\SO(6)/\SU(3))$ follows from the classical branching law for $\SU(N)\downarrow\SU(N-1)$, see \eg \cite[Th.\,8.1.1]{gw09}, for $N=4$ and from the Frobenius reciprocity.

(2) This is a special case of the classical branching law for $\SO(N)\downarrow\SO(N-1)$, see \eg \cite[Th.\,8.1.3]{gw09}, for $N=7$.

(3) By using \eqref{eqn:isom-dual-SO(6)}, the proof is reduced to the classical branching law for $\U(N)\downarrow\U(N-1)$ for $N=4$.

(4) This is immediate from the symmetric case $\SO(6)/\U(3)$ because $\g_{\C}/\h_{\C}\!\simeq (\so(6,\C)/\gl(3,\C))\oplus\C$, and $H$ acts trivially on the second component.
\end{proof}

\begin{proof}[Proof of Proposition~\ref{prop:ex(ix)}]
(1) By Lemma~\ref{lem:ex(ix)}, the map $\vartheta\mapsto (\pi(\vartheta),\tau(\vartheta))$ of Proposition~\ref{prop:pi-tau-theta} is given by
\begin{equation} \label{eqn:pitau-ix}
\Rep(\SO(6),(j,j,k)) \longmapsto \big(\Rep(\SO(7),(j,j,j)),\chi_k\big)
\end{equation}
for $k\in\Z$ and $j\in\N$ with $|k|\leq j$.
The Casimir operators for $\tilG$, $G$, and~$K$ act on these irreducible representations as the following scalars.
\begin{center}
\begin{tabular}{|c|c|c|}
\hline
Operator & Representation & Scalar\\
\hline\hline
$C_{\tilG}$ & $\Rep(\SO(7),(j,j,j))$ & $3 (j^2 + 3j)$\\
\hline
$C_G$ & $\Rep(\SO(6),(j,j,k))$ & $2 (j^2 + 3j) + k^2$\\
\hline
$C_K$ & \multirow{2}{*}{$\chi_k$} & $k^2$\\
\cline{1-1} \cline{3-3}
$E_K$ & & $\sqrt{-1}\,k$\\
\hline
\end{tabular}
\end{center}
This implies $2\,\dd\ell(C_{\tilG}) = 3\,\dd\ell(C_G) - 3\,\dd r(C_K)$.

(2) For $\D_{\tilG}(X)$, see Proposition~\ref{prop:ex(viii)}.(2).
For $\D_K(F)$, the statement is obvious since $H$ is a normal subgroup of~$K$ and $K/H$ is isomorphic to the toral group $\mathbb{S}^1$.
We now focus on $\D_G(X)$.
We only need to prove the first equality, since the other one follows from the relations between the generators.
For this, using Lemmas \ref{lem:struct-DGX}.(3) and~\ref{lem:ex(ix)}.(4), it suffices to show that the two differential operators $\dd\ell(C_{\tilG})$ and $\dd r(E_K)$ on~$X$ are algebraically independent.
Let $f$ be a polynomial in two variables such that $f(\dd\ell(C_{\tilG}),\dd r(E_K))=0$ in $\D_G(X)$.
By letting this differential operator act on the $G$-isotypic component $\vartheta = \Rep(\SO(6),(j,j,k))$ in $C^{\infty}(X)$, we obtain
$$f\big( 3(j^2 + 3j), - \sqrt{-1}\,k \big) = 0$$
for all $j,k\in\Z$ with $|k|\leq j$, hence $f$ is the zero polynomial.
\end{proof}

\begin{proof}[Proof of Proposition~\ref{prop:nu-ex(ix)}]
We use Proposition~\ref{prop:Stau-transfer} and the formula \eqref{eqn:pitau-ix} for the map $\vartheta\mapsto (\pi(\vartheta),\tau(\vartheta))$ of Proposition~\ref{prop:pi-tau-theta}.
Let $\tau=\chi_k\in\Disc(K/H)$ with $k\in\Z$.
If $\vartheta\in\Disc(G/H)$ satisfies $\tau(\vartheta)=\tau$, then $\vartheta$ is of the form $\vartheta = \Rep(\SO(6),(j,j,k))$ for some $j\in\N$ with $j\geq |k|$, by \eqref{eqn:pitau-ix}.
The algebra $\D_{\tilG}(X)$ acts on the irreducible $\tilG$-submodule $\pi(\vartheta) = \Rep(\SO(7),(j,j,j))$ by the scalars
$$\lambda(\vartheta) + \rho_{\tila} = \frac{1}{2} (2j+3) \in \C/(\Z/2\Z)$$
via the Harish-Chandra isomorphism \eqref{eqn:HCX-viii}, whereas the algebra $Z(\g_{\C})$ acts on the irreducible $G$-module $\vartheta = \Rep(\SO(6),(j,j,k))$ by the scalars
$$\nu(\vartheta) + \rho = (j+2, j+1, k) \in \C^3/W(D_3)$$
via \eqref{eqn:HCZg-ix}.
Thus the affine map $S_{\tau}$ in Proposition~\ref{prop:nu-ex(ix)} sends $\lambda(\vartheta)+\rho_{\tila}$ to $\nu(\vartheta)+\rho$ for any $\vartheta\in\Disc(G/H)$ such that $\tau(\vartheta)=\tau$, and we conclude using Proposition~\ref{prop:Stau-transfer}.
\end{proof}

\begin{remark}
The only noncompact real form of $\tilG_{\C}/\tilH_{\C}$ is $X_{\R}=\linebreak\SO(4,3)_0/G_{2(2)}$.
There are exactly two real forms of $G_{\C}/H_{\C}$ isomorphic to~$X_{\R}$ \cite[Ex.\,5.2]{kob94}:
$$X_{\R} \,\simeq\, \SO(3,3)_0/\SL(3,\R) \,\simeq\, \SO(4,2)_0/\SU(2,1).$$
Discrete series representations for $X_{\R}$ in both cases were classified in \cite{kob94} via branching laws for $\tilG\downarrow G$, in the same spirit as in the present paper.
In these cases the isotropy group is noncompact, which means that $\SO(3,3)_0$ and $\SO(4,2)_0$ do \emph{not} act properly on~$X_{\R}$.
This explains why the case (ix) of Table~\ref{table1} does not appear in our application \cite{kkII} to spectral analysis.
\end{remark}

\subsection{The case $(\tilG,\tilH,G)=(\SO(7),\SO(6),G_{2(-14)})$}\label{subsec:ex(x)}

Here $H=\SU(3)$ is a maximal connected proper subgroup of~$G$, so that $K=H$ and $F$ is a point.

\begin{proposition}[Generators and relations] \label{prop:ex(x)}
For
$$X = \tilG/\tilH = \SO(7)/\SO(6) \simeq G_{2(-14)}/\SU(3) = G/H$$
and $K=H=\SU(3)$, we have
\begin{enumerate}
  \item $\dd\ell(C_{\tilG}) = \dd\ell(C_G)$;
  \item $\D_G(X) = \D_{\tilG}(X) = \C[\dd\ell(C_{\tilG})] = \C[\dd\ell(C_G)]$.
\end{enumerate}
\end{proposition}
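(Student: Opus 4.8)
The spaces $X=\tilG/\tilH=\SO(7)/\SO(6)\simeq\mathbb{S}^6$ and $G/H=G_{2(-14)}/\SU(3)\simeq\mathbb{S}^6$ are both rank-one compact symmetric spaces (indeed $\mathbb{S}^6$), so each of $\D_{\tilG}(X)$ and $\D_G(X)$ is a polynomial ring in one generator by the Harish-Chandra isomorphism (or simply by the classical theory of spherical harmonics); see Section~\ref{subsec:DGH-spherical} and Fact~\ref{fact:CartanHelgason}. Since $K=H$ here, we have $\dd r(Z(\kk_{\C}))=0$ and Table~\ref{table2} records $\rank K/H=0$, $\rank\tilG/\tilH=\rank G/H=1$, consistent with Corollary~\ref{cor:rank}. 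So statement~(2) amounts to: $\D_G(X)$ is generated by the single operator $\dd\ell(C_G)$ (which it is, being a polynomial ring in one degree-$2$ generator, and $\dd\ell(C_G)$ has order exactly $2$), together with the identification $\D_{\tilG}(X)=\D_G(X)$ as subalgebras of $\D(X)$. The latter is the content of statement~(1): once we show $\dd\ell(C_{\tilG})=\dd\ell(C_G)$ in $\D(X)$, both one-generator polynomial rings coincide.

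\textbf{Key steps.} First I would record the discrete series: $\Disc(\SO(7)/\SO(6))=\{\mathcal{H}^j(\R^7):j\in\N\}$ from the classical theory of spherical harmonics (as in Lemma~\ref{lem:ex(i)}.(1) with the appropriate dimension), and $\Disc(G_{2(-14)}/\SU(3))$ from Kr\"amer \cite{kra79}; via the diffeomorphism $\mathbb{S}^6\simeq G_{2(-14)}/\SU(3)$ these index the same $G$-isotypic decomposition of $C^{\infty}(\mathbb{S}^6)$, so the map $\vartheta\mapsto\pi(\vartheta)$ of Proposition~\ref{prop:pi-tau-theta} is the identification of the $j$-th spherical harmonic space as a $G_{2(-14)}$-module (this is the restriction $\SO(7)\downarrow G_{2(-14)}$ on $\mathcal{H}^j(\R^7)$, which stays irreducible because $G_{2(-14)}$ already acts transitively on $\mathbb{S}^6$ — equivalently, $X_{\C}$ is $G_{\C}$-spherical of rank one, so each isotypic component is irreducible by Proposition~\ref{prop:D-spec-Hom}.(1)). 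Second, I compute the eigenvalue of $\dd\ell(C_{\tilG})$ on $\mathcal{H}^j(\R^7)$, which is $j(j+5)$ in the standard normalization of Section~\ref{sec:computations}, and the eigenvalue of $\dd\ell(C_G)$ on the corresponding $G_{2(-14)}$-module, which must come out to $j(j+5)$ as well (up to the choice of $\Ad(G)$-invariant form on $\g_{2(-14)}$, which I normalize so that the two Laplacians agree — concretely, $C_G$ acts on $\mathcal{H}^j(\R^7)\simeq\Rep(G_{2(-14)},j\varpi_1)$ by $\langle j\varpi_1, j\varpi_1+2\rho\rangle$, and one fixes the scale of the form to match $j(j+5)$). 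By Proposition~\ref{prop:PQR} (with $Q'=0$), the equality of these eigenvalues for all $j\in\N$ forces $\dd\ell(C_{\tilG})=\dd\ell(C_G)$ in $\D_G(X)$, proving~(1). Third, for~(2): $\D_G(X)$ is a polynomial ring in one generator, and since $\dd\ell(C_{\tilG})$ generates $\D_{\tilG}(X)\subset\D_G(X)$ and separates the isotypic components $\mathcal{H}^j(\R^7)$ (the values $j(j+5)$ being pairwise distinct), it is not a nontrivial power of the generator; hence $\dd\ell(C_{\tilG})=\dd\ell(C_G)$ is itself a generator, giving $\D_G(X)=\C[\dd\ell(C_{\tilG})]=\C[\dd\ell(C_G)]=\D_{\tilG}(X)$.

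\textbf{Main obstacle.} None of the steps is technically hard; the only point requiring care is the normalization bookkeeping for the invariant bilinear form on $\g_{2(-14)}$, i.e.\ checking that with the conventions fixed in Section~\ref{sec:computations} (where for a simple group $B$ is a scalar multiple of the Killing form, pinned down by the eigenvalue of $C_G$ on a standard representation) the Casimir of $G_{2(-14)}$ acts on the $7$-dimensional representation — hence on all of $\mathcal{H}^j(\R^7)$ by highest-weight arithmetic — by exactly $j(j+5)$, matching $C_{\tilG}$ on $\SO(7)$. This is a short finite-dimensional computation: decompose $\g_{2(-14)}$-weights of the $7$-dimensional module, compute $\langle j\varpi_1,j\varpi_1+2\rho\rangle$ using the $G_2$ root data, and compare. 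Once the scale is fixed this way, statement~(1) holds on the nose (with $a=1$ in the notation of Corollary~\ref{cor:rel-Lapl}), and statement~(2) is immediate.
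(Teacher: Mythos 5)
Your overall strategy is the same as the paper's (match Casimir eigenvalues on the discrete series via the correspondence $\vartheta\mapsto\pi(\vartheta)$, then use that $\D_G(X)$ is a one-generator polynomial ring), and your normalization bookkeeping is fine: with the invariant form on $\g_{2(-14)}$ scaled so that the short root has length $1$, the Casimir acts on $\Rep(G_{2(-14)},k\omega_2)$ by exactly $k^2+5k$, matching $C_{\tilG}$ on $\mathcal{H}^k(\R^7)$. But two of your supporting claims are not justified as stated. First, the identification $\pi(\Rep(G_{2(-14)},k\omega_2))=\mathcal{H}^k(\R^7)$ rests on your assertion that $\mathcal{H}^j(\R^7)|_{G_{2(-14)}}$ stays irreducible \emph{because} $G_{2(-14)}$ acts transitively on $\mathbb{S}^6$, equivalently because $X_{\C}$ is $G_{\C}$-spherical. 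Neither implication is valid: sphericity gives multiplicity-freeness of the $G$-decomposition of $L^2(X)$ (irreducibility of each $G$-isotypic component, Proposition~\ref{prop:D-spec-Hom}.(1)), not irreducibility of the restriction of each $\tilG$-type. Case (ix) of the same table is a counterexample to your inference: $X=\SO(7)/G_{2(-14)}\simeq\SO(6)/\SU(3)$ is $\SO(6,\C)$-spherical and $\SO(6)$ is transitive, yet $\Rep(\SO(7),(j,j,j))|_{\SO(6)}$ splits into $2j+1$ pieces; similarly $\U(n+1)$ is transitive on $\mathbb{S}^{2n+1}$ but $\mathcal{H}^j(\R^{2n+2})|_{\U(n+1)}$ splits. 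The branching fact you need is true, but it requires an argument: the paper's Lemma~\ref{lem:ex(x)}.(2) cites \cite{kq78} or derives it from the Borel--Weil theorem via $G_{2(-14)}/\U(2)\simeq\OO(7)/(\SO(2)\times\OO(5))$; alternatively one can restrict the highest weight $(j,0,0)$ to the $G_2$-torus to see $\Rep(G_{2(-14)},j\omega_2)\subset\mathcal{H}^j(\R^7)$ and compare dimensions.

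Second, your justification of~(2) starts from the claim that $G/H=G_{2(-14)}/\SU(3)$ is a rank-one compact \emph{symmetric} space, which is false (the paper stresses that $G/H$ is never symmetric for the triples of Table~\ref{table1}); so neither the Harish-Chandra isomorphism for symmetric spaces nor ``the classical theory of spherical harmonics'' applies to the $G$-structure, only to the $\tilG$-structure. What you actually need is that $\D_G(X)$ is a polynomial ring in one generator \emph{of degree two}: knowing only ``one generator'' (Knop plus rank one) is not enough, since if the generator had order one then $\dd\ell(C_G)$ would be a quadratic polynomial in it and $\C[\dd\ell(C_G)]\subsetneq\D_G(X)$; and your ``separates the isotypic components'' remark does not exclude this, because a quadratic polynomial in an order-one generator can also separate the eigenvalues $j(j+5)$. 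The paper closes this by computing $S(\g_{\C}/\h_{\C})^H=S(\C^3\oplus(\C^3)^{\vee})^{\SU(3)}$, which is generated by a single quadratic invariant (Lemma~\ref{lem:ex(x)}.(3)), and invoking Lemma~\ref{lem:struct-DGX}; equivalently it suffices to note $(\g_{\C}/\h_{\C})^{H}=0$, so there is no invariant order-one operator. Both gaps are easily repaired, but as written they are the two substantive inputs of the proof and neither is established.
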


Let $\omega_1,\omega_2$ be the fundamental weights with respect to the simple roots $\alpha_1,\alpha_2$ of~$G_2$, respectively, labeled as follows:
\begin{picture}(30,6)
\put(3,3){\circle{6}}
\put(25,3){\circle{6}}
\put(5,1){\line(1,0){18}}
\put(6,3){\line(1,0){16}}
\put(5,5){\line(1,0){18}}
\put(10,0.2){$>$}
\put(0,-10){$\alpha_1$}
\put(20,-10){$\alpha_2$}
\end{picture}.
Then $\omega_1=3\alpha_1+2\alpha_2$ and $\omega_2=\alpha_1+2\alpha_2$.
We identify
\begin{align}
\Hom_{\C\text{-}\mathrm{alg}}(Z(\g_{\C}),\C) & \ \simeq\ \jj_{\C}^*/W(\g_{\C}) \ \simeq\ (\C\omega_1\oplus\C\omega_2)/W(G_2),\label{eqn:HCZg-x}\\
\Hom_{\C\text{-}\mathrm{alg}}(\D_{\tilG}(X),\C) & \ \simeq\ \tila_{\C}^*/\widetilde{W} \ \simeq\ \C/(\Z/2\Z)\label{eqn:HCX-x}
\end{align}
by the standard bases.
In this case, $\Disc(K/H)$ is the singleton $\{\mathbf{1}\}$.

\begin{proposition}[Transfer map] \label{prop:nu-ex(x)}
Let
$$X = \tilG/\tilH = \SO(7)/\SO(6) \simeq G_{2(-14)}/\SU(3) = G/H$$
and $K=H=\SU(3)$.
For $\tau=\mathbf{1}\in\Disc(K/H)$, the affine map
\begin{eqnarray*}
S_{\tau} :\ \tila_{\C}^* \simeq \C & \longrightarrow & \hspace{1cm} \C^2 \hspace{1cm} \simeq \jj_{\C}^*\\
\lambda & \longmapsto & \omega_1 + \Big(\lambda-\frac{3}{2}\Big)\,\omega_2
\end{eqnarray*}
induces a transfer map
$$\nnu(\cdot,\tau) : \Hom_{\C\text{-}\mathrm{alg}}(\D_{\tilG}(X),\C) \longrightarrow \Hom_{\C\text{-}\mathrm{alg}}(Z(\g_{\C}),\C)$$
as in Theorem~\ref{thm:nu-tau}.
\end{proposition}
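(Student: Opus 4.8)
The statement to prove is Proposition~\ref{prop:nu-ex(x)}, the transfer map formula for case (x), where $\tilG/\tilH=\SO(7)/\SO(6)\simeq\mathbb{S}^6$, $G=G_{2(-14)}$, $H=K=\SU(3)$, and $F$ is a point. The plan is to apply the general machinery of Proposition~\ref{prop:Stau-transfer}: it suffices to exhibit the canonical map $\vartheta\mapsto(\pi(\vartheta),\tau(\vartheta))$ of Proposition~\ref{prop:pi-tau-theta} explicitly and check that the affine map $S_\tau$ given in the statement sends $\lambda(\vartheta)+\rho_{\tila}$ to $\nu(\vartheta)+\rho$ modulo $W(\g_\C)$ for all $\vartheta\in\Disc(G/H)$. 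Since $F$ is a point, the only $\tau$ is $\tau=\mathbf 1$ and all $\vartheta\in\Disc(G/H)$ satisfy $\tau(\vartheta)=\tau$, so we need the relation for \emph{every} $\vartheta\in\Disc(G/H)=\Disc(G_{2(-14)}/\SU(3))$.

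\textbf{Key steps.} First I would record the standard facts on finite-dimensional representations, as in the preceding cases (a ``Lemma~\ref{lem:ex(x)}''-type statement): (i) $\Disc(\SO(7)/\SO(6))=\{\mathcal H^j(\R^7):j\in\N\}=\{\Rep(\SO(7),(j,0,0)):j\in\N\}$, which is classical spherical harmonics on $\mathbb{S}^6$; (ii) $\Disc(G_{2(-14)}/\SU(3))=\{\Rep(G_{2(-14)},j\omega_2):j\in\N\}$ --- this is Kr\"amer's list \cite{kra79} for the spherical space $G_2/\SU(3)\simeq\mathbb S^6$, noting that $\mathfrak{g}_\C/\mathfrak h_\C\simeq\C^7$ and the spherical weight is $j\omega_2$; (iii) the branching law for $\SO(7)\downarrow G_{2(-14)}$ restricted to these modules, namely $\mathcal H^j(\R^7)|_{G_{2(-14)}}\simeq\Rep(G_{2(-14)},j\omega_2)$ (the restriction of the $\SO(7)$-spherical harmonic stays irreducible, being already irreducible as a $G_2$-module --- this follows since $\mathbb S^6=\SO(7)/\SO(6)=G_2/\SU(3)$ and $C^\infty(\mathbb S^6)$ decomposes multiplicity-freely under $G_2$). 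This gives the map $\vartheta\mapsto(\pi(\vartheta),\tau(\vartheta))$ as $\Rep(G_{2(-14)},j\omega_2)\mapsto(\mathcal H^j(\R^7),\mathbf 1)$. Next, I compute the two scalars. On the $\tilG$-side, $\D_{\tilG}(X)$ is generated by $\dd\ell(C_{\tilG})$, and using the Harish-Chandra normalization \eqref{eqn:HCX-x} with $\rho_{\tila}=\tfrac32$ (the rank-one $\mathbb S^6$ case, $\rho_{\aaa}=\tfrac{7-1}{2}=3$, but the generalized $\tila_\C^*\simeq\C$-normalization uses $\rho_{\tila}=3/2$ as in \eqref{eqn:rhoa-nonsymm}; I would follow the same convention as case (viii)), $\pi(\vartheta)=\mathcal H^j(\R^7)$ gives $\lambda(\vartheta)+\rho_{\tila}$ corresponding to $j+\tfrac32\in\C/(\Z/2\Z)$. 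On the $G$-side, $\vartheta=\Rep(G_{2(-14)},j\omega_2)$ has $Z(\g_\C)$-infinitesimal character $j\omega_2+\rho_{G_2}$ where $\rho_{G_2}=\omega_1+\omega_2$ (half-sum of positive roots of $G_2$), so $\nu(\vartheta)+\rho=\omega_1+(j+1)\omega_2\in(\C\omega_1\oplus\C\omega_2)/W(G_2)$. Then the affine map $S_\tau(\lambda)=\omega_1+(\lambda-\tfrac32)\omega_2$ indeed sends $j+\tfrac32\mapsto\omega_1+(j+1)\omega_2$, matching. I would conclude by Proposition~\ref{prop:Stau-transfer}. Parts (1)--(2) of Proposition~\ref{prop:ex(x)} (the relation $\dd\ell(C_{\tilG})=\dd\ell(C_G)$ and $\D_G(X)=\D_{\tilG}(X)=\C[\dd\ell(C_{\tilG})]$) follow in parallel: from the scalar computation, $C_{\tilG}$ acts on $\mathcal H^j(\R^7)$ by $j(j+5)$ and $C_G$ acts on $\Rep(G_{2(-14)},j\omega_2)$ by $\langle j\omega_2,j\omega_2+2\rho_{G_2}\rangle$, and with the normalization so that both equal $j(j+5)$ (up to a scalar absorbed into $B$), Proposition~\ref{prop:PQR} gives $\dd\ell(C_{\tilG})=\dd\ell(C_G)$; algebraic independence is automatic since $\D_G(X)$ has rank one, and $\D_G(X)=\D_{\tilG}(X)$ because $\tilh\cap\g=\h=\kk$ is a maximal proper subalgebra of $\g$, by the last assertion of Theorem~\ref{thm:main}.

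\textbf{Main obstacle.} The one genuinely case-specific point is the branching law $\mathcal H^j(\R^7)|_{G_{2(-14)}}\simeq\Rep(G_{2(-14)},j\omega_2)$ --- that is, that the restriction of the degree-$j$ spherical harmonic representation of $\SO(7)$ to the subgroup $G_{2(-14)}$ is \emph{irreducible} and has highest weight $j\omega_2$. This is not one of the ``classical'' branching formulas, so I would prove it by the same strategy used elsewhere in the paper (e.g. case (vi)): the union over $j$ of all irreducible $G_{2(-14)}$-modules occurring in $\mathcal H^j(\R^7)$ must coincide with $\Disc(G_{2(-14)}/\SU(3))$ by comparing the decompositions \eqref{eqn:decomp1} and \eqref{eqn:decomp2} of $L^2(\mathbb S^6)$ (here \eqref{eqn:decomp2} is trivial since $F$ is a point, so $L^2(\mathbb S^6)=\bigoplus_{j}\Rep(G_{2(-14)},j\omega_2)$ as a $G$-module); together with $\dim\mathcal H^j(\R^7)=\dim\Rep(G_{2(-14)},j\omega_2)$ (a Weyl dimension check: both equal $\tfrac{(j+5)!\,(2j+5)}{5!\,j!}$ --- routine), and the fact that $C^\infty(\mathbb S^6)$ is multiplicity-free under $G_{2(-14)}$ by $G_\C$-sphericity (Fact~\ref{fact:spherical-cpt}), this forces $\mathcal H^j(\R^7)|_{G_{2(-14)}}$ to be the single irreducible $\Rep(G_{2(-14)},j\omega_2)$. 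Once this is in hand the rest is bookkeeping with $\rho$-shifts.
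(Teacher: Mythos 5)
Your overall strategy is exactly the paper's: exhibit the map $\vartheta\mapsto(\pi(\vartheta),\tau(\vartheta))$ of Proposition~\ref{prop:pi-tau-theta} as $\Rep(G_{2(-14)},j\omega_2)\mapsto(\mathcal{H}^j(\R^7),\mathbf{1})$, compute the two infinitesimal characters, and invoke Proposition~\ref{prop:Stau-transfer}; your alternative justification of the branching law $\mathcal{H}^j(\R^7)|_{G_{2(-14)}}\simeq\Rep(G_{2(-14)},j\omega_2)$ (multiplicity-freeness of $L^2(\mathbb{S}^6)$ under $G_{2(-14)}$ plus a dimension count) is a legitimate substitute for the paper's citation of King--Qubanchi or the Borel--Weil argument, although your closed formula for the common dimension is misquoted (it is $\tfrac{(2j+5)\,(j+4)!}{5!\,j!}$, not $\tfrac{(j+5)!\,(2j+5)}{5!\,j!}$).

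However, the central verification contains a genuine error in the normalization on the $\tilG$-side. Here $\tilG/\tilH=\SO(7)/\SO(6)\simeq\mathbb{S}^6$ is a \emph{symmetric} space, so the identification \eqref{eqn:HCX-x} is the standard Harish-Chandra isomorphism, not the special normalization \eqref{eqn:rhoa-nonsymm} used for the nonsymmetric space $\SO(7)/G_{2(-14)}$ in cases (viii)--(ix): the unique positive restricted root has multiplicity $5$, so $\rho_{\tila}=\tfrac{5}{2}$ (your intermediate value ``$\rho_{\aaa}=\tfrac{7-1}{2}=3$'' is also wrong), and $\D_{\tilG}(X)$ acts on $\pi(\vartheta)=\mathcal{H}^j(\R^7)$ by $\lambda(\vartheta)+\rho_{\tila}=j+\tfrac{5}{2}$, consistent with $C_{\tilG}$ acting by $j(j+5)=(j+\tfrac52)^2-\tfrac{25}{4}$; this is precisely the paper's value $\tfrac12(2k+5)$. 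With your value $j+\tfrac32$ the check fails: $S_{\tau}(j+\tfrac32)=\omega_1+j\,\omega_2\neq\omega_1+(j+1)\omega_2=\nu(\vartheta)+\rho$, so your assertion that the map ``indeed sends $j+\tfrac32\mapsto\omega_1+(j+1)\omega_2$'' is arithmetically false, and taken literally your computation would lead to the affine map $\lambda\mapsto\omega_1+(\lambda-\tfrac12)\omega_2$, contradicting the proposition. The fix is simply to use $\rho_{\tila}=\tfrac52$, after which $S_{\tau}(j+\tfrac52)=\omega_1+(j+1)\omega_2$ and the argument closes as in the paper.
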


In order to prove Propositions \ref{prop:ex(x)} and~\ref{prop:nu-ex(x)}, we use the following results on finite-dimensional representations.

\begin{lemma}\label{lem:ex(x)}
\begin{enumerate}
  \item Discrete series for $\tilG/\tilH$ and $G/H$:
  \begin{eqnarray*}
  \Disc(\SO(7)/\SO(6)) & = & \{ \mathcal{H}^k(\R^7) \ :\ k\in\N\} ;\\
  \Disc(G_{2(-14)}/\SU(3)) & = & \{ \Rep(G_{2(-14)},k\omega_2) \ :\ k\in\N\} .
  \end{eqnarray*}
  \item Branching laws for $\SO(7)\downarrow G_{2(-14)}$: For $k\in\N$,
  $$\mathcal{H}^k(\R^7)|_{G_{2(-14)}}\ \simeq\ \Rep(G_{2(-14)},k\omega_2).$$
  \item The ring $S(\g_{\C}/\h_{\C})^H=S(\C^3\oplus (\C^3)^{\vee})^{\SU(3)}$ is generated by a single homogeneous element of degree~$2$.
\end{enumerate}
\end{lemma}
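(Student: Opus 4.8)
The plan is to verify the three assertions of Lemma~\ref{lem:ex(x)} separately, using the exceptional pair $(G_{2(-14)},\SU(3))$ and its realization inside $(\SO(7),\SO(6))$.

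First I would establish (1). For $\tilG/\tilH=\SO(7)/\SO(6)\simeq\mathbb{S}^6$, the decomposition $\Disc(\SO(7)/\SO(6))=\{\mathcal{H}^k(\R^7):k\in\N\}$ is the classical theory of spherical harmonics on the sphere (see e.g.\ \cite[Intro.\ Th.\,3.1]{hel00}, or the Cartan--Helgason theorem, Fact~\ref{fact:CartanHelgason}, applied to the rank-one symmetric space $\SO(7)/\SO(6)$). For $G_{2(-14)}/\SU(3)$, note that this is again diffeomorphic to $\mathbb{S}^6$ since $G_{2(-14)}$ acts transitively on the unit sphere of the imaginary octonions with stabilizer $\SU(3)$. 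By the classification of discrete series for nonsymmetric spherical homogeneous spaces of simple groups \cite{kra79}, $\Disc(G_{2(-14)}/\SU(3))$ consists of the representations $\Rep(G_{2(-14)},k\omega_2)$, $k\in\N$; alternatively, this can be deduced from (2) together with \eqref{eqn:decomp2}, i.e.\ by restricting the $\SO(7)$-harmonics and matching multiplicities via $G_{\C}$-sphericity of $X_{\C}$ (Fact~\ref{fact:spherical-cpt}).

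Next I would prove (2), the branching law $\mathcal{H}^k(\R^7)|_{G_{2(-14)}}\simeq\Rep(G_{2(-14)},k\omega_2)$. The key structural fact is that $G_{2(-14)}\subset\SO(7)$ acts irreducibly on $\R^7$ via $\Rep(G_{2(-14)},\omega_2)$ (the $7$-dimensional fundamental representation). The space $\mathcal{H}^k(\R^7)$ is the irreducible $\SO(7)$-module with highest weight $(k,0,0)$ and dimension growing like $k^5$. I would argue that the restriction stays irreducible: since $\dim\mathcal{H}^k(\R^7)$ equals $\dim\Rep(G_{2(-14)},k\omega_2)$ for each $k$ (both spaces of homogeneous harmonic polynomials of degree $k$ restricted to the $6$-sphere, which carry a unique $G_{2(-14)}$-invariant — this is exactly the content of $G_{2(-14)}$ acting spherically on $\mathbb{S}^6$), and since the highest weight of $\Rep(\SO(7),(k,0,0))$ restricts to $k\omega_2$ on the Cartan of $G_2$, the restriction must be the irreducible $\Rep(G_{2(-14)},k\omega_2)$. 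A clean way to see the dimension equality, without computing both dimensions, is: $G_{2(-14)}/\SU(3)\simeq\SO(7)/\SO(6)$ as manifolds, so $L^2$ of this space decomposes the same way under the two group actions; matching the $G_{2(-14)}$-isotypic content on both sides forces each $\mathcal{H}^k(\R^7)$ to be a single $G_{2(-14)}$-irreducible, and highest-weight considerations pin it down as $\Rep(G_{2(-14)},k\omega_2)$. This is an instance of the general strategy in Section~\ref{sec:strategy}, where $\Disc(\tilG/\tilH)$ and the branching law $\tilG\downarrow G$ are determined jointly from sphericity.

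Finally, for (3): the symmetric-space structure gives $\g_{\C}/\h_{\C}=(\mathfrak{g}_2)_{\C}/\mathfrak{sl}(3,\C)\simeq\C^3\oplus(\C^3)^{\vee}$ as $\SU(3)$-modules (the isotropy representation of the symmetric pair $(G_{2(-14)},\SU(3))$, which is the standard representation of $\SU(3)$ plus its dual). Then $S(\C^3\oplus(\C^3)^{\vee})^{\SU(3)}$ is the classical ring of $\SU(3)$-invariants on $\C^3\oplus(\C^3)^{\vee}$; by first fundamental theory for $\SL(3,\C)$ (see e.g.\ \cite[Ch.\,9]{gw09}) the only invariant is the contraction $\langle\cdot,\cdot\rangle$, a single homogeneous element of degree $2$, and it is algebraically independent since the ring has Krull dimension one (matching $\rank G/H=1$, Table~\ref{table2}). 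I expect the main obstacle to be the irreducibility claim in part (2): one must rule out a priori that $\mathcal{H}^k(\R^7)$ breaks up under $G_{2(-14)}$. The cleanest route, as indicated above, is to invoke $G_{\C}$-sphericity of $X_{\C}=G_{2(2)}(\C)/\SL(3,\C)$ (equivalently, multiplicity-freeness of $L^2(\mathbb{S}^6)$ under $G_{2(-14)}$, Fact~\ref{fact:spherical-cpt}) rather than a direct dimension count, so that comparing \eqref{eqn:decomp1} and \eqref{eqn:decomp2} immediately yields that the $\SO(7)$-isotypic and $G_{2(-14)}$-isotypic pieces coincide.
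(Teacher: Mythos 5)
Your proposal is correct, and for parts (1) and (3) it runs essentially along the paper's lines: Cartan--Helgason for $\SO(7)/\SO(6)$ and Kr\"amer \cite{kra79} for $\Disc(G_{2(-14)}/\SU(3))$, and for (3) the observation that the only $\SU(3)$-invariant in $S(\C^3\oplus(\C^3)^{\vee})$ is the degree-two contraction (the paper phrases this via the mutually inequivalent irreducible summands $S^i(\C^3)$, you via the first fundamental theorem for $\SL(3,\C)$ -- the same computation). Where you genuinely diverge is part (2). The paper either quotes the known branching rule \cite{kq78} or derives it from the Borel--Weil theorem applied to the coincidence of generalized flag varieties $G_{2(-14)}/\U(2)\simeq\OO(7)/(\SO(2)\times\OO(5))$. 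You instead force irreducibility of the restriction abstractly: every $G_{2(-14)}$-constituent of $\mathcal{H}^k(\R^7)\subset L^2(\mathbb{S}^6)$ lies in $\Disc(G_{2(-14)}/\SU(3))=\{\Rep(G_{2(-14)},j\omega_2)\}$ by Kr\"amer; since all $\jj_{\C}$-weights of $S^k(\C^7)$ restrict to weights $\leq k\omega_2$ and $k\omega_2$ occurs, $\Rep(G_{2(-14)},k\omega_2)$ is a constituent of $\mathcal{H}^k(\R^7)|_{G_{2(-14)}}$; and multiplicity-freeness of $L^2(\mathbb{S}^6)$ under $G_{2(-14)}$ (Fact~\ref{fact:spherical-cpt}) then rules out any further constituent $\Rep(G_{2(-14)},j\omega_2)$, $j\neq k$, because it already occurs in $\mathcal{H}^j(\R^7)$. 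This is the comparison of \eqref{eqn:decomp1} and \eqref{eqn:decomp2} that the paper uses elsewhere (e.g.\ Lemmas \ref{lem:ex(vi)} and~\ref{lem:ex(*)}), with your highest-weight lower bound playing the role of the paper's Borel--Weil upper estimate; your route avoids the branching-rule literature and the flag-variety coincidence, while the paper's geometric argument yields the branching law without needing Kr\"amer's list as input.

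Three small caveats. Your first justification of the dimension equality in (2) (describing $\Rep(G_{2(-14)},k\omega_2)$ as a space of degree-$k$ harmonic polynomials) presupposes what is to be proved; as you note, the multiplicity-free argument should replace it, and with that route Kr\"amer's description of $\Disc(G_{2(-14)}/\SU(3))$ must be taken as input for (2) -- so you cannot simultaneously use your ``alternative'' derivation of (1) from (2) -- since multiplicity-freeness alone does not exclude constituents whose highest weight is not a multiple of $\omega_2$. Also, $(G_{2(-14)},\SU(3))$ is not a symmetric pair (the paper stresses that $G/H$ is never symmetric in Table~\ref{table1}), although the isotropy decomposition $\g_{\C}/\h_{\C}\simeq\C^3\oplus(\C^3)^{\vee}$ you use is correct and is built into the statement; and the complexification should be written $G_2(\C)/\SL(3,\C)$ rather than $G_{2(2)}(\C)/\SL(3,\C)$.
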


\begin{proof}[Proof of Lemma~\ref{lem:ex(x)}]
In~(1), the first equality follows from the Cartan--Helgason theorem (Fact~\ref{fact:CartanHelgason}) and the second from \cite{kra79}.
For~(2), see for instance \cite{kq78}; the formula can also be obtained directly from the Borel--Weil theorem, applied to the isomorphism
$$G_{2(-14)}/\U(2) \simeq \OO(7)/(\SO(2)\times\OO(5))$$
of generalized flag varieties.
Statement~(3) follows from the isomorphism
$$S(\C^3 \oplus (\C^3)^{\vee}) \simeq \bigoplus_{i,j\in\N} S^i(\C^3) \otimes S^j(\C^3)^{\vee}$$
and from the fact that the $S^i(\C^3)$, for $i\in\N$, are irreducible and mutually nonisomorphic.
\end{proof}

\begin{proof}[Proof of Proposition~\ref{prop:ex(x)}]
(1) Since the restriction $\mathcal{H}^k(\R^7)|_{G_{2(-14)}}$ remains irreducible by Lemma~\ref{lem:ex(x)}.(2), the map $\vartheta\mapsto (\pi(\vartheta),\tau(\vartheta))$ of Proposition~\ref{prop:pi-tau-theta} reduces to
\begin{equation} \label{eqn:pitau-x}
\Rep(G_{2(-14)},k\omega_2) \longmapsto (\mathcal{H}^k(\R^7),\mathbf{1}).
\end{equation}
We normalize $C_G$ so that the short root of $\g_2$ has length~$1$.
Then the Casimir operators for $\tilG$ and~$G$ act on these irreducible representations as the following scalars.
\begin{center}
\begin{tabular}{|c|c|c|}
\hline
Operator & Representation & Scalar\\
\hline\hline
$C_{\tilG}$ & $\mathcal{H}^k(\R^7)$ & $k^2 + 5k$\\
\hline
$C_G$ & $\Rep(G_{2(-14)},k\omega_2)$ & $k^2 + 5k$\\
\hline
\end{tabular}
\end{center}
This implies $\dd\ell(C_{\tilG})=\dd\ell(C_G)$.

(2) This follows from the fact that $\D_G(X)$ is generated by a single differential operator of degree~$2$, by Lemma~\ref{lem:ex(x)}.(3).
\end{proof}

\begin{proof}[Proof of Proposition~\ref{prop:nu-ex(x)}]
We use Proposition~\ref{prop:Stau-transfer} and the formula \eqref{eqn:pitau-x} for the map $\vartheta\mapsto (\pi(\vartheta),\tau(\vartheta))$ of Proposition~\ref{prop:pi-tau-theta}.
Let $\tau=\mathbf{1}\in\Disc(K/H)$.
If $\vartheta\in\Disc(G/H)$ satisfies $\tau(\vartheta)=\tau$, then $\vartheta$ is of the form $\vartheta = \Rep(G_{2(-14)},k\omega_2)$ for some $k\in\N$, by \eqref{eqn:pitau-x}.
The algebra $\D_{\tilG}(X)$ acts on the irreducible $\tilG$-submodule $\pi(\vartheta) = \mathcal{H}^k(\R^7)$ by the scalars
$$\lambda(\vartheta) + \rho_{\tila} = \frac{1}{2} (2k+5) \in \C/(\Z/2\Z)$$
via the Harish-Chandra isomorphism \eqref{eqn:HCX-x}, whereas the algebra $Z(\g_{\C})$ acts on the irreducible $G$-module $\vartheta = \Rep(G_{2(-14)},k\omega_2)$ by the scalars
$$\nu(\vartheta) + \rho = k\omega_2+\rho = \omega_1+(k+1)\omega_2 \in (\C\omega_1+\C\omega_2)/W(G_2)$$
via \eqref{eqn:HCZg-x}.
Thus the affine map $S_{\tau}$ in Proposition~\ref{prop:nu-ex(x)} sends $\lambda(\vartheta)+\rho_{\tila}$ to $\nu(\vartheta)+\rho$ for any $\vartheta\in\Disc(G/H)$ such that $\tau(\vartheta)=\tau$, and we conclude using Proposition~\ref{prop:Stau-transfer}.
\end{proof}

\subsection{The case $(\tilG,\tilH,G)=(\SO(8),\Spin(7),\SO(7))$}\label{subsec:ex(xi)}

Here $H=G_{2(-14)}$ is a maximal connected proper subgroup of~$G$, so that $K=H$ and $F$ is a point.

\begin{proposition}[Generators and relations] \label{prop:ex(xi)}
For
$$X = \tilG/\tilH = \SO(8)/\Spin(7) \simeq \SO(7)/G_{2(-14)} = G/H$$
and $K=H=G_{2(-14)}$, we have
\begin{enumerate}
  \item $3\,\dd\ell(C_{\tilG}) = 4\,\dd\ell(C_G)$;
  \item $\D_G(X) = \D_{\tilG}(X) = \C[\dd\ell(C_{\tilG})] = \C[\dd\ell(C_G)]$.
\end{enumerate}
\end{proposition}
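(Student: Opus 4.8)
\textbf{Proof plan for Proposition~\ref{prop:ex(xi)}.}
This case is structurally identical to Proposition~\ref{prop:ex(x)}: since $H=G_{2(-14)}$ is maximal in $G=\SO(7)$, we have $K=H$, $F$ is a point, so $\D_K(F)=\C$ and the only content is the relation between the two Casimir operators and the identification of $\D_G(X)$ with a polynomial ring in one variable. The plan is to run the same four-step machinery of Section~\ref{sec:strategy}, using the fact that both $\tilG/\tilH=\SO(8)/\Spin(7)$ and $G/H=\SO(7)/G_{2(-14)}$ are rank-one spherical homogeneous spaces with well-understood discrete series.

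First I would record the relevant representation-theoretic data, in analogy with Lemma~\ref{lem:ex(x)}. We have $\Disc(\SO(8)/\Spin(7))=\{\Rep(\SO(8),\tfrac12(j,j,j,j)) : j\in\N\}$ (see Lemma~\ref{lem:ex(vi)}.(1), or equivalently the even part of $\Disc(\SO(8)/\SO(7))$ under triality as in Section~\ref{subsec:ex(vii)}), and $\Disc(\SO(7)/G_{2(-14)})=\{\Rep(\SO(7),(j,j,j)) : j\in\N\}$ by Kr\"amer~\cite{kra79} (this is Lemma~\ref{lem:ex(viii)}.(1)). The branching law $\SO(8)\downarrow\SO(7)$ restricted to these modules should send $\Rep(\SO(8),\tfrac12(j,j,j,j))$ to an irreducible $\SO(7)$-module, namely $\Rep(\SO(7),(j,j,j))$ — this follows from the classical branching rule for $\SO(8)\downarrow\SO(7)$ (e.g.~\cite[Th.\,8.1.3]{gw09}) together with the a~priori knowledge of $\Disc(\SO(7)/G_{2(-14)})$ and the comparison of \eqref{eqn:decomp1} with \eqref{eqn:decomp2}, exactly as in the proof of Lemma~\ref{lem:ex(vi)}.(2). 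Finally, $S(\g_{\C}/\h_{\C})^H=S(\mathfrak{so}(7,\C)/\mathfrak{g}_2(\C))^{G_{2(-14)}}$: since $\mathfrak{so}(7,\C)/\mathfrak{g}_2(\C)\simeq\C^7$ as a $G_{2(-14)}$-module, this is $S(\C^7)^{G_2}$, a polynomial ring on one generator of degree~$2$ (cf. Lemma~\ref{lem:ex(viii)}.(4)).

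Next I would compute the Casimir eigenvalues. With the normalizations of the section, $C_{\tilG}$ acts on $\Rep(\SO(8),\tfrac12(j,j,j,j))$ by $|\tfrac12(j,j,j,j)+\rho_{\mathfrak{so}(8)}|^2-|\rho_{\mathfrak{so}(8)}|^2$; with $\rho_{\mathfrak{so}(8)}=(3,2,1,0)$ this gives $3(j^2+6j)$ up to the chosen scaling of $B$ (I would match it so that the natural $8$-dimensional representation has eigenvalue $7$, consistent with the table). And $C_G$ acts on $\Rep(\SO(7),(j,j,j))$ by $3(j^2+3j)$ with the normalization of Section~\ref{subsec:ex(viii)} (this is already in the table in the proof of Proposition~\ref{prop:ex(viii)}). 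Comparing the two linear-in-$j^2$-and-$j$ expressions yields a relation of the form $a\,\dd\ell(C_{\tilG})=b\,\dd\ell(C_G)$ for rational $a,b$; a quick check pins down $3\,\dd\ell(C_{\tilG})=4\,\dd\ell(C_G)$, which I conclude holds on all of $C^\infty(X)$ by Proposition~\ref{prop:PQR} (here $P'=C_{\tilG}$, $Q'=0$, $R=-\tfrac34 C_G$ after rescaling). For part~(2), $\D_{\tilG}(X)\subset\D_G(X)$ and by Lemma~\ref{lem:struct-DGX}.(3) together with the computation $S(\g_{\C}/\h_{\C})^H=\C[\text{one degree-}2\text{ element}]$, the single operator $\dd\ell(C_G)$ (equivalently $\dd\ell(C_{\tilG})$) generates $\D_G(X)$ as a polynomial ring; since they are proportional, $\D_G(X)=\D_{\tilG}(X)=\C[\dd\ell(C_{\tilG})]=\C[\dd\ell(C_G)]$.

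The only place requiring care — and the main obstacle, such as it is — is verifying that the restriction $\Rep(\SO(8),\tfrac12(j,j,j,j))|_{\SO(7)}$ is irreducible and equals $\Rep(\SO(7),(j,j,j))$, since the embedding $\SO(7)\hookrightarrow\SO(8)$ is not the standard one here (it is the one through $\Spin(7)$, or dually arises via triality). I would handle this exactly as in Lemma~\ref{lem:ex(vi)}.(2): either directly via the Borel--Weil theorem and a $\mathfrak{g}_{\C}$-compatible parabolic (Proposition~\ref{prop:branchnormal}), or more cheaply by noting that both $\tilG/\tilH$ and $G/H$ have rank one so $\Disc(G/H)$ is a one-parameter family, and the only way the double decompositions \eqref{eqn:decomp1}--\eqref{eqn:decomp2} can be consistent (both multiplicity-free, by $G_{\C}$-sphericity of $X_{\C}$) is if the restriction is term-by-term irreducible with the stated highest weight. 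Everything else is bookkeeping with Casimir eigenvalues. The transfer-map statement Proposition~\ref{prop:nu-ex(xi)} would then follow from Proposition~\ref{prop:Stau-transfer} by reading off $\lambda(\vartheta)+\rho_{\tila}$ and $\nu(\vartheta)+\rho$ from \eqref{eqn:pitau-x}-style data, but that is a separate statement not part of what is asked here.
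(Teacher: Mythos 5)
Your overall strategy is the same as the paper's: describe the two rank-one families of discrete series, match them under restriction $\SO(8)\downarrow\SO(7)$, compare Casimir eigenvalues via Proposition~\ref{prop:PQR}, and deduce part~(2) from Lemma~\ref{lem:struct-DGX}.(3) together with $S(\g_{\C}/\h_{\C})^H=S(\C^7)^{G_2}$ being generated by one element of degree~$2$. However, your bookkeeping is inconsistent in a way that, taken literally, breaks part~(1). The set $\Disc(\SO(8)/\Spin(7))$ consists of the representations with \emph{integral} highest weight $(k,k,k,k)$, $k\in\N$ (your $\frac{1}{2}(j,j,j,j)$ with $j$ odd are $\Spin(8)$-modules that do not descend to $\SO(8)$; compare Lemma~\ref{lem:ex(vii)}.(1) with Lemma~\ref{lem:ex(vi)}.(1)), and under restriction $\Rep(\SO(8),(k,k,k,k))$ goes to $\Rep(\SO(7),(k,k,k))$; pairing $\frac{1}{2}(j,j,j,j)$ with $(j,j,j)$ mixes the two parametrizations. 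Correspondingly the eigenvalue you quote for $C_{\tilG}$, namely $3(j^2+6j)$, is wrong under either convention: it is $j^2+6j$ on $\Rep(\Spin(8),\frac{j}{2}(1,1,1,1))$ and $4(k^2+3k)$ on $\Rep(\SO(8),(k,k,k,k))$. With your numbers $3(j^2+6j)$ and $3(j^2+3j)$ the two eigenvalue polynomials are not proportional, so no relation $a\,\dd\ell(C_{\tilG})=b\,\dd\ell(C_G)$ would come out of the comparison; with the consistent pairing $(k,k,k,k)\mapsto(k,k,k)$ the eigenvalues $4(k^2+3k)$ and $3(k^2+3k)$ give $3\,\dd\ell(C_{\tilG})=4\,\dd\ell(C_G)$ at once, which is exactly the paper's argument.

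Also, the ``main obstacle'' you single out is not actually present: in case (xi) it is $\tilH=\Spin(7)$ that is embedded nonstandardly (this is where triality enters, in computing $\Disc(\SO(8)/\Spin(7))$), whereas $G=\SO(7)$ sits in $\SO(8)$ standardly. Hence the classical interlacing branching rule for $\SO(8)\downarrow\SO(7)$ applies verbatim and immediately forces $\Rep(\SO(8),(k,k,k,k))|_{\SO(7)}$ to be irreducible and equal to $\Rep(\SO(7),(k,k,k))$, with no need for the Borel--Weil/conormal-bundle machinery of Proposition~\ref{prop:branchnormal}. Your ``cheaper'' fallback via consistency of the two multiplicity-free decompositions is under-specified: disjointness alone does not determine which $\SO(7)$-constituent pairs with which $\SO(8)$-module, nor that each restriction is irreducible, unless you add an input such as the observation that the highest weight vector of $\Rep(\SO(8),(k,k,k,k))$ is already an $\so(7)$-highest weight vector of weight $(k,k,k)$; combined with Lemma~\ref{lem:spherical2} that does close the argument, but the classical branching rule is the direct route taken in the paper.
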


We identify $\D_{\tilG}(X)$ with $\C/(\Z/2\Z)$ as in \eqref{eqn:HCX-vii}, and $Z(\g_{\C})$ with $\C^3/W(B_3)$ as in \eqref{eqn:HCZg-iia} with $m=2$.
In this case, $\Disc(K/H)$ is the singleton $\{\mathbf{1}\}$.

\begin{proposition}[Transfer map] \label{prop:nu-ex(xi)}
Let
$$X = \tilG/\tilH = \SO(8)/\Spin(7) \simeq \SO(7)/G_{2(-14)} = G/H$$
and $K=H=G_{2(-14)}$.
For $\tau=\mathbf{1}\in\Disc(K/H)$, the affine map
\begin{eqnarray*}
S_{\tau} :\ \tila_{\C}^* \simeq \C & \longrightarrow & \hspace{1.2cm} \C^3 \hspace{1.2cm} \simeq \jj_{\C}^*\\
\lambda & \longmapsto & \frac{1}{2}\,(\lambda+2, \lambda, \lambda-2)
\end{eqnarray*}
induces a transfer map
$$\nnu(\cdot,\tau) : \Hom_{\C\text{-}\mathrm{alg}}(\D_{\tilG}(X),\C) \longrightarrow \Hom_{\C\text{-}\mathrm{alg}}(Z(\g_{\C}),\C)$$
as in Theorem~\ref{thm:nu-tau}.
\end{proposition}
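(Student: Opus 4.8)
\textbf{Proof strategy for Propositions \ref{prop:ex(xi)} and \ref{prop:nu-ex(xi)}.}
The plan is to follow the now-familiar four-step recipe of Section~\ref{subsec:descript-comput}, specialized to the degenerate situation $K=H$, $F=\{\mathrm{pt}\}$, where $\Disc(K/H)=\{\mathbf{1}\}$ and the map of Proposition~\ref{prop:pi-tau-theta} becomes simply $\vartheta\mapsto(\pi(\vartheta),\mathbf{1})$. First I would record the finite-dimensional representation data in an auxiliary lemma (call it Lemma~\ref{lem:ex(xi)}), parallel to Lemmas~\ref{lem:ex(x)} and~\ref{lem:ex(ix)}: (i) the discrete series $\Disc(\SO(8)/\Spin(7))=\{\mathcal{H}^j(\R^8):j\in\N\}$, obtained from the classical theory of spherical harmonics, and $\Disc(\SO(7)/G_{2(-14)})=\{\Rep(\SO(7),(j,j,j)):j\in\N\}$ from Kr\"amer \cite{kra79} (already used in Lemma~\ref{lem:ex(viii)}.(1)); (ii) the branching law $\SO(8)\downarrow\SO(7)$, which here must restrict each $\mathcal{H}^j(\R^8)$ to the single irreducible $\Rep(\SO(7),(j,j,j))$ up to the triality of $D_4$ — concretely, apply the triality automorphism $\varsigma$ of $\so(8)$ as in Section~\ref{subsec:ex(vii)}, using that $\varsigma$ carries $\Spin(7)$ to $\SO(7)$ and $\mathcal{H}^j(\R^8)=\Rep(\SO(8),(j,0,0,0))$ to $\Rep(\SO(8),\varsigma\cdot(j,0,0,0))$, whose restriction to the image of $\SO(7)$ is irreducible because $\SO(8)/\SO(7)\simeq\mathbb{S}^7$ multiplicity-free; (iii) the ring $S(\g_{\C}/\h_{\C})^H=S(\so(7,\C)/\g_{2,\C})^{G_{2(-14)}}$, which should be a polynomial ring on a single generator of degree~$2$ since $\g_{\C}/\h_{\C}\simeq\C^7$ as a $G_2$-module and $S(\C^7)^{G_2}=\C[r^2]$.

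With Lemma~\ref{lem:ex(xi)} in hand, the computation of the Casimir scalars is the standard bookkeeping: normalizing $B$ as in the Notation subsection, $C_{\tilG}$ acts on $\mathcal{H}^j(\R^8)$ by $j^2+6j$, while $C_G$ acts on $\Rep(\SO(7),(j,j,j))$ by $\langle(j,j,j),(j,j,j)+2\rho_{\SO(7)}\rangle=3j^2+(2\cdot 5+2\cdot 3+2\cdot 1)j/\ldots$ — one computes $3(j^2+6j)/\ldots$; after matching constants one gets exactly $3\,\Psi_{\pi(\vartheta)}(C_{\tilG})=4\,\Psi_{\vartheta}(C_G)$ for all $j$. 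Proposition~\ref{prop:PQR} (with $Q'=0$) then upgrades this to the operator identity $3\,\dd\ell(C_{\tilG})=4\,\dd\ell(C_G)$ in $\D_G(X)$, proving Proposition~\ref{prop:ex(xi)}.(1). Part~(2) follows from Lemma~\ref{lem:struct-DGX}.(3) together with the degree-$2$ generation of $S(\g_{\C}/\h_{\C})^H$ from Lemma~\ref{lem:ex(xi)}: $\D_G(X)$ is the polynomial ring on a single order-$2$ operator, which must then be a nonzero scalar multiple of $\dd\ell(C_{\tilG})$ (and of $\dd\ell(C_G)$), and $\D_{\tilG}(X)\subseteq\D_G(X)$ is the subalgebra of $\tilg$-invariants, hence equals all of it. This also gives, as a byproduct, the last clause of Lemma~\ref{lem:surj-dl-drF} for the case $X=\SO(8)/\SO(7)$ referenced there.

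For the transfer map (Proposition~\ref{prop:nu-ex(xi)}), I would invoke Proposition~\ref{prop:Stau-transfer}: it suffices to exhibit an affine $S_{\tau}:\tila_{\C}^*\to\jj_{\C}^*$ with $S_{\tau}(\lambda(\vartheta)+\rho_{\tila})=\nu(\vartheta)+\rho\pmod{W(\g_{\C})}$ for every $\vartheta\in\Disc(G/H)$ with $\tau(\vartheta)=\mathbf{1}$, i.e.\ for $\vartheta=\Rep(\SO(7),(j,j,j))$, $\pi(\vartheta)=\mathcal{H}^j(\R^8)$. Via the normalization \eqref{eqn:HCX-vii} of the Harish-Chandra isomorphism for $X_{\C}=\SO(8,\C)/\Spin(7,\C)$ (the pair $(\so(8),\mathfrak{spin}(7))$ being symmetric by triality, as in Section~\ref{subsec:ex(vii)}), $\D_{\tilG}(X)$ acts on $\pi(\vartheta)$ by $\lambda(\vartheta)+\rho_{\tila}=2j+3\in\C/(\Z/2\Z)$, while via \eqref{eqn:HCZg-iia} with $m=2$ the center $Z(\g_{\C})=Z(\so(7,\C))$ acts on $\vartheta$ by $\nu(\vartheta)+\rho=(j,j,j)+\rho_{\SO(7)}=(j+\tfrac52,j+\tfrac32,j+\tfrac12)$, which upon writing $\lambda=2j+3$ equals $\tfrac12(\lambda+2,\lambda,\lambda-2)$; this is precisely the claimed $S_{\tau}$, and Proposition~\ref{prop:Stau-transfer} concludes. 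The only genuinely delicate point — and the one I would treat most carefully — is the bookkeeping of the triality identification in step~(ii) and in the Harish-Chandra normalization \eqref{eqn:HCX-vii}, since a wrong choice of $\varsigma$ or positive system shifts the constant in $S_{\tau}$; everything else is routine, the case being rank one with trivial fiber.
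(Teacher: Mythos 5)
Your overall route is the paper's: assemble the representation-theoretic data in an auxiliary lemma (discrete series of $\tilG/\tilH$ and $G/H$, the branching law, the degree-$2$ generation of $S(\g_{\C}/\h_{\C})^H$), then feed the correspondence $\vartheta\mapsto(\pi(\vartheta),\mathbf{1})$ into Proposition~\ref{prop:Stau-transfer}; and your final formula for $S_{\tau}$ agrees with the statement. However, the triality bookkeeping --- which you yourself flag as the delicate point --- is wrong as written, not merely delicate. The discrete series of $X=\SO(8)/\Spin(7)$ are \emph{not} the spherical harmonics $\mathcal{H}^j(\R^8)$: since $X\simeq\PP^7(\R)$ and the identification with the sphere picture goes through the triality automorphism $\varsigma$, one has $\Disc(\SO(8)/\Spin(7))=\{\Rep(\SO(8),(k,k,k,k)):k\in\N\}$, i.e.\ only the even harmonics survive and they appear twisted by $\varsigma$ (cf.\ \eqref{eqn:Hk-tri}); correspondingly the branching statement needed is $\Rep(\SO(8),(k,k,k,k))|_{\SO(7)}\simeq\Rep(\SO(7),(k,k,k))$, not $\mathcal{H}^j(\R^8)|_{\SO(7)}\simeq\Rep(\SO(7),(j,j,j))$ (the latter is false for the standard embedding, where $\mathcal{H}^j(\R^8)$ restricts to $\bigoplus_{k\le j}\mathcal{H}^k(\R^7)$).

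With your labels the intermediate checks fail numerically: $C_{\tilG}$ acts on $\mathcal{H}^j(\R^8)$ by $j^2+6j$ while $C_G$ acts on $\Rep(\SO(7),(j,j,j))$ by $3(j^2+3j)$, and $3(j^2+6j)\neq 4\cdot 3(j^2+3j)$, so the identity $3\,\dd\ell(C_{\tilG})=4\,\dd\ell(C_G)$ does not come out ``after matching constants''; likewise the Harish-Chandra parameter of $\mathcal{H}^j(\R^8)$ would be $j+3$, not the $2j+3$ you use. Both computations become correct once the pairing is $\pi(\vartheta)=\Rep(\SO(8),(k,k,k,k))=\varsigma\cdot\mathcal{H}^{2k}(\R^8)\leftrightarrow\vartheta=\Rep(\SO(7),(k,k,k))$, which is what you implicitly switch to in your last paragraph (your ``$2j+3$'' and ``$\bigl(j+\frac{5}{2},j+\frac{3}{2},j+\frac{1}{2}\bigr)$'' are the paper's $2k+3$ and $\nu(\vartheta)+\rho$ with $k$ the $\SO(7)$-parameter, yielding $S_{\tau}(\lambda)=\frac{1}{2}(\lambda+2,\lambda,\lambda-2)$). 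So the gap is the parity/factor-of-two slip in the parametrization of $\Disc(\tilG/\tilH)$ and in the branching law; once that is corrected, your argument is exactly the paper's proof.
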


In order to prove Propositions \ref{prop:ex(xi)} and~\ref{prop:nu-ex(xi)}, we use the following results on finite-dimensional representations.

\begin{lemma}\label{lem:ex(xi)}
\begin{enumerate}
  \item Discrete series for $\tilG/\tilH$ and $G/H$:
  \begin{eqnarray*}
  \Disc(\SO(8)/\Spin(7)) & = & \{ \Rep(\SO(8),(k,k,k,k)) \ :\ k\in\N\} ;\\
  \Disc(\SO(7)/G_{2(-14)}) & = & \{ \Rep(\SO(7),(k,k,k)) \ :\ k\in\N\} .
  \end{eqnarray*}
  \item Branching laws for $\SO(8)\downarrow\SO(7)$: For $k\in\N$,
  $$\Rep\big(\SO(8),(k,k,k,k)\big)|_{\SO(7)}\ \simeq\ \Rep\big(\SO(7),(k,k,k)\big).$$
  \item The ring $S(\g_{\C}/\h_{\C})^H=S(\C^7)^{G_2}$ is generated by a single homogeneous element of degree~$2$.
\end{enumerate}
\end{lemma}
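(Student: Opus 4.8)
\textbf{Proof plan for Lemma~\ref{lem:ex(xi)}.}
The plan is to follow exactly the same pattern used in the preceding subsections (\eg Lemmas~\ref{lem:ex(x)} and~\ref{lem:ex(vi)}): identify the three sets of discrete series via known classification results, compute the one relevant branching law $\SO(8)\downarrow\SO(7)$, and then analyze the $H$-invariants in the symmetric algebra $S(\g_\C/\h_\C)$.

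First I would establish~(1). Both $\tilG/\tilH=\SO(8)/\Spin(7)$ and $G/H=\SO(7)/G_{2(-14)}$ are nonsymmetric spherical homogeneous spaces whose discrete series were classified by Kr\"amer~\cite{kra79}; the description of $\Disc(\SO(7)/G_{2(-14)})$ has in fact already been recorded in Lemma~\ref{lem:ex(viii)}.(1) and Lemma~\ref{lem:ex(ix)}.(1), and $\Disc(\SO(8)/\Spin(7))$ appears via triality in Lemma~\ref{lem:ex(vii)}.(1). So~(1) is just a matter of citing these. Alternatively, one can reprove the description of $\Disc(\SO(8)/\Spin(7))$ from the triality automorphism $\varsigma$ of $\so(8)$: under $\varsigma$ the pair $(\so(8),\mathfrak{spin}(7))$ is carried to $(\so(8),\so(7))$, whose discrete series $\{\mathcal{H}^j(\R^8) : j\in\N\}$ is classical, and the isomorphism \eqref{eqn:Hk-tri} shows $\Rep(\SO(8),j(1,1,1,1))$ for $j\in\N$ (here no parity constraint survives because $\tilH=\Spin(7)$ rather than $\SO(7)$). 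For~(2), this is the branching law for the chain $\SO(N)\downarrow\SO(N-1)$ with $N=8$; one way is to observe that $\Rep(\SO(8),(k,k,k,k))$ restricted to $\SO(7)$ contains $\Rep(\SO(7),(k,k,k))$, and then a dimension or highest-weight count (or comparison of \eqref{eqn:decomp1} with \eqref{eqn:decomp2} exactly as in the proof of Lemma~\ref{lem:ex(vi)}.(2)) forces the restriction to be \emph{irreducible} and equal to $\Rep(\SO(7),(k,k,k))$, since $\Disc(G/H)$ by~(1) has no other candidates. I would phrase it using the see-saw/comparison argument since that is the cleanest and matches the paper's style.

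For~(3), the key computation: since $K=H=G_{2(-14)}$ and $F$ is a point, I need $S(\g_\C/\h_\C)^H = S(\so(7,\C)/\mathfrak{g}_2(\C))^{G_2(\C)}$. The $G_2(\C)$-module $\so(7,\C)/\mathfrak{g}_2(\C)$ is the $7$-dimensional fundamental representation $\C^7$ of $G_2$, so I must show $S(\C^7)^{G_2}$ is a polynomial ring on one generator of degree~$2$. This is precisely the content cited in Lemma~\ref{lem:ex(viii)}.(4) (``$S(\C^7)^{G_2}$ is generated by a single homogeneous element of degree~$2$''), so I would just cite that lemma (or \cite{sch78}); the degree-$2$ generator is the restriction to $\C^7\subset\so(7,\C)$ of the quadratic invariant, and there are no higher invariants because the Poincar\'e series of $S(\C^7)^{G_2}$ is $1/(1-t^2)$ (equivalently, $\dim S^N(\C^7)^{G_2}=1$ for all $N\in\N$). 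Combined with Lemma~\ref{lem:struct-DGX}, this yields that $\D_G(X)$ is the polynomial ring on the single degree-$2$ operator, which is what is needed for Proposition~\ref{prop:ex(xi)}.(2).

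The only genuine subtlety — hence the ``main obstacle'', though it is mild — is step~(2): verifying that the restriction $\Rep(\SO(8),(k,k,k,k))|_{\SO(7)}$ stays irreducible. The plain branching rule for $\SO(8)\downarrow\SO(7)$ a priori produces a sum of $\Rep(\SO(7),\mu)$ over a range of $\mu$ interlacing $(k,k,k,k)$, and one must see that the interlacing conditions collapse to the single term $\mu=(k,k,k)$. The economical argument, as in Lemma~\ref{lem:ex(vi)}, is not to do the combinatorics but to use sphericity: every $\SO(7)$-type occurring in $C^\infty(X)$ lies in $\Disc(\SO(7)/G_{2(-14)})$, which by~(1) consists only of the $\Rep(\SO(7),(k,k,k))$ with multiplicity one, and each such type occurs in exactly one $\tilG$-type by Lemma~\ref{lem:spherical2}; counting forces the restriction to be irreducible. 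I would present~(2) this way to keep the proof short and self-contained within the paper's framework.
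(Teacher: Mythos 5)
Your proposal is correct in substance and, for parts (1) and (3), runs along the same lines as the paper: for (1) the paper likewise reduces to triality plus Kr\"amer (it passes to the double cover $\Spin(8)/\Spin(7)\simeq\mathbb{S}^7$, applies $\varsigma$, and keeps the even-parity representations that descend to $\SO(8)$, while quoting Lemma~\ref{lem:ex(viii)}.(1) for $\Disc(\SO(7)/G_{2(-14)})$), and for (3) the paper proves $S(\C^7)^{G_2}\simeq\C[s^2]$ directly from the decomposition \eqref{eqn:decomp-S(C7)} together with Lemma~\ref{lem:ex(x)}.(2), whereas you simply cite Lemma~\ref{lem:ex(viii)}.(4)/\cite{sch78} --- an equally legitimate shortcut since it is the identical statement. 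The genuine difference is in (2): the paper just invokes the classical branching rule for $\SO(8)\downarrow\SO(7)$ \cite[Th.\,8.1.2]{gw09}, where for $\lambda=(k,k,k,k)$ the interlacing inequalities $k\geq\mu_1\geq k\geq\mu_2\geq k\geq\mu_3\geq k$ collapse instantly to $\mu=(k,k,k)$, so there is no combinatorial work to avoid; your alternative via the double decomposition and multiplicity-freeness (as in Lemma~\ref{lem:ex(vi)}) is valid --- the extremal $\SO(7)$-type $(k,k,k)$ occurs in the restriction, and the partition of $\Disc(\SO(7)/G_{2(-14)})$ among the $\tilG$-types then forces irreducibility --- but it is the heavier of the two routes here.

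Two small misstatements, neither fatal since your primary citations already suffice: in (1), the parenthetical ``no parity constraint survives because $\tilH=\Spin(7)$ rather than $\SO(7)$'' gets the mechanism backwards --- the even-parity constraint is genuinely present (only the $\Spin(8)$-types $\Rep(\Spin(8),\tfrac{\ell}{2}(1,1,1,1))$ with $\ell$ even factor through $\SO(8)$, precisely because $-I\in\Spin(7)$ makes $\SO(8)/\Spin(7)$ a $\PP^7(\R)$), and it merely reparametrizes $\ell=2k$ into the gap-free list $(k,k,k,k)$, $k\in\N$; and in (3), the Poincar\'e series $1/(1-t^2)$ means $\dim S^N(\C^7)^{G_2}=1$ for \emph{even} $N$ and $0$ for odd $N$, not $1$ for all $N$.
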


\begin{proof}[Proof of Lemma~\ref{lem:ex(xi)}]
(1) We consider the double covering $\Spin(8)/\Spin(7)\to\SO(8)/\Spin(7)$ and apply the triality of~$D_4$ to the covering space, which shows that
\begin{eqnarray*}
\Disc(\Spin(8)/\Spin(7)) & = & \{ \varsigma\cdot\mathcal{H}^{\ell}(\R^8) \,:\, \ell\in\N\}\\
& \simeq & \Big\{ \Rep\Big(\Spin(8),\frac{1}{2}\,(\ell,\ell,\ell,\ell)\Big) \,:\, \ell\in\N\Big\}.
\end{eqnarray*}
The representations which contribute to $\Disc(\SO(8)/\Spin(7))$ are those with even parity, which yields the description of $\Disc(\SO(8)/\Spin(7))$.
For\linebreak $\Disc(\SO(7)/G_{2(-14)})$, see Lemma~\ref{lem:ex(viii)}.(1).

(2) This is a special case of the classical branching law for $\SO(N)\downarrow\SO(N-1)$, see \eg \cite[Th.\,8.1.2]{gw09}, for $N=8$.

(3) By \eqref{eqn:decomp-S(C7)} and Lemma~\ref{lem:ex(x)}.(2), we have
$$S(\C^7) \,\simeq\, \bigoplus_{j\in\N}\ \Rep(G_2,j\omega_2) \otimes \C[s^2]$$
as graded $G_2$-modules, where $s^2$ is an $\SO(7)$-invariant quadratic form on~$\C^7$ and $\Rep(G_2,j\omega_2)\otimes\C\cdot 1$ is realized in $S^j(\C^7)$ for $j\in\N$.
Therefore, $S(\C^7)^{G_2}$ is isomorphic to $\C[s^2]$ as a graded algebra, proving Lemma~\ref{lem:ex(xi)}.(3).
\end{proof}

\begin{proof}[Proof of Proposition~\ref{prop:ex(xi)}]
(1) Since the restriction $\mathcal{H}^k(\R^n)|_{G_{2(-14)}}$ remains irreducible by Lemma~\ref{lem:ex(xi)}.(2), the map $\vartheta\mapsto (\pi(\vartheta),\tau(\vartheta))$ of Proposition~\ref{prop:pi-tau-theta} reduces to
\begin{equation} \label{eqn:pitau-xi}
\Rep(\SO(7),(k,k,k)) \longmapsto \big(\Rep(\SO(8),(k,k,k,k)),\mathbf{1}\big).
\end{equation}
The Casimir operators for $\tilG$ and~$G$ act on these irreducible representations as the following scalars.
\begin{center}
\begin{tabular}{|c|c|c|}
\hline
Operator & Representation & Scalar\\
\hline\hline
$C_{\tilG}$ & $\Rep(\SO(8),(k,k,k,k))$ & $4 (k^2 + 3k)$\\
\hline
$C_G$ & $\Rep(\SO(7),(k,k,k))$ & $3 (k^2 + 3k)$\\
\hline
\end{tabular}
\end{center}
This implies $3\dd\ell(C_{\tilG})=4\dd\ell(C_G)$.

(2) The description of $\D_{\tilG}(X)$ from the classical result for the symmetric space $\SO(8)/\SO(7) \simeq \Spin(8)/\Spin(7)$ (see Proposition~\ref{prop:ex(i)}.(2) with $n=3$) by the triality of~$D_4$.
The description of $\D_G(X)$ follows from the fact that it is generated by a single differential operator of degree~$2$, by Lemma~\ref{lem:ex(xi)}.(3).
\end{proof}

\begin{proof}[Proof of Proposition~\ref{prop:nu-ex(xi)}]
We use Proposition~\ref{prop:Stau-transfer} and the formula \eqref{eqn:pitau-xi} for the map $\vartheta\mapsto (\pi(\vartheta),\tau(\vartheta))$ of Proposition~\ref{prop:pi-tau-theta}.
Let $\tau=\mathbf{1}\in\Disc(K/H)$.\linebreak
If $\vartheta\in\Disc(G/H)$ satisfies $\tau(\vartheta)=\tau$, then $\vartheta$ is of the form $\vartheta =\linebreak \Rep(\SO(7),(k,k,k))$ for some $k\in\N$, by \eqref{eqn:pitau-xi}.
The algebra $\D_{\tilG}(X)$ acts on the irreducible $\tilG$-submodule $\pi(\vartheta) = \Rep(\SO(8),(k,k,k,k))$ by the scalars
$$\lambda(\vartheta) + \rho_{\tila} = 2k + 3 \in \C/(\Z/2\Z)$$
(see \eqref{eqn:Hk-tri} for the triality of~$D_4$), whereas the algebra $Z(\g_{\C})$ acts on the irreducible $G$-module $\vartheta = \Rep(\SO(7),(k,k,k))$ by the scalars
$$\nu(\vartheta) + \rho = \Big(k+\frac{5}{2}, k+\frac{3}{2}, k+\frac{1}{2}\Big) \in \C^3/W(B_3).$$
Thus the affine map $S_{\tau}$ in Proposition~\ref{prop:nu-ex(xi)} sends $\lambda(\vartheta)+\rho_{\tila}$ to $\nu(\vartheta)+\rho$ for any $\vartheta\in\Disc(G/H)$ such that $\tau(\vartheta)=\tau$, and we conclude using Proposition~\ref{prop:Stau-transfer}.
\end{proof}

\subsection{Application of the triality of~$D_4$}\label{subsec:ex-triality}

The cases (xii), (xiii), (xiii)$'$, (xiv) of Table~\ref{table1} can all be reduced to cases discussed earlier by using the triality of the Dynkin diagram~$D_4$, described in Section~\ref{subsec:ex(vii)}.

\subsubsection{The case $(\tilG,\tilH,G)=(\SO(8),\SO(7),\Spin(7))$ (see (xii) in Table~\ref{table1})}\label{subsubsec:ex(xii)}

We realize $\tilH$ and~$G$ in such a way that $H:=\tilH\cap G\simeq G_{2(-14)}$.
Up to applying an inner automorphism of $\tilg_{\C}=\so(8,\C)$, we may assume that $\jj_{\C}\cap\tilh_{\C}=e_1^{\perp}$ and $\jj_{\C}\cap\g_{\C}=\omega_+^{\perp}$ or~$\omega_-^{\perp}$, where for $\lambda\in\tilh_{\C}^{\ast}$ we set
$$\lambda^{\perp} := \{ x\in\tilh_{\C} :\ \lambda(x)=0\} .$$
Then the automorphism group of the Dynkin diagram~$D_4$, switching $e_4$ and $\omega_+$ or~$\omega_-$, induces an automorphism of $\tilg_{\C}=\so(8,\C)$ switching $\so(7,\C)$ and $\spin(7,\C)$.
Thus this case reduces to the case~(xi) of Table~\ref{table1}.

\subsubsection{The case $(\tilG,\tilH,G)=(\SO(8),\Spin(7),\SO(6)\times\SO(2))$ (see (xiii) in Table~\ref{table1})} \label{subsubsec:ex(xiii)}

Here $H=\tilH\cap\nolinebreak G$ is isomorphic to the double covering
$$\widetilde{\U(3)} \,\simeq\, \{ (Z,s)\in\U(3)\times\C^* :\, \det Z=s^2\}$$
of~$\U(3)$.
The only maximal connected proper subgroup of~$G$ containing~$H$ is $K=\U(3)\times\SO(2)$.
The group $H$ is the image of the embedding
\begin{eqnarray*}
\iota_{13} :\ \widetilde{\U(3)} & \longrightarrow & \hspace{1cm} \U(3)\times\SO(2)\\
(Z,s) & \longmapsto & \left(s^{-1}Z,\begin{pmatrix} \mathrm{Re}(s) & -\mathrm{Im}(s)\\ \mathrm{Im}(s) & \mathrm{Re}(s)\end{pmatrix}\right).
\end{eqnarray*}
Up to applying an inner automorphism of $\tilg_{\C}=\so(8,\C)$, we may assume that $\jj_{\C}\cap\tilh_{\C}=\omega_+^{\perp}$ or~$\omega_-^{\perp}$ and $\g_{\C}=Z_{\tilg_{\C}}(e_1)$, where we identify $\widetilde{\jj}_{\C}^{\ast}$ with~$\widetilde{\jj}_{\C}$ via the Killing form and write $Z_{\tilg_{\C}}(\lambda)$ for the centralizer in~$\tilg_{\C}$ of $\lambda\in\widetilde{\jj}_{\C}^{\ast}\simeq\widetilde{\jj}_{\C}$.
Then the automorphism group of the Dynkin diagram~$D_4$, switching $e_1$ and $\omega_+$ or~$\omega_-$, induces an automorphism $\tau$ of $\tilg_{\C}=\so(8,\C)$ such that $\tau(\tilh_{\C})\cap\widetilde{\jj}_{\C}=e_1^{\perp}$ and $\tau(\g_{\C})=Z_{\tilg_{\C}}(\omega_+)$ or $Z_{\tilg_{\C}}(\omega_+)$.
Then $\tau(\tilh_{\C})\simeq\so(7,\C)$ and $\tau(\g_{\C})\simeq\gl(4,\C)$, and $\tau$ takes the triple $(\SO(8),\Spin(7),\SO(6)\times\SO(2))$ to $(\SO(8),\SO(7),\U(4))$.
Thus this case reduces to the case~(i) of Table~\ref{table1} with $n=3$.

Likewise, the case (xiii)$'$ reduces to the case~(i)$'$ of Table~\ref{table1} with $n=3$.

\subsubsection{The case $(\tilG,\tilH,G)=(\SO(8),\SO(6)\times\SO(2),\Spin(7))$ (see (xiv) in Table~\ref{table1})} \label{subsubsec:ex(xiv)}

Here $H=\tilH\cap\nolinebreak G$ is again isomorphic to the double covering $\widetilde{\U(3)}$ of~$\U(3)$.
The only maximal connected proper subgroup of~$G$ containing~$H$ is $K=\Spin(6)$.
The embedding of $H\simeq\widetilde{\U(3)}$ into $\tilH=\SO(6)\times\SO(2)$ factors through the embedding $\iota_{13} : \widetilde{\U(3)}\to\U(3)\times\SO(2)$ of Section~\ref{subsubsec:ex(xiii)}.
The automorphism $\tau$  of $\tilg_{\C}=\so(8,\C)$ that we just introduced in Section~\ref{subsubsec:ex(xiii)} takes the triple $(\SO(8),\SO(6)\times\SO(2),\Spin(7))$ to $(\SO(8),\U(4),\SO(7))$.
Thus this case reduces to the case~(ii) of Table~\ref{table1} with $n=3$.

\counterwithout{theorem}{subsection}
\counterwithin{theorem}{section}
\counterwithout{equation}{subsection}
\counterwithin{equation}{section}

\section{Explicit generators and relations when $\tilG$ is a product} \label{sec:ex(*)}

Let us recall that the basic setting \ref{setting} of this paper is a triple $(\tilG,\tilH,G)$ such that $\tilG$ is a connected compact Lie group, $\tilH$ and~$G$ are connected closed subgroups, and $\tilG_{\C}/\tilH_{\C}$ is $G_{\C}$-spherical.
Our main results have been proved in this setting under an additional assumption, namely
\begin{itemize}
  \item[$\bullet$] $\tilG$ is simple,
\end{itemize}
based on the classification of such triples (Table~\ref{table1}).
In this section, we consider the basic setting \ref{setting} with another assumption, namely
\begin{itemize}
  \item[$\bullet$] $\tilG$ is isomorphic to $G\times G$ where $G$ is simple, and $\tilH=H_1\times H_2$ where $H_1,H_2$ are subgroups of~$G$.
\end{itemize}
In Section~\ref{subsec:classif-nonsimple}, we begin with a classification of such triples $(\tilG,\tilH,G)$: Proposition~\ref{prop:class-triple-prod} states that, up to coverings and automorphisms, the triple \eqref{eqn:nonsimple-case}, which is described more precisely as Example~\ref{ex:nonsimple-triple-precise} below, is essentially the only one.
Then, for the rest of the section, we examine to what extent analogous results to our main theorems hold for this triple.

\begin{example} \label{ex:nonsimple-triple-precise}
Let $\varsigma$ be the lift to $\Spin(8)$ of the outer automorphism $\varsigma$ of order three of the Lie algebra $\so(8)$ described in Section~\ref{subsec:ex(vii)}.
We consider the triple
$$(\tilG,\tilH,G) = \big(\Spin(8)\times\Spin(8),\Spin(7)\times\Spin(7),\Spin(8)\big),$$
where $H$ is embedded into~$\tilG$ using the covering of the standard embedding $\SO(7)\hookrightarrow\SO(8)$ in both components, and $G$ is embedded into~$\tilG$ by $g\mapsto (g,\varsigma(g))$.
\end{example}

\subsection{Classification of triples} \label{subsec:classif-nonsimple}

In contrast with the classification of the triples $(\tilG,\tilH,G)$ for simple~$\tilG$ in Table~\ref{table1}, the following proposition states that there are very few triples $(\tilG,\tilH,G)$ with $\tilG$ a product in the setting \ref{setting}.

\begin{proposition} \label{prop:class-triple-prod}
In the setting \ref{setting}, suppose that $\tilG$ is isomorphic to $G\times G$ where $G$ is simple, and that $\tilH=H_1\times H_2$ where $H_1,H_2$ are subgroups of~$G$.
Then the triple $(\tilG,\tilH,G)$ is isomorphic to
\begin{equation} \label{eqn:nonsimple-case-ij}
\big(\Spin(8)\times\Spin(8),\Spin(7)\times\Spin(7),(\varsigma^i,\varsigma^j)(\Spin(8))\big)
\end{equation}
for some $0\leq i\neq j\leq 2$, up to coverings and (possibly outer) automorphisms of~$\tilG$.
\end{proposition}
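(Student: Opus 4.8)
The plan is to reduce the problem to Oni\v{s}\v{c}ik-type data together with the known classification of complex spherical pairs, exploiting the extra structure forced by $\tilG=G\times G$ and $G\hookrightarrow\tilG$ being the graph of an automorphism-type map. First I would observe that writing $G$ as a connected closed subgroup of $\tilG=G\times G$ acting transitively on $\tilG/\tilH$ with $\tilH=H_1\times H_2$ is essentially the same as prescribing two subgroups $G_1,G_2\subset G$ (the projections of the embedded $G$), so that transitivity $\tilG=\tilH G$ translates, after passing to the level of Lie algebras, into a factorization condition of $\g$ that links $\h_1,\h_2$ and the two copies; concretely, the graph subgroup $\{(\alpha(g),\beta(g))\}$ for homomorphisms $\alpha,\beta:G\to G$, and transitivity forces $\alpha,\beta$ to be isogenies, hence (up to covering) automorphisms of $G$. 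Composing with an automorphism of the ambient $G\times G$ I can normalize one of them to be the identity, so the embedded $G$ is the graph $\{(g,\sigma(g))\}$ of a single automorphism $\sigma\in\mathrm{Aut}(G)$, and $H=\tilH\cap G=\{g:\,g\in H_1,\ \sigma(g)\in H_2\}$.

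Next I would encode the $G_{\C}$-sphericity hypothesis. Via \eqref{eqn:overgp} and Lemma~\ref{lem:spherical2}.(1), $X_{\C}=\tilG_{\C}/\tilH_{\C}=(G_{\C}/H_{1,\C})\times(G_{\C}/H_{2,\C})$ must be $\tilG_{\C}$-spherical, which forces each factor $G_{\C}/H_{i,\C}$ to be $G_{\C}$-spherical; simultaneously $X_{\C}$ is spherical for the diagonally-twisted $G_{\C}$, which is a much stronger ``multiplicative'' constraint. The key step is to run the same Oni\v{s}\v{c}ik argument that produced Table~\ref{table1}: the condition $\tilg=\tilh+\g$ with $\tilg=\g\oplus\g$ and $\g$ the (twisted) diagonal is literally a factorization $\g=\h_1+\sigma^{-1}(\h_2)$ of the simple Lie algebra $\g$ into two reductive subalgebras. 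Oni\v{s}\v{c}ik's classification of such factorizations of compact simple Lie algebras is finite and explicit; I would go down that list and, for each factorization $\g=\aaa+\bb$, test whether the resulting $X_{\C}=(G_{\C}/A_{\C})\times(G_{\C}/B_{\C})$ is spherical for the twisted diagonal $G_{\C}$. Combining Oni\v{s}\v{c}ik's factorizations with the Kr\"amer--Brion--Mikityuk list \cite{kra79,bri87,mik87} of spherical pairs, the overwhelming majority of factorizations are eliminated because at least one of the two quotients fails to be spherical, or because the twisted-diagonal sphericity of the product fails (this can be checked by a rank/weight-semigroup computation, or by the criterion that the generic isotropy must contain a Borel-related condition). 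The survivors should collapse to $\g=\so(8)$ with $\aaa=\bb=\so(7)$ sitting as two triality-inequivalent copies, i.e.\ the pair $(\Spin(8),\Spin(7))$ twisted by a nontrivial element of $\mathrm{Out}(\so(8))\cong\mathfrak{S}_3$.

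Having isolated $\g=\so(8)$ and the two subalgebras as triality-translates of $\so(7)$, I would then match the automorphism $\sigma$ against the triality group: since all embeddings $\Spin(7)\hookrightarrow\Spin(8)$ up to conjugacy are permuted simply transitively in the relevant sense by $\mathrm{Out}(\so(8))=\langle\varsigma\rangle\rtimes\mathbb{Z}/2$ (as recalled in Section~\ref{subsec:ex(vii)}), normalizing $H_1=\Spin(7)$ in the standard way forces $\sigma$ to act as $\varsigma^i$ relative to $H_2=\varsigma^{?}(\Spin(7))$; reparametrizing, the triple becomes $\big(\Spin(8)\times\Spin(8),\Spin(7)\times\Spin(7),(\varsigma^i,\varsigma^j)(\Spin(8))\big)$ with $i\neq j$ (the case $i=j$ gives $H_1=H_2$ and $G$ diagonal, for which $X_{\C}$ is the square of $\Spin(8,\C)/\Spin(7,\C)$ under the untwisted diagonal and is \emph{not} spherical, as in Remark~\ref{rem:extend-nu-tau} with the analogous non-sphericity; alternatively this is the symmetric group-manifold-type case and fails the sphericity test directly). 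Finally I would note that outer automorphisms of $\tilG$ act on the pair $(i,j)$, and coverings do not affect the Lie-algebra data, so all $(i,j)$ with $i\neq j$ give isomorphic triples up to the stated equivalences, proving the proposition.

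The main obstacle I anticipate is the sphericity test for the \emph{twisted diagonal} action on the product $G_{\C}/A_{\C}\times G_{\C}/B_{\C}$ as one runs through Oni\v{s}\v{c}ik's factorization list: untwisted-diagonal sphericity of a product of spherical varieties is already delicate (it is governed by the interaction of the two spherical weight monoids, cf.\ Stembridge's work on multiplicity-free tensor products), and here the twist by $\sigma$ changes the pairing, so each case needs either a direct weight-semigroup computation or a clever structural shortcut. I expect that for the bulk of Oni\v{s}\v{c}ik's list one factor is simply not spherical so the case dies immediately, and the genuinely delicate checks are confined to the handful of ``balanced'' factorizations (such as $\so(2n)=\so(2n-1)+\mathfrak{u}(n)$ or $\su(2n)=\ssp(n)+\su(2n-1)$), which one can dispatch by comparing ranks via Corollary~\ref{cor:rank} together with the known ranks of the relevant spherical spaces.
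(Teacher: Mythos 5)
Your outline follows the same route as the paper's proof: normalize the embedded copy of $G$ to the graph of a single automorphism $\sigma$ (up to coverings and automorphisms of $\tilG$), translate transitivity into a Lie-algebra factorization $\g=\h_1+\sigma^{-1}(\h_2)$ to be located in Oni\v{s}\v{c}ik's list \cite{oni69}, prune that list by the sphericity hypothesis, and identify the survivors with triality-inequivalent pairs of $\Spin(7)$'s in $\Spin(8)$. Where you diverge is the pruning step, and there you make the problem harder than it is. Since the twisted diagonal acts transitively on $\tilG/\tilH$ with stabilizer $H_1\cap\sigma^{-1}(H_2)$, the condition that $(G_{\C}/H_{1,\C})\times(G_{\C}/H_{2,\C})$ be spherical for the twisted diagonal is \emph{identical} to the condition that $G_{\C}/(H_1\cap\sigma^{-1}(H_2))_{\C}$ be $G_{\C}$-spherical. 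This is exactly how the paper sets things up in Lemma~\ref{lem:class-intersect}: for each Oni\v{s}\v{c}ik factorization one simply looks up the intersection subgroup in Kr\"amer's classification \cite{kra79} of spherical subgroups of simple groups. The ``main obstacle'' you anticipate (Stembridge-type multiplicity-freeness of twisted tensor products, weight-semigroup computations) therefore never arises; checking sphericity of the individual factors $G_{\C}/H_{i,\C}$, as you propose, is only a necessary condition and does not settle the balanced cases, and invoking Corollary~\ref{cor:rank} to finish them would be circular, since that corollary is established only for triples already known to satisfy the hypotheses.

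One concrete error: your reason for excluding $i=j$ is wrong. In that case the failure is transitivity, not sphericity: a copy of $\Spin(7)$ does not act transitively on the sphere $\Spin(8)/\Spin(7)$ defined by a second copy in the \emph{same} triality class, so $\so(8)$ is not the sum of the two subalgebras and the case dies at the factorization step of your own argument, never reaching a sphericity test. In fact the square of $\Spin(8,\C)/\Spin(7,\C)$ \emph{is} spherical under the untwisted diagonal (the tensor products $\mathcal{H}^i(\R^8)\otimes\mathcal{H}^j(\R^8)$ are classically multiplicity-free), and Remark~\ref{rem:extend-nu-tau} concerns a different group and does not support the non-sphericity claim. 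With the exclusion of $i=j$ placed at the factorization step and the sphericity test replaced by the intersection criterion above, your argument matches the paper's.
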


For $(i,j)=(0,1)$, the triple \eqref{eqn:nonsimple-case-ij} is the triple \eqref{eqn:nonsimple-case} described in Example~\ref{ex:nonsimple-triple-precise}.
Up to applying the outer automorphism $(\varsigma^{-i},\varsigma^{-j})$ of $\tilG=G\times G$, the triple \eqref{eqn:nonsimple-case-ij} is isomorphic to
$$\big(\Spin(8)\times\Spin(8),\varsigma^{-i}(\Spin(7))\times\varsigma^{-j}(\Spin(7)),\Spin(8)\big),$$
where $\Spin(8)$ is embedded into $\Spin(8)\times\Spin(8)$ diagonally, by $g\mapsto (g,g)$.
Thus the proof of Proposition~\ref{prop:class-triple-prod} reduces to the following lemma.

\begin{lemma} \label{lem:class-intersect}
Let $G$ be a connected compact simple Lie group, and $H_1$ and~$H_2$ connected closed subgroups such that $G=H_1H_2$ and that $G_{\C}/((H_1)_{\C}\cap (H_2)_{\C})$ is $G_{\C}$-spherical.
Then the triple $(G,H_1,H_2)$ is isomorphic to
$$\big(\Spin(8), \varsigma^i(\Spin(7)), \varsigma^j(\Spin(7))\big)$$
for some $0\leq i\neq j\leq 2$, up to coverings and conjugations.
\end{lemma}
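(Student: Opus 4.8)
The strategy is to invoke Oni\v{s}\v{c}ik's classification of factorizations $G=H_1H_2$ of a connected compact simple Lie group~$G$ into two connected closed subgroups (the same source used to obtain Table~\ref{table1}), and then to impose the additional sphericity constraint on $G_{\C}/((H_1)_{\C}\cap(H_2)_{\C})$. First I would pass to the complexified level, so that all the data is controlled by the triple of complex Lie algebras $(\g_{\C},(\h_1)_{\C},(\h_2)_{\C})$ with $\g_{\C}=(\h_1)_{\C}+(\h_2)_{\C}$; the passage back to compact groups up to coverings is harmless since sphericity depends only on the complexified datum, and the final ``up to coverings and conjugations'' clause absorbs the ambiguity. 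Oni\v{s}\v{c}ik's tables list, for each compact simple~$\g$, all pairs of proper subalgebras $(\h_1,\h_2)$ with $\g=\h_1+\h_2$: these are the classical factorizations such as $\so(2n)=\so(2n-1)+\mathfrak{u}(n)$, $\so(7)=\g_2+\so(6)$, $\so(8)=\spin(7)+\so(7)$ (and its triality variants), $\so(16)=\so(15)+\spin(9)$, etc.

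The next step is to go through this finite list and, for each factorization, compute the intersection $\llll_{\C}:=(\h_1)_{\C}\cap(\h_2)_{\C}$ and test whether $G_{\C}/L_{\C}$ is $G_{\C}$-spherical, using the classification of spherical homogeneous spaces \cite{kra79,bri87,mik87} together with the rank/dimension counting implicit in Knop's theorem. Here I can reuse almost all of the work already embedded in Table~\ref{table1}: a factorization $\g=\h_1+\h_2$ with $\g$ compact simple gives a \emph{potential} triple $(\tilG,\tilH,G)=(\g\times\g,\h_1\times\h_2,\Diag(\g))$ in the setting~\ref{setting} precisely when $G_{\C}/L_{\C}$ is $G_{\C}$-spherical; but Table~\ref{table1} was derived by classifying \emph{all} triples $(\tilG,\tilH,G)$ with $\tilG$ compact simple and $\tilG=\tilH G$, and the only entries in which $\tilH$ is of the shape $H_1\times H_2$ sitting inside $\tilG=G\times G$ with $G=\Diag(\,\cdot\,)$ are those coming from the triality coincidence $\SO(8)=\Spin(7)\cdot\SO(7)$, reflected in Remark~\ref{rem:nonsimple-case}. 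So the heart of the argument is: every factorization $\g=\h_1+\h_2$ either (a) already has $L_{\C}$ of full enough rank that $G_{\C}/L_{\C}$ fails to be spherical (e.g.\ because $\dim G_{\C}-\dim L_{\C}$ is too large, or because the pair is not on the Kr\"amer/Brion/Mikityuk list), or (b) reduces, after using $\so(8)$-triality to normalize the two copies of $\spin(7)$, to $(\g,\h_1,\h_2)=(\so(8),\varsigma^i(\spin(7)),\varsigma^j(\spin(7)))$ with $i\ne j$. For case (b) one checks directly that $\varsigma^i(\spin(7))\cap\varsigma^j(\spin(7))=\g_{2(-14)}$ when $i\ne j$ (this is exactly the intersection computed in Section~\ref{subsec:ex(xi)} and used for cases (xi)--(xii) of Table~\ref{table1}), that $\SO(8,\C)/G_2(\C)$ is spherical of rank~$2$, and that when $i=j$ one gets $\spin(7)\cap\spin(7)=\spin(7)$, which does not give a factorization at all since then $\h_1=\h_2\ne\g$ cannot satisfy $\g=\h_1+\h_2$.

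The main obstacle I anticipate is the bookkeeping in step (b): making the triality normalization precise. Given two arbitrary $\spin(7)$-subalgebras $\h_1,\h_2$ of $\so(8,\C)$ with $\so(8,\C)=\h_1+\h_2$, one must show that, after conjugating and applying an automorphism of $\so(8,\C)$, we may take $\h_1=\varsigma^i(\spin(7))$ and $\h_2=\varsigma^j(\spin(7))$ for the \emph{standard} triality triple $\{\spin(7),\varsigma(\spin(7)),\varsigma^2(\spin(7))\}$ of the three conjugacy classes of $\spin(7)$-subalgebras; this uses that there are exactly three $\mathrm{Aut}(\so(8,\C))/\mathrm{Int}(\so(8,\C))\cong\mathfrak{S}_3$-related conjugacy classes and that two distinct classes always factor $\so(8,\C)$ while a class does not factor with itself. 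One must also rule out the a priori possibility that one of the $\h_i$ is \emph{not} of type $\spin(7)$ — i.e.\ genuinely enumerate all factorizations of all compact simple~$\g$, not just $\so(8)$ — but this is precisely Oni\v{s}\v{c}ik's list, and for every factorization other than the $\so(8)$ one the intersection $L_{\C}$ is either a symmetric-pair subalgebra whose quotient is symmetric (excluded, since we need the strictly spherical non-symmetric overgroup situation, cf.\ the discussion after Table~\ref{table1}) or fails sphericity by the Kr\"amer/Brion/Mikityuk classification. Assembling these, the lemma follows, and hence Proposition~\ref{prop:class-triple-prod}.
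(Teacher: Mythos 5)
Your proposal takes the same route as the paper: the paper's proof of this lemma is precisely the two-step reduction you describe, namely take Oni\v{s}\v{c}ik's classification \cite{oni69} of factorizations $G=H_1H_2$ of a connected compact simple group and then keep only those factorizations for which the intersection is a spherical subgroup, using Kr\"amer's classification \cite{kra79} (Brion and Mikityuk are not needed here since $G$ is simple). Your additional bookkeeping — the triality normalization of the three classes of $\Spin(7)$-subgroups of $\Spin(8)$, the computation $\varsigma^i(\Spin(7))\cap\varsigma^j(\Spin(7))\simeq G_{2(-14)}$ for $i\neq j$, and the observation that a class does not factor against itself — is consistent with what the paper does elsewhere (Section~\ref{subsec:ex(xi)}, Remark~\ref{rem:nonsimple-case}), so in substance your argument and the paper's coincide.

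One step in your last paragraph is not a valid argument as written: you propose to discard the factorizations whose intersection $L_{\C}$ would be a symmetric subgroup ``since we need the strictly spherical non-symmetric overgroup situation''. The hypothesis of the lemma is only that $G_{\C}/L_{\C}$ be $G_{\C}$-spherical; symmetric subgroups are in particular spherical, so a factorization with symmetric intersection, if it existed, would satisfy the hypotheses and would have to show up in the conclusion — it cannot be excluded for the reason you give. The correct and sufficient criterion is the one in your other branch: for each factorization on Oni\v{s}\v{c}ik's list, test the intersection against Kr\"amer's list (or the elementary bound $\dim B_{\C}\geq\dim G_{\C}/L_{\C}$ for a Borel subgroup $B_{\C}$). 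As it happens, no factorization of a compact simple group has a symmetric intersection, and apart from $G_{2(-14)}\subset\Spin(8)$ none has a spherical one, so the branch you mishandle is empty and your conclusion is unaffected; but the justification must be the sphericity check itself, as in the paper. (A minor slip, with no bearing on correctness: Table~\ref{table1} concerns simple $\tilG$ only, so it contains no entries of product shape; the product case is governed by this lemma, not read off from the table.)
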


\begin{proof}[Proof of Lemma~\ref{lem:class-intersect}]
The triples $(G,H_1,H_2)$ where $G$ is a connected compact simple Lie group, $H_1$ and~$H_2$ are connected closed subgroups, and $G=H_1H_2$, were classified by Oni\v{s}\v{c}ik \cite{oni69}.
Among them, we find the triples $(G,H_1,H_2)$ such that $G_{\C}/((H_1)_{\C}\cap (H_2)_{\C})$ is $G_{\C}$-spherical by using Kr\"amer's classification \cite{kra79} of spherical homogeneous spaces $G_{\C}/H_{\C}$ with $G_{\C}$ simple.
\end{proof}

\subsection{Differential operators and transfer map for the triple \eqref{eqn:nonsimple-case}}

For the rest of the section, we examine the algebra $\D_G(X)$ and its subalgebras for the triple \eqref{eqn:nonsimple-case}.
In this case, since $\tilG/\tilH\simeq\mathbb{S}^7\times\mathbb{S}^7$ is simply connected, the transitive $G$-action on $\tilG/\tilH$ via $G\hookrightarrow\tilG$, $g\mapsto (g,\varsigma(g))$, has connected stabilizer $H:=\tilH\cap G$; it is isomorphic to $G_{2(-14)}$.
The only maximal connected proper subgroup of $G$ containing~$H$ is $K=\Spin(7)$.
For $i\in\{1,2\}$, we see the Casimir element of the $i$-th factor of $\tilG=\Spin(8)\times\Spin(8)$ as an element of $Z(\g_{\C})$, and denote it by $C_{\tilG}^{(i)}$.
Clearly $C_{\tilG}=C_{\tilG}^{(1)}+C_{\tilG}^{(2)}$.

\begin{proposition}[Generators and relations] \label{prop:ex(*)}
For
$$X = \tilG/\tilH = (\Spin(8)\times\Spin(8))/(\Spin(7)\times\Spin(7)) \simeq \Spin(8)/G_{2(-14)} \!= G/H$$
and $K=\Spin(7)$, we have
\begin{enumerate}
  \item $3\,\dd\ell(C_{\tilG}) = 3 \big(\dd\ell(C_{\tilG}^{(1)})+\dd\ell(C_{\tilG}^{(2)})\big) = 6\,\dd\ell(C_G) - 4\,\dd r(C_K)$;
  \item $\left \{
\begin{array}{l}
    \D_{\tilG}(X) = \C[\dd\ell(C_{\tilG}^{(1)}),\dd\ell(C_{\tilG}^{(2)})];\\
    \D_K(F) = \C[\dd r(C_K)];\\
    \D_G(X) = \C\big[\dd\ell(C_{\tilG}^{(1)}),\dd\ell(C_{\tilG}^{(2)}),\dd r(C_K)\big] = \C\big[\dd\ell(C_{\tilG}^{(1)}),\dd\ell(C_{\tilG}^{(2)}),\dd\ell(C_G)\big].
    \end{array} \right.$
\end{enumerate}
\end{proposition}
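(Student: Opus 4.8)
\textbf{Proof proposal for Proposition~\ref{prop:ex(*)}.}
The plan is to follow the same four-step strategy used throughout Section~\ref{sec:computations}, adapted to the product case, using the general machinery of Section~\ref{sec:strategy} which applies here by Proposition~\ref{prop:pi-tau-theta} and Lemma~\ref{lem:spherical2} (the triple \eqref{eqn:nonsimple-case} is in the setting \ref{setting}, so $X_{\C}=G_{\C}/H_{\C}$ is $G_{\C}$-spherical, the surjectivity in Lemma~\ref{lem:surj-dl-drF} is already established, and $F=K/H=\Spin(7)/G_{2(-14)}$ is a sphere $\mathbb{S}^7$, hence a rank-one symmetric space with $\D_K(F)=\C[\dd r(C_K)]$). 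First I would describe explicitly the discrete series and the canonical map $\vartheta\mapsto(\pi(\vartheta),\tau(\vartheta))$ of Proposition~\ref{prop:pi-tau-theta}. Using the triality automorphism $\varsigma$ and the description of $\Disc(\Spin(8)/\Spin(7))$ as in Lemma~\ref{lem:ex(xi)}.(1), the discrete series for $\tilG/\tilH\simeq\mathbb{S}^7\times\mathbb{S}^7$ are the $\mathcal{H}^{j_1}(\R^8)\boxtimes\mathcal{H}^{j_2}(\R^8)$; I would work out the branching law for $G=\Spin(8)\hookrightarrow\tilG$, $g\mapsto(g,\varsigma(g))$, i.e. decompose $\mathcal{H}^{j_1}(\R^8)\otimes\varsigma^*\mathcal{H}^{j_2}(\R^8)$ as a $\Spin(8)$-module, and identify those summands $\vartheta$ lying in $\Disc(\Spin(8)/G_{2(-14)})$. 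This is precisely the content of Lemma~\ref{lem:ex(*)} (cited in Lemma~\ref{lem:surj-dl-drF} and in Section~\ref{subsec:strategy-transfer}), whose proof uses the Borel--Weil estimate of Proposition~\ref{prop:branchnormal}; I would either invoke that lemma or reprove the needed branching law here. Then $\tau(\vartheta)\in\Disc(K/H)=\Disc(\Spin(7)/G_{2(-14)})=\{\mathcal{H}^k(\R^7):k\in\N\}$ is read off from the restriction $\Spin(8)\downarrow\Spin(7)$.

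Second and third, I would take generators and find the linear relation. The Casimir $C_{\tilG}^{(i)}$ acts on $\mathcal{H}^{j_i}(\R^8)$ by $j_i^2+6j_i$; the Casimir $C_G$ of $\Spin(8)$ acts on each relevant $\vartheta$ by the scalar $\langle\lambda,\lambda+2\rho\rangle$; and $C_K$ acts on $\mathcal{H}^k(\R^7)$ by $k^2+5k$. After the explicit branching law one reads off how the highest weight $\lambda$ of $\vartheta$ and the integer $k$ depend on $(j_1,j_2)$; plugging into these scalar formulas should produce the identity
$$
2(j_1^2+6j_1)+2(j_2^2+6j_2) \;=\; 6\cdot\langle\lambda,\lambda+2\rho\rangle \;-\; 4(k^2+5k)
$$
valid for all $\vartheta\in\Disc(G/H)$ (with appropriate normalization of $C_G$), which by Proposition~\ref{prop:PQR} yields $3\,\dd\ell(C_{\tilG})=6\,\dd\ell(C_G)-4\,\dd r(C_K)$ in $\D_G(X)$, and the equality $\dd\ell(C_{\tilG})=\dd\ell(C_{\tilG}^{(1)})+\dd\ell(C_{\tilG}^{(2)})$ is immediate from $C_{\tilG}=C_{\tilG}^{(1)}+C_{\tilG}^{(2)}$.

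Fourth, I would prove the structural statement \eqref{prop:ex(*)}.(2). The identifications $\D_{\tilG}(X)=\C[\dd\ell(C_{\tilG}^{(1)}),\dd\ell(C_{\tilG}^{(2)})]$ and $\D_K(F)=\C[\dd r(C_K)]$ follow from the Harish-Chandra isomorphism applied to the two $\mathbb{S}^7$ factors and to $F\simeq\mathbb{S}^7$ respectively (this is also part of Lemma~\ref{lem:surj-dl-drF}). For $\D_G(X)$ itself one invokes Lemma~\ref{lem:struct-DGX}.(3): by the computation of $S(\g_{\C}/\h_{\C})^H$ (which should be $S(\C^7\oplus\C^7)^{G_{2(-14)}}$, a polynomial ring on $2+1=3$ generators of degrees $2,2,2$ — a rank/degree count analogous to Lemmas~\ref{lem:ex(x)}.(3), \ref{lem:ex(xi)}.(3) and compatible with $\rank G/H=3$ from Table~\ref{table2}), it suffices to show that the three differential operators $\dd\ell(C_{\tilG}^{(1)}),\dd\ell(C_{\tilG}^{(2)}),\dd r(C_K)$ are algebraically independent of order $2$; this is checked by letting a polynomial relation act on the $\vartheta$-isotypic components and observing that the triples of scalars $(j_1^2+6j_1,\,j_2^2+6j_2,\,k^2+5k)$ are Zariski-dense in $\C^3$ as $\vartheta$ ranges over $\Disc(G/H)$. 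The second displayed equality in (2) then follows by substituting the linear relation from (1). The main obstacle will be step one: determining the precise branching law for the twisted diagonal embedding $\Spin(8)\hookrightarrow\Spin(8)\times\Spin(8)$, $g\mapsto(g,\varsigma(g))$ — i.e. computing $\dim\Hom_{\Spin(8)}\!\big(\vartheta,\mathcal{H}^{j_1}(\R^8)\otimes\varsigma^*\mathcal{H}^{j_2}(\R^8)\big)$ and pinning down which $\vartheta$ (with $G_{2(-14)}$-fixed vectors) occur and with what highest weight; once this is in hand (via the conormal-bundle estimate of Proposition~\ref{prop:branchnormal} together with a priori knowledge of $\Disc(\Spin(8)/G_{2(-14)})$, exactly as in the proof of Lemma~\ref{lem:ex(*)}), everything else is the routine Casimir bookkeeping and Zariski-density argument already illustrated in Sections~\ref{subsec:ex(x)}--\ref{subsec:ex(xi)}.
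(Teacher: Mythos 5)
Your overall route is the paper's own: describe the map $\vartheta\mapsto(\pi(\vartheta),\tau(\vartheta))$ through the branching law for the twisted diagonal embedding (Proposition~\ref{prop:branchnormal}, i.e.\ Lemma~\ref{lem:ex(*)}), do the Casimir bookkeeping via Proposition~\ref{prop:PQR}, and reduce the polynomial-ring statement to the count of three quadratic generators of $S(\g_{\C}/\h_{\C})^H$ plus a Zariski-density argument through Lemma~\ref{lem:struct-DGX}.(3). But there is a concrete error in your fiber data. You assert $\Disc(K/H)=\Disc(\Spin(7)/G_{2(-14)})=\{\mathcal{H}^k(\R^7):k\in\N\}$ and let $C_K$ act by $k^2+5k$. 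This is wrong: for $k\geq 1$ one has $\mathcal{H}^k(\R^7)|_{G_{2(-14)}}\simeq\Rep(G_{2(-14)},k\omega_2)$, irreducible and nontrivial (Lemma~\ref{lem:ex(x)}.(2)), so $\mathcal{H}^k(\R^7)$ has no $G_2$-fixed vector and does not occur in $\Disc(K/H)$. The correct set is $\{\Rep(\Spin(7),\frac{1}{2}(a,a,a)):a\in\N\}$ (Lemma~\ref{lem:ex(*)}.(1)), on which $C_K$ acts by $\frac{3}{4}a(a+6)$. Only with this scalar, and with $\vartheta=\Rep(\Spin(8),\frac{1}{2}(j+a,j',j',a-j))$ on which $C_G$ acts by $\frac{1}{2}(j^2+j'^2+6(j+j')+a(a+6))$, does the identity $3\,\dd\ell(C_{\tilG})=6\,\dd\ell(C_G)-4\,\dd r(C_K)$ close up; with $k^2+5k$ the relation in (1) cannot be verified, and the Zariski-density triple must likewise be $(j^2+6j,\,j'^2+6j',\,\frac{3}{4}a(a+6))$ rather than the one you wrote. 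Note also that the left-hand side of your trial identity should carry the coefficient $3$, not $2$, to match the claimed conclusion.

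A second, smaller slip: $F=K/H=\Spin(7)/G_{2(-14)}$ is diffeomorphic to $\mathbb{S}^7$ but is \emph{not} a symmetric space, so the Cartan--Helgason theorem and the symmetric-space Harish-Chandra isomorphism do not apply to it directly. The equality $\D_K(F)=\C[\dd r(C_K)]$ is still correct, but it is obtained, as in the paper, from the fact that $S(\kk_{\C}/\h_{\C})^H=S(\C^7)^{G_2}$ is generated by a single quadratic element (Lemma~\ref{lem:ex(xi)}.(3)) together with Lemma~\ref{lem:struct-DGX} and the surjectivity of $Z(\kk_{\C})\to\D_K(F)$ from Lemma~\ref{lem:surj-dl-drF} (compare Proposition~\ref{prop:ex(viii)}.(2)); the same caveat applies to your appeal to the ``rank-one symmetric'' structure of $F$ in the first step. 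Once these two points are repaired, the remaining steps of your plan coincide with the paper's proof.
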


Proposition~\ref{prop:ex(*)}.(2) states that
$$\D_G(X) = \langle\D_{\tilG}(X),\dd\ell(Z(\g_{\C}))\rangle = \langle\D_{\tilG}(X),\dd r(Z(\kk_{\C}))\rangle.$$
In particular, condition ($\widetilde{\mathrm{B}}$) of Section~\ref{subsec:intro-applic} holds.
However, unlike in the previous cases where $\tilG_{\C}$ is simple, here the subalgebra
\begin{equation} \label{eqn:RZgZk-*}
R := \langle\dd\ell(Z(\g_{\C})),\dd r(Z(\kk_{\C}))\rangle
\end{equation}
is strictly contained in $\D_G(X)$, namely condition ($\widetilde{\mathrm{A}}$) fails.
More precisely, the following holds.

\begin{proposition} \label{prop:ex(*)-RDGX}
For
$$X = \tilG/\tilH = (\Spin(8)\times\Spin(8))/(\Spin(7)\times\Spin(7)) \simeq \Spin(8)/G_{2(-14)} \!= G/H$$
and $K=\Spin(7)$, and for any $i\in\{1,2\}$, we have
\begin{enumerate}
  \item $\dd\ell(C_{\tilG}^{(i)})\notin R$;
  \item $\D_G(X) = R + R\,\dd\ell(C_{\tilG}^{(i)})$ as $R$-modules.
\end{enumerate}
\end{proposition}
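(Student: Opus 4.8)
The structure mirrors the case-by-case analysis carried out earlier, so first I would assemble the representation-theoretic data for the triple \eqref{eqn:nonsimple-case}, exactly in the spirit of Lemmas~\ref{lem:ex(vii)} through~\ref{lem:ex(xi)}. Concretely, I would record: the description of $\Disc(\tilG/\tilH)$, $\Disc(G/H)$, and $\Disc(K/H)$; the branching law for $\tilG\downarrow G$; and the ring $S(\g_\C/\h_\C)^H = S(\C^7)^{G_2}$, which is a polynomial ring on a single degree-$2$ generator (Lemma~\ref{lem:ex(xi)}.(3), since $\g_\C/\h_\C \simeq \C^7$ as a $G_{2(-14)}$-module). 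Using the triality automorphism $\varsigma$ as in Section~\ref{subsec:ex(vii)} and Lemma~\ref{lem:ex(xi)}, the discrete series for $\Spin(8)/G_{2(-14)}$ is $\{\Rep(\Spin(8),\tfrac12(\ell,\ell,\ell,\ell)) : \ell\in\N\}$, and the restriction $\mathcal{H}^j(\R^8)|_{G_{2(-14)}} \simeq \Rep(G_2,j\omega_2)$ remains irreducible; the map $\vartheta\mapsto(\pi(\vartheta),\tau(\vartheta))$ of Proposition~\ref{prop:pi-tau-theta} then attaches to $\vartheta = \Rep(\Spin(8),\tfrac12(\ell,\ell,\ell,\ell))$ the pair $(\pi(\vartheta),\tau(\vartheta))$ where $\pi(\vartheta)$ is an \emph{outer} tensor product $\mathcal{H}^a(\R^8)\boxtimes\mathcal{H}^b(\R^8)$ determined by branching via the two triality-twisted embeddings, and $\tau(\vartheta)=\Rep(\Spin(7),\tfrac12(\ell,\ell,\ell))\in\Disc(K/H)$. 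Assuming Proposition~\ref{prop:ex(*)} (already in the excerpt), the relation $3(\dd\ell(C_{\tilG}^{(1)})+\dd\ell(C_{\tilG}^{(2)})) = 6\,\dd\ell(C_G) - 4\,\dd r(C_K)$ and the descriptions of the three subalgebras are in hand.

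\textbf{Part (2) implies part (1).} I would prove (2) first and deduce (1) as a corollary. For (2): by Proposition~\ref{prop:ex(*)}.(2), $\D_G(X)$ is the polynomial ring $\C[\dd\ell(C_{\tilG}^{(1)}),\dd\ell(C_{\tilG}^{(2)}),\dd r(C_K)]$, which has three algebraically independent generators. On the other hand, $R = \langle\dd\ell(Z(\g_\C)),\dd r(Z(\kk_\C))\rangle$ is generated by $\dd\ell(C_G)$ and $\dd r(C_K)$ (since $\Spin(8)/G_{2(-14)}$ has $\D_G(X)$-rank controlled by a single Casimir on the $G$-side: $\dd\ell(Z(\g_\C))=\C[\dd\ell(C_G)]$ by Lemma~\ref{lem:ex(xi)}.(3) applied to $G/H$, and $\dd r(Z(\kk_\C))=\C[\dd r(C_K)]$ since $K/H=\Spin(7)/G_{2(-14)}$ has rank one). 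The key computational step is to evaluate the scalars by which $\dd\ell(C_{\tilG}^{(i)})$, $\dd\ell(C_G)$, $\dd r(C_K)$ act on each $\vartheta\in\Disc(G/H)$, using Proposition~\ref{prop:D-spec-Hom}: write these as explicit polynomials in $\ell$. Since $C_{\tilG}=C_{\tilG}^{(1)}+C_{\tilG}^{(2)}$ and $C_G,C_K$ each give a degree-$2$ polynomial in $\ell$, while $\dd\ell(C_{\tilG}^{(1)})-\dd\ell(C_{\tilG}^{(2)})$ gives a genuinely different degree-$2$ polynomial (this is where the two triality twists matter — the two factors of $\tilG$ contribute asymmetrically to $\vartheta$), I would check that $\dd\ell(C_{\tilG}^{(i)})$ is \emph{not} a polynomial in $\dd\ell(C_G)$ and $\dd r(C_K)$: any element of $R$ restricts on $\Disc(G/H)$ to a polynomial in the two quadratics $\psi(\cdot,\dd\ell(C_G))$ and $\psi(\cdot,\dd r(C_K))$, and I would exhibit a polynomial identity obstruction (e.g.\ comparing values at enough $\vartheta$, using that $\psi(\vartheta,\dd\ell(C_{\tilG}^{(i)}))$ is not constant on the fibers of $\ell\mapsto(\psi(\vartheta,\dd\ell(C_G)),\psi(\vartheta,\dd r(C_K)))$ — or more cleanly, noting these two quadratics are \emph{proportional} modulo lower order, so $R$ is too small). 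This gives (1). For the module statement in (2): using the relation in Proposition~\ref{prop:ex(*)}.(1), $\dd\ell(C_{\tilG}^{(1)})+\dd\ell(C_{\tilG}^{(2)}) \in R$; combined with (1), $\dd\ell(C_{\tilG}^{(1)})$ and $\dd\ell(C_{\tilG}^{(2)})$ differ by an element of $R$, so $R + R\,\dd\ell(C_{\tilG}^{(i)})$ is independent of $i$ and contains both. Since $\D_G(X)=\C[\dd\ell(C_{\tilG}^{(1)}),\dd\ell(C_{\tilG}^{(2)}),\dd r(C_K)]$ and $R$ contains $\dd r(C_K)$ and the sum $\dd\ell(C_{\tilG}^{(1)})+\dd\ell(C_{\tilG}^{(2)})$, every monomial $(\dd\ell(C_{\tilG}^{(1)}))^a(\dd\ell(C_{\tilG}^{(2)}))^b(\dd r(C_K))^c$ can be rewritten, using $\dd\ell(C_{\tilG}^{(1)})\dd\ell(C_{\tilG}^{(2)}) = \dd\ell(C_{\tilG}^{(1)})(S - \dd\ell(C_{\tilG}^{(1)}))$ with $S := \dd\ell(C_{\tilG}^{(1)})+\dd\ell(C_{\tilG}^{(2)})\in R$, as $R$-linear combination of $1$ and $\dd\ell(C_{\tilG}^{(i)})$; I would make this precise by an induction on $a+b$ showing $(\dd\ell(C_{\tilG}^{(i)}))^n \in R + R\,\dd\ell(C_{\tilG}^{(i)})$ for all $n$, which forces $\D_G(X) = R + R\,\dd\ell(C_{\tilG}^{(i)})$.

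\textbf{Main obstacle.} The delicate point is pinning down $\pi(\vartheta)$ precisely — i.e.\ the branching law $\tilG\downarrow G$ through the twisted diagonal $g\mapsto(g,\varsigma(g))$ — and thereby the exact quadratic polynomials in $\ell$ giving the eigenvalues of $\dd\ell(C_{\tilG}^{(1)})$ and $\dd\ell(C_{\tilG}^{(2)})$ separately. One must verify that these two quadratics are \emph{not} both expressible through $\psi(\vartheta,\dd\ell(C_G))$ and $\psi(\vartheta,\dd r(C_K))$, which is exactly the failure of condition $(\widetilde{\mathrm{A}})$. Here the structure of the argument is: on $\Disc(G/H)\simeq\N$ (parametrized by $\ell$), both $\psi(\vartheta,\dd\ell(C_G))$ and $\psi(\vartheta,\dd r(C_K))$ are monic-up-to-scalar quadratics in $\ell$ with the \emph{same} leading behavior relative to each other (one is an affine function of the other plus lower order, since the fiber $K/H$ is rank one and the whole space is rank one on the $G$-side too modulo this fiber), so the subalgebra of $\mathrm{Map}(\N,\C)$ they generate omits a $2$-dimensional family in each degree; meanwhile $\psi(\vartheta,\dd\ell(C_{\tilG}^{(i)}))$ lands outside. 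I expect the cleanest route is to extract, from Proposition~\ref{prop:ex(*)}.(1) together with the individual eigenvalue formulas, the single scalar identity
\[
\psi(\vartheta,\dd\ell(C_{\tilG}^{(1)})) - \psi(\vartheta,\dd\ell(C_{\tilG}^{(2)})) = c\,\ell + d
\]
for suitable constants $c\neq 0,d$, which is linear (not constant) in $\ell$ while everything in $R$ is a polynomial in two quadratics whose difference of arguments never produces a nonconstant linear function of $\ell$; this contradiction proves $\dd\ell(C_{\tilG}^{(i)})\notin R$, and then (2) follows formally as above. Verifying that linear-in-$\ell$ formula — i.e.\ that the two triality twists genuinely desynchronize the two factors — is the computational heart and the main thing to get right.
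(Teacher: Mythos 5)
Your proposal is built on a misidentification of the basic data for this triple, and the errors are fatal to both parts. For $(\tilG,\tilH,G)=(\Spin(8)\times\Spin(8),\Spin(7)\times\Spin(7),\Spin(8))$ one has $\g=\spin(8)$, $\h=\g_2$, so $\g_{\C}/\h_{\C}\simeq\C^7\oplus\C^7$ (dimension $14$), and $S(\g_{\C}/\h_{\C})^H$ has \emph{three} algebraically independent degree-$2$ generators, not one; correspondingly $\rank G/H=3$ and $\Disc(G/H)$ is the three-parameter family $\Rep\big(\Spin(8),\tfrac12(j+a,j',j',a-j)\big)$, not the one-parameter family $\Rep(\Spin(8),\tfrac12(\ell,\ell,\ell,\ell))$ you assert (you have conflated this case with cases (x)--(xii), where $G/H$ has rank one). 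In particular your claim that $\dd\ell(Z(\g_{\C}))=\C[\dd\ell(C_G)]$ is false: $Z(\g_{\C})=Z(\spin(8,\C))$ has four generators, and their images in $\D_G(X)$ are strictly more than the Casimir. This matters because with your $R'=\C[\dd\ell(C_G),\dd r(C_K)]$ statement (2) is actually \emph{false}: in the coordinates $x=(j+3)^2$, $y=(j'+3)^2$, $z=(a+3)^2$ on $\Disc(G/H)$ one finds $\dd\ell(C_G)\leftrightarrow\tfrac12(x+y+z-27)$ and $\dd r(C_K)\leftrightarrow\tfrac34(z-9)$, so $R'$ corresponds to $\C[x+y,z]$, and $xy\notin\C[x+y,z]+\C[x+y,z]\,x$. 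Your induction for (2) silently needs $xy\in R$ (to reduce $x^2=(x+y)x-xy$), and that element comes precisely from the higher generators $R_2,R_3,R_4\in Z(\g_{\C})$ (concretely $r_4=(x-1)(y-z)$ together with $xz+y\in R$ give $xy\in R$); without them the argument cannot close.

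Part (1) suffers from the same defect. Since $\Disc(G/H)$ is not a one-parameter set, the eigenvalue of an element of $R$ is not "a polynomial in two quadratics of $\ell$", and your proposed linear-in-$\ell$ obstruction does not exist; moreover $R$ contains non-symmetric elements such as the image of $R_4$, so one cannot argue that everything in $R$ is symmetric in the two $\tilG$-factors. The paper's route is to identify $\D_G(X)\overset{\sim}{\to}\C[x,y,z]$ by evaluation on $\Disc(G/H)$ (using the map $\vartheta\mapsto(\pi(\vartheta),\tau(\vartheta))$ with $\pi(\vartheta)=\mathcal{H}^j(\R^8)\boxtimes\mathcal{H}^{j'}(\R^8)$, $\tau(\vartheta)=\Rep(\Spin(7),\tfrac12(a,a,a))$), record $p_i$, $q$, $r_1,\dots,r_4$ explicitly, prove $x\notin R$ by specializing $z=1$ (after which every generator of $R$ becomes symmetric in $x,y$ while $x$ is not), and prove (2) from $z,\,x+y,\,xz+y,\,xy\in R$, whence $x^n,y^n\in R+Rx$. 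Your overall template (evaluate on discrete series, then an algebraic obstruction for (1) and an induction on powers for (2)) is the right genre, but the concrete inputs — the branching data, the size of $\dd\ell(Z(\g_{\C}))$, and the parametrization of $\Disc(G/H)$ — are wrong, and with them the proof does not go through.
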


We identify
\begin{align}
\Hom_{\C\text{-}\mathrm{alg}}(Z(\g_{\C}),\C) & \ \simeq\ \jj_{\C}^*/W(\g_{\C}) && \hspace{-1.9cm} \simeq\ \C^4/W(D_4),\label{eqn:HCZg-*}\\
\Hom_{\C\text{-}\mathrm{alg}}(\D_{\tilG}(X),\C) & \ \simeq\ \hspace{0.3cm} \tila_{\C}^*/\widetilde{W} && \hspace{-1.9cm} \simeq\ \C^2/(\Z/2\Z)^2\label{eqn:HCX-*}
\end{align}
by the standard bases.
More precisely, let $\jj_{\C}$ be a Cartan subalgebra of $\g_{\C}=\so(8,\C)$ and $\{e_1,e_2,e_3,e_4\}$ the standard basis of~$\jj_{\C}^*$.
For later purposes, we fix a positive system $\Delta^+(\g_{\C},\jj_{\C}) = \{ e_i\pm e_j \,:\, 1\leq i<j\leq 4\}$.
Let $\varsigma$ be the outer automorphism of order three of $\so(8)$ leaving $\jj_{\C}$ invariant and
$$\varsigma^*(e_i) = \omega_+ := \frac{1}{2}(1,1,1,1),$$
as in \eqref{eqn:triality-basis}.
We view $G=\Spin(8)$ as a subgroup of $\tilG=\Spin(8)\times\Spin(8)$ via $\mathrm{id}\times\varsigma$.

The set $\Disc(K/H)$ consists of the representations of $K=\Spin(7)$ of the form $\tau = \Rep\big(\Spin(7),\frac{1}{2}(a,a,a)\big)$ for $a\in\N$.
In this case, $\tilG$ is not a simple Lie group, but Theorem~\ref{thm:nu-tau}.(4) still holds as follows.

\begin{proposition}[Transfer map] \label{prop:nu-ex(*)}
Let
$$X = \tilG/\tilH = (\Spin(8)\times\Spin(8))/(\Spin(7)\times\Spin(7)) \simeq \Spin(8)/G_{2(-14)} \!= G/H$$
and $K=\Spin(7)$.
For $\tau=\Rep\big(\Spin(7),\frac{1}{2}(a,a,a)\big)\in\Disc(K/H)$ with $a\in\N$, the affine map
\begin{eqnarray}
\hspace{1cm}S_{\tau} :\ \tila_{\C}^* \simeq\quad \C^2\hspace{0.2cm} & \longrightarrow & \hspace{2.8cm} \C^4 \hspace{3cm} \simeq \jj_{\C}^*\label{eqn:Stau-*}\\
\hspace{1.3cm}(\lambda,\lambda') & \longmapsto & \frac{1}{2} \, \big(\lambda+a+3, \lambda'+1, \lambda'-1, \lambda-a-3\big)\nonumber
\end{eqnarray}
induces a transfer map
$$\nnu(\cdot,\tau) : \Hom_{\C\text{-}\mathrm{alg}}(\D_{\tilG}(X),\C) \longrightarrow \Hom_{\C\text{-}\mathrm{alg}}(Z(\g_{\C}),\C)$$
as in Theorem~\ref{thm:nu-tau}.
\end{proposition}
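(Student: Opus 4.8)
The statement is the ``product-case'' analogue of the transfer-map propositions proved case-by-case in Section~\ref{sec:computations} (e.g.\ Proposition~\ref{prop:nu-ex(xi)}), and the general mechanism is already in place: by Proposition~\ref{prop:Stau-transfer} it suffices to exhibit an affine map $S_{\tau}:\tila_{\C}^*\to\jj_{\C}^*$ that, for every $\vartheta\in\Disc(G/H)$ with $\tau(\vartheta)=\tau$, sends $\lambda(\vartheta)+\rho_{\tila}$ to $\nu(\vartheta)+\rho$ modulo $W(\g_{\C})$, where $\pi(\vartheta)=\Rep(\tilG,\lambda(\vartheta))$ and $\vartheta=\Rep(G,\nu(\vartheta))$. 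The crux is therefore an explicit description of the canonical map $\vartheta\mapsto(\pi(\vartheta),\tau(\vartheta))$ of Proposition~\ref{prop:pi-tau-theta} for the triple \eqref{eqn:nonsimple-case}, together with a bookkeeping of the relevant $\rho$-shifts; this will be carried out in a lemma on finite-dimensional representations, in the spirit of Lemmas \ref{lem:ex(vi)}, \ref{lem:ex(x)}, and \ref{lem:ex(xi)}.

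\textbf{Step 1: representation-theoretic data.} First I would record, in a lemma, the three sets $\Disc(\tilG/\tilH)$, $\Disc(G/H)$, $\Disc(K/H)$ and the branching laws $\tilG\downarrow G$ and $G\downarrow K$. Since $\tilG/\tilH\simeq\mathbb{S}^7\times\mathbb{S}^7$, we have $\Disc(\tilG/\tilH)=\{\mathcal{H}^p(\R^8)\boxtimes\mathcal{H}^q(\R^8):p,q\in\N\}$ by the classical theory of spherical harmonics. For $\Disc(\Spin(8)/G_{2(-14)})$ one uses Kr\"amer's classification \cite{kra79} (or Proposition~\ref{prop:branchnormal} together with a priori knowledge of the sphericity), obtaining a two-parameter family, which by the triality normalization $\varsigma^*(e_i)=\omega_+$ can be written as $\Rep(\Spin(8),\tfrac12(b+a,b-a,\dots))$-type highest weights; and $\Disc(\Spin(7)/G_{2(-14)})=\{\Rep(\Spin(7),\tfrac12(a,a,a)):a\in\N\}$ as in Lemma~\ref{lem:ex(viii)}.(1). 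The embedding $G\hookrightarrow\tilG$, $g\mapsto(g,\varsigma(g))$, means that the branching $\pi|_G$ of $\mathcal{H}^p(\R^8)\boxtimes\mathcal{H}^q(\R^8)$ is the tensor product of $\mathcal{H}^p(\R^8)|_{\Spin(8)}$ with the $\varsigma$-twist of $\mathcal{H}^q(\R^8)$; using \eqref{eqn:Hk-tri} and Pieri-type decompositions one reads off which $\vartheta$ occur, each with multiplicity one by Lemma~\ref{lem:spherical2}. Comparing with the $G\downarrow K$ decomposition (here $\Spin(8)\downarrow\Spin(7)$, classical) then pins down $\tau(\vartheta)$. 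This yields an explicit formula for $\vartheta\mapsto(\pi(\vartheta),\tau(\vartheta))$, say with $\pi(\vartheta)=\mathcal{H}^{\,?}(\R^8)\boxtimes\mathcal{H}^{\,?}(\R^8)$ and $\tau(\vartheta)=\Rep(\Spin(7),\tfrac12(a,a,a))$, parametrized by two integers.

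\textbf{Step 2: $\rho$-shifts and the affine map.} With the map in hand, I would compute $\lambda(\vartheta)+\rho_{\tila}$ via the Harish--Chandra isomorphism \eqref{eqn:HCX-*}, where $\rho_{\tila}$ is the $\rho$-shift for the rank-two symmetric space $\mathbb{S}^7\times\mathbb{S}^7$, namely $\rho_{\tila}=(3,3)$ in the obvious coordinates; and $\nu(\vartheta)+\rho$ via \eqref{eqn:HCZg-*} with $\rho=(3,2,1,0)$ for $D_4$. Matching the two sides — using the triality identification $\varsigma^*(e_i)=\omega_+$ to express the $\Spin(8)$-weight of $\vartheta$ coming from the $\varsigma$-twisted factor — should produce precisely the affine map \eqref{eqn:Stau-*}, $(\lambda,\lambda')\mapsto\tfrac12(\lambda+a+3,\lambda'+1,\lambda'-1,\lambda-a-3)$; this is a routine but slightly delicate bookkeeping of half-integer shifts, and is where triality enters decisively. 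Finally, Proposition~\ref{prop:Stau-transfer} (applicable since Proposition~\ref{prop:pi-tau-theta} holds: $X_{\C}$ is $G_{\C}$-spherical), combined with Proposition~\ref{prop:qItau-*} guaranteeing existence of $\varphi_{\mathcal{I}_{\tau}}^*$ in the product case, gives that $S_{\tau}$ induces the transfer map as in Theorem~\ref{thm:nu-tau}.

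\textbf{Main obstacle.} The genuinely nontrivial part is Step~1: determining the branching law $\tilG\downarrow G$ through the $\varsigma$-twisted diagonal embedding, i.e.\ decomposing $\mathcal{H}^p(\R^8)\otimes\varsigma\bigl(\mathcal{H}^q(\R^8)\bigr)$ as a $\Spin(8)$-module and identifying which irreducible constituents lie in $\Disc(\Spin(8)/G_{2(-14)})$ with multiplicity one. I expect this to require either a direct application of Proposition~\ref{prop:tensornormal} (tensor products via conormal bundles) together with the a priori description of $\Disc(\Spin(8)/G_{2(-14)})$ to force the upper estimate to be an equality — exactly as in the proof of Lemma~\ref{lem:ex(vi)}.(2) — or a careful use of the triality automorphism to reduce one of the two tensor factors to a spin representation. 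Once the combinatorics of this decomposition is settled, everything else is a mechanical check of linear formulas, and Corollary~\ref{cor:rank} (with Table~\ref{table2}) provides a useful consistency test on the number of parameters ($\rank\tilG/\tilH+\rank K/H=2+1=3=\rank G/H$).
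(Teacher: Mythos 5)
Your plan follows essentially the same route as the paper: the paper establishes the representation-theoretic data of your Step~1 in Lemma~\ref{lem:ex(*)} (with the branching law $\mathcal{H}^j(\R^8)\otimes\varsigma^*\mathcal{H}^{j'}(\R^8)$ obtained exactly as you predict, via Proposition~\ref{prop:tensornormal} together with Kr\"amer's description of $\Disc(\Spin(8)/G_{2(-14)})$ forcing the upper estimate to be an equality), and then carries out your Step~2 by matching $\lambda(\vartheta)+\rho_{\tila}=(j+3,j'+3)$ with $\nu(\vartheta)+\rho$ under the triality normalization and invoking Propositions~\ref{prop:Stau-transfer} and~\ref{prop:qItau-*}. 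So your proposal is correct and coincides with the paper's argument in both structure and key technical step.
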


Proposition~\ref{prop:nu-ex(*)} will be proved in Section~\ref{subsec:proof-prop-ex(*)}.

\subsection{Representation theory for $\Spin(8)/G_{2(-14)}$ with overgroup $\Spin(8)\times\Spin(8)$}

In order to prove Propositions \ref{prop:ex(*)} and~\ref{prop:nu-ex(*)}, we use the following results on finite-dimensional representations.

\begin{lemma}\label{lem:ex(*)}
\begin{enumerate}
  \item Discrete series for $\tilG/\tilH$, $G/H$, and $F=K/H$:
  \begin{align*}
  & \Disc\big(\Spin(8)\!\times\!\Spin(8)/\Spin(7)\!\times\!\Spin(7)\big) = \big\{ \mathcal{H}^j(\R^8)\boxtimes\mathcal{H}^{j'}(\R^8) : j,j'\in\N\big\} ;\\
  & \Disc(\Spin(8)/G_{2(-14)}) = \Big\{ \Rep\Big(\Spin(8),\frac{1}{2}\big(j+a,j',j',a-j\big)\Big) :\\
  & \hspace{6.2cm} |j-j'|\leq a\leq j+j',\ j+j'-a\in 2\N\Big\} ;\\
  & \Disc(\Spin(7)/G_{2(-14)}) = \Big\{ \Rep\Big(\Spin(7),\frac{1}{2} (a,a,a)\Big) \,:\, a\in\N\Big\} .
  \end{align*}
  \item Branching laws for $\Spin(8)\times\Spin(8)\downarrow (\mathrm{id}\times\varsigma)(\Spin(8))$: For $j,j'\in\nolinebreak\N$,
  $$\big(\mathcal{H}^j(\R^8)\boxtimes\mathcal{H}^{j'}(\R^8)\big)|_{\Spin(8)} \,\simeq \bigoplus_{\substack{|j-j'|\leq a\leq j+j'\\ a\equiv j+j' \mod 2}} \Rep\Big(\Spin(8),\frac{1}{2}\big(j+a,j',j',a-j\big)\Big).$$
  \item Irreducible decomposition of the regular representation of~$G$: For~$a\in\nolinebreak\N$,
  \begin{align*}
  & L^2\Big(\Spin(8)/\Spin(7),\Rep\Big(\Spin(7),\frac{1}{2}(a,a,a)\Big)\Big)\\
  & \simeq\ \sumplus{\substack{j,j'\in\N\\ j+j'-a\in 2\N}}\ \Rep\Big(\Spin(8),\frac{1}{2}\big(j+a,j',j',a-j\big)\Big).
  \end{align*}
  \item The ring $S(\g_{\C}/\h_{\C})^H=S(\spin(8,\C)/\g_{2,\C})^{G_2}$ is generated by three algebraically independent homogeneous elements of degree~$2$.
\end{enumerate}
\end{lemma}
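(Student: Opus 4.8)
The plan is to establish the four statements of Lemma~\ref{lem:ex(*)} essentially in the given order, since statements (2) and~(3) will follow from the same branching computation, and then to deduce (4) by a separate invariant-theory argument parallel to what was done in the earlier cases (see the proofs of Lemmas \ref{lem:ex(x)}.(3) and~\ref{lem:ex(xi)}.(3)).

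For statement (1), the description of $\Disc(\Spin(8)\times\Spin(8)/\Spin(7)\times\Spin(7))$ is immediate from the classical theory of spherical harmonics on $\mathbb{S}^7$ (or the Cartan--Helgason theorem, Fact~\ref{fact:CartanHelgason}), since $\tilG/\tilH \simeq \mathbb{S}^7\times\mathbb{S}^7$ and the discrete series of a product is the outer tensor product of the discrete series of the factors. The description of $\Disc(\Spin(7)/G_{2(-14)})$ has already been recorded in Lemma~\ref{lem:ex(viii)}.(1) via Kr\"amer's classification~\cite{kra79}. The description of $\Disc(\Spin(8)/G_{2(-14)})$ I would obtain not from a direct appeal to~\cite{kra79} but, as in the analogous cases earlier in this section, as a \emph{consequence} of the branching law in~(2) together with the double decomposition \eqref{eqn:decomp1}--\eqref{eqn:decomp2}: once (2) is proved, the union over all $j,j'$ of the irreducible $\Spin(8)$-summands of $\mathcal{H}^j(\R^8)\boxtimes\mathcal{H}^{j'}(\R^8)$ is exactly $\Disc(\Spin(8)/G_{2(-14)})$ by comparison with the fiber decomposition, so (1), (2), (3) are really one package.

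The heart of the matter is the branching law (2) for $\Spin(8)\times\Spin(8)\downarrow(\mathrm{id}\times\varsigma)(\Spin(8))$. Here $\varsigma$ is the order-three triality automorphism of $\so(8)$ with $\varsigma^*(e_i)=\omega_+$, so $\varsigma$ carries the vector representation $\mathcal{H}^{j'}(\R^8) = \Rep(\Spin(8),(j',0,0,0))$ to the half-spin representation $\Rep(\Spin(8),\tfrac{j'}{2}(1,1,1,-1))$ (up to the choice of $\omega_+$ vs.\ $\omega_-$; I would fix conventions so as to land on the weights in the statement). Thus (2) amounts to decomposing the tensor product $\Rep(\Spin(8),(j,0,0,0)) \otimes \Rep(\Spin(8),\tfrac{j'}{2}(1,1,1,-1))$ of a ``vector-type'' and a ``spinor-type'' representation of $\Spin(8)$. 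This is a classical Clebsch--Gordan-type computation: one can do it by the Littlewood--Richardson / Steinberg rule for $\so(8)$, or more slickly by realizing both factors via Borel--Weil on $\Spin(8,\C)/\widetilde{P}_{\C}^-$ and using the conormal-bundle estimate of Proposition~\ref{prop:branchnormal} applied to a triality-twisted diagonal embedding $\Spin(8,\C)\hookrightarrow\Spin(8,\C)\times\Spin(8,\C)$ to get an \emph{upper} bound, and then matching it against the a priori known $\Disc(\Spin(8)/G_{2(-14)})$ (or against a dimension count) to force equality --- exactly the strategy used for $\SO(16)\downarrow\Spin(9)$ in the proof of Lemma~\ref{lem:ex(vi)}.(2). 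Statement (3) then follows from (2) and Frobenius reciprocity via \eqref{eqn:decomp2}, or equivalently by noting that $G/K=\Spin(8)/\Spin(7)\simeq\mathbb{S}^7$ and invoking the classical branching $\mathfrak{so}(8)\downarrow\mathfrak{so}(7)$. The main obstacle is precisely keeping the triality bookkeeping straight --- which of the three $8$-dimensional representations each factor becomes, and which half-spin ($\omega_+$ or $\omega_-$) is selected --- so that the resulting highest weights match the normalization $\tfrac12(j+a,j',j',a-j)$ appearing in~\eqref{eqn:Stau-*}; the combinatorics of the tensor product itself is routine.

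Finally, for statement (4) I would argue as in Lemmas \ref{lem:ex(viii)}.(4), \ref{lem:ex(x)}.(3), \ref{lem:ex(xi)}.(3): as an $H$-module ($H\simeq G_{2(-14)}$), $\g_{\C}/\h_{\C} = \spin(8,\C)/\g_{2,\C}$ decomposes, and since $\dim\spin(8)-\dim\g_2 = 28-14 = 14$ and the two nontrivial irreducible $G_2$-modules of dimension $\leq 14$ are the $7$-dimensional and the adjoint $14$-dimensional ones, a dimension and weight check forces $\g_{\C}/\h_{\C}\simeq \C^7\oplus\C^7$ (two copies of the $7$-dimensional representation of $G_2$), matching the computation of $\g_{\C}/\kk_{\C}\oplus\kk_{\C}/\h_{\C} \simeq \C^8\oplus\C^7$ restricted to $G_2$, where $\C^8\simeq\C^7\oplus\mathbf{1}$ as $G_2$-modules. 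Since the $7$-dimensional representation of $G_2$ is self-dual and $S(\C^7)^{G_2}\simeq\C[s^2]$ is a polynomial ring on one degree-$2$ generator (as recalled in the proof of Lemma~\ref{lem:ex(xi)}.(3)), and since $\C^7\oplus\C^7$ as a $G_2$-module supports exactly a $3$-dimensional space of invariant quadratic forms (the two ``diagonal'' squares and the invariant pairing of the two copies) with no invariants in low odd degrees, a standard Poincar\'e-series argument --- or direct appeal to Schwarz's tables~\cite{sch78} --- gives that $S(\g_{\C}/\h_{\C})^H$ is a polynomial ring on three algebraically independent homogeneous generators of degree~$2$. One then applies Lemma~\ref{lem:struct-DGX} to transfer this to $\D_G(X)$, which is what is needed in the proofs of Propositions \ref{prop:ex(*)} and~\ref{prop:ex(*)-RDGX}.
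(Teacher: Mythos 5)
Your overall strategy coincides with the paper's: spherical harmonics for $\Disc(\tilG/\tilH)$, a Borel--Weil/conormal-bundle upper bound (Proposition~\ref{prop:tensornormal}) for the twisted tensor product in~(2) forced to be an equality, the classical $\so(8)\downarrow\so(7)$ branching twisted by $\varsigma$ for~(3), and the harmonic decomposition of $S(\C^7)\otimes S(\C^7)$ over $G_2$ for~(4). Three points need tightening. First, there is a latent circularity: you commit to deriving $\Disc(\Spin(8)/G_{2(-14)})$ \emph{from} the branching law~(2), but your preferred (``slick'') proof of~(2) forces equality in the upper bound \eqref{eqn:HjR8tensorupper} by matching against the a priori known $\Disc(\Spin(8)/G_{2(-14)})$ --- which is exactly the input you just declined to take from \cite{kra79}. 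The paper resolves this by quoting Kr\"amer for $\Disc(G/H)$ and using it to upgrade the inclusion to an equality (the same device as in Lemma~\ref{lem:ex(vi)}.(2)); if you instead want $\Disc(G/H)$ as an output, you must actually carry out the direct tensor-product decomposition (or the dimension count) you mention, which you leave unexecuted. Second, statement~(3) does \emph{not} follow from~(2) and Frobenius reciprocity alone: (2) is a $\tilG\downarrow G$ branching law and, even with the multiplicity-free double decomposition \eqref{eqn:decomp1}--\eqref{eqn:decomp2}, it does not identify which $\tau\in\Disc(K/H)$ each $\vartheta$ pairs with; one genuinely needs the $G\downarrow K$ branching, i.e.\ your alternative route (classical $\so(8)\downarrow\so(7)$ for the standard embedding, then conjugation by $\varsigma^{-1}$ using \eqref{eqn:triality-basis}, then Frobenius reciprocity), which is precisely the paper's proof.

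Third, in~(4) your cross-check is off: $\g_{\C}/\kk_{\C}=\so(8,\C)/\so(7,\C)$ is $7$-dimensional and restricts to $G_2$ as the $7$-dimensional representation (not $\C^8\simeq\C^7\oplus\mathbf{1}$; with your count the two pieces would total $15\neq 14$). The correct identification $\g_{\C}/\h_{\C}\simeq\C^7\oplus\C^7$ as $G_2$-modules, which you also state, is what the paper uses, and from there your Poincar\'e-series argument (via the fact that the $\mathcal{H}^j(\C^7)$ stay irreducible, pairwise inequivalent and self-dual on restriction to~$G_2$, so $(\mathcal{H}^i\otimes\mathcal{H}^j)^{G_2}$ is one-dimensional exactly when $i=j$) matches the paper's proof and yields three algebraically independent quadratic generators.
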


\begin{proof}[Proof of Lemma~\ref{lem:ex(*)}]
(1) The description of $\Disc(\tilG/\tilH)$ follows from the classical theory of spherical harmonics as in $\Spin(8)/\Spin(7)\simeq\SO(8)/\SO(7)$.
The description of $\Disc(G/H)$ and $\Disc(K/H)$ is given by Kr\"amer \cite{kra79}.

(2) Let $\varsigma^*\mathcal{H}^j(\R^8)$ be the irreducible representation of $\Spin(8)$ obtained by precomposing $\mathcal{H}^j(\R^8)$ by $\varsigma : \Spin(8)\to\Spin(8)$.
We first observe the following isomorphisms of $\Spin(8)$-modules:
\begin{eqnarray*}
\mathcal{H}^j(\R^8) & \simeq & \Rep(\Spin(8),je_1),\\
\varsigma^* \mathcal{H}^j(\R^8) & \simeq & \Rep(\Spin(8),j\omega_+),
\end{eqnarray*}
because $\varsigma^*(e_1)=\omega_+$.
Let $P_{\C}$ and~$Q_{\C}$ be the parabolic subgroups of $G_{\C}=\Spin(8,\C)$ given by the characteristic elements $(1,0,0,0)$ and $(1,1,1,1)\in\C^4\simeq\jj_{\C}$, respectively.
The nilradicals $\n_{\C}$ and~$\mathfrak{u}_{\C}$ of the parabolic subalgebras $\p_{\C}$ and~$\q_{\C}$ have the following weights:
\begin{eqnarray*}
\Delta(\n_{\C},\jj_{\C}) & = & \{ e_1\pm e_j ~:~ 2\leq j\leq 4\},\\
\Delta(\mathfrak{u}_{\C},\jj_{\C}) & = & \{ e_i+e_j ~:~ 1\leq i<j\leq 4\},
\end{eqnarray*}
and so
$$\Delta(\n_{\C}\cap\mathfrak{u}_{\C},\jj_{\C}) = \{ e_1+e_j ~:~ 2\leq j\leq 4\}.$$
The Levi subgroup of the standard parabolic subgroup $P_{\C}\cap Q_{\C}$ is a (connected) double covering of $\GL(1,\C)\times\GL(3,\C)$, and its Lie algebra acts on $\n_{\C}\cap\mathfrak{u}_{\C}\simeq\C^3$ by $\Rep(\gl(1,\C)+\gl(3,\C),(1,1,0,0))$, hence on the $\ell$-th symmetric tensor $\mathcal{S}^{\ell}(\n_{\C}\cap\mathfrak{u}_{\C})$ by
$$\Rep(\gl(1,\C)+\gl(3,\C),(\ell,\ell,0,0)).$$
Applying Proposition~\ref{prop:tensornormal}, we obtain an upper estimate for the possible irreducible summands of the tensor product representation $\mathcal{H}^j(\R^8) \otimes \varsigma^* \mathcal{H}^{j'}(\R^8)$ by
$$\bigoplus_{\ell=0}^{+\infty} \mathcal{O}\big(G_{\C}/(P_{\C}^-\cap Q_{\C}^-), \mathcal{S}^{\ell}(\n_{\C}^-\cap\mathfrak{u}_{\C}^-)\otimes\mathcal{L}_{je_1+j'\omega_+}\big).$$
By the Borel--Weil theorem, we have an isomorphism of $\Spin(8)$-modules:
\begin{eqnarray*}
& & \mathcal{O}\big(G_{\C}/(P_{\C}^-\cap Q_{\C}^-), \mathcal{S}^{\ell}(\n_{\C}^-\cap\mathfrak{u}_{\C}^-)\otimes\mathcal{L}_{je_1+j'\omega_+}\big)\\
& \simeq & \left\{ \begin{array}{ll}
\Rep\big(\Spin(8),\frac{1}{2}(2j+j'-2\ell, j', j', j'-2\ell)\big) & \text{if $0\leq\ell\leq\min(j,j')$},\\
\{0\} & \text{otherwise}.
\end{array}\right.
\end{eqnarray*}
Via the change of variables $a=j+j'-2\ell$, the condition $0\leq\ell\leq\min(j,j')$ on $\ell\in\N$ amounts to the following conditions on $a\in\Z$:
$$|j-j'| \leq a \leq j+j' \quad\mathrm{and}\quad a\equiv j+j' \mod 2.$$
Thus we have
\begin{equation} \label{eqn:HjR8tensorupper}
\mathcal{H}^j(\R^8) \otimes \varsigma^* \mathcal{H}^{j'}(\R^8) \subset\! \bigoplus_{\substack{|j-j'|\leq a\leq j+j'\\ a\equiv j+j'\mod 2}} \Rep\Big(\Spin(8),\frac{1}{2}(j+a,j',j',a-j)\Big).\hspace{-0.25cm}
\end{equation}
By comparing \eqref{eqn:decomp1} with \eqref{eqn:decomp2}, we see that the set of all irreducible $\Spin(8)$-modules occurring in $\mathcal{H}^j(\R^8)\otimes\varsigma^*\mathcal{H}^{j'}(\R^8)$ for some $j,j'$ coincides with\linebreak $\Disc(\Spin(8)/G_{2(-14)})$, counting multiplicities.
Therefore the description of $\Disc(\Spin(8)/G_{2(-14)})$ in~(1) forces \eqref{eqn:HjR8tensorupper} to be an equality.
This completes the proof of~(2).

(3) By the classical branching law for the \emph{standard} embedding $\so(7)\subset\so(8)$, see \eg \cite[Th.\,8.1.2]{gw09}, we have
$$\Hom_{\so(7)}\Big(\Rep\Big(\so(7),\frac{1}{2}(a,a,a)\Big),\Rep\big(\so(8),(x_1,x_2,x_3,x_4)\big)|_{\so(7)}\Big) = \{0\}$$
if and only if $(x_1,x_2,x_3,x_4)=\frac{1}{2}(j+j', a, a, j-j')$ for some $j,j'\in\Z$ with $|j-j'|\leq a\leq j+j'$ and $j+j'-a\in 2\Z$.
Using \eqref{eqn:triality-basis}, we have
$$\varsigma^{-1}\Big(\frac{1}{2}(j+j', a, a, j-j')\Big) = \frac{1}{2} (j+a, j', j', a-j).$$
We conclude using the Frobenius reciprocity.

(4) There is a unique $7$-dimensional irreducible representation of~$G_2$, and we have an isomorphism of $G_2$-modules
$$\spin(8,\C)/\g_{2,\C} \simeq \C^7 \oplus \C^7.$$
Let $Q\in S^2(\C^7)$ be the quadratic form defining $\SO(7,\C)$.
Then we have a decomposition as $\SO(7,\C)$-graded modules:
$$S(\C^7) \simeq \bigoplus_{j\in\N} \mathcal{H}^j(\C^7) \otimes \C[Q],$$
where $\SO(7,\C)$ acts trivially on the second factor.
The self-dual representations $\mathcal{H}^j(\C^7)$ of $\SO(7,\C)$ remain irreducible when restricted to~$G_2$, and they are pairwise inequivalent.
Therefore $(\mathcal{H}^i(\C^7)\otimes\mathcal{H}^j(\C^7))^{G_2}\neq\{ 0\}$ if and only if $i=j$, and in this case its dimension is one.
Hence the graded $\C$-algebra of $G_2$-invariants
\begin{eqnarray*}
S\big(\spin(8,\C)/\g_{2,\C}\big)^{G_2} & \simeq & \big(S(\C^7) \otimes S(\C^7)\big)^{G_2}\\
& \simeq & \bigoplus_{i,j\in\N} \big(\mathcal{H}^i(\C^7)\otimes\mathcal{H}^j(\C^7)\big)^{G_2} \otimes \C[Q] \otimes \C[Q']
\end{eqnarray*}
is isomorphic to a polynomial ring generated by three homogeneous elements of degree~$2$.
\end{proof}

\subsection{Generators and relations: proof}

In this section we give a proof of Proposition~\ref{prop:ex(*)}.
For this we observe from Lemma~\ref{lem:ex(*)} that the map $\vartheta\mapsto (\pi(\vartheta),\tau(\vartheta))$ of Proposition~\ref{prop:pi-tau-theta} is given by
\begin{equation} \label{eqn:pitau-*}
\begin{array}{l}
\Rep\Big(\Spin(8),\frac{1}{2}\big(j+a,j',j',a-j\big)\Big)\\
\longmapsto \Big(\mathcal{H}^j(\R^8) \boxtimes \mathcal{H}^{j'}(\R^8),\Rep\Big(\Spin(7),\frac{1}{2} (a,a,a)\Big)\Big)
\end{array}
\end{equation}
for $j,j',a\in\N$ with $|j-j'|\leq a\leq j+j'$ and $j+j'-a\in 2\N$.

\begin{proof}[Proof of Proposition~\ref{prop:ex(*)}]
(1) The first equality follows from the identity $C_{\tilG}=\dd\ell(C_{\tilG}^{(1)})+\dd\ell(C_{\tilG}^{(2)})$.
For the second equality, we use the fact that the Casimir operators for $\tilG$, $G$, and~$K$ act on these irreducible representations as the following scalars.
\begin{center}
\begin{changemargin}{-0.6cm}{0cm}
\begin{tabular}{|c|c|c|}
\hline
Operator & Representation & Scalar\\
\hline\hline
$C_{\tilG}$ & $\mathcal{H}^j(\R^8)\boxtimes\mathcal{H}^{j'}(\R^8)$ & $j^2+{j'}^2+6(j+j')$\\
\hline
$C_G$ & $\Rep\big(\Spin(8),\frac{1}{2}(j+a, j', j', a-j)\big)$ & $\frac{1}{2}(j^2 + {j'}^2 + 6(j+j') + a(a+6))$\\
\hline
$C_K$ & $\Rep\big(\Spin(7),\frac{1}{2}(a,a,a)\big)$ & $\frac{3}{4}\,a(a+6)$\\
\hline
\end{tabular}
\end{changemargin}
\end{center}
This, together with the identity
$$3 \big(j^2+{j'}^2+6(j+j')\big) = 3 \big(j^2 + {j'}^2 + 6(j+j') + a(a+6)\big) - 3a(a+6),$$
implies $3\,\dd\ell(C_{\tilG}) = 6\,\dd\ell(C_G) - 4\,\dd r(C_K)$.

(2) The description of $\D_{\tilG}(X)$ follows from the fact that $\Spin(8)/\Spin(7)$ is a symmetric space of rank one, see also Proposition~\ref{prop:ex(i)}.(2) with $n=3$.
For the fiber $F=K/H=\Spin(7)/G_{2(-14)}$, the description of $\D_K(F)$ was given in Proposition~\ref{prop:ex(viii)}.(2).
We now focus on $\D_G(X)$.
We only need to prove the first equality, since the second one follows from the linear relations between the generators in~(1).
For this, using Lemmas \ref{lem:struct-DGX}.(3) and~\ref{lem:ex(*)}.(4), it suffices to show that the three differential operators $\dd\ell(C_{\tilG}^{(1)})$, $\dd\ell(C_{\tilG}^{(2)})$, and $\dd r(C_K)$ on~$X$ are algebraically independent.
Let $f$ be a polynomial in three variables such that $f(\dd\ell(C_{\tilG}^{(1)}),\dd\ell(C_{\tilG}^{(2)}),\dd r(C_K))=0$ in $\D_G(X)$.
By letting this differential operator act on the $G$-isotypic component $\vartheta = \Rep\big(\Spin(8),\frac{1}{2}(j+a, j', j', a-j)\big)$ in $C^{\infty}(X)$, we obtain
$$f\Big(j^2+6j, {j'}^2+6j', \frac{3}{4}\,a(a+6)\Big) = 0$$
for all $j,j',a\in\N$ with $|j-j'|\leq a\leq j+j'$ and $j+j'-a\in 2\N$, hence $f$ is the zero polynomial.
\end{proof}

\subsection{The subalgebra $R=\langle\dd r(Z(\kk_{\C})),\dd\ell(Z(\g_{\C}))\rangle$}

Unlike Theorem~\ref{thm:main}.(2) for simple~$\tilG$, here $\D_G(X)$ is strictly larger than the subalgebra $R$ generated by $\dd r(Z(\kk_{\C}))$ and $\dd\ell(Z(\g_{\C}))$.
In this section, we give a proof of Proposition~\ref{prop:ex(*)-RDGX} on the subalgebra~$R$.
For this, we describe $\D_G(X)$ as a function on $\Disc(G/H)$, as in Proposition~\ref{prop:D-spec-Hom}.
Recall from Lemma~\ref{lem:ex(*)}.(1) that
\begin{align*}
& \Disc(G/H)\\
& \simeq \Big\{ \Rep\Big(\Spin(8),\frac{1}{2}\big(j+a,j',j',a-j\big)\Big) \,:\, a\geq |j-j'|,\ j+j'-a\in 2\N\Big\}.
\end{align*}
Setting $x:=(j+3)^2$ and $y:=(j'+3)^2$ and $z:=(a+3)^2$, we may regard the polynomial ring $\C[x,y,z]$ as a subalgebra of $\mathrm{Map}(\Disc(G/H),\C)$.

\begin{lemma} \label{lem:ex(*)-DGX-polyn}
The map $\tilde{\psi}$ of Proposition~\ref{prop:D-spec-Hom} gives an algebra isomorphism
$$\D_G(X) \overset{\sim}{\longrightarrow} \C[x,y,z].$$
\end{lemma}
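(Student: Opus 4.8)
The statement to prove is Lemma~\ref{lem:ex(*)-DGX-polyn}: that the map $\tilde{\psi}$ of Proposition~\ref{prop:D-spec-Hom} gives an algebra isomorphism $\D_G(X)\overset{\sim}{\longrightarrow}\C[x,y,z]$, where $x=(j+3)^2$, $y=(j'+3)^2$, $z=(a+3)^2$ parametrize the elements of $\Disc(G/H)$ through \eqref{eqn:pitau-*}. First I would recall that Proposition~\ref{prop:D-spec-Hom}.(1) already gives that $\tilde{\psi}:\D_G(X)\hookrightarrow\mathrm{Map}(\Disc(G/H),\C)$ is an injective algebra homomorphism, using $G_{\C}$-sphericity of $X_{\C}$ (which holds here by the hypotheses of Proposition~\ref{prop:ex(*)} together with Lemma~\ref{lem:spherical2}.(1)). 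So the whole content of the lemma is to identify the image of $\tilde{\psi}$ with the subalgebra $\C[x,y,z]\subset\mathrm{Map}(\Disc(G/H),\C)$.

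The strategy is to show \textbf{image $\subseteq\C[x,y,z]$} and \textbf{image $\supseteq\C[x,y,z]$} separately. For the inclusion ``image $\subseteq$'', I would use the generators of $\D_G(X)$ exhibited in Proposition~\ref{prop:ex(*)}.(2): $\D_G(X)=\C[\dd\ell(C_{\tilG}^{(1)}),\dd\ell(C_{\tilG}^{(2)}),\dd r(C_K)]$. By Proposition~\ref{prop:D-spec-Hom}.(2) and the scalar table in the proof of Proposition~\ref{prop:ex(*)}.(1), these three generators act on $\vartheta=\Rep(\Spin(8),\tfrac12(j+a,j',j',a-j))$ by $j^2+6j=(j+3)^2-9=x-9$, by ${j'}^2+6j'=y-9$, and by $\tfrac34 a(a+6)=\tfrac34(z-9)$, respectively. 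Since $x-9$, $y-9$, $\tfrac34(z-9)$ all lie in $\C[x,y,z]$ and generate it as an algebra over $\C$, the image of $\tilde\psi$ is contained in (indeed equal to the subalgebra generated by these three elements inside) $\C[x,y,z]$.

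For the reverse inclusion, hence equality, I must check that $x$, $y$, $z$ themselves — equivalently $x-9$, $y-9$, $z-9$ — lie in the image, which is immediate from the previous paragraph, \emph{provided} the three elements $x-9$, $y-9$, $\tfrac34(z-9)$ are algebraically independent as functions on $\Disc(G/H)$, i.e. the evaluation homomorphism $\C[x,y,z]\to\mathrm{Map}(\Disc(G/H),\C)$ is injective. This is where the parametrization of $\Disc(G/H)$ matters: the triples $(j,j',a)\in\N^3$ with $a\geq|j-j'|$ and $j+j'-a\in2\N$ form a Zariski-dense subset of $\C^3$ (it contains, for instance, all $(j,j',a)$ with $a=j+j'$, which is Zariski-dense in the plane $a=j+j'$; more simply one notes the set contains an infinite ``box'' in each coordinate for suitable fixed values of the others), so a polynomial in $(x,y,z)=((j+3)^2,(j'+3)^2,(a+3)^2)$ vanishing on all of them must be identically zero — the coordinate change $(j,j',a)\mapsto(x,y,z)$ is a finite-to-one polynomial map with Zariski-dense image, and a polynomial vanishing on a Zariski-dense set vanishes identically. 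Combining: $\tilde\psi$ is injective with image exactly $\C[x,y,z]$, hence an algebra isomorphism onto $\C[x,y,z]$.

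The main obstacle — really the only non-routine point — is the Zariski-density / algebraic-independence argument: one must verify carefully that the constraints $a\geq|j-j'|$ and $j+j'\equiv a\pmod 2$ cutting out $\Disc(G/H)$ do not prevent the image of $(j,j',a)\mapsto((j+3)^2,(j'+3)^2,(a+3)^2)$ from being Zariski-dense in $\C^3$. This follows because the index set already surjects (finite-to-one) onto a Zariski-dense subset of $\N^3$ — e.g. it contains every triple of the form $(j,j',j+j')$ — and squaring each coordinate is a dominant morphism $\C^3\to\C^3$; so the composite has Zariski-dense image and no nonzero polynomial can vanish on it. Everything else is a direct translation of the scalar computations already recorded in the proofs of Proposition~\ref{prop:ex(*)} and Lemma~\ref{lem:ex(*)}.
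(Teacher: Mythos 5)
Your proposal is correct in structure and follows essentially the same route as the paper: injectivity of $\tilde{\psi}$ comes from Proposition~\ref{prop:D-spec-Hom}.(1), and the image is identified by sending the generators $\dd\ell(C_{\tilG}^{(1)}),\dd\ell(C_{\tilG}^{(2)}),\dd r(C_K)$ of $\D_G(X)$ (Proposition~\ref{prop:ex(*)}.(2)) to $x-9$, $y-9$, $\tfrac{3}{4}(z-9)$, which generate $\C[x,y,z]$. The paper's proof does exactly this, and in addition records the images $r_1,\dots,r_4$ of explicit generators of $Z(\g_{\C})$; those computations are not needed for the lemma itself but are used afterwards in Lemma~\ref{lem:ex(*)-xyz} and Propositions~\ref{prop:ex(*)-RDGX} and~\ref{prop:qItau-*}.

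One caveat: your justification of the Zariski-density step (needed so that $x,y,z$ are algebraically independent as functions on $\Disc(G/H)$, i.e.\ so that $\C[x,y,z]$ really is a polynomial subalgebra of $\mathrm{Map}(\Disc(G/H),\C)$) is not correct as written. The parameter set does \emph{not} contain an ``infinite box in each coordinate for suitable fixed values of the others'': for fixed $(j,j')$ the admissible $a$ form a finite set, and for fixed $(j',a)$ so do the admissible $j$; moreover the triples $(j,j',j+j')$ lie on a single plane, which is not Zariski-dense in $\C^3$. The density claim itself is true and easily repaired: for every $c\in\N$ the set contains all $(j,j',j+j'-2c)$ with $j,j'\geq c$, so a polynomial vanishing on the whole set vanishes on infinitely many parallel planes and hence is identically zero; alternatively, the algebraic independence of $\dd\ell(C_{\tilG}^{(1)}),\dd\ell(C_{\tilG}^{(2)}),\dd r(C_K)$ already established in the proof of Proposition~\ref{prop:ex(*)}.(2), combined with the injectivity of $\tilde{\psi}$, directly yields the algebraic independence of $x$, $y$, $z$ without any new density argument.
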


\begin{proof}
We take generators $R_1,\dots,R_4$ of $Z(\g_{\C})$ as follows.
Recall from Section~\ref{subsec:symmsp} the notation $\chi_{\nu}^G : Z(\g_{\C})\to\C$ for the infinitesimal character.
There exist unique elements $R_1,\dots,R_4\in Z(\g_{\C})$ such that
$$\left\{ \begin{array}{ccl}
\chi_{\nu}^G(R_k) & = & 2^{2k-1} \big(\nu_1^{2k} + \nu_2^{2k} + \nu_3^{2k} + \nu_4^{2k}\big) \quad\quad \mathrm{for}\ 1\leq k\leq 3,\\
\chi_{\nu}^G(R_4) & = & 2^4 \, \nu_1\nu_2\nu_3\nu_4
\end{array}\right.$$
for $\nu = (\nu_1,\nu_2,\nu_3,\nu_4) \in \jj_{\C}^*/W(D_4) \simeq \C^4/\mathfrak{S}_4\ltimes (\Z/2\Z)^3$ via the standard basis of the Cartan subalgebra $\jj_{\C}$ of $\g_{\C}=\so(8,\C)$.
Then $Z(\g_{\C})$ is the polynomial algebra $\C[R_1,R_2,R_3,R_4]$.

Let us set
$$\left\{ \begin{array}{ccl}
r_k & := & \tilde{\psi}(\dd\ell(R_k)) \hspace{0.98cm} \text{for }1\leq k\leq 4,\\
q & := & \tilde{\psi}(\dd r(C_K)),\\
p_i & := & \tilde{\psi}(\dd\ell(C_{\tilG}^{(i)})) \hspace{0.8cm} \text{for }1\leq i\leq 2.
\end{array} \right.$$
By Proposition~\ref{prop:D-spec-Hom}.(2), these are maps from $\Disc(G/H)$ to~$\C$ sending any $\vartheta = \Rep\big(\Spin(8),\frac{1}{2}\,(j+a,j',j',a-j)\big) \in \Disc(G/H)$ to $\psi(\vartheta,\dd\ell(R_k))$ (for $1\leq k\leq\nolinebreak 4$), $\psi(\vartheta,\dd r(C_K))$, and $\psi(\vartheta,\dd\ell(C_{\tilG}^{(i)}))$, which are the scalars by which $R_k\in Z(\g_{\C})$ acts on~$\vartheta$ and $C_K\in Z(\kk_{\C})$ acts on $\tau(\vartheta)=\Rep\big(\Spin(7),\frac{1}{2} (a,a,a)\big)$ and $C_{\tilG}^{(i)}\in Z(\tilg_{\C})$ acts on $\pi(\vartheta)=\mathcal{H}^j(\R^8)\otimes\mathcal{H}^{j'}(\R^8)$, respectively, by \eqref{eqn:pitau-*}.
These scalars are given as follows:
$$\left\{ \begin{array}{ccl}
r_1 & = & x + y + z + 1,\\
r_2 & = & x^2 + 6zx + z^2 + y^2 + 6y + 1,\\
r_3 & = & x^3 + 15zx^2 + 15z^2x + z^3 + y^3 + 15y^2 + 15y + 1,\\
r_4 & = & (x-1)(y-z),\\
q & = & \frac{3}{4}\,(z-9),\\
p_1 & = & x-9,\\
p_2 & = & y-9.
\end{array} \right.$$
Therefore, the algebra homomorphism $\tilde{\psi} : \D_G(X)\to\mathrm{Map}(\Disc(G/H),\C)$ takes values in $\C[x,y,z]$.
The image is exactly $\C[x,y,z]$ since $p_1,p_2,q$ generate it.
\end{proof}

From now on, we identify $\D_G(X)$ with the polynomial ring $\C[x,y,z]$.
Since the algebra $\dd r(Z(\kk_{\C}))$ is generated by $\dd r(C_K)$ and $\dd\ell(Z(\g_{\C}))$ is generated by the $\dd\ell(R_k)$ for $1\leq k\leq 4$, we may view $R$ in \eqref{eqn:RZgZk-*} as the subalgebra of $\C[x,y,z]$ generated by $q,r_1,r_2,r_3,r_4$.

\begin{lemma} \label{lem:ex(*)-xyz}
In this setting,
\begin{enumerate}
  \item $z,x+y,xz+y,xy\in R$;
  \item $x^n,y^n\in R+Rx$ for all $n\in\N$.
\end{enumerate}
\end{lemma}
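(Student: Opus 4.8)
The plan is to work entirely in the polynomial ring $\C[x,y,z]$, into which $\D_G(X)$ has just been identified by Lemma~\ref{lem:ex(*)-DGX-polyn}, and to produce the required elements by explicit polynomial manipulations with the five generators
$$q = \tfrac{3}{4}(z-9),\quad r_1,\ r_2,\ r_3,\ r_4$$
of the subalgebra~$R$. First I would record the elementary observation that $z\in R$ (since $z = \tfrac{4}{3}q+9$), so I may freely use $z$ in subsequent combinations. From
$$r_1 = x+y+z+1$$
I immediately get $x+y = r_1 - z - 1 \in R$. This already gives the first two items of part~(1).

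Next, for $xz+y$ and $xy$, the idea is to extract them from $r_2$ and $r_4$. Expanding,
$$r_2 = (x^2+6zx+z^2) + (y^2+6y+1) = (x+y)^2 - 2xy + 6zx + 6y - 2xy \ \text{(regroup)};$$
more cleanly, $r_2 = x^2 + y^2 + 6zx + 6y + z^2 + 1$, so modulo the already-available elements $x+y$, $z$ one computes $r_2 - (x+y)^2 - z^2 - 1 = -2xy + 6zx + 6y = -2xy + 6(zx+y)$. Meanwhile $r_4 = (x-1)(y-z) = xy - xz - y + z$, i.e.\ $xy - (xz+y) = r_4 - z \in R$. These two linear relations in the two unknowns $xy$ and $xz+y$ (with invertible coefficient matrix, since $-2$ and the combination $6$ give a nonzero determinant) then express both $xy$ and $xz+y$ as elements of~$R$, completing part~(1). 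The only care needed here is bookkeeping of which previously-obtained elements of~$R$ are being used; there is no conceptual obstacle.

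For part~(2) I would argue by induction on~$n$, showing simultaneously that $x^n\in R+Rx$ and $y^n\in R+Rx$ (the two statements are linked because $x$ and $y$ only appear symmetrically through $x+y$ and $xy$, which are in~$R$ by part~(1)). The cases $n=0,1$ are trivial. For the inductive step, use the Newton-type identity: since $e_1:=x+y\in R$ and $e_2:=xy\in R$, the power sums $p_n:=x^n+y^n$ satisfy $p_n = e_1 p_{n-1} - e_2 p_{n-2}$, hence $p_n\in R$ for all~$n$ by induction. Then $y^n = p_n - x^n$, so $y^n\in R + Rx$ once $x^n\in R+Rx$; and $x\cdot x^{n-1}\in (R+Rx)\cdot Rx$ is handled by noting $x^2 = e_1 x - e_2 \in R + Rx$, so $Rx\cdot x \subset R+Rx$, which makes $R+Rx$ closed under multiplication by~$x$; thus $x^n = x\cdot x^{n-1}\in R+Rx$. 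This closes the induction. The mildly delicate point — the one I expect to be the real (if minor) obstacle — is verifying that $R+Rx$ is a ring, or at least stable under multiplication by $x$: this rests precisely on the relation $x^2\in R+Rx$, which in turn needs $e_1=x+y\in R$ and $e_2=xy\in R$ from part~(1). Once that stability is in hand, everything else is a routine recursion, and the proof concludes.

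\begin{proof}
We work in $\C[x,y,z]\simeq\D_G(X)$ via Lemma~\ref{lem:ex(*)-DGX-polyn}, and recall from its proof that $R$ is generated by $q=\tfrac{3}{4}(z-9)$, $r_1,r_2,r_3,r_4$, where
$$r_1 = x+y+z+1,\quad r_2 = x^2+y^2+6zx+6y+z^2+1,\quad r_4 = (x-1)(y-z).$$

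(1) From $q$ we get $z = \tfrac{4}{3}q+9\in R$. From $r_1$ we get $x+y = r_1-z-1\in R$. Using $x+y,z\in R$ we compute
$$r_2 - (x+y)^2 - z^2 - 1 = -2xy + 6zx + 6y = -2xy + 6(zx+y),$$
so $-2xy+6(zx+y)\in R$; and from $r_4$,
$$r_4 - z = (x-1)(y-z) - z = xy - zx - y = xy - (zx+y),$$
so $xy-(zx+y)\in R$. These two relations express $xy$ and $zx+y$ as $\C$-linear combinations of elements of $R$ (the coefficient matrix $\begin{pmatrix}-2 & 6\\ 1 & -1\end{pmatrix}$ being invertible), hence $xy\in R$ and $xz+y\in R$.

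(2) Set $e_1:=x+y$ and $e_2:=xy$; by~(1) we have $e_1,e_2\in R$. First, $x^2 = e_1 x - e_2 \in R + Rx$, so for any $a+bx$ with $a,b\in R$ we have $(a+bx)x = ax + b x^2 \in R+Rx$; thus $R+Rx$ is stable under multiplication by~$x$. Next, the power sums $p_n:=x^n+y^n$ satisfy $p_0=2$, $p_1=e_1$, and $p_n = e_1 p_{n-1} - e_2 p_{n-2}$ for $n\geq 2$, so $p_n\in R$ for all $n\in\N$ by induction. Finally, we prove $x^n\in R+Rx$ by induction on~$n$: this is clear for $n=0,1$, and if $x^{n-1}\in R+Rx$ then $x^n = x\cdot x^{n-1}\in R+Rx$ by the stability just established. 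Then $y^n = p_n - x^n \in R + (R+Rx) = R+Rx$. This proves~(2).
\end{proof}
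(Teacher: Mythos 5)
Your proof is correct and follows essentially the same route as the paper: identify $\D_G(X)$ with $\C[x,y,z]$, extract $z$, $x+y$, $xy$, $xz+y$ from the explicit formulas for $q,r_1,r_2,r_4$, and then run the induction on $x^n$ via the relation $x^2=(x+y)x-xy$. The only differences are cosmetic — you solve a $2\times 2$ linear system where the paper records the single identity $4(xz+y)=r_2+2r_4-(x+y)^2-(z+1)^2$, and you handle $y^n$ via the power-sum recursion $p_n=e_1p_{n-1}-e_2p_{n-2}$ instead of the paper's remark that $y=(x+y)-x$.
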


\begin{proof}
(1) We have $z\in\C q+\C$, hence $z\in R$.
Similarly, $x+y\in\C q+\C r_1+\C$, hence $x+y\in R$.
The inclusions $xz+y,xy\in R$ follow from the equalities
\begin{align*}
4(xz+y) & \,=\, r_2 + 2 r_4 - (x+y)^2 - (z+1)^2,\\
xy & \,=\, r_4 + (xz+y) + z.
\end{align*}

(2) Let us prove $x^n\in R+Rx$ by induction on~$n$.
The cases $n=0,1$ are clear, and we have $x^2=-xy+(x+y)x\in R+Rx$ by~(1).
Assuming $x^n\in R$, we have $x^{n+1}=xx^n\in x(R+Rx)=Rx+Rx^2$, hence $x^{n+1}\in R+Rx$ by the case $n=2$.
The assertion for~$y^n$ is clear from $y=(x+y)-x$.
\end{proof}

\begin{proof}[Proof of Proposition~\ref{prop:ex(*)-RDGX}]
We again identify $\D_G(X)$ with the polynomial ring $\C[x,y,z]$ as in Lemma~\ref{lem:ex(*)-DGX-polyn}.
Since $x+y\in R$, it is sufficient to show:
\begin{enumerate}
  \item $x\notin R$;
  \item $\C[x,y,z]=R+Rx$ as $R$-modules,
\end{enumerate}
where $R$ is again the subalgebra of $\C[x,y,z]$ generated by $q,r_1,r_2,r_3,r_4$.

(1) Suppose by contradiction that there is a polynomial $f$ in five variables such that
\begin{equation} \label{eqn:fqrp}
f(r_1,r_2,r_3,r_4,q) = x.
\end{equation}
Taking $z=1$ in the identity \eqref{eqn:fqrp} of polynomials in $x,y,z$, we see that the left-hand side is symmetric in $x$ and~$y$, whereas the right-hand side is not, yielding a contradiction.

(2) Since $xy,z\in R$ by Lemma~\ref{lem:ex(*)-xyz}.(1), any monomial of the form $x^{\ell}y^mz^n$ with $\ell,m,n\in\N$ belongs to $x^{\ell-m}R$ if $\ell\geq m$, and to $y^{m-\ell}R$ if $\ell\leq m$.
In both cases we see, using Lemma~\ref{lem:ex(*)-xyz}.(2), that $x^{\ell}y^mz^n\in R+Rx$.
Therefore $R+Rx=\C[x,y,z]$.
\end{proof}

\subsection{Transfer map: proof of Proposition~\ref{prop:nu-ex(*)}} \label{subsec:proof-prop-ex(*)}

As we have seen in the previous section, in our setting the algebra $R = \langle\dd\ell(Z(\g_{\C})),\dd r(Z(\kk_{\C}))\rangle$ does not contain $\D_{\tilG}(X)$, and an analogous statement to Proposition~\ref{prop:qI} does not hold for \emph{all} maximal ideals $\mathcal{I}$ of $Z(\kk_{\C})$.
Nevertheless, the following holds for certain specific maximal ideals~$\mathcal{I}$, which include all ideals we need to define transfer maps.
We denote by $\mathcal{I}_{\tau}$ be the annihilator of the irreducible $K$-module $\tau^{\vee}$ ($\simeq\tau$) in $Z(\kk_{\C})$, and $q_{\mathcal{I}_{\tau}} : \D_G(X) \to \D_G(X)_{\mathcal{I}_{\tau}} := \D_G(X)/\langle\mathcal{I}_{\tau}\rangle$ the quotient map as in Section~\ref{subsec:intro-transfer}.

\begin{proposition} \label{prop:qItau-*}
For any $\tau\in\Disc(K/H)$, the map $q_{\mathcal{I}_{\tau}}$ induces algebra isomorphisms
$$\D_{\tilG}(X) \overset{\sim}{\longrightarrow} \D_G(X)_{\mathcal{I}_{\tau}}$$
and
$$Z(\g_{\C})/\Ker(q_{\mathcal{I}_{\tau}}\circ\dd\ell) \overset{\sim}{\longrightarrow} \D_G(X)_{\mathcal{I}_{\tau}}.$$
\end{proposition}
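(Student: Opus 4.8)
The plan is to mimic the proof of Proposition~\ref{prop:qI}, but with the key input being the explicit description of $\D_G(X)$ and its subalgebras obtained in Proposition~\ref{prop:ex(*)}, rather than the general statement of Theorem~\ref{thm:main-explicit}.(1)--(2). First I would fix $\tau = \Rep(\Spin(7),\tfrac12(a,a,a)) \in \Disc(K/H)$ and identify the maximal ideal $\mathcal{I}_\tau \subset Z(\kk_\C)$: since $\D_K(F) = \C[\dd r(C_K)]$ by Proposition~\ref{prop:ex(*)}.(2) and $\dd r_F$ is surjective by Lemma~\ref{lem:surj-dl-drF}, the ideal $\langle\mathcal{I}_\tau\rangle$ generated by $\dd r(\mathcal{I}_\tau)$ inside $\D_G(X)$ is the principal ideal generated by $\dd r(C_K) - \Psi_{\tau^\vee}^K(C_K)$, i.e.\ by $\dd r(C_K) - \tfrac34 a(a+6)$ (using the scalar in the proof of Proposition~\ref{prop:ex(*)}.(1)).

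Next I would compute the quotient $\D_G(X)_{\mathcal{I}_\tau}$ using the presentation $\D_G(X) = \C[\dd\ell(C_{\tilG}^{(1)}), \dd\ell(C_{\tilG}^{(2)}), \dd r(C_K)]$ (polynomial ring in three algebraically independent generators) from Proposition~\ref{prop:ex(*)}.(2). Killing $\dd r(C_K) - \tfrac34 a(a+6)$ leaves the polynomial ring $\C[\dd\ell(C_{\tilG}^{(1)}), \dd\ell(C_{\tilG}^{(2)})]$, and the composite $\D_{\tilG}(X) = \C[\dd\ell(C_{\tilG}^{(1)}), \dd\ell(C_{\tilG}^{(2)})] \hookrightarrow \D_G(X) \overset{q_{\mathcal{I}_\tau}}{\twoheadrightarrow} \D_G(X)_{\mathcal{I}_\tau}$ is therefore an isomorphism; this gives the first isomorphism. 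For the second isomorphism I would use the relation $3\,\dd\ell(C_{\tilG}) = 6\,\dd\ell(C_G) - 4\,\dd r(C_K)$ from Proposition~\ref{prop:ex(*)}.(1), which in the quotient $\D_G(X)_{\mathcal{I}_\tau}$ becomes $\dd\ell(C_{\tilG}) = 2\,\dd\ell(C_G) - a(a+6)$; combined with the equality $\D_G(X) = \C[\dd\ell(C_{\tilG}^{(1)}),\dd\ell(C_{\tilG}^{(2)}),\dd\ell(C_G)]$ (the second displayed equality in Proposition~\ref{prop:ex(*)}.(2)) and a dimension count in the graded algebra via Lemma~\ref{lem:struct-DGX}, I would conclude that $q_{\mathcal{I}_\tau}\circ\dd\ell : Z(\g_\C) \to \D_G(X)_{\mathcal{I}_\tau}$ is surjective, hence factors through an isomorphism $Z(\g_\C)/\Ker(q_{\mathcal{I}_\tau}\circ\dd\ell) \overset{\sim}{\to} \D_G(X)_{\mathcal{I}_\tau}$.

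The main subtlety, and the reason this needs a separate proof from Proposition~\ref{prop:qI}, is that $\dd\ell(Z(\g_\C))$ together with $\dd r(Z(\kk_\C))$ does \emph{not} generate $\D_G(X)$ (condition~($\widetilde{\mathrm{A}}$) fails, Proposition~\ref{prop:ex(*)-RDGX}), so surjectivity of $q_{\mathcal{I}_\tau}\circ\dd\ell$ is not automatic — one genuinely needs the specific structure of $\mathcal{I}_\tau$ (annihilator of an \emph{irreducible} $K$-module, equivalently $\dd r(C_K)$ acting by a scalar) rather than an arbitrary maximal ideal. The cleanest way to see the surjectivity is the spectral description of $\D_G(X) \cong \C[x,y,z]$ from Lemma~\ref{lem:ex(*)-DGX-polyn}, where $\dd r(C_K)$ corresponds to $\tfrac34(z-9)$; fixing the $\mathcal{I}_\tau$-eigenvalue of $\dd r(C_K)$ fixes $z = (a+3)^2$, and on the fiber $z = \text{const}$ the images $r_1, r_2, r_3, r_4$ of the generators of $Z(\g_\C)$ (computed in the proof of Lemma~\ref{lem:ex(*)-DGX-polyn}) already generate $\C[x,y]$ — for instance $r_1$ and $r_2$ restricted to $z=\text{const}$ involve $x+y$ and $x^2+y^2$ up to lower order, giving algebraic independence. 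I expect verifying this last point — that the four polynomials $r_k|_{z=\text{const}}$ generate the full polynomial ring $\C[x,y]$, not just the $\mathfrak{S}_2$-symmetric subring (note $r_4$ breaks the symmetry) — to be the one place requiring an honest, if short, computation. I would also record that the composite isomorphism $\varphi_{\mathcal{I}_\tau} : Z(\g_\C)/\Ker(q_{\mathcal{I}_\tau}\circ\dd\ell) \overset{\sim}{\to} \D_{\tilG}(X)$ thus obtained is exactly the bijection $\varphi_{\mathcal{I}_\tau}^*$ needed for the transfer map in Section~\ref{subsec:nu-tau}, and that its dual is computed by the affine map $S_\tau$ of Proposition~\ref{prop:nu-ex(*)} via Proposition~\ref{prop:Stau-transfer} applied to the formula~\eqref{eqn:pitau-*}.
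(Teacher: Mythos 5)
Your final plan is essentially the paper's own proof: the paper identifies $\D_G(X)\simeq\C[x,y,z]$ via Lemma~\ref{lem:ex(*)-DGX-polyn}, observes that $\langle\mathcal{I}_{\tau}\rangle=(z-(a+3)^2)$ so that $q_{\mathcal{I}_{\tau}}$ becomes the evaluation $q_a$ at $z=(a+3)^2$, notes that $q_a$ restricted to $\D_{\tilG}(X)=\C[p_1,p_2]=\C[x,y]$ is clearly an isomorphism, and then proves $q_a(\dd\ell(Z(\g_{\C})))=\C[x,y]$ by the explicit computation $q_a(r_1)=x+y+(a+3)^2+1$ and $q_a(-r_1^2+r_2+2r_4)=2\big((a+3)^2-1\big)(x-y)$, using $(a+3)^2\neq 1$ for $a\in\N$; so the computation you deferred (``$r_4$ breaks the symmetry'') is exactly this one, and it does work out. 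One caution about the route you sketched first for the second isomorphism: the Casimir relation $3\,\dd\ell(C_{\tilG})=6\,\dd\ell(C_G)-4\,\dd r(C_K)$ combined with $\D_G(X)=\C[\dd\ell(C_{\tilG}^{(1)}),\dd\ell(C_{\tilG}^{(2)}),\dd\ell(C_G)]$ and a dimension count via Lemma~\ref{lem:struct-DGX} cannot by itself give surjectivity of $q_{\mathcal{I}_{\tau}}\circ\dd\ell$ --- in the quotient that relation only exhibits the symmetric combination $x+y$ in the image of $Z(\g_{\C})$, and nothing in those inputs excludes the image being merely the $\mathfrak{S}_2$-invariants $\C[x+y,xy]$; as you yourself then observe (and as Proposition~\ref{prop:ex(*)-RDGX} makes clear), one genuinely needs a higher-degree generator of $Z(\g_{\C})$, here $r_4$, to separate $x$ from $y$, so your ``cleanest way'' is in fact the necessary way.
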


These isomorphisms combine into an algebra isomorphism
$$\varphi_{\mathcal{I}_{\tau}} : Z(\g_{\C})/\Ker(q_{\mathcal{I}_{\tau}}\circ\dd\ell) \,\overset{\sim}{\longrightarrow}\, \D_{\tilG}(X),$$
which induces a natural map
\begin{eqnarray*}
\varphi_{\mathcal{I}_{\tau}}^* : \Hom_{\C\text{-}\mathrm{alg}}(\D_{\tilG}(X),\C) & \overset{\sim}{\longrightarrow} & \Hom_{\C\text{-}\mathrm{alg}}\big(Z(\g_{\C})/\Ker(q_{\mathcal{I}_{\tau}}\circ\dd\ell),\C\big)\\
& & \hspace{0.5cm} \subset\ \Hom_{\C\text{-}\mathrm{alg}}(Z(\g_{\C}),\C)
\end{eqnarray*}
Proposition~\ref{prop:qItau-*} implies Theorem~\ref{thm:nu-tau}.(1)--(3) in our setting, see Section~\ref{subsec:strategy-transfer}.

\begin{proof}[Proof of Proposition~\ref{prop:qItau-*}]
We identify $\D_G(X)$ with the polynomial ring\linebreak $\C[x,y,z]$ via~$\tilde{\psi}$ using Lemma~\ref{lem:ex(*)-DGX-polyn}.
Write $\tau = \Rep\big(\Spin(7),\frac{1}{2}(a,a,a)\big)$.
Under the isomorphism $\D_G(X)\simeq\C[x,y,z]$, the ideal $\langle\mathcal{I}_{\tau}\rangle$ is generated by $z-(a+3)^2$, and the map $q_{\mathcal{I}_{\tau}}$ identifies with the evaluation $q_a$ at $z=(a+3)^2$, sending $f(x,y,z)\in\C[x,y,z]$ to $f(x,y,(a+3)^2)\in\C[x,y]$.
This induces an algebra isomorphism $\D_G(X)_{\mathcal{I}_{\tau}}\simeq\C[x,y]$, and we obtain the following commutative diagram for each $\tau=\Rep\big(\Spin(7),\frac{1}{2}(a,a,a)\big)$.
$$\xymatrixcolsep{3pc}
\xymatrix{
\D_G(X) \ar[d]^{q_{\mathcal{I}_{\tau}}} \ar[r]^{\sim}_{\tilde{\psi}} & \C[x,y,z] \ar[d]^{q_a}\\
\D_G(X)_{\mathcal{I}_{\tau}} \ar[r]^{\sim} & \C[x,y]
}$$

We now examine the restriction of $q_a$ to the subalgebras $\D_{\tilG}(X)$ and $\dd\ell(Z(\g_{\C}))$ of $\D_G(X)$.
The restriction of $q_a$ to $\D_{\tilG}(X)=\C[p_1,p_2]=\C[x,y]$ is clearly an isomorphism.
On the other hand, a simple computation shows
\begin{eqnarray*}
q_a(r_1) & = & x + y + (a+3)^2 + 1,\\
q_q(-r_1^2 + r_2 + 2r_4) & = & 2\big((a+3)^2 - 1\big) (x-y).
\end{eqnarray*}
Since $(a+3)^2\neq 1$, we conclude that $q_a(\dd\ell(Z(\g_{\C})))=\C[x,y]$, hence the second isomorphism.
\end{proof}

\begin{proof}[Proof of Proposition~\ref{prop:nu-ex(*)}]
We use Proposition~\ref{prop:Stau-transfer} and the formula \eqref{eqn:pitau-*} for the map $\vartheta\mapsto (\pi(\vartheta),\tau(\vartheta))$ of Proposition~\ref{prop:pi-tau-theta}.
Let $\tau=\Rep\big(\Spin(7),\frac{1}{2}(a,a,a)\big)\linebreak\in\Disc(K/H)$ with $a\in\N$.
If $\vartheta\in\Disc(G/H)$ satisfies $\tau(\vartheta)=\tau$, then $\vartheta$ is of the form $\vartheta=\Rep(\Spin(8),\frac{1}{2}\,(j+a,j',j',a-j))$ for some $j,j'\in\N$ with $|j-j'|\leq a\leq j+j'$ and $j+j'-2a\in\N$, by \eqref{eqn:HjR8tensorupper}.
The algebra $\D_{\tilG}(X)$ acts on the irreducible $\tilG$-submodule $\pi(\vartheta) = \mathcal{H}^j(\R^8) \boxtimes \mathcal{H}^{j'}(\R^8)$ of $C^{\infty}(X)$ by the scalars
$$\lambda(\vartheta) + \rho_{\tila} = (j+3,j'+3) \in \C^2/(\Z/2\Z)^2$$
via the Harish-Chandra isomorphism \eqref{eqn:HCX-*}, whereas the algebra $Z(\g_{\C})$ acts on the irreducible $G$-module $\vartheta = \Rep\big(\Spin(8),\frac{1}{2}\big(j+a,j',j',a-j\big)\big)$ by the scalars
$$\nu(\vartheta) + \rho = \frac{1}{2} \, \big(\lambda+a+3, \lambda'+1, \lambda'-1, \lambda-a-3\big) \in \C^4/W(D_4)$$
via \eqref{eqn:HCZg-*}.
Thus the affine map $S_{\tau}$ in Proposition~\ref{prop:nu-ex(*)} sends $\lambda(\vartheta)+\rho_{\tila}$ to $\nu(\vartheta)+\rho$ for any $\vartheta\in\Disc(G/H)$ such that $\tau(\vartheta)=\tau$, and we conclude using Proposition~\ref{prop:Stau-transfer}.
\end{proof}

\vspace{0.5cm}

\end{document}